\newcommand\xqed[1]{%
  \leavevmode\unskip\penalty9999 \hbox{}\nobreak\hfill
  \quad\hbox{#1}}
\newcommand\tqed{\xqed{$\triangle$}}
\renewcommand{\sp}{{\ \ }}
\newcommand{\hide}[1]{}
\newcommand{\blambda}{{\boldsymbol\lambda}}
\newcommand{\btau}{{\boldsymbol\tau}}
\newcommand{\bkappa}{{\boldsymbol\kappa}}
\newcommand{\upbullet}[1]{\overset{\bullet}{ #1}{}}
\newcommand{\upstar}[1]{\overset{\star}{ #1}{}}
\renewcommand{\Re}{\operatorname {Re}}
\newcommand{\FamG}{{\mathcal G}}
\newcommand{\FamH}{{\mathcal H}}
\newcommand{\FamP}{{\mathcal P}}
\newcommand{\FamQ}{{\mathcal Q}}
\newcommand{\ext}{{\operatorname{ext}}}
\renewcommand{\div}{{\operatorname{div}}}
\newcommand{\Fam}{{\mathcal F}}
\newcommand{\Width}{{\mathcal W}}
\newcommand{\epscasc}{{\mu}}
\newcommand{\opp}{{\text{opp}}}
\newcommand{\stab}{{\text{stab}}}
\newcommand\LL{{\mathcal L}}
\newcommand\NN{\mathcal{N}}
\newcommand{\setword}[2]{%
  \phantomsection
  #1\def\@currentlabel{\unexpanded{#1}}\label{#2}%
}
\newcommand{\Mandel}{{\mathcal{M}}}
\newcommand\MM{\mathcal{M}}
\newcommand\TT{\mathcal{T}}
\newcommand\YY{\mathcal{Y}}
\newcommand\ZZ{\mathcal{Z}}
\newcommand\HH{\mathcal{H}}
\newcommand{\bDelta}{{\mathbb{\Delta}}}
\newcommand{\bC}{{\mathbf{C}}}
\newcommand\RR{\mathcal{R}}
\renewcommand\SS{\mathcal{S}}
\newcommand\XX{\mathcal{X}}
\newcommand\FF{\mathcal{F}}
\newcommand\intr{\operatorname{int}}
\newcommand\Jul{{\mathfrak J}}
\newcommand{\bN}{{\mathbf N}}
\newcommand{\bM}{{\mathbf M}}
\newcommand{\tF}{{\mathfrak F}}
\newcommand{\wL}{{\widehat L}}
\newcommand{\wJ}{{\widehat J}}
\newcommand{\wI}{{\widehat I}}
\newcommand{\wZ}{{\widehat Z}}
\newcommand{\bZ}{{\mathbf Z}}
\newcommand{\bK}{{\mathbf K}}
\newcommand{\bbn}{{\mathbf n}}
\newcommand{\bbk}{{\mathbf k}}
\newcommand{\bbs}{{\mathbf s}}
\newcommand{\bbt}{{\mathbf t}}
\newcommand{\Level}{\operatorname{Level}}
\newcommand{\bbg}{{\mathbf g}}
\newcommand{\str}{{\star}}
\newcommand{\pp}{\mathfrak{p}}
\newcommand{\ee}{\mathbf e}
\newcommand{\qq}{\mathfrak{q}}
\renewcommand{\tt}{\mathfrak{t}}
\newcommand{\rr}{\mathfrak{r}}
\newcommand{\kk}{\mathfrak{k}}
\newcommand{\length}{{\mathfrak l}}
\newcommand\inn{{\operatorname{inn}}}
\newcommand\parab{{\operatorname{par}}}
\newcommand\UU{\mathcal{U}}
\newcommand\BB{\mathcal{B}}
\renewcommand\AA{\mathcal{A}}
\newcommand\PP{\mathcal{P}}
\newcommand\ovl{\overline}
\newcommand\out{{\operatorname{out}}}
\newcommand{\grnd}{{\operatorname{grnd}}}
\newcommand{\GRND}{{\operatorname{GRND}}}
\newcommand{\well}{{\operatorname{well}}}
\newcommand\new{{\operatorname{new}}}
\newcommand\New{{\operatorname{New}}}
\newcommand\NEW{{\operatorname{NEW}}}
\newcommand\full{{\operatorname{full}}}
\newcommand\dual{{\operatorname{dual}}}
\newtheorem{claim}{Claim}
\newtheorem{claim3}{Claim}
\newtheorem{claim4}{Claim}
\newtheorem{claim5}{Claim}
\newtheorem{claim6}{Claim}
\newtheorem{claim7}{Claim}
\newtheorem{claim8}{Claim}
\newcommand{\wC}{\widehat{\mathbb{C}}}
\newcommand{\C}{\mathbb{C}}
\newcommand{\Q}{\mathbb{Q}}
\newcommand{\R}{\mathbb{R}}
\newcommand{\N}{\mathbb{N}}
\DeclareFontFamily{U}{mathb}{\hyphenchar\font45}
\DeclareFontShape{U}{mathb}{m}{n}{ <5> <6> <7> <8> <9> <10> gen * mathb <10.95> mathb10 <12> <14.4> <17.28> <20.74> <24.88> mathb12 }{}
\DeclareSymbolFont{mathb}{U}{mathb}{m}{n}
\DeclareMathSymbol{\selfmap}{3}{mathb}{"FD}
\newcommand{\Z}{\mathbb{Z}}
\newcommand{\Circle}{\mathbb{S}^1}
\newcommand{\Disk}{\mathbb{D}}
\newcommand{\ovDisk}{{\overline \Disk}}
\newcommand{\ovZ}{{\overline Z}}
\newcommand{\Jbb}{{\mathfrak{J}}}
\newcommand{\Dbb}{{\mathfrak{D}}}
\newtheorem{assum}{Assumption}
\newtheorem{thm}{Theorem}[section]
\newtheorem{cor}[thm]{Corollary}
\newtheorem{lem}[thm]{Lemma}
\newtheorem{prop}[thm]{Proposition}
\newtheorem{rem}[thm]{Remark}
\newtheorem{loclmm}[thm]{Localization Lemma}
\newtheorem{snakelmm}[thm]{Snake Lemma}
\newtheorem{weldlmm}[thm]{Welding Lemma}
\newtheorem{caliblem}[thm]{Calibration Lemma}
\newtheorem{ampthm}[thm]{Amplification Theorem}
\newtheorem{squeelmm}[thm]{Squeezing Lemma}
\newtheorem{sneaklmm}[thm]{Sneaking Lemma}
\newtheorem{lairlmm}[thm]{Snake-Lair Lemma}
\newtheorem{mainthm:bounds}[thm]{Uniform Bounds Theorem}
\newtheorem{UniformBounds}[thm]{Uniform Bounds Theorem}
\newtheorem{Hedgehog}[thm]{Mother Hedgehog Theorem}
\newtheorem{Quasidisk}[thm]{Quasidisk Approximation Theorem}
\newtheorem{mainthm:geom_limits}[thm]{Geometric Limit Theorem}
\newtheorem{mainthm:sector_bounds}[thm]{Sector Bounds Theorem}
\newtheorem{mainthm:radial_str}[thm]{Star-Like Theorem}
\newtheorem{mainthm:psi_ql}[thm]{QL Siegel bounds Theorem}
\theoremstyle{remark}
\numberwithin{equation}{section}
\theoremstyle{definition}
\newtheorem{defn}[thm]{Definition}
\font\nt=cmr7
\def\be{\begin{equation}}
\newcommand{\filled}{{\mathcal {K} }}
\newcommand{\bnd}{{\mathrm{bnd}}}
\renewcommand{\sec}{\mathrm{sec}}
\newcommand{\ovlTheta}{{\overline \Theta}}
\newcommand{\di}{\partial}
\def\sm{\smallsetminus}
\newcommand{\diam}{\operatorname{diam}}
\newcommand{\dist}{\operatorname{dist}}
\renewcommand{\mod}{\operatorname{mod}}
\newcommand{\orb}{\operatorname{orb}}
\newcommand{\supp}{\operatorname{supp}}
\newcommand{\Fjord}{{\mathfrak F}}
\newcommand{\Penin}{{\mathfrak P}}
\newcommand{\CP}{{\operatorname{CP}}}
\newcommand{\CC}{{\mathcal C}}
\newcommand{\bdelta}{{\boldsymbol{ \delta}}}
\newcommand{\bvarepsilon}{{\boldsymbol{ \varepsilon}}}
\newcommand{\bchi}{{\boldsymbol{ \chi}}}
\newcommand{\RN}[1]{\MakeUppercase{\romannumeral#1}}%
\def\note#1
\DeclareFontFamily{U}{mathb}{\hyphenchar\font45}
\DeclareFontShape{U}{mathb}{m}{n}{ <5> <6> <7> <8> <9> <10> gen * mathb <10.95> mathb10 <12> <14.4> <17.28> <20.74> <24.88> mathb12 }{}
\DeclareSymbolFont{mathb}{U}{mathb}{m}{n}
\DeclareMathSymbol{\righttoleftarrow}{3}{mathb}{"FD}
\begin{document}

\title[Uniform \emph{a priori} bounds for neutral renormalization]{Uniform \emph{a priori} bounds  \\ for neutral renormalization.}
\author{Dzmitry Dudko}
\author{Mikhail Lyubich}

\begin{abstract} We prove {\em uniform ``pseudo-Siegel'' a priori bounds} for Siegel disks of bounded type that give a  uniform control of oscillations of their boundaries in all scales. As a consequence, we construct the {\em Mother Hedgehog} controlling the postcritical set for any quadratic polynomial with a neutral periodic point and show that this hedgehog has a star-like structure. Pseudo-Siegel bounds imply \emph{uniform a priori bounds} of the Sector Renormalization, which gives an opportunity to extend Siegel/Pacman Renormalization Theory and Near-Parabolic Renormalization Theory to all near-neutral quadratic polynomials. Various applications beyond quadratic polynomials are also underway.
\end{abstract}
\maketitle

\setcounter{tocdepth}{1}

\tikzset{%
  block/.style    = {draw, thick, rectangle, minimum height = 3em,
    minimum width = 3em},
  sum/.style      = {draw, circle, node distance = 2cm}, 
  input/.style    = {coordinate}, 
  output/.style   = {coordinate} 
}


\tableofcontents

\section{Introduction}

Local dynamics near a neutral fixed point, and a closely related dynamical theory of circle homeomorphisms,
is a classical story going back to Poincar\'e, Fatou,  and Julia.
It followed up in the next two decades  with breakthroughs by Denjoy (1932)
and  Siegel (1942) on the linearization of circle diffeomorphisms and local maps, respectively. In the 1950-60s, the Kolmogorov–Arnold–Moser (KAM) theory emerged resulting in the reenvision of the near-rotation phenomenon in mathematics and physics. In the mid-1970s, Renormalization Ideas were introduced into Dynamics by  Feigenbaum, Coullet and Tresser and led, in particular, to numerous conjectures in Low-Dimensional Dynamics including the nature of the KAM \emph{small divisor problem}.

The theory of analytic circle diffeomorphisms and local theory for neutral holomorphic germs
and quadratic polynomials received an essentially complete treatment in the
second half of the last century in the work by Arnold (in the KAM framework), Bruno,
Herman,  Yoccoz, and Perez-Marco. About at the same time (1980--90s) a global and semi-local theory for neutral Siegel quadratic polynomials $f_\theta: z\mapsto e^{2\pi i \theta} z + z^2$ with rotation numbers $\theta$  of {\em bounded type}  was designed on the basis of the  {\em Douady-Ghys surgery}. Renormalization Theory of Siegel maps, also initiated by physicists, was mathematically designed by McMullen in~\cite{McM1} in the mid-1990s. In the late-2010s, the hyperbolicity of Siegel Renormalization of bounded type was established in the framework of Pacman Renormalization~\cite{DLS} providing tools to study near-Siegel maps~\cite{DL}.

A remarkable progress in understanding Near-Neutral Complex Dynamics came in the 2000s, when Inou and Shishikura established  {\em uniform a priori bounds}
for quadratic polynomials $f_\theta$ with rotation numbers of {\em high type} (near-parabolic perturbative regime).  This theory found numerous applications, from constructing examples of Julia sets of positive area by Buff-Cheritat (2000s, \cite{BC}) and Avila-Lyubich (2010s, \cite{AL-posmeas})) to a complete
description, for high type rotation numbers,
of the topological structure of the  {\em Mother Hedgehogs} 
that capture the semi-local dynamics of  neutral quadratic polynomials
(Shishikura-Yang,  Cheraghi (2010s)). See \S\ref{ss:HistRetro} and~\S\ref{ss:MLC} for a more detailed historical account.


In this paper, we lay down a foundation for the renormalization  theory based upon \emph{almost-invariant objects}, called \emph{pseudo-Siegel disks}, that ``hide'' various geometric irregularities; compare with Item~\ref{Item:wZ} in~\S\ref{ss:MLC}. In the almost invariant framework, we will establish {\em uniform ``pseudo-Siegel'' a priori bounds} for neutral quadratic polynomials $f_\theta$ with artbitrary rotation numbers and show that the postcritical set of every such $f_\theta$ is ``small'': it lies within the compact {\em Mother Hedgehog} $H_\theta$ so that the restriction $f_\theta\colon H_\theta\selfmap $ is a homeomorphism. Pseudo-Siegel bounds can be transferred into other forms of \emph{a priori bounds}~\cite{DL:sector bounds, DL:psi bounds} making the theory more complete and compatible with the previous developments; see~\S\ref{ss:pseudo-Siegel bounds} for the summary.


Ideas of the current paper helped to confirm the MLC at the classical Feigenbaum parameter~\cite{DL:Feigen} -- historically, the most important parameter for the development of the Dynamical Renormalization Theory;  see Remark~\ref{remark: going deep}.  Applications to McMullen's problem on Sierpinski hyperbolic components and to the construction of Herman curves have already appeared in~\cite{DL:HypComp,Lim}. More applications of the theory developed in the current paper are underway; see~\S\ref{ss:models}.

Our proof is based upon analysis of {\em degenerating} Siegel disks of bounded type. The {\em Near-Degenerate Regime and its principles}, in the quadratic-like renormalization context,
were originally designed by Jeremy Kahn \cite{K},
with a key analytic tool, the {\em Covering Lemma}, 
 appeared in \cite{KL}. They serve as an entry point for our paper. One of the major subtleties of our situation is that Siegel disks of bounded type \emph{do not} have uniformly bounded qc-geometry since they may develop long fjords in all scale. (Otherwise, Cremer and even parabolic points would not have existed.) To deal with this problem, we design a \emph{regularization machinery} of filling-in the fjords to gradually turn Siegel disks $\overline Z_f$ into almost-invariant pseudo-Siegel disks $\big(\wZ_f^n\big)_{n\ge -1}$ so that $\wZ_f^{-1}$ are uniform qc disks. Every $\wZ_f^n$ is almost invariant up to $\qq_{n+1}$ iterations so that $\bigcap_{n\ge -1} \wZ^n_f=H_f$ is the Mother Hedgehog controlling the postcritical set. Moreover, $\wZ^{\kk(n+1)}_f$ become uniform qc disks $\wZ^{-1}_{f_{n+1}}$ after applying the $(n+1)$-fold sector renormalization $f\mapsto f_{n+1}$; see Item~\ref{into:main_cor:5} in~\S\ref{ss:pseudo-Siegel bounds}.

 We remark  in conclusion that the resulting pseudo-Siege bounds are the first \emph{non-perturbative} \emph{a priori} bounds dealing with non-locally connected (e.g., Cremer) Julia sets -- this removes arguably the main conceptual obstacle towards the \emph{full} MLC; see~\S\ref{ss:MLC} for a detailed summary of the developments in the direction of the MLC conjecture.

\subsection{Results} Due to the Douady-Ghys surgery, 
for a Siegel map $f=f_\theta$ of bounded type, the dynamics on the
Siegel disk $Z$, all the way up to  its boundary $\partial Z$, is  qc conjugate to 
the rigid rotation by $\theta$, which provides us with  the
{\em rotation combinatorial model} for $f|\, \di Z$.

\begin{figure}
\begin{tikzpicture}
     \begin{scope}[scale=0.8, every node/.style={scale=0.8} ]
  \node at (0,0) {   \includegraphics[width=8cm]{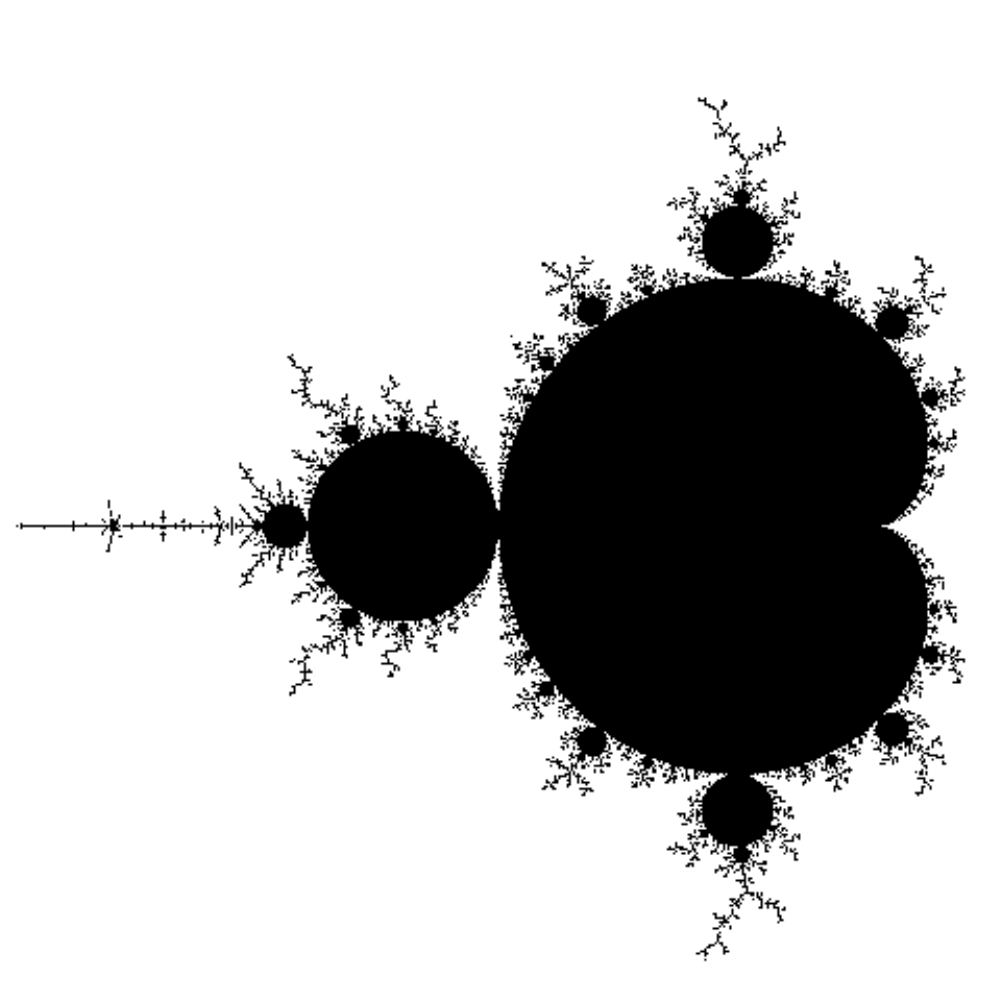}};

  \begin{scope}[shift={(-2.6,-4.8)},scale=1.4,every node/.style={scale=1.4}]
 \node[scale=0.8] at (1,1) {\includegraphics[width=2cm]{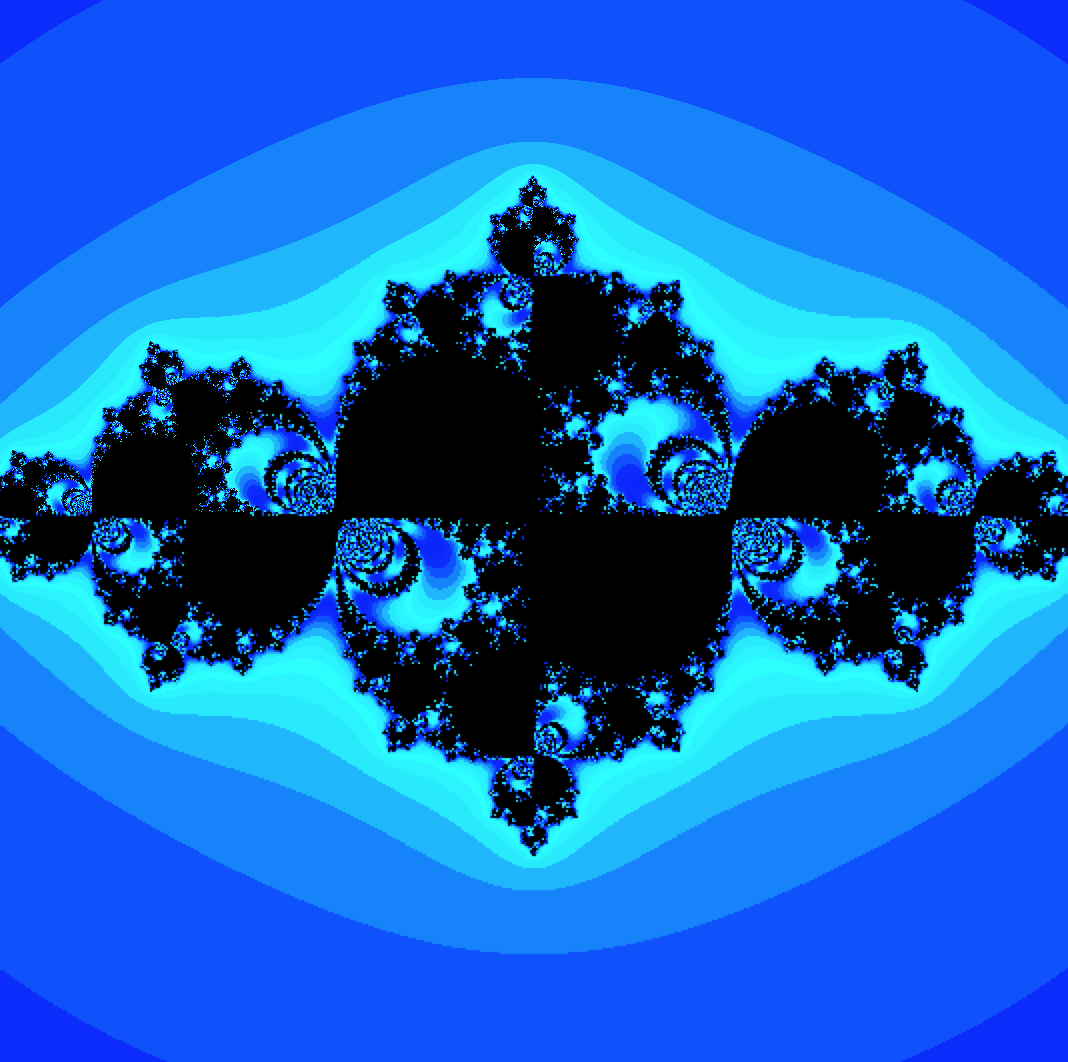}};
  \draw[red, line width=0.3mm] (0, 0) --(0, 2) --(2, 2) --(2, 0)--(0,0);

\draw[red,dashed, line width=0.3mm,shift={(0.47,0.8)}] (-0.15,-0.15) --(-0.15, 0.6) --(0.6,0.6) --(0.6, -0.15)--(-0.15,-0.15);
 
 \coordinate (yr) at (0.3,1.2);
 
\coordinate  (w1) at (1.6,2);
\end{scope}  
  \draw [red, line width=0.3mm] (w1) -- (0.02,-0.1);

\begin{scope}[shift={(-6,-4.8)},scale=1.4,every node/.style={scale=1.4}]
 \node[scale=0.8] at (1,1) {\includegraphics[width=2cm]{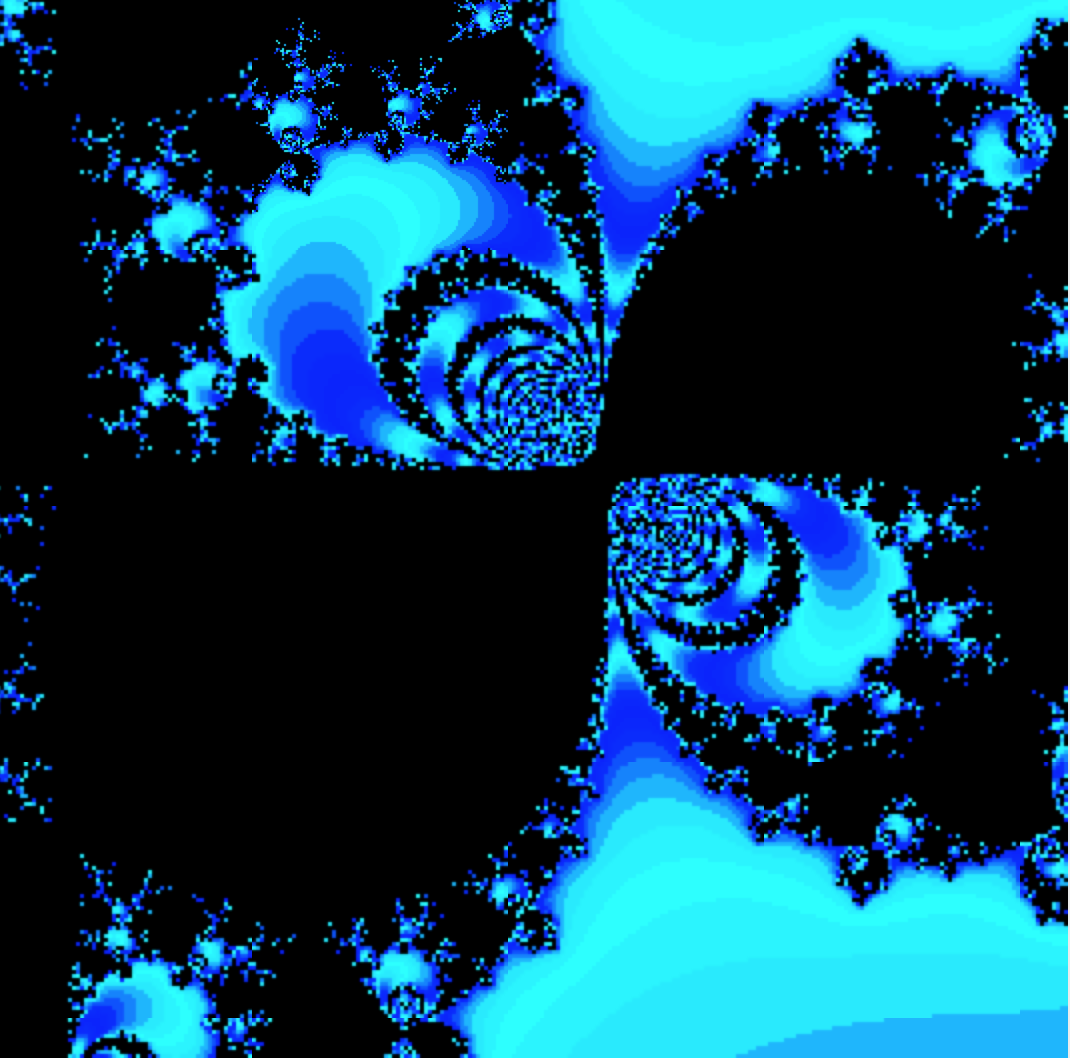}};
  \draw[red, line width=0.3mm] (0, 0) --(0, 2) --(2, 2) --(2, 0)--(0,0);

\coordinate  (w1) at (2,1.6);
\end{scope}  
  \draw [red, dashed, line width=0.3mm] (w1) -- (yr);

\begin{scope}[shift={(4.2,-1.5)},scale=1.4,every node/.style={scale=1.4}]
 \node[scale=0.8] at (1,1) {\includegraphics[width=2cm]{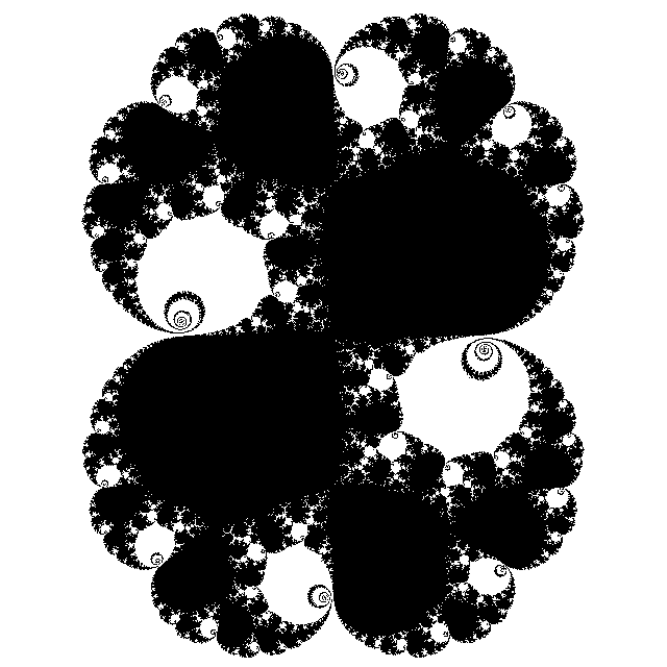}};
  \draw[red, line width=0.3mm] (0, 0) --(0, 2) --(2, 2) --(2, 0)--(0,0);

   \draw[red, dashed,line width=0.3mm] (0.85, 0.85) --(1.75, 0.85) --(1.75, 1.75) --(0.85, 1.75)--(0.85,0.85);
   \coordinate (xt) at (1.25, 0.85);

\node[white,scale=0.7] at (0.8,1.1){$\alpha$};

\coordinate  (w1) at (0,1.25);
\end{scope}  
  \draw [red, line width=0.3mm] (w1) -- (3.05,-0.18);

\begin{scope}[shift={(4.2,-4.8)},scale=1.4,every node/.style={scale=1.4}]
 \node[scale=0.8] at (1,1) {\includegraphics[width=2cm]{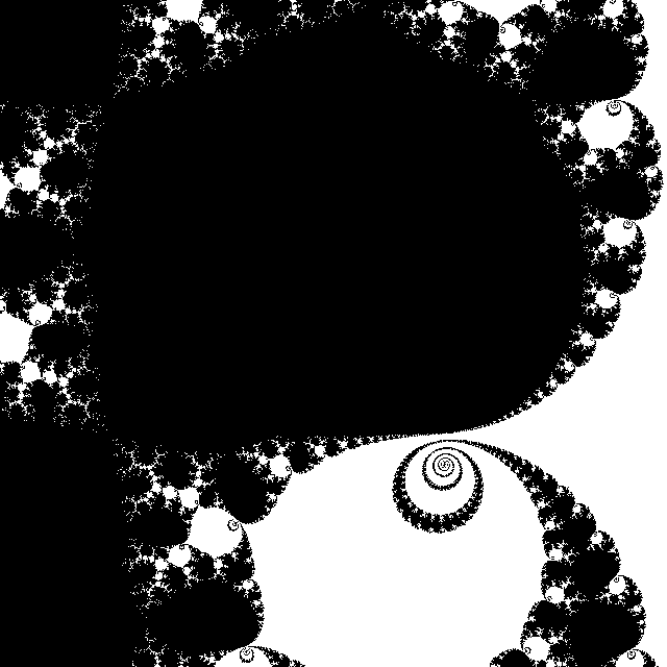}};
  \draw[red, line width=0.3mm] (0, 0) --(0, 2) --(2, 2) --(2, 0)--(0,0);

\coordinate  (w1) at (1.5,2);
\end{scope}  
  \draw [red, dashed, line width=0.3mm] (w1) -- (xt);

\begin{scope}[shift={(-3.8,1.3)},scale=1.4,every node/.style={scale=1.4}]
 \node[scale=0.8] at (1,1) {\includegraphics[width=2cm]{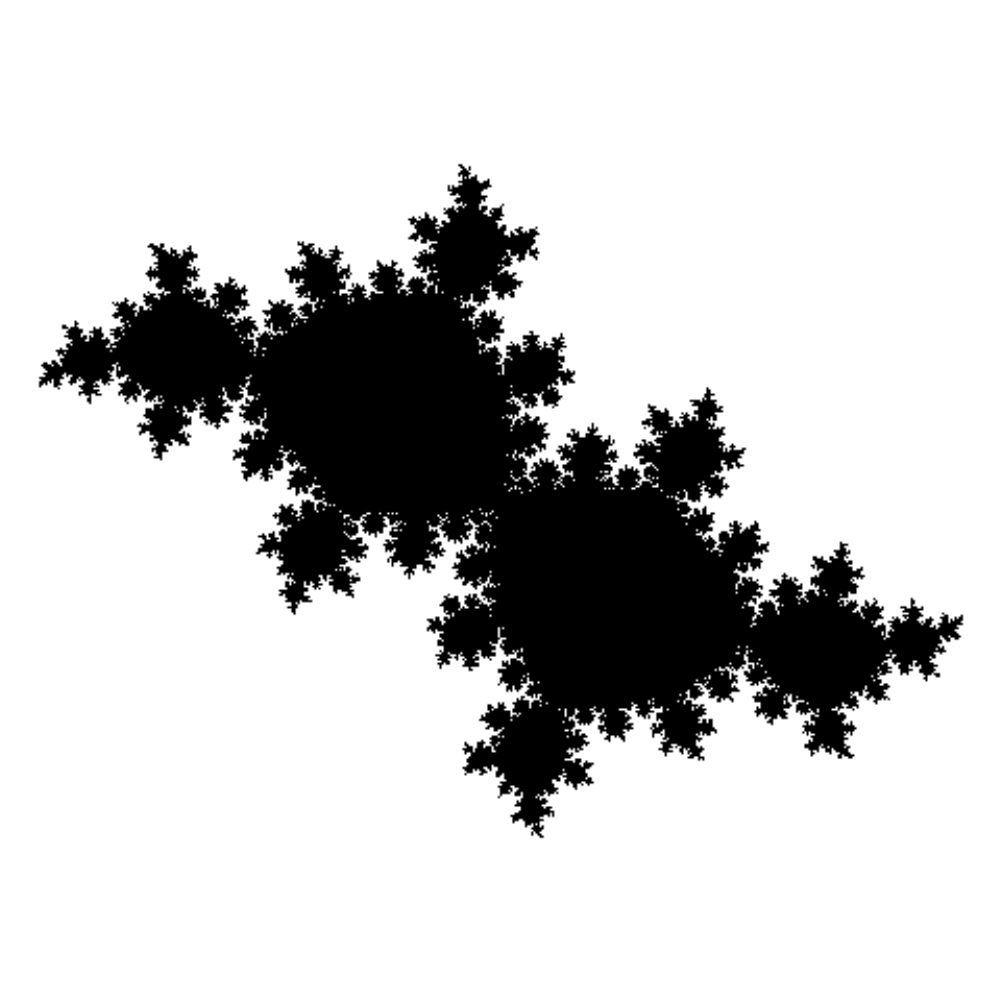}};
  \draw[red, line width=0.3mm] (0, 0) --(0, 2) --(2, 2) --(2, 0)--(0,0);

   

\coordinate  (w1) at (2,0.4);
\end{scope}  
  \draw [red, line width=0.3mm] (w1) -- (1.15,1.6);

\begin{scope}[shift={(4.2,1.8)},scale=1.4,every node/.style={scale=1.4}]
 \node[scale=0.8] at (1,1) {\includegraphics[width=2cm]{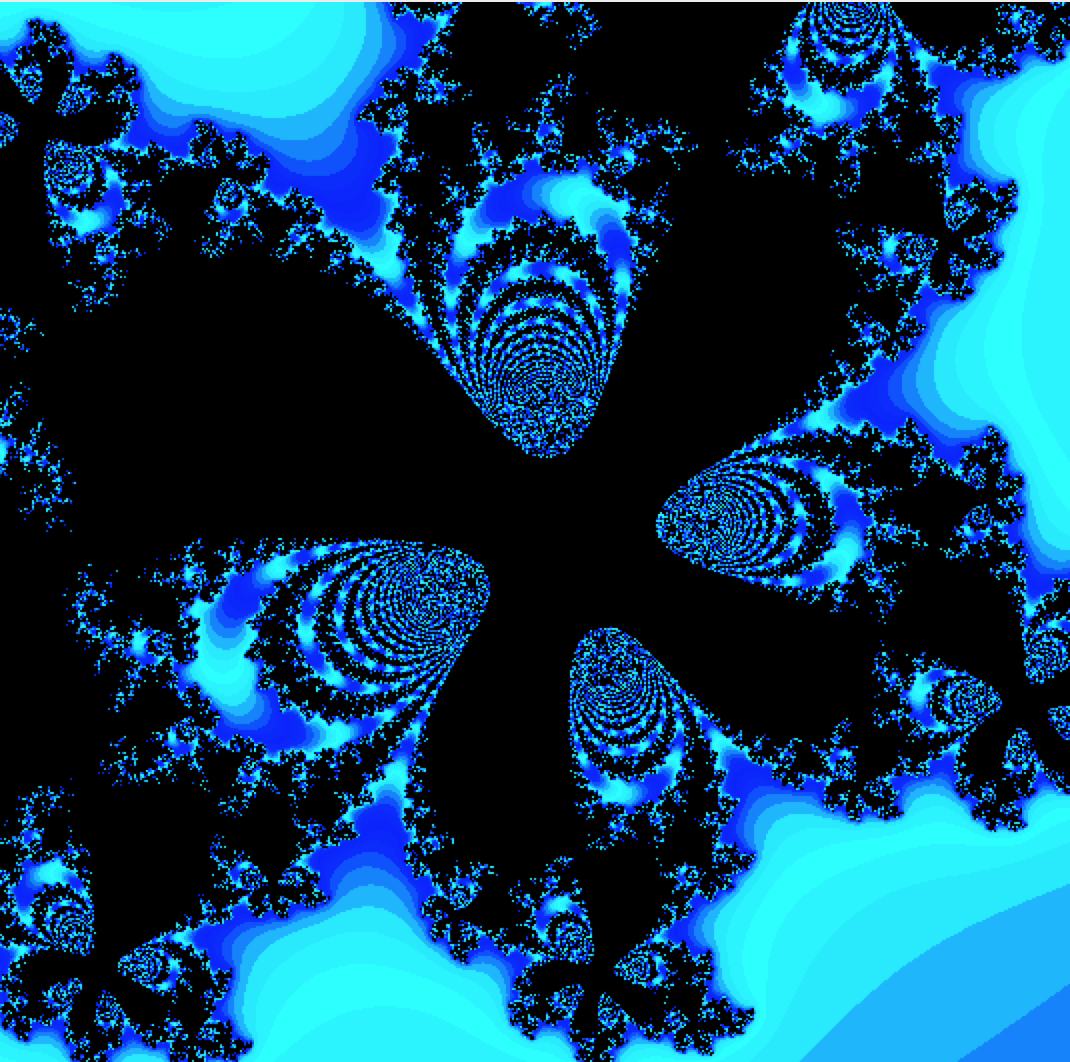}};
  \draw[red, line width=0.3mm] (0, 0) --(0, 2) --(2, 2) --(2, 0)--(0,0);


\coordinate  (w1) at (0.3,0.0);
\end{scope}  
  \draw [red, line width=0.3mm] (w1) -- (3.03,1.33);
    \end{scope}

  \end{tikzpicture}   
    
\caption{Different types of Siegel disks: golden-ratio (top-left),  near-Basilica (bottom-left), near-cauliflower (bottom-right), near-$1/4$-Rabbit (top-right).}   
\label{fig:Man + S disks}
\end{figure}

Let $\pp_n/\qq_n$ be the continued fraction
approximands for $\theta$, so for any $x\in \di Z$, $f^{q_n} x$ are
the closest combinatorial returns of $\orb x$ back to $x$. A {\em combinatorial
  interval} $I\equiv I_f^n(x) \subset \di Z$ of level $n$ is  the combinatorially shortest  interval bounded by $x$ and
$f^{q_n} x$. For a combinatorial interval $I\subset \di Z$, we let
$\widetilde I= 3I\supset I$ be the enlargement  of $I$ by  two attached combinatorial intervals.  

Given a combinarorial interval $I\subset \di Z$, let us consider the
family $\FF^+_3(I)$ of curves $\gamma\subset \wC\sm Z$  connecting $I$ to 
points of $\di Z\sm \widetilde I$. The {\em external modulus} $\Width^+_3(I)$ is the {\em extremal width} (i.e., the inverse of the extremal length)  of the family $\FF^+_\lambda(I)$.

\begin{UniformBounds}
\label{thm:unform bounds}
  There exists an absolute constant $\bK$ such that
  $\Width^+_3(I)\leq \bK$ for all Siegel quadratic polynomials $f=f_\theta$ of bounded type and all combinatorial intervals $ I=I_f^n(x)$.
\end{UniformBounds}


A {\em hull} $Q\subset \C$ is a compact connected full set.
The {\em  Mother Hedgehog}~\cite{Ch} for a neutral polynomial $f_\theta$
is an invariant hull containing both the fixed point $0$ and the critical
point $c_0(f)\coloneqq -e^{2\pi i \theta}/2$.

\begin{Hedgehog}
\label{thm:hedgehog}
  Any neutral quadratic polynomial $f=f_\theta$, $\theta\not\in \Q$,
  has a Mother Hedgehog $H_f\ni c_0(f)$ such that $f\colon  H_f \to H_f$ is a homeomorphism.
\end{Hedgehog}

The last theorem is a consequence of the following result:

\begin{Quasidisk}
\label{thm:quasidisk}
   There exists an absolute constant $\bK$ such that for any Siegel
   quadratic polynomial $f$ of bounded type there exists a $\bK$-quasidisk
    $\wZ_f\supset  \overline Z_f$ such that $f|\, \wZ_f$ is injective. 
 \end{Quasidisk}   






\subsection{Variations of uniform bounds} \label{ss:pseudo-Siegel bounds}  Let us formulate  a more technical statement representing the main result of the paper. Let $\pp_n/\qq_n\approx \theta$ be the best approximations of an irrational $\theta\in \R/\Z$ starting with $\qq_0=1$. Then the $f_\theta^{\qq_n}\equiv f_\theta^{\qq_n[\theta]}$ are the closest combinatorial returns enumerated so that $f_\theta^{\qq_0}=f$. Let us introduce the following concept:

\begin{defn}[Pseudo-Siegel bounds] \label{defn:ps Siegel bounds}Consider a quadratic polynomials $f_\theta\colon z\mapsto e^{2\pi i \theta} z + z^2$. We say that $f_\theta$ has \emph{pseudo-Siegel bounds} if it has a sequence of disks  
\begin{equation}
\label{eq:dfn:wZ^m:intro}
\wZ^{-1}\equiv \wZ^{-1}_\theta\supseteq \wZ^{0}_\theta\supseteq \wZ^1_\theta\supseteq \dots \supseteq H_{\theta}\ \ni 0, \sp\sp f\mid \wZ^m_\theta \text{ is injective, }\sp\sp \bigcap_{m\ge -1} \wZ^m_\theta =H_{\theta},
\end{equation}
where $H_{\theta}\equiv H_{f_\theta}$ is the Mother Hedgehog from Theorem~\ref{thm:hedgehog}, such that (see also~\S\ref{sss:uniform wZ bounds})
\begin{enumerate}[label=\text{(\Alph*)},font=\normalfont,leftmargin=*]
\item\label{intro:prop:A} $\wZ^{m}_\theta$ is a pseudo-Siegel disk almost invariant under $f_\theta^{\qq_{m+1}}$;
\item\label{intro:prop:B} $\partial \wZ^m_\theta$ has a nest of tilings $\TT\left(\wZ^m_\theta\right)$ with essentially bounded geometry independent of $m$, see Remark~\ref{rem:TT:wZ^m}. 
\end{enumerate}
If the geometric bounds of $\TT\left(\wZ^m_\theta\right)$ in~\ref{intro:prop:B} are independent of $\theta$, then we say that $f_\theta$ has \emph{uniform pseudo-Siegel bounds}.
\end{defn}

The main outcome of the paper is the establishment of uniform pseudo-Siegel bounds for bounded type rotation numbers. Since the bounds are uniform, they persist for all rotation numbers; see Theorem~\ref{thm:quasidisk} for illustration. Pseudo-Siegel disks $\wZ^m_\theta$ can be viewed as \emph{matings} of rotational and unicritical circle dynamics; see~\S\ref{sss:uniform wZ bounds}. We remark that uniform bounds in~\ref{intro:prop:B} can only be stated for almost-invariant objects because the inner geometry of $\wZ^m$ can degenerate or even disappear in the limit.

In~\cite{DL:sector bounds}, we transfer uniform pseudo-Siegel bounds to other forms of precompactness used in Siegel/Pacman Renormalization Theory and in Near-Parabolic Renormalization Theory and clarify the behavior of pseudo-Siegel disks $\wZ^m_\theta $ for $m\ge 0$ at $\theta$ of unbounded type. 

 \begin{mainthm:sector_bounds}[{\cite{DL:sector bounds}}] \label{thm:intro:sector bounds}There exists a compact Sector Renormalization operator $\RR_\sec$ for all neutral quadratic polynomials. 
 
 Moreover, Properties~\ref{into:main_cor:1} -- \ref{into:main_cor:6} stated below hold. 
\end{mainthm:sector_bounds}

 The Sector Renormalization was originally designed to study Local Dynamics around an indifferent fixed point~\cite[\S3.1]{D-Siegel}, ~\cite{Y} with various ideas going back to the theory of Ecalle-Voronin's invariants. For high type rotation numbers, Inou and Shishikura~\cite{IS} extended the sector renormalization into a semi-local compact operator with a geometric control of the critical orbit; see~\S\ref{ss:HistRetro} for more details. Often, sectorial bounds are stated in the framework of cylinder renormalization which is more canonical,~\cite{Ya-posmeas}. 
 
 Sectorial Renormalization constructed in~\cite{DL:sector bounds} comes along with the following structure:
\begin{enumerate}[label=\text{(\Roman*)},font=\normalfont,leftmargin=*]
\item \label{into:main_cor:1} (appropriately specified) pseudo-Siegel disks $\wZ^m_\theta$  depend, in the $C^0$-topology of
\[ \text{RiemannMap}_\theta\colon \wC\setminus \Disk\to \wC\setminus \intr(\wZ^m_\theta),\hspace{1cm}  \infty\mapsto \infty, \sp\sp 1\mapsto c_0(\theta),\]
 uniformly continuously in the topology coming from the compactification 
\[\theta\in \ovlTheta \ \coloneqq \ \big\{\theta= [0;a_1,a_2,\dots] \mid a_i \in \N_{\ge 1}\cup \{\infty\}\big\}\  \supset\ \Theta\coloneqq [\R\setminus \Q]/\Z.\]
\end{enumerate} 

\noindent We remark that $\ovlTheta$ is a natural compactification of the set of irrational rotation numbers $\Theta$.  Property~\ref{into:main_cor:1} implies:
 \begin{enumerate}[label=\text{(\Roman*)},font=\normalfont,leftmargin=*,start=2]
\item \label{into:main_cor:2} $\wZ^m_\theta$ exist  by continuity for all $\theta\in \ovlTheta$ and are almost invariant under $f^{\qq_{m+1}[\theta]}_\theta$ as in ~\ref{intro:prop:A} and~\ref{intro:prop:B}, where  $\left(f^{\qq_{m+1}[\theta]}_\theta\right)_{m\ge -1}$ evolves into Lavaurs-Epstein towers~\cite{La,Ep} at $\theta\in \ovlTheta\setminus \Theta$.
\item \label{into:main_cor:3} the Mother Hedgehog $H_{\theta}$ depends uniformly continuous on $\theta\in \ovlTheta$ in the Hausdorff topology. If $\theta\in \ovlTheta\setminus \Theta$, then $H_\theta$ is the Mother Hedgehog of the associated Lauvars-Epstein tower.
\item \label{into:main_cor:4}The set of closest returns \[
 \left\{\left.\left(f_\theta^{\qq_n[\theta]}\right)_{n\ge 0}\ \right| \ \theta\in \Theta \  \right\} \] is precompact up to linear conjugacy at the critical value in the following sense: for $n(i)\to  \infty$, any sequence $f^{\qq_{n(i)}[\theta(i)]}_{\theta(i)}$ has a converging subsequence $f^{\qq_{n(i_t)}[\theta(i_t)]}_{\theta(i_t)}\to F$, where $F\colon W_F\to \C$ is a $\sigma$-proper transcendental map. The convergence is uniform on compact subsets of $W_F$.
\end{enumerate} 
\noindent Transcendental limit as in~\ref{into:main_cor:4} were previously established for bounded type parameters $\Theta_\bnd$ in~\cite[Theorem~8.1]{McM1} and for the unstable manifolds associated with $\Theta_\bnd$ in~\cite[Theorem 5.1]{DLS}. The latter became a central tool in Pacman Renormalization Theory~\cite{DLS,DL}  in bypassing a technical challenge that semi-local maps under consideration are \emph{not} branched coverings. We believe that Transcendental Dynamics as in~\ref{into:main_cor:4} will allow us to extend Pacman Renormalization Theory (starting with the hyperbolicity result) to all rotation numbers; see also~\S\ref{ss:models} and Remark~\ref{rem:fill hors}.

A key ingredient for Properties~\ref{into:main_cor:1} -- \ref{into:main_cor:4} is the uniform continuity of $\wZ^{-1}[\RR^m_\sec f_\theta]$ in the spirit of Theorem~\ref{thm:quasidisk}; the key relation between $\wZ^m_{f_\theta}$ and the sector renormalization operator $\RR_\sec$:
\begin{enumerate}[label=\text{(\Roman*)},font=\normalfont,leftmargin=*,start=5]
\item \label{into:main_cor:5}  Under the renormalization change of variables representing $f_\theta \leadsto f_{n,\theta}=\RR^n_\sec f_\theta$, where $\theta\in \Theta$, the pseudo-Siegel disk $\wZ^{m+\kk(n)}_{f_\theta}$ becomes $\wZ^{m}_{f_{n,\theta}}$ -- a pseudo-Siegel disk almost invariant under $f_{n,\theta}^{\qq_{m+1}}$. Here $\kk(n,\theta)\ge n$ refers to the count of ``non-negligible'' (with respect to $\RR_\sec$) renormalization levels. Moreover, the above Properties~\ref{into:main_cor:1} -- \ref{into:main_cor:4} with appropriate adjustments hold for the sector renormalizations $\{f_{n,\theta}\}$ of neutral quadratic polynomials.
\end{enumerate}

The sectorial renormalization change of variables in Theorem~\ref{thm:intro:sector bounds} is expanding in the angular direction. Since the ``full'' non-escaping set of sectororial changes of variables is $H_\theta$, we deduce (compare with~\cite{RRRS}) that

\begin{enumerate}[label=\text{(\Roman*)},font=\normalfont,leftmargin=*,start=6]
\item \label{into:main_cor:6} For every $\theta\in{ (\R\setminus \Q)/\Z}$, there is a set $Q_\theta\subseteq \R/\Z$ invariant under the rotation $\phi \mapsto \phi+\theta$ such that the Mother Hedgehog $H_{\theta}$ is star-like (or a bouquet of curves): \[H_{f_\theta}= \bigcup_{\phi\in Q_\theta} I_\phi\sp\sp\sp \text{ with }\sp \sp\sp f(I_\phi)=I_{\phi+\theta},\] where $I_\phi$ is a closed simple arc emerging from the $\alpha$-fixed point. The arcs $I_\phi$ are pairwise disjoint away from the $\alpha$. If $Q_\theta\not= \R/\Z$, then $\alpha(f)$ is a Cremer point. If $Q_\theta=\R/\Z$, then $Z_f\coloneqq\intr H_f$ is the Siegel disk of $f$.
\end{enumerate}

\noindent For high-type parameters $\theta$, there are much finer models of $H_{f_\theta}$ depending on the arithmetic properties of $\theta$, see~\S\ref{ss:HistRetro}. We expect that these models can be now justified for all $\theta$, see~\S\ref{ss:models}.

In \cite{DL:psi bounds}, we establish pseudo-Siegel bounds for ``$\psi^\bullet$ quadratic-like maps'' so that the bounds depend (with an explicit estimate) only on the external modulus. Here, we state a simplified version for quadratic-like maps:

\begin{mainthm:psi_ql} [\cite{DL:psi bounds}, extension of Theorem~\ref{thm:unform bounds}] \label{thm:ql verions} Let $g\colon X\to Y$ be a quadratic-like map with bounded-type Siegel disk $Z_g$ at its $\alpha$-fixed point. Denote by \[K_g=\Width_\bullet(g)\coloneqq \Width\big(Y\setminus \overline Z_g\big)\] the degeneration of $g$ around its Siegel disk. Then the degeneration $K_g$ is equidistributed among combinatorial intervals as follows. If $I=[x,g^{\qq_{n+1}[g]}x]\subset \partial Z_g$ is a level $n$ combinatorial interval, then 
\begin{equation}
\label{eq:thm:ql verions}\Width^+_3(I) =O\left(\frac{K_g}{\qq_{n+1}}+1\right),
\end{equation}
where $\Width^+_3(I)$ measures the width of curves in $Y\setminus Z_g$ from $I$ to $\partial Y\sqcup \overline Z_g\setminus (3I)$.

In particular, if $K_g=0(1)$ is bounded, then~\eqref{eq:thm:ql verions} takes form $\Width^+_3(I) =O(1)$ as in Theorem~\ref{thm:unform bounds}.
\end{mainthm:psi_ql}

With appropriate modifications (compare with~\cite{DL:psi bounds} and~\cite{DL:HypComp}), Theorem~\ref{thm:intro:sector bounds} together with Properties~\ref{into:main_cor:1} -- \ref{into:main_cor:6} hold for $\psi^\bullet$-quadratic-like maps and transfer various results from quadratic-polynomials to various families of rational maps. (Sector Renormalization can also  substitute the Douady-Hubbard Straightening Theorem whenever it is unavailable.)

\subsection{Historical retrospective}
\label{ss:HistRetro}
As we have already mentioned, the local theory for neutral holomorphic germs, quadratic polynomials, and circle diffeomorphisms was thoroughly developed by Arnold, Herman, Yoccoz and Perez-Marco in the second half of the last century.
In particular, Yoccoz showed that the {\em Bruno's linearization condition}
is sharp for germs and quadratic polynomials \cite{Y}, while Perez-Marco introduced a topological object,
a {\em hedgehog} that greatly clarified the local structure of non-linearizable
Cremer maps~\cite{PM}.

Another line of thought was related to the {\em quasiconformal surgery}  machinery
introduced to the field by Sullivan, Douady and Hubbard in the early 1980s.
Namely, the {\em Douady-Ghys surgery} (see \cite{D-Siegel}) led  to a precise topological model for the Julia set of a neutral Siegel quadratic
polynomial $f_\theta: z\mapsto e^{2\pi i \theta} z+ z^2$ with rotation number $\theta$ of bounded type.  In particular, it allowed Petersen to justify local connectivity of
the corresponding Julia set \cite{Pe}. Renormalization Theory of Siegel  maps first appeared in the work by physicists (see \cite{Wi, MN,MP}) and then was mathematically developed in~\cite{McM1,Ya-posmeas, DLS, DL} and others. The theory deals with universal small-scale geometry of Siegel disks and near-Siegel families.


The Douady-Ghys surgery is based upon {\em real a priori bounds} for
critical circle maps proved by Swiatek \cite{Sw} and Herman \cite{H}.
 ({\em A priori bounds} mean a uniform geometric control of a
 system(s) under consideration in all dynamical scales.) 
 The Swiatek-Herman  bounds were promoted to {\em compex a priori bounds}
 by de Faria for bounded combinatorics \cite{dF} and by Yampolsky in general  
\cite{Ya}. 
 
Yet another direction was the theory of {\em parabolic implosion} designed by Doaudy and Lavaurs in the 1980s. Parabolic implosion (iterated twice) was used in the mid-1990s by Shishikura~\cite{Sh} in showing that the Hausdorff dimension of the boundary of the Mandelbrot set is equal to two.  A breakthrough in the parabolic renormalization by Inou and Shishikura in the mid-2000s provided us with {\em uniform a priori bounds} for rotation numbers $\theta$ of {\em high type} \cite{IS}; see also Cheritat \cite{Che:nearpar} for a computer-free argument. In short, \emph{a priori} bounds of~\cite{IS} allow us to iterate, in a controlled way, parabolic implosion infinitely many times.  Besides applications mentioned above (to producing Julia sets of
positive area \cite{BC,AL-posmeas} and to the description of the Mother
Hedgehogs \cite{ShY,Ch2}), the
Inou-Shishikura bounds were instrumental in the
proof of the \emph{Marmi-Moussa-Yoccoz Conjecture} for rotation numbers of high type  \cite{ChC} and in the description of the \emph{measurable dynamics} on the Julia sets of positive measure in the Inou-Shishikura class \cite{Ch1,ACh}. It also provided an opening to a partial description of the global topological structure of Cremer Julia sets, which have been viewed as most mysterious objects in holomorphic dynamics~\cite{BBCO}.

\subsection{Applications and further perspective}
\label{ss:models} With our {\em a priori} bounds in hands,
the above theory (see~\S\ref{ss:HistRetro}) for {\em  high type} (near-parabolic) and for {\emph bounded type} (near-Siegel) rotation numbers seems to be readily  extendable to quadratic polynomials with  {\em arbitrary}
rotation numbers. (The first steps have been implemented in this paper and in \cite{DL:sector bounds}, see~\S\ref{ss:pseudo-Siegel bounds} for a summary.) In particular, it would yield:

\noindent $\bullet$
{\em Siegel disks of quadratic polynomials are Jordan disks.} 

\noindent $\bullet$
{\em The Mother Hedgehogs from Theorem~\ref{thm:hedgehog} (see also~\ref{into:main_cor:6} in \S\ref{ss:pseudo-Siegel bounds}) satisfy the Cheraghi Trichotomy into Jordan disks, hairy
  Jordan disks, or  Cantor bouquets of curves depending on whether
the rotation number is Herman, Bruno but not Herman, or not Bruno.}

Another possible application of our {\em a priori bounds}
is a construction of a  {\em hyperbolic full renormalization horseshoe} for all
rotation numbers simultaneously containing the pacman renormalization periodic points \cite{McM3,Ya-posmeas,DLS} and the Inou-Shishikura horseshoe of high type \cite{IS}. Such a structure has a potential to reformulate Near-Neutral Dynamics as \emph{Transcendental Dynamics on the unstable manifolds} of the horseshoe; see also Item~\ref{into:main_cor:4} and the followup discussion in~\S\ref{ss:pseudo-Siegel bounds}. We refer to~\cite[\S 1.4]{DL} for a historical account on the interplay between Neutral and Transcendental Dynamics.

\begin{rem}[Full  Horseshoes] \label{rem:fill hors}A full renormalization horseshoe had been previously constructed for real-symmetric quadratic-like maps in~\cite{L-regul vs stoch}, for analytic unicritical circle maps in~\cite{Ya:hors}, and for real analytic unimodular maps in~\cite{ALM}. In every case, a horseshoe structure had strong consequences.
\end{rem}

Applications beyond quadratic family of the machinery designed in this paper
include:

\noindent $\bullet$
 An advance in {\em McMullen's Conjecture} (going back to the 1990s) in~\cite{DL:HypComp}: Sierpinski hyperbolic components of disjoint type (i.e.,when  all critical points are simple and attracted by different periodic cycles) are bounded in the moduli space of all rational functions.

\noindent $\bullet$
  Construction of bounded-type {\em Herman curves} by collapsing bounded-type Herman rings~\cite{Lim}. Herman curves are invariant rotational curves in the Julia set with chaotic dynamics on both sides.

And last but not least, uniform  bounds for the neutral renormalization give a control of the
satellite quadratic-like renormalization that are relevant to the
MLC Conjecture:

\subsection{Connections to the MLC conjecture}\label{ss:MLC}
 The MLC Conjecture on the local connectivity of the Mandelbrot set
$\Mandel$, put forward by Douady and Hubbard in the mid 1980s \cite{DH:Orsay}, is one of the central questions in Dynamical Systems guiding the development of contemporary Holomorphic Dynamics.  The classical Fatou Conjecture on the density of hyperbolicity in the complex quadratic family would be just one of many important consequences.

In a parallel development, Renormalization and
Universality  phenomena were discovered in the mid-1970s by Feigenbaum (the parameter plane) and independently by Coullet and Tresser (the dynamical plane) for the period-doubling real renormalization. It is intimately related to the renormalization phenomenon in the Quantum Field Theory and Statistical Mechanics; its discovery opened up a new universality paradigm in Dynamical Systems.  Quadratic-Like (``ql'')  theory of Douady and Hubbard \cite{DH:ql} laid down a conceptual frame for QL-Renormalization in Holomorphic Dynamics.  The exploration of this renormalization theory was initiated in the foundational work by Sullivan in the late 1980s - early 1990s. In particular, he established ql \emph{a priori} bounds for real quadratic polynomials of bounded type~\cite{S:Berkeley,S}. The theory was further developed by McMullen \cite{McM2,McM3}, and completed (in the real symmetric case) by the second author in the second half of the 1990s \cite{L-feigen}.

Around 1990, Yoccoz proved MLC at any parameter $c\in \Mandel$ that is
not infinitely ql renormalizable \cite{Hub:Yoccoz, Mil},
thus linking the problem tightly to the QL Renormalization Theory. 
In the following years the result was extended, by the second author, to a class of infinitely renormalizable parameters (of ``high type")  by means of ``generalized ql renormalization" that controls intermediate scales between ql renorm levels \cite{L-acta}.
It led to important applications to Real Dynamics culminating in the \emph{Regular or Stochastic Dichotomy} in the real quadratic family \cite{L-aster,L-regul vs stoch}. A key ingredient of the proof was a construction of the full renormalization horseshoe, see Remark~\ref{rem:fill hors}.

Consequently, the following metaprinciple was accepted:
\begin{enumerate}[label=\text{(\Roman*)},font=\normalfont,leftmargin=*]
\item\label{MLC:Step1} The MLC should follow from an appropriate form of \emph{a priori} bounds.
\end{enumerate}
One expects that \emph{a priori} bounds for \emph{all} parameters in a combinatorial class $\mathcal C\subset \mathcal M$ should imply that all maps in $\mathcal C$ are qc conjugate. Since the boundary $\partial \MM$ is qc rigid, this will imply that $\partial C=\mathcal C=\{c\}$ is a singleton, which implies the MLC at $c$.

In the primitive case, it is anticipated that
\begin{enumerate}[label=\text{(\Roman*)},font=\normalfont,leftmargin=*,start=2]
\item \label{MLC:Step2} Quadratic-Like bounds should hold for all infinitely ql-renormalizable parameters of primitive type.
\end{enumerate}
In the late 2000s, the Near-Degenerate Regime developed in~\cite{K, KL, KL2, molecules} led to a substantial progress in Conjecture~\ref{MLC:Step2}: the MLC was established \
in the {\em anti-molecule} case,. i.e.,
for combinatorics that stay ``$\varepsilon$-away'' from the main molecule of $\Mandel$.

In the satellite case, the situation is quite different:
\begin{enumerate}[label=\text{(\Roman*)},font=\normalfont,leftmargin=*,start=3]
\item \label{MLC:Step3} Quadratic-like bounds generally fail for the ql-Renormalization of satellite type~\cite[\S2]{Mil}: namely, if the Julia set is not locally connected at the critical point $c_0$ (the ``non-JLC phenomenon''), then small Julia sets do not shrink around $c_0$ and the ql-bounds fail.
\end{enumerate}
As the satellite copies of $\Mandel$ are attached to the main hyperbolic component $\HH$ of $\Mandel$, the {\em Satellite QL-Renormalization} evolves on $\partial \HH$ into the {\em Neutral Renormalization}, which is a subject in its own right with rich history, see~\S\ref{ss:HistRetro}. 

 As we have already mentioned, a landmark in the Near-Neutral Renormalization occurred when the Inou-Shishikura sectorial \emph{a priori} bounds~\cite{IS} were established in the Near-Parabolic Regime. The sectorial bounds are compatible with the non-JLC phenomenon and convincingly explain the compatibility of the MLC with Obstacle~\ref{MLC:Step3}; see~\cite{CS,Ch3}. Taking into account a parallel development in the Near-Siegel Renormalization~\cite{DLS, DL}, we conjecture:
 
  \begin{enumerate}[label=\text{(\Roman*)},font=\normalfont,leftmargin=*,start=4]
\item \label{MLC:Step4} Sectorial IS-like \emph{a priori} should hold for all infinitely ql-renormalizable parameters of satellite type and are sufficient for Task~\ref{MLC:Step1} for satellite parameters.
\end{enumerate}
Namely, if sectorial bounds are satisfied for \emph{all} maps in a combinatorial class $\CC\subset \MM$, then a variant of the Pullback Argument similar to~\cite[Corollary 4.7]{DLS} will produce qc conjugacies between \emph{all} maps in $\CC$.

 Note that the Near-Parabolic Theory is perturbative and hence is not (easily) amenable for the general rotational combinatorics. It also seems that the near-degenerate regime cannot be directly applied to the sector renormalization because the involved maps are \emph{not} branched coverings. 
These issues are addressed in  our current paper whose outcome can be roughly summarized as follows:

\begin{enumerate}
\item[\setword{$\big(\wZ\big)$}{Item:wZ}] Almost-invariant pseudo-Siegel disks $\wZ^m$ ``hide'' non-JLC issues. Near-degenerate tools are used to establish {\em pseudo-Siegel bounds}  for $\partial \wZ^m$; see~\S\ref{ss:pseudo-Siegel bounds}. They  yield {\em Sectorial bounds} by analyzing in detail the inner geometry of $\wZ^m$; see Theorem~\ref{thm:intro:sector bounds}.
\end{enumerate}
We believe that the theory of almost-invariant pseudo-Siegel disks can be developed for infinitely renormalizable quadratic polynomials of satellite type by enveloping their postcritical sets into  regularized ``pseudo-Siegel flowers''. This should imply the Satellite Case of the MLC conjecture. We also remark that the MLC at bounded type parameters was recently established in~\cite{DL:Feigen} by extending Kahn's argument~\cite{K}. This does not leave much room for the MLC to fail.

Let us conclude with our current formulation of the MLC Problem:
\begin{enumerate}[label=\text{(\Roman*)},font=\normalfont,leftmargin=*,start=5]
\item \label{MLC:Step5} Conjecturally, pseudo-Siegel bounds as in Item~\ref{Item:wZ} can be established for the Satellite Renormalization and then interpolated with primitive ql bounds established in~\cite{molecules}. This would complete Task~\ref{MLC:Step1} and confirm the full MLC conjecture.
\end{enumerate}

A popular version of the MLC story has been recently presented in a Quanta Magazine article \href{https://www.quantamagazine.org/the-quest-to-decode-the-mandelbrot-set-maths-famed-fractal-20240126/}{``The Quest to Decode the Mandelbrot Set, Math’s Famed Fractal''} \cite{Quest}.

\subsection{Quick outline of the proof}\label{ss:outline} Let us first give an informal description of Siegel disks degenerations. Let us denote by 
\[    \theta = [0;a_1, a_2,\dots, a_n ,a_{n+1}, \dots ],\sp\sp\sp\ a_n \le M_\theta.
\] the rotation number of $f$. Its Siegel disk $Z_f$ is a $K_\theta$-quasidisk by the Douady-Ghys surgery. Assume that $a_1=1$. If we start increasing $a_{n+1}$ with the remaining $a_i$ fixed, then $Z_f$ will be developing parabolic fjords towards the $\alpha$-fixed point on the renormalization scale $n$, see Figure~\ref{fig:fjords}. These fjords can approach $\alpha$ arbitrary close. Cremer points are obtained by developing fjords in many scales so that in the limit the $\alpha$-fixed point is not an interior point of the filled Julia set. In the paper, we will justify that this ``star-like'' degeneration is the only possible degeneration of bounded type Siegel disks. We will work in the near-degenerate regime where wide rectangles impose non-crossing constrains on the geometry. (One may call it ``$1.5$-dimensional real dynamics''.)

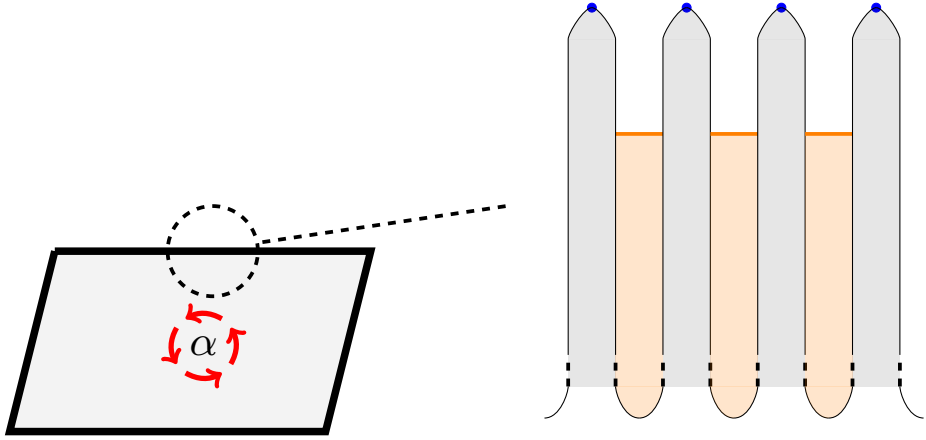
\begin{figure}
\begin{tikzpicture}[scale=0.6, every node/.style={scale=0.6}]

\draw[line width =0.1 cm,fill, fill opacity =0.05] (0,0) -- (7,0) -- (6,-4)--(-1,-4)--(0,0)--(0.,0);

\draw[dashed,line width =0.05 cm] (3.5,0) circle (1cm)
(4.6,0.2)  -- (10,1) ;

\begin{scope}[shift={(-5.7,3.4)}]
\draw[shift={(8,-6.5)},scale=2, line width=0.7mm,red] 
  (0.8,0.3) edge[->,bend right] (0.8,0.7)
 (0.2,0.7) edge[->,bend right] (0.2,0.3)
 (0.3,0.2) edge[->,bend right] (0.7,0.2)
 (0.7,0.8) edge[->,bend right] (0.3,0.8); 
 
 \node[shift={(7.5,-7.)},scale=3,] at (1.5,1.5) {\small $\alpha$}; 
\end{scope}

\begin{scope}[shift={(14,-3)},scale=0.7,xscale =0.5]

\coordinate (dd) at (3.5,7);

\begin{scope}
\filldraw[blue,xscale =2] (0,12) circle (0.14 cm);

\draw[orange, line width=0.5mm,shift={(-6,0)}]  (1.5,8) --(4.5,8) ;
\draw[opacity=0, fill=orange, fill opacity =0.2,shift={(-6,0)}] 
(1.5,8) --(4.5,8)--(4.5,0)--(1.5,0);

\draw[opacity=0, fill=orange, fill opacity =0.2] 
(-4.5,0) 
 .. controls  (-4.5, -1+0.8) and (-4,-1) .. 
(-3,-1) 
 .. controls (-2,-1) and (-1.5, -1+0.8) .. 
 (-1.5,0);

\draw (-3,-1)  .. controls (-2,-1) and (-1.5, -1+0.8) .. 
 (-1.5,0);
\draw[dashed,line width=0.5mm]  (-1.5,0)--(-1.5,1);
  
\draw (-1.5,1)  .. controls (-1.5,1) and (-1.5, 3) .. 
 (-1.5,11) 
  .. controls (-1.5,11.2) and (-0.5, 12) .. 
 (0,12);

\draw[xscale=-1] (-3,-1) 
 .. controls (-2,-1) and (-1.5, -1+0.8) .. 
 (-1.5,0);
\draw[xscale=-1,dashed,line width=0.5mm]  (-1.5,0)--(-1.5,1);
  
\draw [xscale=-1](-1.5,1)
  .. controls (-1.5,1) and (-1.5, 3) .. 
 (-1.5,11) 
  .. controls (-1.5,11.2) and (-0.5, 12) .. 
 (0,12);
 \end{scope}
 
 \begin{scope}[shift={(6,0)}]
 \filldraw[blue,xscale =2] (0,12) circle (0.14 cm);

\draw[orange, line width=0.5mm,shift={(-6,0)}]  (1.5,8) --(4.5,8) ;
\draw[opacity=0, fill=orange, fill opacity =0.2,shift={(-6,0)}] 
(1.5,8) --(4.5,8)--(4.5,0)--(1.5,0);

\draw[opacity=0, fill=orange, fill opacity =0.2] 
(-4.5,0) 
 .. controls  (-4.5, -1+0.8) and (-4,-1) .. 
(-3,-1) 
 .. controls (-2,-1) and (-1.5, -1+0.8) .. 
 (-1.5,0);

\draw (-3,-1) 
 .. controls (-2,-1) and (-1.5, -1+0.8) .. 
 (-1.5,0);
\draw[dashed,line width=0.5mm]  (-1.5,0)--(-1.5,1);
  
\draw (-1.5,1)  .. controls (-1.5,1) and (-1.5, 3) .. 
 (-1.5,11) 
  .. controls (-1.5,11.2) and (-0.5, 12) .. 
 (0,12);

\draw[xscale=-1] (-3,-1) 
 .. controls (-2,-1) and (-1.5, -1+0.8) .. 
 (-1.5,0);
\draw[xscale=-1,dashed,line width=0.5mm]  (-1.5,0)--(-1.5,1);
  
\draw [xscale=-1](-1.5,1)
  .. controls (-1.5,1) and (-1.5, 3) .. 
 (-1.5,11) 
  .. controls (-1.5,11.2) and (-0.5, 12) .. 
 (0,12);
 \end{scope}

\begin{scope}[shift={(-6,0)}]
\filldraw[blue,xscale =2] (0,12) circle (0.14 cm);

\draw (-3,-1) 
 .. controls (-2,-1) and (-1.5, -1+0.8) .. 
 (-1.5,0);
\draw[dashed,line width=0.5mm]  (-1.5,0)--(-1.5,1);
  
\draw (-1.5,1)  .. controls (-1.5,1) and (-1.5, 3) .. 
 (-1.5,11) 
  .. controls (-1.5,11.2) and (-0.5, 12) .. 
 (0,12);

\draw[xscale=-1] (-3,-1) 
 .. controls (-2,-1) and (-1.5, -1+0.8) .. 
 (-1.5,0);
\draw[xscale=-1,dashed,line width=0.5mm]  (-1.5,0)--(-1.5,1);
  
\draw [xscale=-1](-1.5,1)
  .. controls (-1.5,1) and (-1.5, 3) .. 
 (-1.5,11) 
  .. controls (-1.5,11.2) and (-0.5, 12) .. 
 (0,12);
 \end{scope} 
 
 \begin{scope}[shift={(12,0)}]
 \filldraw[blue,xscale =2] (0,12) circle (0.14 cm);

\draw[orange, line width=0.5mm,shift={(-6,0)}]  (1.5,8) --(4.5,8) ;
\draw[opacity=0, fill=orange, fill opacity =0.2,shift={(-6,0)}] 
(1.5,8) --(4.5,8)--(4.5,0)--(1.5,0);

\draw[opacity=0, fill=orange, fill opacity =0.2] 
(-4.5,0) 
 .. controls  (-4.5, -1+0.8) and (-4,-1) .. 
(-3,-1) 
 .. controls (-2,-1) and (-1.5, -1+0.8) .. 
 (-1.5,0); 
 
\draw (-3,-1) 
 .. controls (-2,-1) and (-1.5, -1+0.8) .. 
 (-1.5,0);
\draw[dashed,line width=0.5mm]  (-1.5,0)--(-1.5,1);
  
\draw[] (-1.5,1)  .. controls (-1.5,1) and (-1.5, 3) .. 
 (-1.5,11) 
  .. controls (-1.5,11.2) and (-0.5, 12) .. 
 (0,12);

\draw[xscale=-1] (-3,-1) 
 .. controls (-2,-1) and (-1.5, -1+0.8) .. 
 (-1.5,0);
\draw[xscale=-1,dashed,line width=0.5mm]  (-1.5,0)--(-1.5,1);
  
\draw [xscale=-1](-1.5,1)
  .. controls (-1.5,1) and (-1.5, 3) .. 
 (-1.5,11) 
  .. controls (-1.5,11.2) and (-0.5, 12) .. 
 (0,12);
 \end{scope}

\draw[opacity=0,fill ,fill opacity=0.1] 
 (-1.5,11) 
  .. controls (-1.5,11.2) and (-0.5, 12) .. 
 (0,12)
  .. controls (0.5, 12) and (1.5,11.2)   ..
  (1.5,11)  ;

\draw[shift={(-6,0)} ,opacity=0,fill ,fill opacity=0.1] 
 (-1.5,11) 
  .. controls (-1.5,11.2) and (-0.5, 12) .. 
 (0,12)
  .. controls (0.5, 12) and (1.5,11.2)   ..
  (1.5,11)  ;
  
  \draw[shift={(6,0)} ,opacity=0,fill ,fill opacity=0.1] 
 (-1.5,11) 
  .. controls (-1.5,11.2) and (-0.5, 12) .. 
 (0,12)
  .. controls (0.5, 12) and (1.5,11.2)   ..
  (1.5,11)  ;
  \draw[shift={(12,0)} ,opacity=0,fill ,fill opacity=0.1] 
 (-1.5,11) 
  .. controls (-1.5,11.2) and (-0.5, 12) .. 
 (0,12)
  .. controls (0.5, 12) and (1.5,11.2)   ..
  (1.5,11) ;
  
 \draw[opacity=0,fill ,fill opacity=0.1,shift={(-9,0)} ] (1.5,0)-- (1.5,11)--(4.5,11)--(4.5,0);

  \draw[opacity=0,fill ,fill opacity=0.1,shift={(-3,0)} ] (1.5,0)-- (1.5,11)--(4.5,11)--(4.5,0);

  \draw[opacity=0,fill ,fill opacity=0.1,shift={(3,0)} ] (1.5,0)-- (1.5,11)--(4.5,11)--(4.5,0);
  \draw[opacity=0,fill ,fill opacity=0.1,shift={(9,0)} ] (1.5,0)-- (1.5,11)--(4.5,11)--(4.5,0);


  \end{scope}


\end{tikzpicture}

\caption{For a rotation number $[0;1,\dots, 1,a_{n+1},1,\dots]$, the Siegel disk develops parabolic fjords on scale $n$ towards the $\alpha$-fixed point as $a_{n+1}\to \infty$.  The critical points of $f^{\qq_{n+1}}$ (blue) are beacons (on the top of) parabolic peninsulas. After adding appropriately truncated parabolic fjords (orange) to the Siegel disk, the resulting pseudo-Siegel disk is ``quasi-invariant'' up to $\qq_{n+1}$ iterates.}
\label{fig:fjords}
\end{figure}

In Section~\ref{s:par fjords} we will justify (in the near-degenerate regime) that parabolic fjords have translational geometry -- reminiscent to the Fatou coordinates for near-parabolic maps. Combined with Calibration Lemma~\ref{lmm:CalibrLmm}, this will imply that the critical points of $f^{\qq_{n+1}}$ are ``beacons'' (i.e., on the top) of the level-$n$ parabolic peninsulas.  

A pseudo-Siegel disk $\wZ^m$ is constructed by adding to $\overline Z$ all truncated parabolic fjords on scales $\ge m$, see Figure~\ref{fig:fjords}. We will show that $\wZ^m$ is quasi-invariant (in particular, injective) for all $f^i$ with $i\le \qq_{m+1}$. Moreover, the pseudo-Siegel disk $\wZ_f=\wZ^{-1}$ is uniformly qc (Theorem~\ref{thm:quasidisk}).

  Theorem~\ref{thm:beau:part U: c quasi line} establishes certain beau bounds to control the inner geometry of $\wZ^m$. The bounds imply, in particular, that errors do not accumulate under the regularization \[\overline Z \leadsto
\dots \leadsto \wZ^{m+2}\leadsto \wZ^{m+1} \leadsto \wZ^{m}\leadsto \dots\leadsto \wZ^{-1}=\wZ_f\]
Furthermore, the outer geometry of the Siegel disk is almost unaffected under $\overline Z  \leadsto \wZ^m$ -- most of the outer harmonic measure of $\overline Z$ sits on the tops of peninsulas, see~\S\ref{ss:WellGrounded}. In other words, a random walk in $\wC\setminus \overline Z$ starting at $\infty$ is unlikely to enter any truncated parabolic fjord of any level if the truncation is chosen sufficiently deep.

To control the outer geometry and its interaction with the inner geometry of $\wZ^m$, we will introduce the following degeneration parameters. For an interval $I\subset \partial \wZ^m$, we denote by $\lambda I\subset \partial \wZ^m$ the $\lambda$-rescaling of $I$ in the linearized coordinates of $\partial Z$. Then $\Width_\lambda(I)$ is the width of the family of curves connecting $I$ to $\partial \wZ^m \setminus (\lambda I)$. Similarly, $\Width^+_\lambda(I)$ is the width of the outer family of curves (i,e., in $\wC\setminus Z$) connecting $I$ to $\partial \wZ^m \setminus (\lambda I)$. If $\Width_\lambda(I) = K\gg 1$, then iterating Snake Lemma~\ref{simplmm:SnLmm:wZ}, we can eventually find $J$ with $\Width^+_{\lambda}(J)\succeq K$ and $|J|\le |I|$, where ``$|\ |$'' denotes the length in the linearized coordinates of $\partial Z$.

If for a combinatorial interval $I\subset \partial \wZ^m$ on the renormalization level $m$ we have $\Width^+_\lambda(I)=K\ge \bK$ for an absolute threshold $\bK\gg 1$, then the Covering Lemma allows us to spread the associated degeneration around $\partial \wZ^m$. Then Snake-Lair Lemma~\ref{lem:Hive Lemma} finds a bigger degeneration: there will exist a combinatorial interval $J_2\subset \partial \wZ^n$ for some $n>m$ such that $\Width^+_\lambda(J_2)\ge 2K$ and $|J_2|\le |J|$. Proceeding by induction, we obtain a sequence of shrinking intervals $J_n$ such that $\Width^+(J_n) \ge 2^nK$ eventually contradicting that $\overline Z$ is a $K_\theta$-quasidisk. This establishes Theorem~\ref{thm:unform bounds}. 

Theorem~\ref{thm:quasidisk} is obtained by justifying a universal combinatorial bound for the truncation depth. It allows us to control Hausdorff limits of $\overline Z_{f_\theta}$ as $\theta$ approaches any irrational number; the resulting limits are Mother Hedgehogs (Theorem~\ref{thm:hedgehog}).

The main part of the paper is organized as follows. In Part~\ref{part:near rotat} we will show that near-rotation domains and parabolic fjords are coarse qs-equivalent to rotations of the unit disk -- see Proposition~\ref{prop:part U: c quasi line} and Theorems \ref{thm:beau:part U: c quasi line},~\ref{thm:par fjords}. In Part~\ref{part:psSiegDisk}, we will introduce pseudo-Siegel disks, show that they quasi-behave as uniformly bounded Siegel disks, establish Snake Lemma~\ref{simplmm:SnLmm:wZ}. Corollary~\ref{cor:regul} states that either the regularization $\wZ^{m+1}\leadsto \wZ^{m}$ is possible or there is a much bigger degeneration on some scale $\ge m+1$. In Part~\ref{part:CovCalibr} we will prove Theorem~\ref{thm:SpreadingAround} (application of the Covering Lemma followed by Snake-Lair Lemma~\ref{lem:Hive Lemma}) and Calibration Lemma~\ref{lmm:CalibrLmm}; they say that if the outer geometry of $\wZ^{m+1}$ is sufficiently degenerate on scale ``between $m$ and $m+1$'', then this degeneration can be ``calibrated'' on level $m+1$ combinatorial interval. The main theorems are proven in Part~\ref{part:conclus}. For the readers convenience, in the beginning of each section we provide its detailed outline.


\begin{rem}
\label{remark: going deep}
Let us note that our inductive argument goes in the opposite direction compared with the primitive quadratic-like renormalization theory~\cite{K,KL2}. Indeed, we show that high degeneration on a certain level implies even higher degeneration on a deeper rather than shallower level.

The idea that degeneration can also be accounted (with amplification) on a deeper level helped us to develop an extension to Kahn's argument \cite{K} and establish the MLC at the classical Feigenbaum parameter \cite{DL:Feigen}.
\end{rem}

\subsection{Main notations and conventions}
We state here our main notations and conventions; see~\S\ref{s:PrepNot} for more details. We denote by
 \begin{itemize}
 \item $\Theta_\bnd=\{\theta=[0;a_1,a_2,\dots] \mid a_i \le M_\theta\}$ the set of bounded (irrational) rotation numbers;
 
 \item $\ee(\theta)\coloneqq e^{2\pi i \theta}$;

\item $Z$ the Siegel disk of $f=f_\theta$ for $\theta\in \Theta_\bnd, \sp 0<\theta<1$;

\item $c_0, c_1\in  \partial Z$ the critical point and critical value of $f$; 

\item more generally: $c_n\coloneqq (f\mid \partial Z)^n (c_0)$;

 
\item $h\colon (Z,\alpha)\to (\Disk,0)$ the conformal conjugacy between $f$ and $z\mapsto e(\theta) z$;

\item $|I|\coloneqq |h(I)|_{\R/\Z}$ the (combinatorial) Euclidean length of an interval $I\subset \partial Z$;

\item $\theta_n\in (-1/2,1/2)$ the rotation number of $f^{\qq_n}\mid Z$ and $\ell_n\coloneqq |\theta_n|$ the length of a  \emph{level $n$ combinatorial} interval $[x,f^{\qq_n}(x)]\subset \partial Z$, where $\pp_n/\qq_n\approx \theta$ are best approximations;

\item $x \boxplus \nu \coloneqq h^{-1}(h(x)+\nu)$, where $x\in \partial Z$, $\nu\in \R$, and $h(x)+\nu \in \R/\Z\simeq \partial \Disk$;

\item $\Width_\lambda(I)\coloneqq \Width(\Fam_\lambda(I))$ and $\Width^+_\lambda(I)\coloneqq \Width(\Fam^+_\lambda(I))$ the width of the full and outer families measuring degeneration of $\overline Z$ at an interval $I\subset \partial Z$, see~\S\ref{ss:FullOutFam};

\item ``$<$'' denotes a clockwise orientation on $\partial Z$; 

\item for an interval $I\subset \partial Z$ and $x,y\in I$ we write $x<y$ rel $I$ if $x$ is on the left of $y$ in $I$, i.e.~$\partial Z\setminus I , x,y$ are clockwise oriented;

\item for a pair of disjoint intervals $I,J\subset \partial Z$ we define $\lfloor I, J \rfloor\coloneqq I\cup L\cup J$, where $L$ is the complementary interval between $I,J$ so that $I<L <J$; in most cases $L$ will be the shortest interval between $I$ and $J$;

 \item  $x\oplus y =(x^{-1}+y^{-1})^{-1},\sp x,y>0$ the harmonic sum -- see the Gr\"otzsch inequality~\eqref{eq:Grot};

 \item $\gamma \#\beta $ the concatenation of curves $\gamma$ and $\beta$.
 \end{itemize}
 By default, curves are considered up reparametrization and are usually parameterized by the unit interval $[0,1]$. We say a curve $\gamma\colon (0,1)\to \wC\setminus (A\cup B)$ \emph{connects} $A$ and $B$ if \[\lim_{\tau\to 0}\gamma(\tau)=\gamma(0)\in A\sp\sp\text{ and }\sp\sp \lim_{\tau\to 0}\gamma(1-\tau)=\gamma(1)\in B.\]

A \emph{Jordan} disk is a closed or open topological disk bounded by a Jordan curve.

We write \[f(x)\preceq g(x) \sp\sp \text {if}\sp  f(x)\le C g(x),\sp\sp f(x),g(x)>0\] 
for an absolute constant $C>0$. Similarly:
\[f(x)\preceq_\kappa g(x) \sp\sp \text {if}\sp  f(x)\le C_\kappa g(x),\sp\sp f(x),g(x)>0\] 
for a constant $C_\kappa>0$ depending on $\kappa$. The big $O(\sp )$ notation describes at most linear dependence on the argument: $O(f(x))\preceq f(x)$. Similarly, $O_\kappa(f(x))\preceq_\kappa f(x)$ is at most linear dependence on $f(x)$ with a constant depending on $\kappa$.

We will write ``$A\gg B$'' to assume that $A$ is sufficiently bigger than $B$. Similarly, ``$A\gg_\kappa B$'' means that $A$ is sufficiently big than $B$ depending on a parameter $\kappa$.

We will often need to truncate laminations $\Fam,\FamG,\FamH$ by removing buffers of certain sizes. We will use upper indices  ``$\new,\New,\NEW$'' to denote new truncated families with the convention
\[\Fam \supset \Fam^\new \supset \Fam^\New \supset \Fam^\NEW.\]

Slightly abusing notations, we will often identify a lamination with its support.

A \emph{vertical} curve of a rectangle $\RR$ is a curve that becomes vertical after conformal identification $\RR$ with a standard Euclidean rectangle.

{\bf Acknowledgment.} The first author was partially supported by Simons Foundation grant of the IMS, the ERC grant ``HOLOGRAM,'' and the NSF grant DMS $2055532$. The second author has been partly supported by the NSF, the Hagler and Clay Fellowships, the Institute for Theoretical Studies at ETH (Zurich), and MSRI (Berkeley).

We are grateful to Davoud Cheraghi and Mitsuhiro Shishikura for fruitful  discussions of potential applications of our results that motivated \cite{DL:sector bounds}. 

We are grateful to Yusheng Luo for multiple discussions regarding subtleties of the $\psi^\bullet$-ql formalism~\cite{DL:psi bounds} in relation to~\cite{DL:HypComp}.

The results of  this paper were announced at the Fields Institute Symposium celebrating Artur
Avila's Fields medal (November 2019), and during the SCGP Renormalization program in December 2020 (see the mini-course on \href{https://scgp.stonybrook.edu/video/results.php?event_id=281}{http://scgp.stonybrook.edu/video/results.php?event\_id=281}).

\part{Rotational geometry}
\label{part:near rotat}

\section{Preparation}
\label{s:PrepNot}
 We fix a neutral quadratic polynomial $f:z\mapsto \ee(\theta)z+z^2$ with bounded type rotation number $\theta\in (0,1)$; i.e.~the $\alpha$-fixed point of $f$ has multiplier \[\ee(\theta)=e^{2\pi i \theta}, \sp\sp \theta\in \Theta_\bnd.\]

\subsection{Siegel Disk}
\label{ss:SiegDisk}
Let us denote by $Z$ the Siegel disk of $f$. Recall that $\overline Z$ is a qc disk because  $\theta\in \Theta_\bnd.$ Consider a Riemann map \[h\colon \overline Z\to \overline \Disk, \sp\sp h(\alpha)=0\] conjugating $f\mid \overline Z$ to $z\mapsto \ee(\theta)z$.

The \emph{(combinatorial) length} of an interval $I\subset \partial Z\simeq \R/\Z$ is defined by 
\[ 
|I| \coloneqq |h(I)|_{\R/\Z}  \in  (0,1).\]
Similarly, the \emph{(combinatorial) distance} \[\dist(x,y)\coloneqq \dist_{\R/\Z}(h(x),h(y))\in [0,1/2],\sp\sp x,y\in \partial Z\] is defined. It also induces the distance between subsets of $\partial Z$.

Given $t\in \R/\Z$ and $x\in \partial Z$, we set 
\[x\boxplus t = h ^{-1} \big( \ee(t) h(x)\big),\]
i.e.~$x\boxplus t$ is $x$ rotated by angle $t$. We have
\[ f(x)=x\boxplus \theta ,\sp\sp \text{ for }x\in\partial Z.\]

\subsubsection{Closest returns of $ f| \partial Z$}
\label{sss:closest returns}
Let $\theta=[0;a_1,a_2,\dots ]$ be the continued fraction expansion and consider the sequence of the best approximations of $\theta$
\[\pp_n/\qq_n\coloneqq \left\{ 
\begin{array}{cc}
[0;a_1,a_2,a_3,\dots, a_n] & \text{if }a_1>1\\{} 
[0;1,a_2,a_3,\dots, a_{n+1}] & \text{if }a_1=1
\end{array}
\right.,\]and set $\qq_0\coloneqq 1$. Then $f^{\qq_0}=f,f^{\qq_1},f^{\qq_2},\dots$ is the sequence of the closest returns of $f\mid \partial Z$; i.e.
\[ \dist \big( f^i (x),x\big) >\dist \big( f^{\qq_n} (x),x\big)\eqqcolon \length_n, \sp x\in \partial Z\sp\sp \text{ for all }i<\qq_n.\]
For $n\ge 0$, we specify $\theta_n\in (-1/2,1/2)$ so that
\[ f^{\qq_n} (x)= x\boxplus \theta_n, \sp\sp x\in \partial Z.\]
In particular, $\theta_0=\theta$ if $\theta <1/2$ and $\theta_0=\theta-1$ otherwise. By construction, $\length_n=|\theta_n|$. The sequence $\theta_n$ is alternating: $\theta_n\theta_{n+1}<0$ -- reflecting the fact that $\theta$ is between $\pp_{n}/\qq_{n}$ and $ \pp_{n+1}/\qq_{n+1}.$ Since $\length_{n}\ge \length_{n+1}+\length_{n+2}$ and $\length_{n+2}<\length_{n+1}$, we have
\begin{equation}
\label{eq:length decrease}
\length_{n+2} < \length_n/2.
\end{equation}

\subsubsection{Intervals} Consider two points $x,y\in \partial Z$. Unless stated otherwise, we denote by $[x,y]$ the shortest closed interval of $\partial Z$ between $x$ and $y$. The corresponding  open interval is denoted by $(x,y)$. Most of the intervals will be closed.

For an interval $I\subset \partial Z$, we denote by $I^c=\overline {\partial Z\setminus I}$ its complement.

We denote by ``$<$'' the clockwise order on $\partial \Disk$ and on $\partial Z$. Given two non-equal points $a,b$ in an interval $I $ with $ |I|<1/2$, we say that $a$ is on the \emph{left} of $b$, and write $a<b$, if $I^c, a,b$ have the clockwise order. This convention is consistent with drawing intervals on the upper side of $\partial Z$, see Figures~\ref{Fg:ShrinkingLmm:Disk}, \ref{fig:Width Width^+}. (Note that $x\mapsto x\boxplus \varepsilon$ is a counterclockwise rotation for a small $\varepsilon>0$.)

Given intervals $I,J\subset \partial  Z$, we define $\lfloor I,J\rfloor \subset \partial  Z$ to be the interval $I\cup L\cup J$, where $L$ is the complementary interval of $I,J$ (i.e.,~a component of $\partial Z\setminus (I\cup J)$) specified so that $I< L <J$ with respect to the clockwise order. In other words, $\lfloor I,J\rfloor$ is the shortest interval containing $I\cup J$ such that $I<J$ in $\lfloor I,J\rfloor$. In most cases, $\lfloor I,J\rfloor$ will be the shortest interval containing $I,J$.

Given an interval $I\subset \partial Z$ and $\lambda \ge 1$, we define
\begin{equation}
\label{eq:dfn:lambdaI}
\lambda I \coloneqq \{x\in \partial Z:\sp  \dist(x , I) \le (\lambda-1)|I|/2\}
\end{equation}
to be the $\lambda$-rescaling of $I$ with respect to its center.

\subsubsection{Combinatorial intervals}
For $n\ge0,$ a \emph{combinatorial level $n$ interval} is an interval $I\subset \partial Z$ with length $\length_n$. It has the form \[I=[x,f^{\qq_n}(x)]=[x,x \boxplus\theta_n] \sp\sp\text{ where }x\in \partial Z. \]
Since the sequence $\theta_n$ is alternating, we have:

\begin{lem}
\label{lem:FirstLand}
The return time of points in a level $n\ge 0$ combinatorial interval $I=[x,x \boxplus\theta_n]$ is at least $\qq_{n+1}$:
\begin{equation}
f^i(y)\not\in I \sp\sp \text{ for }\sp  y\in (x,x \boxplus\theta_n),\sp  i< \qq_{n+1}. 
\end{equation}\qed
\end{lem}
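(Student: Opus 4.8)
The plan is to transport the statement to the rigid rotation via the linearizing coordinate and then reduce it to the classical record‑minimum property of best‑approximation denominators. By \S\ref{ss:SiegDisk}, $h$ conjugates $f|\,\di Z$ to $t\mapsto t+\theta$ on $\R/\Z$; under $h$ the interval $I=[x,x\boxplus\theta_n]$ becomes the arc of length $\length_n=|\theta_n|<1/2$ with endpoints $h(x)$ and $h(x)+\theta_n$, and $h\big(f^i(y)\big)=h(y)+i\theta$. Since $f^0(y)=y\in I$, the ``return time'' of $y$ means the least $i\ge 1$ with $f^i(y)\in I$, and this is what must be bounded below by $\qq_{n+1}$.

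First I would argue by contradiction: suppose $f^i(y)\in I$ for some $y\in(x,x\boxplus\theta_n)$ and some $1\le i<\qq_{n+1}$. Write $h(y)=h(x)+u$ with $u$ strictly between $0$ and $\theta_n$, and $h\big(f^i(y)\big)=h(x)+v$ in $\R/\Z$ with $v$ between $0$ and $\theta_n$ (endpoints now allowed, since $I$ is closed). Then $i\theta\equiv v-u\pmod 1$. The key elementary observation is that $u$ and $v$ both lie in the closed real interval of length $\length_n$ with endpoints $0$ and $\theta_n$, while $u$ lies in its interior; hence $|v-u|<\length_n<1/2$, so $\dist_{\R/\Z}(i\theta,\Z)=|v-u|<\length_n$. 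I would stress that this strict inequality is exactly what allows the conclusion for the \emph{closed} interval $I$ rather than merely for $I^\circ$.

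Next I would invoke the best‑approximation property of the denominators $\qq_m$: for every integer $k$ with $1\le k<\qq_{n+1}$ one has $\dist_{\R/\Z}(k\theta,\Z)\ge \dist_{\R/\Z}(\qq_n\theta,\Z)=\length_n$. Applying this with $k=i$ contradicts the bound of the previous paragraph, so no such $i$ exists and the return time is $\ge\qq_{n+1}$. This step is where the alternation of the $\theta_n$ — equivalently, of the convergents of $\theta$ — is used: it is precisely the reason the $\qq_m$ realize the successive record minima of $k\mapsto \dist_{\R/\Z}(k\theta,\Z)$. One may cite it from the classical theory of continued fractions, or derive it by combining the closest‑return estimate of \S\ref{sss:closest returns} across consecutive levels together with $\theta_m\theta_{m+1}<0$.

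The argument is essentially routine; the only points demanding care are the bookkeeping of open versus closed intervals (to get $f^i\big((x,x\boxplus\theta_n)\big)\cap I=\emptyset$, not just disjointness of interiors) and making the record‑minimum property of the $\qq_m$ explicit. The latter is the real input of the proof, so I would state it as a cited or quickly‑justified lemma about rotations before running the contradiction above.
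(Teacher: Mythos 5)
Your proof is correct and is exactly the standard argument the paper has in mind: the paper states this lemma with no proof at all (a bare \qed, asserted to follow from the alternation of the $\theta_n$, i.e.\ from the best-approximation/closest-return structure of the $\qq_n$), and your reduction to the rigid rotation plus the record-minimum property $\dist_{\R/\Z}(k\theta,\Z)\ge\length_n$ for $1\le k<\qq_{n+1}$ — with the strictness coming from $y$ lying in the open interval, which correctly handles $k=\qq_n$ — supplies precisely that omitted justification.
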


As a consequence, the commuting pair 
\begin{equation}
\label{eq:ren:com_pair}
\big(f^{\qq_{n+1}}\mid [f^{\qq_n}(x),x],\sp  f^{\qq_{n}}\mid [x,f^{\qq_{n+1}}(x)]\big)
\end{equation}  
realizes the first return map to $[f^{\qq_n}(x), f^{\qq_{n+1}}(x)]$. The renormalization theory of circle maps is often set up using commuting pairs.

\subsubsection{Renormalization tilings} \label{sss:renorm til}
Given $x\in \partial Z$ and $n\ge 0$, we denote by $\Jbb_n(x)$ the associated \emph{renormalization tiling of level $n$}:  
\begin{equation}
\label{eq:ren_til}
 \bigcup _{i=0}^{\qq_{n+1}-1} f^i [f^{\qq_n}(x),x] \sp\cup \sp \bigcup _{i=0}^{\qq_{n}-1} f^i [x,f^{\qq_{n+1}}(x)]. 
\end{equation}
We also set $\Jbb_n\coloneqq \Jbb_n(c_0)$, where $c_0$ is the free critical point in $\partial Z$. 

Note that most of the intervals in~\eqref{eq:ren_til} are in the orbit of $[f^{\qq_n}(x),x]$ (explaining the subindex $n$  in $\Jbb_n(x)$). Moreover, level $n$ intervals in $\Jbb_n$ form an almost tiling, with gaps being intervals of level $n+1$:
\begin{lem}
\label{lem:Jbb_n}
Level $n+1$ combinatorial intervals in~\eqref{eq:ren_til} are disjoint.  
\end{lem}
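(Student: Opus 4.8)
The plan is to pin down which intervals in~\eqref{eq:ren_til} have level $n+1$, and then exclude any overlap between two of them by invoking the closest‑return property of~\S\ref{sss:closest returns}. Since $f|\,\partial Z$ is conjugate by $h$ to a rotation, it preserves combinatorial length, so the $\qq_{n+1}$ intervals in the first union of~\eqref{eq:ren_til} all have length $\length_n$; as $\length_n>\length_{n+1}$ (recall $\length_n\ge\length_{n+1}+\length_{n+2}$ and $\length_{n+2}>0$), the level‑$(n+1)$ intervals occurring in~\eqref{eq:ren_til} are exactly the $\qq_n$ intervals $I_i\coloneqq f^i([x,f^{\qq_{n+1}}(x)])=[f^i(x),\,f^i(x)\boxplus\theta_{n+1}]$ with $0\le i\le\qq_n-1$; each $I_i$ is itself a level‑$(n+1)$ combinatorial interval of length $\length_{n+1}$.

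Next, fix $0\le i<j\le\qq_n-1$ and set $m\coloneqq j-i$, so $1\le m\le\qq_n-1<\qq_{n+1}$. Because $f|\,\partial Z$ is the rigid rotation $x\mapsto x\boxplus\theta$, applying the homeomorphism $f^{-i}|\,\partial Z$ reduces the assertion $I_i\cap I_j=\emptyset$ to: the arc $[x,f^{\qq_{n+1}}(x)]$ and its $f^m$‑image $f^m([x,f^{\qq_{n+1}}(x)])$ are disjoint. In the linearizing coordinate these are two arcs of common length $\length_{n+1}<1/2$, each the translate of the other by $m\theta$; such arcs meet if and only if $\dist_{\R/\Z}(m\theta,0)\le\length_{n+1}$, that is, if and only if $\dist(f^m(y),y)\le\length_{n+1}$ for $y\in\partial Z$. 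But $1\le m<\qq_{n+1}$, so the closest‑return property forces $\dist(f^m(y),y)>\length_{n+1}$, a contradiction. Hence the $I_i$ are pairwise disjoint.

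The only point needing genuine care is the elementary fact on $\R/\Z$ that two closed arcs of equal length $\ell<1/2$ which are translates by $t$ of one another intersect precisely when $\dist_{\R/\Z}(t,0)\le\ell$; this is what turns the dynamical inequality into honest disjointness, endpoints included. Everything else — that $f|\,\partial Z$ is a rotation, that $\qq_n<\qq_{n+1}$, and that $\length_{n+1}<\length_n$ — is immediate from~\S\ref{ss:SiegDisk} and~\S\ref{sss:closest returns}. (If one only wants disjointness of interiors, Lemma~\ref{lem:FirstLand} gives it at once: a point $z\in\inter I_i\cap\inter I_j$ with $i<j$ would produce $w\coloneqq f^{-(j-i)}(z)\in\inter I_i$ returning to $I_i$ in $j-i<\qq_{n+2}$ steps, which the lemma forbids for the level‑$(n+1)$ interval $I_i$.)
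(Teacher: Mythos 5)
Your proof is correct, but it takes a genuinely different route from the paper's. The paper argues by induction on the nest of tilings: $\Jbb_{n+1}$ is obtained from $\Jbb_n$ by subdividing each level-$n$ interval into level-$(n+1)$ intervals plus a single level-$(n+2)$ interval, and the disjointness of the short intervals is read off from this refinement picture. You instead give a direct one-step argument: after conjugating to the rigid rotation, any two of the level-$(n+1)$ intervals $f^i[x,f^{\qq_{n+1}}(x)]$, $f^j[x,f^{\qq_{n+1}}(x)]$ are translates of one another by $(j-i)\theta$ with $1\le j-i\le \qq_n-1<\qq_{n+1}$, and the closest-return inequality of \S\ref{sss:closest returns} gives $\dist(f^{j-i}(y),y)>\length_{n+1}$, which by your elementary arc-intersection criterion on $\R/\Z$ forces the two closed arcs apart, endpoints included. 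Your identification of which intervals of~\eqref{eq:ren_til} have level $n+1$ is right, the criterion is right, and the strict inequality does yield disjointness of the closed intervals, which is the strong form the paper uses afterwards (each level-$(n+1)$ interval is flanked by level-$n$ intervals). What the paper's inductive proof buys, beyond the lemma itself, is the explicit refinement structure $\Jbb_n\leadsto\Jbb_{n+1}$ that is exploited later (e.g.\ Lemma~\ref{lem:Dbb_n:Dbb_n+2} and the spreading-around discussion of \S\ref{sss:spread around}); your argument is more self-contained and purely arithmetic, trading that structural information for brevity. Your parenthetical alternative via Lemma~\ref{lem:FirstLand} is also valid, but, as you note, it only gives disjointness of interiors.
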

\begin{proof}
It is a well known statement that easily follows by induction. In $\Jul_0$, there is a single level $1$ interval and $\qq_1\ge 2$ level $0$ intervals. The tiling $\Jbb_{n+1}$ is obtained from $\Jbb_{n}$  by decomposing every level $n$ interval into level $n+1$ intervals and a single level $n+2$ interval either on the level or on the right depending on the parity of $n$; i.e. level $n+2$ intervals are disjoint in $\Jbb_{n+1}$.
\end{proof}

For $n>m$, we say that level $n$ combinatorial intervals are on \emph{deeper scale} than level $m$ combinatorial intervals, while level $m$ combinatorial intervals are on \emph{shallower scale} than level $n$ combinatorial intervals.

\subsubsection{Spreading around a combinatorial interval}\label{sss:spread around} Consider a combinatorial level $n$ interval $I$. We say that the intervals \[\{ f^{i}(I)\mid i\in \{0,1,\dots, \qq_n-1\}\}\] are obtained by \emph{spreading around $I$}. We enumerate these intervals counterclockwise starting with $I=I_0$
\begin{equation}
\label{eq:spred_around:I} I_0=I,\sp I_1=f^{i_1}(I), \dots, I_{\qq_n-1}=f^{i_{\qq_n-1}-1}(I),\sp\sp\sp i_j\in\{1,2,\dots,\qq_n-1\}.
\end{equation}

It follows from Lemma~\ref{lem:Jbb_n} that either $I_i$ is attached to $I_{i+1}$ or there is a level $n+1$ combinatorial complementary interval between $I_{i}$ and $I_{i+1}$.

\subsubsection{Diffeo-tilings}
\label{sss:diff tilings} For $n\ge -1$, we denote by $\CP_n=\CP_n(f)=\CP(f^{\qq_{n+1}})$ the set of critical points of $f^{\qq_{n+1}}$.
 The \emph{diffeo-tiling $\Dbb_n$ of level $n$} is the partition of $\partial Z$ induced by $\CP_n$: every interval in $\Dbb_n$ is the closure of a component of $\partial Z \setminus \CP_n$. For $n=-1$, the tiling consists of a single ``interval'' viewed as $[c_0,c_0\boxplus 1]$.

 For $n\ge 0$, every point in $\CP_n$ is an endpoint of an interval in $\Jbb_n(c_{-\qq_n+1})$; we see that $\Dbb_n$ is an enlargement of $\Jbb_n(c_{-\qq_{n+1}+1})$. By Lemma~\ref{lem:Jbb_n}, every interval in $\Dbb_n$ has length either $\length_n$ or $\length_n+\length_{n+1}$. 
 
 Enumerating counterclockwise intervals in $\Dbb_n$ as $P_0, P_1,\dots, P_{\qq_{n+1}}$, we have $f^{i\qq_{n+1}}(P_k)\approx P_{k+ i\pp_{n+1} }$, where the ``rotational error'' with respect to the combinatorial length is small if $\length_{n+1}\ll \length_n$.

Let us set 
\begin{equation}
\label{eq:dfn:filled_n}
\filled_n\coloneqq f^{-\qq_{n+1} } (\overline Z),
\end{equation}
Every interval of $\Dbb_n$ is between two components, called \emph{limbs}, of $\filled_n\setminus \overline Z$.

For  $T\in \Dbb_n$, we write $T'\coloneqq T\cap f^{\qq_{n+1}}(T)\subset T$ so that $f^{\qq_{n+1}}\colon T'\boxminus \theta_{n} \to T'$ is a homeomorphism. If $n=-1$, then $T'$ is the longest interval connecting $c_1$ and $c_0$.

The \emph{nest of diffeo-tilings} is 
\begin{equation}
\label{eq:diffeo nest}
\Dbb\coloneqq [\Dbb_n]_{n\ge -1}
\end{equation}

It follows from~\eqref{eq:length decrease} that
\begin{lem}
\label{lem:Dbb_n:Dbb_n+2}
Every interval of $\Dbb_n$ consists of at least $2$ intervals of $\Dbb_{n+2}$.\qed.
\end{lem}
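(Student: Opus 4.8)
The plan is to derive Lemma~\ref{lem:Dbb_n:Dbb_n+2} purely from the combinatorial structure already set up, using the length bookkeeping for combinatorial intervals together with the description of $\Dbb_n$ as an enlargement of a renormalization tiling. First I would recall the two facts established just above the statement: every interval of $\Dbb_n$ has combinatorial length either $\length_n$ or $\length_n+\length_{n+1}$ (this follows from Lemma~\ref{lem:Jbb_n}, since $\Dbb_n$ enlarges $\Jbb_n(c_{-\qq_{n+1}+1})$ by merging each level-$n$ interval with at most one adjacent level-$(n+1)$ gap), and every interval of $\Dbb_{n+2}$ has combinatorial length either $\length_{n+2}$ or $\length_{n+2}+\length_{n+3}$. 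The key quantitative input is the decay estimate~\eqref{eq:length decrease}, $\length_{n+2}<\length_n/2$, which gives in particular $\length_{n+2}+\length_{n+3}<2\length_{n+2}<\length_n$, so a single $\Dbb_{n+2}$-interval is strictly shorter than any $\Dbb_n$-interval and hence cannot by itself cover one.

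The second step is to show that $\Dbb_{n+2}$ genuinely refines $\Dbb_n$, i.e.\ every endpoint of a $\Dbb_n$-interval is also an endpoint of a $\Dbb_{n+2}$-interval. This is because $\Dbb_k$ is the partition induced by $\CP_k=\CP(f^{\qq_{k+1}})$, and $\CP_n\subset \CP_{n+2}$: a critical point of $f^{\qq_{n+1}}$ is a point whose orbit under the appropriate iterate hits the critical point, and since $\qq_{n+1}\mid$-orbits are sub-orbits of $\qq_{n+3}$-dynamics in the relevant combinatorial sense — more precisely, each interval of $\Dbb_n$ is tiled by intervals of $\Dbb_{n+2}$ because passing from level $k$ to level $k+1$ in the nest $\Dbb$ subdivides each interval into level-$(k+1)$ intervals plus one level-$(k+2)$ interval, as in the proof of Lemma~\ref{lem:Jbb_n}. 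So each $\Dbb_n$-interval $P$ is a union of consecutive $\Dbb_{n+2}$-intervals; call their number $k\ge 1$.

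The third step is the counting: if $k=1$, then $P$ would equal a single $\Dbb_{n+2}$-interval, forcing $\length_n\le |P|=\length_{n+2}$ or $\length_n+\length_{n+1}\le \length_{n+2}+\length_{n+3}$, either of which contradicts~\eqref{eq:length decrease} as computed above. Hence $k\ge 2$, which is exactly the claim. One should be slightly careful to phrase the refinement statement correctly — it is cleanest to argue via the inductive subdivision rule for the nest $\Dbb$ (two steps of "subdivide into level-$(k+1)$ pieces and one level-$(k+2)$ piece") rather than via an inclusion of critical sets, since the latter requires unwinding the definition of $\CP_k$; but both routes work.

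The main obstacle, such as it is, is purely expository: making precise that $\Dbb_{n+2}$ refines $\Dbb_n$ (that the nest is genuinely nested, as its name and notation~\eqref{eq:diffeo nest} suggest) without appealing to machinery developed later in the paper. Once that refinement is in hand, the length inequality $\length_{n+2}+\length_{n+3}<\length_n$ does all the work and the lemma is immediate; indeed the "$\qed$" appended to the statement in the excerpt signals the authors regard it as a one-line consequence of~\eqref{eq:length decrease}.
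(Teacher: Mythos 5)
Your proposal is correct and is exactly the argument the paper intends (the lemma is stated with only the remark that it "follows from~\eqref{eq:length decrease}"): since $\Dbb_{n+2}$ refines $\Dbb_n$ because $\CP_n\subset\CP_{n+2}$ (a critical point of $f^{\qq_{n+1}}$ is one of $f^{\qq_{n+3}}$), and each $\Dbb_{n+2}$-interval has length at most $\length_{n+2}+\length_{n+3}<2\length_{n+2}<\length_n\le$ the length of any $\Dbb_n$-interval, no $\Dbb_n$-interval can consist of a single $\Dbb_{n+2}$-interval. Your filled-in details, including the length bookkeeping from Lemma~\ref{lem:Jbb_n}, match the paper's setup; the only trivial case not covered by the formula is $n=-1$, where the single interval has length $1>\length_1+\length_2$ and the same comparison applies.
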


\subsubsection{Fjords}
\label{sss:fjords}

Consider an interval $T=[v,w]$ in $\Dbb_m$, $m\ge -1$. Our standing assumption is that $f^{\qq_{m+1}}\mid T$ moves points clockwise towards $w$, see Figure~\ref{fig:par rect}; i.e., we assume that $ v<w$ and $\theta_{m+1} <0$, compare with~\eqref{eq:T:orientat ass}. Let us set $v'=v\boxplus \theta_{m+1}$ and $w'=w\boxplus \theta_{m+1}$. We specify 
\[[v',w'] = f^{\qq_{m+1}}(T)\subset \partial Z ,\sp \sp\sp T'\coloneqq[v',w]\subset T,\sp \sp\sp\text{ and}\]
\begin{itemize}
\item the \emph{level $m$-fjord} $\Fjord_m(T)$ on $T$ to be the closed disk bounded by $T$ and by the hyperbolic geodesic $\ell_{[v,w]}^{(m)}$ of $\C\setminus \filled_m$ with endpoints $\{v,w\}$ such that $\ell_{[v,w]}^{(m)}$ is homotopic to $[v,w]$ in $\C\setminus \intr(\filled_{m})$;
 \item the \emph{fjord} $\Fjord[v',w']$ on $[v',w']$ to be the closed disk bounded by $[v',w']$ and the hyperbolic geodesic $\ell_{[v',w']}$ of $\C\setminus \overline Z$ with endpoints in $\{v',w'\}$ such that $\ell_{[v',w']}$ is homotopic to $[v',w']$ in $\C\setminus Z$.
\end{itemize}

Similarly, $\Fjord_m(J)\subset \Fjord_m(T)$ and $\Fjord(S)\subset \Fjord[v',w']$ are defined for all subintervals $J\subset T$ and $S\subset [v',w']$. 

Since $f^{\qq_{m+1}}\colon \C\setminus \filled_m\to \C\setminus \overline Z$ is a covering map, we have:

\begin{lem}
\label{lem:Fjord_m(T)}
For the fjord $\Fjord_m(T)$ as above, $f^{\qq_{m+1}}\mid \Fjord_m(T)$ is injective:
\[ f^{\qq_{m+1}}\colon\ \Fjord_m(T)\ \overset{1:1}{\longrightarrow} \ \Fjord[v',w']. \]
We denote by $f^{-\qq_{m+1}}_T\colon \Fjord[v',w'] \overset{1:1}{\longrightarrow} \Fjord_m(T)$ the inverse branch.

  If $J\subset T'$ is a subinterval such that \[ f^{-t\qq_{m+1}}_T(J)\subset T'\sp\text{ for all }\ t\in \{ 0,1,\dots, s-1\},\] 
\[\text{then: }\sp\sp \sp  f_T^{ -s\qq_{m+1}}\big(\Fjord(J)\big)\subset\Fjord_m(T);\]
i.e., $\Fjord(J)$ can be iteratively lifted under $f^{ s\qq_{m+1}}$. 
\qed
\end{lem}  

We say that a fjord is \emph{parabolic} if it contains a wide parabolic rectangles as in Section~\ref{s:par fjords}. Theorem~\ref{thm:par fjords} will describe the geometry of parabolic fjords.

\subsection{Inner geometry of $\overline Z$: Log-Rule}
\label{ss:GeomZ}
 For disjoint intervals $I,J\subset\partial Z$,  the \emph{inner family} $\Fam^-(I,J)=\Fam_{\ovZ}(I,J)$ is the family of all curves in $\ovZ$ connecting $I,J$; see also~\S\ref{sss:can rect}. Its width $\Width^-(I,J)$ can be explicitly computed:

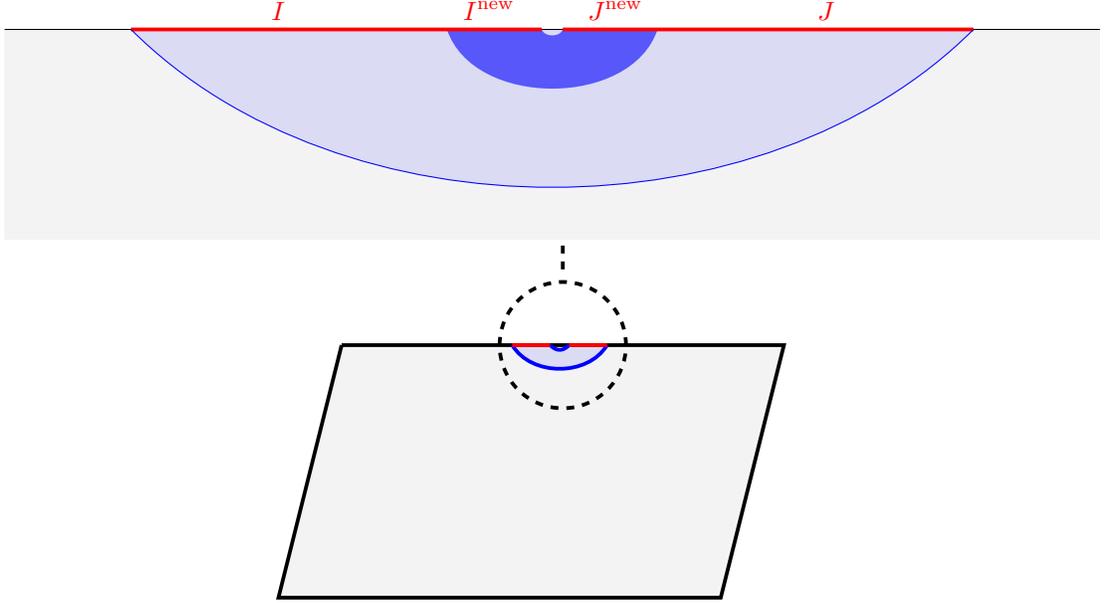
\begin{figure}[t!]
\[\begin{tikzpicture}[scale=1.4]

\begin{scope}[shift={(-2,-3)},scale=0.6]
\draw[line width =0.05 cm,fill, fill opacity =0.05] (0,0) -- (7,0) -- (6,-4)--(-1,-4)--(0,0)--(0.,0);

\draw [blue,line width=0.05 cm] (2.7,0)
 .. controls (3,-0.5) and (3.9,-0.5) ..
 (4.2,0)
(3.6,-0.) 
 .. controls (3.5,-0.1) and (3.4,-0.1) ..
(3.3,-0.);

\draw [opacity=0,fill = blue, fill opacity =0.1] 
(2.7,0)
 .. controls (3,-0.5) and (3.9,-0.5) ..
 (4.2,0)
  .. controls (4,-0) and (3.8,-0.) ..
(3.6,-0.) 
 .. controls (3.5,-0.1) and (3.4,-0.1) ..
(3.3,-0.);

\draw[red, line width=0.05 cm]
 (4.2,0) --(3.6,-0.) 
(2.7,0)-- (3.3,-0.);

\draw[dashed,line width =0.05 cm] (3.5,0) circle (1cm)
(3.5,1.2) -- (3.5,1.7);

\end{scope}

\node[red,above] at(2.6,0){$J$}; 
\node[red,above] at(-2.6,0){$I$};

\draw (-5.2,0) -- (5.2,0);

\draw[opacity=0, fill, fill opacity=0.05] (-5.2,0) -- (5.2,0)--(5.2,-2) -- (-5.2,-2);

\node[red,above] at(0.6,0){$J^\new$}; 
\node[red,above] at(-0.6,0){$I^\new$};

\draw[opacity=0, fill =blue, fill opacity=0.1 ]
(0.1,0)
 .. controls (0.075, -0.075) and (-0.075,-0.075) ..
 (-0.1,0)
 .. controls (-0.5, 0) and (-0.8,-0.) ..
(4,0)
 .. controls (2, -2) and (-2,-2) ..
 (-4,0);

\draw[opacity=0, fill =blue, fill opacity=0.6 ]
(0.1,0)
 .. controls (0.075, -0.075) and (-0.075,-0.075) ..
 (-0.1,0)
 .. controls (-0.5, 0) and (-0.8,-0.) ..
(-1,0)
 .. controls (-0.75, -0.75) and (0.75,-0.75) ..
 (1,0);

\draw[red,line width=0.5mm]  (-4,0) --(-0.1,0)
(4,0) --(0.1,0);

 \draw[blue]
(4,0)
 .. controls (2, -2) and (-2,-2) ..
 (-4,0);

\end{tikzpicture}\]
\caption{Illustration to the Localization Property: the width $\Fam^-(I,J)$ is within $\Fam^-(I^\new,J^\new)$ up to $O(\log \lambda)$, where $I^\new, J^\new$ is an innermost subpair.}
\label{Fg:ShrinkingLmm:Disk}
\end{figure}

\begin{lem}[Log-Rule]
\label{lmm:W^- I J}
Consider intervals $I,J\subset \partial Z$. If $\dist (I,J)\le \min\{|I|,|J|\}$, then
\begin{equation}
\label{eq:1:lmm:W^- I J}
\Width^-(I,J) \asymp \log \frac{\min\{|I|,|J|\}}{\dist (I,J)} +1;
\end{equation}
otherwise 
\begin{equation}
\label{eq:2:lmm:W^- I J}
\Width^-(I,J) \asymp \left( \log \frac{\dist (I,J)}{\min\{|I|,|J|\}} +1\right)^{-1}.
\end{equation}
\end{lem}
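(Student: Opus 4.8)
The plan is to reduce the statement to a conformally invariant computation in the round disk, exploit the symmetry of a quadrilateral to halve the work, and then invoke the classical asymptotics of the modulus of a quadrilateral.

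\emph{Conformal reduction and duality.} Since $\Fam^-(I,J)=\Fam_{\overline Z}(I,J)$, and hence $\Width^-(I,J)$, depends only on the conformal structure of the quadrilateral $(\overline Z;I,J)$, I push it forward by the Riemann map $h\colon\overline Z\to\overline\Disk$; as $h$ preserves combinatorial length and distance, this reduces us to $Z=\Disk$ with $I,J$ arcs of $\partial\Disk$. Let $L,L'$ be the two complementary arcs, labelled so that $|L|=\dist(I,J)\le|L'|$; the cyclic order of the four arcs is $I,L,J,L'$, so $(L,L')$ is the pair of sides of the quadrilateral $(\Disk;I,J)$ conjugate to $(I,J)$. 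The classical conjugate-families identity for a quadrilateral (in a Euclidean rectangle the two side-to-side curve families have reciprocal extremal lengths) gives
\[\Width^-(I,J)\cdot\Width^-(L,L')=1 .\]
If $\dist(I,J)\le\min\{|I|,|J|\}$ we are in regime~\eqref{eq:1:lmm:W^- I J}. Otherwise the pair $(L,L')$ has gaps $I$ and $J$, hence $\dist(L,L')=\min\{|I|,|J|\}<|L|=\min\{|L|,|L'|\}$, so $(L,L')$ lies in regime~\eqref{eq:1:lmm:W^- I J}; then~\eqref{eq:2:lmm:W^- I J} for $(I,J)$ follows from~\eqref{eq:1:lmm:W^- I J} for $(L,L')$ via the displayed identity. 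Thus it suffices to establish~\eqref{eq:1:lmm:W^- I J}: write $d:=\dist(I,J)$ and $m:=\min\{|I|,|J|\}$ and assume $d\le m$.

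\emph{The model computation.} Normalizing the four endpoints by a M\"obius automorphism to the symmetric configuration $\pm1,\pm\lambda$ on $\partial\H$ with $\lambda>1$, the Schwarz--Christoffel primitive $w\mapsto\int_0^w dt\big/\sqrt{(t^2-1)(t^2-\lambda^2)}$ identifies $\overline\Disk$ with a closed Euclidean rectangle, so $\Width^-(I,J)$ equals a fixed multiple of the ratio $K(1/\lambda)/K'(1/\lambda)$ of complete elliptic integrals. A routine trigonometric computation of the cross-ratio in terms of arc lengths, starting from the identity
\[\frac{(\lambda-1)^2}{4\lambda}\;=\;\frac{\sin(\pi|L|)\,\sin(\pi|L'|)}{\sin(\pi|I|)\,\sin(\pi|J|)}\]
and the normalization $|I|+|L|+|J|+|L'|=1$, shows that in the regime $d\le m$ the right-hand side is comparable to $d/m$ up to a factor whose logarithm is $O\!\big(\log(m/d)+1\big)$ — the point being that $|L|=d$ is the minimal gap and $|L'|\ge d$, so the correction coming from the ``second channel'' $L'$ is never larger than the main term. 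Hence the complementary modulus $k'$ of $1/\lambda$ satisfies $\log(1/k')\asymp\log(m/d)+1$, and the classical asymptotics $K(k)\sim\log(4/k')$, $K'(k)\to\pi/2$ as $k\to1$ yield $\Width^-(I,J)\asymp\log(m/d)+1$.

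\emph{The main point.} The genuinely delicate step — the one I expect to require the most care in a self-contained treatment — is the \emph{upper} bound $\Width^-(I,J)\preceq\log(m/d)+1$, equivalently the \emph{lower} bound on the extremal length of the \emph{whole} family $\Fam^-(I,J)$. The matching lower bound on the width is easy: one exhibits the subfamily of Euclidean semicircles of radii ranging over a logarithmic interval of length $\asymp\log(m/d)$ across the short gap $L$ and estimates it with the metric $|dz|/|z|$, using that extremal length is monotone under passage to subfamilies. For the upper bound, however, no connecting curve may be neglected, and extremal length of a family can only be bounded below by an honest admissible metric, never by a subfamily; in the rectangle model above this is automatic, but elementarily it forces one to build a single comparison metric charging every curve from $I$ to $J$ — the competitors being the ``hugging'' curves near either gap, controlled precisely because $|L|=d$ is the smaller gap and $|L'|\ge d$. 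Together with the lower bound this gives~\eqref{eq:1:lmm:W^- I J}, and with the duality of the first step, the full statement.
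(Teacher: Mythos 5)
Your argument is correct in substance, but it takes a genuinely different route from the paper for the main regime. Both you and the paper transfer the problem to $\ovDisk$ by the Riemann map, and both dispose of regime \eqref{eq:2:lmm:W^- I J} by the same conjugate-quadrilateral duality ($\Width^-(I,J)\,\Width^-(L,L')=1$, with $\dist(L,L')=\min\{|I|,|J|\}$). The difference is in \eqref{eq:1:lmm:W^- I J}: the paper only proves the ``unit'' estimate $\Width^-(A,B)\asymp 1$ when $\dist(A,B)\asymp\min\{|A|,|B|\}$ (M\"obius normalization plus compactness) and then runs the Splitting Argument of Remark~\ref{rem:SplitArg} --- decomposing $I,J$ into $n\asymp\log\frac{m}{d}+1$ pieces and using the Parallel Law for the upper bound and disjoint geodesic rectangles for the lower bound --- whereas you compute the modulus exactly via the cross-ratio and the $K/K'$ asymptotics. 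Your computation yields explicit constants but is tied to the round disk; the paper's soft scheme is precisely the one it reuses later where no formula exists (Proposition~\ref{prop:part U: c quasi line}, Theorems~\ref{thm:beau:part U: c quasi line} and~\ref{thm:par fjords}), which is why the Log-Rule is proved that way.

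One step of your middle paragraph must be tightened. The identity $\chi:=\frac{(\lambda-1)^2}{4\lambda}=\frac{\sin(\pi|L|)\sin(\pi|L'|)}{\sin(\pi|I|)\sin(\pi|J|)}$ is indeed correct (it is the cross-ratio written as a product of chords over opposite pairs of arcs), and your conclusion needs the two-sided comparison $\log(1/\chi)+1\asymp\log(m/d)+1$. But ``comparable to $d/m$ up to a factor whose logarithm is $O(\log(m/d)+1)$'' is, as literally stated, too weak: such a factor could swallow the main term and destroys the lower bound $\log(1/\chi)\ge\log(m/d)-O(1)$. What is true, and what you actually need, are the two one-sided bounds $\chi\preceq d/m$ and $\chi\succeq (d/m)^2$ with absolute constants; both follow from $\sin(\pi t)\asymp\min(t,1-t)$ after a short but genuine case analysis (the case where one of $|I|,|J|$ exceeds $1/2$, and the cases $|L'|\lessgtr |I|+|J|+d$, must be treated separately; the exponent $2$ really occurs, e.g.\ when $|L'|\asymp|L|$ and $|I|\asymp 1$, so $\chi\asymp d/m$ is false in general, while $\log(1/\chi)+1\asymp\log(m/d)+1$ survives). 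One should also note that in the regime $d\le m$ the bound $\chi\preceq d/m=O(1)$ keeps you on the range where $K(1/\lambda)/K'(1/\lambda)\asymp\log(1/\chi)+1$. With these points made explicit your proof is complete; your final paragraph about admissible metrics is not needed on your route (as you yourself observe, the rectangle model delivers both bounds at once) and could be dropped.
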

\noindent We will later establish a similar Log-Rule to near-rotation domains (Proposition~\ref{prop:part U: c quasi line}  and Theorem \ref{thm:beau:part U: c quasi line}) and to parabolic fjords (Theorem~\ref{thm:par fjords}).

\begin{proof}
Since $\ovZ$ and $\ovDisk$ are conformally identified, it is sufficient to prove the lemma for $\ovDisk$. Observe first that for $A,B\subset \partial \Disk$
\begin{equation}
\label{eq:HypRect is 1}
\Width^-(A,B) \asymp \Width(\RR(A,B))\asymp 1 \sp\sp\text{ if }\dist(A,B) \asymp \min\{|A|,|B|\},
\end{equation} 
where $\RR(A,B)$ is the geodesic rectangle, see \S\ref{sss:GeodRect}. Indeed, the condition $\dist(A,B) \asymp \min\{|A|,|B|\}$ implies that the cross-ratio of $4$ endpoints of $A, B$ is comparable to $1$. Applying a M\"obius transformation, we can assume that all $4$ intervals (i.e.,~$A,B$ and two complementary intervals between $A,B$) have comparable length, and the claim follows by compactness.

Suppose $\dist (I,J)\le \min\{|I|,|J|\}$. We also assume that $|I|\le |J|$, and we present $I$ and $J$ as concatenations \[I=I_1\#I_2\#\dots \# I_n\sp\sp\text{ and }\sp\sp J_1\# J_2\#\dots \# J_n,\]
where $n\asymp  \log \frac{\min\{|I|,|J|\}}{\dist (I,J)} +1$, such that 
\[\dist (I_k,J_k)\asymp |I_k|\asymp \dist (I_k,J) .\]
By the Parallel Law, we obtain:
\[ \Width^-(I,J)\le \sum_{k=1}^n \Width(I_k,J) \preceq n,\]
\[ \Width^-(I,J)\ge \sum_{k=1}^n \Width(\RR(I_k,J_k)) \succeq n.\]
This proves~\eqref{eq:1:lmm:W^- I J}. If $\dist (I,J)\ge \min\{|I|,|J|\}$, then $\Width^-(I,J)=1/\Width^-(A,B)$, where $A,B$ are complementary intervals between $I,J$;~i.e.,~\eqref{eq:2:lmm:W^- I J} follows from \eqref{eq:1:lmm:W^- I J}. \end{proof}

\begin{rem}[Splitting Argument]
\label{rem:SplitArg} Note that in the proof of Lemma~\ref{lmm:W^- I J} we established \eqref{eq:1:lmm:W^- I J} and \eqref{eq:2:lmm:W^- I J} from~\eqref{eq:HypRect is 1} by splitting $I$ and $J$ into an appropriate number of intervals. We will call it the \emph{Splitting Argument} -- this argument will be used several times later.
\end{rem}

\subsubsection{Localization Property}
\label{sss:LocProp} Consider a pair $I,J \subset \partial D$, where $D\subset \C$ is a closed Jordan disk. A subpair $I^\new\subset I,\sp J^\new\subset J$ is called \emph{innermost} if 
\[ I\setminus I^\new < I^\new< J^\new< J\setminus J^\new \sp\sp\sp\text{ in }\lfloor I,J\rfloor.\]

Lemma~\ref{lmm:W^- I J} implies the following localization property, see Figure~\ref{Fg:ShrinkingLmm:Disk}.  For a pair of intervals $I,J$  with $|\lfloor I, J\rfloor| \le 1 - \frac{1}{\lambda}\min\{|I|,|J|\}$, define $I^\new \subset I$ and $J^\new \subset J$ to be the closest innermost subpair such that 
\[ |I^\new| =|J^\new| =\frac 1 \lambda \min\{|I|,|J|\}.\]
Then most of the width of $\Fam^-(I,J)$ is in $\Fam^-(I^\new, J^\new)$: 
\[ \Width^-(I^\new, J) + \Width^-(I,J^\new)= O(\log \lambda).\]

\subsubsection{Squeezing Property}
\label{sss:SqueezProp} A counterpart to the localization property is the following squeezing property which also follows from Lemma~\ref{lmm:W^- I J}.

There is a constant $C>0$ such that the following holds. Suppose $I, J\subset \partial Z$ is a pair of intervals such that
\[ \Width^{-}(I,J) \ge C\log \lambda,\sp\sp \lambda>2.\]
Then 
\[\dist(I,J)\le \frac 1 \lambda \min\{|I|,|J|\}.\]

We will later generalize Localization and Squeezing Properties to pseudo-Siegel disks, see~\S\ref{ss:wZ:local lmm}.

\subsection{Outer geometry of $\overline Z$} In this section we will define $\Width_\lambda (I)=\Width(\Fam_\lambda(I)),$ $\Width^+_\lambda (I)=\Width(\Fam^+_\lambda(I))$ and other quantities to measure degeneration of $\overline Z$, see Figure~\ref{fig:Width Width^+}. 


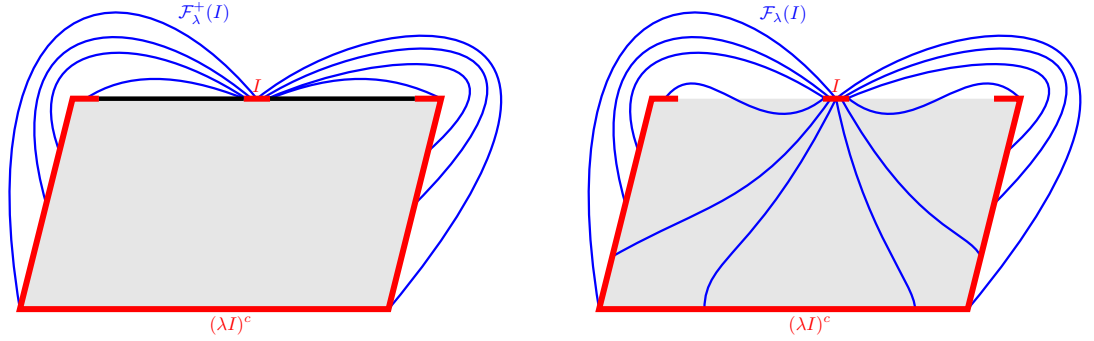
\begin{figure}

\begin{tikzpicture}[scale=0.7,every node/.style={scale=0.7}]

\draw[opacity=0,fill, fill opacity =0.1] (0,0) -- (7,0) -- (6,-4)--(-1,-4)--(0,0)--(0.,0);

\draw[line width =0.03 cm,blue]
(3.26,0)  .. controls (2, 0.5) and (1,0.5) ..
(0.25,0)
(3.3,0)  .. controls (1, 1.5) and (-1,1) ..
(-0.25,-1)
(3.4,0)  .. controls (1, 2) and (-1.5,1.5) ..
(-0.5,-2)
(3.5,0)  .. controls (1, 3) and (-2,2) ..
(-1,-4)
(3.74,0)  .. controls (5, 0.5) and (6,0.5) ..
(7,0)
(3.7,0)  .. controls (6, 1) and (9,1) ..
(6.75,-1)
(3.6,0)  .. controls (6, 1.5) and (10,1.5) ..
(6.5,-2)
(3.5,0)  .. controls (6, 2) and (11,2) ..
(6,-4)
;

\node[above,blue] at (2.5,1.3) {$\Fam^+_\lambda(I)$};

\node[above,red] at (3.5,0) {$I$};
\node[above,below,red] at (3,-4) {$(\lambda I)^c$};

\draw[line width =0.08 cm,red] (6.5,0) -- (7,0) -- (6,-4)--(-1,-4)--(0,0)--(0.5,0)
(3.25,0)  -- (3.75,0);
\draw[line width =0.06 cm] (3.75,0)--(6.5,0)
(0.5,0)--(3.25,0);

\begin{scope}[shift={(11,0)}]
\draw[opacity=0,fill, fill opacity =0.1] (0,0) -- (7,0) -- (6,-4)--(-1,-4)--(0,0)--(0.,0);

\draw[line width =0.03 cm,blue]
(3.26,0)  .. controls (2, -1) and (1,1) ..
(0.25,0)
(3.3,0)  .. controls (1, 1.5) and (-1,1) ..
(-0.25,-1)
(3.4,0)  .. controls (1, 2) and (-1.5,1.5) ..
(-0.5,-2)
(3.4,0)  .. controls (2, -2) and (1,-2) ..
(-0.75,-3)
(3.5,0)  .. controls (1, 3) and (-2,2) ..
(-1,-4)
(3.5,0)  .. controls (2, -3) and (1,-3) ..
(1,-4)

(3.74,0)  .. controls (5, -1) and (6,1) ..
(7,0)
(3.7,0)  .. controls (6, 1) and (9,1) ..
(6.75,-1)
(3.6,0)  .. controls (6, 1.5) and (10,1.5) ..
(6.5,-2)
(3.6,0)  .. controls (5, -2.5) and (6,-2.5) ..
(6.25,-3)
(3.5,0)  .. controls (6, 2) and (11,2) ..
(6,-4)
(3.5,0)  .. controls (4, -2.5) and (5,-3.5) ..
(5,-4)
;
\node[above,blue] at (2.5,1.3) {$\Fam_\lambda(I)$};

\node[above,red] at (3.5,0) {$I$};
\node[above,below,red] at (3,-4) {$(\lambda I)^c$};

\draw[line width =0.08 cm,red] (6.5,0) -- (7,0) -- (6,-4)--(-1,-4)--(0,0)--(0.5,0)
(3.25,0)  -- (3.75,0);

\end{scope}

\end{tikzpicture}

\caption{Parameters measuring degeneration of the Siegel disk: $\Width_\lambda(I)$ is the width of the full family of curves connecting $I$ and $[\lambda I]^c$ (right), while $\Width^+_\lambda(I)$ is the width of outer family (left).}
\label{fig:Width Width^+}
\end{figure}

\subsubsection{Full and outer families} \label{ss:FullOutFam} Recall~\eqref{eq:dfn:lambdaI} that $\lambda I$ denotes the rescaling of $I\subset \partial Z$ by $\lambda$ with respect to the center of $I$. Recall also that $[\lambda I]^c$ denotes the complement of $\lambda I$ in $\partial Z$. 

Given disjoint intervals $I,J\subset \partial Z$ and $\lambda\ge 2$, we denote by
\begin{itemize}
\item $\Fam(I,J)$ the family of curves in $\wC\setminus (I\cup J)$ connecting $I$ and $J$;
\item $\Width (I,J)=\Width (\Fam(I,J))$ the extremal width of $\Fam(I,J)$; 
\item $\Fam_\lambda(I)\coloneqq \Fam\left(I, [\lambda I]^c \right)$;
\item $\Width_\lambda(I) =\Width(\Fam_\lambda (I))$;
\item $\Fam^+(I,J)$ the family of curves in $\C\setminus Z$ connecting $I$ and $J$;
\item $\Width^+(I,J)=\Width (\Fam^+(I,J))$ the extremal width of $\Fam^+(I,J)$; 
\item $\Fam^+_\lambda(I)\coloneqq \Fam^+\left(I, [\lambda I]^c \right)$;
\item $\Width^+_\lambda(I) =\Width(\Fam^+_\lambda (I))$.
\end{itemize}
We call $\Fam$ and $\Fam^+$ the \emph{full} and \emph{outer} families respectively.

We say that an interval $I$ is
\begin{itemize}
\item \emph{$[K,\lambda]$-wide} if $\Width_\lambda(I)\ge K$, and
\item  \emph{$[K,\lambda]^+$-wide} if $\Width^+_\lambda(I)\ge K$.
\end{itemize}

Clearly, for every $K>1$ and $\lambda\ge 2$, there is $K_\lambda>1$ such that if $\ovZ$ is a $K$-quasidisk, then $\Width(\Fam_\lambda (I))\le K_\lambda$ for every $I\subset \partial Z$.  See~\S\ref{ss:NestOfTiling} for a converse statement.

For a closed Jordan disk $D\subset \C$ and disjoint intervals $I,J\subset \partial D$, the objects \[\Fam_D^-(I,J),\sp \Width_D^-(I,J), \sp \Fam_D^+(I,J), \sp \Width_D^+(I,J),\sp  \Fam_D(I,J),\sp  \Width_D(I,J)\] are defined in the same way as in the Siegel case $D=\overline Z$ (see also~\S\ref{sss:can rect}). We say that a rectangle $\RR$ is \emph{based on an interval $I\subset \partial D$} if 
\begin{equation}
\label{eq:dfn:based on} \RR\subset \wC\setminus \intr D \sp\sp\text{ and }\sp\sp \partial ^h\RR \subset I. 
\end{equation}

\subsubsection{External and diving families}
\label{sss:ext div famil} 

Consider an interval $I\subset \partial Z$. Recall from~\eqref{eq:dfn:filled_n} that $\filled_m\coloneqq f^{-\qq_{m+1} } (\overline Z)$. A curve $\gamma$ in $\Fam_\lambda^+(I)$ is called 
\begin{itemize}
\item \emph{external} (rel $\filled_m$) if $\gamma$ minus its endpoints is in $\wC\setminus \filled_m$, and 
\item \emph{diving} (rel $\filled_m$) otherwise;
\end{itemize}
i.e., diving curves submerge into limbs of $\filled_m$.

We denote by $\Fam^+_{\ext,m}(I,J)$ and $\Fam^+_{\div,m}(I,J)$ the subfamilies of $\Fam_{\lambda}^+(I,J)$ consisting of external and diving curves respectively. As usual, we write 
\[\Width^+_{\ext,m}(I,J)=\Width\left(\Fam^+_{\ext,m}(I,J)\right)\sp\sp\text{ and }\sp\sp \Width^+_{\div,m}(I,J)=\Width\left(\Fam^+_{\div,m}(I,J)\right).\]
The families $\Fam^+_{\lambda,\ext,m}(I),\Fam^+_{\lambda,\div,m}(I)$ are defined accordingly.

\begin{lem}
\label{lmm:ext famil}
Consider an interval $T=[a,b]\in \Dbb_m$ in the diffeo-tiling~\S\ref{sss:diff tilings} and let $L_a,L_b$ be the limbs of $\filled_m$ attached to $a, b$. Consider intervals $I\subset T$ and $J\subset \partial Z$. 

 \emph{\bf \setword{(A)}{Cl:A:lmm:ext famil}} We have:
\begin{equation}
\label{eq:lmm:ext famil}
 \Width^+(I,J)=\Width^+_{\ext,m}(I,J)+\Width^+_{\div,m}(I,J)-O(1).
\end{equation}

 \emph{\bf \setword{(B)}{Cl:B:lmm:ext famil}} There are laminations $\FamG_\ext\subset \Width^+_{\ext,m}(I,J), \sp \FamG_\div\subset \Fam^+_{\div,m}(I,J)$ consisting of at most two rectangles each such that
\begin{equation}
\label{eq:lmm:ext famil:2}
 \Width(\FamG_{\ext})=\Width^+_{\ext,m}(I,J)-O(1),\sp\sp\sp  \Width(\FamG_{\div})=\Width^+_{\div,m}(I,J)-O(1).
\end{equation}

  \emph{\bf \setword{(C)}{Cl:C:lmm:ext famil}} Moreover, we can assume that $\FamG_\ext$ consists of rectangles $\RR_a,\RR_b\subset \C\setminus Z$ with $\partial^{h,0}\RR_a,\partial^{h,0}\RR_b\subset I$ such that every curve in $\Fam(\RR_a)$ intersects $L_a$ before intersecting $\filled_m\setminus L_a$ while every curve in $\Fam(\RR_b)$ intersects $L_b$ before intersecting $\filled_m\setminus L_b$. 

  \emph{\bf \setword{(D)}{Cl:D:lmm:ext famil}} If $J\subset T^c$, then \[\Width^+_{\ext,m}(I,J)\le \Width^+_{\ext,m}(I,T^c) \le 2\sp\text{ and } \sp \Width^+(I,J)=\Width^+_{\div,m}(I,J)+O(1).\]

\end{lem}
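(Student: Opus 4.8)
The plan is to reduce everything to the Log-Rule (Lemma~\ref{lmm:W^- I J}) and to the elementary fact, recorded in Lemma~\ref{lem:inj push forw}, that $f^{\qq_{m+1}}$ is injective on a fjord and that the harmonic measure of a single diffeo-tile $T\in \Dbb_m$ seen from $\infty$ in $\wC\setminus\filled_m$ is at most $2^{-\qq_{m+1}}$. The decomposition $\Fam^+(I,J)=\Fam^+_{\ext,m}(I,J)\cup\Fam^+_{\div,m}(I,J)$ is a partition of the curve family according to whether the curve meets $\filled_m\setminus\overline Z$ (the limbs) or not, so \eqref{eq:lmm:ext famil} is just the Parallel Law (Gr\"otzsch/Parallel inequality for extremal width) applied to this partition, with the $O(1)$ error absorbing the usual overlap loss when one passes from a disjoint-support family to the union. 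So the first step is to set up that partition carefully and invoke the Parallel Law in both directions.

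The substance is the ``moreover'' clauses, i.e.\ that each of $\Fam^+_{\ext,m}$ and $\Fam^+_{\div,m}$ can be replaced, up to an additive $O(1)$ loss of width, by a subfamily consisting of at most two rectangles. For the \emph{external} part I would argue as follows. An external curve $\gamma$ starts on $I\subset T$ and, before it is allowed to touch $\filled_m$ again, it must leave a neighborhood of $T$; since $T$ has exactly two limbs $L_a,L_b$ of $\filled_m$ attached at its two endpoints $a,b$, the curve (after a first-exit modification) must pass either to the left of $L_a$ or to the right of $L_b$. This splits $\Fam^+_{\ext,m}$ into two subfamilies $\FamG_a,\FamG_b$; each lives in a slit domain $\wC\setminus(\filled_m\cup(\text{the other limbs}))$ which is conformally a Jordan disk, and there the family of curves from $I$ to $J$ degenerates into a single canonical rectangle $\RR_a$ (resp.\ $\RR_b$) based on $I$ in the sense of~\eqref{eq:dfn:based on}, with $\partial^{h,0}\RR_a\subset I$, and with the stated first-hitting property relative to $L_a$ (resp.\ $L_b$) built in by construction. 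The width of the whole external family is then the harmonic sum/parallel combination of $\Width(\RR_a)$ and $\Width(\RR_b)$ up to $O(1)$, which is~\eqref{eq:lmm:ext famil:2} for $\FamG_\ext=\{\RR_a,\RR_b\}$. For the \emph{diving} part, a diving curve enters some limb of $\filled_m$; pushing forward by $f^{\qq_{m+1}}$, which is injective on each fjord (Lemma~\ref{lem:inj push forw}) and maps $\wC\setminus\filled_m$ onto $\wC\setminus\overline Z$ as a branched cover, one transports the diving family into an \emph{outer} family for $\overline Z$ based on $f^{\qq_{m+1}}(I)$ and $f^{\qq_{m+1}}(J)$, where the same two-rectangle reduction (now just the Log-Rule / canonical-rectangle statement for $\overline Z$) applies and is pulled back.

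The last assertion, that $\Width^+_{\ext,m}(I,J)=O(1)$ when $J\subset T^c$, is where I expect the cleanest ``real-dynamics'' flavor: an external curve from $I\subset T$ to $J\subset T^c$ cannot dive, so it must slip past one of the two bounding limbs $L_a,L_b$ of the single tile $T$ while staying outside $\filled_m$; but the limbs of $\filled_m$ attached all around $\partial Z$ form a ``fence'' whose complement near $\di T$ is a bounded-modulus corridor, so the width of such curves is controlled by an absolute constant — concretely, one estimates the width of $\Fam^+_{\ext,m}(I,T^c)$ from above by the width of curves in $\wC\setminus\filled_m$ separating $a$ from $b$ along $T$, and the harmonic-measure bound $\le 2^{-\qq_{m+1}}$ from Lemma~\ref{lem:inj push forw} (equivalently $\mod$ of the relevant annulus is bounded below) forces this to be $O(1)$. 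Then \eqref{eq:lmm:ext famil} gives $\Width^+(I,J)=\Width^+_{\div,m}(I,J)+O(1)$ for such $J$.

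The main obstacle will be the topological bookkeeping in the two-rectangle reductions: one must justify, via a first-exit/first-entry surgery on curves, that replacing each curve of $\Fam^+_{\ext,m}$ by its sub-arc up to the first return to $\filled_m$ changes the width by at most $O(1)$, and that the resulting truncated family genuinely splits into only the two homotopy classes ``past $L_a$'' vs.\ ``past $L_b$'' with the stated first-hitting behavior — here one has to be careful that $I\subset T$ lies on a single tile, so no third escape route around a different limb is available, and that the degeneration of a curve family in a Jordan disk into a single canonical rectangle based on a prescribed boundary interval is exactly the content already used in Lemma~\ref{lmm:W^- I J}. Once those surgeries are set up, every width estimate reduces to the Log-Rule and the Parallel Law, and the $O(1)$ errors are the standard losses from passing between a family and a disjoint-support sub-lamination.
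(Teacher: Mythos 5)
Your opening reduction already has a genuine gap: the Parallel Law gives only the subadditive direction $\Width^+(I,J)\le \Width^+_{\ext,m}(I,J)+\Width^+_{\div,m}(I,J)$, whereas the content of \eqref{eq:lmm:ext famil} is the reverse inequality up to $O(1)$. The external and diving subfamilies are \emph{not} disjointly supported (their curves overlap in $\wC\setminus \filled_m$), and in general two subfamilies of a family can each carry essentially the full width, so there is no ``usual overlap loss'' to absorb into $O(1)$. The mechanism the paper uses, and which your sketch is missing, is a splitting of the canonical rectangle $\RR$ of $\Fam^+(I,J)$ (see \S\ref{sss:can rect}) into \emph{consecutive genuine subrectangles}, whose widths add exactly: writing $J=J_x\cup J_y\cup J_z$ with $J_x,J_z\subset T$ and $J_y\subset T^c$, one gets two external bands $\RR_1,\RR_5$ landing on $J_x,J_z$, two diving bands $\RR_2,\RR_4$ whose vertical leaves hit $L_a$ (resp.\ $L_b$) before any other part of $\filled_m$, and a middle band $\RR_3$ of leaves going to $J_y$; the harmonic-measure comparison (the harmonic measure of $T$ in $(\wC\setminus\filled_m,\infty)$ is at most $2^{-\qq_{m+1}}$, smaller than that of $\partial L_a,\partial L_b$) gives $\Width(\RR_3)=O(1)$, and \eqref{eq:lmm:ext famil}--\eqref{eq:lmm:ext famil:2} are then read off from the exact additivity of widths over genuine subrectangles (cf.~\eqref{eq:width: concat}) together with the Parallel Law and monotonicity.

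Your two two-rectangle reductions also do not work as described. For the external family the dichotomy is backwards: an external curve from $I\subset T$ need not ``leave a neighborhood of $T$'' at all — when $J$ meets $T$, the \emph{wide} external subfamilies are precisely those joining $I$ to $J\cap T$ inside the fjord over $T$ (these can have arbitrarily large width, cf.\ Theorem~\ref{thm:par fjords}), while the curves that slip past $L_a$ or $L_b$ to reach $T^c$ are exactly the $O(1)$ junk you estimate in your last step; so your $\RR_a,\RR_b$ capture the negligible part of $\Fam^+_{\ext,m}(I,J)$ and miss the essential one. (Note also that, as literally written, the property ``intersects $L_a$ before $\filled_m\setminus L_a$'' characterizes \emph{diving} curves, so the $\FamG_\ext$ in that clause of the statement is evidently $\FamG_\div$; taking it at face value seems to have shaped your external construction.) For the diving family, the push-forward by $f^{\qq_{m+1}}$ fails: a diving curve enters the limbs, i.e.\ leaves $\wC\setminus\filled_m$, so the covering $f^{\qq_{m+1}}\colon\wC\setminus\filled_m\to\wC\setminus\overline Z$ does not apply to it, a global push-forward preserves width only up to the factor $2^{-\qq_{m+1}}$ by \eqref{eq:Width:degree d} rather than up to $O(1)$, and Lemma~\ref{lem:inj push forw} concerns fjords (which lie outside $\filled_m$), not limbs. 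In fact no dynamics is needed in this lemma: the diving rectangles are just the bands $\RR_2,\RR_4$ of the canonical rectangle, made disjoint from $Z$ by removing $O(1)$ buffers. Only your treatment of the final assertion ($J\subset T^c$), via the harmonic-measure bound, matches the paper's argument.
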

\begin{proof}
 Present $J$ as a concatenation of intervals $J_x\cup J_y\cup J_z$ such that $J_x,J_z\subset T$ while $J_y\subset T^c$;  we allow some of the intervals to be empty. Consider the canonical rectangle $\RR$ of $\Fam^+(I,J)$, see~\S\ref{sss:can rect}. Then $\RR$ splits into a union of genuine subrectangles $\RR_1\cup \RR_2\cup \RR_3\cup \RR_4\cup \RR_5$, where some of them can be empty, such that
\begin{itemize}
\item $\RR_1\subset \Fam^+_{\ext,m}(I,J_x)$ and $\RR_5\subset \Fam^+_{\ext,m}(I,J_z)$;
\item every $\gamma\in \Fam(\RR_2)$ intersects $L_a$ before intersecting $\filled_m\setminus L_a$; 
\item similarly, every $\gamma\in \Fam(\RR_4)$ intersects $L_b$ before intersecting $\filled_m\setminus L_b$; 
\item every $\gamma\in \Fam(\RR_3)\subset \Fam^+(I,J_y)$ is either disjoint from $L_a\cup L_b$ or it intersects $\filled_m\setminus (L_a\cup L_b)$ before intersecting $L_a\cup L_b$.
\end{itemize}
In particular, $\RR_2,\RR_4\subset \Fam^+_{\div, m}(I,J)$.

Since the harmonic measure in $(\wC\setminus \filled_m,\infty) $ of $\partial L_a,\partial L_b$ is bigger than the harmonic measure of $T$, we have $\Width(\RR_3)\le 2$. Indeed, if $\RR'_3$ is the restriction of $\RR_3$ to $\wC\setminus \filled_m$, then $\RR'_3$ is blocked in $\wC\setminus \filled_m$ by two of its shifts emerging from $L_a$ and $L_b$.

 By removing $O(1)$ buffers, we can assume that $\RR_2,\RR_4\subset \C\setminus Z$. By~\eqref{eq:width: concat}, we have:
\[\Width(\RR) =\Width(\RR_1)+\Width(\RR_2)+\Width(\RR_4)+\Width(\RR_5)-O(1).\]
\end{proof}

\subsubsection{Univalent push-forward}\label{sss:univ push forw}  Let $T=[a,b]\subset \Dbb_m$ be an interval in the diffeo-tiling. Consider a rectangle $\RR\subset \wC\setminus Z$ with $\partial^{h,0} \RR\subset T$ and $\partial^{h,1} \RR\subset \partial Z$. Let us fix an iterate $f^i$ with $i\le \qq_{m+1}$. Below we will show that after removing $O(1)$-curves in $\RR$, the remaining family of curves can be restricted to $\C\setminus f^{-i}( \overline Z)$ and then univalently pushed-forward under $f^i$, see~\eqref{eq:sss:univ push forw}.

 Let $\RR'$ be the restriction (compare to~\S\ref{sss:short subcurves}) of $\RR$ to $\wC\setminus f^{-i}( \overline Z)$: the family $\RR'$ consist of the subarcs $\gamma'$ of the $\gamma\in\Fam(\RR)$ such that every $\gamma'$ is the shortest subarc of $\gamma$ connecting $\partial^{h,0}\RR $ to $\partial f^{-i}( \overline Z)\setminus \partial^{h,0} \RR$. The $\infty$ splits $\RR'$ into at most two families of curves $\RR'_{1},\RR'_{2}$ such that the leftmost and rightmost curve of $\RR'_i$ bound a rectangle $\widetilde \RR'_{i}$ in $\C\setminus  f^{-i}( \overline Z)$ (see~\S\ref{sss:App:Lam}), where the latter surface is viewed as an annulus. The left-right order in $\RR'_i$ is inherited from $\RR$.
 
 Let $\widetilde \RR^\new_{i}$ be the rectangles obtained from $\widetilde \RR'_{i}$ by removing the $1$-buffers on every side. Let $\RR^\new_{i}$ be the family of all curves in $\RR'_{i}$ that are in $\widetilde \RR^\new_{i}$. We set $\RR^\new\coloneqq \RR^\new_1\cup \RR^\new_2$ and observe that $\Width(\RR^\new)\ge \Width(\RR)-8$. It follows from Lemma~\ref{lem:univ push} applied to $\bigcup_i\widetilde \RR^\new_i$ that  the map 
\begin{equation}
\label{eq:sss:univ push forw} f^i \colon \RR^\new \ \overset{1:1}\longrightarrow\  f^i (\RR^\new)\subset \wC\setminus Z
\end{equation}
is injective.  We will refer to ~\eqref{eq:sss:univ push forw} as the \emph{univalent push-forward} of $\RR$. Again, by convenience, $f^i (\RR^\new)$ can be replaced in applications by the rectangle bounded by the leftmost and rightmost curves in $f^i (\RR^\new)\subset \wC\setminus Z$, see~\S\ref{sss:App:Lam}.

\subsubsection{$\Fam(I^+,J^+)$-families} For a closed Jordan disk $D\subset \C$, consider two disjoint intervals $I,J\subset \partial D$. Let us view $\wC\setminus (I\cup  J)$ as a Riemann surface; with respect to this Riemann surface both $I, J$ have two sides: the \emph{outer sides} $I^+,J^+$ and the \emph{inner sides} $I^-,J^-$. Ignoring the endpoints of $I,J$, a curve $\gamma\colon (0,1)\to \wC\setminus (I\cup J)$ \emph{lands} at $\gamma(1)\in I^+$ if \[\gamma [1-\varepsilon, 1)\subset \wC\setminus Z\sp \forall \varepsilon >0\sp\sp \text{ and }\sp \lim_{\tau\to 1} \gamma(\tau)=\gamma(1). \] 
Similarly, the landing at $I^-$ is defined. Let
\begin{itemize}
\item $\Fam (I^+,J^+)$ be the family of curves in $\wC\setminus (I\cup J)$ connecting $I^+$ and $J^+$;
\item $\Width (I^+,J^+)=\Width (\Fam(I^+,J^+))$ be the extremal width of $\Fam (I^+,J^+)$.
\end{itemize}

The \emph{central arc} in $\Fam^-_D(I,J)$ is the curve $\ell\in \Fam^-_D(I,J)$ that splits $D$, viewed as a rectangle with horizontal sides $I,J$, into two genuine subrectangles of equal width. 


\begin{lem}[Trading $\Fam(I,J)$ into $\Fam(I^+,J^+)$]
\label{lmm:W into W(I+,J+)}
Consider a closed Jordan disk $D\subset \C$ and a family $\Fam(I,J)=\Fam_D (I,J)$ for $I,J\subset \partial D$.  Let $A,B\subset \partial D$ be two complementary intervals between $I$ and $J$ and let $\widetilde I\supset I$ and $  \widetilde J\supset J$ be thickenings of $I,J$ so that $\widetilde I, \widetilde J$ are disjoint intervals of $\partial D$. Set 
\[ C\coloneqq \Width^-(A,B) +\Width^-(I,\widetilde I ^c )+\Width^-(J,\widetilde J ^c) .\]
Let $\RR\subset \Fam(I,J)$ be a lamination.

Then there are intervals $\wI, \wJ \subset \partial Z$ with $I\subset \wI\subset \widetilde I$ and $J\subset \wJ\subset \widetilde J$ such that there is a restriction $\FamG$ of a sublamination of $\RR$ with
\begin{itemize}
\item  $\FamG\subset \Fam\big(\wI^+, \wJ^+\big)$;
\item $\Width(\RR|\FamG)=\Width(I,J) - O(C)$, see~\eqref{eq:sss:Restr of Lam};
\item $\FamG$ is disjoint from the central arc in $\Fam^-(I,J)$.
\end{itemize}

 In particular, by taking $\RR$ to be the vertical family of  $\Fam(I,L)$, see~\S\ref{sss:can rect}, we obtain $\Width(\wI^+, \wJ^+)\ge\Width(I,J) - O(C)$. 
\end{lem}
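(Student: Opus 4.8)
The plan is to reduce the statement to the explicit inner estimates of Lemma~\ref{lmm:W^- I J} via a systematic inner/outer decomposition of the curves of $\RR$ relative to $D$. I would work with the vertical curves $\gamma\colon[0,1]\to\wC\setminus(I\cup J)$, $\gamma(0)\in I$, $\gamma(1)\in J$, of $\RR$, put in general position with respect to $\partial D$; each $\gamma$ then meets $\overline D$ in finitely many maximal sub-arcs, each a crosscut of $D$ --- its \emph{inner portions} --- and, the vertical curves being disjoint, all their inner portions together form a lamination in $D$, and this is the object to which the Log--Rule is applied.

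First I would discard, one after another, the following sublaminations of $\RR$, whose inner portions are ``expensive'' in the sense of Lemma~\ref{lmm:W^- I J}. (a) The curves landing at $I$ from inside $D$ whose first inner crosscut ends in $\widetilde I^c$: restricting such a curve to that crosscut produces a lamination inside $\Fam^-_D(I,\widetilde I^c)$, hence --- since restricting vertical curves to sub-arcs does not decrease width --- of width $\le\Width^-(I,\widetilde I^c)$. (b) Symmetrically at the $J$-end, of width $\le\Width^-(J,\widetilde J^c)$. (c) The curves having an inner portion that meets the central arc $\ell$ of $\Fam^-_D(I,J)$: cutting such an inner portion at its first point on $\ell$, the preceding piece is a crosscut lying in one of the two halves $D_1,D_2$ into which $\ell$ splits $D$, joining $\ell$ to a point of $\partial D$; since each $D_i$ has width $\frac12\Width^-_D(I,J)=\frac1{2\Width^-(A,B)}$, and since the $\partial D$-endpoint of such a crosscut lies in $A\cup B$ whenever the inner portion in question is neither the first nor the last one of $\gamma$, these crosscuts sit in a family of width $O(\Width^-(A,B))$; the first and last inner portions crossing $\ell$ (whose $\partial D$-endpoints lie in $I\cup J$, next to the endpoints of $\ell$) need a separate treatment, discussed below. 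By the Parallel Law all these peelings cost $O(C)$ in total.

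Next, for each surviving $\gamma$ I would set $t_I(\gamma)$ to be the parameter at which $\gamma$ first leaves $\overline D$ after its $I$-endpoint (so $t_I=0$ if $\gamma$ lands at $I^+$) and $s_J(\gamma)$ the last parameter at which it re-enters $\overline D$ before its $J$-endpoint; by (a) and (b) the cut points $\gamma(t_I),\gamma(s_J)$ lie in $\widetilde I$ and $\widetilde J$. Let $\wI$ be the smallest subinterval of $\widetilde I$ containing $I$ and all the $\gamma(t_I)$, and $\wJ$ the analogous subinterval of $\widetilde J$. Define $\FamG$ as the restriction of the remaining sublamination to the arcs $\gamma|[t_I(\gamma),s_J(\gamma)]$, shortened a little more --- peeling a last $O(C)$, of the types in (a), (b) --- so that these arcs do not re-enter $\wI\cup\wJ$ once they have left it. Then each curve of $\FamG$ lies outside $\overline D$ just after its first and just before its last parameter, hence lands at $\wI^+$ and at $\wJ^+$, so $\FamG\subset\Fam(\wI^+,\wJ^+)$; it misses $\ell$ because the curves whose inner portions meet $\ell$ were removed (or rerouted so as to avoid $\ell$) in (c); and $\Width(\RR|\FamG)\ge\Width(\RR)-O(C)$, since only an $O(C)$ family was discarded and restricting to sub-arcs does not decrease width. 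The ``in particular'' clause then follows by feeding in for $\RR$ the vertical family of $\Fam(I,J)$ (see~\S\ref{sss:can rect}), of width $\Width(I,J)$, so that $\Width(\wI^+,\wJ^+)\ge\Width(\FamG)\ge\Width(I,J)-O(C)$.

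The main obstacle is exactly the part of (c) left open above: a crosscut of $D$ meeting the central arc $\ell$ is in general \emph{not} confined to a family of bounded width, since crosscuts to $\ell$ can be made arbitrarily short near either of its two endpoints (a ``corner'' phenomenon), and the $\partial D$-endpoints of the first and last inner portions of a curve of $\RR$ lie precisely in $I\cup J$, adjacent to those corners. Handling these curves --- presumably by rerouting the offending end-pieces through $\wC\setminus\overline D$ (which is possible for at most a Log--Rule-controlled amount of width) or by choosing $t_I,s_J$ more cleverly --- while keeping the resulting arcs disjoint, is where the real work lies. The other delicate point is purely the bookkeeping: one has to organize the several discarded families (near $I$, near $J$, across $\ell$, and the re-entry family) so that their widths genuinely add up to $O(C)$ under the Parallel Law rather than interacting in an uncontrolled way.
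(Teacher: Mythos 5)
Your overall route is the same as the paper's (peel off an $O(C)$ sublamination using the inner families and the central arc, then read off $\wI,\wJ$ from the extreme exit points and restrict), but the step you yourself flag as open is a genuine gap, and the remedies you sketch would not close it. The conclusion is about $\Width(\RR|\FamG)$ for a restriction $\FamG$ of a sublamination of $\RR$: you are only allowed to \emph{select} leaves of $\RR$ and pass to subarcs, never to modify them, so ``rerouting the offending end-pieces through $\wC\setminus\overline D$'' is not an admissible operation (a family of rerouted curves is not a sublamination of $\RR$, and no Log--Rule estimate applies to it); nor is the problem one of choosing $t_I,s_J$ more cleverly, since the difficulty is to bound the \emph{width} of the leaves whose initial or terminal dip meets the central arc, not to cut them at better points.

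The gap closes by following the offending dip \emph{past} its crossing of the central arc $\beta$, instead of stopping at the first crossing. Since the interior of a leaf avoids $I\cup J$, the initial dip (entered through $I$) that meets $\beta$ can only end in one of two ways: either it stays in $\overline D$ all the way to the terminal point on $J$, in which case the leaf contains a subarc of $\Fam^-(I,\widetilde I^c)$ (because $J\subset\widetilde I^c$) and is already discarded in your class (a); or it exits $D$ through a complementary interval, and then the piece between its \emph{last} crossing of $\beta$ inside that dip and the exit point joins $\beta$ to the opposite side ($A$ or $B$) of one of the two halves into which $\beta$ cuts $D$. Each half, viewed as a quadrilateral with horizontal sides on $I$ and $J$, has width $\tfrac12\Width^-(I,J)=\tfrac1{2\Width^-(A,B)}$, so its crossing family from $\beta$ to the opposite complementary interval has width $2\Width^-(A,B)$; the same argument handles the terminal dip and the middle excursions. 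Hence all leaves meeting $\beta$ are absorbed by your classes (a), (b) together with these two crossing families, at total cost $O(C)$, with no corner family ever appearing. This is exactly how the paper's proof proceeds: it removes outright every leaf that meets the central arc or contains a subarc of $\Fam^-(I,\widetilde I^c)\cup\Fam^-(J,\widetilde J^c)$, at cost $O(C)$, and then (instead of your hull-of-exit-points construction, whose extra ``non re-entry'' peeling you also do not justify) uses the leftmost and rightmost surviving leaves $\gamma_\pm$, which exist because the survivors miss $\beta$ and are therefore linearly ordered, to define $\wI,\wJ$ and obtain the landing on the outer sides after restriction.
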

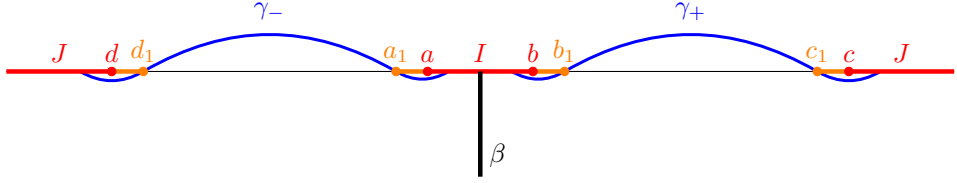
\begin{figure}[t!]
\begin{tikzpicture}[scale=1.4]

\draw (-4.5,0) -- (4.5,0);

\draw[blue,line width=0.4mm]   
(0.3,0) edge[bend right] (0.8,0)
(0.8,0) edge[bend left] (3.2,0) 
(3.2,0)edge[bend right] (3.8,0);

\node[blue,above] at (2,0.4) {$\gamma_+$};

\draw[blue,line width=0.4mm]   
(-0.3,0) edge[bend left] (-0.8,0)
(-0.8,0) edge[bend right] (-3.2,0) 
(-3.2,0)edge[bend left] (-3.8,0);

\node[blue,above] at (-2,0.4) {$\gamma_-$};

\draw[red,line width=0.6mm] (-0.5,0) -- (0.5,0)
(-4.5,0) -- (-3.5,0)
(4.5,0) -- (3.5,0);
\node[above,red] at (0,0) {$I$};

\draw[line width=0.6mm] (0,0) -- (0,-1);
\node[above right ] at (0,-1) {$\beta$};

\node[above,red] at (-4,0) {$J$};
\node[above,red] at (4,0) {$J$};

\draw[orange,line width=0.6mm]
(-0.5,0)--(-0.8,0)
(0.5,0)--(0.8,0)
(-3.2,0)--(-3.5,0)
(3.2,0)--(3.5,0);


\filldraw[red] (0.5,0) circle (0.04 cm);
\node[above, red] at (0.5,0) {$b$};

\filldraw[orange] (0.8,0) circle (0.04 cm);
\node[above, orange] at (0.8,0) {$b_1$};

\filldraw[red] (-0.5,0) circle (0.04 cm);
\node[above, red] at (-0.5,0) {$a$};

\filldraw[orange] (-0.8,0) circle (0.04 cm);
\node[above, orange] at (-0.8,0) {$a_1$};

\filldraw[red] (3.5,0) circle (0.04 cm);
\node[above, red] at (3.5,0) {$c$};

\filldraw[orange] (3.2,0) circle (0.04 cm);
\node[above, orange] at (3.2,0) {$c_1$};

\filldraw[red] (-3.5,0) circle (0.04 cm);
\node[above, red] at (-3.5,0) {$d$};

\filldraw[orange] (-3.2,0) circle (0.04 cm);
\node[above, orange] at (-3.2,0) {$d_1$};
\end{tikzpicture}
\caption{The curves $\gamma_-$ and $\gamma_+$ specify the intervals $\widehat I=[a_1,b_1]$ and $\widehat J=[c_1,d_1]$. Here $I=[a,b]$ and $ J=[c,d]$.}
\label{Fg:gamma_- gammma_+}
\end{figure}
\begin{proof}
 Let $\FamG^\new$ be the lamination obtained from $\FamG$ by removing all leaves $\ell$ satisfying one of the following property:
\begin{itemize}
\item $\ell$ intersects the central arc $\beta$ of  $\Fam^-(I,J)$;
\item $\ell$ contains a subarc in $\Fam^-(I,\widetilde I^c)$;
\item $\ell$ contains a subarc in $\Fam^-(J,\widetilde J^c)$.
\end{itemize}
By assumption, $\Width(\FamG^\new) = \Width(\FamG)- O(C)$.

Since curves in $\FamG^\new$ do not intersect $\beta$, they possess a left-right order. Denote by $\gamma_-,\gamma_+$ the leftmost and rightmost arc in $\FamG$. We assume that $\gamma_-<\gamma_+<\beta$ with respect to the clockwise order around $I$, see Figure~\ref{Fg:gamma_- gammma_+}. Write $I=[a,b], J=[c,d]$, where $a<b<c<d$.  Intersecting $\gamma_-,\gamma_+$ with $\partial D^+$, we obtain the intervals $\widehat I =[a_1,b_1], \widehat J=[c_1,d_1]$ as follows (Figure~\ref{Fg:gamma_- gammma_+}):
\begin{itemize}
\item If $\gamma_-$ starts in $I^+$, then $a_1\coloneqq a$; otherwise $a_1$ is the first intersection of $\gamma_-$ with $\partial D\setminus I$.
\item If $\gamma_+$ starts in $I^+$, then $b_1\coloneqq b$; otherwise $b_1$ is the first intersection of $\gamma_+$ with $\partial D\setminus I$.
\item If $\gamma_-$ ends at $J^+$, then $d_1\coloneqq d$; otherwise $d_1$ is the last intersection of $\gamma_-$ with $\partial D\setminus J$.
\item If $\gamma_+$ ends at $J^+$, then $c_1\coloneqq c$; otherwise $c_1$ is the last intersection of $\gamma_+$ with $\partial D\setminus J$.
\end{itemize} 
By construction, $\widehat I\subset I$ and $\widehat J\subset J$. Restricting $\FamG^\new$ to the family $\Fam(\widehat I, \widehat J)$, we obtain a required lamination $\FamG^\New\subset \Fam(\widehat I^+, \widehat J^+).$ 
\end{proof}

\subsubsection{$\Fam^\circ_L(I,J)$-families}
\label{sss:Fam^circ}
Consider a pair $I,J\subset \partial D$ of disjoint intervals, and let $L\subset \partial D$ be one of the complementary intervals between $I$ and $J$. We define 
\begin{itemize}
\item $\Fam^\circ _L(I,J)$ to be the set of curves $\gamma\in \Fam (I^+,J^+)$ such that $\gamma$ is disjoint from $\partial D\setminus (I\cup L\cup J)$;
\item $\Width^\circ _L(I,J)\coloneqq \Width \left(\Fam^\circ _L(I,J) \right)$.
\end{itemize}

If $I<L<J$, then we write:
\[ \Fam^\circ (I,J) \coloneqq \Fam^\circ_L(I,J)\sp\sp\text{ and }\sp\sp \Width^\circ (I,J) \coloneqq \Width^\circ_L(I,J).\]

\begin{lem}[Snakes in $\Fam^\circ \setminus \Fam^+$]
\label{lem:Fam^circ:R}Consider a closed topological disk $D$, intervals $I,J\subset\partial D$, and $\Fam_L^\circ(I,J)$ as above, if 
\[K =\Width^\circ(I,J)-\Width^+_L(I,J)\gg 1,\] then $\Fam^\circ_L(I,J)$ contains a rectangle $\RR$, called a \emph{snake}, with $\Width(\RR)=K-O(1)$ such that every vertical curve of $\RR$ intersects $L$. 
\end{lem}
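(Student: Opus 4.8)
The statement is a ``Snake Lemma'' of the usual shape: a width defect between $\Fam^\circ_L$ and its sub-family $\Fam^+_L$ must be realized by a wide rectangle of the ``forbidden'' (here: $L$-crossing) type, and I would extract that rectangle from the canonical rectangle of $\Fam^\circ_L$. First I set up the ambient domain. Let $A\coloneqq \partial D\setminus(I\cup L\cup J)$ be the complementary arc of $\partial D$ other than $L$. Since $A$ shares one endpoint with $I$ and one with $J$, the union $I\cup A\cup J$ is a single Jordan arc whose two free endpoints are exactly the endpoints of $L$; hence $\Omega\coloneqq \wC\setminus(I\cup A\cup J)$ is a Jordan domain in which $L$ is a crosscut with $\Omega\setminus L=\intr D\,\sqcup\,(\wC\setminus D)$, and both outer sides $I^+$, $J^+$ lie on $\partial(\wC\setminus D)$. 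In this language $\Fam^\circ_L(I,J)$ is precisely the family of curves in $\Omega$ joining $I^+$ to $J^+$, and $\Fam^+_L(I,J)$ is its sub-family of curves that do not cross $L$, i.e.\ that stay in $\wC\setminus\intr D$.

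Next I would uniformize: let $\psi\colon\Omega\to\RR^\circ\coloneqq[0,W^\circ]\times[0,1]$ be the conformal map onto the canonical rectangle of $\Fam^\circ_L(I,J)$, with $\psi(I^+)$ the bottom side, $\psi(J^+)$ the top side, and $W^\circ\coloneqq\Width^\circ_L(I,J)$. Tracking the boundary correspondence along $\partial\Omega$ (the doubled arc $I\cup A\cup J$), one sees that the inner arcs $I^-,A^-,J^-$ make up one entire vertical side of $\RR^\circ$, say $\{0\}\times[0,1]$, and that $\psi(L)$ is a crosscut of $\RR^\circ$ joining the two endpoints $(0,0),(0,1)$ of that side; together with the side it bounds the subdomain $W\coloneqq\psi(\intr D)$. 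Because $W$ is connected, abuts $\{0\}\times[0,1]$, and is otherwise interior to $\RR^\circ$, the non-constant harmonic function $\Re$ maps $W$ onto the open interval $(0,m)$, where $m\coloneqq\sup_W\Re=\max_{\psi(L)}\Re$. The point of this is the lower bound $m\ge K$: the open rectangle $(m,W^\circ)\times[0,1]$ is disjoint from $\overline W$, hence contained in $\RR^\circ\setminus\overline W=\psi(\wC\setminus D)$, so every curve joining its horizontal sides within it pulls back to a curve of $\Fam^+_L(I,J)$; monotonicity of width then gives $W^\circ-m=\Width\!\big((m,W^\circ)\times[0,1]\big)\le\Width^+_L(I,J)$, i.e.\ $m\ge W^\circ-\Width^+_L(I,J)=K$.

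Finally, I would take $\RR\coloneqq\psi^{-1}\!\big([1,K-1]\times[0,1]\big)$, a rectangle of width $K-2=K-O(1)$ lying in $\Omega$. Its vertical curves are the $\psi^{-1}$-images of the segments $\{c\}\times[0,1]$ with $c\in[1,K-1]\subset(0,m)$; since $c\in\Re(W)=(0,m)$, each such segment meets $W$ at an interior point of $\RR^\circ$, so its $\psi^{-1}$-image is a curve of $\Fam^\circ_L(I,J)$ that enters $\intr D$. Being confined to $\Omega$, such a curve can leave $\wC\setminus D$ only through the crosscut $L$, hence it crosses $L$ (and so lies in $\Fam^\circ_L\setminus\Fam^+_L$), which is exactly what the snake requires. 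The only step that calls for real care is the boundary-correspondence claim in the uniformization — that $\intr D$ sits against a single side of $\RR^\circ$ and $\psi(L)$ joins that side's endpoints — which follows by noting that on $\partial\Omega$ the arcs $I^+$ and $J^+$ are separated by $A^+$ on one side and by $I^-\cup A^-\cup J^-$ on the other, with the endpoints of $L$ landing at the two corners shared by $I^+,I^-$ and by $J^+,J^-$; the rest is a single monotonicity-of-width comparison, and the $O(1)$ loss is merely the two unit buffers that discard the degenerate vertical curves at the ends.
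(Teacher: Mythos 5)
Your proposal is correct and follows essentially the same route as the paper: the paper likewise takes the canonical rectangle of $\Fam^\circ_L(I,J)$ in $\wC\setminus L^c$, splits it along the (unique) extremal vertical curve that meets $L$ while staying in $\wC\setminus \intr D$ — which is exactly your threshold line $\{x=m\}$ with $m=\max_{\psi(L)}\Re$ — and bounds the outer piece by $\Width^+$ via monotonicity (quoting Lemma~\ref{lem:splitting rectangle:2}), so the remaining piece of width $K-O(1)$ is the snake. Your write-up merely makes the uniformization, the crosscut picture, and the inequality $m\ge K$ explicit, which is a faithful expansion of the paper's compressed argument.
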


\begin{proof}
Let $\RR$ be the canonical rectangle of $\Fam^\circ_L(I,J)$, \S\ref{sss:can rect}; i.e. the semi-closed rectangle realizing the width between $I^+,J^+$ in the open topological disk $\wC\setminus L^c$. Let $\gamma\in \Fam(\RR)$ be the unique vertical curve intersecting $L$ such that $\gamma\subset \wC\setminus \intr D$. Then $\gamma$ splits $\RR$ into two rectangles $\RR^\out\subset \Fam^+(I,J)$ and $\RR^\inn\subset \Fam_L^\circ(I,J)$, where the latter rectangle submerges into $D$. By Lemma~\ref{lem:splitting rectangle:2}, $\Width(\RR^\out)=\Fam^+(I,J)-O(1)$; this will imply that $\Width(\RR^\inn)=K-O(1)$.
\end{proof}

\subsection{Series Decompositions}
\label{sss:SerDecomp for F circ}
 In this subsection we will discuss how to take restrictions (compare with~\S\ref{sss:short subcurves}) of families submerging into topological disks. For a closed topological disk $D$ consider a lamination $\RR$ in $\Fam^\circ_L(I,J)\setminus \Fam^+(I,J)$; i.e.~every curve in $\RR$ intersects $L$.  We assume that $I<L<J$ is the order of intervals in $\partial D$. Let us introduce a topological decomposition for $\RR$. 

We will use the inner/outer order on curves in $\RR$, see \S\ref{sss:inner outer order}: the innermost curve in $\RR$ is the closest curve to $( L^c )^-$ in $\wC\setminus L^c$, where $L^c=\partial D \setminus L$.

Let $\beta\colon [0,1]\to \wC, \sp \beta(0)\in I,\sp \beta(1)\in J$ be the outermost curve of $\RR$; i.e. all other curves of $\RR$ are between $\beta$ and $(L^c)^-$. Let $x=\beta(t)\in \beta \cap L$ be the first (for the smallest $t$) intersection between $L$ and $\beta$.  


 \begin{figure}[t!]
\[\begin{tikzpicture}[scale=1.4]

\node[red,above] at(4.6,0){$J$}; 
\node[red,above] at(-4.6,0){$I$};

\draw (-5.2,0) -- (5.2,0);

\draw[red,line width=1mm] (5,0)--(4,0)
(-5,0)--(-4,0);

\draw [] (-4.5,0)
 .. controls (-3, 1.5) and (-0.5,1.5) ..
 (0,0);

\draw [] (4.5,0)
 .. controls (3, 1.5) and (0.5,1.5) ..
 (0,0);
\node[above] at  (-2.5, 1.)  {$\beta$};

 \draw[red] (-3,0)
 .. controls (-2.5, 1) and (-1,1) ..
 (-0.5,0);
 
 \node[red] at (-2.1, 0.5){$\gamma_a$};
 
  \draw[red,shift={(-4,0)}]
(0.5,0)
.. controls (0.5+0.2, -0.3) and (1-0.2,-0.3) ..
(1,0); 
\draw[red,shift={(-4.5,0)}]
(0.5,0)
.. controls (0.5+0.2, 0.3) and (1-0.2,0.3) ..
(1,0); 
 
\draw[orange]
(0.5,0)
.. controls (0.2, -0.5) and (-0.2,-0.5) ..
(-0.5,0);

\draw[]
(0.5,0)
.. controls (0.5+0.2, 0.3) and (1-0.2,0.3) ..
(1,0);

\draw[scale =2]
(0.5,0)
.. controls (0.2, -0.5) and (-0.2,-0.5) ..
(-0.5,0);

\draw[shift={(-2,0)}]
(0.5,0)
.. controls (0.5+0.2, 0.3) and (1-0.2,0.3) ..
(1,0);

\draw[scale =3,orange]
(0.5,0)
.. controls (0.2, -0.5) and (-0.2,-0.5) ..
(-0.5,0);

\node[below,orange] at (0,-1.1) {$\gamma_b^d$};

\draw[red,shift={(1,0)}]
(0.5,0)
.. controls (0.5+0.2, 0.3) and (1-0.2,0.3) ..
(1,0); 
\draw[red,shift={(1.5,0)}]
(0.5,0)
.. controls (0.5+0.2, -0.3) and (1-0.2,-0.3) ..
(1,0); 
\draw[red,shift={(2,0)}]
(0.5,0)
.. controls (0.5+0.2, 0.3) and (1-0.2,0.3) ..
(1,0); 
 \draw[red,shift={(2.5,0)}]
(0.5,0)
.. controls (0.5+0.2, -0.3) and (1-0.2,-0.3) ..
(1,0); 
\draw[red,shift={(3,0)}]
(0.5,0)
.. controls (0.5+0.2, 0.3) and (1-0.2,0.3) ..
(1,0); 
 
 \node[below,red ] at (3.2,-0.2) {$\gamma_b$};
 
\filldraw (0,0) circle (0.04 cm);
\node[above right] at (0,0) {$x$};
\node[below,orange] at (0,0.05) {$\gamma_a^d$};

\end{tikzpicture}\]
\caption{The subcurves of $\gamma$: $\gamma_a$ (red), $\gamma^d_a$ (orange),  $\gamma^d_b$ (orange), $\gamma_b$ (red). Note that $\gamma_a$ and $\gamma_b$ are disjoint. }
\label{Fig:prf:lmm:W^circ to W^+:2}
\end{figure}
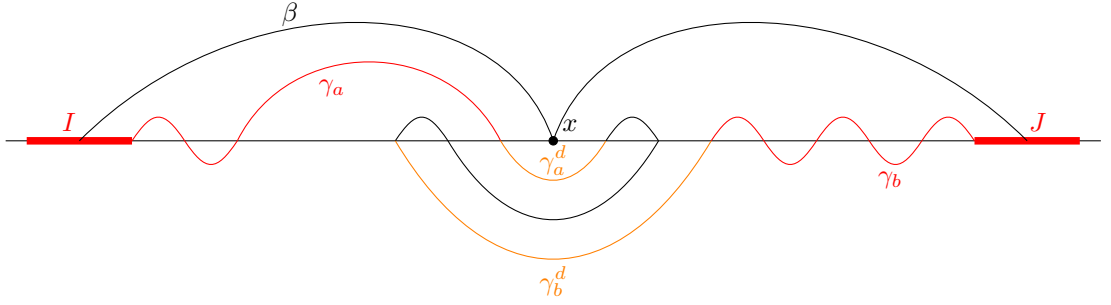

Consider a vertical curve $\gamma\colon [0,1]\to \C$ in $\RR$ with $\gamma(0)\in I$ and $\gamma(1)\in  J$. Since $x$ is the first entry of $\beta$ into $\lfloor x, J \rfloor$, we obtain that the first entry of $\gamma$ into $\lfloor x, J \rfloor$ is from $\intr D$. And since $\gamma$ starts and ends at $I^+$ and $J^+$ respectively, we can define (see Figure~\ref{Fig:prf:lmm:W^circ to W^+:2})
\begin{itemize} 
\item $\gamma(a_2)\in \partial D$ to be the first intersection of $\gamma$ with $\lfloor x, J\rfloor $;
\item $\gamma(a)\in  \partial D$ to be the last before $a_2$ intersection of $\gamma$ with $\lfloor I, x\rfloor$;
\item $\gamma_a$ to be the subcurve of $\gamma$ between $I$ and $\gamma(a)$;
\item $\gamma_{a}^d$ to be the subcurve of $\gamma$ between $\gamma(a)$ and $\gamma(a_2)$;
\item $\gamma(b_2)\in  \partial D$ to be the last intersection of $\gamma$ with $\lfloor I, x\rfloor$; 
\item $\gamma(b)\in  \partial D$ to be the first after $b_2$ intersection of $\gamma$ with  $\lfloor x, J\rfloor $,
\item $\gamma_{b}^{d}$ to be the subcurve of $\gamma$ between $\gamma(b_2)$ and $\gamma(b)$; and
\item $\gamma_b$ to be the subcurve of $\gamma$ between $\gamma(b)$ and $J$.
\end{itemize}

We say that
\begin{itemize}
\item $\gamma_a^d\subset D$ is the \emph{first passage} of $\gamma$ under $x$;
\item  $\gamma_a$ is the subcurve of $\gamma$ \emph{before} $\gamma_a^d$;
\item $\gamma_b^d\subset D$ is the \emph{last passage} of $\gamma$ under $x$;
\item $\gamma_b$ is the subcurve of $\gamma$ \emph{after} $\gamma_b^d$.
\end{itemize}

Clearly, $\gamma_a$ and $\gamma_b$ are disjoint because $a_2,b_2 $ are between $a$ and $b$. Also $\gamma_a\cup \gamma_b$ is disjoint from $\gamma_a^d\cup \gamma_b^d$. The curves $\gamma_a^d, \gamma_b^d$ may or may not coincide.

\begin{rem} Since $x$ is the first intersection of $\beta$ with $L$, the curve $\beta$ is outside of $D$ before it reaches $x$. After $x$, the curve $\beta$ may have a complicated intersection pattern with $\partial D$. For example, $\beta$ may pass under $x$ to intersect the left interval of $L\setminus \{x\}$; but then $\beta$ must go back under $x$
and intersect the right interval of $L\setminus \{x\}$. 
\end{rem}

Let us specify the following laminations
 \[
\widetilde \Fam_a \coloneqq \{ \gamma_a\mid \gamma\in \RR\}, \sp\sp\sp\widetilde \Fam_b\coloneqq \{\gamma_b\mid \gamma\in \RR\}
,\]
 \[
 \Gamma_a \coloneqq \{ \gamma^d_a\mid \gamma\in \RR\}, \sp\sp\sp \Gamma_b\coloneqq \{\gamma^d_b\mid \gamma\in \RR\}
,\]
\begin{equation}
\label{eq:Gamma}
 \Gamma\coloneqq \Gamma_a\cup \Gamma_b=\{\gamma_a^d\mid \gamma\in \RR\} \cup  \{\gamma_b^d\mid \gamma\in \RR\}.
\end{equation}
Then $\RR$ consequently overflows $\widetilde \Fam_a, \Gamma, \widetilde \Fam_b$.

 Let $\ell$ be the lowest curve in $\Gamma$ with respect to $x$; i.e.~$\ell$ separates $\Gamma$ from $L^c$ in $D$. We denote by $J_a\subset L$ the interval between the left endpoint of $\ell$ and $x$ and we denote by $I_b\subset L$ the interval between $x$ and the right endpoint of $\ell$. Define  
\begin{equation}
\label{eq:F_a:dfn}
\Fam_a=\{\gamma'\mid \gamma' \text{ is the first shortest subcurve of }\gamma\in \widetilde \Fam_a \text{ connecting }I^+, J^+_a\}\end{equation} 
to be the restriction of $\widetilde \Fam_a$ to $ \Fam(I,J_a)$ 
-- compare to~\S\ref{sss:short subcurves}; and
\begin{equation}
\label{eq:F_b:dfn}
\Fam_b=\{\gamma'\mid \gamma' \text{ is the first shortest subcurve of }\gamma\in \widetilde \Fam_b \text{ connecting }I^+_b, J^+\}
\end{equation}
to be the restriction of $\widetilde \Fam_b$ to $ \Fam(I_b,J)$.   Since curves in $\widetilde \Fam_a\sqcup \widetilde \Fam_b$ are disjoint from $\ell $, we obtain
\[ \Fam_a\subset  \Fam^\circ(I,J_a),\sp\sp\sp \Fam_b\subset \Fam^\circ(I_b,J);\]
in particular, curves in $\Fam_a, \Fam_b$ land at $J_a^+, I_b^+$ respectively. 

We summarize:

\begin{lem}
\label{lem:SerDecomp}
A lamination $\RR \subset \Fam^\circ_L(I,J)\setminus \Fam^+(I,J)$ as above consequently overflows the pairwise disjoint laminations 
\[\Fam_a\subset \Fam^\circ (I,J_a),\sp \sp \Gamma\subset \Fam^-(J_a,I_b),\sp \sp \Fam_b\subset \Fam^\circ (I_b, J).\]
defined in~\eqref{eq:F_a:dfn},~\eqref{eq:Gamma},~\eqref{eq:F_b:dfn} respectively.
\[\begin{tikzpicture}
\draw (-6,0) -- (6,0);
\draw[line width =0.8mm,red ] (-5.5,0)--(-3,0);
\node[below,red ] at (-4.2,0){$I$};
\draw[line width =0.8mm,blue ] (5.5,0)--(3,0);
\node[below,blue ] at (4.2,0){$J$};

\draw[line width =0.8mm,red ] (-1,0)--(-0.,0);
\node[below,red ] at (-0.9,0){$J_a$};

\draw[line width =0.8mm,blue ] (1,0)--(0,0);
\node[below,blue ] at (0.9,0){$I_b$};

\filldraw (0,0) circle (0.08 cm);
\node[above] at (0,0.1){$x$};

\draw[line width =1.2mm,brown ] (-0.6,0)
.. controls (-0.3, -0.5) and (0.3, -0.5)..
(0.6,0);
\node[brown,below] at (0,-0.4){$\Gamma$};

\draw [line width =1.2mm,red ] (-4,0)
 .. controls (-3.5, 0.5) and (-2.5,0.5) ..
 (-2,0)
 .. controls (-1.8, -0.5) and (-1.6,-0.5) ..
 (-1.4,0)
  .. controls (-1.2, 0.5) and (-0.8,0.5) ..
 (-0.6,0);
 \node[above,red] at (-1.7, 0.3) {$\Fam_a$};

\draw [line width =1.2mm,blue ] (4,0)
 .. controls (3.5, 0.5) and (2.5,0.5) ..
 (2,0)
 .. controls (1.8, -0.5) and (1.6,-0.5) ..
 (1.4,0)
  .. controls (1.2, 0.5) and (0.8,0.5) ..
 (0.6,0);
 \node[above,blue] at (1.8, 0.3) {$\Fam_b$};

\end{tikzpicture}
\]
\end{lem}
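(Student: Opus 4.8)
The plan is to obtain the lemma as a bookkeeping summary of the decomposition constructed just above it; no genuinely new estimate is needed. First I would recall that each vertical curve $\gamma \in \RR$ is the concatenation $\gamma_a \# \gamma_a^d \# \sigma \# \gamma_b^d \# \gamma_b$, where $\sigma$ is the (possibly trivial) portion of $\gamma$ between its first and last passages under $x$, and that it has already been recorded that $\gamma_a$ and $\gamma_b$ are disjoint while $\gamma_a \cup \gamma_b$ is disjoint from $\gamma_a^d \cup \gamma_b^d$. Taking, along $\gamma$ and in this order, the subcurves $\gamma_a$, then $\gamma_a^d$ (which is a leaf of $\Gamma = \Gamma_a \cup \Gamma_b$), then $\gamma_b$, we see at once that $\RR$ consecutively overflows $\widetilde\Fam_a$, $\Gamma$, $\widetilde\Fam_b$.

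Next I would cut $\widetilde\Fam_a$, $\widetilde\Fam_b$ down to $\Fam_a$, $\Fam_b$, which is the one step that uses the choice of $\ell$ as the \emph{lowest} leaf of $\Gamma$ with respect to $x$. Since $\ell$ separates $\Gamma$ from $L^c$ inside $D$, every passage of every $\gamma \in \RR$ lies in the component of $D \setminus \ell$ bounded by $\ell$ and the boundary arc running through $x$ from the left endpoint $p$ of $\ell$ to the right endpoint $q$ of $\ell$; by definition this arc is $J_a \cup I_b$ with $J_a \subset L$ to the left of $x$ and $I_b \subset L$ to the right of $x$. Hence both endpoints of each passage lie in $J_a \cup I_b$; as $\gamma(a) \in \lfloor I, x\rfloor$ lies to the left of $x$ and $\gamma(b) \in \lfloor x, J\rfloor$ to the right, we get $\gamma(a) \in J_a$ and $\gamma(b) \in I_b$, and — because the adjacent passage $\gamma_a^d$, resp.\ $\gamma_b^d$, immediately submerges into $D$, whereas between two consecutive intersections with $I \cup L \cup J$ the curve $\gamma$ cannot cross $\partial D$ — these points are reached from the outer side. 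Therefore $\gamma_a$ has a first shortest subcurve connecting $I^+$ to $J_a^+$, namely the corresponding leaf of $\Fam_a$, and $\gamma_b$ one connecting $I_b^+$ to $J^+$, namely the leaf of $\Fam_b$; being sub-subcurves of $\gamma_a$, $\gamma_a^d$, $\gamma_b$, the three are still in order along $\gamma$ and pairwise disjoint, so $\RR$ consecutively overflows the pairwise disjoint laminations $\Fam_a$, $\Gamma$, $\Fam_b$.

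It then remains to verify the three memberships. For $\Gamma$: each passage lies in $D$ and, as just shown, has one endpoint in $J_a$ and one in $I_b$, hence represents a curve in $\Fam^-(J_a, I_b)$. For $\Fam_a$: a leaf $\gamma'$ runs from $I^+$ up to its first landing at $J_a^+$, so beforehand it has neither crossed under $x$ nor met $J$; since $\gamma \in \Fam^\circ_L(I,J)$ meets $\partial D$ only in $I \cup L \cup J$, the curve $\gamma'$ avoids $\partial D \setminus (I \cup L_a \cup J_a)$, where $L_a \subset L$ is the complementary interval between $I$ and $J_a$, and this is exactly the defining condition of $\Fam^\circ_{L_a}(I,J_a) = \Fam^\circ(I, J_a)$; symmetrically $\Fam_b \subset \Fam^\circ(I_b, J)$. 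The only real subtlety — and the place I expect to spend care — is the endpoint-location step: checking that the leaves of $\Gamma$ are indeed the innermost passages cut off by $\ell$ and that $\gamma$ approaches $\gamma(a)$, $\gamma(b)$ from the outer side. This is the standard disk-topology argument that a curve on the far side of the innermost separating leaf must enter and exit through the boundary arc that leaf cuts off, combined with the observation that $\gamma$ cannot cross $\partial D$ between two consecutive hits of $I \cup L \cup J$.
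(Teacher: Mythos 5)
Your plan is the same as the paper's: Lemma~\ref{lem:SerDecomp} is stated there as a summary (with a bare \qed) of the construction in \S\ref{sss:SerDecomp for F circ}, so a proof is exactly the bookkeeping you carry out, and your treatment of $\Gamma$ is right — since $\ell$ is the lowest passage, every passage is confined to the component of $D\setminus\ell$ cut off by $J_a\cup I_b$, so its endpoints satisfy $\gamma(a),\gamma(b_2)\in J_a$ and $\gamma(a_2),\gamma(b)\in I_b$, giving $\Gamma\subset\Fam^-(J_a,I_b)$.

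The flaw is in the one step you yourself single out. You assert that $\gamma(a)$ and $\gamma(b)$ ``are reached from the outer side,'' justified by the fact that the adjacent passage immediately submerges into $D$. That justification is a non sequitur (the passage constrains $\gamma$ after time $a$, not before), and the claim itself is false in general: a curve of $\Fam^\circ_L(I,J)\setminus\Fam^+(I,J)$ may leave $I^+$, enter $D$ from outside at some point $y$ in the interior of $J_a$, wander inside the component of $D\setminus\ell$ bounded by $\ell$ and $J_a\cup I_b$, touch $\partial D$ again at $\gamma(a)\in J_a$ and only then dive under $x$; here the approach to $\gamma(a)$ is from $\intr D$, yet the curve is perfectly admissible (all its boundary hits lie in $I\cup L\cup J$). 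So ``$\gamma_a$ lands at $J_a^+$ at $\gamma(a)$'' cannot be the argument.

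What is true, and what the definition \eqref{eq:F_a:dfn} actually needs, is weaker: $\gamma_a$ meets $J_a$ from the outer side at \emph{some} point, not necessarily at $\gamma(a)$. The correct argument is the one your closing sentence gestures at, aimed at the right statement: consider the last time $s\le a$ at which $\gamma$ lies in $\wC\setminus D$. The arc $\gamma|(s,a]$ is contained in $D$, is (essentially) disjoint from $\ell$, and terminates on $J_a$, hence is trapped in the component of $D\setminus\ell$ whose boundary arc is $J_a\cup I_b$; since all boundary hits before $a_2$ lie in $\lfloor I,x\rfloor$, the entry point $\gamma(s)$ lies in $J_a$ and is approached from outside. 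Thus $\gamma_a$ does contain a first shortest subcurve from $I^+$ to $J_a^+$ (ending at $\gamma(s)$ in the scenario above), and symmetrically the first exit of $\gamma|[b,1]$ to the outside occurs through $I_b$, so $\gamma_b$ contains a subcurve from $I_b^+$ to $J^+$. With that substitution your memberships $\Fam_a\subset\Fam^\circ(I,J_a)$, $\Fam_b\subset\Fam^\circ(I_b,J)$ and the consecutive-overflow conclusion go through as you wrote them.
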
\qed

\subsection{Snake Lemma}
\label{sss:SnakeLmm:simpl toplog}


\begin{figure}[t!]

\begin{tikzpicture}[scale=0.8,every node/.style={scale=0.8}]

\begin{scope}[shift={(11.6,0)}]
\draw[line width =0.05 cm,fill, fill opacity =0.05] (0,0) -- (7,0) -- (6,-4)--(-1,-4)--(0,0)--(0.,0);

\draw [blue,line width=0.1 cm] (2.7,0)
 .. controls (3, 0.5) and (3.4,-0.2) ..
 (3.5,-0.2)
 .. controls (3.6, -0.2) and (4,0.5) ..
 (4.2,0);

\draw[dashed,line width =0.05 cm] (3.5,0) circle (1cm)
(3.5,1.2) -- (3.5,2.5);

\end{scope}

\begin{scope}[line width =0.04 cm,shift={(15,4)},scale=1.5]

\draw (-6,0) -- (6,0);

\draw[opacity=0,fill, fill opacity =0.05] (-6,0) -- (6,0)-- (6,-1)--(-6,-1);

\draw[red,line width=1mm] (5.5,0)--(4,0)
(-5.5,0)--(-4,0);

\node[below,red] at (4.8, 0) {$J$};
\node[below,red] at (-4.8, 0) {$I$};

\draw [blue] (5.5,0)
 .. controls (3, 2.5) and (0.5,1.5) ..
 (0,0);
 
 \draw [blue] (5.2,0)
 .. controls (3, 2.2) and (0.5,1.2) ..
 (0.3,0);

\draw[opacity=0, fill=blue, fill opacity=0.2]
(0,0)
.. controls (0.5,1.5) and (3, 2.5) ..
(5.5,0)
.. controls (5.4,0) and (5.3,0) ..
(5.2,0)
 .. controls (3, 2.2) and (0.5,1.2) ..
 (0.3,0);

  \draw [blue] (-5.2,0)
 .. controls (-3, 2.2) and (-0.5,1.2) ..
 (-0.3,0);

\draw[opacity=0, fill=blue, fill opacity=0.2]
(0,0)
.. controls (-0.5,1.5) and (-3, 2.5) ..
(-5.5,0)
.. controls (-5.4,0) and (-5.3,0) ..
(-5.2,0)
 .. controls (-3, 2.2) and (-0.5,1.2) ..
 (-0.3,0);
 
 \draw[red,line width =0.1cm]  (-0.3,0)-- (0.3,0);
\node[ below left ,red] at (-0,0){$J_1$};
 \node[ below right ,red] at (-0,0){$I_1$};
 
 \node[blue, above] at (-2,1.5) {$\RR^\new_\ell$};
  \node[blue, above] at (2,1.5) {$\RR^\new_\rho$};

\draw[blue] (4,0)
 .. controls (2.5, 1) and (2,0.5) ..
 (1.5,0);

\filldraw[blue] (1.5,0) circle (0.04cm)
 (-1.5,0) circle (0.04cm);

\node[blue,above] at (3,0.5) {$\gamma_\rho$};
\node[blue,above] at (-3,0.5) {$\gamma_\ell$};
\node[blue,above] at (0,-0.8) {$\gamma'$};

\draw[blue]
(1.5,0)
.. controls (1, -1) and (-1,-1) ..
(-1.5,0);

\draw [blue] (-5.5,0)
 .. controls (-3, 2.5) and (-0.5,1.5) ..
 (0,0);

\draw[blue] (-4,0)
 .. controls (-2.5, 1) and (-2,0.5) ..
 (-1.5,0);

\filldraw (0,0) circle (0.04 cm);
\node[below] at (0,0) {$x$};

\end{scope}

\end{tikzpicture}

\caption{Illustration to Snake Lemma~\ref{simplmm:SnLmm:Z}: if a ``snake'' with width $K\gg 1$ submerges, then either $\RR^\new_\ell$ or $\RR^\new_\rho$ has width ${2K-O(\log \lambda)}$.}
\label{Fig:SnakeLmm:illustr}
\end{figure}
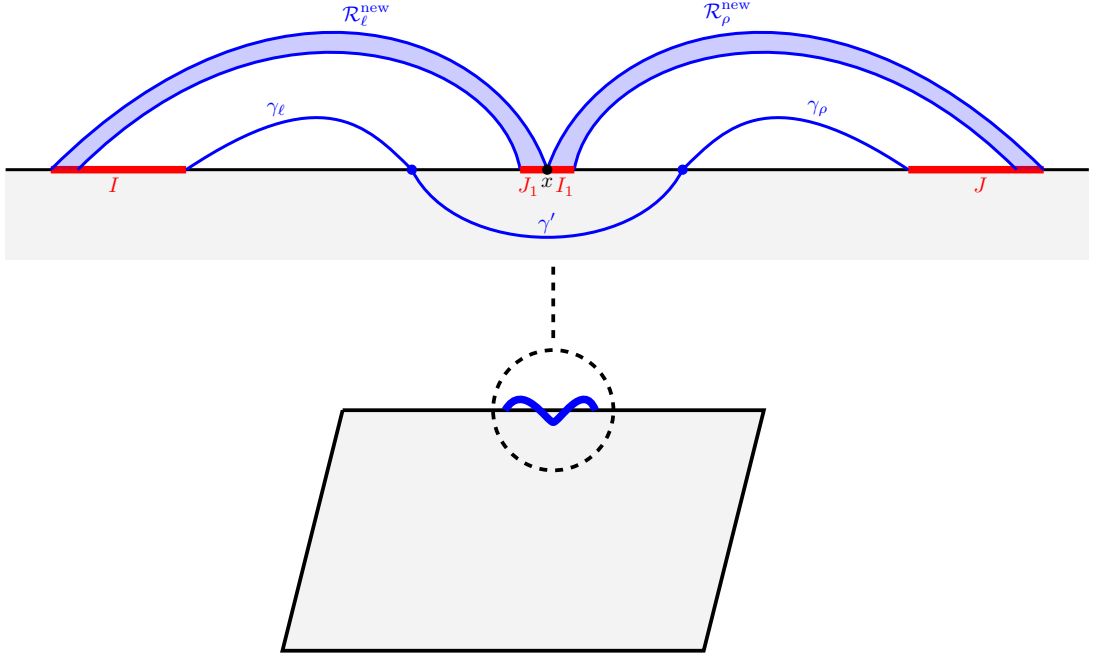

The following lemma allows us to control submergence of $\Fam^\circ(I,J)$ into $Z$ (see~\S\ref{sss:Fam^circ}). The Snake Lemma for pseudo-Siegel disks will be proven as Lemma~\ref{simplmm:SnLmm:wZ}.

\begin{snakelmm}[See Figure~\ref{Fig:SnakeLmm:illustr}]
\label{simplmm:SnLmm:Z}
Let $I,J\subset \partial Z$ be a pair of intervals with $\lfloor I, J \rfloor <1/2$ and let $L\coloneqq \lfloor I, J \rfloor\setminus (I\cup J)$ be the complementary interval between $I,J$ with $I<L<J$. Set
\[K\coloneqq \Width^\circ_L(I,J) - \Width^+(I,J).\]

 If $K\gg \log \lambda$ with $\lambda>2$, then there are intervals \[J_1,I_1\subset L,\sp\sp |J_1|< \frac{\dist(I, J_1)}{\lambda},\sp |I_1|<\frac{\dist( I_1,J)}{\lambda},\sp\sp I< J_1<I_1<J\] such that 
\begin{equation}
\label{eq:SnakeLmm:simpl}  \Width^\circ_{L_a} (I,J_1) \oplus\Width^\circ_{L_b} (I_1,J)\ge K - O(\log \lambda),   
\end{equation}
where $L_a,L_b\subset L$ are the complementary intervals between $I,J_1$ and $I_1,J$ respectively:
\[\begin{tikzpicture}
\draw (-6,0) -- (6,0);
\draw[line width =0.8mm,red ] (-5.5,0)--(-3,0);
\node[below,red ] at (-4.2,0){$I$};
\draw[line width =0.8mm,blue ] (5.5,0)--(3,0);
\node[below,blue ] at (4.2,0){$J$};

\draw[line width =0.8mm,red ] (-1,0)--(-0.3,0);
\node[below,red ] at (-0.7,0){$J_1$};

\draw[line width =0.8mm,blue ] (1,0)--(0.3,0);
\node[below,blue ] at (0.7,0){$I_1$};

\draw [line width =1.2mm,red ] (-4,0)
 .. controls (-3.5, 0.5) and (-2.5,0.5) ..
 (-2,0)
 .. controls (-1.8, -0.5) and (-1.6,-0.5) ..
 (-1.4,0)
  .. controls (-1.2, 0.5) and (-0.8,0.5) ..
 (-0.6,0);
 \node[above,red] at (-1.7, 0.3) {$\Fam^\circ_{L_a} (I,J_1)$};

\draw [line width =1.2mm,blue ] (4,0)
 .. controls (3.5, 0.5) and (2.5,0.5) ..
 (2,0)
 .. controls (1.8, -0.5) and (1.6,-0.5) ..
 (1.4,0)
  .. controls (1.2, 0.5) and (0.8,0.5) ..
 (0.6,0);
 \node[above,blue] at (1.8, 0.3) {$\Fam^\circ_{L_b} (I_1,J)$};

\end{tikzpicture}
\]
\end{snakelmm}

\noindent In particular, either $ \Width^\circ_{L_a} (I,J_1) $ or $\Width^\circ_{L_b} (I_1,J)$ has width $\ge 2K-O(\log \lambda)$. 

The Snake Lemma is a consequence of the Localization Property~\S\ref{sss:LocProp} applied to Series Decomposition~\S\ref{sss:SerDecomp for F circ}.

\begin{proof}
 Let $\RR\subset \Fam^\circ_L(I,J)\setminus \Fam^+(I,J)$ with $\Width(\RR)=K-O(1)$ be a rectangle (a snake) from Lemma~\ref{lem:Fam^circ:R} realizing $K$. Apply Series Decomposition~\S\ref{sss:SerDecomp for F circ} to $\RR$, we obtain that $\Fam(\RR)$ consequently overflows the laminations \[\Fam_a\subset \Fam^\circ (I,J_a),\sp \sp \Gamma\subset \Fam^-(J_a,I_b),\sp \sp \Fam_b\subset \Fam^\circ (I_b, J).\]
By the Localization Property~\S\ref{sss:LocProp}, $J_a, I_b$ contain an innermost subpair $J_1,I_1$ such that \[ |I_1|, \ |J_1|\le \frac 1 {2\lambda} \{|I_a|,\ |J_b|\}\]
and up to $O(\log \lambda)$-width the family $\Fam^-(J_a,I_b)$ is in  $\Fam^-(J_1,I_1)$:
\[\Width^-(J_a\setminus J_1,\sp I_b) +\Width^-(J_a,\sp I_b\setminus I_1)=O(\log \lambda)\]
 
Let $\RR^\new$ be the lamination obtained from $\RR$ by removing all $\gamma\in \Fam(\RR)$ with $\gamma_a^d\not \in \Fam^-(J_1,I_1)$ or  $\gamma_b^d\not \in \Fam^-(J_1,I_1)$. Then $\Width(\RR^\new)=K-O(\log \lambda)$.

Applying Series Decomposition~\S\ref{sss:SerDecomp for F circ}  to $\RR^\new$, we obtain that $\Gamma^\new\subset \Fam^-(J_1,I_1)$; i.e.~$J^\new_a\subset J_1$ and $I^\new_b\subset I_1$.

\end{proof}

\subsection{Trading $\Fam$ into $\Fam^+$}

\begin{cor}[Trading $\Width^\circ$ into $\Width^+$]
Under the assumptions of Lemma~\ref{simplmm:SnLmm:wZ}, there is an interval $I^\new\subset L$ such that $\Width^+_\lambda(I^\new)\succeq K$.
\end{cor}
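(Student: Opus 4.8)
The plan is to iterate the Snake Lemma, tracking the \emph{submergence excess} $\Width^\circ-\Width^+$ of the current pair of intervals, and to halt the iteration as soon as the \emph{outer} width of that pair has absorbed a fixed fraction of $K$. Two elementary reductions will do most of the work. First, if $(A,B)$ is a pair of intervals with $\dist(A,B)>\lambda|B|$, then $A\subseteq[\lambda B]^c$, so cutting each curve of $\Fam^+(A,B)$ (resp.\ of $\Fam(A,B)$) at its first passage through $[\lambda B]^c$ exhibits $\Fam^+(A,B)$ as an overflowing of $\Fam^+_\lambda(B)$ (resp.\ $\Fam(A,B)$ overflowing $\Fam_\lambda(B)$); hence $\Width^+_\lambda(B)\ge\Width^+(A,B)$ and $\Width_\lambda(B)\ge\Width(A,B)$. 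Second, $\Fam^\circ_M(A,B)\subseteq\Fam(A,B)$ for any complementary arc $M$, so $\Width^\circ_M(A,B)\le\Width(A,B)$; combining this with the first reduction and the fact that $\overline Z$ is a $K_\theta$-quasidisk (so $\Width_\lambda(B)\le K_{\theta,\lambda}$, see \S\ref{ss:FullOutFam}) yields the a priori ceiling $\Width^\circ_M(A,B)\le K_{\theta,\lambda}$ as soon as $\dist(A,B)>\lambda|B|$.

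With these in hand, I will fix a small absolute constant $c_0\in(0,1)$ and run the following loop from $(I_0,J_0,L_0)\coloneqq(I,J,L)$, $K_0\coloneqq K$. Given a pair $(I_n,J_n)$ with complementary arc $L_n\subseteq L$ (so $I_n<L_n<J_n$) and excess $K_n\coloneqq\Width^\circ_{L_n}(I_n,J_n)-\Width^+(I_n,J_n)\gg\log\lambda$, apply the Snake Lemma to it. Using \eqref{eq:SnakeLmm} and the elementary implication ``$x\oplus y\ge c\Rightarrow\max(x,y)\ge 2c$'', pick the sub-pair $(I_{n+1},J_{n+1})$ — with complementary arc $L_{n+1}\subseteq L_n$ — whose $\Width^\circ$ is $\ge 2K_n-O(\log\lambda)$; by the Snake Lemma its freshly produced short interval $S_{n+1}$ satisfies $\dist(S_{n+1},\text{partner})>\lambda|S_{n+1}|$. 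If $\Width^+(I_{n+1},J_{n+1})\ge c_0K$, then I stop and take $I^\new\coloneqq S_{n+1}$: by the first reduction $\Width^+_\lambda(I^\new)\ge\Width^+(I_{n+1},J_{n+1})\ge c_0K\succeq K$, and $I^\new\subseteq L_{n+1}\subseteq L$, as required. Otherwise the excess of the new pair is still $\gg\log\lambda$ and I iterate.

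The remaining point is that the loop halts. If it never did, then $\Width^+(I_{n+1},J_{n+1})<c_0K$ at every step, so $K_{n+1}\ge 2K_n-O(\log\lambda)-c_0K$; since $K_0=K\gg\log\lambda$ and $c_0<1$, unrolling gives $K_n\ge 2^n\bigl((1-c_0)K-O(\log\lambda)\bigr)$, so $K_n\gg\log\lambda$ throughout (keeping the Snake Lemma applicable) and $K_n\to\infty$. But for $n\ge 1$ the short interval $S_n$ of $(I_n,J_n)$ is $\lambda$-far from its partner, so the second reduction gives $K_n\le\Width^\circ_{L_n}(I_n,J_n)\le K_{\theta,\lambda}$, a contradiction. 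Hence the loop terminates after finitely many steps.

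The hard part will be the bookkeeping in the loop. The subtlety is that the halting threshold must be set at a \emph{fixed} value $c_0K$ tied to the original $K$ (not at a fraction of the current $\Width^\circ$): this way the per-step leakage of width into the outer family is a constant while the excess roughly doubles, which is exactly what forces the geometric growth $K_n\gtrsim 2^n(1-c_0)K$ needed to collide with the ceiling $K_{\theta,\lambda}$. One must also check, along the way, that the short new interval always stays inside $L$ — automatic, since the complementary arcs nest, $L_{n+1}\subseteq L_n\subseteq\dots\subseteq L$ — and that the hypothesis $K_n\gg\log\lambda$ of the Snake Lemma survives each iteration, which it does by the same lower bound on $K_n$.
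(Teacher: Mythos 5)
Your proposal is correct and follows essentially the same route as the paper: iterate the Snake Lemma, at each stage either harvest $\Width^+_\lambda$ of the newly produced $\lambda$-separated short interval (which lies in the nested complementary arcs, hence in $L$) or conclude that the submergence excess roughly doubles, and terminate by the a priori bound $\Width^\circ \le \Width_\lambda \le K_{\theta,\lambda}$ coming from $\overline Z$ being a (non-uniform) quasidisk. The only differences are cosmetic bookkeeping — your fixed halting threshold $c_0K$ versus the paper's threshold proportional to the current excess — and your more explicit statement of the quasidisk ceiling, which the paper invokes in one line.
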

\begin{proof}
It follows from~\eqref{eq:SnakeLmm:simpl} and $K\gg \log \lambda$ that either $\Width^\circ_{L_a} (I,J_1) $ or $\Width^\circ_{L_b} (I_1,J)$ has width $\ge 2K-O(\log \lambda)\ge \frac 7 4 K$. Assume that  $\Width^\circ_{L_a} (I,J_1) \ge \frac 74 K$. Since $\Fam^+_\lambda (J_1)\supset \Fam^+_{L_a} (I,J_1)$, either $\Width^+_\lambda(J_1)\ge \frac 1 5 K$ or $ \Width^\circ(I,J_1) - \Width^+(I,J_1)\ge \frac32 K$; in the latter case, we can again apply the Snake Lemma and construct intervals $I_2,J_2$ such that
\[ \Width^\circ (I,J_2) \oplus\Width^\circ (I_2,J_1)\ge \frac 32K - O(\log \lambda),\sp\sp\sp\text{ where }\] 
 \[\Fam^+ (I,J_2)\subset \Fam^+_{\lambda}(J_2)\sp\sp\text{ and }\sp\sp \Fam^+ (I_2,J_1)\subset \Fam^+_{\lambda}(I_2).\]
 Applying induction, we either find an interval $I^\new$ with $\Width^+_\lambda(I^\new)\ge  \frac{3^n}{2^n5}K$, or construct an infinite sequence of shrinking intervals $I^\new_n,J_n^\new, L^\new_n$ with \[\Fam^\circ_{L_n^\new}(I^\new_n, J^\new_n)\ge \frac{3^n}{2^n}K\sp\sp\sp |L^\new_n|\ge \min\{|I^\new_n|,\sp |J^\new_n|\}.\] 
Such an infinite sequence does not exist because $\overline Z$ is a (non-uniform) qc disk. 
\end{proof}

\section{Near-Rotation Systems}
\label{s:NearRotatSystem}

For $r>0$, we denote by $|x-y|_r$ the Euclidean distance between $x,y$ on the circle $\R/r=\R /(r\Z)$. We also write $|x-y| = |x-y|_1$, which is consistent with the combinatorial distance introduce for $\partial \Disk, \partial Z$.

Fix $\epscasc>0$. A \emph{$\epscasc$-near-rotation} system with rotation number $\pp/\qq \in \Q$ is $\tF_\qq=\big( f^t \colon U\to U_t\big)_{0
\le t \le \qq}$ such that (see Figure~\ref{Fig:NearRotDom})
\begin{itemize}
\item[(A)] $\overline U$ and $\overline U_t$ are closed Jordan disks;
\item[(B)] $f^t \colon U\to U_t$ is conformal for $t\le \qq$;
\item[(C)] $\partial U$ is a cyclic (clockwise or counterclockwise) concatenation of simple arcs: 
\begin{equation}
\label{eq:part U:unit intervals}
\partial U =L_{0}\# L_1\# L_2\#\dots\# L_{\qq-1};
\end{equation}
\item[(D)] for every $k$ there is an annulus $A_k$ with $\mod(A_k)\ge \epscasc$ such that 
\begin{itemize}
\item[(D1)] the bounded component $B_k$ of $\C\setminus A_k$ compactly contains 
$L_k$ as well as all $f^t (L_{k-\pp t})$ for $t \le \qq$, and
\item[(D2)] $A_k$ is disjoint from $A_i$ for $|i-k|_\qq>1$.
\end{itemize}
\end{itemize}

In other words, $f^{t}$ maps $L_k$ approximately onto $L_{k + t\pp}$ so that $L_k,L_{k + t\pp}\subset B_k$; this ``error'' is controlled (surrounded) by $A_k$. Let us write $\widetilde A_k=A_k\cup B_k$ -- the filling-in of $A_k$; and 
 \[\widetilde U\coloneqq U\cup \bigcup_{k=0}^{\qq-1} B_k.\]

We call the $L_i$ \emph{unit intervals} of $\partial U$ and we call $\overline U$ a $\epscasc$-\emph{near-rotation domain}.

\subsubsection{Motivation and outline} 
\label{sss:outline:s:NearRotatSystem}
Recall that $\theta\approx \pp_n/\qq_n$ (see \S\ref{sss:closest returns}) and $f$ rotates the diffeo-tiling $\Dbb_n$ by approximately $\pp_{n+1}/\qq_{n+1}$, see \S\ref{sss:diff tilings}. If $\length_{n+1}\ll \length_n$, then the ``rotational error'' is small with respect to the combinatorial metric of $\partial Z$.  However, with respect to the conformal geometry of $\C$, the rotational error will be big due to parabolic fjords, see Figure~\ref{fig:fjords}. To deal with this issue, we will approximate $\overline Z$ by a $\bdelta$-near-rotation domain $Z^n$ with a universal $\bdelta>0$ and add all such $Z^m$ to $\overline Z$, see \S\ref{s:Welding}.

For $\Circle=\partial \Disk$, the Euclidean metric on $\R/\Z\simeq \Circle$ is a unique invariant metric under all rotations $z\mapsto \ee(\phi)z$. For near-rotation domains we have  almost-rotations $f^i\mid U, i \le\qq$, where the error is controlled by annuli $A_i$ with $\mod A_i\ge \epscasc>0$. It is natural to expect that as $\epscasc$ is fixed and $\qq\to \infty$, ``almost invariant metrics'' on $\partial U$ converge to the Euclidean metric on $\R/\Z\simeq \partial \Disk$ after a conformal uniformization. In this section, we will prove a slightly weaker statement: universal Log-Rule holds for $\partial U$ on scales $\gg_{\mu} 1/\qq$,  see Theorem~\ref{thm:beau:part U: c quasi line}. These beau bounds will imply that the error does not increase during iterative construction of pseudo-Siegel disks $\dots \leadsto\wZ^{m+1}\leadsto \wZ^{m} \leadsto\wZ^{m-1}\leadsto\dots$, see Remark~\ref{rem:error does not accum}.

 Theorem~\ref{thm:beau:part U: c quasi line} is proven using the Shift Argument (Figure~\ref{Fg:lmm:shift}): if there is an unexpected wide rectangle, then its appropriate shift will have a substantial cross-intersection with itself contradicting Non-Crossing Principle~\S\ref{sss:non cross princ}. From this, the Log-Rule in Theorem~\ref{thm:beau:part U: c quasi line} are established in the same way as in Lemma~\ref{lmm:W^- I J}. The main subtlety is that shifted curves can sneak through the $B_k$. We will first establish estimates on scale $\ge 40/\qq$ with an error depending on $\epscasc$ (Proposition~\ref{prop:part U: c quasi line}), then we will upgrade them to universal estimates on scale $\gg_{\epscasc} 1/\qq$.

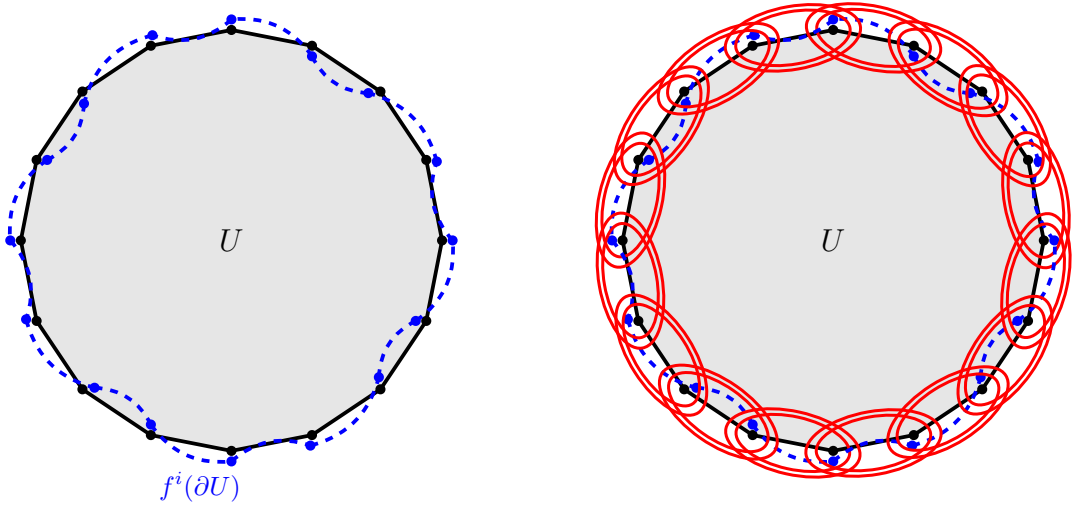
\begin{figure}
\begin{tikzpicture}[line width=0.05cm]
\begin{scope}[shift={(8,-6)},scale=0.7]

\draw[fill, fill opacity=0.1] (4,0)  --(3.7,1.53)-- (2.83, 2.83)--(1.53,3.7)--(0,4)--(-1.53,3.7)--(-2.83, 2.83)--(-3.7,1.53)--(-4,0)--(-3.7,-1.53)--(-2.83, -2.83)
--(-1.53,-3.7)--(0,-4)--(1.53,-3.7)--(2.83, -2.83)--(3.7,-1.53)--(4,0);

\node at (0,0) {\Large $U$};

\node[blue,below]at (-0.6,-4.2) {$f^i(\partial U)$};

\filldraw  (4,0)  circle (0.06cm)
(3.7,1.53)  circle (0.06cm)
(2.83, 2.83)  circle (0.06cm)
(1.53,3.7)  circle (0.06cm)
(0,4) circle (0.06cm)
(-1.53,3.7) circle (0.06cm)
(-2.83, 2.83) circle (0.06cm)
(-3.7,1.53) circle (0.06cm)
(-4,0)  circle (0.06cm)
(-3.7,-1.53)  circle (0.06cm)
(-2.83, -2.83)  circle (0.06cm)
(-1.53,-3.7)  circle (0.06cm)
(0,-4) circle (0.06cm)
(1.53,-3.7) circle (0.06cm)
(2.83, -2.83) circle (0.06cm)
(3.7,-1.53) circle (0.06cm);

\draw[blue,dashed] 
(4.2,0)  edge [ bend left ] (3.9,1.5)
 (3.9,1.5) edge [ bend right ]  (2.6, 2.8)
(2.6, 2.8) edge [bend left] (1.53,3.5) 
(1.53,3.5) edge[bend right](0,4.2);

\filldraw [blue] (4.2,0)  circle (0.06cm)
 (3.9,1.5) circle (0.06cm)
(2.6, 2.8)circle (0.06cm)
(1.53,3.5)circle (0.06cm);



\begin{scope}[rotate=90]
\draw[blue,dashed] 
(4.2,0)  edge [ bend left ] (3.9,1.5)
 (3.9,1.5) edge [ bend right ]  (2.6, 2.8)
(2.6, 2.8) edge [bend left] (1.53,3.5) 
(1.53,3.5) edge[bend right](0,4.2);

\filldraw [blue] (4.2,0)  circle (0.06cm)
 (3.9,1.5) circle (0.06cm)
(2.6, 2.8)circle (0.06cm)
(1.53,3.5)circle (0.06cm);


\end{scope}

\begin{scope}[rotate=180]
\draw[blue,dashed] 
(4.2,0)  edge [ bend left ] (3.9,1.5)
 (3.9,1.5) edge [ bend right ]  (2.6, 2.8)
(2.6, 2.8) edge [bend left] (1.53,3.5) 
(1.53,3.5) edge[bend right](0,4.2);

\filldraw [blue] (4.2,0)  circle (0.06cm)
 (3.9,1.5) circle (0.06cm)
(2.6, 2.8)circle (0.06cm)
(1.53,3.5)circle (0.06cm);

\end{scope}

\begin{scope}[rotate=270]
\draw[blue,dashed] 
(4.2,0)  edge [ bend left ] (3.9,1.5)
 (3.9,1.5) edge [ bend right ]  (2.6, 2.8)
(2.6, 2.8) edge [bend left] (1.53,3.5) 
(1.53,3.5) edge[bend right](0,4.2);

\filldraw [blue] (4.2,0)  circle (0.06cm)
 (3.9,1.5) circle (0.06cm)
(2.6, 2.8)circle (0.06cm)
(1.53,3.5)circle (0.06cm);


\end{scope}
\end{scope}

\begin{scope}[shift={(16,-6)},scale=0.7]

\draw[fill, fill opacity=0.1] (4,0)  --(3.7,1.53)-- (2.83, 2.83)--(1.53,3.7)--(0,4)--(-1.53,3.7)--(-2.83, 2.83)--(-3.7,1.53)--(-4,0)--(-3.7,-1.53)--(-2.83, -2.83)
--(-1.53,-3.7)--(0,-4)--(1.53,-3.7)--(2.83, -2.83)--(3.7,-1.53)--(4,0);

\node at (0,0) {\Large $U$};

\filldraw  (4,0)  circle (0.06cm)
(3.7,1.53)  circle (0.06cm)
(2.83, 2.83)  circle (0.06cm)
(1.53,3.7)  circle (0.06cm)
(0,4) circle (0.06cm)
(-1.53,3.7) circle (0.06cm)
(-2.83, 2.83) circle (0.06cm)
(-3.7,1.53) circle (0.06cm)
(-4,0)  circle (0.06cm)
(-3.7,-1.53)  circle (0.06cm)
(-2.83, -2.83)  circle (0.06cm)
(-1.53,-3.7)  circle (0.06cm)
(0,-4) circle (0.06cm)
(1.53,-3.7) circle (0.06cm)
(2.83, -2.83) circle (0.06cm)
(3.7,-1.53) circle (0.06cm);

\draw[blue,dashed] 
(4.2,0)  edge [ bend left ] (3.9,1.5)
 (3.9,1.5) edge [ bend right ]  (2.6, 2.8)
(2.6, 2.8) edge [bend left] (1.53,3.5) 
(1.53,3.5) edge[bend right](0,4.2);

\filldraw [blue] (4.2,0)  circle (0.06cm)
 (3.9,1.5) circle (0.06cm)
(2.6, 2.8)circle (0.06cm)
(1.53,3.5)circle (0.06cm);


 \draw[red,rotate around={10:(3.85,0.765)},line width=0.04cm] (3.85,0.765) ellipse (0.5cm and 1.1cm)
 (3.85,0.765) ellipse (0.6cm and 1.3cm);
 \draw[rotate=22.5,red,rotate around={10:(3.85,0.765)},line width=0.04cm] (3.85,0.765) ellipse (0.5cm and 1.1cm)
 (3.85,0.765) ellipse (0.6cm and 1.3cm);

 \draw[rotate=45,red,rotate around={10:(3.85,0.765)},line width=0.04cm] (3.85,0.765) ellipse (0.5cm and 1.1cm)
 (3.85,0.765) ellipse (0.6cm and 1.3cm);
  \draw[rotate=45+22.5,red,rotate around={10:(3.85,0.765)},line width=0.04cm] (3.85,0.765) ellipse (0.5cm and 1.1cm)
 (3.85,0.765) ellipse (0.6cm and 1.3cm);

\begin{scope}[rotate=90]
\draw[blue,dashed] 
(4.2,0)  edge [ bend left ] (3.9,1.5)
 (3.9,1.5) edge [ bend right ]  (2.6, 2.8)
(2.6, 2.8) edge [bend left] (1.53,3.5) 
(1.53,3.5) edge[bend right](0,4.2);

\filldraw [blue] (4.2,0)  circle (0.06cm)
 (3.9,1.5) circle (0.06cm)
(2.6, 2.8)circle (0.06cm)
(1.53,3.5)circle (0.06cm);

 \draw[red,rotate around={10:(3.85,0.765)},line width=0.04cm] (3.85,0.765) ellipse (0.5cm and 1.1cm)
 (3.85,0.765) ellipse (0.6cm and 1.3cm);
 \draw[rotate=22.5,red,rotate around={10:(3.85,0.765)},line width=0.04cm] (3.85,0.765) ellipse (0.5cm and 1.1cm)
 (3.85,0.765) ellipse (0.6cm and 1.3cm);

 \draw[rotate=45,red,rotate around={10:(3.85,0.765)},line width=0.04cm] (3.85,0.765) ellipse (0.5cm and 1.1cm)
 (3.85,0.765) ellipse (0.6cm and 1.3cm);
  \draw[rotate=45+22.5,red,rotate around={10:(3.85,0.765)},line width=0.04cm] (3.85,0.765) ellipse (0.5cm and 1.1cm)
 (3.85,0.765) ellipse (0.6cm and 1.3cm);
 
\end{scope}

\begin{scope}[rotate=180]
\draw[blue,dashed] 
(4.2,0)  edge [ bend left ] (3.9,1.5)
 (3.9,1.5) edge [ bend right ]  (2.6, 2.8)
(2.6, 2.8) edge [bend left] (1.53,3.5) 
(1.53,3.5) edge[bend right](0,4.2);

\filldraw [blue] (4.2,0)  circle (0.06cm)
 (3.9,1.5) circle (0.06cm)
(2.6, 2.8)circle (0.06cm)
(1.53,3.5)circle (0.06cm);

 \draw[red,rotate around={10:(3.85,0.765)},line width=0.04cm] (3.85,0.765) ellipse (0.5cm and 1.1cm)
 (3.85,0.765) ellipse (0.6cm and 1.3cm);
 \draw[rotate=22.5,red,rotate around={10:(3.85,0.765)},line width=0.04cm] (3.85,0.765) ellipse (0.5cm and 1.1cm)
 (3.85,0.765) ellipse (0.6cm and 1.3cm);

 \draw[rotate=45,red,rotate around={10:(3.85,0.765)},line width=0.04cm] (3.85,0.765) ellipse (0.5cm and 1.1cm)
 (3.85,0.765) ellipse (0.6cm and 1.3cm);
  \draw[rotate=45+22.5,red,rotate around={10:(3.85,0.765)},line width=0.04cm] (3.85,0.765) ellipse (0.5cm and 1.1cm)
 (3.85,0.765) ellipse (0.6cm and 1.3cm);
 
\end{scope}

\begin{scope}[rotate=270]
\draw[blue,dashed] 
(4.2,0)  edge [ bend left ] (3.9,1.5)
 (3.9,1.5) edge [ bend right ]  (2.6, 2.8)
(2.6, 2.8) edge [bend left] (1.53,3.5) 
(1.53,3.5) edge[bend right](0,4.2);

\filldraw [blue] (4.2,0)  circle (0.06cm)
 (3.9,1.5) circle (0.06cm)
(2.6, 2.8)circle (0.06cm)
(1.53,3.5)circle (0.06cm);

 \draw[red,rotate around={10:(3.85,0.765)},line width=0.04cm] (3.85,0.765) ellipse (0.5cm and 1.1cm)
 (3.85,0.765) ellipse (0.6cm and 1.3cm);
 \draw[rotate=22.5,red,rotate around={10:(3.85,0.765)},line width=0.04cm] (3.85,0.765) ellipse (0.5cm and 1.1cm)
 (3.85,0.765) ellipse (0.6cm and 1.3cm);

 \draw[rotate=45,red,rotate around={10:(3.85,0.765)},line width=0.04cm] (3.85,0.765) ellipse (0.5cm and 1.1cm)
 (3.85,0.765) ellipse (0.6cm and 1.3cm);
  \draw[rotate=45+22.5,red,rotate around={10:(3.85,0.765)},line width=0.04cm] (3.85,0.765) ellipse (0.5cm and 1.1cm)
 (3.85,0.765) ellipse (0.6cm and 1.3cm);
 
\end{scope}
\end{scope}

\end{tikzpicture}

\caption{Illustration to near-rotation systems: $\partial U$ is a finite concatenation of arcs, and $f^i (\partial U)$ (blue) is approximately $\partial U$ rotated by $i \pp  /\qq$, where the error is controlled by a system of annuli (right side). We say that $\overline U$ is \emph{almost invariant} under $f^\qq$.}
\label{Fig:NearRotDom}
\end{figure}

\subsection{Standard intervals of $\partial U$} 
\label{ss:stand inter}
A \emph{discrete interval} \[S\subset\{0,1,2\dots, \qq-1\} \simeq \Z/\qq\] with \emph{length} $b$ is a finite subset $\{a,a+1,a+2,\dots,a+b -1\}$ of $\Z/\qq$ consisting of consecutive numbers. Set 
\[ L_S\coloneqq \bigcup_{s\in S}L_s,\sp \sp\sp 
B_S\coloneqq \bigcup_{s\in S}B_s. \] By construction, $f^t (L_S) \subset  B_{S +  \pp t} $ for $t\le \qq$,  
where $S + j=\{s+ j \mid s\in S\}$.

For $r>1$, we define the rescaling of $S$ with respect to its center as
\[ r S \coloneqq  \left\{n\in \Z/\qq :  \left| n- a -(b-1)/2\right|_\qq \le r|S|_\qq/2 \right\}.\]
Let $(rS)^c \coloneqq \{0,1,\dots, \qq-1\} \setminus (rS)^c$ be the complement of $rS$. Similar to~\S\ref{ss:GeomZ}, we define:
\begin{itemize}
\item $\Fam^-(L_V,L_W)$ to be the family of curves in $U$ connecting $L_V$ and $L_W$;
\item $\Width^-(L_V,L_W)=\Width(\Fam^-(L_V,L_W))$;
\item $\Fam^-_r(L_S)=\Fam^-(L_S, L_{(rS)^c})$;
\item $\Width^-_r(L_S)=\Width^-(L_S, L_{(rS)^c})$.
\end{itemize} 
We say that an interval $L_S$ is \emph{$[K,r]^-$-wide} if $\Width_r(L_S)\ge K$. 

We call an interval $L_S\subset \partial U$ \emph{standard}. Any interval $ I\subset \partial U$ can be approximated from above or below by a standard interval with an error within $L_a\cup L_b$ for some $a,b \in \{0,1,\dots, \qq-1\}$.

\subsection{Inner geometry of $U$} The following lemma is a corollary of Lemma~\ref{lem:Rect in U}.

\begin{lem}
\label{lem:1/eps buffer}
Consider a rectangle
\[\RR\subset f^i(\ovl U)\subset\widetilde U,\sp\sp \partial ^h \RR\subset f^i(\partial U), \sp\sp \sp i\le \qq\]
\[\text{ with } \sp\partial^{h,0} \RR\subset B_V,\sp \partial^{h,1} \RR\subset B_W, 
\] where $V$ and $W$ are discrete intervals.  After removing two $C'_\epscasc\coloneqq 1/\epscasc$-wide buffers from $\RR$, the new rectangle $\RR^\new$ is disjoint from $B_s$ for every $s\in \Z/\qq$ at distance at least $3$ from $V\cup W$.\qed
\end{lem}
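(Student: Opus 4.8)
The plan is to deduce the statement from Lemma~\ref{lem:Rect in U}; let me indicate the reduction and the underlying mechanism, the bead structure being exactly what that lemma is built to exploit. Since $f^i$ is conformal I would work directly with the Jordan curve $f^i(\partial U)$: by axiom (D1) one has $f^i(L_{s-\pp i})\subset B_s$ for every $s$, so $f^i(\partial U)$ passes through the beads $B_0,\dots,B_{\qq-1}$ in cyclic order, and the horizontal sides $\partial^{h,0}\RR\subset B_V$, $\partial^{h,1}\RR\subset B_W$ cut it into two complementary closed arcs $E_0,E_1$. Fix a bead $B_s$ with $\dist(s,V\cup W)\ge 3$ that meets $\RR$; since $B_s$ carries the sub-arc $f^i(L_{s-\pp i})$ of $\partial(f^i(\ovl U))$, which is disjoint from $\partial^h\RR$, the set $B_s$ has points outside $f^i(\ovl U)$, and so it reaches $\RR$ only by protruding across one of the two vertical sides $\partial^{v,0}\RR,\partial^{v,1}\RR$ of $\RR$.

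The key estimate is then local to $B_s$. By axiom (D2) the filled annulus $\widetilde A_s=A_s\cup B_s$ is a closed topological disk disjoint from $B_v$ whenever $|v-s|_\qq\ge 2$, hence in particular disjoint from $B_V\cup B_W\supset\partial^h\RR$. Every vertical curve of $\RR$ that meets $B_s$ therefore runs from $\partial^{h,0}\RR$ to $\partial^{h,1}\RR$ --- both outside $\widetilde A_s$ --- while passing through $B_s\subset\intr\widetilde A_s$, so it contains a subcurve crossing the collar $A_s$ from $\partial\widetilde A_s$ to $\partial B_s$. As distinct vertical curves are disjoint, these curves overflow the family of crossings of $A_s$, whose width is $1/\mod(A_s)\le 1/\epscasc=C'_\epscasc$ by the Gr\"otzsch inequality. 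A short topological argument --- connectedness of $B_s$ together with the fact that $B_s$ touches $\partial\RR$ only along the vertical sides --- shows that the vertical curves meeting $B_s$ form an interval anchored at one of the two vertical sides (if it spanned all of $\RR$, the bound just obtained would force $\Width(\RR)\le C'_\epscasc$, and the lemma is vacuous once $\Width(\RR)\le 2C'_\epscasc$). Hence the curves meeting $B_s$ lie in a buffer of width $\le C'_\epscasc$ along one vertical side. Carrying this out for every bead with $\dist(s,V\cup W)\ge 3$, the buffers anchored at a given vertical side are nested, so all such curves lie in two buffers of width $C'_\epscasc$; removing these buffers yields the $\RR^\new$ of the statement.

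The modulus estimate above is the only quantitative input; the delicate part --- which is precisely what Lemma~\ref{lem:Rect in U} packages, and which I would invoke rather than reprove --- is the topological bookkeeping: that a small bead protrudes into $\RR$ coherently from a single side (so the curves meeting it form a genuine anchored buffer), that the buffers attached to distinct far beads on the same side coalesce into one $C'_\epscasc$-buffer rather than accumulating, and that no far bead can tunnel through a rectangle of width $>2C'_\epscasc$. All three rest on disjointness of the $\widetilde A_k$ at cyclic distance $\ge 2$ (axiom (D2)) together with the compact containment of (D1), which keeps each $B_k$ in a small neighbourhood of $L_k$ and of its rotated images $f^t(L_{k-\pp t})$.
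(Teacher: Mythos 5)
Your proposal is correct and is essentially the paper's own argument: the paper proves this lemma simply by citing Lemma~\ref{lem:Rect in U} (applied with $D=f^i(\ovl U)$, $A=A_s$, $O=B_s$, using (D1) to see that $B_s$ meets $\partial D$ and (D2) with the distance-$3$ condition to keep $\partial^h\RR$ away from $\widetilde A_s$), exactly as you do. Your additional unpacking of the mechanism (crossing the collar $A_s$ bounds the width of curves meeting $B_s$ by $1/\epscasc$, and these curves form buffers anchored at the vertical sides) just reproduces the appendix proofs of Lemmas~\ref{lem:Rect in U} and~\ref{lem:buffer:R O}, so it is the same route, not a different one.
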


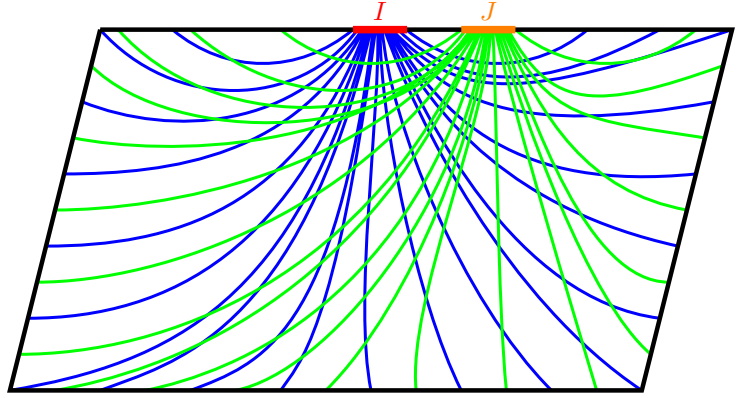
\begin{figure}[t!]
\[\begin{tikzpicture}

\begin{scope}[shift={(-2,-3.6)},scale =1.2]

\draw[blue,line width=0.4mm]
 (2.8,0) 
.. controls (2.2,-0.5) and (1.4,-0.5)..
(0.8,0)
 (2.85,0) 
.. controls (2.2,-0.9) and (0.8,-0.9)..
(0,0)
 (2.9,0) 
.. controls (2.2,-1.1) and (0.8,-1.2)..
(-0.2,-0.8)
 (2.92,0) 
.. controls (2.2,-1.4) and (0.8,-1.6)..
(-0.4,-1.6)
 (2.94,0) 
.. controls (2.2,-2) and (0.8,-2.4)..
(-0.6,-2.4)
 (2.96,0) 
.. controls (2.2,-2.4) and (0.8,-3.2)..
(-0.8,-3.2)
(2.98,0) 
.. controls (2.2,-3) and (0.8,-3.7)..
(-1,-4)
(3,0) 
.. controls (2.2,-3.4) and (0.8,-3.7)..
(0,-4)
(3.025,0) 
.. controls (2.6,-3.4) and (1.8,-3.7)..
(1,-4)
(3.05,0) 
.. controls (2.6,-3.4) and (2.2,-3.7)..
(2,-4)
(3.075,0) 
.. controls (3.1,-1) and (2.8,-3)..
(3,-4)
(3.1,0) 
.. controls (3.1,-1) and (3.5,-3)..
(4,-4)
(3.125,0) 
.. controls (3.2,-1) and (4,-3)..
(5,-4)
(3.15,0) 
.. controls (3.5,-1) and (4.5,-3)..
(6,-4)
(3.17,0) 
.. controls (3.6,-1) and (4.6,-3)..
(6+0.2,-4+0.8)
(3.19,0) 
.. controls (3.7,-1) and (4.6,-2)..
(6+0.4,-4+1.6)
(3.21,0) 
.. controls (3.8,-1) and (4.6,-1.8)..
(6+0.6,-4+2.4)
(3.23,0) 
.. controls (3.8,-0.8) and (4.6,-1.2)..
(6+0.8,-4+3.2)
(3.23,0) 
.. controls (4,-0.8) and (4.6,-0.8)..
(7,0)
(3.3,0) 
.. controls (4,-0.7) and (4.9,-0.7)..
(6.2,0)
(5.4,0)
.. controls (4.8,-0.5) and (4,-0.5)..
(3.39,0);

\draw[green,line width=0.4mm]
 (4,0) 
.. controls (3.4,-0.5) and (2.6,-0.5)..
(2,0)
(4.05,0) 
.. controls (3.4,-0.9) and (2,-0.9)..
(1,0)
(4.1,0) 
.. controls (3.4,-1.15) and (1.2,-1.15)..
(0.2,0)
(4.12,0) 
.. controls (3.4,-1.3) and (0.8,-1.3)..
(-0.1,-0.4)
(4.14,0) 
.. controls (3,-1.4) and (0.7,-1.4)..
(-0.3,-1.2)
(4.16,0) 
.. controls (3,-1.6) and (0.7,-2)..
(-0.5,-2)
(4.18,0) 
.. controls (3,-2) and (0.7,-2.8)..
(-0.7,-2.8)
(4.2,0) 
.. controls (3,-2.8) and (0.7,-3.6)..
(-0.9,-3.6)
(4.225,0) 
.. controls (3,-3) and (0.7,-3.8)..
(-0.5,-4)
(4.25,0) 
.. controls (3.5,-3) and (1.5,-3.8)..
(0.5,-4)
(4.275,0) 
.. controls (4,-2) and (2.5,-3.8)..
(1.5,-4)
(4.3,0) 
.. controls (4.2,-1) and (3.5,-3)..
(2.5,-4)
(4.325,0) 
.. controls (4.3,-1) and (3.4,-3)..
(3.5,-4)
(4.35,0) 
.. controls (4.4,-1) and (4.4,-3)..
(4.5,-4)
(4.375,0) 
.. controls (4.6,-1) and (5.2,-3)..
(5.5,-4)
(4.4,0) 
.. controls (4.7,-1) and (5.4,-3)..
(6+0.1,-4+0.4)
(4.43,0) 
.. controls (4.8,-1) and (5.4,-2.8)..
(6+0.3,-4+1.2)
(4.46,0) 
.. controls (4.9,-1) and (5.4,-1.8)..
(6+0.5,-4+2)
(4.49,0) 
.. controls (4.95,-1) and (5.4,-1.)..
(6+0.7,-4+2.8)
(4.52,0) 
.. controls (5,-0.9) and (5.5,-0.9)..
(6+0.9,-4+3.6)
(6.6,0)
.. controls (6,-0.5) and (5.2,-0.5)..
(4.6,0);

\draw[line width =0.06 cm] (0,0) -- (7,0) -- (6,-4)--(-1,-4)--(0,0)--(0.,0);

\draw[red,line width=0.1 cm]  (2.8,0) --(3.4,0);

\node[above, red] at (3.1, 0) {$I$};

\node[above, orange] at (4.3, 0) {$J$};

\draw[orange,line width=0.1 cm]  (4,0) --(4.6,0);

\end{scope}

\end{tikzpicture}\]
\caption{Illustration to Lemma~\ref{lem:near transl:est:eps}: since $\Fam_{20} (I)$ (blue) crosses its shift $\Fam_{20}(J)$ (green), $\Fam_{20} (I)$ is not wide.}
\label{Fg:lmm:shift}
\end{figure}

\begin{lem}
\label{lem:near transl:est:eps}
Set $C_\epscasc\coloneqq 30+ 2C'_\epscasc =30+2/\epscasc$. There are no  $[C_\epscasc,20]^-$-wide intervals $L_S$, where $1\le |S|_\qq \le \qq/40$.
\end{lem}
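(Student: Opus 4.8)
The plan is to argue by contradiction via the \emph{Shift Argument} outlined in \S\ref{sss:outline:s:NearRotatSystem} (see Figure~\ref{Fg:lmm:shift}). Suppose some standard interval $L_S$, with $S=\{a,a+1,\dots,a+b-1\}$ and $1\le b=|S|_\qq\le\qq/40$, were $[C_\epscasc,20]^-$-wide, i.e.\ $\Width^-(L_S,L_{(20S)^c})\ge C_\epscasc$; let $\RR\subset U$ be the canonical rectangle of $\Fam^-(L_S,L_{(20S)^c})$, so that $\Width(\RR)\ge C_\epscasc$ with horizontal sides $\partial^{h,0}\RR=L_S$ and $\partial^{h,1}\RR=L_{(20S)^c}$. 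Since $20b\le\qq/2$, the arc $L_{(20S)^c}$ is nondegenerate and each of the two components of $20S\setminus S$ has length $19b/2$, so they genuinely separate $S$ from $(20S)^c$ with several unit intervals to spare; this is the only role of the constants $20$ and $\qq/40$.

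Next I would shift. Using $\gcd(\pp,\qq)=1$, choose $t\in\{1,\dots,\qq\}$ so that the rotation amount $p\coloneqq\pp t\bmod\qq$ places the discrete interval $S+p$ deep inside one component of $20S\setminus S$, at combinatorial distance $\ge 3$ from both $S$ and $(20S)^c$; the bound $b\le\qq/40$ leaves room for this, and it also keeps $S$ at distance $\ge 3$ from $(S+p)\cup((20S)^c+p)$. Since $f^t\colon U\to U_t$ is conformal and $f^t(L_j)\subset B_{j+\pp t}$ for all $j$ and $t\le\qq$, the shifted rectangle $f^t(\RR)\subset f^t(\ovl U)\subset\widetilde U$ still has width $\Width(\RR)$, with feet $f^t(L_S)\subset B_{S+p}$ and $f^t(L_{(20S)^c})\subset B_{(20S)^c+p}$. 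By Lemma~\ref{lem:1/eps buffer}, applied to $\RR$ (with $V=S$, $W=(20S)^c$) and to $f^t(\RR)$ (with $V=S+p$, $W=(20S)^c+p$), discarding two $C'_\epscasc=1/\epscasc$-buffers from each yields rectangles $\RR^\new\subset U$ and $(f^t\RR)^\new=f^t(\RR^\new)$, each of width $\ge C_\epscasc-2/\epscasc=30$, such that $(f^t\RR)^\new$ is disjoint from every $B_s$ at distance $\ge 3$ from $(S+p)\cup((20S)^c+p)$ — in particular from $B_S\supset L_S$, so the relevant feet of the two rectangles are disjoint. One then shows, using the near-rotation structure, that after truncating and repositioning the feet of both rectangles within the controlling annuli $\widetilde A_k$ (a loss of $O(1/\epscasc)$ in width, which is within the slack built into $C_\epscasc$), one obtains sublaminations $\RR_1\subset\RR^\new$ and $\RR_2\subset(f^t\RR)^\new$ of width $\ge 30-O(1)>1$ whose four feet interleave along $\partial\widetilde U$, so that every vertical curve of $\RR_1$ crosses every vertical curve of $\RR_2$. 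The Non-Crossing Principle (\S\ref{sss:non cross princ}) then forces $\Width(\RR_1)\cdot\Width(\RR_2)\le 1$, which is absurd since both factors exceed $1$. Hence no $[C_\epscasc,20]^-$-wide $L_S$ with $1\le|S|_\qq\le\qq/40$ exists.

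The delicate point — and the step I expect to be the main obstacle — is establishing the cross-intersection: because $f^t(\RR)$ lives in $\widetilde U$ rather than in $U$, its curves can \emph{sneak} through the filling-in disks $B_k$, so one must carefully control how much width survives after buffering (Lemma~\ref{lem:1/eps buffer}), after forcing the four feet onto $\partial\widetilde U$, and after restricting to the interleaved configuration; the whole point is that this surviving width still exceeds $1$. This is exactly where the uniform lower bound $\mod A_k\ge\epscasc$ on the controlling annuli is used, and it is why the admissible threshold has the shape $C_\epscasc=30+2/\epscasc$. Once this bookkeeping is in place the conclusion is immediate, and the same mechanism, iterated in the manner of the proof of \lemref{lmm:W^- I J}, will later give the sharper scale-$\gg_\epscasc 1/\qq$ estimates of Proposition~\ref{prop:part U: c quasi line}.
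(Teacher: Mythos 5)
Your setup follows the same shift strategy as the paper, but there is a genuine gap precisely at the step you yourself flag as "the delicate point": the cross-intersection is asserted ("one then shows\dots") rather than proved, and the mechanism you propose for it does not work as stated. The base of $\RR$ is $L_S$ and its roof is $L_{(20S)^c}$, an arc of length $\ge \qq/2$, while the shift is only of order $|S|_\qq\le \qq/40$; hence the roof of $f^t(\RR)$, which lies in $B_{(20S)^c+p}$, overlaps the roof of $\RR$ along most of $\partial U$. So the four feet do \emph{not} interleave cyclically, and a curve of $\RR^\new$ and a curve of $f^t(\RR)^\new$ landing in the common part of the two roofs can be unlinked rather than crossing; to force "every vertical curve of $\RR_1$ crosses every vertical curve of $\RR_2$" you would have to restrict both families to sub-feet that genuinely separate each other, and controlling how much width survives such a restriction is essentially the content of the lemma itself. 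There is also a budget problem: the slack in $C_\epscasc=30+2/\epscasc$ covers exactly one application of Lemma~\ref{lem:1/eps buffer} (two $C'_\epscasc$-buffers) plus absolute constants, so an additional loss of "$O(1/\epscasc)$" in the repositioning step is not affordable with this threshold.

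The paper avoids the crossing-of-wide-families issue altogether. After removing $5$-buffers and invoking Lemma~\ref{lem:vet boundar} (with $n=1$) to make the vertical boundaries of the trimmed rectangles $\RR^{\NEW}$ and $\RR_J^{\NEW}$ disjoint, it uses a \emph{single} vertical boundary curve $\beta$ of the unshifted rectangle -- a curve inside $\overline U$, which therefore cannot sneak through the $B_k$ -- chosen so that $\beta$ separates $L_{S+k}$ from $\RR^{\NEW}\setminus\beta$ in $\overline U$; its endpoints lie in $L_a,L_b$ with $\{a,b\}$ at $\Z/\qq$-distance at least $3$ from $(S+k)\cup(V+k)$, where $L_V$ contains the trimmed roof. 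Since $\Width(\RR_J^{\NEW})\ge 10+2/\epscasc$, Lemma~\ref{lem:1/eps buffer} yields a single vertical curve $\gamma$ of $\RR_J^{\NEW}$ disjoint from $B_a\cup B_b$ (and from $\beta$), whose endpoints lie in $B_{S+k}$ and $B_{V+k}$; these are separated by $\beta\cup B_a\cup B_b$ in $\widetilde U$, which is the contradiction. This separation argument -- one barrier curve in $\overline U$ against one test curve of the shifted rectangle -- is the missing ingredient in your write-up; without it, or something playing the same role, the interleaving claim does not hold and the proof is incomplete.
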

\begin{proof}
 Suppose $I=L_S$ is such a $[C_\epscasc,20]^-$-wide interval. Let $\RR,\sp  \partial^{h,0}\RR=I$ be the canonical rectangle of $\Fam^-_{20}(L_S)$, see~\S\ref{sss:can rect}. We will construct below a shift $\RR_J$ of $\RR$ so that $\RR,\RR_J$ have substantial cross-intersection,  see Figure~\ref{Fg:lmm:shift}. 

Fix $k\in \N$ such that $S+k$ has $\Z/\qq$-distance at least $3$ from $S\cup [20 S]^c$. Let $j < \qq $ be so that $\pp j= k$ in $\Z/\qq$. Define \[J\coloneqq f^{j}(I)\subset B_{S+k}\sp\sp \text{ and }\sp\sp 
\RR_J\coloneqq f^j (\RR).
\]

Let $\RR_J^\new$ be the rectangle obtained from $\RR_J$ by removing $5$-buffers. By Lemma~\ref{lem:vet boundar} (with $n=1$), we can remove from $\RR$ and $\RR_J^\new$ buffers with width less than $5$ so that the new rectangles $\RR^\NEW$ and $\RR_J^\NEW$ have disjoint vertical boundaries. Let $V$ be the minimal discrete interval such that $\partial^{h,1}\RR^\NEW \subset L_V$. By construction:
\begin{equation}
\label{eq:lem:near transl:est:eps}
\Width\left(\RR^\NEW_J\right)\ge 10 +2/\mu, \sp\partial^{h,0}\RR_J^\NEW \subset B_{S+k},\sp \partial^{h,1}\RR_J^\NEW\subset B_{V+k}, 
\end{equation}
\[
 f^j\left(\RR^\NEW\right)\supset \RR_J^\new \supset \RR^\NEW_J.\]
 Since $\RR^\NEW\subset \overline U$ with $\partial ^h \RR^\NEW \subset \partial U$, we can choose a vertical boundary component $\beta \in \{\partial ^{v,\ell} \RR^\NEW, \partial ^{v,\rho} \RR^\NEW \}$ that separate $L_{S+k}$ from $\RR^\NEW\setminus \beta$; i.e., $L_{S+k}$, $\RR^\NEW \setminus \beta$  are in different components of $\overline U\setminus \beta $. Suppose $\beta$ starts in $L_a$ and ends at $L_b$. 

By construction, $\{a,b\}$ has distance at least $3$ from $[S+k]\cup [V+k]$. Since the horizontal boundary of $\RR_J^\NEW$ is within $B_{S+k}$ and $B_{V+k}$  (see~\eqref{eq:lem:near transl:est:eps}), by Lemma~\ref{lem:1/eps buffer}, the rectangle $\RR_J^\NEW$ has a vertical curve $\gamma$ disjoint from $B_a\cup B_b$, i.e. $\gamma$ is disjoint from $\beta\cup B_a\cup B_b $. This is a contradiction as the endpoints of $\gamma$ are separated by $\beta\cup B_a\cup B_b $ in $\widetilde U$.
\end{proof}

\subsection{Coarse bounds for Near-Rotation domains} We now extend the estimates from Lemma~\ref{lmm:W^- I J} to near-rotation domains. Let us rescale the distance on $\partial U$ by $1/\qq$: 
\[ |I|\coloneqq \frac 1 \qq |I|_\qq,\sp\sp \dist(I,J)\coloneqq \frac 1 \qq\dist_{\qq}(I,J),\sp\sp\text{ for } I=L_V,\  J=L_W\subset \partial U,
\] 
and we choose any continuous extension of the distance function $\dist (\sp,\sp )$ to all point in $\partial U$. The objects \[\Fam^-(I,J)=\Fam_{\overline U}^-(I,J),\sp\sp  \Width^-(I,J)=\Width^-_{\overline U}(I,J)\] for intervals $I,J\subset \partial U$ are defined in ~\S\ref{ss:FullOutFam}.

\begin{prop}[Coarse bounds]
\label{prop:part U: c quasi line}
The following \emph{Log-Rule} holds for intervals \[I,J\subset \partial U\sp\sp\sp\text{ such that }\sp |I|,\ |J|,\ \dist(I,J) \ge 40/\qq .\] If $\dist (I,J)\le \min\{|I|,|J|\}$, then
\begin{equation}
\label{eq:1:lmm:nrd:W^- I J}
\Width^-(I,J) \asymp_\epscasc \log \frac{\min\{|I|,|J|\}}{\dist (I,J)} +1;
\end{equation}
otherwise 
\begin{equation}
\label{eq:2:lmm:nrd:W^- I J}
\Width^-(I,J) \asymp_\epscasc \left( \log \frac{\dist (I,J)}{\min\{|I|,|J|\}} +1\right)^{-1}.
\end{equation} 
\end{prop}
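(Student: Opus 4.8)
The plan is to follow the proof of Lemma~\ref{lmm:W^- I J} almost verbatim, with a single substitution: the role played in the disk by the compactness argument behind~\eqref{eq:HypRect is 1} will here be played by Lemma~\ref{lem:near transl:est:eps}, which forbids short $[C_\epscasc,20]^-$-wide intervals of $\partial U$. First I would record a consequence of the hypothesis: since $\dist(I,J)$ is the length of the shorter of the two complementary arcs $L,L'\subset\partial U$ between $I$ and $J$, the assumption $\dist(I,J)\ge 40/\qq$ forces \emph{all four} of $I,J,L,L'$ to have length $\ge 40/\qq$. This is precisely the room needed so that every subinterval produced below stays in the range $[1/\qq,\,1/40]$ of lengths to which Lemma~\ref{lem:near transl:est:eps} applies, and this is where the constant $40$ is consumed.

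Next I would reduce~\eqref{eq:2:lmm:nrd:W^- I J} to~\eqref{eq:1:lmm:nrd:W^- I J}. By the extremal-length duality for conformal quadrilaterals, $\Width^-_{\overline U}(I,J)\cdot\Width^-_{\overline U}(L,L')=1$. Since $\min\{|L|,|L'|\}=\dist(I,J)$ and $\dist(L,L')=\min\{|I|,|J|\}$, the hypothesis $\dist(I,J)>\min\{|I|,|J|\}$ of~\eqref{eq:2:lmm:nrd:W^- I J} says exactly that the pair $(L,L')$ satisfies the hypothesis $\dist(L,L')<\min\{|L|,|L'|\}$ of~\eqref{eq:1:lmm:nrd:W^- I J}, with all of $|L|,|L'|,\dist(L,L')$ being $\ge 40/\qq$ by the first paragraph; so~\eqref{eq:2:lmm:nrd:W^- I J} follows from~\eqref{eq:1:lmm:nrd:W^- I J} applied to $(L,L')$ by taking reciprocals.

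So it remains to prove~\eqref{eq:1:lmm:nrd:W^- I J}, where $\dist(I,J)\le\min\{|I|,|J|\}$; say $|I|\le|J|$. The heart of the matter is the base estimate: if moreover $\dist(I,J)\asymp|I|\asymp|J|$ then $\Width^-(I,J)\asymp_\epscasc 1$. For its upper bound I would subdivide $I$ into $O(1)$ standard subintervals $P$, each of length in $[1/\qq,1/40]$ and with $\dist(P,J)\ge\tfrac{19}{2}|P|$ (possible because $\dist(I,J)\asymp|I|$ and $|I|,\dist(I,J)\ge 40/\qq$); then $J\subset L_{(20P)^c}$, so $\Width^-(P,J)\le\Width^-_{20}(P)\le C_\epscasc$ by Lemma~\ref{lem:near transl:est:eps}, and the Parallel Law gives $\Width^-(I,J)\le\sum_P\Width^-(P,J)\preceq_\epscasc 1$. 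For the lower bound I would use duality again: $\Width^-(I,J)=1/\Width^-(L,L')$, and since $|L|=\dist(I,J)\asymp|I|$ and $\dist(L,L')=\min\{|I|,|J|\}\asymp|L|$, subdividing $L$ the same way gives $\Width^-(L,L')\preceq_\epscasc 1$, hence $\Width^-(I,J)\succeq_\epscasc 1$ (replacing arbitrary intervals by standard ones costs only $O(1/\epscasc)$ in width and is absorbed throughout). With the base estimate available, the general case of~\eqref{eq:1:lmm:nrd:W^- I J} follows by the Splitting Argument of Remark~\ref{rem:SplitArg}, exactly as in Lemma~\ref{lmm:W^- I J}: with $n\coloneqq\lfloor\log_2\tfrac{|I|}{\dist(I,J)}\rfloor+1\asymp\log\tfrac{\min\{|I|,|J|\}}{\dist(I,J)}+1$, decompose $I=I_1\#\cdots\#I_n$ and a subinterval of $J$ as $J_1\#\cdots\#J_n$ so that $\dist(I_k,J_k)\asymp|I_k|\asymp|J_k|\asymp\dist(I_k,J)$ and each piece has length $\ge\dist(I,J)\ge 40/\qq$; then the base estimate gives $\Width^-(I_k,J_k)\asymp_\epscasc 1$ and its upper-bound argument gives $\Width^-(I_k,J)\preceq_\epscasc 1$, and the Parallel Law yields both $\Width^-(I,J)\le\sum_k\Width^-(I_k,J)\preceq_\epscasc n$ and $\Width^-(I,J)\ge\sum_k\Width^-(I_k,J_k)\succeq_\epscasc n$, the canonical rectangles of the $\Fam^-(I_k,J_k)$ being arranged, just as in the disk, into pairwise disjoint nested shells.

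The step I expect to be the main obstacle is the lower half of the base estimate: Lemma~\ref{lem:near transl:est:eps} is one-sided — it only rules out \emph{wide} short intervals — so a lower bound on $\Width^-(I,J)$ has to be extracted by passing to the dual quadrilateral and bounding the width of the complementary family, which is legitimate only because, as noted at the outset, the dual pair also lies above the $40/\qq$ threshold. The remaining work is bookkeeping: rounding arbitrary intervals to standard ones, ensuring every piece of the decomposition has discrete length between $1$ and $\qq/40$ so that Lemma~\ref{lem:near transl:est:eps} applies to it, and controlling the $O(1/\epscasc)$ truncation errors; this is exactly where the hypothesis $|I|,|J|,\dist(I,J)\ge 40/\qq$ is used.
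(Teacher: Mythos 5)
Your proof is correct and takes essentially the same route as the paper's: the base case $\min\{|I|,|J|\}\asymp\dist(I,J)$ is handled exactly as in the paper's Claim, by cutting into standard intervals and combining Lemma~\ref{lem:near transl:est:eps} with the Parallel Law for the upper bound and duality on the complementary pair (which also satisfies the $40/\qq$ threshold) for the lower bound, after which the general case and \eqref{eq:2:lmm:nrd:W^- I J} follow by the Splitting Argument of Remark~\ref{rem:SplitArg}. The only cosmetic slip is in the last step: the pairwise disjoint nested shells realizing the lower bound should be the geodesic rectangles $\RR(I_k,J_k)$ of $U$ (as in Lemma~\ref{lmm:W^- I J}), not the canonical rectangles of $\Fam^-(I_k,J_k)$, which would each be all of $\overline U$ and hence not disjoint.
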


\begin{cor}[$\partial U$ is a coarse quasi-line]
\label{cor:part U: c quasi line}
 Choose a homeomorphism \[h\colon \partial U\to \Circle=\R/\Z, \hspace{1cm} 
  h(L_i) = [i/\qq,(i+1)/\qq].\] Let $I,J\subset \partial U$ be two intervals with ${\min\{|I|, |J|, \dist (I,J)\} \ge 40/\qq}$. Then 
\[ \Width^-_{U}(I,J)  \asymp_\epscasc \Width^-_{\Disk} (h(I), h(J)).\]
\end{cor}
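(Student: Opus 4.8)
The plan is to derive the corollary by combining \propref{prop:part U: c quasi line} (applied to the left-hand side) with \lemref{lmm:W^- I J} (applied to $\Fam^-_\Disk(h(I),h(J))$), the bridge between the two being that $h$ is a \emph{coarse isometry} from $(\partial U,\dist)$ to $(\Circle,\dist_{\R/\Z})$ all the way down to scale $1/\qq$. First I would record that such an $h$ exists at all: $\partial U$ is a Jordan curve written as the cyclic concatenation $L_0\#L_1\#\dots\#L_{\qq-1}$, so one simply glues homeomorphisms $L_i\to[i/\qq,(i+1)/\qq]$ arc by arc.

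Next I would verify the coarse-isometry estimate. Any interval $I\subset\partial U$ is trapped between two standard intervals $L_{V_-}\subset I\subset L_{V_+}$ whose symmetric difference sits inside $L_a\cup L_b$ for some $a,b$, so $|V_+|-|V_-|\le 2$; together with the normalization $|I|=\tfrac1\qq|I|_\qq$ and the chosen continuous extension of $\dist$, this gives $|I|=|V_\pm|/\qq+O(1/\qq)$, and the same squeezing applied to $J$ as well gives $\dist(I,J)=\tfrac1\qq\dist_\qq(L_{V_\pm},L_{W_\pm})+O(1/\qq)$. On the circle side, $h(L_V)$ has Euclidean length exactly $|V|/\qq$ and $h(L_{V_-})\subset h(I)\subset h(L_{V_+})$, so $|h(I)|=|I|+O(1/\qq)$ and $\dist_{\R/\Z}(h(I),h(J))=\dist(I,J)+O(1/\qq)$. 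Under the hypothesis that $|I|,|J|,\dist(I,J)$ all exceed $40/\qq$ these additive errors are a bounded fraction of the quantities they perturb, so $|h(I)|\asymp|I|$, $|h(J)|\asymp|J|$, and $\dist_{\R/\Z}(h(I),h(J))\asymp\dist(I,J)$.

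Then I would plug these comparabilities into the two explicit formulas. \propref{prop:part U: c quasi line} writes $\Width^-_U(I,J)$, up to an $\epscasc$-constant, as $\log\frac{\min\{|I|,|J|\}}{\dist(I,J)}+1$ when $\dist(I,J)\le\min\{|I|,|J|\}$ and as the reciprocal of that expression otherwise, and \lemref{lmm:W^- I J} gives exactly the same description, up to an absolute constant, for $\Width^-_\Disk(h(I),h(J))$ in terms of $|h(I)|,|h(J)|,\dist_{\R/\Z}(h(I),h(J))$. The elementary fact I would invoke is that for $a\asymp a'$ and $b\asymp b'$ with $a\ge b$, $a'\ge b'$ one has $\log(a/b)+1\asymp\log(a'/b')+1$ — the additive logarithmic discrepancy is absorbed because both sides are $\ge1$ — and similarly for the reciprocals; this yields $\Width^-_U(I,J)\asymp_\epscasc\Width^-_\Disk(h(I),h(J))$ whenever $(I,J)$ and its image fall in the same one of the two regimes.

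The one delicate point — the ``main obstacle'', modest as it is — is matching the two case dichotomies, since $(I,J)$ could have $\dist(I,J)\le\min\{|I|,|J|\}$ while $(h(I),h(J))$ satisfies the reverse inequality. But in that event the comparabilities from the coarse-isometry step force $\dist(I,J)\asymp\min\{|I|,|J|\}$, so both widths are $\asymp_\epscasc1$ by the two branches of the respective formulas and the comparison holds anyway. This is the familiar ``everything is $\asymp1$ near the transition'' phenomenon already implicit in \lemref{lmm:W^- I J}; apart from it the argument is pure bookkeeping, the genuine content residing entirely in \propref{prop:part U: c quasi line}.
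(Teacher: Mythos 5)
Your argument is correct and is exactly the route the paper intends: the corollary is stated with an immediate \qed precisely because it follows from Proposition~\ref{prop:part U: c quasi line} together with the Log-Rule of Lemma~\ref{lmm:W^- I J}, once one notes that $h$ preserves lengths and distances up to $O(1/\qq)$ errors, which are negligible at scale $\ge 40/\qq$. Your explicit treatment of the coarse-isometry bookkeeping and of the borderline case $\dist(I,J)\asymp\min\{|I|,|J|\}$ simply spells out what the paper leaves implicit.
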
\qed

\begin{proof}[Proof of Proposition~\ref{prop:part U: c quasi line}]

\begin{claim3}
\label{cl:1:prop:part U: c quasi line}
Suppose $I,J$ are intervals with 
\begin{equation}
\label{eq:cl:1:prop:part U: c quasi line}
 \min \{|I| , |J| \} \asymp \dist(I,J) \sp\sp \text{ and } \sp |I|, |J|, \dist(I,J) \ge 40/\qq.
\end{equation}
Then $\Width^-(I,J)  \asymp_\epscasc 1$.
\end{claim3}
\begin{proof}
We can approximate $I$ from above by a concatenation of standard intervals \[\widetilde I=I_1\#I_2\#\dots \# I_n\sp\sp \text{such that }\sp \Fam^-(I,J)\subset \bigcup_{k=1}^n \Fam^-_{20}(I_k),\sp\sp \widetilde I\setminus I\subset L_a\cup L_b,\] where $n$ depends on the constant representing ``$\asymp$'' in~\eqref{eq:cl:1:prop:part U: c quasi line}. 
Using Lemma~\ref{lem:near transl:est:eps} and Parallel Law~\eqref{eq:ParLaw}, we obtain
\begin{equation}
\label{eq:prf:cl:1:prop:part U: c quasi line}
\Width^-(I,J)\le \Width_{20}^-(I_1)+\Width_{20}^-(I_2)+\dots + \Width_{20}^-(I_n)\preceq_{\epscasc} 1.
\end{equation}

Let $X, Y$ be the connected components of $\partial U\setminus (I\cup J)$. We have:
\[ \min \{|X| , |Y| \} \asymp \dist(X,Y) \sp\sp \text{ and } \sp |X|, |Y|, \dist(X,Y) \ge 40/\qq.
\] 
Repeating the above argument for $X, Y$, we obtain:
\[\left(\Width^-(I,J)\right)^{-1}=\Width^-(X,Y) \preceq_{\epscasc} 1,\sp\sp \text{i.e.}\sp \Width^-(I,J) \succeq_{\epscasc}1.\]
Therefore, $\Width^-(I,J)\asymp_\epscasc 1$.
\end{proof}

The proposition follows from Claim~\ref{cl:1:prop:part U: c quasi line} by applying the Splitting Argument, see Remark~\ref{rem:SplitArg}.
\end{proof}

\begin{rem}
\label{rem:prop:part U: c quasi line}
We note that the comparison ``$\asymp_\epscasc$''  in~\eqref{eq:1:lmm:nrd:W^- I J} and~\eqref{eq:2:lmm:nrd:W^- I J} depends only on the constant $C_\epscasc$ from Lemma~\ref{lem:near transl:est:eps} -- this lemma was used only in~\eqref{eq:prf:cl:1:prop:part U: c quasi line}. The constant $C_\epscasc$ depends only on the constant $C'_\epscasc=1/\epscasc$ from Lemma~\ref{lem:1/eps buffer}. In \S\ref{ss:BeauCoarBnds}, we will improve Lemma~\ref{lem:1/eps buffer} and obtain beau coarse-bounds on scale $\gg_\epscasc 1/\qq $.
\end{rem}

\subsection{Beau coarse-bounds for near-rotation domains}
\label{ss:BeauCoarBnds} Let us start by improving Lemma~\ref{lem:1/eps buffer} on scale $\gg_\epscasc 1/\qq$:

\begin{lem}
\label{lem:1/eps buffer:impr} There is a constant $T_\epscasc >1$ such that the following holds. Consider a rectangle
\[\RR\subset f^i(\ovl U)\subset\widetilde U,\sp\sp \partial ^h \RR\subset f^i(\partial U), \sp\sp \sp i\le \qq\]
\[\text{ with } \sp\partial^{h,0} \RR\subset B_V,\sp \partial^{h,1} \RR\subset B_W, 
\]  where $V$ and $W$ are discrete intervals. After removing two $1$-buffers from $\RR$, the new rectangle $\RR^\new$ is disjoint from $B_s$ for every $s\in \Z/\qq$ with $\dist_\qq(s, V\cup W)\ge T_\epscasc$.
\end{lem}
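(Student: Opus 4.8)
As in Lemma~\ref{lem:1/eps buffer} (itself a corollary of Lemma~\ref{lem:Rect in U}), fix $s$ and put $D:=\dist_\qq(s,V\cup W)$; let $\GG_s$ be the family of vertical curves of $\RR$ meeting $B_s$. Since $\partial^h\RR\subset f^i(\partial U)$ lies inside $B_V\cup B_W$ while $B_s$ is a bump pinned near the boundary arc $f^i(L_{s-\pp i})\subset B_s$ (axiom (D1) with $t=i$), the set $B_s\cap\RR$ is disjoint from $\partial^h\RR$ and reaches at most one vertical side of $\RR$; hence, exactly as in Lemma~\ref{lem:1/eps buffer}, $\GG_s$ fills a sub-rectangle $\RR_s\subset\RR$ abutting a vertical side of $\RR$ (possibly $\RR_s=\RR$), all of whose vertical curves meet $B_s$. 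It therefore suffices to prove
\[
\Width(\RR_s)\ \preceq_\epscasc\ \frac1{D}\,.
\]
Indeed, taking $T_\epscasc$ to be a large enough multiple of $1/\epscasc$, we get $\Width(\RR_s)<1$ whenever $D\ge T_\epscasc$; then $\RR_s$ is disjoint from the rectangle $\RR^\new$ obtained by removing two unit buffers from $\RR$ (being contained in one buffer, or having width $<2$), and since $B_s\cap\RR\subset\RR_s$ this yields $\RR^\new\cap B_s=\emptyset$.

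To estimate $\Width(\RR_s)$ we exploit the near-rotation structure over the \emph{whole} combinatorial stretch separating $s$ from $V\cup W$, rather than the single annulus $A_s$ used in Lemma~\ref{lem:1/eps buffer}. Let $P\subset\Z/\qq$ be the discrete interval of indices between $s$ and the nearer of $V,W$; then $|P|_\qq\ge D-1$, so we may pick $t_1<\dots<t_N$ in $P$ with $N\ge(D-1)/2-3$, pairwise at $\Z/\qq$-distance $\ge2$ and each at distance $\ge2$ from $\{s\}\cup V\cup W$. By axiom (D2) the annuli $A_{t_1},\dots,A_{t_N}$ are pairwise disjoint, each has $\mod(A_{t_j})\ge\epscasc$, and each $\widetilde A_{t_j}$ is disjoint from $B_V$, $B_W$ and $B_s$. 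A vertical curve $\gamma$ of $\RR_s$ runs from $\partial^{h,0}\RR\subset B_V$ through $B_s$ to $\partial^{h,1}\RR\subset B_W$; the arc of $\gamma$ from its start in $B_V$ to its first visit to $B_s$ joins two points lying outside all of the $\widetilde A_{t_j}$ while passing, along $\partial f^i(\overline U)$, the bumps $B_{t_1},\dots,B_{t_N}$ (which contain the intervening boundary arcs $f^i(L_{t_j-\pp i})$), and one checks that it must cross each $A_{t_j}$: to pass a bump $B_{t_j}$ while staying in $\widetilde U$ one must either detour around $\widetilde A_{t_j}$ or dive in and out of $B_{t_j}$, and in either case $A_{t_j}$ is crossed. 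Thus $\RR_s$ overflows a family each of whose curves crosses the $N$ disjoint annuli $A_{t_1},\dots,A_{t_N}$, so the Gr\"otzsch (series) inequality gives
\[
\Width(\RR_s)\ \le\ \Big(\sum_{j=1}^N\mod(A_{t_j})\Big)^{-1}\ \le\ \frac1{N\epscasc}\ \preceq\ \frac1{D\,\epscasc}\,,
\]
which is the required bound.

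\textbf{Main obstacle.} The delicate point is precisely the claim that $\gamma$ genuinely crosses each intermediate $A_{t_j}$: a priori $\gamma$ might slip through the interior of $U$, avoiding every $\widetilde A_{t_j}$, and only enter $\widetilde A_s$ at the end — the ``sneaking through the $B_k$'' phenomenon flagged in \S\ref{sss:outline:s:NearRotatSystem}. Ruling this out is where the rectangle structure is essential: the relevant vertical side of $\RR_s$, together with the arc of $\partial f^i(\overline U)$ carrying $f^i(L_{s-\pp i})$ and the arcs $f^i(L_{t_j-\pp i})$, bounds a region $\Omega$ into which each $B_{t_j}$ and $B_s$ protrude, and $B_s$ can reach $\RR_s$ only by poking across that vertical side; tracking this crossing against the nested bumps and invoking the Non-Crossing Principle \S\ref{sss:non cross princ} (in the style of the Shift Argument of Lemma~\ref{lem:near transl:est:eps}) forces the threading. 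Making this topological bookkeeping uniform in $i$ and in the near-rotation system — presumably a quantified refinement of Lemma~\ref{lem:Rect in U} that retains all $\asymp D$ protective annuli instead of one — is the bulk of the work. (Alternatively, one could feed the sub-arcs of $\gamma$ lying in $U$ into the coarse bounds of Proposition~\ref{prop:part U: c quasi line}, obtaining $\Width(\RR_s)\preceq_\epscasc(\log D+1)^{-1}$, which is again $<1$ for $D\ge T_\epscasc$; cf.~Remark~\ref{rem:prop:part U: c quasi line}.)
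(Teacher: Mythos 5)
There is a genuine gap, and it sits exactly where you placed your ``main obstacle'': the claim that every vertical curve of $\RR_s$ must cross each of the $\asymp D$ intermediate annuli $A_{t_1},\dots,A_{t_N}$ is not merely unproven, it is false as a topological statement. The annuli $A_{t_j}$ only surround the bumps $B_{t_j}$, which are localized near the boundary arcs $f^i(L_{t_j-\pp i})$; they do not separate the ``$V\cup W$ side'' of $\widetilde U$ from the ``$s$ side''. A vertical curve of $\RR$ can leave $B_V$, travel through the bulk of $f^i(U)$ far from $\partial f^i(U)$, and arrive at $B_s$ without meeting a single $\widetilde A_{t_j}$, so the Gr\"otzsch series bound $\Width(\RR_s)\preceq_\epscasc 1/D$ does not follow. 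No amount of bookkeeping with the vertical side of $\RR_s$, the region $\Omega$, or the Non-Crossing Principle can force the threading, because the obstruction you need simply is not there: the only objects that do separate $s$ from $V\cup W$ inside $U$ are families of curves \emph{crossing} $U$, i.e.\ the inner families controlled by Proposition~\ref{prop:part U: c quasi line}. This is why the Shift Argument of Lemma~\ref{lem:near transl:est:eps} is not a model here --- there the wide rectangle itself provided the transversal barrier.

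The paper's proof is, in effect, the route you relegate to your closing parenthesis, but with the missing ingredient supplied. One chooses $n=n(T_\epscasc)$ pairs of discrete intervals $G_i,H_i$ separating $s$ from $V\cup W$, pairwise far apart, with $|G_i|=|H_i|=\dist(G_i,H_i)$, and uses Proposition~\ref{prop:part U: c quasi line} to extract pairwise disjoint sublaminations $\Fam_i\subset\Fam^-(L_{G_i},L_{H_i})$ of width $\asymp_\epscasc 1$; these are genuine barriers across $U$. The complication the paper flags --- that $\RR$ lives in $\widetilde U$, not in $U$, so a curve may dodge a barrier by slipping through the bumps at its endpoints --- is handled by a trichotomy: every vertical curve of $\RR$ meeting $B_s$ either meets $B_{g_i}$, or meets $B_{h_i}$ (chosen via the buffer structure of Lemma~\ref{lem:buffer:R O}), or crosses \emph{every} curve of $\Fam_i$; each alternative costs extremal length $\succeq_\epscasc 1$ (crossing $A_{g_i}$, crossing $A_{h_i}$, or the duality with $\Width(\Fam_i)\asymp_\epscasc 1$), and since these costs are supported on disjoint sets for distinct $i$, the serial law gives $\Width(\GG_s)\preceq_\epscasc 1/n$, which is $<1$ once $T_\epscasc$ is large. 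Your sketch of ``feeding the sub-arcs in $U$ into the coarse bounds'' skips precisely this bump-bypass analysis, which is the substance of the lemma; the reduction to a buffer of small width at the start of your proposal is fine and matches the paper.
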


\begin{proof}
Follows essentially from Proposition~\ref{prop:part U: c quasi line} because $B_s$ is protected  by a wide family $\Fam ^-(L_G,L_H)$, where $G,H$ are discrete intervals separating $s$ from $V\cup W$. A slight complication is that $\RR$ is a rectangle in $\widetilde U$ and not in $U$.

Suppose $T_\epscasc$ is sufficiently big. There is a sequence of pairs of discrete intervals $G_i,H_i\subset \Z/\qq$ such that all $G_i,H_i$ have pairwise distances at least three, $|G_i|= |H_i|=\dist(G_i,H_i)$, every pair $G_i, H_i$ separates $s$ from $V\cup W$, and $n=n( T_\epscasc)$ is big. 

Using Proposition~\ref{prop:part U: c quasi line}, we can choose a subfamily $\Fam_i$ in $\Fam^-(G_i,H_i)$ such that  $\Width(\Fam_i)\asymp_\epscasc 1$ and such that the $\Fam_i$  are pairwise disjoint.

For every $g\in G_i$, the set of  vertical curves in $\RR$ that intersect $B_g$ forms a buffer of $\RR$ by Lemma~\ref{lem:buffer:R O}; let us choose $g_i\in G_i$ such that the buffer is maximal. Similarly, we choose $h_i\in H_i$ such that the buffer of curves in $\RR$ intersecting $B_{h_i}$ is maximal. Then for every $i$ and every curve $\gamma\in \Fam$ intersecting $B_s$ either
\begin{itemize}
\item[(1)] $\gamma$ intersects $B_{g_i}$; or
\item[(2)] $\gamma$ intersects $B_{h_i}$; or
\item[(3)] $\gamma $ intersects every curve in $\Fam_i$.
\end{itemize}
The modulus of curves in $\Fam$ satisfying (1), (2), and (3) is $\preceq _\epscasc 1$ because $B_{g_i} ,B_{h_i}$ are separated from $V\cup W\cup \{s\}$ by $A_{g_i} ,A_{h_i}$. Therefore, the modulus of  vertical curves in $\RR$ intersecting $B_s$ is $\preceq_\epscasc 1/n $. Since $n$ is big, the lemma follows.
\end{proof}

\begin{lem}
\label{lem:near transl:est:eps:impr}
There is a universal constant $C>0$ and a constant $T_\epscasc>0$ depending on $\epscasc$ such that there are no  $[C,5]^-$-wide intervals $L_S$ with $|S|\ge T_\epscasc$.
\end{lem}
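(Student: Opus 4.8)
The plan is to run the Shift Argument exactly as in the proof of Lemma~\ref{lem:near transl:est:eps} (see Figure~\ref{Fg:lmm:shift}), but to feed it the improved buffer estimate Lemma~\ref{lem:1/eps buffer:impr} in place of Lemma~\ref{lem:1/eps buffer}. The single point to exploit is that the buffers removed in Lemma~\ref{lem:1/eps buffer:impr} have \emph{universal} width $1$ rather than width $1/\epscasc$; consequently the threshold $C$ produced by the argument is universal, and the whole $\epscasc$-dependence migrates into the hypothesis $|S|\ge T_\epscasc$, whose only role is to leave enough room for the shift once the (now $\epscasc$-dependent) constant of Lemma~\ref{lem:1/eps buffer:impr} is large. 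Implicitly, the coarse bounds of Proposition~\ref{prop:part U: c quasi line}, which are what made Lemma~\ref{lem:1/eps buffer:impr} available, are the real input.

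In detail, I would argue by contradiction: suppose $L_S$ is $[C,5]^-$-wide with $|S|\ge T_\epscasc$, and let $\RR$ be the canonical rectangle of $\Fam^-_5(L_S)$, so $\partial^{h,0}\RR\subset L_S$ and $\partial^{h,1}\RR\subset L_{(5S)^c}$. Each of the two collars of $5S\setminus S$ has width $2|S|$, so, since $|S|\ge T_\epscasc$ is large compared with the constant of Lemma~\ref{lem:1/eps buffer:impr}, there is a shift $k$ — taken near the midpoint of a collar — with $S+k$ at $\Z/\qq$-distance $\asymp|S|$ from both $S$ and $(5S)^c$. Picking $j<\qq$ with $\pp j=k$ in $\Z/\qq$ and setting $J\coloneqq f^j(L_S)\subset B_{S+k}$, $\RR_J\coloneqq f^j(\RR)\subset\widetilde U$, I would then — exactly as in Lemma~\ref{lem:near transl:est:eps} — remove bounded buffers and invoke Lemma~\ref{lem:vet boundar} to obtain rectangles $\RR^\NEW,\RR_J^\NEW$ with disjoint vertical boundaries, with $\Width(\RR_J^\NEW)\ge C-O(1)$, and with $\partial^{h,0}\RR_J^\NEW\subset B_{S+k}$ and $\partial^{h,1}\RR_J^\NEW\subset B_{V+k}$, where $V$ is the minimal discrete interval carrying $\partial^{h,1}\RR^\NEW$. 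Taking $\beta$ to be the vertical boundary of $\RR^\NEW$ on the side of the collar where $S+k$ sits — so $\beta$ separates $L_{S+k}$ from $\RR^\NEW\setminus\beta$ in $\overline U$ and runs from $L_a$ to $L_b$ — the choice of $k$ near the collar's midpoint forces $a$ and $b$ to be at $\Z/\qq$-distance $\asymp|S|\ge T_\epscasc$ from $(S+k)\cup(V+k)$. Lemma~\ref{lem:1/eps buffer:impr}, applied to $\RR_J^\NEW$ after discarding two harmless $1$-buffers, then yields a vertical curve $\gamma$ of $\RR_J^\NEW$ disjoint from $B_a\cup B_b$, hence from $\beta\cup B_a\cup B_b$; but $\gamma$ joins $B_{S+k}$ to $B_{V+k}$, which $\beta\cup B_a\cup B_b$ separates in $\widetilde U$, contradicting the Non-Crossing Principle of~\S\ref{sss:non cross princ}. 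Since $C$ is universal, this is the claim.

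The step I expect to be the main obstacle is the topological bookkeeping of the previous paragraph, which is more delicate here than in Lemma~\ref{lem:near transl:est:eps} because the rescaling factor has shrunk from $20$ to $5$: one must simultaneously keep $a$, $b$, the shifted interval $S+k$, and the index interval $V$ of the far horizontal side of $\RR^\NEW$ pairwise far apart on the scale $T_\epscasc$ (this is what the separation statement needed for the Non-Crossing Principle requires), and this is precisely what dictates both taking the shift well inside a collar of $5S\setminus S$ and choosing $T_\epscasc$ comfortably larger than the constant furnished by Lemma~\ref{lem:1/eps buffer:impr}. In particular one has to rule out the shifted rectangle $\RR_J^\NEW$ sneaking through the blobs $B_s$ clustered near the endpoints of $\beta$ — which is exactly the content of Lemma~\ref{lem:1/eps buffer:impr}, and the reason that lemma (rather than the cruder Lemma~\ref{lem:1/eps buffer}) is the crucial ingredient and the place where the coarse $\epscasc$-dependent bounds get bootstrapped into beau bounds.
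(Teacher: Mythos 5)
Your proposal is exactly the paper's argument: the paper proves this lemma by the one-line remark that it follows from Lemma~\ref{lem:1/eps buffer:impr} in the same way that Lemma~\ref{lem:near transl:est:eps} follows from Lemma~\ref{lem:1/eps buffer}, i.e.\ by rerunning the Shift Argument with the universal $1$-buffers of the improved buffer lemma, so that $C$ becomes universal and the $\epscasc$-dependence is absorbed into the scale requirement $|S|\ge T_\epscasc$. Your detailed write-up (choice of the shift $k$ well inside a collar, Lemma~\ref{lem:vet boundar}, the separating vertical boundary $\beta$, and the Non-Crossing contradiction) is a faithful expansion of that same proof, so there is nothing to add.
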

\begin{proof}
Follows from Lemma~\ref{lem:1/eps buffer:impr} in the same way as Lemma~\ref{lem:near transl:est:eps} follows from Lemma~\ref{lem:1/eps buffer}.
\end{proof}

\begin{thm}[Beau coarse-bounds]
\label{thm:beau:part U: c quasi line} Let $\tF_\qq=\big( f^t \colon U\to U_t\big)_{0
\le t \le \qq}$ be a $\epscasc$-near rotation domain. 
There is a constant $T_\epscasc>1$ depending on $\epscasc$ such that the following holds. The following \emph{universal Log-Rule} holds for intervals \[I,J\subset \partial Z\sp\sp\sp\text{ such that }\sp |I|,\ |J|,\ \dist(I,J) \ge T_\epscasc/\qq .\] If $\dist (I,J)\le \min\{|I|,|J|\}$, then
\begin{equation}
\label{eq:1:lmm:beau:nrd:W^- I J}
\Width^-(I,J) \asymp \log \frac{\min\{|I|,|J|\}}{\dist (I,J)} +1;
\end{equation}
otherwise 
\begin{equation}
\label{eq:2:lmm:beau:nrd:W^- I J}
\Width^-(I,J) \asymp \left( \log \frac{\dist (I,J)}{\min\{|I|,|J|\}} +1\right)^{-1}.
\end{equation}

\end{thm}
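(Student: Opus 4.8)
The plan is to run the proof of Proposition~\ref{prop:part U: c quasi line} essentially verbatim, with two substitutions: Lemma~\ref{lem:near transl:est:eps} is replaced by its universal upgrade Lemma~\ref{lem:near transl:est:eps:impr}, and the rescaling factor $20$ is replaced by $5$ throughout (so $\Fam^-_{20}$ becomes $\Fam^-_5$). By Remark~\ref{rem:prop:part U: c quasi line}, the only point at which the comparison constant in ``$\asymp_\epscasc$'' enters the proof of the Proposition is through the constant $C_\epscasc$ of Lemma~\ref{lem:near transl:est:eps}; since Lemma~\ref{lem:near transl:est:eps:impr} gives the same conclusion with an \emph{absolute} constant $C$, every ``$\asymp_\epscasc$'' in the argument turns into a universal ``$\asymp$''. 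The price is that Lemma~\ref{lem:near transl:est:eps:impr} requires $|S|$ to exceed a threshold depending on $\epscasc$, so the scale hypothesis weakens from $\ge 40/\qq$ to $\ge T_\epscasc/\qq$ for a suitable (larger) $T_\epscasc=T_\epscasc(\epscasc)$.

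First I would prove the base case, the analogue of Claim~\ref{cl:1:prop:part U: c quasi line}: if $I,J\subset\partial U$ satisfy $\min\{|I|,|J|\}\asymp\dist(I,J)$ and $|I|,|J|,\dist(I,J)\ge T_\epscasc/\qq$, then $\Width^-(I,J)\asymp 1$ with an absolute constant. As in the Proposition, assume $|I|\le|J|$ and approximate $I$ from above by a concatenation of standard intervals $\widetilde I=I_1\#\dots\#I_n$ with $\widetilde I\setminus I\subset L_a\cup L_b$, where $n$ is bounded in terms of the constant implicit in $\min\{|I|,|J|\}\asymp\dist(I,J)$; refining the partition if necessary (this only enlarges $n$ by a bounded factor) we arrange that each $I_k=L_{S_k}$ has $|S_k|$ above the threshold of Lemma~\ref{lem:near transl:est:eps:impr} and that $5S_k$ is disjoint from the discrete interval carrying $J$, so that $\Fam^-(I,J)\subset\bigcup_{k=1}^n\Fam^-_5(I_k)$ --- to make this compatible with the scale hypothesis one chooses $T_\epscasc$ large enough. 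By Lemma~\ref{lem:near transl:est:eps:impr} each $\Width^-_5(I_k)\preceq 1$ with an absolute constant, so the Parallel Law gives $\Width^-(I,J)\le\sum_{k=1}^n\Width^-_5(I_k)\preceq n\preceq 1$. Running the same argument on the pair $X,Y$ of complementary intervals of $\partial U\setminus(I\cup J)$ --- which again satisfies the base-case hypotheses --- and using $\Width^-(X,Y)=\big(\Width^-(I,J)\big)^{-1}$ gives the matching lower bound, whence $\Width^-(I,J)\asymp 1$.

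The estimates~\eqref{eq:1:lmm:beau:nrd:W^- I J} and~\eqref{eq:2:lmm:beau:nrd:W^- I J} then follow from the base case by the Splitting Argument of Remark~\ref{rem:SplitArg}, exactly as~\eqref{eq:1:lmm:nrd:W^- I J}--\eqref{eq:2:lmm:nrd:W^- I J} were obtained from Claim~\ref{cl:1:prop:part U: c quasi line}. When $|I|\le|J|$ and $\dist(I,J)\le|I|$, write $I=I_1\#\dots\#I_N$ and $J=J_1\#\dots\#J_N$ with $N\asymp\log\frac{|I|}{\dist(I,J)}+1$, with $I_1,J_1$ of length $\asymp\dist(I,J)$ and successive pieces roughly doubling, so that $\dist(I_k,J_k)\asymp|I_k|\asymp\dist(I_k,J)$; since all pieces have length $\ge\dist(I,J)\ge T_\epscasc/\qq$, the base case applies to each $(I_k,J)$ and to each geodesic rectangle $\RR(I_k,J_k)$, and the Parallel Law sandwiches $\sum_{k=1}^N\Width(\RR(I_k,J_k))\le\Width^-(I,J)\le\sum_{k=1}^N\Width^-(I_k,J)$ between absolute multiples of $N$. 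The remaining case $\dist(I,J)\ge\min\{|I|,|J|\}$ follows by passing to $X,Y$ and using $\Width^-(I,J)=\big(\Width^-(X,Y)\big)^{-1}$ together with the case already proved.

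The main obstacle --- already confronted, and resolved, in Lemma~\ref{lem:1/eps buffer:impr}, which in turn powers Lemma~\ref{lem:near transl:est:eps:impr} --- is that the rectangles produced by the Shift Argument live in the filled domain $\widetilde U$ rather than in $U$, so their vertical curves may try to ``sneak through'' the filling disks $B_k$ instead of being cut off by the separating boundary arc $\beta$. Lemma~\ref{lem:1/eps buffer:impr} controls exactly this: when $s$ has $\Z/\qq$-distance $\ge T_\epscasc$ from $V\cup W$, the disk $B_s$ is shielded by a definite number $n=n(T_\epscasc)$ of pairwise disjoint wide families $\Fam^-(L_{G_i},L_{H_i})$, so the family of vertical curves of $\RR$ reaching $B_s$ has modulus $\preceq_\epscasc 1/n$; taking $n$ large makes this modulus $<1$, so removing two unit buffers destroys every such curve. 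The net effect on the present proof is that every ``distance $\ge 3$'' in the $\epscasc$-dependent bookkeeping of Proposition~\ref{prop:part U: c quasi line} must be upgraded to ``distance $\ge T_\epscasc$'', which is precisely what pushes the scale threshold from $40/\qq$ up to $T_\epscasc/\qq$ while leaving the final comparison constants absolute; carrying out the matching adjustments in the partitions above is routine.
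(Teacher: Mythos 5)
Your proposal is correct and follows exactly the route the paper takes: the paper's proof of Theorem~\ref{thm:beau:part U: c quasi line} is precisely to rerun the argument of Proposition~\ref{prop:part U: c quasi line} with Lemma~\ref{lem:near transl:est:eps:impr} in place of Lemma~\ref{lem:near transl:est:eps}, invoking Remark~\ref{rem:prop:part U: c quasi line} to see that the comparison constant enters only through that lemma, at the cost of raising the scale threshold to $T_\epscasc/\qq$. Your write-up just makes explicit the base-case claim and the Splitting Argument that the paper leaves implicit.
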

\begin{proof}
Follows from Lemma~\ref{lem:near transl:est:eps:impr} in the same way as Proposition~\ref{prop:part U: c quasi line} follows from Lemma~\ref{lem:near transl:est:eps} -- see Remark~\ref{rem:prop:part U: c quasi line}.
\end{proof}

In Theorem~\ref{thm:wZ:shallow scale}, we will extend beau coarse-bounds for pseudo-Siegel disks.

\section{Parabolic fjords}
\label{s:par fjords}
In this section we will describe the geometry inside a fjord $\Fjord_m(T)$ from \S\ref{ss:pullbacks curves}, where $T\in \Dbb_m, \sp m\ge -1$ is an interval in the diffeo-tiling, see~\S\ref{sss:diff tilings}. In particular, we establish a Log-Rule for wide ``parabolic'' rectangles in $\C\setminus \filled_m$ based on $T$. By removing $1$-buffer, such rectangles are in $\Fjord_m(T)$.

We recall that $|T|\in \{ \ell_m, \ell_m+\ell_{m+1}\}$ and $T'\coloneqq T\cap f^{\qq_{m+1}}(T)$. If $m=-1$, then $T=[c_0, c_0\boxplus 1]\simeq \partial Z$ and $T'$ is the longest interval between $c_1$ and $c_0$.

Recall from~\eqref{eq:dfn:based on} that a rectangle $\RR$ is based on $T'$ if $\RR\subset \wC\setminus Z$ and $\partial^h\RR\subset T'$.  We assume that $\partial ^{h,0}\RR< \partial ^{h,1}\RR$ in $T'$ so that $\lfloor \partial \RR \rfloor \subset T'$. 

Let us denote by $\dist_T(\ )$ the ``internal'' metric on $T$ induced by $\dist(\   )$. Namely, if $m>-1$, then we set $\dist_T(x,y)\coloneqq \dist(x,y)$. For $m=-1$ and $x,y\not= c_0$, we define $\dist_T(x,y)$ to be the length of the interval $(x,y)$ that does not contain $c_0$.  In other words, we view $T$ as $(c_0,c_0\boxplus 1)$ with the induced Euclidean metric.

 A rectangle based on $T'$ or $T$ is called \emph{parabolic} (of level $m$) if
\begin{equation}
\label{eq:parab rect}
\dist_{T}(\partial^{h,0}\RR,  \partial^{h,1}\RR ) \ge 6\min \{ | \partial^{h,0}\RR|,\sp | \partial^{h,1}\RR| \} +3\length_{m+1}
\end{equation}
i.e.~the gap between $ \partial^{h,0}\RR$ and $\partial^{h,1}\RR$ is bigger than the minimal horizontal side of $\RR.$ We say that a parabolic rectangle $\RR$ is \emph{balanced} if $|\partial^{h,0} \RR|=|\partial^{h,1} \RR|$. 

Let us assume that 
\begin{equation}
\label{eq:T:orientat ass} T=[v,w], \sp v<w,\sp\sp \theta_{m+1} <0,\sp\sp T'=[v',w],\sp\sp\text{ where }v'=v\boxplus \theta_{m+1},
\end{equation}
i.e.~$f^{\qq_{m+1}}\mid T$ moves points clockwise towards $w$, see Figure~\ref{fig:par rect}.  The case $\theta_{m+1}>0$ is equivalent. For a parabolic rectangle $\RR$ based on $T'$ we will often write
\begin{equation}
\label{eq:part RR:notat}
\partial^{h,0} \RR=[a,b],\sp \partial^{h,1} \RR=[c,d], \sp\sp\text{ where }\sp a<b<c<d.
\end{equation}

\begin{figure}[t!]
\begin{tikzpicture}[ line width=0.5mm ] 
\draw (-6.5,0)--(-4.5,0)
(-2.5,0)--(2.5,0)
(4.5,0) --(6.5,0);
\draw[red, ] (-4.5,0) --(-2.5,0)
(4.5,0) --(2.5,0);

\node[red,below] at(-3.5, 0) {$\partial ^{h,0} \RR$};
\node[red,below] at(3.5, 0) {$\partial ^{h,1} \RR$};

\filldraw (-6,0)  circle (0.5mm);
\node[below] at (-6,0) {$v$};

\filldraw (-5.5,0)  circle (0.5mm);
\node[below] at (-5.5,0) {$v'$};

\filldraw (6,0)  circle (0.5mm);
\node[below] at (6,0) {$w$};

\filldraw (-4.5,0)  circle (0.5mm);
\node[below] at (-4.5,0) {$a$};

\filldraw (-2.5,0)  circle (0.5mm);
\node[below] at (-2.5,0) {$b$};

\filldraw (4.5,0)  circle (0.5mm);
\node[below] at (4.5,0) {$d$};

\filldraw (2.5,0)  circle (0.5mm);
\node[below] at (2.5,0) {$c$};

\draw[blue] (-4.5,0)  .. controls (-2,2.5) and (2,2.5) .. (4.5,0); 
\draw[blue] (2.5,0) .. controls (1.5,1.5) and (-1.5,1.5) ..  (-2.5,0); 

\draw[opacity = 0, fill=blue, fill opacity=0.05] (-4.5,0)  .. controls (-2,2.5) and (2,2.5) .. (4.5,0)
 .. controls (4,0) and (3.5,0) .. 
(2.5,0) .. controls (1.5,1.5) and (-1.5,1.5) ..  (-2.5,0); 

\node[blue] at(0,1.4) {$\RR$};

\draw (-6,0.2) edge[->,red, bend left=40] node[above]{$f^{\qq_{m+1}}$}(-5.5,0.2);

\end{tikzpicture}
\caption{A parabolic rectangle $\RR$ on $T'=[v',w]$, following Notations~\eqref{eq:T:orientat ass} and~\eqref{eq:part RR:notat}.} 
\label{fig:par rect}
\end{figure}
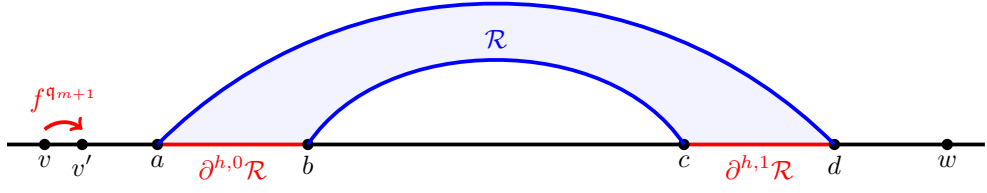

Following~\S\ref{sss:ext div famil}, we say that a parabolic rectangle $\RR$ is \emph{external} rel $\filled_m$ if $\intr\RR\subset \wC\setminus \filled_m$. The following theorem (Item~\ref{thm:par fjords:part1}) says that if there is at least one external wide parabolic rectangle, then the width of all such wide rectangles is calculated according to the Log-Rule on a certain $T_\parab$. Items~\ref{thm:par fjords:part2} and \ref{thm:par fjords:part3} state the Log-Rule for intervals in $T_\parab$.

\begin{thm}[Log-rule for $\Fjord_m(T)$]
\label{thm:par fjords}
Assume that there is at least one sufficiently wide external parabolic rectangle rel $\filled_m$ based on $T$ (see~\eqref{eq:T:orientat ass}). Then $T$ contains a subinterval 
 \[T_\parab=[x,y]\subset T',\sp\sp v'<x<y<w ,\sp\sp \dist(x,v)\le\dist(y,w)\] with the following properties.

 \emph{\bf \setword{(\RN{1})}{thm:par fjords:part1}} If $\RR$ is any external parabolic rectangle based on $T$ with  $\Width(\RR)\gg 1$, then $\RR$  contains a balanced parabolic subrectangle 
\[ \RR^\new \subset\RR,\sp\sp \partial^h\RR^\new \subset T_\parab, \sp\sp\Width(\RR^\new)= \Width(\RR)-O(1)\]   
 such that $x< \partial ^{h,0}\RR^\new < \partial ^{h,1}\RR^\new <y$, $\sp\dist(x,\partial^{h,0}\RR^\new )=\dist(\partial^{h,1}\RR^\new,y ),$
\begin{equation}
\label{eq:1:thm:par fjords}
 \Width(\RR^\new) \asymp \log \frac{|\partial^{h,0}\RR^\new|}{\dist ( \partial^{h,0}\RR^\new ,v)}=\log \frac{|\partial^{h,1}\RR^\new|}{\dist ( \partial^{h,0}\RR^\new ,v)}.
\end{equation}

 \emph{\bf \setword{(\RN{2})}{thm:par fjords:part2}}  If $I,J\subset T_\parab,  \sp x<I<J<y$ are two intervals with
\[ \dist(x,I) \asymp \dist(J,y) \preceq |I| \asymp |J|\preceq \dist_{T}(I,J) \]
and $|I|, |J|,\dist_{T}(I,J)\ge \length_{m+1}$, then
\begin{equation}
\label{eq:2:thm:par fjords}
 \Width^+(I,J) -O(1)=\Width_{\ext,m}^+(I,J) \asymp \log^+ \frac{\min\{|I|,|J|\}}{\dist ( v,I)}+1.
\end{equation}

 \emph{\bf \setword{(\RN{3})}{thm:par fjords:part3}}  If $I,J\subset T_\parab,  \sp x<I<J<y$ are two intervals with
\[ |\lfloor I,J \rfloor | <\frac 1 2 |T_\parab| \sp\text{ and } \sp \min\{|I|, |J|\} \succeq\dist(I,J) >3\length_{m+1}\]
and $|I|, |J|\ge \length_{m+1}$, then
\begin{equation}
\label{eq:3:thm:par fjords}
\Width^+(I,J) -O(1)=\Width_{\ext,m}^+(I,J)\asymp \log^+ \frac{\min\{|I|,|J|\}}{\dist ( I,J)}
 +1 .
\end{equation}
\end{thm}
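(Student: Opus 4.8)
The plan is to reduce all three parts to extremal-width computations in a Fatou-type model for the fjord $\FjordT(T)$, built from the dynamics of $g\coloneqq f^{\qq_{m+1}}$ in the near-degenerate regime $\ell_{m+1}\ll\ell_m$, and then to run the Splitting Argument of Remark~\ref{rem:SplitArg} in that model, just as the Log-Rule (Lemma~\ref{lmm:W^- I J}) and the coarse bounds for near-rotation domains (Proposition~\ref{prop:part U: c quasi line}) are obtained from their comparable-scales base cases. The wideness hypothesis is what makes this possible: it is used, via a non-crossing argument, to force translational structure on the part of the fjord that matters.

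First I would build a translational tower in the fjord. Let $\RR_0$ be a sufficiently wide external parabolic rectangle based on $T$. On $\partial Z$ the map $g$ is the rotation by $\theta_{m+1}$, so it slides $T'$ along itself by the single combinatorial unit $\ell_{m+1}$, and by Lemma~\ref{lem:inj push forw} it is injective on $\FjordT(T)$ and on its $g$-translates; thus each $g^{j}(\RR_0)$ is again an external parabolic rectangle, a shift of $\RR_0$ along the fjord. I would then apply a variant of the Shift Argument of \S\ref{s:NearRotatSystem} (as in Lemma~\ref{lem:near transl:est:eps}): since $\Width(\RR_0)\gg1$, the Non-Crossing Principle \S\ref{sss:non cross princ} forbids a substantial cross-intersection of $\RR_0$ with its translates, which forces a fundamental sub-rectangle of $\RR_0$ and its $g$-orbit to stack up as $\asymp\Width(\RR_0)$ nested conformal copies inside $\FjordT(T)$, each carried to the next by $g$; the a priori bound that $\overline Z$ is a $K_\theta$-quasidisk keeps the fjord from pinching across this stack and makes the copies of uniformly bounded shape. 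Uniformizing the union of the stack produces a conformal coordinate $\Phi\colon\FjordP\to\Pi$ on a parabolic sub-fjord $\FjordP\subset\FjordT(T)$, with $\Pi$ a model domain of bounded transverse size, $\Phi\circ g=\Phi+1$, and $T_\parab\coloneqq\partial Z\cap\overline{\FjordP}=[x,y]\subset T'$ (oriented as in the statement) mapping to a boundary arc of $\Pi$, the two ends of the petal going to the two ends of $\Pi$. Comparing combinatorial length on $\partial Z$ with $\Phi$ — using the near-degenerate normalization and the quasidisk bound — yields the dictionary: near the $v$-end one has $\Re\Phi(p)=-\log\bigl(\ell_m/\dist(p,v)\bigr)+O(1)$ for $p\in T_\parab$ (symmetrically near $w$), while $\Phi$ is bi-Lipschitz with combinatorial length in the bulk of $T_\parab$. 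Part~(I) then follows: a wide external parabolic $\RR$ based on $T$ must have all its curves crossing $\FjordP$, else a non-crossing contradiction with the tower, so in the $\Phi$-picture it is, after deleting two $O(1)$-buffers, a Euclidean rectangle of $\Pi$; re-centering it (another $O(1)$ loss) makes it balanced, and its width equals its $\Phi$-aspect ratio, which by the dictionary is $\asymp\log\bigl(|\partial^{h,0}\RR^\new|/\dist(\partial^{h,0}\RR^\new,v)\bigr)$.

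For Parts~(II) and~(III): when $I,J\subset T_\parab$, a curve in $\wC\setminus Z$ that is external rel $\filled_m$ and joins $I$ to $J$ is precisely one confined to $\FjordT(T)$, so Lemma~\ref{lmm:ext famil} gives $\Width^+(I,J)=\Width^+_{\ext,m}(I,J)+O(1)$, and once $I,J\subset T_\parab$ one also has $\Width^+_{\ext,m}(I,J)=\Width_{\FjordP}(I,J)+O(1)$. Both parts thus reduce to computing the extremal width between two boundary sub-arcs of the model $\Pi$, which I would do by the Splitting Argument: cut $I$ and $J$ into $\asymp$ the predicted number of pieces of comparable $\Phi$-size, bound each elementary width by compactness of $\Pi$, and reassemble via the Parallel and Series Laws. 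The two hypotheses single out the two regimes of the model: with $I$ near the $v$-end and $J$ near the $w$-end (Part~(II)), the dictionary replaces the relevant scale by $\dist(v,I)$ and produces $\log^+\!\bigl(\min\{|I|,|J|\}/\dist(v,I)\bigr)+1$; with $I,J$ close together inside $T_\parab$ (Part~(III)), the dictionary is essentially the identity and one recovers the Log-Rule shape $\log^+\!\bigl(\min\{|I|,|J|\}/\dist(I,J)\bigr)+1$. The truncations by $\ell_{m+1}$ in both hypotheses are exactly what keeps every interval at least one fundamental domain wide, so that the model estimates apply.

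The heart of the matter — and where I expect the real work to be — is building the translational tower and the coordinate $\Phi$ with bounds \emph{uniform in $m$ and in $\theta\in\Theta_\bnd$}. Three points are delicate: $\overline Z$ is only a \emph{non-uniform} quasidisk, so the fjord may pinch at scales deeper than $m$, and one must argue that the wideness hypothesis confines the tower to an unpinched part of it; the rotational error of $g$ on $T'$ (it is only approximately a translation) must be controlled in a metric in which the fjord has bounded geometry, tightly enough that the translates genuinely close up into a single coordinate; and the comparison of combinatorial length with $\Phi$ near the two ends of $T_\parab$ — the source both of the $\log$ (rather than a power) and of the appearance of $\dist(\cdot,v)$ rather than $\dist(\cdot,w)$ — must be pinned down carefully, since everything downstream in Parts~(I)--(III) is read off from it.
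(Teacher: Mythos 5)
There is a genuine gap, and it sits exactly where the paper locates the real difficulty. Your tower construction assumes that all forward shifts are freely available: ``each $g^{j}(\RR_0)$ is again an external parabolic rectangle, a shift of $\RR_0$ along the fjord.'' This is unjustified. Lemma~\ref{lem:inj push forw} only gives injectivity of $f^{\qq_{m+1}}$ on the fjord; the image of an external rectangle need not be external rel $\filled_m$, its curves can hit $\filled_m\setminus\overline Z$ or leave the region where the next iterate is controlled, so the translates do not automatically stack into a single translational coordinate with $\Phi\circ g=\Phi+1$. In the paper only \emph{pullbacks} of non-winding parabolic rectangles are unconditional (Lemma~\ref{lem:Rect:pullback}); push-forwards are obtained only \emph{after} one has constructed, by pullbacks, a protecting wide external rectangle $\XX$ as close to $v'$ as possible (the shortest interval $S$ with $\Width^+_{\ext,m}(S,[m_{10},w])\ge 500$), and then Lemma~\ref{lem:push forw in par int} and the Type \RN{1}/\RN{2}/\RN{3} analysis of Lemma~\ref{lem:3 Types} make forward shifts legal on $T_\parab=[x,y]$. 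Your proposal needs both directions to build $\Phi$, so without an argument replacing this protection step the Fatou-coordinate model does not get off the ground; the paper in fact never constructs such a coordinate, but runs the Shift Argument (linked rectangles have width $\le 2$) directly, plus Claim~\ref{cl8:piush forw} to locate where wide rectangles can be based, and then the Splitting Argument.

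Two further points would break uniformity or beg the question even if the tower existed. First, you invoke the $K_\theta$-quasidisk property of $\overline Z$ to get ``uniformly bounded shape'' of the fundamental copies and of the model $\Pi$; but $K_\theta$ is not uniform (it blows up in exactly the near-degenerate regime under study), whereas all constants in Theorem~\ref{thm:par fjords} are absolute. The paper uses the non-uniform quasidisk property only to terminate infinite processes (as in Corollary~\ref{cor:SnakeLmm}), never to produce uniform geometric bounds. Second, the ``dictionary'' $\Re\Phi(p)=-\log\bigl(\length_m/\dist(p,v)\bigr)+O(1)$ near the $v$-end is precisely the content of the theorem (it is where the logarithm and the appearance of $\dist(\cdot,v)$ rather than $\dist(\cdot,w)$ come from); you assert it and flag it as delicate, but supply no derivation, so Parts (\RN{1})--(\RN{3}) are read off from an unproven estimate. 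Similarly, the reduction $\Width^+_{\ext,m}(I,J)=\Width_{\FjordP}(I,J)+O(1)$ for $I,J\subset T_\parab$ requires knowing that no definite width dives deeper than the petal, which is again a consequence of the push-forward machinery (Claim~\ref{cl8:piush forw} and Corollary~\ref{cor:non-central disbalance}) rather than an input to it.
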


Theorem~\ref{thm:par fjords} will be proven in \S\ref{ss:prf:thm:par fjords}. We remark that narrow families based on $T$ can be estimated by evaluating their dual families using Theorem~\ref{thm:par fjords}. 

\subsubsection{Explanation and Motivation}
\label{sss:outline:s:par fjords} Theorem~\ref{thm:par fjords} says that Siegel disks develop fjords in a controllable way. Roughly, as Figures~\ref{fig:fjords}  and \ref{fig:parabol_fjords} illustrate, fjords are vertical strips towards the $\alpha$ fixed point and wide parabolic rectangles are horizontal. After conformal uniformization, $f^{q_{m+1}}\mid \text{fjord}$ becomes a ``quasi-rotation'' of the unit disk; Theorem~\ref{thm:par fjords} describes the geometry of this quasi-rotation. Under the conformal uniformization ``$\text{fjord}\to \Disk$'', the hyperbolic geodesic $\ell_{x,y}\subset \partial (\text{fjord})$ connecting $x,y$ will get the Euclidean length $\asymp \dist(v,x)$ -- this explains $v$ in the estimates of Theorem~\ref{thm:par fjords}; taking this into account, the Log-Rules in Theorem~\ref{thm:par fjords} are similar to the Log-Rule in Lemma~\ref{lmm:W^- I J}.


The central theme of this section is designing ``shifts'' for rectangles based on $T$; after that the proof of Theorem~\ref{thm:par fjords} is similar to Lemma~\ref{lmm:W^- I J}. Shifts towards $v$ (pullbacks) are relatively easy and follow from Lemma~\ref{lem:Fjord_m(T)}: external parabolic rectangles ``$\RR\subset \wC\setminus \intr \filled_m$'' are in $\Fjord_m(T)$ after removing $1$-buffer, see~\S\ref{ss:pullbacks curves}. Shifts towards $w$ (push-forwards) are more delicate because curves may hit $\filled_m$. Since pullbacks are well-defined, we can choose the closest to $v'$ outermost external parabolic geodesic rectangle $\RR_\out$ with a certain fixed width; then we set $T_\parab\coloneqq[x,y]$ to be the complementary interval between $\partial^{h,0}\RR_\out $ and $\partial^{h,1}\RR_\out$. Thanks to the ``protection'' by $\RR_\out $,  rectangles based on $[x,y]$ can be efficiently shifted towards $y$ using $f^{\qq_{m+1}}$, see~\S\ref{ss:push forw curves}. We note that our arguments are not local: the global branched structure of $f^{\qq_{m+1}}$ is essential in the proofs. 

At the end of the section, we will formulate Lemma~\ref{lem:central cond} dealing with the ``non-central'' case  $\dist_T(y,w)\gg \dist_T(v,x)$. It will be used in Corollary~\ref{cor:regul}, Case~\ref{case:2:thm:regul} of the Welding Lemma. See also Remark~\ref{rem:thm:regul}.

\subsubsection{Theorem~\ref{thm:par fjords} after assuming final results of the paper}
\label{sss:thm:par fjords:post factum} Let us remark that \emph{post factum}, assuming the final results of the paper established in Part~\ref{part:conclus}, the interval $T_\parab$ in Theorem~\ref{thm:par fjords} can be taken to be $T'$. Indeed, writing $\bK=O(1)$, where $\bK$ is the absolute constant in Theorem~\ref{eq:cond on theta}, Lemma \ref{lem:est for bN} implies~\eqref{eq:2:thm:par fjords} if $I$ and $J$ are attached to $v'$ and $w$ respectively. This allows us to set $T_\parab\coloneqq T'$.

\subsubsection{Remark about parabolic fjords and unicritical circle maps}\label{sss:rem:crit circle maps and fjords} It follows from~\S\ref{sss:thm:par fjords:post factum} that the geometry of level $m+1$ diffeo-tiling $\Dbb_{m+1}$ in $T$ is that of unicritical circle maps: with respect to the ``external'' harmonic measure of $\big(\wC\setminus \overline Z,\infty \big)$ 
\begin{itemize}
\item the intervals in $\Dbb_{m+1}\cap T$ that are not far from $v,w$ have external harmonic measures comparable to that of $T$;
\item the external harmonic measures of intervals in $\Dbb_{m+1}\cap T$ that are deep in $T$ shrink according to the Log-rule. 
\end{itemize}

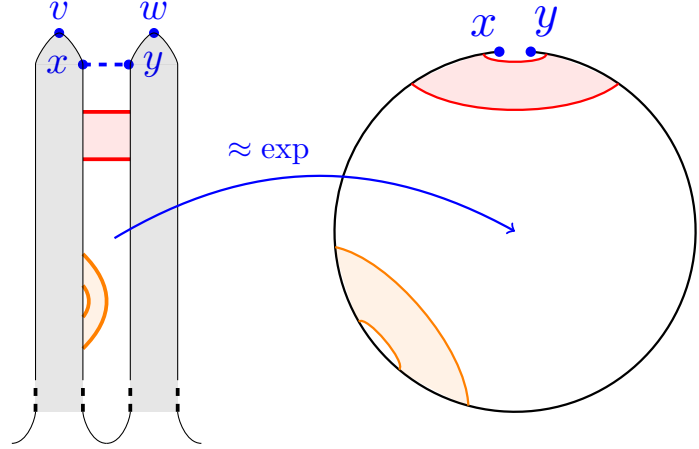
\begin{figure}
\begin{tikzpicture}[scale=0.6, every node/.style={scale=0.6}]

\begin{scope}[shift={(14,-3)},scale=0.7,xscale =0.5]

\coordinate (dd) at (3.5,7);

\begin{scope}
\filldraw[blue,xscale =2] (0,12) circle (0.14 cm);

\node[above,blue,scale=2.5 ] at (0,12) {$w$};
\node[above,blue,scale=2.5 ] at (-6,12) {$v$};

\draw[blue, line width=0.5mm,shift={(-6,0)}]  (1.5,11) edge[dashed] (4.5,11) ;

\filldraw[blue, shift={(-4.5,11)},xscale=2] (0,0) circle (0.14 cm);

\filldraw[blue, shift={(-1.6,11)},xscale=2] (0,0) circle (0.14 cm);

\node[left,blue,scale=2.5 ] at (-4.5,11) {$x$};
\node[right,blue,scale=2.5 ] at (-1.6,11) {$y$};

\draw[red, line width=0.5mm,shift={(-6,0)}]  (1.5,9.5) -- (4.5,9.5) ;
\draw[red, line width=0.5mm,shift={(-6,0)}]  (1.5,8) -- (4.5,8) ;

\draw[red, line width=0.5mm,shift={(-6,0)},opacity=0, fill=red, fill opacity=0.1]  (1.5,9.5) -- (4.5,9.5) --(4.5,8)--(1.5,8) ;


\draw[orange, line width=0.5mm] 
(-4.5,5)
.. controls (-2.5,4) and (-2.5,3) .. 
(-4.5,2)
(-4.5,4)
.. controls (-4,3.7) and (-4,3.3) .. 
(-4.5,3);

\draw[opacity=0, fill= orange, fill opacity=0.1] 
(-4.5,5)
.. controls (-2.5,4) and (-2.5,3) .. 
(-4.5,2)
.. controls (-4.5,2) and (-4.5,3) .. 
(-4.5,3)
.. controls (-4,3.3) and (-4,3.7)  .. 
(-4.5,4);

\coordinate (xx) at (-2.5,5.5);


\draw (-3,-1)  .. controls (-2,-1) and (-1.5, -1+0.8) .. 
 (-1.5,0);
\draw[dashed,line width=0.5mm]  (-1.5,0)--(-1.5,1);
  
\draw (-1.5,1)  .. controls (-1.5,1) and (-1.5, 3) .. 
 (-1.5,11) 
  .. controls (-1.5,11.2) and (-0.5, 12) .. 
 (0,12);

\draw[xscale=-1] (-3,-1) 
 .. controls (-2,-1) and (-1.5, -1+0.8) .. 
 (-1.5,0);
\draw[xscale=-1,dashed,line width=0.5mm]  (-1.5,0)--(-1.5,1);
  
\draw [xscale=-1](-1.5,1)
  .. controls (-1.5,1) and (-1.5, 3) .. 
 (-1.5,11) 
  .. controls (-1.5,11.2) and (-0.5, 12) .. 
 (0,12);
 \end{scope}

\begin{scope}[shift={(-6,0)}]
\filldraw[blue,xscale =2] (0,12) circle (0.14 cm);

\draw (-3,-1) 
 .. controls (-2,-1) and (-1.5, -1+0.8) .. 
 (-1.5,0);
\draw[dashed,line width=0.5mm]  (-1.5,0)--(-1.5,1);
  
\draw (-1.5,1)  .. controls (-1.5,1) and (-1.5, 3) .. 
 (-1.5,11) 
  .. controls (-1.5,11.2) and (-0.5, 12) .. 
 (0,12);

\draw[xscale=-1] (-3,-1) 
 .. controls (-2,-1) and (-1.5, -1+0.8) .. 
 (-1.5,0);
\draw[xscale=-1,dashed,line width=0.5mm]  (-1.5,0)--(-1.5,1);
  
\draw [xscale=-1](-1.5,1)
  .. controls (-1.5,1) and (-1.5, 3) .. 
 (-1.5,11) 
  .. controls (-1.5,11.2) and (-0.5, 12) .. 
 (0,12);
 \end{scope}

\draw[opacity=0,fill ,fill opacity=0.1] 
 (-1.5,11) 
  .. controls (-1.5,11.2) and (-0.5, 12) .. 
 (0,12)
  .. controls (0.5, 12) and (1.5,11.2)   ..
  (1.5,11)  ;

\draw[shift={(-6,0)} ,opacity=0,fill ,fill opacity=0.1] 
 (-1.5,11) 
  .. controls (-1.5,11.2) and (-0.5, 12) .. 
 (0,12)
  .. controls (0.5, 12) and (1.5,11.2)   ..
  (1.5,11)  ;

 \draw[opacity=0,fill ,fill opacity=0.1,shift={(-9,0)} ] (1.5,0)-- (1.5,11)--(4.5,11)--(4.5,0);

  \draw[opacity=0,fill ,fill opacity=0.1,shift={(-3,0)} ] (1.5,0)-- (1.5,11)--(4.5,11)--(4.5,0);


  \end{scope}

\begin{scope}[shift={(22,1)},line width=0.3mm]
\draw (4*0.08715574274,4*0.99619469809) arc (85:-265:4);

\filldraw[blue]  (4*0.08715574274,4*0.99619469809)  circle (0.09 cm);
\filldraw[blue]  (-4*0.08715574274,4*0.99619469809)  circle (0.09 cm);

\node[blue,above,scale=3] at (4*0.17364817766,4*0.98480775301)  {$y$};
\node[blue,above,scale=3] at (-4*0.17364817766,4*0.98480775301)  {$x$};

\draw[red] (4*0.17364817766,4*0.98480775301)
.. controls (0.7, 3.7) and (-0.7,3.7) ..
(-4*0.17364817766,4*0.98480775301);

\draw[red] (-4*0.57357643635,4*0.81915204428)
.. controls (-1.3, 2.5) and (1.3,2.5) ..
(4*0.57357643635,4*0.81915204428);

\draw[opacity=0, fill=red,fill opacity=0.1] (4*0.17364817766,4*0.98480775301)
.. controls (0.7, 3.7) and (-0.7,3.7) ..
(-4*0.17364817766,4*0.98480775301)
.. controls  (-4*0.2588190451, 4*0.96592582628) and (-4*0.5, 4*0.86602540378) ..
 (-4*0.57357643635,4*0.81915204428)
.. controls (-1.3, 2.5) and (1.3,2.5) ..
(4*0.57357643635,4*0.81915204428)
.. controls (4*0.5, 4*0.86602540378) and  (4*0.2588190451, 4*0.96592582628) ..
(4*0.17364817766,4*0.98480775301);

\begin{scope}[rotate=130]
\draw[orange] (4*0.17364817766,4*0.98480775301)
.. controls (0.7, 3.7) and (-0.7,3.7) ..
(-4*0.17364817766,4*0.98480775301);

\draw[orange] (-4*0.57357643635,4*0.81915204428)
.. controls (-1.3, 2.5) and (1.3,2.5) ..
(4*0.57357643635,4*0.81915204428);

\draw[opacity=0, fill=orange,fill opacity=0.1] (4*0.17364817766,4*0.98480775301)
.. controls (0.7, 3.7) and (-0.7,3.7) ..
(-4*0.17364817766,4*0.98480775301)
.. controls  (-4*0.2588190451, 4*0.96592582628) and (-4*0.5, 4*0.86602540378) ..
 (-4*0.57357643635,4*0.81915204428)
.. controls (-1.3, 2.5) and (1.3,2.5) ..
(4*0.57357643635,4*0.81915204428)
.. controls (4*0.5, 4*0.86602540378) and  (4*0.2588190451, 4*0.96592582628) ..
(4*0.17364817766,4*0.98480775301);

\end{scope}

\draw[blue] (xx) edge[->,bend left] node[shift={(-1,0)},above,scale=2] {$\approx \exp$}  (0,0);

\end{scope}


\end{tikzpicture}

\caption{Two types of wide rectangles in a parabolic fjord; compare with Figures~\ref{fig:Man + S disks} and \ref{fig:fjords}. The red (parabolic) rectangle has a small combinatorial distance towards $x$ and $y$ relative its horizontal sides, while the orange rectangle has a small distance between its horizontal sides. On the right picture, the distance between $x,y$ should be thought as $\asymp \dist(v,x)$ -- explaining ``$v$'' in\eqref{eq:1:thm:par fjords} and~\eqref{eq:2:thm:par fjords}.}
\label{fig:parabol_fjords}
\end{figure}

\subsection{Pullbacks in fjords}
\label{ss:pullbacks curves} In this subsection, we will first establish a few modifications of  Lemma~\ref{lem:Fjord_m(T)} and then obtain a few consequences.

We say that a parabolic rectangle $\RR$ based on $T$ is \emph{non-winding} if every vertical curve in $\RR$ is homotopic in $\C\setminus Z$ to a curve in $T$; i.e.~vertical curves in a non-winding parabolic rectangle do not go around $\infty$.  For a non-winding parabolic rectangle $\RR$, we will write \[\upbullet{ \RR}\coloneqq \RR\cup \overline O,\]
where $O$ is the bounded component of $\C\setminus \big(\overline  Z\cup \partial^{v,\inn} \RR\big)$; i.e.~$O$ is the component between $\RR$ and $T$.

\begin{lem}[From $\Fam^+_{\ext,m}$ to non-winding]
\label{lem:injecti of diffeo rect}
Let $\RR$ be an external parabolic rectangle based on $T$ with $ \Width(\RR)>1.$ Let $\RR^{\new}$ be the rectangle obtained from $\RR$ by removing the outer $1$-buffer. Then $\RR^{\new},\sp f^{\qq_{m+1}}(\RR^\new)$ are non-winding and $f^{\qq_{m+1}}\mid \upbullet{\RR}^\new$ is injective. 
\end{lem}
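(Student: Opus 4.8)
The strategy is to reduce the injectivity of $f^{\qq_{m+1}}$ on $\upbullet{\RR}^\new$ to the covering‑space picture already used in Lemma~\ref{lem:inj push forw}, together with a harmonic‑measure bound that keeps $\upbullet{\RR}^\new$ inside the ``non‑winding'' locus. Recall that $\filled_m = f^{-\qq_{m+1}}(\overline Z)$ and that $f^{\qq_{m+1}}\colon \wC\setminus\filled_m \to \wC\setminus\overline Z$ is a covering of degree $2^{\qq_{m+1}}$. First I would observe that an external parabolic rectangle $\RR$ based on $T$ lies in $\wC\setminus\filled_m$ (by the external hypothesis $\intr\RR\subset\wC\setminus\filled_m$) and that its vertical curves, after removing a $1$‑buffer, are homotopic rel endpoints in $\wC\setminus Z$ to arcs of $T$; this is exactly the statement that $\RR^\new$ is non‑winding, and it follows because the buffer removed absorbs the $O(1)$ winding that a thin collar of vertical curves near $\partial^{v}\RR$ might exhibit. (Here one uses that $T$, being an interval of $\Dbb_m$, has harmonic measure $\le 2^{-\qq_{m+1}}\le \tfrac12$ in $(\wC\setminus\filled_m,\infty)$, so the covering branch of $f^{-\qq_{m+1}}$ fixing the relevant sheet is single‑valued over the simply connected region $\wC\setminus\overline Z$ cut along a curve through $\infty$.)

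Next I would analyze $\upbullet{\RR}^\new = \RR^\new\cup\overline O$, where $O$ is the bounded component of $\C\setminus(\overline Z\cup\partial^{v,\inn}\RR^\new)$. The region $O$ is a Jordan domain squeezed between $\RR^\new$ and the boundary arc $T$, hence $\partial(\upbullet{\RR}^\new)$ is contained in the arc $T$ together with the outer vertical/horizontal boundary of $\RR^\new$; in particular $\upbullet{\RR}^\new$ is a closed Jordan disk contained in $\wC\setminus Z$ (it meets $\overline Z$ only along $T\subset\partial Z$). The key point is that $\upbullet{\RR}^\new\subset \wC\setminus\intr\filled_m$: indeed $\overline O$ is attached to the boundary arc $T$, so if $O$ met $\intr\filled_m$ it would meet a limb of $\filled_m\setminus\overline Z$ attached to an endpoint of $T$, but such a limb is separated from $O$ by the non‑winding vertical curve $\partial^{v,\inn}\RR^\new$ (this uses that $\RR^\new$, being parabolic, has its horizontal sides well inside $T$, at combinatorial distance $\succeq \length_m$ from the endpoints of $T$). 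Thus $\upbullet{\RR}^\new$ avoids $\intr\filled_m$ entirely.

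Now $f^{\qq_{m+1}}$ restricted to $\upbullet{\RR}^\new\subset\wC\setminus\intr\filled_m$ is the restriction of a covering map $\wC\setminus\filled_m\to\wC\setminus\overline Z$ (extended continuously to the relevant boundary arcs of $\filled_m$ and $\overline Z$). A covering map is injective on any subset that is ``sheet‑confined'', i.e.\ that injects into a fundamental domain; concretely, since $\upbullet{\RR}^\new$ is connected, simply connected, and — by the non‑winding property and the harmonic‑measure estimate — its image $f^{\qq_{m+1}}(\upbullet{\RR}^\new)$ is contained in $\wC\setminus\overline Z$ with the lift being single‑valued there, the map $f^{\qq_{m+1}}\mid\upbullet{\RR}^\new$ is injective. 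The same argument applied to the pushed‑forward rectangle shows $f^{\qq_{m+1}}(\RR^\new)$ is non‑winding: its vertical curves are images under a local homeomorphism of non‑winding curves and inherit the homotopy to arcs of $T'$.

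\textbf{Main obstacle.} The delicate step is establishing that $O$ does not reach into a limb of $\filled_m$ — equivalently, that pulling $\upbullet{\RR}^\new$ back by $f^{-\qq_{m+1}}$ stays univalent even though $O$ hugs the boundary arc $T$ where $\filled_m$ and $\overline Z$ touch tangentially at the endpoints of $T$. Controlling this requires the parabolic inequality~\eqref{eq:parab rect}, which guarantees that $\partial^{v,\inn}\RR^\new$ connects two points of $T$ that are at combinatorial distance $\succeq\min\{|\partial^{h,0}\RR^\new|,|\partial^{h,1}\RR^\new|\} + \length_{m+1}$ from each endpoint $v,w$ of $T$, so the limbs attached at $v$ and at $w$ are strictly outside the region cut off by $\partial^{v,\inn}\RR^\new$; the $1$‑buffer removal is what makes this margin effective rather than merely generic. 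Once that is in place, the covering‑space injectivity is formal, paralleling the proof of Lemma~\ref{lem:inj push forw}.
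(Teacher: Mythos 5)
Your high-level picture (externality puts everything in $\wC\setminus\filled_m$, a $1$-buffer removal kills winding, and injectivity comes from the degree-$2^{\qq_{m+1}}$ covering $\wC\setminus\filled_m\to\wC\setminus\overline Z$ as in Lemma~\ref{lem:inj push forw}) is the right one, but the two steps that carry the actual content have genuine gaps. First, the non-winding of $\RR^\new$ is asserted rather than proved: the whole point is to show that the winding vertical curves have width at most $1$ (and sit in the outer buffer), and saying ``the buffer removed absorbs the $O(1)$ winding'' is circular. The paper gets this from Lemma~\ref{lem:hyp rectangles}: after removing the buffer, $\intr\RR^\new$ is contained in the geodesic rectangle, hence in the fjord $\FjordT$ bounded by the hyperbolic geodesic $\ell$ of $\wC\setminus\filled_m$ through the endpoints of $T$, and $\FjordT\not\ni\infty$. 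Your parenthetical about the harmonic measure of $T$ being $\le 2^{-\qq_{m+1}}$ is the ingredient for injectivity (Lemma~\ref{lem:inj push forw}), not for winding, so it does not close this gap.

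Second, your argument that $\upbullet{\RR}^\new$ avoids $\intr\filled_m$ — which you correctly flag as the main obstacle — rests on a misreading of the parabolic condition~\eqref{eq:parab rect}: that inequality bounds the gap between the two horizontal sides of $\RR$, it does \emph{not} say that $\partial^h\RR$ is at combinatorial distance $\succeq\length_m$ (or even $\succeq\min\{|\partial^{h,0}\RR|,|\partial^{h,1}\RR|\}$) from the endpoints $v,w$ of $T$; indeed $T'=[v',w]$ shares the endpoint $w$ with $T$, so $\partial^{h,1}\RR$ may abut $w$ and the limb there. Consequently your separation of the limbs from $O$ does not go through as written, and your final ``sheet-confined'' injectivity step is then applied without the containment it needs. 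The paper's route fixes both problems at once: once $\intr\RR^\new\subset\FjordT$ (via Lemma~\ref{lem:hyp rectangles}), the set $\upbullet{\RR}^\new$ lies in $\overline{\FjordT}$, so it is automatically disjoint from the limbs and Lemma~\ref{lem:inj push forw} (injectivity of $f^{\qq_{m+1}}$ on the fjord, proved by the harmonic-measure/covering-degree count) applies verbatim; non-winding of $f^{\qq_{m+1}}(\RR^\new)$ then follows as well. In short: you never introduce the fjord and its bounding geodesic, and that object is exactly what makes the lemma a three-line consequence of Lemmas~\ref{lem:inj push forw} and~\ref{lem:hyp rectangles}.
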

\begin{proof}
 By Lemma~\ref{lem:hyp rectangles}, after removing the outermost  $1/2$-buffer from $\RR$ the new rectangle $\RR_2$ is bounded from outside by the hyperbolic geodesic $\gamma_T$ of $\wC\setminus \filled_m$ connecting the endpoints of $T$. Since the harmonic measure of $(\wC\setminus \filled_T, \infty)$ is less than $2^{-\qq_{m+1}}\le \frac12$, we obtain that $\gamma_T\cup T$ bounds an open disk in $\C$; i.e., $\RR_2\subset \C$. By Lemma~\ref{lem:hyp rectangles}, after removing the outermost $1/2$-buffer from $\RR_2$, the new rectangle $\RR^\new$ is within the fjord $\Fjord_m(T)$ from see~\S\ref{sss:fjords}. We obtain that $\upbullet{\RR}^\new\subset \Fjord_m(T)$ and, by Lemma~\ref{lem:Fjord_m(T)}, $f^{\qq_{m+1}}\mid \upbullet{\RR}^\new$ is injective. The image $f^{\qq_{m+1}}(\RR^\new)$ is non-winding.
\end{proof}

\begin{lem}[Pullbacks]
\label{lem:Rect:pullback}
Let $\RR$ be a parabolic non-winding rectangle on $T'$. Then the pullback of $\RR$ along $f^{\qq_{m+1}}\colon T'\boxminus \theta_{m+1}\to T'$ is a parabolic non-winding rectangle on $T$.
\end{lem}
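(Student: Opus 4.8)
The plan is to unpack the definitions and check that pulling back a parabolic non-winding rectangle $\RR$ on $T'$ by the well-defined branch of $f^{-\qq_{m+1}}$ preserves all three properties: being a rectangle (a genuine conformal rectangle in $\wC\setminus Z$), being parabolic (the inequality~\eqref{eq:parab rect}), and being non-winding. First I would record that, by hypothesis, $f^{\qq_{m+1}}\colon T'\boxminus\theta_{m+1}\to T'$ is a homeomorphism onto $T'$ and, more relevantly, that $f^{\qq_{m+1}}$ restricted to the fjord $\FjordT(T)$ is injective by Lemma~\ref{lem:inj push forw} (the harmonic-measure bound $\le 2^{-\qq_{m+1}}\le 1/2$). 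Since $\RR$ is non-winding, its vertical curves are homotopic in $\C\setminus Z$ to curves in $T'$; together with the parabolic condition this places $\intr\RR$ inside $\FjordT(T)$ (cf.\ Lemma~\ref{lem:hyp rectangles}), so the branch $g\coloneqq (f^{\qq_{m+1}})^{-1}$ is single-valued and conformal on a neighborhood of $\overline{\RR}$ in $\wC\setminus Z$, and maps $\RR$ conformally onto a rectangle $\RR^\sharp$ based on $T$, with $\partial^h\RR^\sharp = g(\partial^h\RR)\subset g(T')\subset T$.

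Next I would verify the non-winding property of $\RR^\sharp$. Because $g$ maps the fjord $\FjordT(T)$ (minus the limb through which one leaves) into itself in the sense that points near $T'$ are dragged back toward $v$ along the direction of $f^{-\qq_{m+1}}$, no new winding around $\infty$ is created: the vertical curves of $\RR^\sharp$ are $g$-images of curves homotopic into $T'$, and $g$ is homotopic to the identity within $\C\setminus Z$ on the relevant region (it does not cross $\infty$ since $\infty\notin\FjordT(T)$). Hence each vertical curve of $\RR^\sharp$ is homotopic in $\C\setminus Z$ to a curve in $T$, which is exactly non-winding.

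Finally I would check the parabolic inequality for $\RR^\sharp$. Write $\partial^{h,0}\RR=[a,b]$, $\partial^{h,1}\RR=[c,d]$ with $a<b<c<d$ in $T'$ (Notation~\eqref{eq:part RR:notat}), so $g$ sends these to $[a\boxminus\theta_{m+1},\,b\boxminus\theta_{m+1}]$ and $[c\boxminus\theta_{m+1},\,d\boxminus\theta_{m+1}]$. Under $f^{\qq_{m+1}}\mid\partial Z$, which is the rotation $x\mapsto x\boxplus\theta_{m+1}$, combinatorial lengths of intervals are exactly preserved, so $|\partial^{h,i}\RR^\sharp|=|\partial^{h,i}\RR|$ for $i=0,1$, and likewise $\dist_T$ between the pulled-back horizontal sides equals $\dist_T(\partial^{h,0}\RR,\partial^{h,1}\RR)$ (for $m\ge 0$ this is the genuine rotation-invariance of the combinatorial metric; for $m=-1$ one uses that the relevant intervals stay inside $T=(c_0,c_0\boxplus 1)$ so the Euclidean metric on $T$ is transported correctly, which is where one must be a little careful). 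Therefore the inequality~\eqref{eq:parab rect} for $\RR$ transfers verbatim to $\RR^\sharp$, and $\RR^\sharp$ is parabolic.

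The main obstacle I anticipate is the bookkeeping around the $m=-1$ case and, more generally, making precise that the pullback genuinely stays inside a single fjord so that $g$ is well-defined and univalent there — i.e.\ that the homotopy class of the vertical curves, combined with the parabolic (``horizontally long'') condition, really forces $\intr\RR\subset\FjordT(T)$ and keeps $g(\RR)$ from wrapping. Once the fjord-confinement is established, the injectivity of $f^{\qq_{m+1}}$ on $\FjordT(T)$ (Lemma~\ref{lem:inj push forw}) does all the heavy lifting and the rest is the rotation-invariance of combinatorial lengths, which is essentially formal.
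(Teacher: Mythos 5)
There is a genuine gap, and it sits exactly at the step you yourself flagged as the crux: the claim that the non-winding and parabolic hypotheses force $\intr\RR\subset \FjordT(T)$. Non-winding only constrains the homotopy class of the vertical curves in $\C\setminus Z$ (they must be homotopic into $T'$); it does not confine them geometrically. A vertical curve of $\RR$ may cross the hyperbolic geodesic $\ell\subset\wC\setminus\filled_m$ that bounds the fjord, wander far out in $\wC\setminus\filled_m$, or dive into limbs of $\filled_m$ attached elsewhere, and still be homotopic into $T'$, so $\intr\RR\subset\FjordT(T)$ is simply false in general. Lemma~\ref{lem:hyp rectangles} gives fjord confinement only for geodesic-type rectangles, and in the paper it is applied to \emph{external} parabolic rectangles after removing a buffer (Lemma~\ref{lem:injecti of diffeo rect}); the conversion of a non-winding rectangle into an external one (Lemma~\ref{lem:zero wind are diffeo}) costs an $O(1)$ buffer and, worse for your plan, its proof already invokes Lemma~\ref{lem:Rect:pullback}, so routing the pullback through fjord confinement would be circular. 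Moreover the lemma must apply to the \emph{full} rectangle with no loss of width, since it is iterated (e.g.\ in Lemma~\ref{lem:contr:part^h,1} and inside the proofs of Lemmas~\ref{lem:left balanc}, \ref{lem:zero wind are diffeo}), so a buffer-removal fix does not recover the statement. Your non-winding-preservation step ("$g$ is homotopic to the identity on the relevant region") leans on the same unestablished confinement.

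The paper's proof avoids geometry entirely and works homotopically: for each vertical curve $\ell$ of $\RR$, the non-winding hypothesis supplies a homotopy $\tau$ in $\C\setminus\big(Z\cup T'^c\big)$ between $\ell$ and a curve $\bar\ell\subset T'$; this homotopy is lifted under $f^{\qq_{m+1}}$ starting from $\bar\ell\boxminus\theta_{m+1}\subset T$, and the terminal curves $\ell_1$ of the lifted homotopies constitute the pullback rectangle $\RR_1$. This simultaneously defines the relevant inverse branch curve-by-curve (no globally single-valued branch on a fixed region is needed) and certifies non-winding of $\RR_1$ for free, since each $\ell_1$ comes equipped with a homotopy to a curve in $T$. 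Your final paragraph on parabolicity (invariance of the combinatorial metric under $x\mapsto x\boxminus\theta_{m+1}$, with the $m=-1$ caveat) is fine and matches what the paper leaves implicit, but the construction of the pullback itself needs the lifting argument, not fjord confinement.
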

\begin{proof}
For every vertical curve $\ell\in \RR$ there is a homotopy $\tau$ in $\C\setminus \big(Z\cup T'^c\big)$ between $\ell$ and a curve $\bar \ell\subset T'$, where $T'^c=\partial Z\setminus T'$. This homotopy $\tau$ lifts under $f^{\qq_{m+1}}$ to a homotopy between $\bar \ell\boxminus \theta_m\subset T$ and a curve $\ell_1$; all such curves $\ell_1$ form a parabolic non-winding rectangle $\RR_1$ which is the pullback of $\RR$ along $f^{\qq_{m+1}}\colon T'\boxminus \theta_{m+1}\to T'$.
\end{proof}

\begin{lem}
\label{lem:left balanc}
Let $\RR$ be a parabolic non-winding rectangle based on $T'$. Then $\RR$ contains a parabolic non-winding subrectangle $\RR^\new$ with 
\begin{equation}
\label{eq:lem:left balanc}
|\partial^{h,0}\RR^\new |\le |\partial^{h,1}\RR^{\new}|\sp\sp\text{ and }\sp \sp\Width(\RR^\new) \ge \Width(\RR)-2,
\end{equation}
where we assume Notations~\eqref{eq:T:orientat ass} and~\eqref{eq:part RR:notat}.
\end{lem}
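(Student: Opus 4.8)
The plan is to extract the desired ``left-balanced'' subrectangle $\RR^\new$ by cutting off an appropriate vertical buffer from $\RR$. Recall the notation \eqref{eq:part RR:notat}: $\partial^{h,0}\RR=[a,b]$, $\partial^{h,1}\RR=[c,d]$ with $a<b<c<d$ in $T'$, and by \eqref{eq:T:orientat ass} the dynamics $f^{\qq_{m+1}}\mid T$ moves points clockwise towards $w$. If already $|[a,b]|\le|[c,d]|$ there is nothing to do, so assume $|\partial^{h,0}\RR|>|\partial^{h,1}\RR|$. First I would look at the foliation of $\RR$ by its vertical curves and the induced identification of $[a,b]$ with $[c,d]$ (the ``holonomy'' along vertical leaves): each vertical curve $\ell$ has a lower endpoint $p(\ell)\in[a,b]$ and an upper endpoint $q(\ell)\in[c,d]$, and $p,q$ are the boundary parametrizations coming from the conformal identification of $\RR$ with a Euclidean rectangle.

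The key step is to choose a point $b^\new\in(a,b)$ so that the sub-rectangle $\RR^\new$ whose vertical leaves have lower endpoints in $[a,b^\new]$ satisfies $|\partial^{h,0}\RR^\new|=|[a,b^\new]|\le|\partial^{h,1}\RR^\new|$, while the discarded portion is a genuine buffer of width $\le 2$. Concretely, slide $b^\new$ from $b$ towards $a$; the lower side $[a,b^\new]$ shrinks continuously and monotonically, while the corresponding upper side $[c,d^\new]=q(\text{leaves over }[a,b^\new])$ also shrinks but, because $d^\new<d$ and the upper side is already the shorter one, it shrinks slower in the relevant comparison. By a continuity/intermediate-value argument there is a position where $|[a,b^\new]|=|[c,d^\new]|$, or where the inequality $|[a,b^\new]|\le|[c,d^\new]|$ first becomes achievable; at that position set $\RR^\new$ to be the sub-rectangle foliated by the vertical leaves over $[a,b^\new]$. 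The discarded part is the sub-rectangle over $[b^\new,b]$; one estimates its width using Lemma~\ref{lmm:W^- I J} (the Log-Rule) applied in $\ovZ$ — or rather the external counterpart — together with the parabolic inequality \eqref{eq:parab rect}, which guarantees the gap $\dist_T([a,b],[c,d])\ge 6\min\{|[a,b]|,|[c,d]|\}+3\ell_{m+1}$ is comparable to the short side, forcing the crossing family over any sub-interval of $[a,b]$ of length $\ge$ the short side to have width $O(1)$; a careful bookkeeping yields the buffer width $\le 2$, hence $\Width(\RR^\new)\ge\Width(\RR)-2$.

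It remains to check that $\RR^\new$ is again a \emph{parabolic non-winding} rectangle based on $T'$. Non-winding is inherited: every vertical leaf of $\RR^\new$ is a vertical leaf of $\RR$, hence homotopic in $\C\setminus Z$ to a curve in $T'$. The parabolic condition \eqref{eq:parab rect} for $\RR^\new$ follows because its horizontal sides $[a,b^\new]\subset[a,b]$ and $[c,d^\new]\subset[c,d]$ are no longer than those of $\RR$ while the gap $\dist_T([a,b^\new],[c,d^\new])$ is no smaller than $\dist_T([a,b],[c,d])$ (both sub-intervals sit towards the $a$- and $c$-ends), so the defining inequality only improves. Being based on $T'$ is clear since $\lfloor\partial\RR^\new\rfloor\subset\lfloor\partial\RR\rfloor\subset T'$.

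The main obstacle I anticipate is the bound on the buffer width — i.e. showing the cut-off sub-rectangle over $[b^\new,b]$ really has width $\le 2$ rather than merely $O(1)$. This requires the sharp form of the Log-Rule estimate and exploiting that the gap between the horizontal sides is at least $6$ times the short side (the constant $6$ in \eqref{eq:parab rect} is presumably tuned precisely so that $\log 6$-type quantities come out below the needed threshold), rather than a soft compactness argument. One must also be slightly careful that when $|\partial^{h,0}\RR|$ is only marginally larger than $|\partial^{h,1}\RR|$ the cut is tiny and the ``$-2$'' is not wasteful, and when it is much larger the holonomy comparison (which a priori could distort lengths) does not spoil the intermediate-value step; here the parabolic geometry again keeps the distortion under control.
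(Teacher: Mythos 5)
Your cut-point selection is harmless (the paper simply splits the base as $I_1\#I_2$ with $|I_1|=|\partial^{h,1}\RR|$; no intermediate-value or holonomy argument is needed), but the heart of your proof --- the bound on the width of the discarded piece --- does not work. You propose to bound the family of vertical leaves over the discarded part of $[a,b]$ via Lemma~\ref{lmm:W^- I J} ``or rather the external counterpart'' together with the gap condition \eqref{eq:parab rect}. Lemma~\ref{lmm:W^- I J} is an estimate for the \emph{inner} family in $\ovZ$; no external analogue is available at this point, and the statement you need is in fact false: a wide external family based on two subintervals of $T'$ whose gap is at least six times the shorter side is \emph{exactly} a wide parabolic rectangle, and the whole point of this section (Theorem~\ref{thm:par fjords}, proved \emph{after} this lemma) is that such families can be arbitrarily wide when the Siegel disk develops fjords. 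So the combinatorial gap condition cannot force the discarded subrectangle to have width $O(1)$, let alone $\le 2$, by any static geometric argument.

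The correct mechanism is dynamical, and your proposal never uses the dynamics or the non-winding hypothesis beyond noting it is inherited --- a telltale sign of the gap. In the paper, the subrectangle $\RR_2$ of leaves emanating from $I_2$ is pulled back under $f^{\qq_{m+1}k}$, where $k$ is the least integer with $k\length_{m+1}\ge|I_1|$; the non-winding hypothesis makes this pullback a well-defined parabolic non-winding rectangle (Lemma~\ref{lem:Rect:pullback}), and since $f^{\qq_{m+1}}\mid T'$ acts as a translation by $|\theta_{m+1}|$ toward $w$, the pullback is linked with $\RR_2$ in the sense of~\eqref{eq:LinkCond} (the constant $6$ in \eqref{eq:parab rect} is used to ensure the shifted roof lands strictly between $\partial^{h,0}\RR_2$ and $\partial^{h,1}\RR_2$, not to make a logarithm small). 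The Shift Argument, Lemma~\ref{lem:shift argum}, then yields $\Width(\RR_2)\le 2$. A minor additional slip: the vertical sides of $\RR$ join $b$ to $c$ and $a$ to $d$, so the holonomy pairs the end of the base near $a$ with the end of the roof near $d$; your identification of $[a,b^\new]$ with an interval $[c,d^\new]$ has the orientation reversed, which further muddles the comparison of side lengths in your intermediate-value step.
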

\begin{proof}
Assume that $|\partial^{h,0}\RR |> |\partial^{h,1}\RR|$. Present \[\partial ^{h,0}\RR=I_1\#I_2,\sp\sp I_1\le I_2,\sp\sp \sp |I_1|=|\partial^{h,1}\RR|,\]
and let $\RR_2$ be the subrectangle of $\RR$ consisting of vertical curves emerging from $I_2$. We claim that $\Width(\RR_2)\le 2$, this implies the lemma by removing $\RR_2$ from $\RR$.

The claim follows from the Shift Argument~\S\ref{sss:shift argum}. Let $k$ be the smallest integer such that $k\ge |I_1|/\length_{m+1}$. Pulling back $\RR_2$ under $f^{\qq_{m+1}k}$, we obtain a rectangle $\RR'_2$ linked with $\RR_2$, see~\eqref{eq:LinkCond}:
$\partial^{h,0} \RR_2 < \partial ^{h,1}\RR'_2 <\partial^{h,1}\RR_2.$ Lemma~\ref{lem:shift argum} completes the proof.
\end{proof}

\begin{lem}[Upper Log-Rule]
\label{lem:contr:part^h,1}
Let $\RR$ be a parabolic non-winding rectangle based on $T'$ with $\Width(\RR)>2$. Assume Notations~\eqref{eq:T:orientat ass} and~\eqref{eq:part RR:notat}. Then  $|\partial^{h,0}\RR|>  1$ and
\begin{equation}
\label{eq:lem:contr:part^h,1}
\Width(\RR) \ \preceq\  \log \frac{|\partial^{h,0}\RR |}{\dist(v, \partial^{h,0}\RR)} +1,
\end{equation} 
i.e.,~$\dist(v, \partial^{h,0}\RR) $ is small compared to $|\partial^{h,0}\RR|$ if $\Width(\RR)$ is big.
\end{lem}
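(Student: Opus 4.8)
The plan is to use the Shift Argument again, now in the direction of pullbacks toward $v$, exactly as in the proof of Lemma~\ref{lem:left balanc}, but iterating the pullback operator $f^{-\qq_{m+1}}$ many times so that the cumulative motion toward $v$ is comparable to $|\partial^{h,0}\RR|$. Write $I_0 \coloneqq \partial^{h,0}\RR = [a,b]$ and recall that by~\eqref{eq:T:orientat ass} the map $f^{\qq_{m+1}}\mid T$ pushes points clockwise toward $w$, so the pullback $f^{-\qq_{m+1}}$ moves $I_0$ counterclockwise toward $v$ by an amount $\length_{m+1}$ each time. By Lemma~\ref{lem:Rect:pullback}, every such pullback $\RR^{(j)}$ is again a parabolic non-winding rectangle on $T$, with $\partial^{h,0}\RR^{(j)} = I_0 \boxminus j\theta_{m+1}$ and $\partial^{h,1}\RR^{(j)} = \partial^{h,1}\RR \boxminus j\theta_{m+1}$, all of the same widths as $\RR$.

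First I would establish $|\partial^{h,0}\RR| > 1$, i.e.\ $|I_0| > \length_{m+1}$: otherwise a single pullback $\RR^{(1)}$ is disjoint-in-the-interior from $\RR$ but \emph{linked} with it (in the sense of~\eqref{eq:LinkCond}), since shifting $I_0$ by $\length_{m+1} \ge |I_0|$ moves it past itself while the far side $\partial^{h,1}$ stays on the $w$-side; then Lemma~\ref{lem:shift argum} forces $\Width(\RR) \le 2$, a contradiction. (One must check the linking configuration: because $\RR$ is parabolic, $\dist_T(\partial^{h,0},\partial^{h,1})$ is at least $6|I_0| + 3\length_{m+1}$, so after a shift by $\length_{m+1}$ the image of $\partial^{h,1}\RR$ is still strictly between $\partial^{h,0}\RR$ and $w$, giving precisely the linking pattern $\partial^{h,0}\RR < \partial^{h,1}\RR^{(1)} < \partial^{h,1}\RR$ needed to invoke Lemma~\ref{lem:shift argum} — this is the same mechanism as in Lemma~\ref{lem:left balanc}.)

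Next, for the quantitative bound~\eqref{eq:lem:contr:part^h,1}, I would iterate: set $N$ to be the largest integer with $N\length_{m+1} \le \frac12\,\dist(v,I_0)$ plus $|I_0|$ — more precisely, I want to pull back enough times that the accumulated shift is of order $|I_0|$ while the left endpoint of $\partial^{h,0}\RR^{(N)}$ has not yet collided with $v$. After each pullback step the left endpoint $a$ moves toward $v$ by $\length_{m+1}$; the point is that the rectangles $\RR, \RR^{(1)}, \dots, \RR^{(N)}$ occupy a "staircase" along $T'$, and consecutive ones (or ones a fixed bounded distance apart in the index) are linked, so by the Parallel Law their widths add up along any vertical crossing family, forcing $N \cdot 1 \gtrsim \Width(\RR)$ — wait, that would give the wrong inequality. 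The correct use is the dual: the family of curves realizing $\Width(\RR)$, pulled back $j$ times, produces $N+1$ essentially parallel wide families stacked between $v$-side and $w$-side across a horizontal extent $\asymp |I_0|$; crossing them all is a single vertical curve, so by the Series Law (Grötzsch) the width of a \emph{transversal} family is $\preceq 1/\big((N+1)\cdot(1/\Width(\RR))\big)$, and comparing with the fact that the transversal family has width $\asymp 1$ (it connects the two complementary intervals of $\lfloor I_0, \partial^{h,1}\RR^{(N)}\rfloor$, which are comparable once $|I_0|$, the gap, and $\dist(v,I_0)$ are all comparable) yields $N \preceq \Width(\RR)$, i.e.\ $\Width(\RR) \succeq N \asymp |I_0|/\length_{m+1} \ge |I_0|/\dist(v,I_0)$, hence $\log(|I_0|/\dist(v,I_0)) + 1 \succeq \Width(\RR)$ once one also handles the regime $|I_0| \gg \dist(v,I_0)$ by the logarithmic Log-Rule splitting of Lemma~\ref{lmm:W^- I J} / Remark~\ref{rem:SplitArg} applied to the staircase.

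\textbf{Main obstacle.} The delicate point is controlling the pullbacks near $v$: as $j$ grows, $\partial^{h,0}\RR^{(j)}$ approaches the endpoint $v$ of $T$ (or $v'$ of $T'$), where $f^{\qq_{m+1}}$ has a critical point and the "translational" picture of the fjord degenerates. I must stop the iteration before collision and argue that up to that point the pulled-back rectangles genuinely stay non-winding and genuinely remain linked in the sense required by Lemma~\ref{lem:shift argum} (equivalently, that the non-winding staircase does not wrap around $\infty$ or fold back on itself). This is where the external hypothesis and Lemma~\ref{lem:injecti of diffeo rect} do the real work: externality rel $\filled_m$ is what guarantees that the homotopy in $\C\setminus(Z\cup T'^c)$ used in Lemma~\ref{lem:Rect:pullback} persists under iteration, so the bookkeeping of which intervals are linked reduces to the purely combinatorial staircase on $\partial Z$ governed by the alternating sequence $\theta_n$ and estimate~\eqref{eq:length decrease}. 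Once that is in place, the width estimate is a routine application of the Splitting Argument, parallel to Lemma~\ref{lmm:W^- I J}.
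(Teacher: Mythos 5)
Your first claim ($|\partial^{h,0}\RR|>1$) is handled essentially as in the paper: one pullback, linking, Lemma~\ref{lem:shift argum}. (Minor slip: the linking pattern you should invoke is $\partial^{h,0}\RR_1<\partial^{h,0}\RR<\partial^{h,1}\RR_1$, which follows from $|\partial^{h,0}\RR|<\length_{m+1}$ together with the parabolic gap condition~\eqref{eq:parab rect}; the pattern you wrote, $\partial^{h,0}\RR<\partial^{h,1}\RR_1<\partial^{h,1}\RR$, would require the shifted roof to be disjoint from the roof, i.e.\ $|\partial^{h,1}\RR|<\length_{m+1}$, which is not guaranteed.)

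The quantitative part, however, has a genuine gap, and it is exactly where the key idea of the paper's proof is missing. The paper does \emph{not} iterate pullbacks of the whole rectangle: after balancing via Lemma~\ref{lem:left balanc}, it decomposes the base $\partial^{h,0}\RR^\new$ into a \emph{dyadic} concatenation $I_1\#\dots\#I_n$ with $|I_t|\approx 2^{t-1}k\,\length_{m+1}$, where $k\approx \dist(v,\partial^{h,0}\RR^\new)/\length_{m+1}$, so that $n\asymp \log\frac{|\partial^{h,0}\RR|}{\dist(v,\partial^{h,0}\RR)}+1$; for each $t$ the subrectangle $\RR_t$ of vertical curves emanating from $I_t$ is linked, in the sense of~\eqref{eq:LinkCond}, with its pullback under $f^{\qq_{m+1}2^{t-1}k}$ (a shift adapted to the distance of $I_t$ from $v$), hence $\Width(\RR_t)\le 2$ by Lemma~\ref{lem:shift argum}, and the Parallel Law~\S\ref{sss:ParLaw} gives $\Width(\RR)\le 2n+O(1)$ --- an upper bound by the logarithm, which is what~\eqref{eq:lem:contr:part^h,1} asserts. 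Your substitute argument does not deliver this. First, the inequality comes out in the wrong direction: your series-law comparison yields $N+1\preceq\Width(\RR)$, a \emph{lower} bound on $\Width(\RR)$ in terms of a quantity $N$ that is linear (of order $|I_0|/\length_{m+1}$ or $\dist(v,I_0)/\length_{m+1}$), not logarithmic, in the combinatorial sizes; the concluding ``hence $\log(|I_0|/\dist(v,I_0))+1\succeq\Width(\RR)$'' is a non sequitur. Second, the step ``the transversal family has width $\asymp 1$'' is unjustified at this point of the paper: estimating such outer families above $T'$ on these scales is precisely the content of Theorem~\ref{thm:par fjords}, which is proved \emph{using} the present lemma, so invoking it here is circular; moreover the comparability hypotheses you attach to it ($|I_0|$, the gap, and $\dist(v,I_0)$ all comparable) fail exactly in the regime the lemma is about, $\dist(v,I_0)\ll|I_0|$. (Also, when $|I_0|>\length_{m+1}$ the iterated pullbacks of $\RR$ overlap, so the disjointly-supported setup needed for the Gr\"otzsch/series inequality is not even in place.) The final appeal to the Splitting Argument does not repair this: the splitting needed is of the base of $\RR$ into dyadic pieces adapted to the distance to $v$, each piece being killed by its own shift --- that is the missing idea.
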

\begin{proof}
Follows from the Shift Argument~\S\ref{sss:shift argum} and Lemma~\ref{lem:Rect:pullback}. If $|\partial^{h,0}\RR|< 1$, then the pullback $\RR_1$ of $\RR$ under $f^{\qq_{m+1}}$ would be linked to $\RR$ -- impossible because $\Width(\RR)>2$.

By Lemma~\ref{lem:left balanc}, there is a parabolic subrectangle  $\RR^\new\subset \RR$ satisfying~\eqref{eq:lem:left balanc}; thus $|\partial^{h,0}\RR^\new|<\dist_{T'}(\partial^{h,0}\RR^\new,\partial^{h,1}\RR^\new)+\length_{m+1}$. Let $k\ge 1$ be the integer part of $\dist(v,\partial^{h,0}\RR^\new )/\length_{m+1}$. Decompose $\partial^{h,0}\RR^\new$ into the concatenation of closed intervals \[I_1\#I_2\#\dots \# I_n, \sp\sp v'\le I_1\le I_2\le \dots \le I_n<w\]
such that \[ |I_1|=k\length_{m+1},\sp |I_2|=2k\length_{m+1}, \dots , |I_{n-1}|=2^{n-1}k \length_{m-1}, \sp |I_{n}|\le 2^{n}k \length_{m-1}.\]
We claim that $n\ge \Width(\RR^\new)/2$ -- this will imply the lemma.

Let $\RR_t$ be the subrectangle of $\RR^\new$ consisting of vertical curves connecting $I_t$ and $\partial^{h,1} \RR^\new$. Then $\RR_t$ is linked to its pullback $\RR'_t$ under $f^{\qq_{m+1} 2^{t-1} k}$, see~\eqref{eq:LinkCond}: $ \partial ^{h,0} \RR'_t < \partial^{h,0} \RR_t < \partial ^{h,1} \RR'_t.$ By Lemma~\ref{lem:shift argum}, $\Width(\RR_t)\le 2$ for every $t$. By the Parallel Law \S\ref{sss:ParLaw},  $n\ge \Width(\RR^\new)/2\ge \Width(\RR)/2+1$.
\end{proof}

The following lemma is a counterpart to Lemma~\ref{lem:injecti of diffeo rect}.
\begin{lem}
\label{lem:zero wind are diffeo} Let $\RR$ be a parabolic  non-winding rectangle based on $T'$. If $\Width(\RR)>2$, then after removing the outermost $2$-buffer from $\RR$, we obtain an external parabolic rectangle $\RR^\new$ with $ f^{\qq_{m+1}}\left(\upbullet{\RR}^\new\right)\subset \upbullet\RR$. In particular, $f^{\qq_{m+1}}\mid \upbullet\RR^\new$ is injective.
\end{lem}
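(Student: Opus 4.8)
The plan is to mimic the proof of Lemma~\ref{lem:injecti of diffeo rect}; the two differences are that $\RR$ is now only assumed non-winding (so the truncation must first be made \emph{external}), and that we need the stronger push-forward control — both are paid for by using a $2$-buffer instead of a $1$-buffer. So let $\RR^\new$ be $\RR$ with its outermost $2$-buffer removed, so that $\Width(\RR^\new)=\Width(\RR)-2>0$ and $\partial^{v,\inn}\RR^\new=\partial^{v,\inn}\RR$. With the notation $\partial^{h,0}\RR=[a,b]$, $\partial^{h,1}\RR=[c,d]$ of~\eqref{eq:part RR:notat}, the removal only shortens the two horizontal sides towards their inner endpoints $b$, $c$, so the gap $\dist_T(\partial^{h,0}\RR^\new,\partial^{h,1}\RR^\new)=\dist_T(b,c)$ is unchanged while $\min\{|\partial^{h,0}\RR^\new|,|\partial^{h,1}\RR^\new|\}$ only decreases; hence~\eqref{eq:parab rect} is preserved and $\RR^\new$ is parabolic. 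Being a subrectangle of the non-winding $\RR$, it is non-winding, and $\upbullet\RR^\new\subset\upbullet\RR$ with $\infty\notin\upbullet\RR^\new$.

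Next I would show $\RR^\new$ is external. Since $\RR^\new$ is a parabolic rectangle of large width based on $T'\subset T$, one unit of its outer buffer places $\intr\RR^\new$ inside the fjord $\FjordT=\FjordT(T)$ — this is Lemma~\ref{lem:hyp rectangles}, used exactly as in the proof of Lemma~\ref{lem:injecti of diffeo rect}. The fjord $\FjordT$ is disjoint from $\filled_m$: it is a component of $\wC\setminus(\overline Z\cup\ell)$ with $\ell\subset\wC\setminus\filled_m$ the defining geodesic from $v$ to $w$, and near each of $v$, $w$ this geodesic separates the $T$-side from the adjacent limb of $\filled_m$, so $\FjordT$ contains no limb. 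Therefore $\intr\RR^\new\subset\wC\setminus\filled_m$, i.e.\ $\RR^\new$ is external, and $\upbullet\RR^\new\subset\FjordT$. By Lemma~\ref{lem:inj push forw}, $f^{\qq_{m+1}}\mid\upbullet\RR^\new$ is injective, which is the last assertion of the lemma.

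It remains to prove $f^{\qq_{m+1}}(\upbullet\RR^\new)\subset\upbullet\RR$. Write $\partial^{h,0}\RR^\new=[a^\new,b]$, $\partial^{h,1}\RR^\new=[c,d^\new]$ with $a<a^\new$ and $d^\new<d$, so $\upbullet\RR^\new$ is the Jordan disk bounded by $[a^\new,d^\new]\subset[a,d]$ and $\gamma^\new\coloneqq\partial^{v,\out}\RR^\new$. On $\partial Z$ the map $f^{\qq_{m+1}}$ is the rotation $x\mapsto x\boxplus\theta_{m+1}$, which moves points clockwise towards $w$ by exactly one period $\length_{m+1}$. The reason for removing a full $2$-buffer is that, besides putting $\intr\RR^\new$ in $\FjordT$, it also retracts the outer endpoint $d$ of $\partial^{h,1}\RR$ by at least one period, $\dist_T(d^\new,d)\ge\length_{m+1}$: otherwise pulling $\RR$ back once along $f^{\qq_{m+1}}$ (Lemma~\ref{lem:Rect:pullback}, which shifts the horizontal sides counterclockwise by $\length_{m+1}$) would produce a rectangle linked with $\RR$, contradicting $\Width(\RR)>2$ via the Shift Argument, exactly as in the proofs of Lemmas~\ref{lem:left balanc} and~\ref{lem:contr:part^h,1}. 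Consequently $f^{\qq_{m+1}}([a^\new,d^\new])=[a^\new\boxplus\theta_{m+1},\,d^\new\boxplus\theta_{m+1}]\subset[a,d]$ — the left endpoint moves to the right of $a^\new>a$, and the right endpoint stays to the left of $d$ by the period estimate. Finally, $\gamma^\new\setminus\{a^\new,d^\new\}\subset\FjordT$ and $f^{\qq_{m+1}}\mid\FjordT$ is an injection into $\wC\setminus\overline Z$, so $f^{\qq_{m+1}}(\gamma^\new)$ is a simple arc in $\wC\setminus Z$ joining the endpoints of $f^{\qq_{m+1}}([a^\new,d^\new])$; it is non-winding (non-winding passes through $f^{\qq_{m+1}}$, as already observed in Lemma~\ref{lem:injecti of diffeo rect}), hence with its base subarc it cuts off a bounded Jordan disk, and by the translational (near-rotation) normal form for $f^{\qq_{m+1}}$ on $\FjordT$ from~\S\ref{s:NearRotatSystem} this disk lies on the $\upbullet\RR$-side of $\partial^{v,\out}\RR$. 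Thus $f^{\qq_{m+1}}(\upbullet\RR^\new)\subset\upbullet\RR$.

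The step I expect to be the main obstacle is this last one: verifying that $f^{\qq_{m+1}}(\gamma^\new)$ stays on the $\upbullet\RR$-side of $\partial^{v,\out}\RR$ and neither crosses it nor winds around $\infty$. This is where the translational, Fatou-coordinate-like behaviour of $f^{\qq_{m+1}}$ on the fjord is essential — one iterate shifts the fjord picture by one fundamental period towards $w$ while keeping it inside $\FjordT$, so the two-unit retraction of $\RR^\new$ inside $\RR$ precisely compensates the shift — and it has to be quantified using the near-rotation estimates of~\S\ref{s:NearRotatSystem} (and, at level $m=-1$, the special meaning of $\dist_T$).
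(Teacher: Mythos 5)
There is a genuine gap, and it sits at the two places where the lemma actually has content. First, the externality of $\RR^\new$: you claim that "one unit of its outer buffer places $\intr\RR^\new$ inside the fjord $\FjordT$ — this is Lemma~\ref{lem:hyp rectangles}, used exactly as in the proof of Lemma~\ref{lem:injecti of diffeo rect}." But in Lemma~\ref{lem:injecti of diffeo rect} externality is the \emph{hypothesis}: there $\RR$ is a rectangle in the surface $\wC\setminus\intr\filled_m$, so Lemma~\ref{lem:hyp rectangles} applied in that surface traps the trimmed rectangle under the geodesic $\ell$ of $\wC\setminus\filled_m$, i.e.\ inside $\FjordT$. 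Here $\RR$ is only non-winding, so its vertical curves may dive into the limbs of $\filled_m$ attached at $v$ and $w$; it is not a rectangle in $\wC\setminus\intr\filled_m$, and applying Lemma~\ref{lem:hyp rectangles} in $\wC\setminus \overline Z$ only confines $\RR^\new$ under a hyperbolic geodesic of $\wC\setminus\overline Z$, which nothing prevents from crossing the (possibly huge) limbs; such a geodesic does not bound $\FjordT$. The implication "non-winding and wide $\Rightarrow$ external after trimming" is precisely what the lemma asserts and cannot be borrowed from Lemma~\ref{lem:injecti of diffeo rect}. The paper gets it dynamically: pull $\RR$ back under $f^{\qq_{m+1}}$ (Lemma~\ref{lem:Rect:pullback}); the pullback $\RR_1$ lies automatically in $\wC\setminus\intr\filled_m$, since it lives in $f^{-\qq_{m+1}}(\wC\setminus\overline Z)$. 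Writing $\RR=\XX\cup\YY\cup\RR^\new$ with $\XX,\YY$ the two outermost $1$-buffers and letting $\XX_1\subset\RR_1$ be the pullback of $\XX$, the parabolic gap ($\dist_T(\partial^{h,0}\RR,\partial^{h,1}\XX)>\length_{m+1}$) gives $\partial^{h,0}\XX_1<\partial^{h,0}\YY\cup\partial^{h,0}\RR^\new<\partial^{h,1}\XX_1$, so by the non-crossing principle (\S\ref{sss:non cross princ}) at most a $1$-wide part of $\YY\cup\RR^\new$ can cross the shield $\XX_1$; that is exactly what the second buffer $\YY$ absorbs, whence $\RR^\new\subset\upbullet\RR_1$. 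Externality, the inclusion $f^{\qq_{m+1}}(\upbullet\RR^\new)\subset\upbullet\RR$, and injectivity (Lemma~\ref{lem:inj push forw} applied to $\upbullet\RR_1$) then all follow at once.

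Second, even granting externality, your argument for $f^{\qq_{m+1}}(\upbullet\RR^\new)\subset\upbullet\RR$ is not a proof: the assertion that $f^{\qq_{m+1}}(\gamma^\new)$ stays on the $\upbullet\RR$-side of $\partial^{v,\out}\RR$ is the whole point, and you reduce it to a "translational normal form" for $f^{\qq_{m+1}}$ on the fjord from \S\ref{s:NearRotatSystem}; that section treats abstract near-rotation systems, it is not established at this stage that the fjord carries such a structure, and nothing there controls where the image of this particular boundary arc lies relative to $\partial^{v,\out}\RR$ — as you yourself acknowledge. The auxiliary claim $\dist_T(d^\new,d)\ge\length_{m+1}$ is also not justified as stated: pulling back the whole rectangle $\RR$ produces a linked pair only when a horizontal side of $\RR$ is shorter than $\length_{m+1}$, which is unrelated to $d^\new$; the shift argument applied to the outermost $2$-buffer (or to the subrectangle landing on the last period of the roof) only yields that its width is at most $2$, which is no contradiction with $\Width(\RR)>2$, and in any case a retraction of the roof endpoint does not by itself control the image of the outer vertical side. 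The pullback-plus-shield argument above replaces all of this.
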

\begin{proof}
As before, we assume Notations~\eqref{eq:T:orientat ass} and~\eqref{eq:part RR:notat}.  Consider the pullback $\RR_1$ of $\RR$ under $f^{\qq_{m+1}}$ (see Lemma~\ref{lem:Rect:pullback}); clearly, $\intr(\RR_1)\subset \C\setminus \intr\filled_m$. We claim that after removing the outermost $2$-buffer from $\RR$, the new rectangle $\RR^\new$ is within $\upbullet \RR_1$. This would imply the lemma because $f^{\qq_{m+1}}\mid \upbullet\RR_1$ is injective.

Denote by $\XX$ the outermost $1$-buffer of $\RR$, and denote by $\YY$ the outermost $1$-buffer of $\RR\setminus \XX$. We have $\RR=\XX\cup \YY\cup \RR^\new$. Let $\XX_1\subset \RR_1$ be the pullback of $\XX$ under $f^{\qq_{m+1}}$. 
Since the distance between $\partial ^{h,0} \RR$ and $\partial ^{h,1}\XX$ is bigger than $\length_{m+1}$ (see~\eqref{eq:parab rect}), we have \[\partial ^{h,0} \XX_1<\partial^{h,0}\YY\cup \partial^{h,0}\RR^\new < \partial^{h,1} \XX_1;\]
thus at most $1$-wide part of $\YY\cup \RR^\new$ can cross $\XX_1$, see \S\ref{sss:non cross princ}. Hence $\RR^\new\subset \RR_1$.
\end{proof}

Combined with Lemma~\ref{lem:left balanc}, we obtain:

\begin{cor}
If $\RR$ is a parabolic non-winding rectangle on $T'$ with $\Width(\RR)\ge 5$, then $|\partial ^{h,0}\RR|,|\partial ^{h,1}\RR|\ge 1$.\qed
\end{cor}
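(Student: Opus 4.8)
The plan is to deduce the statement from two applications of Lemma~\ref{lem:contr:part^h,1}, using Lemma~\ref{lem:left balanc} to handle the side of $\RR$ that Lemma~\ref{lem:contr:part^h,1} does not control directly. Recall that, under Notations~\eqref{eq:T:orientat ass} and~\eqref{eq:part RR:notat}, Lemma~\ref{lem:contr:part^h,1} asserts that any parabolic non-winding rectangle based on $T'$ of width exceeding $2$ has $|\partial^{h,0}\RR| > 1$, where $\partial^{h,0}\RR$ is the horizontal side closer to $v$ (the side into which the pullbacks $f^{-\qq_{m+1}}$ accumulate). Since $\Width(\RR) \ge 5 > 2$, this already gives $|\partial^{h,0}\RR| > 1$, which is one half of the assertion.

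For the other side, I would first invoke Lemma~\ref{lem:left balanc} to replace $\RR$ by a parabolic non-winding subrectangle $\RR^\new \subseteq \RR$, still based on $T'$, that is left-balanced, $|\partial^{h,0}\RR^\new| \le |\partial^{h,1}\RR^\new|$, and loses only bounded width: $\Width(\RR^\new) \ge \Width(\RR) - 2 \ge 3 > 2$. Applying Lemma~\ref{lem:contr:part^h,1} once more, now to $\RR^\new$, yields $|\partial^{h,0}\RR^\new| > 1$, hence $|\partial^{h,1}\RR^\new| \ge |\partial^{h,0}\RR^\new| > 1$ by left-balancedness. Since $\RR^\new$ is a subrectangle of $\RR$, its horizontal boundary components sit inside those of $\RR$; in particular $\partial^{h,1}\RR^\new \subseteq \partial^{h,1}\RR$, so $|\partial^{h,1}\RR| \ge |\partial^{h,1}\RR^\new| > 1$. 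Together with $|\partial^{h,0}\RR| > 1$ this proves the corollary, in fact with strict inequalities.

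I do not expect a genuine obstacle here: this is a short bookkeeping argument. The one point worth a glance is that the subrectangle $\RR^\new$ furnished by Lemma~\ref{lem:left balanc} is indeed again a parabolic non-winding rectangle \emph{based on $T'$}, so that Lemma~\ref{lem:contr:part^h,1} is legitimately applicable to it; but this is built into the conclusion of Lemma~\ref{lem:left balanc} — shrinking the horizontal sides only widens the gap between them relative to their lengths, so the parabolicity inequality~\eqref{eq:parab rect} is preserved.
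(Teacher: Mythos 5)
Your proof is correct, and it is essentially the paper's (unwritten) argument: the corollary is stated with just a \qed as a combination of Lemma~\ref{lem:left balanc} with the shift/pullback machinery, and your accounting matches the threshold exactly — balancing costs width $2$, and Lemma~\ref{lem:contr:part^h,1} needs width $>2$, whence $5$. The only nuance is that the paper's lead-in sentence nominally combines Lemma~\ref{lem:left balanc} with Lemma~\ref{lem:zero wind are diffeo} (push-forwards), which would bound $|\partial^{h,1}\RR|$ by a forward shift argument instead; your reduction of the roof bound to a second application of the pullback-based Lemma~\ref{lem:contr:part^h,1} via the balanced subrectangle is an equally valid realization, and you correctly checked the one point that needs checking, namely that $\RR^\new$ is again a parabolic non-winding rectangle based on $T'$ with $\partial^{h,1}\RR^\new\subset\partial^{h,1}\RR$ (true for the genuine subrectangle produced in the proof of Lemma~\ref{lem:left balanc}).
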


\subsection{Push-forwards in fjords}
\label{ss:push forw curves} As before, we assume Notations~\eqref{eq:T:orientat ass},~\eqref{eq:part RR:notat}.

Let us select a simple arc $\delta\subset \wC\setminus  Z$ connecting a point in $\delta(0)\in \partial Z\setminus (v,w)$ to $\delta(1)=\infty$ such that $\delta$ is disjoint from $\partial Z\setminus \delta(0).$ Then $\Delta\coloneqq \wC\setminus (\overline Z\cup \delta)$ is an open topological disk. 

For a rectangle $\RR$ based on $ T$, we will define below the push-forward $\RR_k$ of $\RR$ under $f^{\qq_{m+1}k}$ assuming that $\dist (\partial ^{h,1}\RR,w)>k\length_{m+1}$. The result $\RR_k$ will be a lamination in $\Delta$.

Let us orient all vertical curves in $\RR$ from $\partial^{h,1} \RR$ to $\partial^{h,0} \RR $:
\begin{equation}
\gamma(0)\in \partial^{h,1}\RR,\sp  \gamma(1)\in \partial^{h,0}\RR\sp \sp\text{ for }\sp\sp [\gamma\colon [0,1]\to \wC\setminus Z]\in \RR.
\end{equation}

Let $\Delta_{-k}$ be the component of $f^{-\qq_{m+1}k}(\Delta)$ attached to $[v,w \boxminus k\theta_{m+1} ]\subset T$. For a vertical curve $\ell\colon [0,1]\to \wC$ in $\RR$, let $t_k^\ell >0$ be the first moment such that $\ell\big(t^\ell_k\big)\in \partial \Delta_{-k}$. 
We define 
\[\RR'_k\coloneqq \{\ell\mid [0,t_k^\ell] \sp \text{ for }\sp \ell\in \RR\},\sp\sp \RR_k\coloneqq f^{\qq_{m+1}k }(\RR'_k).\]
In other words, $\RR'_k$ is the restriction (see~\S\ref{sss:short subcurves}) of $\RR$ to $\Delta_{-k}$ and $\RR_k$ is the appropriate conformal image of $\RR'_k$. 
 We say that the curve $\ell\mid [0,t_k^\ell]$ in $\RR'_k$ and its image $f^{\qq_{m+1}k}\big(\ell\mid [0,t_k^\ell] \big)$ in $\RR_k$ is of
\begin{itemize}
\item {Type \RN{1}} if $t_k^\ell=1$,
\item {Type \RN{2}} if $t_k^\ell<1$ but $f^{\qq_{m+1}k} \circ \ell\big(t_k^\ell\big)\in T$;
\item {Type \RN{3}} otherwise.
\end{itemize}
We denote by $\RR^{\RN{1}}_k,\ \RR^{\RN{2}}_k,\ \RR^{\RN{3}}_k$ the sublaminations of $\RR_k$ consisting of Type~\RN{1}, \RN{2}, \RN{3}  curves respectively. Similarly are defined the sublaminations $\RR'^{\RN{1}}_k,\ \RR'^{\RN{2}}_k,\ \RR'^{\RN{3}}_k$ of  $\RR'^k$. Since $\RR$ overflows $\RR'_k$, we have
\begin{equation}
\label{eq:RR_k:monot}
\Width(\RR)\le \Width(\RR'_k) = \Width(\RR_k).
\end{equation}


\begin{figure}[t!]
\[\begin{tikzpicture}[scale=1.4]

\draw[shift={(2,0)},fill=black, fill opacity=0.1] (0,0) -- (1,1.5 )--(0,3)--(-1,1.5)--(0,0);

\node[below] at (2,0) {$w$};

\draw (-7.2,0) -- (3,0);

\draw[shift={(-6.5,0)},fill=black, fill opacity=0.1] (0,0) -- (1,1.5 )--(0,3)--(-1,1.5)--(0,0);

\node[below]at (-6.5,0) {$v$};

\begin{scope}[shift= {(-2,0)},scale=0.2]

\draw [red,fill=red , fill opacity =0.1] (7,0)
 .. controls (4.5, 4) and (-4.5,4) ..
 (-7,0)
 .. controls (-7, 0) and (-5,0) ..
  (-5,0)
 .. controls (-3, 2.1) and (3,2.1) ..
 (5,0)
 .. controls (5, 0) and (7,0) ..
 (7,0) ;
 
\node[red] at (0,2.3){$\RN{1}$}; 
\end{scope}

\begin{scope}[shift= {(-2,0)},scale=0.2]

\draw [orange,fill=orange , fill opacity =0.1] (9,0)
 .. controls (6, 9) and (-11.5,9) ..
 (-14,0)
 .. controls (-14, 0) and (-12,0) ..
  (-12,0)
 .. controls (-9, 7) and (4,7) ..
 (7,0)
 .. controls (7, 0) and (9,0) ..
 (9,0) ;
 
\node[orange] at (-3,5.9){$\RN{2}$}; 
\end{scope}

\begin{scope}[shift ={(-0.2,0)},scale=0.3]
\draw[yscale=4,blue, fill=blue, fill opacity=0.1] 
(1.2,2.5) 
-- 
(1.2,0)
--
 (0,0) 
 --
(0,2.5); 

\node[blue] at (0.6,6){$\RN{3}$};

\end{scope}

\end{tikzpicture}\] 
\caption{Types $\RN{1}$ (red), $\RN{2}$ (orange), and $\RN{3}$ (blue) curves in $\RR_k$.}
\label{Fg:Fam:I II III}
\end{figure}
 
\begin{lem}
\label{lem:3 Types}
In $\wC\setminus Z$, the lamination $\RR^{\RN{2}}_k$ separates $\RR^{\RN{1}}_k$ from $\RR^{\RN{3}}_k$ and from $\{v,w\}$; i.e., $\RR^{\RN{3}}_k\cup \{v,w\}$ and $\RR^{\RN{1}}_k$ are in different components of $\wC\setminus (Z\cup \gamma)$ for every $\gamma\in \RR^{\RN{2}}_k$, see Figure~\ref{Fg:Fam:I II III}.
\end{lem}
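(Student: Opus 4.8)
The plan is to track the curves through the covering map $f^{\qq_{m+1}k}$ and record how the defining ``stopping condition'' behaves along a continuous family. First I would set up the picture in the open disk $\Delta=\wC\setminus(\overline Z\cup\delta)$: pulling back along $f^{\qq_{m+1}k}$, consider the component $\Delta_{-k}$ attached to $[v, w\boxminus k\theta_{m+1}]\subset T$. The three types are distinguished by where a vertical curve $\ell$ of $\RR$ first exits $\Delta_{-k}$ (i.e.\ at the moment $t^\ell_k$): Type~\RN{1} means it never exits within the rectangle (it runs all the way to $\partial^{h,0}\RR$); Type~\RN{2} means it exits through the part of $\partial\Delta_{-k}$ lying on $\partial Z$ (equivalently, in the image it lands on $T$); Type~\RN{3} means it exits through $f^{-\qq_{m+1}k}(\delta)$. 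Since $\RR$ is a genuine rectangle, its vertical curves are pairwise disjoint and linearly ordered (left to right), and $t^\ell_k$ depends in a controlled (semicontinuous) way on $\ell$; the key topological input is that $\Delta_{-k}$ is a closed Jordan-type region whose boundary decomposes as an arc on $\partial Z$ (containing $[v,w\boxminus k\theta_{m+1}]$, hence the relevant piece of $T$) together with the pulled-back copies of $\delta$, and these two pieces of $\di\Delta_{-k}$ are separated along $\di\Delta_{-k}$.

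Next I would argue the separation. Fix $\gamma\in\RR_k^{\RN{2}}$, say $\gamma=f^{\qq_{m+1}k}(\ell\mid[0,t^\ell_k])$ with $\ell$ of Type~\RN{2}; its terminal point lies on $T$ and its initial point lies on $\partial^{h,1}\RR\subset T'\subset T$, so $\gamma$ together with the sub-arc of $T$ between its endpoints bounds a Jordan domain in $\wC\setminus Z$ (using that $\RR_k$ sits inside the topological disk $\Delta$, so no winding around $\infty$ occurs). I claim $v,w$ lie on the ``outer'' side of $\gamma$, i.e.\ not in that Jordan domain: indeed $\gamma$'s endpoints lie strictly between $v$ and $w$ on $T$ (the terminal point is in $[v,w\boxminus k\theta_{m+1}]\cap$(something interior), the initial point in $T'$), and $\gamma$ stays in the fjord region $\FjordT(T)\setminus$(core), which is disjoint from neighborhoods of $v$ and $w$ by the parabolic condition~\eqref{eq:parab rect} and the definition of $T_\parab$. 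So $\{v,w\}$ is on the far side of $\gamma$. For the Type~\RN{3} curves: a Type~\RN{3} curve exits $\Delta_{-k}$ through $f^{-\qq_{m+1}k}(\delta)$, which in the $\RR_k$-picture means $\gamma^{\RN{3}}$ terminates on $\delta$, i.e.\ ``deeper'' in $\Delta$ than $T$; by the left-right order of $\ell$'s and the fact that exiting later forces the curve to first pass the region swept out by earlier exits, every Type~\RN{3} curve is separated from every Type~\RN{1} curve by the intervening Type~\RN{2} curves. Concretely: order vertical curves of $\RR$; the set of Type~\RN{1} curves, the set of Type~\RN{2}, and the set of Type~\RN{3} form (essentially) consecutive blocks in this order because the exit behavior is monotone — once $\ell$ is ``captured'' by $f^{-\qq_{m+1}k}(\delta)$ the neighboring curves to one side are too — and Type~\RN{2} sits between Type~\RN{1} and Type~\RN{3}, with $\{v,w\}$ attached to the Type~\RN{3}/boundary side.

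Then I would make the separation statement precise by a connectedness argument: fix $\gamma\in\RR_k^{\RN{2}}$ and let $V_\gamma$ be the component of $\wC\setminus(Z\cup\gamma)$ containing $\di^{h,0}\RR_k$-side Type~\RN{1} curves; I must show $\RR_k^{\RN{1}}\subset V_\gamma$ and $\RR_k^{\RN{3}}\cup\{v,w\}\subset\wC\setminus(Z\cup\gamma)\setminus \overline{V_\gamma}$. Since $\RR_k$ overflows its sublaminations and all three families arise as images of disjoint sub-arcs of the disjoint vertical curves of $\RR$, disjointness of $\gamma$ from every other curve in $\RR_k$ is automatic; so each of $\RR_k^{\RN{1}}$, $\RR_k^{\RN{3}}$, being connected unions (or at least: each individual curve being connected and disjoint from $\gamma$), lies entirely in one component of $\wC\setminus(Z\cup\gamma)$, and it remains to identify which. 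That identification is where I expect the real work: I would do it by choosing one reference Type~\RN{1} and one reference Type~\RN{3} curve adjacent (in the left-right order) to $\gamma$ and checking their sides by hand using the local picture near the exit point of $\gamma$ on $T$ versus the exit point of the Type~\RN{3} curve on $\delta$, i.e.\ using that the arc of $\partial\Delta_{-k}$ on $\partial Z$ and the arc on $f^{-\qq_{m+1}k}(\delta)$ are disjoint and appear in a definite cyclic order on $\partial\Delta_{-k}$, and that $f^{\qq_{m+1}k}$ restricted to $\Delta_{-k}$ (or to the relevant $\upbullet{(\cdot)}$-filled rectangle) is injective by Lemma~\ref{lem:inj push forw} / Lemma~\ref{lem:zero wind are diffeo}, so the cyclic order is preserved. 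The main obstacle, as indicated, is controlling the possibly complicated re-entry pattern of curves into $T$ after they first pass under points of $T$ (the phenomenon described in the Remark after the Series Decomposition): a Type~\RN{3} curve's image may wander near $T$ before escaping to $\delta$, so I cannot naively say ``Type~\RN{3} curves stay deep''. I would neutralize this by working with the \emph{first} exit time $t^\ell_k$ only (so each $\gamma\in\RR_k$ is a simple arc that meets $\partial\Delta$ only at its terminal endpoint), which is exactly how $\RR_k$ was defined; this makes each $\gamma$ a crosscut-type arc of the disk $\Delta$ and reduces the lemma to the elementary fact that disjoint crosscuts of a disk, together with a marked boundary pair $\{v,w\}$, are linearly ordered and that a crosscut landing on one boundary arc separates those landing ``beyond it'' on the far boundary arc from the near ones — with $\{v,w\}$ sitting on the far boundary arc.
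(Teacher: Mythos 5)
Your overall strategy (pull back to $\Delta_{-k}$, use first-exit times so each restricted curve is a crosscut, then argue via the order of landing arcs on $\partial\Delta_{-k}$) is the same as the paper's, but the step you yourself flag as "the real work" is done incorrectly, and this is a genuine gap. You identify the Type~$\RN{2}$ exit zone as "the part of $\partial\Delta_{-k}$ lying on $\partial Z$" and the Type~$\RN{3}$ zone as $f^{-\qq_{m+1}k}(\delta)$. Neither is right. The arc $\partial\Delta_{-k}\cap\partial Z=[v,\,w\boxminus k\theta_{m+1}]$ is where the Type~$\RN{1}$ curves terminate (on $\partial^{h,0}\RR$, at $t^\ell_k=1$); the Type~$\RN{2}$ curves exit through $T^b_{-k}$, the branch of $f^{-\qq_{m+1}k}(T)\cap\partial\Delta_{-k}$ attached to the critical point $v$ but \emph{disjoint from} $\partial Z$ (the paper splits $T_{-k}=f^{-\qq_{m+1}k}(T)\cap\partial\Delta_{-k}$ at $v$ into $T^a_{-k}\subset T$ and this off-$\partial Z$ piece $T^b_{-k}$), and the Type~$\RN{3}$ curves exit through all of $\partial\Delta_{-k}\setminus T_{-k}$, which contains preimage arcs of $\partial Z\setminus T$ (limb boundaries), not only preimages of $\delta$. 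The lemma hinges on the resulting order along $\partial\Delta_{-k}$: from the starting arc $\partial^{h,1}\RR$ one way along $\partial Z$ one meets $\partial^{h,0}\RR$, then $v$, then $T^b_{-k}$, then $\partial\Delta_{-k}\setminus T_{-k}$, returning to $w\boxminus k\theta_{m+1}$; with your zones this order, and hence the separation, is not established. Your auxiliary claim that the three types form consecutive blocks in the left-right order of $\RR$ by "monotonicity of the exit behavior" is neither true in general nor needed: the statement is curve-by-curve and follows from the crosscut picture alone.

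Second, your argument for placing $\{v,w\}$ on the far side of a Type~$\RN{2}$ curve appeals to the parabolic condition~\eqref{eq:parab rect} and to $T_\parab$, asserting that $\gamma$ stays in a region "disjoint from neighborhoods of $v$ and $w$". This is out of scope: the lemma concerns an arbitrary rectangle based on $T$ subject only to $\dist(\partial^{h,1}\RR,w)>k\length_{m+1}$; it is not assumed parabolic, and $T_\parab$ is constructed only later (in the proof of Theorem~\ref{thm:par fjords}), so it cannot be invoked here — nor is any metric input needed. The correct reasons are combinatorial: the separation in $\Delta_{-k}$ puts $w\boxminus k\theta_{m+1}$ (hence $w$, after applying the conformal map $f^{\qq_{m+1}k}\colon\Delta_{-k}\to\Delta$) on the Type~$\RN{3}$ side, while $v$ — which in the domain lies on the \emph{same} side as the Type~$\RN{1}$ curves, contrary to your closing reduction with "$\{v,w\}$ sitting on the far boundary arc" (note also $w\notin\partial\Delta_{-k}$) — is excluded in the image simply because both endpoints of every Type~$\RN{2}$ curve lie on $T$ strictly between $v$ and $w$ (the terminal point lies in $f^{\qq_{m+1}k}(T^b_{-k})\subset T$), so the Jordan domain cut off by $\gamma$ and the subarc of $T$ between its endpoints contains the Type~$\RN{1}$ curves but neither $v$ nor $w$.
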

\begin{proof}
Consider the preimage $T_{-k} \coloneqq f^{-\qq_{m+1} k} (T) \cap \partial \Delta_{-k}$ of $T$ under ${f^{\qq_{m+1} k} \colon\overline \Delta_{-k} \to \overline \Delta}$. Observe that $T_{-k}$ contains $v$ but not $w$. The point $v$ splits $T_{-k}$ into two intervals $T^b_{-k}$ and $T^a_{-k}$, we assume that $T^a_{-k}\subset T$ while $T^b_{-k}$ is disjoint from $\partial Z$. Then \begin{itemize}
\item $\RR'^{\RN{1}}_k$ is the sublamination of $\RR'_k$ landing at $T^a_{-k}$,
\item $\RR'^{\RN{2}}_k$ is the sublamination of $\RR'_k$ landing at $T^b_{-k}$,
\item $\RR'^{\RN{3}}_k$ is the sublamination of $\RR'_k$ landing at $\partial \Delta_{-k}\setminus T_{-k}$.
\end{itemize}
The lemma now follows from the observation that in $\Delta_{-k}$, the lamination $\RR'^{\RN{2}}_k$ separates $\RR'^{\RN{1}}_k$ from $\RR'^{\RN{3}}_k$ and $w\boxminus \theta_{m+1}k$.
\end{proof}

Let $\XX$ be an external parabolic rectangle based on $T'$ with $\Width(\XX)\ge 10$. 
 Let $P\subset T'$ be the complementary interval between $\partial ^{h,0}\XX$ and $\partial ^{h,1}\XX$. We say that $P$ is \emph{protected} by $\XX$. 
\begin{lem}[Push-forwards]
\label{lem:push forw in par int}
Let $P$ be an interval protected by an external parabolic rectangle $\XX$ as above. If $\RR$ is a parabolic rectangle based on $P$ such that \[\partial^{h} \RR\boxplus i\length_{m+1}\subset P \sp\sp\text{ for all }\sp\sp i\in\{0,1,2,\dots, k\},\]
then after removing the $1$-outermost buffer, the rectangle $\RR^\new$ has univalent push-forwards:
\begin{equation}
\label{eq:lem:push forw in par int}
 f^{\qq_{m+1}i}(\RR^\new)\subset \upbullet \XX\sp\sp\text{ for all }\sp\sp i\in\{0,1,2,\dots, k\}.
 \end{equation}
\end{lem}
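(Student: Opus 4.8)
The plan is to prove, by induction on $i\in\{0,1,\dots,k\}$, the pair of assertions: (a) $f^{\qq_{m+1}i}(\RR^\new)\subset\upbullet\XX$, and (b) $f^{\qq_{m+1}i}$ is injective on $\RR^\new$. Assertion (a) is the content of the lemma, and (b) is the ``univalent push-forward'' in the sense of~\S\ref{sss:univ push forw}. One may assume $\Width(\RR)>1$, since otherwise $\RR^\new=\emptyset$ and the statement is vacuous. As a preliminary step I would regularize $\XX$: because $\XX$ is external parabolic on $T'$ with $\Width(\XX)\ge 10>1$, Lemma~\ref{lem:injecti of diffeo rect} allows us to delete the outermost $1$-buffer of $\XX$, after which $\XX$ and $f^{\qq_{m+1}}(\XX)$ are non-winding and $f^{\qq_{m+1}}\mid\upbullet\XX$ is injective; deleting this buffer only enlarges the protected interval $P$, so the hypotheses on $\RR$ are unaffected. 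Once $f^{\qq_{m+1}}$ is injective on $\upbullet\XX$, assertion (b) at level $i+1$ follows automatically from (a) and (b) at level $i$, since $f^{\qq_{m+1}(i+1)}\mid\RR^\new$ is then the composite of the injective maps $f^{\qq_{m+1}i}\mid\RR^\new$ and $f^{\qq_{m+1}}\mid f^{\qq_{m+1}i}(\RR^\new)$. Thus the entire proof reduces to the inclusion~(a).

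The base case is $\RR^\new\subset\upbullet\XX$. Since the deleted $1$-buffer is the outermost one, Lemma~\ref{lem:hyp rectangles} puts every vertical curve of $\RR^\new$ inside the fjord $\FjordT(T)$, the same topological disk containing $\XX$ and $\upbullet\XX$, and makes $\RR^\new$ non-winding. Now $\upbullet\XX$ is the subregion of $\FjordT(T)$ cut off by $\partial^{v,\inn}\XX$ on the side of the arc of $T'$ through $P$, and a vertical curve of $\RR^\new$ has both endpoints in $P$. One then argues that such a curve cannot leave $\upbullet\XX$: an escaping positive-width subfamily would have to cross the boundary data of $\XX$ in a pattern forbidden, after the buffer removal, by the Non-Crossing Principle~\S\ref{sss:non cross princ} together with the choice of $\XX$ as a wide (in practice, outermost) protector of $P$. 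Hence $\RR^\new\subset\upbullet\XX$.

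For the inductive step, set $\RR_i\coloneqq f^{\qq_{m+1}i}(\RR^\new)$ and assume $\RR_i\subset\upbullet\XX$. As $f^{\qq_{m+1}i}\mid\upbullet\XX$ is injective, $\RR_i$ is again a rectangle; its horizontal sides lie in $\partial^h\RR\boxplus i\length_{m+1}\subset P$ by the standing hypothesis (and the same for $i+1$), and the parabolicity condition~\eqref{eq:parab rect} is preserved because $f^{\qq_{m+1}}\mid\partial Z$ acts on $P$ by a rotation, so $\RR_i$ is a parabolic rectangle based in $P$ and contained in $\upbullet\XX$. Since $\RR_i$ never touches $\partial Z\setminus T$ and does not wind around $\infty$, Lemma~\ref{lem:3 Types} shows that $f^{\qq_{m+1}}(\RR_i)$ is made up entirely of Type~\RN{1} curves, i.e.\ is an honest rectangle again based in $P$. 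It remains to see $f^{\qq_{m+1}}(\RR_i)\subset\upbullet\XX$: the image sits inside $f^{\qq_{m+1}}(\upbullet\XX)$ with its horizontal sides on the arc $P\subset\partial(\upbullet\XX)$, and — arguing as in the proof of Lemma~\ref{lem:zero wind are diffeo}, using the branched covering $f^{\qq_{m+1}}\colon\wC\setminus\filled_m\to\wC\setminus\overline Z$ of Lemma~\ref{lem:inj push forw} and the Non-Crossing Principle against $\XX$ — the portion of $f^{\qq_{m+1}}(\upbullet\XX)$ lying over $P$ is contained in $\upbullet\XX$. This closes the induction and, with it, the lemma.

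The step I expect to cause the most trouble is this last inclusion. The naive approach is to invoke Lemma~\ref{lem:zero wind are diffeo}, which gives $f^{\qq_{m+1}}(\upbullet\XX^\new)\subset\upbullet\XX$ but only after discarding a $2$-buffer from $\XX$; this cannot be iterated $k$ times, since $\Width(\XX)\ge 10$ furnishes only a bounded supply of buffers while $k$ is arbitrary. The correct mechanism must exploit that the protecting pair $(\XX,P)$ is \emph{fixed} throughout: every push-forward $\RR_i$ is again a parabolic rectangle based in the same $P$ and trapped below the same $\XX$, so a single non-crossing estimate against $\XX$ — relying on the global degree-$2^{\qq_{m+1}}$ structure of $f^{\qq_{m+1}}$ rather than on a buffer-consuming local comparison — controls the escape at all $k$ steps at once. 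Making this buffer-free statement precise, and pinning down the base-case containment $\RR^\new\subset\upbullet\XX$ via the non-crossing principle, are the two places that require genuine care.
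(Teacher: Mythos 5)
Your proposal reduces to a set-theoretic induction on the containment $f^{\qq_{m+1}i}(\RR^\new)\subset\upbullet\XX$, and, as you concede in your last paragraph, the inductive step is not actually proved: every tool you cite for propagating the containment (Lemma~\ref{lem:zero wind are diffeo}, or the base inclusion $f^{\qq_{m+1}}(\upbullet\XX^\new)\subset\upbullet\XX$) costs a buffer of $\XX$, whose total supply is bounded by $\Width(\XX)\ge 10$ while $k$ is arbitrary, and the ``single buffer-free non-crossing estimate against $\XX$'' that you postulate is never formulated, let alone proved. A second concrete error: you assert that, because $\RR_i\subset\upbullet\XX$, Lemma~\ref{lem:3 Types} makes $f^{\qq_{m+1}}(\RR_i)$ consist entirely of Type~\RN{1} curves. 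That lemma says nothing of the sort; it only records that the Type~\RN{2} sublamination separates Type~\RN{1} from Type~\RN{3} and $\{v,w\}$. Containment in $\upbullet\XX$ does not prevent a curve from leaving $\Delta_{-1}$ through $\partial\Delta_{-1}\setminus T$ before reaching its endpoint on $\partial Z$ --- exactly the curves lying in the buffer zone $\upbullet\XX\setminus\upbullet\XX^\new$ may do so --- so Types~\RN{2} and~\RN{3} can genuinely occur and your all-curves induction hypothesis is not preserved.

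The missing mechanism (and the paper's actual argument) is a width bookkeeping rather than a set-theoretic induction. One removes a $5$-buffer from $\XX$ \emph{once}, getting $\XX^\new$ with $\Width(\XX^\new)\ge 5$, so that (i) $f^{\qq_{m+1}}(\upbullet\XX^\new)\subset\upbullet\XX$, hence $\intr\upbullet\XX^\new\subset\Delta_{-1}$, and (ii) any family crossing $\XX^\new$ has width at most $1/5$. One then proves by induction the \emph{quantitative} statement $\Width\big(\RR^{\RN{1}}_i\big)\ge\Width(\RR)-4/5$ for the push-forward laminations of~\S\ref{ss:push forw curves}. If $\Width\big(\RR^{\RN{1}}_i\big)>\Width(\RR)-3/5$, then at most a $1/5$-wide subfamily of $\RR^{\RN{1}}_i$ can cross $\XX^\new$ and hit $\partial\Delta_{-1}\setminus T$, which gives the bound at step $i+1$. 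If instead $\Width\big(\RR^{\RN{1}}_i\big)\le\Width(\RR)-3/5$, then by the monotonicity~\eqref{eq:RR_k:monot} Types~\RN{2} and~\RN{3} carry width at least $3/5$; Type~\RN{3} crosses $\XX^\new$, so has width at most $1/5$, hence Type~\RN{2} has width at least $2/5$; curves escaping to $\partial\Delta_{-1}\setminus T$ must cross $\XX^\new$ and are outermost, so by the separation statement of Lemma~\ref{lem:3 Types} they are all of Type~\RN{2}, whence every Type~\RN{1} curve stays inside $\upbullet\XX^\new\subset\Delta_{-1}$ and the Type~\RN{1} width does not decrease. The misbehaving curves are outermost and of total width less than $1$, which is why deleting the outermost $1$-buffer of $\RR$ suffices to obtain~\eqref{eq:lem:push forw in par int}. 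This one-time $5$-buffer sacrifice combined with the Type~\RN{1}/\RN{2}/\RN{3} trichotomy is precisely the ``control of all $k$ steps at once'' whose existence you assumed; without it your proof has a genuine gap.
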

\begin{proof}
Let us choose $\delta$ to be disjoint from $\upbullet \XX$ and let $\XX^\new$ be the rectangle obtained by removing the outermost $5$-buffer from $\XX$. By Lemmas~\ref{lem:injecti of diffeo rect} and~\ref{lem:zero wind are diffeo}, $f^{\qq_{m+1}}$ is injective on $ \upbullet \XX^\new$ and:  
\begin{equation}
\label{eq:0:prf:lem:push forw in par int}
f^{\qq_{m+1}} \left( \upbullet \XX^\new \right)\subset \upbullet \XX\sp\sp\sp \text{ hence }\sp\sp\sp  \intr\upbullet \XX^\new\subset \Delta_{-1}.
\end{equation}

Note that $\Width(\XX^\new) \ge 5$. Let us prove by induction that  
\begin{equation}
\label{eq:prf:lem:push forw in par int}
 \Width\left(\RR_i^{\RN{1}}\right)\ge \Width(\RR)-4/5.
\end{equation} 
for all $i\le k$. This will imply~\eqref{eq:lem:push forw in par int} because at most $\frac 1 {5}$-wide family of $\RR_i^{\RN{1}}$ can cross the protection $\XX^\new$.

It follows from~\eqref{eq:0:prf:lem:push forw in par int} that
\begin{equation}
\label{eq:1:lem:push forw in par int}
\RR^{\RN{1}}_{i+1} \supseteq\big\{f^{\qq_{m+1}} (\gamma)\sp \mid \sp \gamma\in \RR^{\RN{1}}_{i}\sp \text{ and }\sp \gamma \text{ is disjoint from } \partial \Delta_{-1}\setminus T\big\}.
\end{equation} 

If $\Width(\RR^{\RN{1}}_{i})> \Width(\RR)-3/5$, then at most $1/5$ curves in $\RR^{\RN{1}}_{i}$ can cross the protection $\XX^\new$ and  hit $\partial \Delta_{-1}\setminus T$. We obtain that $\Width(\RR^{\RN{1}}_{i+1})> \Width(\RR)-4/5$.

Assume now that \[\Width(\RR^{\RN{1}}_{i})\le  \Width(\RR)-3/5 \sp\sp \text{hence }\sp\sp \Width(\RR^{\RN{2}}_{i})+\Width(\RR^{\RN{3}}_{i})\ge 3/5,\]
by~\eqref{eq:RR_k:monot}. Since $\RR^{\RN{3}}_{i}$ crosses $\XX^\new$, we obtain $\Width(\RR^{\RN{3}}_{i})\le 1/5$ and $\Width(\RR^{\RN{2}}_{i})\ge 2/5$. At most $1/5$ curves in $\RR^{\RN{1}}\cup \RR^{\RN{2}}$ can cross $\XX^\new$ and hit $\partial \Delta_{-1}\setminus T$; and all such curves must be in $\RR^{\RN{2}}$ -- they are outermost by Lemma~\ref{lem:3 Types}. We obtain that all curves in $\RR^{\RN{1}}_{i}$ are inside $\upbullet \XX^\new$ and 
$\Width(\RR^{\RN{1}}_{i+1})\ge \Width(\RR^{\RN{1}}_{i})$.
\end{proof}

\subsection{Proof of Theorem~\ref{thm:par fjords}}
\label{ss:prf:thm:par fjords}

\begin{lem}
\label{lem:balance rectangle}
Let $\RR$ be a parabolic non-winding rectangle based on $T'$ with $\Width(\RR)>50$. Then $\RR$ contains a parabolic non-winding balanced geodesic rectangle $\RR^\new$ with $\Width(\RR^\new)\ge \Width(\RR)-25$.
\end{lem}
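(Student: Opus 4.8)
The plan is to normalize $\RR$ to a balanced geodesic rectangle in three moves: balance the two horizontal sides \emph{up to one-sided inequalities} using the Shift Argument already packaged in Lemma~\ref{lem:left balanc}, promote this to exact equality by an intermediate-value argument over nested subrectangles, and finally pass to a geodesic model. Throughout, the relevant observation is that ``parabolic non-winding'' is inherited by subrectangles: for a subrectangle $\min\{|\partial^{h,0}|,|\partial^{h,1}|\}$ can only decrease while $\dist_T(\partial^{h,0},\partial^{h,1})$ can only increase, so the defining inequality~\eqref{eq:parab rect} persists, and the vertical curves of a subrectangle form a subfamily of a non-winding family.

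First I would apply Lemma~\ref{lem:left balanc} to $\RR$, obtaining a parabolic non-winding subrectangle $\RR_1\subset\RR$ with $\Width(\RR_1)\ge\Width(\RR)-2$ and, say, $|\partial^{h,0}\RR_1|\le|\partial^{h,1}\RR_1|$. Then I would apply the same lemma to $\RR_1$ read in the orientation with $v$ and $w$ — and hence $\partial^{h,0}$ and $\partial^{h,1}$ — interchanged, which is legitimate since the case $\theta_{m+1}>0$ of Notations~\eqref{eq:T:orientat ass} is equivalent (in that orientation the pullbacks used in Lemma~\ref{lem:left balanc} become the push-forwards of Lemmas~\ref{lem:injecti of diffeo rect} and~\ref{lem:zero wind are diffeo}). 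This yields a further parabolic non-winding subrectangle $\RR_2\subset\RR_1$ with $\Width(\RR_2)\ge\Width(\RR)-4$ and the \emph{reverse} inequality $|\partial^{h,1}\RR_2|\le|\partial^{h,0}\RR_2|$.

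Next, write $\RR_1$ conformally as $[0,W_1]\times[0,1]$ with $W_1=\Width(\RR_1)$; then $\RR_2$ corresponds to a subrectangle $[\alpha,\beta]\times[0,1]$ with $\beta-\alpha\ge W_1-2$. Interpolating, the subrectangles $\RR^{(s)}:=[\,s\alpha,\ W_1-s(W_1-\beta)\,]\times[0,1]$, $s\in[0,1]$, all have width $\ge W_1-2\ge\Width(\RR)-4$, and the function $s\mapsto|\partial^{h,1}\RR^{(s)}|-|\partial^{h,0}\RR^{(s)}|$ is continuous, nonnegative at $s=0$ and nonpositive at $s=1$; at a zero $s^\ast$ the subrectangle $\RR^{(s^\ast)}$ is balanced, parabolic, non-winding, and of width $\ge\Width(\RR)-4$. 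Finally I would replace $\RR^{(s^\ast)}$ by the geodesic rectangle $\RR^\new$ spanned by its horizontal intervals $I=\partial^{h,0}\RR^{(s^\ast)}$, $J=\partial^{h,1}\RR^{(s^\ast)}$ (see~\S\ref{sss:GeodRect}): it has the same horizontal intervals, hence is balanced and parabolic; its vertical sides are the hyperbolic geodesics of the fjord $\FjordT(T)$ joining the endpoints of $I$ to those of $J$, which lie in $\FjordT(T)$ and do not surround $\infty$ (cf.\ Lemma~\ref{lem:hyp rectangles}), hence it is non-winding; and since $\RR^{(s^\ast)}$ is a sublamination of $\Fam^+(I,J)$, we get $\Width(\RR^\new)\ge\Width(\RR^{(s^\ast)})\ge\Width(\RR)-4\ge\Width(\RR)-25$. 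The stated threshold $\Width(\RR)>50$ is far more than needed — the argument only loses $O(1)$ in width — and merely guarantees that all intermediate rectangles stay wide enough for Lemma~\ref{lem:left balanc} (hence the underlying Lemma~\ref{lem:shift argum}) and for the geodesic normalization to apply.

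The main obstacle is the last step rather than any of the trimmings: one must verify that the geodesic rectangle of a pair of intervals inside $\RR$ is itself contained in $\RR$ and does not shed width, i.e.\ that passing from $\RR^{(s^\ast)}$ to $\RR^\new$ is a normalization and not a genuine enlargement — this is where one uses that the auxiliary rectangles produced by Lemma~\ref{lem:left balanc} (being canonical rectangles of families in disks, \S\ref{sss:can rect}) are already geodesic, so the replacement is harmless. Even granting only the coarse estimate $\Width(\RR^\new)\succeq\Width(\RR^{(s^\ast)})$, the generous constants $50$ and $25$ absorb the loss.
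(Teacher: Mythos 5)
There is a genuine gap, and it is in your second step. You cannot apply Lemma~\ref{lem:left balanc} ``with $v$ and $w$ interchanged'': the geometry of a parabolic fjord is intrinsically asymmetric with respect to its two ends. Within a fixed $T$ and a fixed sign of $\theta_{m+1}$, the dynamics $f^{\qq_{m+1}}$ moves points toward $w$, so shifts toward $v$ (pullbacks) of non-winding parabolic rectangles are unconditional (Lemma~\ref{lem:Rect:pullback}) — this is what drives the Shift Argument in Lemma~\ref{lem:left balanc} — whereas shifts toward $w$ (push-forwards) are exactly the delicate ones: Lemma~\ref{lem:push forw in par int} only provides them on an interval \emph{protected} by an external parabolic rectangle $\XX$ of width $\ge 10$, and Lemmas~\ref{lem:injecti of diffeo rect} and~\ref{lem:zero wind are diffeo} give injectivity of a \emph{single} iterate, not the $k\approx |J_1|/\length_{m+1}$ iterates you would need to produce a linked shift of the roof. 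The remark that ``the case $\theta_{m+1}>0$ is equivalent'' refers to a global reflection of the whole configuration (it changes which endpoint plays the role of $v$ together with the direction of the dynamics); it is not an internal symmetry of a fixed fjord, so it cannot convert the pullback-based Lemma~\ref{lem:left balanc} into its mirror statement. Without your $\RR_2$ the intermediate-value interpolation has nothing to interpolate to, and the proof collapses. The paper's proof confronts this head on: it first removes an outermost $18$-buffer $\XX$ from $\RR$, so that $\XX$ itself serves as the protection required by Lemma~\ref{lem:push forw in par int}; it then takes maximal $I\subset\partial^{h,0}\RR^\new$, $J\subset\partial^{h,1}\RR^\new$ with $\RR(I,J)\subset\RR^\new$ (Lemma~\ref{lem:hyp rectangles}, costing $\le 2$ more), and balances by discarding the excess of the longer side: if the base is longer, the discarded geodesic subrectangle is linked to its \emph{pullback} (width $\le 2$); if the roof is longer, it is linked to its \emph{push-forward} supplied by the protection (width $\le 3$). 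This is precisely where the thresholds $50$ and $25$ come from — they are not slack to be absorbed, but the price of creating the protection.

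A secondary, fixable point: the lemma asserts that $\RR^\new$ is \emph{contained in} $\RR$, so your final replacement of $\RR^{(s^\ast)}$ by the geodesic rectangle spanned by its horizontal sides must be justified as an inclusion, not only as a width estimate; your justification that the rectangles produced by Lemma~\ref{lem:left balanc} are ``already geodesic'' is not correct (they are genuine subrectangles of the arbitrary $\RR$, cut out by vertical curves over a subinterval of the base). The standard repair is the one the paper uses: trim buffers and invoke Lemma~\ref{lem:hyp rectangles} to place the geodesic rectangle of the trimmed horizontal sides inside the original rectangle. But even with that repaired, the missing push-forward/protection mechanism in your step 2 remains the essential obstruction.
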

\begin{proof}
We assume Notations~\eqref{eq:T:orientat ass} and~\eqref{eq:part RR:notat}. Let $\RR^\new$ be the rectangle obtained from $\RR$ by removing the outermost $18$-buffer $\XX$. Then Lemma~\ref{lem:push forw in par int} (push-forwards) is applicable in $\upbullet \RR^\new$. 

Choose the maximal intervals $I\subset \partial^{h,0}\RR^\new $ and $J\subset \partial^{h,1}\RR^\new $ so that the geodesic rectangle $\RR(I,J)$ is in $\RR^\new$. By Lemma~\ref {lem:hyp rectangles},  $\RR(I,J)$ contains most of the width of $\RR^\new$: we have $\Width(\RR(I,J))\ge \Width(\RR)-20$.

Assume that $|I|>|J|$. Repeating the proof of Lemma~\ref{lem:left balanc}, we present 
\[ I=I_1\#I_2,\sp\sp I_1\le I_2,\sp\sp \sp |I_1|=|J|.\]
 Let $k$ be the smallest integer such that $k\ge |I_1|/\length_{m+1}$.  Since the geodesic rectangle $\RR(I_2,J)$ is linked to its pullback under $f^{\qq_{m+1}k}$, we have $\Width(\RR(I_2,J))\le 2$; hence $\Width(\RR(I_1,J))\ge \Width(\RR)-25$ and $\RR(I_1,J)$ is required.

Assume that $|I|<|J|$. We present 
\[ J=J_2\#J_1,\sp\sp J_2\le J_1,\sp\sp \sp |J_1|=|I|.\]
 Let $k$ be the smallest integer such that $k\ge |J_1|/\length_{m+1}$.  After removing the outermost  $1$-buffer from the geodesic rectangle $\RR(I,J_2)$, we obtain a rectangle linked to its push-forward under $f^{\qq_{m+1}k}$ (Lemma~\ref{lem:push forw in par int}).
 We have $\Width(\RR(I,J_2))\le 3$; hence $\Width(\RR(I,J_1))\ge \Width(\RR)-25$ and $\RR(I,J_1)$ is required
\end{proof}

\begin{proof}[Proof of Theorem~\ref{thm:par fjords}] For $\zeta\in T'$ with $\dist_T(v',\zeta)<|T'|/10$, define $\zeta_{10}\in T'$ so that $\dist_T(v',\zeta_{10})=10 \dist_T(v',\zeta)$.

Assume there exists a sufficiently wide external parabolic rectangle  $\ZZ$ based on $T'$. As before, we assume $\partial^{h,0}\ZZ <\partial^{h,1}\ZZ$ in $T'$. By Lemma~\ref{lem:balance rectangle}, we can assume that $\XX$ is balanced. By Lemma~\ref{lem:contr:part^h,1}, $|\partial ^{h,0}\ZZ|$ is small compared to $\dist(v, \partial ^{h,0} \ZZ)$ and, moreover, by removing an innermost buffer, we can assume that the distance between  $\partial ^{h,0}\XX$ and $\partial ^{h,1}\XX$ is $10$-times bigger than $|\partial ^{h,0}\XX|+\dist(v', \partial ^{h,0}  \XX)$. Therefore, the existence of $\XX$ implies that we can define the shortest interval
\begin{equation}
\label{eq:prf:thm:par fjords:defn S}
S\coloneqq [v', \zeta]\subset T'\sp\sp\text{ such that }\sp\sp\Width^+_{\ext, m}(S, [\zeta_{10},w])= 500,\sp\sp |S|> \length_{m+1},
\end{equation} where the condition $|S|> \length_{m+1}$ is guaranteed by Lemma~\ref{lem:contr:part^h,1}. By Lemmas ~\ref{lem:injecti of diffeo rect} and \ref{lem:balance rectangle}, we can select two disjoint parabolic balanced non-winding geodesic rectangles based on $T'$ satisfying
\[\XX, \YY\subset \Fam^+_{\ext, m}(S,[\zeta_{10},w]), \sp\sp \XX\subset \YY^\bullet \setminus \YY\sp\sp\text{ with }\sp \Width(\XX)\ge 400,\sp \Width(\YY)\ge 10. \] 
We set, see Figure~\ref{fig:notat: x tx ty y}: 
\begin{equation}
\label{thm:par fjords:prf:1} \widetilde T_\parab=[\tilde x,\tilde y] \coloneqq \lfloor\partial^h \XX\rfloor \subset T',\sp\sp\text{ and }\sp\sp  T_\parab=[x,y] \coloneqq \widetilde T_\parab\setminus \partial^h \XX,\end{equation}
where $\tilde x< x <  y < \tilde y $ in $T'$.
Since $\YY$ protects $\widetilde T_\parab$, wide rectangles based on $\widetilde T_\parab$ can be push-forward. By Lemma~\ref{lem:contr:part^h,1}, $|\partial^{h,0}\XX|\succeq \dist (v',\partial^{h,0}\XX)$ hence
\begin{enumerate}[label=\text{(\Alph*)},font=\normalfont,leftmargin=*]
\item \label{prop:A:prf:thm:par fjords}$\dist_{T'}(v', z) \asymp \dist_{T'}(\tilde x, z) $ for all $z\in T_\parab$.
\end{enumerate}
Since $\Width(\XX)=400$ but $\Width^+_{\ext, m}(S, [\zeta_{10},w])= 500$, we obtain 
\begin{enumerate}[label=\text{(\Alph*)},font=\normalfont, start =2,leftmargin=*]
\item \label{prop:B:prf:thm:par fjords} $\Width^+_{\ext, m}([v',x], [\tilde y,w])\le  \Width^+_{\ext, m}([v',\zeta], [\tilde y,w])\le 102$.
\end{enumerate}

\begin{figure}

\includegraphics[width=11cm]{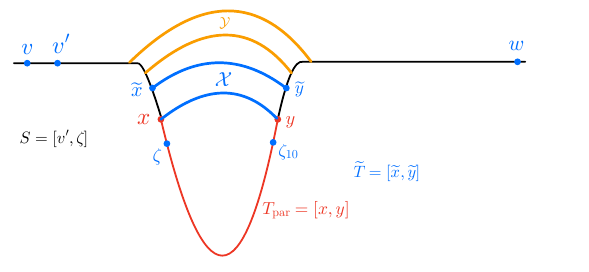}
\caption{Intervals $T_\parab=[x,y]$, $\widetilde T=[\tilde x, \tilde y]$, and $S=[v',\zeta]$.}
\label{fig:notat: x tx ty y}
\end{figure}

\begin{claim8}[Upper Log-Rule]
\label{cl:1:thm:par fjords}
For an interval $I\subset T_\parab$ with \[\dist(I, \{\tilde x, \tilde y\})\succeq |I|\sp\sp\text{ and }\sp\sp |I|\ge \length_{m+1}\] we have $\Width_3^+(I) \preceq 1.$
\end{claim8}
\begin{proof}
Recall from Lemma~\ref{lem:contr:part^h,1} that $\dist(I, \{\tilde x, \tilde y\})\ge \dist(\tilde x,x)\ge \length_{m+1}$. By splitting $I$ into finitely many intervals (depending on the constant representing ``$\succeq$''), it is sufficient to assume that $\dist(I, \{\tilde x, \tilde y\})\ge |I|+\length_{m+1}$ or $|I|=\length_{m+1}$. Write $I=[a,b]$ with $\tilde x< a<b< \tilde y$. Let us present $\Fam_3^+(I)$ as $\Fam_-\sqcup \Fam_+\sqcup \Fam'$ where
\begin{itemize}
\item $\Fam'$ consists of curves crossing $\XX$;
\item $\Fam_-$ consists of curves in $\upbullet \XX$ connecting $I$ and $[\tilde x, a]\cap (3I)^c$; 
\item $\Fam_+$ consists of curves in $\upbullet \XX$ connecting $I$ and $[b, \tilde y]\cap (3I)^c$.
\end{itemize}
Clearly, $\Width(\Fam') \le 1/\Width(\XX)$.  We will estimate the width of $\Fam_-,\Fam_+$ using the Shift Argument.  Let $\RR_-\subset \Fam_-$ and $\RR_+\subset \Fam_+$ be the canonical rectangles; i.e.~$\Width(\RR_-)=\Width(\Fam_-)$ and $\Width(\RR_+)=\Width(\Fam_+)$. Let $k$ be the smallest integer such that $k\ge |I|/\length_{m+1}$. Then $\RR_+$ is linked to its pullback under $f^{-k\qq_{m+1}}$ implying that $\Width(\RR_+)\le 2$. Since $\widetilde T_\parab$ is protected by $\YY$, the rectangle $\RR_-^\new$ obtained by removing the outermost $1$-buffer from $\RR_-$ is linked to its push-forward under $f^{k\qq_{m+1}}$ (Lemma~\ref{lem:push forw in par int}); this implies $\Width(\RR^-)\le 3$.
\end{proof}

\begin{claim8} 
\label{cl:2:thm:par fjords}
If $I, J\subset T_\parab,\sp  I<J$ are two intervals with \[\frac 12|\lfloor I,J\rfloor | < |T_\parab|,\sp\sp\min\{|I|,|J|\}\asymp \dist(I,J),\sp\sp |I|, |J|,\dist(I,J)\ge \length_{m+1},\] then $\Width^+(I,J)\asymp 1$. 
\end{claim8}
\begin{proof}
Assume $|I|\le |J|$. Let $L\subset T_\parab$ be the complementary interval between $I,J$. Applying Claim~\ref{cl:1:thm:par fjords}, and subdividing if necessary $I$ and $L$ into finitely many intervals we obtain
\[\Width^+(I,J)\preceq 1\sp\sp\text{ and }\sp\sp \left(\Width^+(I,J)\right)^{-1}= \Width^+(L,  \lfloor I, J\rfloor^c) \preceq 1.\]
Therefore, $\Width^+(I,J)\asymp 1$.
\end{proof}

Item {\bf \ref{thm:par fjords:part3}} of Theorem~\ref{thm:par fjords} follows from Claims~\ref{cl:1:thm:par fjords} and~\ref{cl:2:thm:par fjords} using the Splitting Argument, see Remark~\ref{rem:SplitArg}, where $\Width^+(\ )-O(1)=\Width^+_{\ext, m}(\ )$ holds because of the protection.

\begin{claim8} 
\label{cl:3:thm:par fjords}
If $I, J\subset T_\parab, I<J$ are two intervals with
\begin{equation}
\label{eq:cl:3:thm:par fjords}
 \dist_{T'}(\tilde x,I) \asymp \dist_{T'}(J,\tilde y) \asymp |I| \asymp |J|\preceq \dist_{T'}(I,J)
\end{equation}
and $|I|, |J|,\dist_{T'}(I,J)\ge \length_{m+1}$. Then $\Width^+(I,J)\asymp 1$.
\end{claim8}
\begin{proof}
The property $\Width^+(I,J)\preceq  1$ follows from Claim~\ref{cl:1:thm:par fjords} by splitting, if necessary, $I$ into finitely many intervals.

Denote by $L\subset T_\parab$ the complementary interval between $I$ and $J.$ Let us show that that the dual family $\FamG=\Fam^+(L , \lfloor I, J \rfloor^c)$ satisfies $\Width(\FamG)\preceq 1$; this will imply the claim. Denote by $N\subset [\tilde x, \tilde y]$ the interval between $\tilde x $ and $\lfloor I, J \rfloor$ and by $M\subset [\tilde x, \tilde y]$ the interval between $\lfloor I, J \rfloor$ and $\tilde y $. As in the proof of Claim~\ref{cl:1:thm:par fjords}, we decompose $\FamG$ as $\FamG'\sqcup \FamG_-\sqcup \FamG_+$ where
\begin{itemize}
\item $\FamG'$ consists of curves crossing $\XX$;
\item $\FamG_-$ consists of curves in $\upbullet \XX$ connecting $N$ and $L$;
\item $\FamG_+$ consists of curves in $\upbullet \XX$ connecting $L$ and $M$.
\end{itemize}

Let $\RR_-\subset \FamG_-$ and $\RR_+\subset \FamG_+$ be the canonical rectangles; i.e. $\Width(\RR_-)=\Width(\FamG_-)$ and $\Width(\RR_+)=\Width(\FamG_+)$. Set 
\[ \tau\coloneqq \min\{\dist_{T'}(\tilde x, I), |I|,  |J|, \dist_{T'}( J, \tilde y)\}-\length_{m+1};\]
if $\tau< \length_{m+1},$ then replace $\tau\coloneqq \length_{m+1}$. We decompose $N$ and $M$ into finitely many intervals $\cup_i N_i$ and $\cup_i M_i$ so that $|N_i|, |M_i| \le \tau$ for all $i$. The number of intervals depends on the constants representing ``$\asymp$'' and ``$\preceq$'' in~\eqref{eq:cl:3:thm:par fjords}. 

Denote by $\RR_{-,i}\subset \RR_-$ the subrectangle consisting of vertical curves landing at $N_i$. Similarly, $\RR_{+,i}\subset \RR_+$ is the subrectangle consisting of vertical curves landing at $M_i$. Define $k$ to be the smallest integer such that $k\length_{m+1}\ge \tau$.
Then $\RR_{-,i}$ is linked to its push-forward under $f^{k\qq_{m+1}}$ (Lemma~\ref{lem:push forw in par int}); i.e.~$\Width(\RR_{-,i})\le 3$. And $\RR_{+,i}$ is linked to its pullback under $f^{k\qq_{m+1}}$; i.e.~$\Width(\RR_{+,i})\le 2$.
\end{proof}

By Property~\ref{prop:A:prf:thm:par fjords} from the beginning of the proof, we can replace $\dist_{T'}(\tilde x,I)$ in  Claim~\ref{cl:3:thm:par fjords} with $\dist_{T'}(v',I)$. Item {\bf \ref{thm:par fjords:part2}}  of Theorem~\ref{thm:par fjords} follows from Claims~\ref{cl:1:thm:par fjords} and~\ref{cl:3:thm:par fjords} using the Splitting Argument, see Remark~\ref{rem:SplitArg};

\begin{claim8} 
\label{cl8:piush forw}
Assume there is an $s\in \N$ such that $\dist_{T'}(y,\tilde y) < s\length_{m+1 }< \dist(\tilde y,w)-\length_{m+1}.$ Define 
$\Upsilon_s\coloneqq f^s[ y,\tilde y] $ and note that $\Upsilon_s$ is between $\Upsilon_0\coloneqq [y,\tilde y]$ and $w$. Then 
\begin{equation}
\label{eq:cl8:piush forw}
\Width^+_{\div,m}\big(\Upsilon_s,\  [x,w]^c\big)\ge 100.
\end{equation}
\end{claim8}
\begin{proof}
See Figure~\ref{Fg:lem:H_i G_i+1} for illustration. Let $\XX_s$ be the push-forward of $\XX$ under $f^{\qq_{m+1} s}$ as in \S\ref{ss:push forw curves}. By construction, $\XX_s$ is lamination containing tree types of curves (Figure~\ref{Fg:Fam:I II III}). Moreover, every curve in $\XX_s$ emerges from $\Upsilon _s$ and land at $[x\boxplus \theta_{m+1}s,w]^c$. We will show below that $\Width(\XX_s\setminus \Fam^+_{\div,m}\big(\Upsilon_s,\  [x,w]^c\big)\le 300$; this will imply that $\Width(\XX_s\cap \Fam^+_{\div,m}\big(\Upsilon_s,\  [x,w]^c\big)) \ge 100$; i.e.,~\eqref{eq:cl8:piush forw} holds.

If $y> x\boxplus s \theta_{m+1}$ in $T$, then \[[x\boxplus \theta_{m+1}s,w]^c=  T^c \cup [v,v']\cup [v',x] \cup [x,y]\]
and the required statement follows from the following estimates:
\begin{itemize}
\item $\Width^+_{\ext, m}(\Upsilon_s,T^c)\le 2$ by  Lemma~\ref{lmm:ext famil}, Item~\ref{Cl:D:lmm:ext famil}; 
\item $\Width^+_{\ext,m} (\Upsilon_s,[v',x])\le 102$ by Property~\ref{prop:B:prf:thm:par fjords} from the beginning of the proof;
\item $\Width^+_{\ext,m} (\Upsilon_s,[v,v'])\le 103$  because, otherwise, after removing the outermost $1$-buffer, Lemma~\ref{lem:injecti of diffeo rect} shifts the associated rectangle to $\Fam^+_{\ext,m}([v',v'\boxplus\theta_{m+1}], \Upsilon_s\boxplus \theta_{m+1})$ -- contradicting Property~\ref{prop:B:prf:thm:par fjords};
\item at most $1/\Width(\XX)<1$ curves in $\XX_s$ land at $[x,y]$ (after crossing $\XX$);
\end{itemize}

Assume that $y< x\boxplus s \theta_{m+1}$ in $T$; then 
\[[x\boxplus \theta_{m+1}s,w]^c=  T^c \cup [v,v']\cup [v',x] \cup [x,y]\cup [y, x\theta_{m+1}s].\]
Recall~\eqref{eq:parab rect} that $|[x,y]|> 6|[y,\tilde y]|$. Therefore,  
\begin{itemize}
\item  $[y,x\boxplus s \theta_{m+1}]$ and $\Upsilon_s=[y,\tilde y]\boxplus s \theta_{m+1}$ are combinatorially far from each other and are separated by $\Fam^+_{\div,m}\big(\Upsilon_{s-t},\  [x,w]^c\big)$ for some $\Upsilon_{s-t}$ between $y$ and $\Upsilon_s$; hence $\Fam^+([y, x\boxplus s \theta_{m+1}], \Upsilon_s)\le 1$. 
\end{itemize}
\end{proof}

\begin{figure}[t!]
\[\begin{tikzpicture}[scale=1.4]

\draw[shift={(2,0)},fill=black, fill opacity=0.1] (0,0) -- (1,1.5 )--(0,3)--(-1,1.5)--(0,0);

\node[below] at (2,0) {$w$};

\draw[shift={(-6,0)},fill=black, fill opacity=0.1] (0,0) -- (1,1.5 )--(0,3)--(-1,1.5)--(0,0);

\node[below] at (-6,0) {$v$};

\node[below,red] at (-0.3,0) {$\Upsilon_s$};
\node[above,red] at (-0.3,0.8) {$\XX_s$};

\draw (-7.,0) -- (3,0);

\begin{scope}[shift= {(-4,0)},scale=0.2]

\draw[red,line width=1mm] (5,0)--(3,0)
(-5,0)--(-3,0);

\draw [red] (5,0)
 .. controls (3.5, 3) and (-3.5,3) ..
 (-5,0);

\draw [red] (3,0)
 .. controls (1.8, 1.1) and (-1.8,1.1) ..
 (-3,0);

 \coordinate (A0) at (4,2) {};
\node[red,below] at(4,0){$[y,\tilde y]$}; 
\node[red,below] at(-4,0){$[\tilde x , x]$};

\node[red] at (0,3){$\XX$}; 
\end{scope}

\begin{scope}[shift ={(-2.5,0)},scale=0.3]

\coordinate (A1) at (-0.5,1.6); 

\coordinate (A2) at (1.5,1.6); 

\draw[red, line width=0.6mm] (0,0) --(1,0);

\draw[red, fill=red, fill opacity=0.1] 
(1,2.5) 
 .. controls  (1.3,1.7) and (1-0.2, 0.8)  .. 
(1,0)
.. controls  (0.8,0) and (0.2, 0)  ..
 (0,0) 
 .. controls (-0.2, 0.8) and (0.3,1.7) .. 
(0,2.5);

\end{scope}

\draw (A0) edge[bend left, ->] (A1);

\begin{scope}[shift ={(-1.5,0)},scale=0.3]

\coordinate (A3) at (-0.5,1.6); 

\coordinate (A4) at (1.5,1.6); 

\draw[red, line width=0.6mm] (0,0) --(1,0);

\draw[red, fill=red, fill opacity=0.1] 
(1,2.5) 
 .. controls  (1.3,1.7) and (1-0.2, 0.8)  .. 
(1,0)
.. controls  (0.8,0) and (0.2, 0)  ..
 (0,0) 
 .. controls (-0.2, 0.8) and (0.3,1.7) .. 
(0,2.5);

\end{scope}

\draw (A2) edge[bend left, ->] (A3);

\begin{scope}[shift ={(-0.5,0)},scale=0.3]

\coordinate (A5) at (-0.5,1.6); 

\coordinate (A6) at (1.5,1.6); 
\coordinate (A7) at (2.8,1.6);

\draw[red, line width=0.6mm] (0,0) --(1,0);

\draw[red, fill=red, fill opacity=0.1] 
(1,2.5) 
 .. controls  (1.3,1.7) and (1-0.2, 0.8)  .. 
(1,0)
.. controls  (0.8,0) and (0.2, 0)  ..
 (0,0) 
 .. controls (-0.2, 0.8) and (0.3,1.7) .. 
(0,2.5);

\end{scope}

\draw (A4) edge[bend left, ->] (A5);

\draw (A6) edge [bend left, ->] (A7);

\node[right,red] at (A7){$\dots$};
\end{tikzpicture}\]
\caption{Laminations $\XX_s$ are push-forwards of the rectangle $\XX$.}
\label{Fg:lem:H_i G_i+1}
\end{figure}
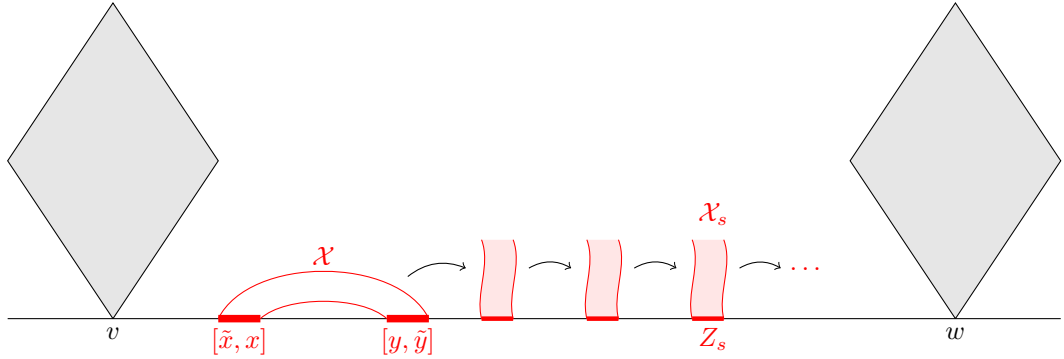

We will later need the following fact:
\begin{cor}
\label{cor:non-central disbalance}
For every $\lambda \gg 1$, the following holds. If $\dist_{T'}( y, w)\gg_\lambda \dist_{T'}(v, x)$, then there is an interval $J\subset [\widetilde y, w]$ (see Figure~\ref{Fg:lem:H_i G_i+1}) such that \[\Width^+_{\lambda, \div, m} (J)\succeq_\lambda \frac{\dist_{T'}(y,w)}{\dist_{T}(v,x )}.\]
\end{cor}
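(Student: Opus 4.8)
The plan is to deduce Corollary~\ref{cor:non-central disbalance} from Claim~\ref{cl8:piush forw} by iterating the push-forward $f^{\qq_{m+1}}$ and accumulating the ``diving mass'' produced at each stage into a single wide family based on one interval of $[\widetilde y, w]$. First I would fix $\lambda\gg 1$ and set $D\coloneqq \dist_{T'}(v,x)$, $E\coloneqq\dist_{T'}(y,w)$; the hypothesis is $E\gg_\lambda D$, and by~\eqref{thm:par fjords:prf:1} we also have $\dist_{T'}(v',x)\asymp \dist_{T'}(\widetilde x, x)\asymp D$, so $[v',x]$ is ``small'' on the same order as $D$. Choosing $s$ as in Claim~\ref{cl8:piush forw} with $s\length_{m+1}$ ranging over the dyadic-type scales between $\dist_{T'}(y,\widetilde y)$ and $E$, the intervals $Z_s=f^{\qq_{m+1}s}[y,\widetilde y]$ march monotonically from $[y,\widetilde y]$ towards $w$, each carrying a diving family $\Fam^+_{\div}(Z_s,(T')^c\cup[v',x])$ of width $\ge 100$ (rel $\filled_m$). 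Since the $Z_s$ sit inside $[\widetilde y,w]$ and their combinatorial lengths roughly double as $s$ increases (the push-forward $f^{\qq_{m+1}}$ near the repelling direction expands the combinatorial length, cf.\ the parabolic translation picture of \S\ref{sss:outline:s:par fjords}), there are $\asymp_\lambda \log^+\!\big(E/D\big)$ essentially disjoint scales $s_1<s_2<\dots<s_N$ with the $Z_{s_j}$ pairwise disjoint and nested correctly; here $N\succeq_\lambda \log(E/D)$. Wait --- but the Corollary asserts the stronger linear bound $\Width^+_{\lambda,\div,m}(J)\succeq_\lambda E/D$, not a logarithmic one, so the iteration must be organized additively rather than multiplicatively.

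So the correct plan is: the intervals $Z_s$ for \emph{all} admissible integers $s$ (not just dyadic ones) range over a combinatorial sub-arc of $[\widetilde y,w]$ of length $\asymp E$, each $Z_s$ has length $\asymp \dist_{T'}(y,\widetilde y)+O(s\length_{m+1})$, and consecutive ones $Z_s, Z_{s+1}$ are attached (or separated by a short gap of level $m+1$), much as in the diffeo-tiling structure of \S\ref{sss:spread around}. I would take a \emph{block} of consecutive indices $s\in\{s_0,s_0+1,\dots,s_0+t-1\}$ whose union $J\coloneqq\bigcup_j Z_{s_0+j}$ is an interval in $[\widetilde y,w]$ of combinatorial length $\asymp \max\{\dist_{T'}(v,x),\dist_{T'}(y,\widetilde y)\}\asymp D$ and with $\dist(J,\{v,w\})$ comparable — so that $J$ is ``central-ish'' relative to $v$ in the fjord. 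For each $Z_{s_0+j}$ in the block, Claim~\ref{cl8:piush forw} gives a width-$\ge 100$ diving family $\FamG_j=\Fam^+_{\div,m}(Z_{s_0+j},(T')^c\cup[v',x])$. These families all connect $Z_{s_0+j}\subset J$ to the \emph{same} far target $(T')^c\cup[v',x]$, they are laminations submerging into $\filled_m$, and — this is the key geometric point — because the $Z_{s_0+j}$ are in monotone combinatorial order along $[\widetilde y,w]$ and each diving family exits into a distinct block of limbs of $\filled_m$ attached near $Z_{s_0+j}$ (the $\RR_2,\RR_4$ of Lemma~\ref{lmm:ext famil}), the $\FamG_j$ can be arranged to be pairwise disjoint after removing $O(1)$ buffers. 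By the Parallel Law~\eqref{eq:ParLaw}, their restriction to $\Fam^+_{\lambda,\div,m}(J)$ has width $\ge \sum_j (100-O(1))\succeq t\succeq E/D$, since the number $t$ of blocks needed to sweep the length-$\asymp E$ arc by length-$\asymp D$ pieces is $\asymp E/D$.

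Thus the concrete steps, in order, are: \textbf{(1)} normalize by setting $D,E$ and recording $\dist_{T'}(v',x)\asymp D$ from~\eqref{thm:par fjords:prf:1}; \textbf{(2)} for each admissible $s$ invoke Claim~\ref{cl8:piush forw} to get the width-$\ge 100$ diving family from $Z_s$ to $(T')^c\cup[v',x]$, and check via Lemma~\ref{lmm:ext famil} that after removing $O(1)$ buffers this family lies in $\C\setminus Z$ and dives into a bounded cluster of limbs of $\filled_m$ located combinatorially near $Z_s$; \textbf{(3)} choose a block $J=\bigcup_j Z_{s_0+j}\subset[\widetilde y,w]$ of length $\asymp D$ with $\lambda J$ still contained in $[\widetilde y,w]$ away from $\{v,w\}$, and note that for $z\in J$, $\dist_{T'}(v,z)\asymp D$, so all targets $(T')^c\cup[v',x]$ lie outside $\lambda J$ (using $E\gg_\lambda D$ and the monotone push-forward geometry) — hence each $\FamG_j\subset\Fam^+_{\lambda,\div,m}(J)$; \textbf{(4)} verify disjointness of the $O(1)$-truncated $\FamG_j$ from the monotone ordering of the $Z_{s_0+j}$ and the separation of the limb-clusters they dive into, then apply the Parallel Law to conclude $\Width^+_{\lambda,\div,m}(J)\ge\sum_j(100-O(1))\succeq_\lambda E/D$; \textbf{(5)} sum over the $\asymp E/D$ choices of block within the full length-$\asymp E$ sweep to get the asserted lower bound for one well-chosen $J$. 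The main obstacle I anticipate is \textbf{(4)}: controlling the interaction of the successive diving families — ensuring that after removing uniformly bounded buffers the $\FamG_j$ are genuinely disjoint (so the Parallel Law applies additively) rather than merely non-crossing, which requires that the push-forward $f^{\qq_{m+1}s}$ distributes the diving curves into disjoint limb-clusters of $\filled_m$; this is exactly where the non-local branched structure of $f^{\qq_{m+1}}$ and the ``protection'' by $\XX,\YY$ (as in Lemma~\ref{lem:push forw in par int}) have to be used carefully, and where one must be vigilant that the $100$-unit surplus in each $\FamG_j$ survives all the buffer removals.
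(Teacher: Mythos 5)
You have assembled the right ingredients---Claim~\ref{cl8:piush forw}, the translates $Z_s$, the Parallel Law, and the requirement that the common target $(T')^c\cup[v',x]$ lie in $(\lambda J)^c$---but the counting at the heart of your argument fails, and this is a genuine gap. Since $f^{\qq_{m+1}}\mid \partial Z$ is a rigid rotation in the linearizing coordinate, every $Z_s$ has the \emph{same} combinatorial length $\dist_{T'}(y,\tilde y)$, and (because $\XX$ is balanced and, by Lemma~\ref{lem:contr:part^h,1} via~\eqref{thm:par fjords:prf:1}, $\dist(\tilde x,x)\asymp\dist(v',x)$) this common length is comparable to $\dist_{T'}(v,x)=D$; so neither the ``doubling'' heuristic of your first paragraph nor the formula $|Z_s|\asymp\dist_{T'}(y,\tilde y)+O(s\length_{m+1})$ is correct. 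Consequently a block $J$ of combinatorial length $\asymp D$ contains only $O(1)$ of the intervals $Z_{s_0+j}$, not $t\succeq E/D$ of them: the ratio $E/D$ counts how many such blocks fit inside $[\tilde y,w]$, not how many $Z$'s fit inside one block. Your step (4) therefore yields only $\Width^+_{\lambda,\div,m}(J)\succeq 1$, and step (5) cannot repair it: families based on \emph{other} blocks do not connect your chosen short $J$ to $(\lambda J)^c$, so their widths cannot be added to $\Width^+_{\lambda,\div,m}(J)$.

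The paper's proof makes the opposite choice of $J$. Take $k$ to be the least integer exceeding $\dist_{T'}(y,\tilde y)/\length_{m+1}$, so that $Z_s,Z_{s+k},\dots,Z_{s+jk}$ are pairwise disjoint, and set $J\coloneqq\lfloor Z_s,Z_{s+kj}\rfloor$---a single \emph{long} interval of length $\asymp j\,\dist_{T'}(y,\tilde y)$, with $j\asymp_\lambda E/D$ chosen precisely so that $(\lambda J)^c\supset(T')^c\cup[v',x]$; this clearance condition is exactly where the $\lambda$-dependence in $j$ and in the conclusion ``$\succeq_\lambda$'' enters (your requirement ``$\lambda J\subset[\tilde y,w]$'' plays the same role, but for a length-$D$ block it buys nothing, since that block hosts only $O(1)$ families). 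Each of the $j$ disjoint base intervals then carries its width-$\ge 100$ diving family from~\eqref{eq:cl8:piush forw}, all of these lie in $\Fam^+_{\lambda,\div,m}(J)$, and the Parallel Law over the $j$ disjointly-based families gives $\Width^+_{\lambda,\div,m}(J)\ge 90j\succeq_\lambda E/D$. Your concern in step (4) about making the families genuinely disjoint after removing $O(1)$ buffers is legitimate (it is the source of the loss from $100$ to $90$), but it is secondary: with $J$ of length $\asymp D$ there are simply not $\asymp E/D$ families available to sum.
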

\begin{proof}
We will use the notations of Claim~\ref{cl8:piush forw}. Set $k$ to be the smallest integer bigger than $\dist_{T'}(y,\tilde y)/\length_{m+1}$.  Since $\dist_{T'}( y, w)\gg_\lambda \dist_{T}(v, x)$, we can find 
\[J\coloneqq \lfloor \Upsilon_s ,\Upsilon_{s+kj}\rfloor \supset \Upsilon_s\sqcup \Upsilon_{s+k}\sqcup \dots \sqcup \Upsilon_{s+jk}\sp\sp\sp j\asymp _\lambda \frac{\dist_{T'}(y,w)}{\dist_{T}(v,x )}\]
so that $\lambda J\subset [x,w]$. By~\eqref{eq:cl8:piush forw} and the Parallel Law, we have $\Width^+_{\lambda, \div, m} (J)\ge 90 j.$
\end{proof}

Let us prove Item {\bf \ref{thm:par fjords:part1}}. By Lemmas~\ref{lem:contr:part^h,1} and~\ref{lem:balance rectangle}, $\RR$ contains a balanced non-winding geodesic subrectangle $\RR_1$ with 
\[\Width(\RR_1)=\Width(\RR)-O(1)\sp\sp\text{ and }\sp\sp \partial ^{h,0}\XX<\partial^{h,0}\RR_1.\] 
Consider $J\coloneqq \lfloor \partial \RR_1\rfloor \setminus \partial ^h \RR_1$. Using Claim~\ref{cl8:piush forw} and its notations, $J$ contains neither $\Upsilon_0$ nor $\Upsilon_s$ for $s$ satisfying Claim~\ref{cl8:piush forw}. We deduce that $J\subset \lfloor\partial ^h \XX\rfloor$. By removing a $2$-buffer, we obtain that the new rectangle $\RR_1^\new$ is disjoint from $\XX$. 

It follows from Claim~\ref{cl:1:thm:par fjords} that $\RR_1^\new$ contains a balanced geodesic subrectangle $\RR_2$ such that $\Width(\RR_2)\ge \Width(\RR_1^\new)-O(1)$ and
\[ \dist(x,\partial^{h,0}\RR_2) \asymp \dist(\partial^{h,1}\RR_2,y) \preceq |\partial^{h,0}\RR_2| \asymp |\partial^{h,1}\RR_2| \]
 Item {\bf \ref{thm:par fjords:part2}} is now applicable for $\RR_2$.
\end{proof}

\subsection{Exponential Boost to $\partial^h \RR$}\label{ss:ExpBoost} Let us underline the following fact, see Figure~\ref{fig:fjords:submerg}. Suppose that we have a wide parabolic rectangle $\NN,$ $\Width(\NN)\asymp K\gg 1$ based on $T_\parab=[x,y]$. We note that $\NN$ is non-winding after removing $O(1)$-buffer. Assume that a parabolic rectangle $\RR$ with $\Width(\RR)\succeq 1$ is protected by $\NN$; i.e.~$\RR$ is based on the interval between $\partial^{h,0}\NN$ and $\partial^{h,1}\NN$. Then by Theorem~\ref{thm:par fjords}
\begin{equation}
\label{eq:subm rule} \log \frac{|\partial ^{h,0}\RR|}{\dist(v,\partial ^{h,0}\RR)} ,\sp \log \frac{|\partial ^{h,1}\RR|}{\dist(v,\partial ^{h,0}\RR)} \succeq K. 
\end{equation}

\begin{figure}
\begin{tikzpicture}[line width=0.4mm]

\draw (-0.2,0) -- (1,0)--(1,-6)--(2,-6)--(2,0)--(8.2,0);

\filldraw[blue] (0,0) circle (0.05 cm);
\filldraw[blue] (2,0) circle (0.05 cm);
\filldraw[blue] (1,0) circle (0.05 cm);
\filldraw[blue] (8,0) circle (0.05 cm);

\node[blue,above] at (0,0) {$v$};
\node[blue,above] at (1,0) {$x$};
\node[blue,above] at (2,0) {$y$};
\node[blue,above] at (8,0) {$w$};

\draw[red] (1,-0.5) --(2,-0.5);
\draw[red] (1,-3) --(2,-3);

\draw[opacity=0, fill=red, fill opacity=0.1] (1,-0.5) --(2,-0.5)--(2,-3)--(1,-3);

\node[right,red] at (2,-1.75) {$\NN,\sp \Width(\NN)\asymp K$};
\draw[red] (1,-0.5) --(2,-0.5);
\draw[red] (1,-3) --(2,-3);

\draw[blue] (1,-3.5) --(2,-3.5);
\draw[blue] (1,-4.5) --(2,-4.5);
\draw[opacity=0, fill=blue, fill opacity=0.1] (1,-3.5) --(2,-3.5)--(2,-4.5)--(1,-4.5);

\filldraw[blue] (1,-3.5) circle (0.05 cm);
\filldraw[blue] (1,-4.5) circle (0.05 cm);
\node[blue,left] at (1,-3.5) {$a$};
\node[blue,left] at (1,-4.5) {$b$};
\node[right,blue] at (2,-4) {$\RR$};
\node[right] at (2.3,-4.7) {if $\Width(\RR)\succeq 1$, then $\log \frac{|\partial^{h,0}\RR |}{\dist (v,a)}=\log \frac{\dist(a,b)}{\dist (v,a)}\succeq K$};

\draw[orange] (0.2,0) edge[bend left=50] (2.8,0);
\draw[orange] (1,0) edge[bend left=10] (2,0);
\node[orange,above] at (1.5,0.6) {$\XX$};


\end{tikzpicture}

\caption{Exponential Boost to $|\partial^h \RR|$: if a rectangle $\RR$ is protected by a wide buffer $\NN$, then $|\partial ^{h, i} \RR|$ is exponentially big,~\eqref{eq:subm rule}.}
\label{fig:fjords:submerg}
\end{figure}

\subsection{Central rectangles} We say that a parabolic rectangle $\RR$ based on $T'$ is \emph{central} if 
\[ 0.9< \frac{\dist_T(v, \lfloor \partial^h \RR '\rfloor)}{\dist_T( \lfloor \partial^h \RR '\rfloor,w)}<1.1;\]
i.e.~if the distances from $\partial^h \RR $ to $v$ and $w$ are essentially the same.

\begin{lem}[Central subrectangles]
\label{lem:central cond} Consider a parabolic non-winding rectangle $\RR$ based on $T'$ with $\Width(\RR)\gg_\lambda 1$. Then 
\begin{itemize}
\item either $\RR$ contains a parabolic non-winding central balanced geodesic subrectangle $\RR^\new$ with $\Width(\RR^\new) \ge \Width(\RR)/2$;  
\item or there is an interval 
\begin{equation}
\label{eq:lem:central cond}
I\subset T',\sp\sp   |I|> \length_{m+1} \sp\sp \text{such that}\sp\sp 
\log \Width^{+}_{\lambda,\div, m}(I)\succeq_\lambda \Width(\RR).
\end{equation}
 \end{itemize}
\end{lem}
\begin{proof}
Write $K\coloneqq \Width(\RR)\gg_\lambda 1.$ Let $\RR^\new$ be the rectangle obtained from $\RR$ by removing the outermost $K/3$ buffer $\NN$. By~\eqref{eq:subm rule}, we have
\begin{equation}
\label{eq:1:lem:central cond}
 \log\frac{|\partial^{h,0}\NN|}{\dist_{T'} (v,\partial^{h,0}\NN)}, \sp \log\frac{|\partial^{h,1}\NN|}
{\dist_{T'} (v,\partial^{h,0}\NN)} \succeq K.
\end{equation}
Since $\Width(\RR^\new)=2K/3$, using Item {\bf \ref{thm:par fjords:part2}} of Theorem~\ref{thm:par fjords}, we can select intervals $I\subset \partial^{h,0} \RR^\new$ and  $J\subset \partial^{h,1} \RR^\new$ such that the geodesic rectangle $\RR^\New\coloneqq \RR(I,J)\subset \wC\setminus  Z$ between $I,J$ is in $\RR^\new$ and satisfies:
\[|I|=|J|,\sp\sp \dist_{T'}(x, I)=\dist_{T'}(J,y), \sp\sp\text{ and }\sp\sp \Width (\RR^\New )\ge K/2.\]

Assume that $\RR^\New$ is not central. Since $ \dist_{T'}(y ,J) =\dist_{T'}(x ,I)$, we have
\[\dist_{T'}(y,w)= \dist_{T'}(J,w) - \dist_{T'}(x ,I) >0.1 \dist_{T'}(J,y)\succeq |\partial^{h,0} \NN|.\]
Using~\eqref{eq:1:lem:central cond}, we have:  
\[ \log \frac{\dist_{T'}(y,w)}{\dist(v,x)} \succeq \log \frac{|\partial^{h,0}\NN|}{\dist(v,x)}  \succeq K.\]
Corollary~\ref{cor:non-central disbalance} now implies the existence of a required interval $I$ with $\log \Width^{+}_{\lambda,\div, m}(I)\succeq_\lambda \Width(\RR)$.
\end{proof}

\part{Pseudo-Siegel disks and Snakes}
\label{part:psSiegDisk}

\section{Pseudo-Siegel disks} 
\label{s:wZ^m}

A pseudo-Siegel disk $\widehat Z^m$ is obtained from $\overline Z$ by filling-in  deep parts of parabolic fjords of levels $\ge m$. We will show in Theorem~\ref{mainthm:v3} that $\widehat Z^{-1}$ can be constructed to be a  uniform quasidisk. Consider a sufficiently small $\bdelta>0$.  

\begin{defn}
\label{dfn:PseudoSiegDisk}
A $\bdelta$-pseudo-Siegel disk $\widehat Z^m$ of level $m$ is a disk inductively constructed as follows:

\begin{itemize}
\item $\widehat Z^n=\overline Z$ for $n\gg 0$,
\item either $\widehat Z^m\coloneqq \widehat Z^{m+1}$, 
\item or $\widehat Z^m\coloneqq \widehat Z^{m+1}\cup Z^m$, where $Z^m$ is a $\bdelta/2$-near rotation domain (see~\S\ref{s:NearRotatSystem}), called the \emph{core} of $\widehat Z^m$, satisfying the compatibility conditions with $\widehat Z^{m+1}$ stated in \S\ref{ss:compat_wZ_Z}.
\end{itemize}
\end{defn}

If $\widehat Z^m\not= \widehat Z^{m+1}$, then we call $\widehat Z^m\coloneqq \widehat Z^{m+1}\cup Z^m$ a \emph{regularization} of $\wZ^{m+1}$ at \emph{level $m$.} Given $\wZ^m$, all its levels of regularization $m_i$ are enumerated as \[\dots>m_{i+1}>m_i>m_{i-1}>\dots \ge m.\] 
We say that $m_{i+1}$ is the level \emph{before} $m_i$ while $m_{i-1}$ is the level \emph{after} $m_i$. 


\subsubsection{Outline and Motivation}
\label{sss:outline:s:Trad W to W+} Pseudo-Siegel disks are inductively constructed as \[\wZ^m= \wZ^{m+1} \cup Z^m=\overline Z\cup \dots\cup Z^{m_i} \cup  Z^{m_{i-1}} \cup \dots\cup Z^{m_k} \cup Z^m, 
\] where $Z^{m}$ is obtained from $\wZ^{m+1}=\wZ^{m_k}$ by smoothing its boundary on level $m$. The boundary $\partial Z^m$ is a cyclic concatenation $\alpha_0\#\beta_0\#\alpha_1\#\beta_1\#\dots$, where $\alpha_i$ are ``channels'' through peninsulas of $\wZ^{m+1}$ and $\beta_i$ are ``dams'' in parabolic fjords of $\wZ^{m+1}$, see Figure~\ref{Fg:wZ_Z_m}. We require that there is a system of annuli around $\alpha_i, \beta_i$ making $Z^m$ a near-rotation domain~\S\ref{s:NearRotatSystem}.

We will require in Assumption~\ref{ass:wZ:bDelta} that dams are sufficiently deep in fjords so that the outer geometries of $Z$ and $\wZ^m$ are close: if the endpoints of intervals $I,J\subset \partial Z$ are in upper parts peninsulas, then $\Width_Z^+(I,J)=(1\pm\varepsilon)\Width^+_{\wZ^m}(I^m,J^m)$, where $I^m, J^m$ are the ``projections'' of $I,J$ onto $\wZ^m$, see details in \S\ref{ss:WellGrounded}. Here $\varepsilon$ is uniformly small independently of the number of regularizations.

We define the combinatorial distance on $\partial \wZ^m$ to be induced from $\partial Z$: 
\begin{equation}
\label{eq:dist on wZ^m}
\dist_{\partial \wZ^m} (x,y)\coloneqq \dist_{\partial Z} (x,y) \sp\sp\text{ for }\sp\sp x,y\in \partial \wZ^m \cap \partial Z.
\end{equation} With respect to this metric, the inner geometry of $\wZ^m$ has a description similar to $\overline Z$ -- see estimates in Theorem~\ref{thm:wZ:shallow scale}. The estimates depend on $\bdelta$; however on scale $\gg_\bdelta \length_{m}$, the estimates are uniform. Lemma~\ref{lmm:gound inter:penins} relates the inner geometry of peninsulas of $\wZ^m$ with the inner geometry of $\wZ^{m+1}$. As a consequence, Localization and Squeezing Lemmas~\ref{lem:trading  width to space}, \ref{lem:squeezing} hold for $\wZ^m$ -- compare with~\S\ref{sss:LocProp}, \S\ref{sss:SqueezProp}. The constant $\bdelta>0$ will be fixed in Section~\ref{s:Welding} so that the regularization $\wZ^{m+1}\leadsto \wZ^m$ can be iterated if $Z$ has deep parabolic fjords of level $m$.

\subsubsection{Regular intervals}
\label{sss:reg interv}
A \emph{regular} point of $\partial \widehat Z^m$ is a point in $\partial \widehat Z^m\cap \partial Z$. A \emph{regular} interval $I\subset \partial \widehat Z^m$ is an interval with regular endpoints. An interval $I\subset \partial Z$ is regular rel $\wZ^m$ if the endpoints of $I$ are in $\partial \wZ^m\cap \partial Z$.

The \emph{projection} of a regular interval $I\subset \partial \widehat Z^m$ onto $\partial Z$ is the interval $I^\bullet\subset \partial Z$ with the same endpoints and the same orientation as $I$.  All regular points of $I$ are in $I^\bullet$. We define the combinatorial length of $I$ by $|I|\coloneqq  |I^\bullet|$. Similarly is defined the \emph{projection} $I^k$ of a regular interval $I\subset\partial \wZ^m$ onto $\partial \widehat Z^k$ for $k>m$. 

For an interval $I\subset \partial Z$, the \emph{projection} $I^m$ onto $\partial Z^m$ is the shortest regular interval whose projection onto $\partial Z$ contains $I$. Similarly is defined the projection of an interval $I\subset \partial \widehat Z^m$ onto $\partial \widehat Z^n$ for $n<m$.  An interval $I\subset \partial Z$ is regular rel $\wZ^m$ if and only if $I= (I^m)^\bullet$

As for $\partial Z$, given $I, J \subset \partial  \wZ^m$, we set $\lfloor I, J \rfloor\coloneqq I\cup L\cup J$ , where $L\subset \partial \wZ^m$ is the complementary interval between $I$ and $J$ so that $I,L,J$ are clockwise oriented.


\subsection{Compatibility between $\widehat Z^{m+1}$ and $Z^m$}
\label{ss:compat_wZ_Z} 
In this subsection, we inductively define $\wZ^m=\wZ^{m+1}\cup Z^m$ completing Definition~\ref{dfn:PseudoSiegDisk}.

We say $\gamma$ is an \emph{external arc} of a closed topological disk $D$ if $\gamma$ is a simple arc in $\widehat \C\setminus \intr D$ such that $\gamma\cap \partial D$ consists of two endpoints of $\gamma$. Similarly, an \emph{internal arc} of $D$ is a simple arc $\ell\subset  D$ such that $\ell\cap \partial D$ consists of two endpoints of $\ell$.


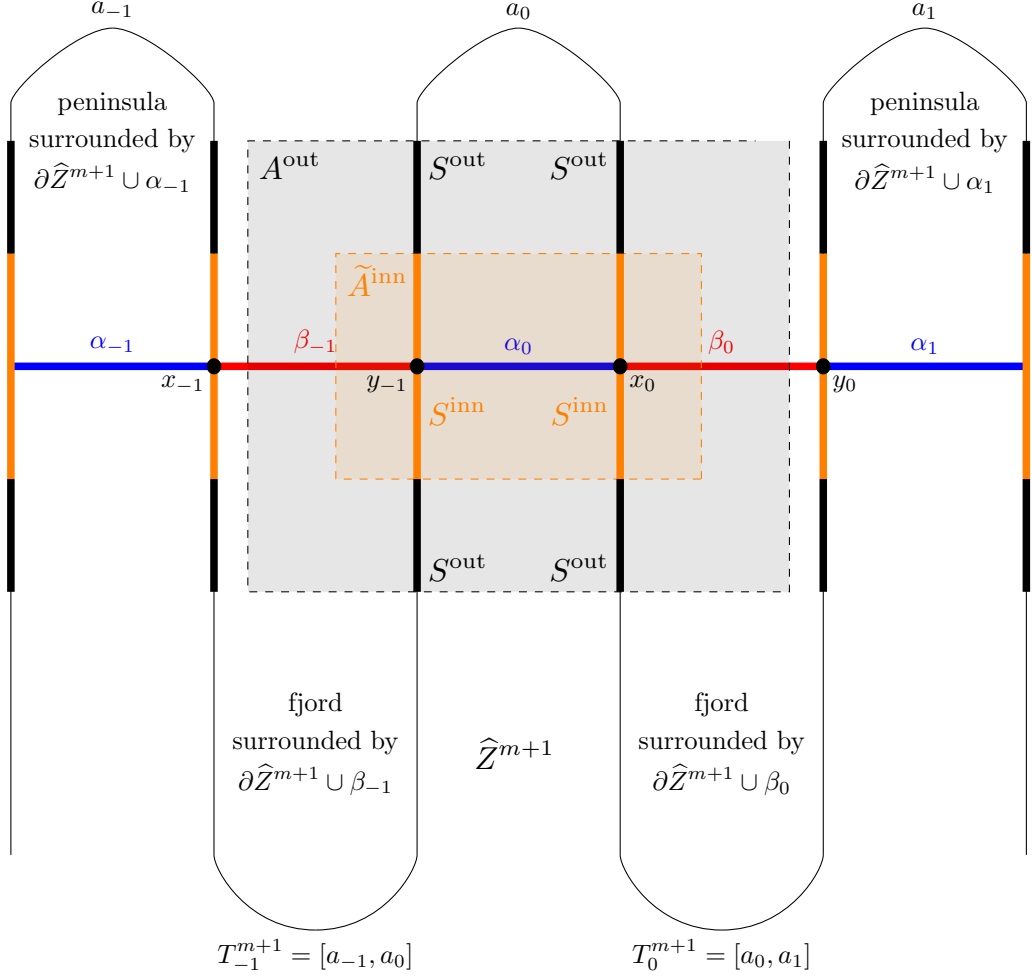
\begin{figure}[t!]
\[\begin{tikzpicture}

\begin{scope}[xscale =0.9]

\node[above] at  (0,12) {$a_0$};
\node[above] at  (6,12) {$a_1$};
\node[above] at  (-6,12) {$a_{-1}$};

\node[below] at  (3,0) {$T^{m+1}_0=[a_0,a_1]$};
\node[below] at  (-3,0) {$T^{m+1}_{-1}=[a_{-1},a_0]$};

\node[] at  (-3,3) {fjord};
\node[] at  (-3,2.5) {surrounded by};
\node[] at  (-3,2) {$\partial \wZ^{m+1}\cup \beta_{-1}$};

\node[] at  (3,3) {fjord};
\node[] at  (3,2.5) {surrounded by};
\node[] at  (3,2) {$\partial \wZ^{m+1}\cup \beta_{0}$};

\node[] at  (6,11) {peninsula};
\node[] at  (6,10.5) {surrounded by};
\node[] at  (6,10) { $\partial \wZ^{m+1}\cup \alpha_{1}$};

\node[] at  (-6,11) {peninsula};
\node[] at  (-6,10.5) {surrounded by};
\node[] at  (-6,10) { $\partial \wZ^{m+1}\cup \alpha_{-1}$};

\begin{scope}
\draw[shift={(0,1)}] (-3,-1) 
 .. controls (-2,-1) and (-1.5, -1+0.8) .. 
 (-1.5,0);
  
\draw (-1.5,1)  .. controls (-1.5,1) and (-1.5, 3) .. 
 (-1.5,11) 
  .. controls (-1.5,11.2) and (-0.5, 12) .. 
 (0,12);

\draw[xscale=-1,shift={(0,1)}] (-3,-1) 
 .. controls (-2,-1) and (-1.5, -1+0.8) .. 
 (-1.5,0);
  
\draw [xscale=-1](-1.5,1)
  .. controls (-1.5,1) and (-1.5, 3) .. 
 (-1.5,11) 
  .. controls (-1.5,11.2) and (-0.5, 12) .. 
 (0,12);
 \end{scope}
 
 \begin{scope}[shift={(6,0)}]
\draw[shift={(0,1)}] (-3,-1) 
 .. controls (-2,-1) and (-1.5, -1+0.8) .. 
 (-1.5,0);
  
\draw (-1.5,1)  .. controls (-1.5,1) and (-1.5, 3) .. 
 (-1.5,11) 
  .. controls (-1.5,11.2) and (-0.5, 12) .. 
 (0,12);

\draw [xscale=-1](-1.5,1)
  .. controls (-1.5,1) and (-1.5, 3) .. 
 (-1.5,11) 
  .. controls (-1.5,11.2) and (-0.5, 12) .. 
 (0,12);
 \end{scope}

\begin{scope}[shift={(-6,0)}]

\draw (-1.5,1)  .. controls (-1.5,1) and (-1.5, 3) .. 
 (-1.5,11) 
  .. controls (-1.5,11.2) and (-0.5, 12) .. 
 (0,12);

\draw[xscale=-1, shift={(0,1)}] (-3,-1) 
 .. controls (-2,-1) and (-1.5, -1+0.8) .. 
 (-1.5,0);
  
\draw [xscale=-1](-1.5,1)
  .. controls (-1.5,1) and (-1.5, 3) .. 
 (-1.5,11) 
  .. controls (-1.5,11.2) and (-0.5, 12) .. 
 (0,12);
 \end{scope}


 \draw[blue, line width=1mm ] (1.5,7.5) edge node[above]{$\alpha_0$} (-1.5,7.5);
 \draw[red, line width=1mm ] (1.5,7.5) edge node[above]{$\beta_0$} (4.5,7.5);
 \draw[blue, line width=1mm ] (7.5,7.5) edge node[above]{$\alpha_1$} (4.5,7.5);
  \draw[red, line width=1mm ] (-1.5,7.5) edge node[above]{$\beta_{-1}$} (-4.5,7.5);
 \draw[blue, line width=1mm ] (-7.5,7.5) edge node[above]{$\alpha_{-1}$} (-4.5,7.5);

\draw[dashed,fill=black, fill opacity=0.1 ] (4,10.5)-- (4,4.5)--(-4,4.5)--(-4,10.5)--(3.5,10.5);

\draw[orange, dashed,fill=orange, fill opacity=0.1 ] (2.7,6)-- (2.7,9)--(-2.7,9)--(-2.7,6)--(2.7,6);

 \node[below right  ,scale=1.2] at  (-4,10.5){$A^\out$}; 

 \node[below right, orange,scale=1.2] at (-2.7,9){$\widetilde A^\inn$};


\begin{scope}[shift={(-1.3,0)}]

  \node[above left  ,scale=1.2] at (2,2){$\wZ^{m+1}$}; 
\end{scope}

\begin{scope}[shift={(-9,0)}]
 \draw[orange, line width=1mm ] (1.5,9)--(1.5,6);

  \draw[ line width=1mm ] (1.5,9)--(1.5,10.5);
  \draw[ line width=1mm ] (1.5,6)--(1.5,4.5);

\end{scope}

\begin{scope}[shift={(-6,0)}]
 \draw[orange, line width=1mm ] (1.5,9)--(1.5,6);

  \draw[ line width=1mm ] (1.5,9)--(1.5,10.5);
  \draw[ line width=1mm ] (1.5,6)--(1.5,4.5);

\end{scope}

\begin{scope}[shift={(-3,0)}]
 \draw[orange, line width=1mm ] (1.5,9)--(1.5,6);
 \node[right, orange,scale=1.2] at (1.5,6.9){$S^\inn$}; 

  \draw[ line width=1mm ] (1.5,9)--(1.5,10.5);
  \draw[ line width=1mm ] (1.5,6)--(1.5,4.5);

 \node[above right  ,scale=1.2] at (1.5,4.5){$S^\out$}; 
 \node[below right  ,scale=1.2] at (1.5,10.5){$S^\out$}; 
\end{scope}
 
\begin{scope}[shift={(-0,0)}]
 \draw[orange, line width=1mm ] (1.5,9)--(1.5,6);
 \node[left, orange,scale=1.2] at (1.5,6.9){$S^\inn$}; 

  \draw[ line width=1mm ] (1.5,9)--(1.5,10.5);
  \draw[ line width=1mm ] (1.5,6)--(1.5,4.5);

 \node[above left  ,scale=1.2] at (1.5,4.5){$S^\out$}; 
\node[below left  ,scale=1.2] at (1.5,10.5){$S^\out$}; 
\end{scope}

\begin{scope}[shift={(3,0)}]
 \draw[orange, line width=1mm ] (1.5,9)--(1.5,6);

  \draw[ line width=1mm ] (1.5,9)--(1.5,10.5);
  \draw[ line width=1mm ] (1.5,6)--(1.5,4.5);

\end{scope}

\begin{scope}[shift={(6,0)}]
 \draw[orange, line width=1mm ] (1.5,9)--(1.5,6);

  \draw[ line width=1mm ] (1.5,9)--(1.5,10.5);
  \draw[ line width=1mm ] (1.5,6)--(1.5,4.5);


\end{scope}

\filldraw (-1.5,7.5)  circle (1mm) ;
 \node[ below left] at (-1.5,7.5) {$y_{-1}$};
\filldraw (-4.5,7.5)  circle (1mm) ;
 \node[ below left] at (-4.5,7.5) {$x_{-1}$};
\filldraw (1.5,7.5)  circle (1mm) ;
 \node[ below right] at (1.5,7.5) {$x_{0}$};
\filldraw (4.5,7.5)  circle (1mm) ;
 \node[ below right] at (4.5,7.5) {$y_{0}$};

  \end{scope}
\end{tikzpicture}\]
\caption{The channels $\alpha_i$, the dams $\beta_i$, the collars $A^{\inn}(\alpha_0),A^{\out}(\alpha_0)$, and buffers $S^\inn, S^\out$ of a pseudo-Siegel disk $\wZ^{m+1}$. (The inner boundary of $A^\inn$ is omitted.)}
\label{Fg:wZ_Z_m}
\end{figure}

\subsubsection{Channels and Dams}

Recall from~\S\ref{sss:diff tilings} that $\Dbb_m$ denotes the diffeo-tiling of level $m\ge -1$. Let us  enumerate intervals in $\Dbb_m$ clockwise as $T_i=[a_i, a_{i+1}]$; i.e. \[\partial Z= T_0\#T_1\#\dots\# T_{\qq_{m+1}-1}\] is the level $m$ tessellation of $\partial Z$ into diffeo-intervals. Then $f^{\qq_{m+1}}$ maps the $T_i$ almost into $T_{i-\pp_{m+1}}$. We also recall that $T'_i= T_i\cap f^{\qq_{m+1}}(T_i)$ (with a slight adjustment for $m=-1$). Let us denote by $T'^{m+1}_i, T^{m+1}_i$ the projections of $T'_i,T_i$ onto $\widehat Z^{m+1}$. By Assumptions~\ref{ass:wZ:2}, $T_i, T'_i$ are regular rel $\wZ^{n},\sp n\ge m $:  

\begin{assum}[Channels and dams]
\label{ass:wZ:2}
The clockwise tessellation~\eqref{eq:part U:unit intervals} of $\partial Z^m$ into unit intervals
\[\partial Z^m=L_0\#L_1\#\dots\# L_{\qq_m-1}\] satisfies $L_i=\alpha_i\#\beta_i$, where (see Figure~\ref{Fg:wZ_Z_m}): 
\begin{itemize}
\item $\alpha_i=\alpha_i^m$ is an internal arc of $\widehat Z^{m+1}$ connecting \[y_{i-1}\in T'^{m+1}_{i-1} \sp\sp \text{ and }\sp\sp x_{i}\in  T'^{m+1}_{i},\]  
\item  $\beta_i=\beta_i^m$ is an external arc of $\widehat Z^{m+1}$ connecting $x_{i},y_{i}\in T'^{m+1}_{i}$,
\item $x_i$ is on the left of $y_i$ in $T'^{m+1}_i$.
\end{itemize}
Moreover, $x_i,y_i \in \CP_{m+1}\setminus \CP_m$.

We say that $\alpha_i$ is a \emph{level $m$ channel} and  $\beta_i$ is a \emph{level $m$ dam}.\tqed
\end{assum}

We will require in Assumption~\ref{ass:wZ:Linking} that the $\alpha^m_i$, $\beta^m_j$ are pairwise disjoint except possibly at endpoints and that the $\alpha^m_i$ are disjoint from the $\alpha^n_j$ for all $n>m$. Components of $\wZ^m\setminus \wZ^{m+1}=Z^m\setminus\wZ^{m+1} $ and of $\wZ^{m}\setminus Z^m=\wZ^{m+1}\setminus Z^m$ will be called fjords and peninsulas, see~\S\ref{sss:Fjord Penin}.

\subsubsection{Collars and S-buffers} 
\begin{assum}[Collars]
\label{ass:wZ:collars}
There are closed collars around $\alpha_i$ and $\beta_i$  \[A(\alpha_i)=A^\inn (\alpha_i)\cup A^\out(\alpha_i)\hspace{0.5cm} \text{ and }\hspace{0.5cm} A(\beta_i)=A^\inn(\beta_i) \cup A^\out(\beta_i),\]
\[ \partial ^\inn A^\out (\alpha_i) = \partial ^\out A^\inn (\alpha_i), \sp\sp\sp\sp\sp\sp\sp \partial ^\inn A^\out (\beta_i) = \partial ^\out A^\inn (\beta_i) \]
with 
\[  \mod A^\inn (\alpha_i),\sp  \mod A^\out (\alpha_i),\sp \mod A^\inn (\beta_i),\sp  \mod A^\out (\beta_i)\sp \ge \bdelta \]
such that for all $i$ and all $k\in \{0,+1,\dots, \qq_{m+1}\}$ we have
\begin{itemize}
\item $A(\alpha_{i - k \pp_{m+1}})$ encloses $f^{k} (\alpha_i)$;
\item $A(\alpha_{i + k \pp_{m+1}})$ encloses the unique $f^k$-lift of $\alpha_i$ starting and ending at $\partial Z$;
\item $A(\beta_{i - k \pp_{m+1}})$ encloses $f^{k} (\beta_i)$;
\item $A(\beta_{i +k \pp_{m+1}})$ encloses the unique $f^k$-lift of $\beta_i$ starting and ending at $\partial Z$.
\end{itemize}
\end{assum}

In other words, $A(\alpha_i)$ and $A(\beta_i)$ control the difference between $Z^m$ and its image under $f^k$ for $|k|\le \qq_{m+1}$. The inner colors $A^\inn$ will be used later in this section to describe the inner geometry of $\wZ^m$. The outer colors $A^\out$ will be used to ``tame snakes,'' see Lemmas~\ref{lem:A|_L},~\ref{lem:I vs I^grnd}.

\begin{assum}[Intersection Pattern]
\label{ass:wZ:int patt}
For all $\alpha^m_i, \beta^m_i$, the simple closed curves
\[ \partial^\out A^\out(\alpha^m_i),\sp\sp \partial^\inn A^\out(\alpha^m_i)=\partial^{out}A^\inn(\alpha^m_i),\sp\sp  \partial^\inn A^\inn(\alpha^m_i), \]
\[ \partial^\out A^\out(\beta^m_i),\sp\sp \partial^\inn A^\out(\beta^m_i)=\partial^{out}A^\inn(\beta^m_i),\sp\sp  \partial^\inn A^\inn(\beta^m_i) \]
intersect $\partial \wZ^m$ at exactly $4$ points and these intersection points are in $\CP_{m+1}\setminus \CP_m$.

Moreover, all $12$ intersection points in \[ P(\beta^m_i)\coloneqq \partial \wZ^m \cap  \left(\partial^\out A^\out(\beta^m_i)\cup \partial^\inn A^\out(\beta^m_i) \cup  \partial^\inn A^\inn(\beta^m_i )\right) \]
are within $T_i$. The $6$ most left points of $P(\beta^m_i)\cap T_i$ are within 
\[ P(\alpha^m_{i})\coloneqq \partial \wZ^m \cap  \left(\partial^\out A^\out(\alpha^m_{i})\cup \partial^\inn A^\out(\alpha^m_{i}) \cup  \partial^\inn A^\inn(\alpha^m_{i} )\right) ,\]
the $6$ most right points of $P(\beta^m_i)\cap T_i$ are within $P(\alpha^m_{i+1})$.
\end{assum}

Let us denote by $\widetilde A(\alpha^m_i), \widetilde A^\inn(\alpha^m_i),\widetilde A(\beta^m_i), \widetilde A^\inn(\beta^m_i)$ the disks obtained by filling-in $A(\alpha^m_i),$ $A^\inn(\alpha^m_i), $ $A(\beta^m_i), $ $A^\inn(\beta^m_i)$. It follows from the Assumption~\ref{ass:wZ:int patt} that $\widetilde A, \widetilde A^\inn$ have the following intersection properties with $\partial \widehat Z^{m+1}$:
\[\widetilde A(\alpha_i)\cap \partial \widehat Z^{m+1} = S_{y_{i-1}} \cup S_{x_{i}},\sp\sp\sp\sp\sp\widetilde A(\beta_i)\cap \partial \widehat Z^{m+1} = S_{x_{i}} \cup S_{y_{i}},\]
\[\widetilde A^\inn(\alpha_i)\cap \partial \widehat Z^{m+1} = S^\inn_{y_{i-1}} \cup S^\inn_{x_{i}},\sp\sp\sp\widetilde A^\inn(\beta_i)\cap \partial \widehat Z^{m+1} = S^\inn_{x_{i}} \cup S^\inn_{y_{i}},\]
where
\begin{itemize}
\item $S^\inn_{x_i}\subset S_{x_i}$ are sub-intervals of $T'^{m+1}_i$ containing $x_i$,
\item $S^\inn_{y_i}\subset S_{y_i}$ are sub-intervals of $T'^{m+1}_i$ containing $y_i$,
\item all $S_{x_i},S_{y_i}$ are pairwise disjoint.\tqed
\end{itemize}

We say that $S_{x_i}$, $S_{y_i}$ are \emph{$S$-buffers of level $m$} and we say that $S^\inn_{x_i}$, $S^\inn_{y_i}$ are \emph{$S^\inn$-buffers of level $m$}. Note that $\partial \widehat Z^{m+1}$ may also contains many $S$- and $S^\inn$-buffers of deeper levels We also write:
\begin{equation}
\label{eq:dfn S^inn beta}
S^\inn(\beta_i^m)\coloneqq \widetilde A^\inn (\beta^m_i)\cap \partial \wZ^{m}=\left(S^\inn_{x_{i}}\cup \beta_i^m \cup  S^\inn_{y_{i}}\right)\setminus \intr(\wZ^m), 
\end{equation}
\begin{equation}
\label{eq:dfn S^inn alpha}
S^\inn(\alpha_i^m)\coloneqq \widetilde A^\inn (\alpha^m_i)\cap \partial \wZ^{m}=\left(S^\inn_{y_{i-1}}\cup   S^\inn_{x_{i}}\right)\setminus \intr(\wZ^m), 
\end{equation}
and similar with $S(\beta_i^m), S(\alpha_i^m)$.

\begin{lem}
The disk $Z^m$ (see Assumption~\ref{ass:wZ:2}) is a $\bdelta/2$-near rotation domain (see Section~\ref{s:NearRotatSystem}) with respect to \[A_i=A (L_i)\coloneqq A^\inn(\alpha_i) \square A^\inn(\beta_i) ;\]
see~\S\ref{sss:EnclAnn} for the definition of ``$\square$''.
\end{lem}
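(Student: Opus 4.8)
The plan is to exhibit, for the near-rotation system of~\S\ref{s:NearRotatSystem}, the data
\[
\tF_{\qq_{m+1}}=\big(f^t\colon Z^m\to f^t(Z^m)\big)_{0\le t\le \qq_{m+1}},\qquad A_i:=A^\inn(\alpha_i)\,\square\,A^\inn(\beta_i),
\]
with period $\qq=\qq_{m+1}$ and rotation number $\pp/\qq$ the one recording the action of $f$ on the level-$m$ diffeo-tiling $\Dbb_m$ (this is the $\pp/\qq$ appearing implicitly in Assumption~\ref{ass:wZ:collars}), and then to check conditions (A)--(D) with $\epscasc=\bdelta/2$. Condition (C) is literally the tessellation $\partial Z^m=L_0\#L_1\#\dots$, $L_i=\alpha_i\#\beta_i$, of Assumption~\ref{ass:wZ:2}. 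For (A): $\partial Z^m$ is a Jordan curve, since by Assumption~\ref{ass:wZ:Linking} the arcs $\alpha_i,\beta_j$ are pairwise disjoint apart from the shared endpoints $x_i,y_i$, and $Z^m$ is obtained from the Jordan disk $\widehat Z^{m+1}$ (a Jordan disk by the inductive construction of pseudo-Siegel disks, base case $\overline Z$) by excising the peninsulas cut off by the internal arcs $\alpha_i$ and gluing in the truncated parabolic fjords bounded by the external arcs $\beta_i$; once (B) holds, $\overline{f^t(Z^m)}$ is then a Jordan disk for each $t\le\qq_{m+1}$ as the injective image of one.

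The substantive point, and the one I expect to be the main obstacle, is (B): univalence of $f^t$ on $Z^m$ for $0\le t\le\qq_{m+1}$. Because $f$ is two-to-one and $Z^m$ is not forward invariant, this cannot be read off the combinatorial assumptions; it must be secured by the welding construction of $Z^m$ in~\S\ref{s:Welding}. The mechanism I would use is that $Z^m$ sits inside $\widehat Z^{m+1}$ together with a finite collection of truncated level-$m$ parabolic fjords: $f^t$ is univalent on $\widehat Z^{m+1}$ for $t\le\qq_{m+2}\ge\qq_{m+1}$ by the inductive hypothesis; on each truncated fjord $f^{\qq_{m+1}}$ --- hence every $f^t$ with $t\le\qq_{m+1}$ --- is univalent by Lemma~\ref{lem:inj push forw} and the univalent push-forward of~\S\ref{sss:univ push forw}; and the collars $A(\beta_i)$ of Assumption~\ref{ass:wZ:collars} keep the $f^t$-images of the fjord parts separated from the $f^t$-images of the rest of $Z^m$, so the two univalent pieces assemble into a univalent $f^t\colon Z^m\to f^t(Z^m)$.

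Condition (D) is then obtained by unwinding Assumptions~\ref{ass:wZ:collars} and~\ref{ass:wZ:int patt}. By Assumption~\ref{ass:wZ:collars}, $\mod A^\inn(\alpha_i),\ \mod A^\inn(\beta_i)\ge\bdelta$; since $\alpha_i$ and $\beta_i$ meet only at $x_i$ and, by the Intersection Pattern (Assumption~\ref{ass:wZ:int patt}), neither collar reaches into the core of the other, the operation $\square$ of~\S\ref{sss:EnclAnn} applies and yields an annulus $A_i$ with
\[
\mod A_i\ \ge\ \tfrac12\min\{\mod A^\inn(\alpha_i),\ \mod A^\inn(\beta_i)\}\ \ge\ \bdelta/2,
\]
whose bounded complementary component $B_i$ contains $B^\inn(\alpha_i)\cup B^\inn(\beta_i)$, where $B^\inn(\,\cdot\,)$ denotes the bounded complementary component of the indicated collar. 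For (D1): the case $k=0$ of Assumption~\ref{ass:wZ:collars} gives $\alpha_i\subset B^\inn(\alpha_i)$, $\beta_i\subset B^\inn(\beta_i)$, hence $L_i\Subset B_i$; and for $1\le t\le\qq_{m+1}$, writing $L_{i-\pp t}=\alpha_{i-\pp t}\cup\beta_{i-\pp t}$ and feeding it to Assumption~\ref{ass:wZ:collars} --- whose index shift is arranged exactly so that $A(\alpha_i)$ encloses $f^t(\alpha_{i-\pp t})$ and $A(\beta_i)$ encloses $f^t(\beta_{i-\pp t})$ --- one gets $f^t(L_{i-\pp t})\Subset B^\inn(\alpha_i)\cup B^\inn(\beta_i)\subset B_i$. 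For (D2): by Assumption~\ref{ass:wZ:int patt} together with Assumption~\ref{ass:wZ:Linking}, the filled collars $\widetilde A^\inn(\alpha_i),\widetilde A^\inn(\beta_i)$, and hence $A_i$ which is supported in their union, meet $\partial\widehat Z^{m+1}$ only inside $T^{m+1}_{i-1}\cup T^{m+1}_i$ and are disjoint from the corresponding objects at index $j$ whenever $\{i-1,i\}\cap\{j-1,j\}=\varnothing$, i.e. whenever $|i-j|_{\qq_{m+1}}>1$; thus $A_i\cap A_j=\varnothing$ in that range, which is (D2). Apart from (B), the only mild subtlety is the index bookkeeping in (D1) --- matching the shifted-index form of Assumption~\ref{ass:wZ:collars} to the near-rotation requirement.
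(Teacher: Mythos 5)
Your proposal is correct, and on the part the paper actually proves it is identical in substance: the paper's own proof consists exactly of your verification of condition (D) --- Assumption~\ref{ass:wZ:collars} gives (D1) (the annulus $A_i$ controls the difference between $f^k(L_i)$ and the appropriately shifted unit interval; the sign of the shift is just the clockwise-enumeration convention you flag), Assumptions~\ref{ass:wZ:int patt} and~\ref{ass:wZ:Linking} give (D2), and Lemma~\ref{lem:square} gives $\mod A_i\ge \bdelta/2$. The difference is in emphasis: the paper does not discuss (A)--(C) in this proof at all. In particular, the univalence of $f^t$, $|t|\le \qq_{m+1}$, which you single out as the main obstacle (B), is not established here but is only asserted later, in \S\ref{sss:UU and ZZ} (see \eqref{eq:dfn UU(wZ)}), as a consequence of the collar control from Assumption~\ref{ass:wZ:collars}; and the Jordan-disk property (A) is likewise treated as part of the construction. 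So your extended treatment of (B) is extra relative to the paper, and your mechanism (injectivity on $\wZ^{m+1}$ by induction, injectivity on each truncated fjord via Lemma~\ref{lem:inj push forw}, and the collars preventing collisions between the pieces) is consistent with what the paper does elsewhere; note, though, that your final assembly step --- that the collar control really excludes collisions between images of different pieces --- is itself only asserted, i.e.\ it is at the same level of detail as the paper's statement in \S\ref{sss:UU and ZZ} rather than a proof of it.
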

\begin{proof}
By Assumption~\ref{ass:wZ:collars}, the annulus $A_i$ controls the difference between $f^k(L_i)$ and $L_{i-k\pp_{m+1}}.$ It follows from Assumptions \ref{ass:wZ:int patt} and~\ref{ass:wZ:Linking} that $A_i$ intersects only $A_{i-1}$ and $A_{i+1}$. Since $\mod (A^\inn(\alpha_i)), \mod (A^\inn(\beta_i))\ge \bdelta$, we have $\mod(A_i)\ge \bdelta/2$ by Lemma~\ref{lem:square}.
\end{proof}

\begin{assum}[Combinatorial space]
\label{ass:wZ:CombSpace}
Each of the $15$ intervals in $T_i\setminus \big(P(\beta^n_i)\cup \{x_i, y_i\}\big)$ 
has length at least $200 \length_{m+1}$.

 Moreover, the subinterval $[x_i,y_i]\subset T^{m+1}_i$ has length at least $\frac 4 5  |T^{m+1}_i|$.
 \tqed
\end{assum}

In particular, most of $T^{m+1}_i$ is ``reclaimed'' during the regularization.

\subsubsection{Extra geometry constrains}
\label{sss:Geom in Fjor}

\begin{assum}[Conformal separation]
\label{ass:wZ:bDelta}
For all $i$ we have
\[ \Width(S_{x_i}, S_{y_i}),\sp \Width(S_{y_i}, S_{x_{i+1}})\le 1/\bdelta.\]\tqed
\end{assum}

Recall from~\eqref{eq:dfn:based on} that a rectangle $\RR$ is based on $T'^{m+1}_i$ if $\RR\subset \wC\setminus \intr \wZ^{m+1}$ and $\partial^h \RR\subset T'^{m+1}_i$. A rectangle $\RR$ based on $T'^{m+1}_i$ is
\begin{itemize}
\item \emph{parabolic} if $\dist_{T'^{m+1}_i}(\partial^{h,0} \RR,\partial^{h,1} \RR) \ge 6\min \{ | \partial^{h,0}\RR|,\sp | \partial^{h,1}\RR| \} +3\length_{m+1};$
\item balanced if $|\partial^{h,0}\RR|=|\partial^{h,1}\RR|$;
\item \emph{non-winding} if, in addition, every vertical curve in $\RR$ is homotopic in ${\C\setminus \intr \wZ^m}$ to a subcurve of $T'^{m+1}_i$ 
\end{itemize}

Consider a sufficiently big $\bDelta \gg 1$ -- it will be fixed in \S\ref{ss:WellGrounded}, see Remark~\ref{rem:fix bDelta}.

\begin{assum}[Extra outer protection]
\label{ass:wZ:EtraProt}
For every dam $\beta^m_i$ there is a balanced parabolic non-winding rectangle $\XX^m_i=\XX(\beta^m_i)$ based on $T'^{m+1}_i$ such that $\Width(\XX^m_i)\ge \bDelta$ and $\widetilde A(\beta^m_i)\setminus \wZ^{m+1}$ is in the bounded component of $\C\setminus( \wZ^{m+1}\cup \XX^m_i)$. \tqed
\end{assum}

In particular, $\beta_i$ is deep in the fjord associated with $T_i$, see~\ref{sss:fjords}. We assume that $\partial^{h,0} \XX^m_i < \partial^{h,1} \XX^n_i$ in $T_i$; i.e.~$\lfloor \XX^n_i \rfloor\subset T^{m+1}_i$. We denote by $\lfloor \XX^n_i \rfloor^m$ the projection of $\lfloor \XX^n_i \rfloor$ onto $\partial \wZ^m$.

\subsubsection{Minimal position of the collars} A collection $\Gamma=(\gamma_k)$ of external arcs of a topological disk $D$ is in \emph{minimal position rel $\partial D$} if every two arcs $\gamma_k, \gamma_t$ have the minimal intersection number up to homotopy in $(\wC\setminus D, \partial D)$. This means that $|\gamma_k\cap \gamma_t|\le 1$ and  $|\gamma_k\cap \gamma_t|= 1$ if and only if the endpoints of $\gamma_k$ are linked rel $\partial D$ to the endpoints of $\gamma_t$. Similarly, the minimal position for internal arcs is defined.

By Assumption~\ref{ass:wZ:int patt}, every simple closed curve
\[ \partial^\out A^\out(\alpha^m_i),\sp\sp \partial^\inn A^\out(\alpha^m_i),\sp\sp  \partial^\inn A^\inn(\alpha^m_i), \]
\[ \partial^\out A^\out(\beta^m_i),\sp\sp \partial^\inn A^\out(\beta^m_i),\sp\sp  \partial^\inn A^\inn(\beta^m_i) \]
is a cyclic concatenation of $2$ external and $2$ internal arcs of $\wZ^m$. We denote by $\Gamma^m_\ext$ and $\Gamma^m_\inn$ the set of external and internal arcs of the above boundaries of collars.

 \begin{assum}
\label{ass:wZ:Linking}
The set $\Gamma^m_\inn\cup \{\alpha^m_i\}_i $ is in minimal position rel $\partial \wZ^m$. The set
 \[  \bigcup_{n\ge m} \Gamma^n_\ext\cup \{\beta^n_i\}_i\cup \{ \partial^{v, 0}\XX^n_i, \partial^{v,1}\XX^n_i \}_i\] 
 is in minimal position rel $\partial Z$.
 
 Moreover, all landing points of curves in $\Gamma^m_\ext\cup \Gamma^m_\inn \cup \{\alpha^m_i, \beta^m_i\}_i\cup \{ \partial^{v, 0}\XX^m_i, \partial^{v,1}\XX^m_i \}_i$ are within $\CP_{m+1}\setminus \CP_m$.
 \tqed
\end{assum}

In particular, the $\XX^n_i$ geometrically separate dams of all levels.

\subsubsection{Fjords and Peninsulas}\label{sss:Fjord Penin} A connected component of $\wZ^m\setminus \wZ^{m+1}=Z^m\setminus \wZ^{m+1}$ is called a \emph{level $m$ fjord} while a connected component of $\wZ^{m}\setminus Z^m=\wZ^{m+1}\setminus Z^m$ is called a \emph{level $m$ peninsula}. Every peninsula $\Penin$ has a unique channel $\alpha_i^m$ on its boundary; we will often write $\Penin=\Penin(\alpha^m_i)$. The \emph{coast} of $\Penin$ is \[\partial^c \Penin\coloneqq \overline{ \partial \Penin\setminus \alpha^m_i}.\] The boundary $\partial \wZ^m$ is a concatenation of dams $\beta^m_i$ and coasts of peninsulas.

 Consider a peninsula $\Penin(\alpha^n_j)$ of $\wZ^n$, where $n>m$. By construction (Assumption~\ref{ass:wZ:2}), $\Penin$ contains a unique point $z\in \CP_n$. There are three possibilities:
\begin{itemize}
 \item $\partial^c \Penin$ (as well as $\Penin$) is in the interior of $\wZ^m$;
 \item $\partial^c \Penin\subset \partial \wZ^m$;
 \item one of the components of $\partial^c \Penin\setminus \{z\}$ is in $\intr\wZ^m$ while the other component is in $\partial \wZ^m$.
\end{itemize}
  In the last case, $z$ is an endpoint of a dam $\beta^k_s\subset \partial \wZ^m$ of generation $k<n$ (by Assumption~\ref{ass:wZ:Linking}).

\subsubsection{$S^\inn (\wZ^m) \subset S (\wZ^m)\subset S^\well (\wZ^m)$} 
\label{sss:S^well}
 Let us write
\[ S^\inn(\wZ^m)\coloneqq \bigcup_{n,j} S^\inn (\beta^n_j), \sp S(\wZ^m)\coloneqq \bigcup_{n,j} S (\beta^n_j), \sp S^\well(\wZ^m)\coloneqq \bigcup_{n,j} \lfloor \XX^n_j\rfloor ,\]
where the unions are taken over all $n\ge m$ and $j$. 

By construction: $\CP_m \subset\partial \wZ^m\setminus  S^\well (\wZ^m)$ and, moreover:

\begin{lem}
\label{lem:S are disj}
Every connected component of $S^\well (\wZ^m)$ is $\lfloor \XX^n_j\rfloor $ for some $n\ge m, j$.

Similarly, every connected component of $S (\wZ^m)$ is $S(\beta^n_i) $ for some $\beta^n_i$.

Similarly, every connected component of $S^\inn (\wZ^m)$ is $S^\inn(\beta^n_i) $ for some $\beta^n_i$.\qed
\end{lem}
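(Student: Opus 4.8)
\textbf{Proof proposal for Lemma~\ref{lem:S are disj}.}

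The plan is to treat the three claims in parallel, since they all have the same shape: we must show that the pieces $\lfloor \XX^n_j\rfloor$ (resp.\ $S(\beta^n_i)$, resp.\ $S^\inn(\beta^n_i)$) that make up the corresponding union are \emph{pairwise disjoint}, so that each connected component of the union is exactly one such piece. For the first claim, observe that by Assumption~\ref{ass:wZ:Linking} the family $\bigcup_{n\ge m}\{\partial^{v,0}\XX^n_i,\partial^{v,1}\XX^n_i\}_i$ (together with the dams) is in minimal position rel $\partial Z$, with all landing points in the sets $\CP_{n+1}\setminus \CP_n$; since these ``level strata'' are pairwise disjoint and each $\XX^n_i$ is based on the single diffeo-interval $T'^{\,n+1}_i$, no two of the arcs $\lfloor\XX^n_i\rfloor$, $\lfloor\XX^{n'}_{i'}\rfloor$ can have interlinked endpoints on $\partial Z$. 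Minimal position then forces their intersection number to be $0$, and being based on intervals of $\partial \wZ^m$ with disjoint (or at worst endpoint-sharing, ruled out by the stratification) endpoint sets, the intervals $\lfloor\XX^n_i\rfloor^m$ are disjoint in $\partial\wZ^m$. Hence every connected component of $S^\well(\wZ^m)=\bigcup_{n,j}\lfloor\XX^n_j\rfloor$ is a single $\lfloor\XX^n_j\rfloor$, as claimed.

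For the second claim I would argue the same way using Assumption~\ref{ass:wZ:int patt}: for a fixed dam $\beta^m_i$ all $12$ intersection points defining $S_{x_i}$ and $S_{y_i}$ lie in $T_i$, and the conclusion of that assumption that ``all $S_{x_i},S_{y_i}$ are pairwise disjoint'' applies verbatim at each level $n\ge m$; combined with Assumption~\ref{ass:wZ:Linking} (the landing points of $\Gamma^n_\ext\cup\{\beta^n_j\}_j$ lie in $\CP_{n+1}\setminus\CP_n$, so buffers of distinct levels are carried by disjoint point sets) one gets that the intervals $S(\beta^n_i)=(S_{x_i}\cup\beta^n_i\cup S_{y_i})\setminus\intr(\wZ^m)$ are pairwise disjoint as subsets of $\partial\wZ^m$. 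The third claim is immediate from the second since $S^\inn(\beta^n_i)\subset S(\beta^n_i)$ and hence the $S^\inn(\beta^n_i)$ are a fortiori pairwise disjoint, and each is connected (it is a single arc $S^\inn_{x_i}\cup\beta^n_i\cup S^\inn_{y_i}$ cut down to $\partial\wZ^m$), so it is its own connected component in the union.

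The only genuinely delicate point — the ``main obstacle'' — is bookkeeping the interaction \emph{across levels}: one must be sure that an $S$-buffer (or $\XX$-arc) of a deep level $n\gg m$ cannot be glued to one of a shallower level to form a larger connected piece of the union in $\partial\wZ^m$. This is exactly what the minimal-position hypothesis plus the placement of all landing points in the disjoint strata $\CP_{n+1}\setminus\CP_n$ is designed to prevent, together with the observation from~\S\ref{sss:reg interv} that the combinatorial structure of $\partial\wZ^m$ refines, consistently with orientations, that of each $\partial\wZ^n$ for $n>m$; I would spell this out by noting that any two arcs in question either lie over a common $T_i$ (handled by Assumption~\ref{ass:wZ:int patt}) or over distinct, hence disjoint, diffeo-intervals, so in all cases they are disjoint. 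I expect the write-up to be short once the disjointness bookkeeping is organized, as there is essentially no analysis involved — it is purely a combinatorial/topological consequence of the compatibility Assumptions~\ref{ass:wZ:2}--\ref{ass:wZ:Linking}.
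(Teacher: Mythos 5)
The paper itself gives no argument for this lemma (it is stated with the proof omitted, as an immediate consequence of the construction), so the task is exactly the bookkeeping you attempt, and your same-level discussion (Assumptions~\ref{ass:wZ:int patt}, \ref{ass:wZ:CombSpace}, the stab condition keeping the pieces away from tile endpoints) is fine. The cross-level step, however --- the one you yourself flag as the main obstacle --- contains a concrete error. Diffeo-tilings of different levels are not disjoint but nested: $\CP_n\subset\CP_{n'}$ for $n\le n'$, so $\Dbb_{n'}$ \emph{refines} $\Dbb_n$, and the alternative ``either over a common $T_i$ or over distinct, hence disjoint, diffeo-intervals'' is false; a deep tile $T^{n'+1}_{i'}$, and with it $\lfloor\XX^{n'}_{i'}\rfloor$ or $S(\beta^{n'}_{i'})$, can perfectly well sit inside a shallow $\lfloor\XX^{n}_{j}\rfloor$ or inside a shallow buffer. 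Likewise ``unlinked endpoints $+$ minimal position $\Rightarrow$ intersection number $0$ $\Rightarrow$ the intervals are disjoint'' is not valid for intervals on a circle: unlinked endpoints only give ``disjoint or nested'', and nothing in Assumption~\ref{ass:wZ:Linking} excludes nesting. So the pairwise disjointness on which your write-up rests is unproven, and it is not what the assumptions deliver.

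The lemma nevertheless holds, and the repair is cheap: what one actually needs is that two pieces are never partially overlapping, never share an endpoint, and that each piece meets $\partial\wZ^m$ in a connected set (or not at all). The endpoints of a level-$n$ piece lie in $\CP_{n+1}\setminus\CP_n$, the endpoints of shallower-level objects (pieces, dams, channels, collar boundaries of level $k<n$) lie in $\CP_{k+1}\subset\CP_{n}\subset\CP_{n'+1}$, and interiors of level-$(n'+1)$ tiles contain no points of $\CP_{n'+1}$; hence an endpoint of a shallower piece, or an endpoint $x^k_j,y^k_j$ of an arc absorbed at a shallower regularization, can never lie in the interior of a deeper piece, while endpoints of pieces of different levels lie in disjoint strata and endpoints of same-level pieces are separated by Assumptions~\ref{ass:wZ:int patt} and~\ref{ass:wZ:CombSpace}. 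Consequently any two pieces are disjoint or nested, every absorbed arc either contains a given piece entirely or misses it (so each piece is connected in $\partial\wZ^m$ when present), and every component of the union is the maximal piece containing it --- which is the statement. You should weaken your disjointness claim to ``disjoint or nested'' and add this connectedness check; as written, the central step would not survive scrutiny even though the conclusion does.
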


We say that an interval $I\subset \partial \wZ^m$ is \emph{well-grounded} if its endpoints are in ${\partial \wZ^m\setminus S^\well (\wZ^m)}$. An interval $I\subset \partial Z$ is \emph{well-grounded} rel $\wZ^m$ if $I$ is regular and its projection $I^m\subset \partial \wZ^m$ is well-grounded.

\subsubsection{$\UU\big(\wZ^m\big)\subset \XX\big(\wZ^m\big)$} \label{sss:UU and ZZ}Let us denote by $U(\alpha^n_i)\coloneqq \widetilde A(\alpha^n_i)\setminus A(\alpha^n_i)$ and $U(\beta^n_i)\coloneqq \widetilde A(\beta^n_i)\setminus A(\beta^n_i)$ the topological disks surrounded by $A(\alpha^n_i)$ and $A(\beta^n_i)$. Let us write
\[\UU\big(\wZ^m\big)\coloneqq \wZ^m\cup \bigcup_{n,i} \big( U(\alpha^n_i)\cup U(\beta^n_i)\big) =\wZ^m\cup \bigcup_{n,i}  U(\beta^n_i),\]
where the equality follows from Assumption~\ref{ass:wZ:int patt}. Since $A(\alpha^n_i),A(\beta^n_i)$ control the difference between $\wZ^m$ and the induced image under $f^k$, $|k|\le \qq_{m+1}$ (Assumption~\ref{ass:wZ:collars}), the map $(f\mid \overline Z)^k\colon \overline Z\selfmap$ extends uniquely to 
\begin{equation}
\label{eq:dfn UU(wZ)}
f^k\colon \ \wZ^m\ \overset{1:1}{\longrightarrow}\ (f^k)_* \big(\wZ^m\big) \ \subset \ \UU\big(\wZ^m\big),\sp\sp\sp\text{ where }\sp |k|\le \qq_{m+1}.
\end{equation}

Let us also set
\begin{equation}
\label{eq:dfn XX(wZ)}
\XX\big(\wZ^m\big)\coloneqq \operatorname{Filling-in~of }\left(\bigcup_{n,j}\XX^n_j \cup \wZ^m \right).
\end{equation} 

By construction, $\XX\big(\wZ^m\big)$ contains all $A(\alpha^n_i)$ and $A^(\beta^n_i)$.

\subsubsection{Pullbacks of $\wZ^m$} 
\label{sss:Pullbacks of wZ^m}Let $\stab(\wZ^m)\in \N_{\ge 0}$ be the smallest number such that the distance between $\partial^h \XX^{m_s}_j$ and the endpoints of $T'^{m_s}_j$ is at least $(\stab(\wZ^m)+1)\length_{m_s+1}$ for every level of regularization $m_s\ge m$ and every $j$. Then pullbacks of $\wZ^m$ are well defined up to $\stab(\wZ^m)\qq_{m+1}$ iterates:

\begin{lem}
\label{lem:Pullback of wZ^m}
For every $t\le \stab(\wZ^m) \qq_{m+1}$ all the $\alpha^n_i,\beta^n_i,A(\alpha^n_i),A(\beta^n_i), \XX(\beta^n_i)$ have univalent lifts along $f^t\colon \overline Z\to \overline Z$; the resulting lifts $\alpha^n_{i,-t},$ $\beta^n_{i,-t},$ $A(\alpha^n_{i,-t}),$ $A(\beta^n_{i,-t}), \XX(\beta^n_{i,-t})$ form a $(\bdelta,\bDelta)$ pseudo-Siegel disk $\wZ^m_{-t}$ such that $f^t\colon \wZ^m_{-t}\to \wZ^m$ is conformal.
\end{lem}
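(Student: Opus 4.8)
The statement is a straightforward consequence of the combinatorial structure already in place: the pseudo-Siegel disk $\wZ^m$ is a union of $\overline Z$ together with the arcs $\alpha^n_i$, the external arcs $\beta^n_i$, the collars $A(\alpha^n_i)$, $A(\beta^n_i)$, and the extra protection rectangles $\XX(\beta^n_i)$, all of which lie in a controlled combinatorial neighborhood of $\partial Z$. The map $(f\mid \partial Z)^t$ is a local homeomorphism of $\partial Z$ onto itself, and the plan is simply to follow all of this data backwards along $f^t$, invoking the injectivity of the relevant lift. First I would recall (Lemma~\ref{lem:Jbb_n}, \S\ref{sss:diff tilings}) that $f^{\qq_{m+1}}$ acts on the diffeo-tiling $\Dbb_m$ essentially as a rotation, so that for $t\le \stab(\wZ^m)\qq_{m+1}$ the preimages $T'^{n}_{i,-t}$ of the diffeo-intervals carrying the arcs $\alpha^n_i,\beta^n_i$ and the horizontal sides of $\XX^n_i$ stay in the interior of a single diffeo-interval of level $n$; this is precisely the content of the definition of $\stab(\wZ^m)$ (the distance from $\partial^h\XX^{m_s}_j$ to the endpoints of $T'^{m_s}_j$ exceeds $(\stab+1)\length_{m_s+1}$). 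In particular the relevant univalent branch of $f^{-t}$ is well-defined on a neighborhood of each such diffeo-interval.

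Next I would construct the lifts one family at a time. Because the collar $A(\alpha^n_i)$ (resp.\ $A(\beta^n_i)$) is disjoint from $\partial Z$ except at its four landing points in $\CP_{m+1}\setminus\CP_m$ (Assumption~\ref{ass:wZ:int patt}) and those landing points have univalent $f^{-t}$-lifts by the stability bound, the collar itself has a univalent lift; the modulus is preserved since $f^t$ is conformal on $\wC\setminus\overline Z$ off the critical orbit, so $\mod A(\alpha^n_{i,-t})=\mod A(\alpha^n_i)\ge\bdelta$ and likewise for $\beta$. The same lifting argument applies to the arcs $\alpha^n_i$ (internal arcs of $\wZ^{m+1}$, lifted using the already-lifted $\wZ^{m+1}_{-t}$, which is available by the inductive structure of Definition~\ref{dfn:PseudoSiegDisk}), to the external arcs $\beta^n_i$, and to the protection rectangles $\XX(\beta^n_i)$ (external parabolic rectangles based on $T'^{n}_i$, whose horizontal sides stay inside $T'^{n}_{i,-t}$ by the very definition of $\stab$, so Lemma~\ref{lem:inj push forw}-type injectivity applies to the relevant degree-$2^t$ covering restricted to the fjord). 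Then I would verify that the union $\wZ^m_{-t}$ of $\overline Z$ with all these lifts satisfies Assumptions~\ref{ass:wZ:2}--\ref{ass:wZ:Linking}: the channel/dam tessellation, the intersection pattern, the combinatorial-space bounds, the conformal-separation bound, the extra-protection bound, and the minimal-position condition are all preserved because $f^t$ is a homeomorphism from $\wZ^m_{-t}$ onto $\wZ^m$ respecting $\partial Z$ and the critical orbit, and combinatorial lengths transform by a bounded factor (indeed by a factor that is $1+o(1)$ on the relevant scale by~\eqref{eq:length decrease} and the closest-return structure of \S\ref{sss:closest returns}). Conformality of $f^t\colon\wZ^m_{-t}\to\wZ^m$ is immediate on $\overline Z$ and extends to the filled-in collars since $f^t$ is holomorphic and injective there.

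The one genuinely delicate point — and the place I expect most of the work to go — is checking that the lifted collars and rectangles do not \emph{collide with each other} in ways forbidden by Assumptions~\ref{ass:wZ:int patt} and~\ref{ass:wZ:Linking}, and in particular that the $A^\out(\alpha^n_{i,-t})$, $A^\out(\beta^n_{i,-t})$ still intersect $\partial Z$ only in the prescribed combinatorial pattern. A priori $f^{-t}$ could fold two disjoint collars onto overlapping regions if $t$ is large; what rules this out is exactly that $t\le\stab(\wZ^m)\qq_{m+1}$ keeps everything inside the original diffeo-intervals, which are pairwise disjoint by Lemma~\ref{lem:Jbb_n}, combined with the fact that the harmonic-measure estimate in the proof of Lemma~\ref{lem:inj push forw} (harmonic measure of $T$ in $(\wC\setminus\filled_m,\infty)$ is $\le 2^{-\qq_{m+1}}$) guarantees the $f^{t}$-branches over distinct diffeo-intervals are the ``obvious'' ones with no winding. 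So the argument is: the stability constant was defined precisely to make the preimage data land inside disjoint diffeo-intervals; once that is granted, each assumption transports verbatim under the homeomorphism $f^t$. I would therefore organize the proof as (i) lift each piece of data univalently using the stability bound and the covering-map injectivity of Lemma~\ref{lem:inj push forw}; (ii) observe that $f^t$ restricted to the union is a homeomorphism onto $\wZ^m$ that is conformal off $\overline Z$ and holomorphic on $\overline Z$; (iii) pull back Assumptions~\ref{ass:wZ:2}--\ref{ass:wZ:Linking} one by one through this homeomorphism, the only non-formal checks being the non-collision of collars (handled by disjointness of diffeo-intervals) and the preservation of the combinatorial bounds up to the $1+o(1)$ scale distortion.
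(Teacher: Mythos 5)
Your overall scaffolding (use the stability bound for combinatorial room, lift each piece, transport Assumptions~\ref{ass:wZ:2}--\ref{ass:wZ:Linking} through the homeomorphism) matches the intent of the lemma, but the step that actually produces the univalent lifts is wrong, and it is precisely the step the paper's proof is about. You assert that $A(\alpha^n_i)$, $A(\beta^n_i)$ are ``disjoint from $\partial Z$ except at four landing points'' and conclude that liftability of those points gives a univalent lift of the collar. That premise is false: by Assumption~\ref{ass:wZ:int patt} only the \emph{boundary curves} of the collars meet $\partial\wZ^m$ in four points each, while the filled-in collars $\widetilde A(\beta^n_i)$ meet $\partial\wZ^{n+1}$ in the $S$-buffers, which are intervals of positive length; a priori these regions could contain critical values of $f^t$, so ``the endpoints lift, hence the collar lifts'' is a non sequitur. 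The paper's mechanism is different and essential: the $\XX^n_i$ are \emph{non-winding}, so (as in Lemma~\ref{lem:Rect:pullback}, where the homotopy of each vertical curve into $T'$ is lifted to select the correct branch) they admit univalent lifts for all $t\le\stab(\wZ^m)\qq_{n+1}$; and by Assumption~\ref{ass:wZ:EtraProt} each $\XX^n_i$ separates $\widetilde A(\beta^n_i)\setminus\wZ^{n+1}$ from $\CP_n$, while $A(\alpha^n_i)\setminus\wZ^{n+1}\subset A(\beta^n_{i-1})\cup A(\beta^n_i)$, so once the $\XX^n_i$ lift univalently the collars are shielded from the critical points and lift as well. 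Neither the non-winding property nor this separation appears in your argument.

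Your proposed substitute for the branch-selection/winding issue --- the harmonic-measure estimate from Lemma~\ref{lem:inj push forw} --- does not do this job: that estimate gives injectivity of the \emph{forward} map $f^{\qq_{n+1}}$ on a fjord, and says nothing about which preimage component a lift along $f^t$ (with $t$ up to $\stab(\wZ^m)\qq_{m+1}$, possibly many multiples of $\qq_{n+1}$) lands on, nor that the lifted objects stay attached to $\partial Z$ rather than to a preimage bubble. That is exactly what the non-winding hypothesis, iterated via the stability bound, is for. (Minor further point: combinatorial lengths are preserved exactly under $f\mid\partial Z$, not merely up to a $1+o(1)$ factor, since they are measured in the linearizing coordinate.) So the proof as written has a genuine gap at its core step; to repair it you should replace the ``four landing points'' argument by the pullback of the non-winding rectangles in the spirit of Lemma~\ref{lem:Rect:pullback} and then invoke the separation property of Assumption~\ref{ass:wZ:EtraProt} to carry the collars along.
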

\begin{proof}
Since the $\XX^n_i$ are non-winding, they have univalent lifts for all $t\le  \stab(\wZ^m)  \qq_{n+1}$ -- compare with Lemma~\ref{lem:Rect:pullback}.  Since the $\XX^n_i$ separate $A(\beta^n_i)\setminus \wZ^{n+1}$ from $\CP_n$ (Assumption~\ref{ass:wZ:EtraProt}) and since $A(\alpha^n_i)\setminus \wZ^{n+1}\subset A(\beta^n_{i-1})\cup A(\beta^n_{i})$,  the filled-in collars $\widetilde A(\alpha^n_i),\widetilde A(\beta^n_i)$ also have univalent lifts and the statement follows.
\end{proof}

\subsubsection{Geodesic pseudo-Siegel disks}\label{sss:regul within rect}  
We say $\wZ^m$ is a \emph{geodesic pseudo-Siegel disk} if 
\begin{itemize}
\item $\bigcup_{n\ge m} \Gamma^n_\ext\cup \{\beta^n_i\}_i\cup \{ \partial^{v, 0}\XX^n_i, \partial^{v,1}\XX^n_i \}_i$ consists of hyperbolic geodesics of $\wC\setminus  \overline Z$;
\item $\Gamma^n_\inn\cup \{\alpha^m_i\}_i $ consists of hyperbolic geodesics of $\intr \wZ^{n+1}$ for every regularization level $n\ge m$; and
\item $\stab(\wZ^m)\ge 10$. 
\end{itemize}

 Consider a parabolic non-winding rectangle $\RR$ based on $T'_i\subset \partial Z$.  By Lemma~\ref{lem:Rect:pullback}, $\RR$ has a univalent pullback along $f^{\qq_{m+1}}\colon {T'\boxminus \theta_{m+1}}\to T$ for ${j\in \{0,1,\dots, \qq_{m+1}-1\}}$. Let $\RR_{-j}$ be the lift of $\RR$ along $f^j\colon \overline Z\selfmap$. We set 
 \[\orb_{-\qq_{m+1}+1} \RR \coloneqq  \bigcup _{j=0}^{\qq_{m+1}-1}  \RR_{-j},\]
i.e., $\orb_{-\qq_{m+1}+1} \RR$ is the set of rectangles obtained by spreading around $\RR$ using pullbacks. 
 
 We say that a regularization $\wZ^{m}=Z^{m+1}\cup \wZ^{m+1}$ is \emph{within $\orb_{-\qq_{m+1}+1} \RR$} if 
\[   \Gamma^m_\ext\cup \{\beta^m_i\}_i\cup \{ \partial^{v, 0}\XX^m_i, \partial^{v,1}\XX^m_i \}_i\subset \orb_{-\qq_{m+1}+1} \RR.\]

\begin{rem}
\label{rem:easy cond:well ground} If $\wZ^m$ is geodesic, then $\stab(\wZ^m)\ge 10$ implies that
\begin{equation}
\label{eq:rem:easy cond:well ground}
\left(\bigcup_{|i|\le 2\qq_{m+1}} [f\mid \partial Z]^i(\CP_m)\right)  \cap S^\well (\wZ^m)=\emptyset.
\end{equation} 
Therefore, if the endpoints of an interval $J\subset \partial Z$ are in $\CP_m$, then $f^i(J)$ is well-grounded rel $\wZ^m$ for all $i\le 2\qq_{m+1}$.

For instance, let $I\subset \partial Z$ be a combinatorial level-$m$ interval such that one of the endpoints of $I$ is in $\CP_m$. Let $I_s, \sp s<\qq_{m+1}$ be the intervals obtained by spreading around $I=I_0$, see~\S\ref{sss:spread around}. Then all $I_s$ are well-grounded rel $\wZ^m$.
\end{rem}

\subsection{Outer geometry of $\wZ^m$}
\label{ss:WellGrounded}
In this subsection we will show that $\overline Z$ and $\wZ^m$ have comparable outer geometries with respect to grounded intervals. 
\subsubsection{Removing small components from a rectangle}  Fix a big $\Delta\gg 1$ and some $\tau>1$. Consider a rectangle $\RR\subset \wC$. Let $D_i, i\in I$ be a finite set of open Jordan disks in $\RR$ with pairwise disjoint closures satisfying the following properties:
\begin{itemize}
\item[(A)] every $D_i$ is a attached to a unique side $T_i$ of $\RR$: the intersection $\partial D_i \cap \partial \RR$ is a closed arc within $T_i$;
\item[(B)] there is a rectangle \[\YY_i\subset \RR,\sp\sp \partial^h \YY_i =\partial \YY_i\cap \partial \RR \subset T_i,\sp\sp \Width(\YY_i)\ge \Delta \] protecting $D_i$ in the following sense: $\partial \RR\setminus T_i$ and $D_i$ are in different components of $\RR\setminus \YY_i$.
\end{itemize}

Let us denote by $T^\opp_i\subset \partial \RR$ the opposite side to $T_i$; i.e.~$T_i\cup T^\opp_i$ is either $\partial ^v\RR$ or $\partial ^h\RR$. We will also consider the following weakenings of (B):
\begin{itemize}
\item[(B${}_{v}$)] if $T_i$ is a vertical side of $\RR$, then there is a rectangle 
\[\YY_i\subset \RR,\sp\sp \partial^h \YY_i =\partial \YY_i\cap \partial \RR \subset T_i,\sp\sp \Width(\YY_i)\ge \tau \] 
protecting $D_i$ in the following sense: $ T^\opp_i$ and $D_i$ are in different components of $\RR\setminus \YY_i$.
\item[(B${}_{h}$)] if $T_i$ is a horizontal side of $\RR$, then there is a rectangle 
\[\YY_i\subset \RR,\sp\sp \partial^h \YY_i =\partial \YY_i\cap \partial \RR \subset \partial \RR \setminus T^\opp_i,\sp\sp \Width(\YY_i)\ge \Delta \] 
protecting $D_i$: $ T^\opp_i$ and $D_i$ are in different components of $\RR\setminus \YY_i$.
\item[(B$'{}_{h}$)] if $T_i$ is a horizontal side of $\RR$, then there is a rectangle 
\[\YY_i\subset \RR,\sp\sp \partial^h \YY_i =\partial \YY_i\cap \partial \RR \subset \partial \RR \setminus T^\opp_i,\sp\sp \Width(\YY_i)\ge \Delta_i \] 
protecting $D_i$: $ T^\opp_i$ and $D_i$ are in different components of $\RR\setminus \YY_i$, where:
\[\Delta_i\ge \tau \sp \text{ for } \sp i \in S\sp\sp\text{ and }\sp\sp \Delta_i\ge \Delta \sp \text{ for }\sp i \not\in S.\]
(Here $S$ is an index set.)
\end{itemize}

 Set \[ \RR'\coloneqq \overline{ \intr \RR\setminus  \bigcup_{i\in I} D_i }\]
and view $\RR'_i$ as a rectangle with the same vertices as $\RR$ and with the same labeling of sides; i.e.~$\partial^h \RR$ and $\partial ^h \RR'$ have an infinite intersection. In other words, $\RR'$ is obtained from $\RR$ by slightly moving its boundaries towards interior; the motion is geometrically controlled by rectangles $\YY_i$. If we view $\RR$ as an outer rectangle in $\wC$ (i.e.,~$\infty\in \intr \RR$), then $\RR'$ is obtained from $\RR$ by filling in fjords, see Figure~\ref{fig:R and R'}.

\begin{figure}
\begin{tikzpicture}[scale=0.7]
\begin{scope}[shift={(-0.5,-6.5)}]
 
 \node at (4.65,2.5) {$\wC \setminus \RR$};

\draw[opacity=0, fill, fill opacity=0.1] 
(0,0) --
(2,0) -- (2,2)--(2.3,2) --(2.3,0)-- 
(3,0) -- (3,2)--(3.3,2) --(3.3,0)-- 
(4,0) -- (4,2)--(4.3,2) --(4.3,0)--
(5,0) -- (5,2)--(5.3,2) --(5.3,0)--
(6,0) -- (6,2)--(6.3,2) --(6.3,0)--
(7,0) -- (7,2)--(7.3,2) --(7.3,0)--  
(9.3,0) -- 
(9.3,0.85)-- (8.3,0.85)--(8.3,1.15) -- (9.3,1.15)--
(9.3,1.85)-- (8.3,1.85)--(8.3,2.15) -- (9.3,2.15)--
(9.3,2.85)-- (8.3,2.85)--(8.3,3.15) -- (9.3,3.15)--
(9.3,3.85)-- (8.3,3.85)--(8.3,4.15) -- (9.3,4.15)--
(9.3,5) --
(7.3,5) -- (7.3,3)--(7,3) --(7,5)--
(6.3,5) -- (6.3,3)--(6,3) --(6,5)--
(5.3,5) -- (5.3,3)--(5,3) --(5,5)--
(4.3,5) -- (4.3,3)--(4,3) --(4,5)--
(3.3,5) -- (3.3,3)--(3,3) --(3,5)--
(2.3,5) -- (2.3,3)--(2,3) --(2,5)--
(0,5) --
(0,4.15)-- (1,4.15)--(1,3.85) -- (0,3.85)--
(0,3.15)-- (1,3.15)--(1,2.85) -- (0,2.85)--
(0,2.15)-- (1,2.15)--(1,1.85) -- (0,1.85)--
(0,1.15)-- (1,1.15)--(1,0.85) -- (0,0.85)--
(0,0);


\draw[line width =0.05 cm,orange] 
(2,0) -- (2.3,0)
(0.5,0) ..controls (1.8,-0.7) and (2.5,-0.7).. (3.8,0)
(0,3.15)--(0,2.85)
(0,4.4)..controls  (-0.5,3.5) and (-0.5,2.5) ..(0,1.6);
\node[left,orange] at (-0.6,3) {$\YY_i$};

\draw[opacity=0, fill=orange, fill opacity=0.2] 
(2.3,0) --(3,0) -- (3,2)--(3.3,2) --(3.3,0)--
(3.3,0) 
(0.5,0) ..controls (1.8,-0.7) and (2.5,-0.7).. (3.8,0)
(0,4.15)-- (1,4.15)--(1,3.85) -- (0,3.85)
(0,2.15)-- (1,2.15)--(1,1.85) -- (0,1.85)
(0,4.4)..controls  (-0.5,3.5) and (-0.5,2.5) ..(0,1.6);

\node[below,orange] at(2.15,-0.5) {$\YY_j$};

\draw[line width =0.08 cm,red] 
(0,0) --
(2,0) -- (2,2)--(2.3,2) --(2.3,0)-- 
(3,0) -- (3,2)--(3.3,2) --(3.3,0)-- 
(4,0) -- (4,2)--(4.3,2) --(4.3,0)--
(5,0) -- (5,2)--(5.3,2) --(5.3,0)--
(6,0) -- (6,2)--(6.3,2) --(6.3,0)--
(7,0) -- (7,2)--(7.3,2) --(7.3,0)--  
(9.3,0) 
(9.3,5) --
(7.3,5) -- (7.3,3)--(7,3) --(7,5)--
(6.3,5) -- (6.3,3)--(6,3) --(6,5)--
(5.3,5) -- (5.3,3)--(5,3) --(5,5)--
(4.3,5) -- (4.3,3)--(4,3) --(4,5)--
(3.3,5) -- (3.3,3)--(3,3) --(3,5)--
(2.3,5) -- (2.3,3)--(2,3) --(2,5)--
(0,5);

\draw[line width =0.08 cm,blue] 
(9.3,0) -- 
(9.3,0.85)-- (8.3,0.85)--(8.3,1.15) -- (9.3,1.15)--
(9.3,1.85)-- (8.3,1.85)--(8.3,2.15) -- (9.3,2.15)--
(9.3,2.85)-- (8.3,2.85)--(8.3,3.15) -- (9.3,3.15)--
(9.3,3.85)-- (8.3,3.85)--(8.3,4.15) -- (9.3,4.15)--
(9.3,5) 
(0,5) --
(0,4.15)-- (1,4.15)--(1,3.85) -- (0,3.85)--
(0,3.15)-- (1,3.15)--(1,2.85) -- (0,2.85)--
(0,2.15)-- (1,2.15)--(1,1.85) -- (0,1.85)--
(0,1.15)-- (1,1.15)--(1,0.85) -- (0,0.85)--
(0,0);

\filldraw (0,0) circle (0.12cm)
(9.3,0) circle (0.12cm)
(9.3,5) circle (0.12cm)
(0,5) circle (0.12cm);

\node[red, above] at (4.65,5) {$\partial^{h,1} \RR$};
\node[red, below] at (4.65,0) {$\partial^{h,0} \RR$};

 \end{scope}

 \begin{scope}[shift={(10,-6.5)}]

 \node at (4.65,2.5) {$\wC\setminus \RR$};

\draw[opacity=0, fill, fill opacity=0.1] 
(0,0) --
(9.3,0) -- 
(9.3,5) --
(0,5) --
(0,0);

\draw[line width =0.08 cm,red] 
(0,0) --
(9.3,0)  
(0,5) --(9.3,5) ;
\draw[line width =0.08 cm,blue] 
(9.3,0) -- 
(9.3,5)
(0,5) --
(0,0);

\filldraw (0,0) circle (0.12cm)
(9.3,0) circle (0.12cm)
(9.3,5) circle (0.12cm)
(0,5) circle (0.12cm);

\draw[line width =0.08 cm,dashed, red] 
(2,0) -- (2,2)--(2.3,2) --(2.3,0)  
(3,0) -- (3,2)--(3.3,2) --(3.3,0)  
(4,0) -- (4,2)--(4.3,2) --(4.3,0) 
(5,0) -- (5,2)--(5.3,2) --(5.3,0) 
(6,0) -- (6,2)--(6.3,2) --(6.3,0) 
(7,0) -- (7,2)--(7.3,2) --(7.3,0)   
(7.3,5) -- (7.3,3)--(7,3) --(7,5) 
(6.3,5) -- (6.3,3)--(6,3) --(6,5) 
(5.3,5) -- (5.3,3)--(5,3) --(5,5) 
(4.3,5) -- (4.3,3)--(4,3) --(4,5) 
(3.3,5) -- (3.3,3)--(3,3) --(3,5) 
(2.3,5) -- (2.3,3)--(2,3) --(2,5) 
;

\draw[line width =0.08 cm,dashed,blue] 
(9.3,0.85)-- (8.3,0.85)--(8.3,1.15) -- (9.3,1.15)
(9.3,1.85)-- (8.3,1.85)--(8.3,2.15) -- (9.3,2.15)
(9.3,2.85)-- (8.3,2.85)--(8.3,3.15) -- (9.3,3.15)
(9.3,3.85)-- (8.3,3.85)--(8.3,4.15) -- (9.3,4.15)
(0,4.15)-- (1,4.15)--(1,3.85) -- (0,3.85)
(0,3.15)-- (1,3.15)--(1,2.85) -- (0,2.85)
(0,2.15)-- (1,2.15)--(1,1.85) -- (0,1.85)
(0,1.15)-- (1,1.15)--(1,0.85) -- (0,0.85)
;

\node[red, above] at (4.65,5) {$\partial^{h,1} \RR'$};
\node[red, below] at (4.65,0) {$\partial^{h,0} \RR'$};

\end{scope}
\end{tikzpicture}
\caption{The (outer) rectangle $\RR'$ is obtained from $\RR$ by filling fjords. If all the fjords are protected by wide rectangles $\YY_j$ (orange),~i.e., fjords have narrow entrances, then $\Width(\RR)$ is close to $\Width( \RR')$.}
\label{fig:R and R'}
\end{figure}
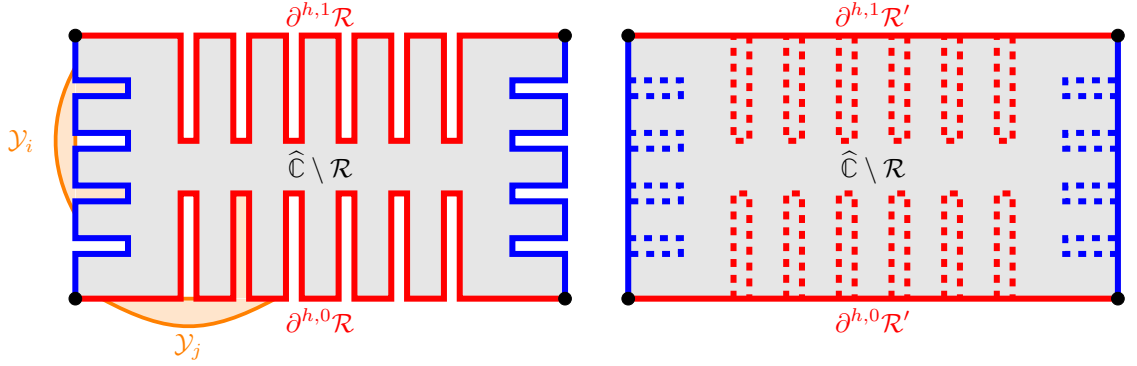

\begin{lem}
\label{lem:R and R'}
For every $\varepsilon>0$ and $\Delta \gg_\varepsilon 1$ the following holds. For $\RR$ and $\RR'$ satisfying the above (A) and (B) we have:
\begin{equation}
\label{eq:lem:R and R'}
 1-\varepsilon < \frac{\Width(\RR')}{\Width(\RR)}<1+\varepsilon.
 \end{equation}
\end{lem}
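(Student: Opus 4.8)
The statement compares the width of an outer rectangle $\RR$ (with $\infty$ in its interior) to the width of the rectangle $\RR'$ obtained by filling in all the protected fjords $D_i$. Since $\RR' \subset \RR$ as a family of curves in the sense that every vertical curve of $\RR'$ is a vertical curve of $\RR$, the inequality $\Width(\RR') \leq \Width(\RR)$ is immediate from monotonicity of extremal width; the content is the lower bound $\Width(\RR') \geq (1-\varepsilon)\Width(\RR)$. The plan is to show that almost all of the width of $\RR$ is carried by vertical curves that avoid every fjord $D_i$, so these curves already lie in $\RR'$.

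\textbf{Step 1: reduce to controlling curves entering a single fjord.} For each $i\in I$, let $\Fam_i \subset \Fam(\RR)$ be the subfamily of vertical curves of $\RR$ that intersect $D_i$. Since the $D_i$ have pairwise disjoint closures, a vertical curve of $\RR$ lies in $\RR'$ precisely if it belongs to none of the $\Fam_i$. So it suffices to bound $\sum_{i\in I}\Width(\Fam_i)$ by $\varepsilon\,\Width(\RR)$, after which the Parallel Law (or rather, removing a union of buffers of total width $\leq \varepsilon\Width(\RR)$) gives a sublamination of $\RR$ inside $\RR'$ of width $\geq (1-\varepsilon)\Width(\RR)$, hence $\Width(\RR')\geq(1-\varepsilon)\Width(\RR)$.

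\textbf{Step 2: each $\Fam_i$ is a buffer of controlled width.} Fix $i$ and let $\YY_i\subset\RR$ be the protecting rectangle from \ref{OG:B}, with $\partial^h\YY_i\subset T_i$ and $\Width(\YY_i)\geq\Delta$, separating $D_i$ from $\partial\RR\setminus T_i$ inside $\RR$. Any vertical curve $\gamma$ of $\RR$ that meets $D_i$ must cross $\YY_i$ transversally (it starts on $\partial^{h,0}\RR$ or $\partial^{h,1}\RR$ outside $T_i$ and must reach $D_i$, which is on the far side of $\YY_i$), so $\gamma$ crosses every vertical curve of $\YY_i$. By the Non-Crossing Principle / the standard estimate for the width of a family crossing a wide rectangle (the same mechanism used throughout the paper, e.g.\ in the Shift Argument and in Lemma~\ref{lem:1/eps buffer}), the modulus of such $\gamma$ is $\preceq 1/\Width(\YY_i)\leq 1/\Delta$; more precisely $\Width(\Fam_i)\le C/\Delta$ for an absolute $C$ coming from the Grötzsch inequality. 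One also has to note that $\Fam_i$ is a genuine buffer of $\RR$: the curves meeting $D_i$ form a connected sub-bundle because $D_i$ is attached to a single side $T_i$ and is separated off by $\YY_i$ — this is exactly the "buffer" structure, so its width can be subtracted cleanly via \eqref{eq:width: concat}.

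\textbf{Step 3: sum over $i$.} The issue is that $I$ may be large, so $\sum_i C/\Delta$ need not be small. Here one uses that the $\Fam_i$ are \emph{disjoint} buffers of $\RR$ attached to the two horizontal sides (or, for the variant with vertical sides, to all four sides), hence by the Parallel Law their widths add up to at most $\Width(\RR)$: $\sum_i\Width(\Fam_i)\leq \Width(\RR)$ — wait, that is not quite enough either. The correct bookkeeping: enumerate the $D_i$ by which side $T_i$ they attach to; along each horizontal side the corresponding $\Fam_i$ are nested-disjoint buffers, and for each one individually $\Width(\Fam_i)\le C/\Delta$. Since they are disjoint, $\sum_i \Width(\Fam_i) \le \min\{\Width(\RR),\ (\#I)\cdot C/\Delta\}$, which is not obviously $\le \varepsilon\Width(\RR)$ for large $\#I$. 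The resolution — and this is the step to be careful about — is that what we actually need is $\sum_i\Width(\Fam_i)\le \varepsilon\Width(\RR)$, and this follows by a different argument: partition the vertical curves of $\RR$ by the \emph{first} fjord they enter; a curve in $\Fam_i$, having crossed all of $\YY_i$, "spends" width $\ge$ (something comparable to the portion of $\RR$ between $T_i$ and $D_i$); summing the Grötzsch contributions $\sum_i \tfrac{1}{\Width(\YY_i)}$, and using that the $\YY_i$ are themselves disjoint subrectangles of $\RR$ so $\sum_i 1/\Width(\YY_i) \le \sum_i 1/\Delta$ is still problematic...

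\textbf{The main obstacle}, then, is precisely this summation: one must show that, although there can be many fjords, the \emph{total} width of curves diverted into them is small. I expect the clean way is: apply the Parallel Law to the disjoint family $\{\YY_i\}$ of subrectangles of $\RR$ to get $\sum_i \Width(\YY_i)^{-1} \cdot (\text{width of curves crossing }\YY_i) \le \Width(\RR)$ type relation, equivalently observe $\sum_i \Width(\Fam_i) \le \Width(\RR) \cdot \sup_i \tfrac{C}{\Delta + C}$ by viewing each $\Fam_i\subset$ (complement of $\YY_i$ inside its local strip) and using \eqref{eq:width: concat} locally: $\Width(\RR\text{ restricted to the }T_i\text{-strip}) \ge \Width(\YY_i) + \Width(\Fam_i) - O(1) \ge \Delta + \Width(\Fam_i) - O(1)$ while the same quantity equals at most the width contributed by that strip; dividing, $\Width(\Fam_i) \le \tfrac{O(1)}{\Delta}\cdot(\text{local width})$ and summing the \emph{local} widths gives $\le\Width(\RR)$. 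Thus $\sum_i\Width(\Fam_i)\le O(1)\Width(\RR)/\Delta < \varepsilon\Width(\RR)$ once $\Delta\gg_\varepsilon 1$. Finally, removing from $\RR$ the union of these disjoint buffers (total width $<\varepsilon\Width(\RR)$) leaves a sublamination all of whose curves avoid every $D_i$, hence lies in $\RR'$, giving $\Width(\RR')>(1-\varepsilon)\Width(\RR)$; combined with $\Width(\RR')\le\Width(\RR)$ this is \eqref{eq:lem:R and R'}. $\qed$
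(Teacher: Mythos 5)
Your proposal has two genuine gaps, one in each direction of the inequality. For the upper bound you claim that $\Width(\RR')\le \Width(\RR)$ is ``immediate from monotonicity'' because every vertical curve of $\RR'$ is a vertical curve of $\RR$. This is false: the horizontal boundary of $\RR'$ differs from that of $\RR$ (it contains the arcs $\partial D_i\cap \intr \RR$ for the $D_i$ attached to horizontal sides), so a vertical curve of $\RR'$ may start in the interior of $\RR$ and is not a curve joining $\partial^{h,0}\RR$ to $\partial^{h,1}\RR$. In fact filling in a fjord attached to a horizontal side pushes that side toward the opposite one and \emph{increases} the width, so the inequality itself can fail; the whole content of the $(1+\varepsilon)$ bound is that the protection \ref{OG:B} (or \ref{OG:Bh}) keeps this increase small, which is exactly why Lemma~\ref{lem:2:R and R'} only gives the upper bound under horizontal protection and only the lower bound under vertical protection. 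Your argument never addresses this direction.

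For the lower bound, your plan requires the total width of curves of $\RR$ that meet some $D_i$ to be at most $\varepsilon\,\Width(\RR)$, and you correctly sense that the per-fjord bound $\Width(\Fam_i)\preceq 1/\Delta$ does not sum. The patch you sketch does not work: the asserted additivity ``$\Width(\RR\text{ on the }T_i\text{-strip})\ge \Width(\YY_i)+\Width(\Fam_i)-O(1)$'' has no justification, since the vertical curves of $\YY_i$ (both horizontal sides of $\YY_i$ lie in $T_i$) run transverse to the vertical direction of $\RR$, so their widths are not parallel contributions. Worse, the quantity you need can genuinely be large: if many tiny fjords are attached to a horizontal side so that their attachment arcs cover most of it, then almost every base-to-roof curve of $\RR$ starts on some $\partial D_i$ and hence ``enters'' a fjord, so the entering family has width comparable to $\Width(\RR)$, while the lemma still holds because curves of $\RR'$ are allowed to start on the filled-in arcs $\partial D_i$. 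So restricting to curves of $\RR$ that avoid every $D_i$ cannot yield $(1-\varepsilon)\Width(\RR)$. The paper's proof avoids both problems: uniformize $\RR$ to $[0,x]\times[0,1]$, use the protecting rectangles $\YY_i$ of width $\ge\Delta$ to show each $D_i$ has Euclidean diameter less than $\delta\min\{x,1\}$ with $\delta\to0$ as $\Delta\to\infty$, and then sandwich $\Width(\RR')$ between the widths of $[\delta,x-\delta]\times[0,1]$ and $[0,x]\times[\delta,1-\delta]$ (the dual, horizontal family of $\RR'$ controls one direction, the vertical family the other). You would need this diameter-after-uniformization step, or an equivalent, to close your argument.
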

\begin{proof}
Let us conformally replace $\RR$ with a Euclidean rectangle, see~\S\ref{sss:Rectan}; i.e. we assume that $\RR=[0,x]\times [0,1]$. Since every $D_i$ is separated from three sides of $\RR$ by a wide rectangle $\YY_i$, we obtain that $D_i$ has a Euclidean diameter less than $\delta\min\{x,1\}$, where $\delta\to 0$ as $\Delta\to \infty$. Therefore, the width of $\RR'$ is estimated from below and above by the width of $[\delta, x-\delta]\times [0,1]$ and of $[0, x]\times [\delta,1-\delta]$. This implies~\eqref{eq:lem:R and R'}.   
\end{proof}

\begin{lem}
\label{lem:2:R and R'}
For every $\varepsilon>0$ and $\Delta \gg_\varepsilon 1$ the following holds. 
\begin{itemize}
\item if $\RR$ and $\RR'$ satisfy (A) and (B${}_{v}$), then:
 \begin{equation}
\label{eq:lem:2:R and R'}
 \Width(\RR) - O_\tau(1) < \Width(\RR')
 \end{equation}
\item if $\RR$ and $\RR'$ satisfy (A) and (B${}_{h}$), then:
 \begin{equation}
\label{eq:lem:2:R and R':2}
  \Width(\RR')< (1+\varepsilon)\Width(\RR) 
 \end{equation}
 \item if $\RR$ and $\RR'$ satisfy (A) and (B$'{}_{h}$), then:
 \begin{equation}
\label{eq:lem:2:R and R':3}
  \Width(\RR')< (1+\varepsilon)\Width(\RR) +O_\tau(|S|).
 \end{equation}
\end{itemize}
\end{lem}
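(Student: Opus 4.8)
\textbf{Plan for the proof of Lemma~\ref{lem:2:R and R'}.} The strategy is the same as in Lemma~\ref{lem:R and R'}: conformally normalize $\RR$ to the Euclidean rectangle $[0,x]\times[0,1]$ with $\partial^{h,0}\RR=[0,x]\times\{0\}$ and $\partial^{h,1}\RR=[0,x]\times\{1\}$, so that vertical curves of $\RR$ join the bottom to the top. The three cases differ only in how much control the rectangles $\YY_i$ give over the Euclidean size of the removed disks $D_i$, and hence in which direction the comparison of widths survives.

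\emph{Case \ref{OG:Bv}.} Here only disks attached to the vertical sides $T_i\in\{0,x\}\times[0,1]$ are removed, and the protecting rectangle $\YY_i$ is merely $\tau$-wide and separates $D_i$ from the opposite vertical side $T_i^\opp$. Thus $D_i$ need not be small, but its horizontal extent is controlled: every $D_i$ attached to the left side is contained in $[0,s_i]\times[0,1]$ with $\sum s_i$ bounded in terms of $\tau$ (the total width of curves of $\RR$ that cross any $D_i$ is $O_\tau(1)$, since such a curve either lies in $\YY_i$ or crosses $\YY_i$, and $\YY_i$'s are disjoint). Consequently $\RR'\supset[s,x-s']\times[0,1]$ for some $s,s'$ with $s+s'=O_\tau(1)$; since $\Width([a,x-a]\times[0,1])=\Width(\RR)-O_\tau(1)$, this gives \eqref{eq:lem:2:R and R'}. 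I would phrase the bound via the overflowing/Parallel Law argument to avoid extracting explicit constants.

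\emph{Case \ref{OG:Bh}.} Now $T_i$ is a horizontal side and $\YY_i$ is $\Delta$-wide, separating $D_i$ from $T_i^\opp$ (the opposite \emph{horizontal} side) — but $\YY_i$'s horizontal side is only required to lie in $\partial\RR\setminus T_i^\opp$, so it may run up a vertical side. Hence, exactly as in Lemma~\ref{lem:R and R'}, each $D_i$ is squeezed by a wide annular region and has Euclidean diameter $<\delta\min\{x,1\}$ with $\delta\to0$ as $\Delta\to\infty$. Removing such disks from the top and bottom only shrinks $\RR$ inside $[0,x]\times[\delta,1-\delta]$ (up to an $O(1)$-count near the two vertical sides, which contributes a further multiplicative $(1+o(1))$), so $\Width(\RR')\le\Width([0,x]\times[\delta,1-\delta])\cdot(1+o(1))<(1+\varepsilon)\Width(\RR)$ for $\Delta\gg_\varepsilon1$, which is \eqref{eq:lem:2:R and R':2}. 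Note the comparison only goes one way: cutting horizontal-side fjords can only \emph{decrease} the vertical extent, so there is no lower bound of this type — which is why the statement is one-sided.

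\emph{Case \ref{OG:B'h}.} This is the mixed version: for $i\notin S$ the protection is $\Delta$-wide (so those $D_i$ are $\delta$-small and handled as in the previous case, contributing the $(1+\varepsilon)$ factor), while for $i\in S$ it is only $\tau$-wide. Each of the $|S|$ weakly-protected disks can have Euclidean diameter up to a definite fraction of $\min\{x,1\}$, but each such $D_i$, sitting on a horizontal side, blocks at most an $O_\tau(1)$-wide subfamily of vertical curves of $\RR'$ — because a vertical curve of $\RR'$ that is forced around $D_i$ either enters $\YY_i$ or crosses it, and $\Width(\YY_i)\le\tau$ (the $\YY_i$, being disjoint, contribute additively). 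Summing over $i\in S$ costs $O_\tau(|S|)$ in width, which when combined with the $(1+\varepsilon)$-estimate from the $i\notin S$ disks yields \eqref{eq:lem:2:R and R':3}. The only genuine subtlety — and the step I would be most careful with — is the bookkeeping that makes the small-component contributions multiplicative while the weakly-protected contributions stay additive; this is a routine but slightly delicate application of the Parallel Law together with the fact that the $\YY_i$ are pairwise disjoint, so I would state it once as a sublemma (``a disjoint family of $\le\tau$-wide buffers blocks at most $O(\tau\cdot\#)$ total width'') and invoke it in both Case \ref{OG:Bv} and Case \ref{OG:B'h}.
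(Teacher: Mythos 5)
Your overall skeleton is the same as the paper's (normalize $\RR$ to $[0,x]\times[0,1]$ with $x=\Width(\RR)$, control the geometry of the removed disks, compare with a slightly trimmed Euclidean rectangle, and in the mixed case split the vertical foliation of $\RR'$ into the curves meeting the $S$-disks plus the rest), but two of your key controls do not hold as you state them. In Case \ref{OG:Bv}, the claim $\sum_i s_i=O_\tau(1)$ is false in general: nothing in \ref{OG:A}--\ref{OG:Bv} makes the protections $\YY_i$ pairwise disjoint (only the $D_i$ are disjoint), and a single family of curves of width comparable to $\tau$ sweeping around \emph{all} the left-side disks at once can serve as $\YY_i$ for each of them, so arbitrarily many disks of horizontal extent comparable to $1/\tau$ are possible and the sum is unbounded; moreover, deducing a Euclidean bound on $\sum_i s_i$ from "the blocked width is $O_\tau(1)$" is a non sequitur. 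What is true, and what the paper uses, is the per-disk bound $\diam D_i\le C_\tau$: every vertical curve of $\YY_i$ starts and ends on $T_i$ and must enclose $D_i$, hence travels out to horizontal distance $s_i$ and back, so a length--area estimate in the width-$s_i$ strip along $T_i$ gives $\tau\le\Width(\YY_i)\le 1/(4s_i)$, i.e.\ $s_i\le 1/(4\tau)$. With this, \eqref{eq:lem:2:R and R'} follows by restricting the vertical segments of $[C_\tau,x-C_\tau]\times[0,1]$ to $\RR'$; note that \ref{OG:Bv} leaves disks attached to the horizontal sides unconstrained (you assume them away), and this restriction step is also what handles them, since their removal only helps the lower bound.

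The second problem is the blocking mechanism in Case \ref{OG:B'h} (and in your proposed sublemma): you write $\Width(\YY_i)\le\tau$ and speak of "disjoint $\le\tau$-wide buffers blocking $O(\tau\cdot\#)$ width", but the hypothesis is $\Width(\YY_i)\ge\Delta_i\ge\tau$, the $\YY_i$ need not be disjoint, and a vertical curve of $\RR'$ meeting $D_i$ need not lie in $\YY_i$. The correct mechanism, which is the paper's, is dual: such a curve joins $\partial^{h,0}\RR'$ to $\partial^{h,1}\RR'$, so it must pass from the component of $\RR\setminus\YY_i$ containing $D_i$ to the one containing $T_i^\opp$, hence it contains a subcurve crossing $\YY_i$ between its vertical sides; that crossing family has width $1/\Width(\YY_i)\le 1/\tau$, and the Parallel Law (subadditivity, no disjointness needed) gives the total $O_\tau(|S|)$. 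Your final bound coincides numerically, but the step as you argue it would not stand. The remaining ingredients (the $c_\Delta$-smallness of the $\Delta$-protected horizontal-side disks and bounding the residual family by the band $[0,x]\times[c_\Delta,1-c_\Delta]$) match the paper's proof.
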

\begin{proof}
As in the proof of Lemma~\ref{lem:R and R'}, we assume that $\RR=[0,x]\times [0,1]$ is a Euclidean rectangle. We also assume that $x=\Width(\RR)>1$ -- the only relevant case.

Assume $\RR,\RR'$ satisfy (A) and (B${}_{v}$). If $T_i$ is vertical, then $\diam D_i <C_\tau$ for some $C_\tau>0$. Therefore, $\Width(\RR')$ is estimated from below by the width of $[C_\tau, x-C_\tau]\times[0,1]$ and~\eqref{eq:lem:2:R and R'} follows.

Assume $\RR,\RR'$ satisfy (A) and (B${}_{h}$). If $T_i$ is horizontal, then $\diam D_i <c_\Delta$, where $c_\Delta\to 0$ as $\Delta \to \infty$. Therefore, $\Width(\RR')$ is estimated from above by the width of \[\RR^\new\coloneqq [0,x]\times[c_\Delta, 1-c_\Delta]\] and~\eqref{eq:lem:2:R and R':2} follows. 

Assume $\RR,\RR'$ satisfy (A) and (B$'{}_{h}$). Consider the vertical foliation $\Fam'$ of $\RR'$. The width of curves in $\Fam'$ landing at $\bigcup_{i\in S}D_i$ is $O_\tau(|S|)$. The width of the remaining curves in $\Fam'$ is bounded by $\Width(\RR^\new)$ because every remaining curve contains a subcurve in $\RR^\new$  connecting $\partial ^h\RR^\new$. This implies~\eqref{eq:lem:2:R and R':3}.
\end{proof}

\subsubsection{Well grounded intervals} Recall from~\S\ref{sss:S^well} that  a regular interval $I^m\subset \partial \wZ^m$ is well-grounded if its endpoints are not in $S^\well (\wZ^m)$.  Recall also that $\bDelta$ is a parameter from Assumption~\ref{ass:wZ:EtraProt}.

\begin{lem}
\label{lem:W^+:well grnd int}
For every $\varepsilon>0$ and $\bDelta\gg_\varepsilon 1 $ the following holds. For  every pseudo-Siegel disk $\wZ^m$ and every pair of well-grounded intervals $I^m,J^m\subset \partial \wZ^m$, we have:
\[ 1-\varepsilon \le\frac{\Width^+_{\wZ^m} (I^m,J^m)}{\Width^+_{Z} (I,J)} \le 1+\varepsilon,\]
where $I,J$ are the projections of $I^m,J^m$ onto $\partial Z$. Similarly, if $\mu^+_{\overline Z}, \mu^+_{\wZ^m}$ are the outer harmonic measures of $\overline Z,\wZ^m$ rel $\infty$, then 
\[ 1-\varepsilon \le \frac{\mu^+_{\wZ^m} (I^m)}{\mu^+_{Z} (I)} \le 1+\varepsilon.\]
\end{lem}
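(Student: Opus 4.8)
The plan is to deduce Lemma~\ref{lem:W^+:well grnd int} from the rectangle-surgery lemmas \ref{lem:R and R'} and \ref{lem:2:R and R'} of the previous subsubsection, applied to the canonical rectangle of the relevant outer family. I would argue by descending induction on the regularization levels $m_k>m_{k-1}>\dots$, so that it suffices to pass from $\wZ^{n+1}$ to $\wZ^n$ (for a single regularization $\wZ^n=\wZ^{n+1}\cup Z^n$) while losing a factor $(1\pm\varepsilon_n)$ whose infinite product converges; choosing $\bDelta$ large enough to make each $\varepsilon_n$ summable (say $\varepsilon_n$ tied to the accuracy $\delta(\bDelta)$ of Lemma~\ref{lem:R and R'}, which does \emph{not} depend on the number of regularizations) gives the uniform two-sided bound. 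The base case $\wZ^n=\overline Z$ for $n\gg0$ is trivial.

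For the inductive step, fix well-grounded intervals $I^n,J^n\subset\partial\wZ^n$ with projections $I^{n+1},J^{n+1}\subset\partial\wZ^{n+1}$ and $I,J\subset\partial Z$; by \S\ref{sss:S^well} and Assumption~\ref{ass:wZ:Linking} the endpoints of $I^n,J^n$ lie outside $S^\well(\wZ^{n+1})$ as well, so $I^{n+1},J^{n+1}$ are well-grounded rel $\wZ^{n+1}$. Let $\RR$ be the canonical rectangle of $\Fam^+_{\wZ^{n+1}}(I^{n+1},J^{n+1})$, viewed as an outer rectangle in $\wC$ (so $\infty\in\intr\RR$ and $\partial^h\RR\subset I^{n+1}\cup J^{n+1}$), and let $\RR'$ be the rectangle obtained from $\RR$ by filling in the fjords of $\wZ^n$, i.e.\ $\RR'=\overline{\intr\RR\setminus\bigcup_s(\widetilde A(\beta^n_s)\setminus\wZ^{n+1})}$; then $\Fam^+_{\wZ^n}(I^n,J^n)$ is essentially $\Fam(\RR')$, up to the $O(1/\bdelta)$ correction coming from the channel collars $A(\alpha^n_s)$, which I would absorb by first removing $O_{\bdelta}(1)$-buffers (this $O_\bdelta(1)$ error is harmless once we know $\Width^+$ is $\succeq1$, and the $\precsim1$ range is handled by duality as usual). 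The disks $D_s=\widetilde A(\beta^n_s)\setminus\wZ^{n+1}$ are attached to the single side $\partial Z\subset$ the horizontal boundary, and by Assumption~\ref{ass:wZ:EtraProt} each is protected inside $\RR$ by the balanced parabolic non-winding rectangle $\XX^n_s$ with $\Width(\XX^n_s)\ge\bDelta$; because the endpoints of $I^n,J^n$ avoid all $\lfloor\XX^n_s\rfloor$ (well-groundedness) these $\XX^n_s$ lie entirely over the horizontal sides, so hypotheses \ref{OG:A} and \ref{OG:B} (not merely \ref{OG:Bh}) hold. Lemma~\ref{lem:R and R'} then gives $(1-\varepsilon_n)\Width(\RR)\le\Width(\RR')\le(1+\varepsilon_n)\Width(\RR)$, which is exactly the desired step once the buffer corrections are reinstated.

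The harmonic-measure statement follows the same template: $\mu^+_{\wZ^{n+1}}(I^{n+1})=\Width^+_{\wZ^{n+1}}(I^{n+1},(I^{n+1})^c)$ is (up to a fixed normalization, or by splitting $(I^{n+1})^c$ into two well-grounded halves at points of $\CP_{n+1}\setminus\CP_n$ far from $I$) comparable to an outer family between two well-grounded intervals, and one runs the identical fjord-filling surgery; since $\bDelta$ was chosen so that the $\XX^n_s$ also separate \emph{the fjords from each other and from $\infty$}, the outer harmonic mass concentrated in the filled fjords is $O(\varepsilon_n)\mu^+$, giving the claimed ratio. The main obstacle I anticipate is bookkeeping the well-groundedness across the infinitely many levels: one must verify that projecting $I^n$ up to $\wZ^{n+1}$ (and ultimately to $\partial Z$) never lands an endpoint inside some $\lfloor\XX^k_j\rfloor$ of an intermediate level $k$, so that at \emph{every} stage of the induction the protecting rectangles genuinely sit over the horizontal sides and \ref{OG:B} (rather than the weaker \ref{OG:Bh}, which would only give a one-sided bound and a lossy $O_\tau(|S|)$ term) is available; this is where Assumptions~\ref{ass:wZ:int patt}, \ref{ass:wZ:Linking}, and the combinatorial-space Assumption~\ref{ass:wZ:CombSpace} do the real work, and getting the constant $\bDelta$ fixed \emph{uniformly in the number of regularizations} — Remark~\ref{rem:fix bDelta} — is the crux.
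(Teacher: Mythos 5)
There is a genuine gap, and it is exactly at the point you flag as the crux: your level-by-level induction accumulates error, and nothing in your setup makes the per-level losses summable. Lemma~\ref{lem:R and R'} gives an accuracy $\varepsilon(\bDelta)$ that depends only on the width of the protecting rectangles, and Assumption~\ref{ass:wZ:EtraProt} supplies the \emph{same} threshold $\bDelta$ at every regularization level; so in your scheme $\varepsilon_n\equiv\varepsilon(\bDelta)>0$ is constant in $n$, and the product $(1\pm\varepsilon(\bDelta))^N$ over the $N$ regularization levels between $m$ and $\bbn_\theta$ is unbounded, since $N$ is not bounded uniformly in $\theta$ and $m$. Saying that the accuracy ``does not depend on the number of regularizations'' works against you here: it means the losses cannot be arranged to decay. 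The same objection applies to the additive $O_\bdelta(1)$ buffer corrections for the channel collars that you propose to discard at each step: they too accumulate over infinitely many levels, and they are not harmless in the regime $\Width^+\asymp1$, where the lemma still demands a multiplicative $(1\pm\varepsilon)$ bound.

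The paper sidesteps this by never comparing consecutive levels: it compares $\overline Z$ with $\wZ^m$ in a \emph{single} application of Lemma~\ref{lem:R and R'}, viewing $\wC\setminus Z$ as a rectangle with horizontal sides $I,J$ and filling in all components $D_i$ of $\intr\wZ^m\setminus\overline Z$ (each bounded by $\partial Z$ and one dam $\beta^{n_i}_i$) at once. The only inductive ingredient is Lemma~\ref{lem:CompInter:prf}, which promotes each protection $\XX^{n_i}_i$ to $\upstar\XX^{n_i}_i$ with horizontal boundary on $\partial Z$ and shows $\Width(\upstar\XX^{n_i}_i)\ge\bDelta/2$; that induction does not degrade because its conclusion is a self-propagating threshold (``width at least $\bDelta/2$''), not a ratio, so no multiplicative errors pile up. If you want to salvage your approach, you would have to replace the per-level ratio estimate by an analogous threshold statement and then still perform the quantitative comparison between $\overline Z$ and $\wZ^m$ in one step --- at which point you have reproduced the paper's argument. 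Your observations that well-groundedness keeps the endpoints of $I,J$ outside every $\lfloor\XX^{n}_j\rfloor$, so that each filled fjord is attached to a single side and protected in the sense of \ref{OG:B}, are correct and are indeed the role of Assumptions~\ref{ass:wZ:int patt} and~\ref{ass:wZ:Linking}; but they do not repair the accumulation problem.
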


\begin{proof}
Let us enumerate all components of $\intr \wZ^m\setminus \overline Z$ as $D_i,\sp i\in I^m$. Every component $D_i$ is bounded by $\overline Z$ and a level $n_i\ge m$ dam $\beta^{n_i}_i$. By  Assumption~\ref{ass:wZ:EtraProt}, there is a wide rectangle 
\[\XX^{n_i}_{i}\subset \C\setminus \intr(\wZ^{n_i+1}), \sp\sp \partial^h\XX^{n_i}_{i}\subset \partial \wZ^{n_i+1} \sp\sp\text{ with }\sp\sp \Width(\XX_{i})\ge \bDelta\]
separating the endpoints of $\beta^{n_i}_i$ from the endpoints of $I,J$.

We denote by $\upstar \XX^{n_i}_i$ the rectangle obtained from $\XX^{n_i}_i$ by adding all bounded components of $\C\setminus (\overline Z\cup \partial ^h \XX^{n_i}_i )$; i.e. we adjust the horizontal boundary of $\XX^{n_i}_i$ by adding fjords so that $\partial ^h \upstar \XX^{n_i}_i\subset \partial Z$.

 \begin{lem}
 \label{lem:CompInter:prf}
   If $\bDelta\gg_\bdelta 1$, then $\Width(\upstar\XX^{n_i}_i)\ge \bDelta/2$ (independently of the number of regularizations). 
 \end{lem}
\begin{proof}
We proceed by induction on $n$: we assume that the statement is verified for all $\upstar \XX^{n_i}_i$ with $n_i\ge n+1$, and we will prove it for all $\upstar \XX^{n_i}_i$ with $n_i=n$.

Consider a rectangle $\XX^n_i$ and recall that $\upstar\XX^n_i$ is obtained from $\XX^n_i$ by adding fjords of level $>n$. Every such fjord is separated from $\intr \XX^n_i$ by a dam $\beta_j^{n_j}$ with $n_j>n$. Consider the protection $\XX_j^{n_j}$ from Assumption~\ref{ass:wZ:EtraProt}. By construction, $\XX_j^{n_j}\subset \upstar\XX_i^n$ and hence $\upstar \XX_j^{n_j}\subset \upstar \XX_i^n$. By the induction assumption, $\Width(\upstar \XX_j^{n_j}) \ge \bDelta/2\gg_\varepsilon 1$. Applying Lemma~\ref{lem:R and R'} for the rectangles $\RR=\upstar \XX^n_i$, $\RR'=\XX^n_i$ and protections $\{\upstar \XX_j^{n_j}\}$, we obtain that their widths are close; in particular, $\Width(\upstar \XX^n_i)> \Width (\XX^n_i)/2\ge \bDelta/2$.
\end{proof}

 Lemma~\ref{lem:W^+:well grnd int} now follows from Lemmas~\ref{lem:R and R'} and~\ref{lem:CompInter:prf} by viewing $\wC\setminus Z$ as a rectangle with horizontal sides $I,J$.
\end{proof}

\subsubsection{Grounded intervals}
\label{sss:ground inter}
  A regular interval $I^m\subset \partial \wZ^m$ is \emph{grounded} if its endpoints are not in $S^\inn (\wZ^m)$. An interval $I\subset \partial Z$ is \emph{grounded rel $\wZ^m$} if its projection $I^m\subset \partial \wZ^m$ is grounded.

\begin{lem}
\label{lem:W+:ground inter}
For every $\varepsilon>0$ and $\bDelta\gg_{\bdelta,\varepsilon} 1 $ the following holds.
If \[I, J\subset \partial Z\sp\sp\text{ with }\sp \dist(I,J)\ge 3 \min\{|I|,|J|\}\] is a pair of grounded rel $\wZ^m$ intervals,  then
\begin{equation}
\label{eq:lem:W+:ground inter} \Width^{+}_{Z} (I,J) - O_{\delta} (1)<  \Width^{+}_{\wZ^m} (I^m,J^m) <(1+\varepsilon) \Width^{+}_{Z} (I,J)+ O_{\delta} (1) ,
\end{equation}

where $I^m,J^m\subset \partial \wZ^m$ are the projections of $I,J$ onto $\wZ^m$.
\end{lem}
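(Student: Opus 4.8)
The plan is to reduce the statement to Lemma~\ref{lem:W^+:well grnd int} by means of the rectangle-filling estimates, Lemmas~\ref{lem:R and R'} and~\ref{lem:2:R and R'}. First I would observe that a grounded interval is automatically regular: a non-regular endpoint of $I$ would lie in a fjord footprint, hence in $S^\inn(\wZ^m)$, contradicting groundedness; so $I=(I^m)^\bullet$ and $J=(J^m)^\bullet$. Then view $\wC\setminus\overline Z$ as a rectangle $\RR$ with horizontal sides $I,J$ --- so $\Width(\RR)=\Width^+_Z(I,J)$, the vertical sides being the two complementary arcs $A,B$ of $\partial Z$ between $I$ and $J$ --- and note that $\wC\setminus\wZ^m$ is the rectangle $\RR'$ obtained from $\RR$ by deleting the filled-in fjords $D_i$ (the components of $\intr\wZ^m\setminus\overline Z$, each bounded by $\overline Z$ and a dam $\beta^{n_i}_i$), with $\Width(\RR')=\Width^+_{\wZ^m}(I^m,J^m)$. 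Thus everything reduces to comparing $\Width(\RR)$ with $\Width(\RR')$; note that $I^m$ may contain dams absent from $\partial Z$, so $\RR'$ genuinely offers new landing sites, which is why the upper bound is not formal.

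Next I would split the $D_i$ into \emph{good} and \emph{bad}. Call $D_i$ good if its protection base $\lfloor\XX^{n_i}_i\rfloor$ misses the four endpoints of $I$ and $J$; then, exactly as in the proof of Lemma~\ref{lem:W^+:well grnd int}, $D_i$ is attached to a single side $T_i$ of $\RR$ and is separated from the other three sides by $\upstar\XX^{n_i}_i$, of width $\ge\bDelta/2$ by Lemma~\ref{lem:CompInter:prf}; so the good fjords verify hypotheses \ref{OG:A} and \ref{OG:B}, and for $\bDelta\gg_\varepsilon 1$ they are responsible only for the multiplicative factor $1\pm\varepsilon$. Call $D_i$ bad if $\lfloor\XX^{n_i}_i\rfloor$ contains an endpoint $p$ of $I$ or $J$; groundedness forces $p$ to lie outside $S^\inn(\beta^{n_i}_i)$ but inside $\lfloor\XX^{n_i}_i\rfloor$, i.e.\ $p$ sits up on a slope of the fjord, so $\upstar\XX^{n_i}_i$ no longer separates $D_i$ from $p$, but the inner collar $A^\inn(\beta^{n_i}_i)$, of modulus $\ge\bdelta$ (Assumption~\ref{ass:wZ:collars}), still does. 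By Assumption~\ref{ass:wZ:Linking} and Lemma~\ref{lem:S are disj} the bases $\lfloor\XX^n_i\rfloor$ are pairwise nested or disjoint, so the bad fjords accumulating at a fixed endpoint $p$ form a nested chain, whose dam-collars I would organize into a single geometrically bounded nest of annuli of modulus $\ge\bdelta$ through which every curve reaching a bad dam near $p$ must pass; hence the total width lost to all bad fjords is $O_\bdelta(1)$. Using the separation hypothesis $\dist(I,J)\ge 3\min\{|I|,|J|\}$ to keep the complementary arcs $A,B$ long enough that no footprint or protection base spans a whole side of $\RR$, each bad fjord can then be assigned to the appropriate clause \ref{OG:Bv} or \ref{OG:B'h} of Lemma~\ref{lem:2:R and R'}, with $\tau=O(1/\bdelta)$ and index sets of size $O(1)$.

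Feeding this decomposition into Lemmas~\ref{lem:R and R'} and~\ref{lem:2:R and R'} and combining, the good fjords supply the factor $1\pm\varepsilon$ and the bad fjords the additive $O_\bdelta(1)$, which yields both inequalities in~\eqref{eq:lem:W+:ground inter}.

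The hard part will be the estimate for the bad fjords: ruling out a divergent sum of $O(1/\bdelta)$ contributions from an infinite nested chain of bad fjords accumulating at a boundary endpoint, and instead bounding their joint effect by a single $O_\bdelta(1)$. This is where the unlinkedness and nesting of the protection rectangles $\XX^n_i$ supplied by Assumption~\ref{ass:wZ:Linking} and Lemma~\ref{lem:S are disj} has to be converted into a telescoping statement about the corresponding collars. A secondary, purely combinatorial point is checking that the separation hypothesis forces every bad fjord to attach to $\RR$ in a way that one of the clauses \ref{OG:Bv}, \ref{OG:B'h} of Lemma~\ref{lem:2:R and R'} genuinely applies.
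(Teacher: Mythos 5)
You have the right frame (the rectangles $\RR=\wC\setminus Z$, $\RR'=\wC\setminus \wZ^m$ with horizontal sides $I,J$, the $\bDelta/2$-wide protections $\upstar{\XX}^{n}_i$ via Lemma~\ref{lem:CompInter:prf}, and the $\bdelta$-collars for the exceptional fjords), which is exactly the paper's setup. But there is a genuine gap at precisely the point you flag as ``the hard part,'' and your proposed fix is not the mechanism that makes the lemma true. First, your good/bad dichotomy (protection base containing an endpoint of $I\cup J$) is the wrong criterion: for the lower bound only fjords attached to the \emph{vertical} sides matter, and \emph{all} of them are $\tau$-protected by the collar rectangles $\upstar{\AA}^n_i$ (so clause \ref{OG:Bv} applies with no exceptional set at all); for the upper bound only fjords attached to the \emph{horizontal} sides matter, and what must be avoided by the protection base is the \emph{opposite horizontal side}, not the endpoints of the side the dam sits on. With your criterion the exceptional set at a single grounded endpoint $p$ can be an unboundedly long nested chain of bases $\lfloor\XX^{n}_j\rfloor\ni p$ (groundedness only excludes $p\in S^\inn$, not $p\in S^\well$), and Lemma~\ref{lem:2:R and R'} charges $O_\tau(1)$ \emph{per} exceptional fjord. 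Your claim that the chain's total contribution telescopes to $O_\bdelta(1)$ through ``a single geometrically bounded nest of annuli'' is unproven and, as sketched, does not follow: charging the $j$-th fjord in the chain to the serial crossing of the shallower $\bDelta$-wide protections gives at best a sum like $\sum_j 1/(j\bDelta)$, which still diverges with the chain length, and nothing in Assumption~\ref{ass:wZ:Linking} or Lemma~\ref{lem:S are disj} by itself converts the nesting into a uniform bound.

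The paper avoids this entirely by making the exceptional set have at most $3$ elements, using the separation hypothesis $\dist(I,J)\ge 3\min\{|I|,|J|\}$ together with a comparison of the dam's level $n$ with the level $k$ of the shorter interval $I$ (normalized by Assumption~\ref{ass:wZ:CombSpace}): for $n>k$ the base $\lfloor\partial^h\XX^n_i\rfloor$ is shorter than $2\length_{k+1}<\dist(I,J)$, so it can never reach the opposite horizontal side and \ref{OG:Bh} applies with the full $\bDelta$-protection, even if the base contains an endpoint; for $n=k$ the dams are longer than $I$, hence lie in $J$, and at most two of their bases can meet $I$ — these two go into $S$ with the $\bdelta$-collar protection; for $n<k$ one splits $\XX^n_i$ into genuine subrectangles $\XX^{n,\inn}_i,\XX^{n,\out}_i$ of width $\ge\bDelta/3$ whose bases are $>|I|$ apart, so at most one shallow dam (at the minimal such $n$) can have $I$ meeting its inner base; after removing an $O_\bdelta(1)$ buffer so that $\RR,\RR'$ do not cross $\XX^{n,\out}_i$, that single dam is the only remaining exceptional one and all other shallow and deep dams are $\bDelta$-protected. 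This $|S|\le 3$ count, not a telescoping collar argument, is the missing idea; without it (or a genuine proof of your uniform bound for nested chains) the upper bound in \eqref{eq:lem:W+:ground inter} is not established. A secondary slip: you route the bad horizontal fjords through \ref{OG:Bv}, but that clause concerns vertical sides; they must go through \ref{OG:B'h}, whose error is linear in $|S|$ — which is exactly why bounding $|S|$ is unavoidable.
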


\begin{proof}We view $\wC\setminus Z$ as a rectangle $\RR$ with horizontal sides $I,J$ and we view $\wC\setminus Z^m$ as a rectangle $\RR'$ with horizontal sides $I',J'$.

Consider a dam $\beta^n_i$. By Assumption~\ref{ass:wZ:int patt}, $A^\inn(\beta^n_i)\setminus \wZ^{n+1}$ consists of $2$ rectangles; let the rectangle \[\AA^n_i\subset \wC\setminus \intr \wZ^{n+1},\sp\sp \partial^h \AA^n_i=\AA^n_i\cap \partial \wZ^{n+1}\]
be the closure of the connected component of $A^\inn(\beta^n_i)\setminus \wZ^{n+1}$ separating (i.e., protecting) $\beta^n_i$ from $\CP_n$. Since $\mod (A(\beta^n_i))\ge \bdelta$ we have $\Width(\AA^n_i)\ge \bdelta$.

As in the proof of Lemma~\ref{lem:W^+:well grnd int}, we define $\upstar \AA^n_i$ to be the rectangle obtained from  $\AA^n_i$ by adding all fjords between $\overline Z$ and $\partial ^h \AA^n_i$. Since $\partial^h \AA^n_i$ consists of a pair of well-grounded intervals of $\partial \wZ^n$, the argument in Lemma~\ref{lem:W^+:well grnd int} is applicable to $\AA^n_i$ and shows that $\Width\left(\upstar \AA^n_i\right) \ge \Width (\AA^n_i)/(1+\epsilon) > \bdelta/2\eqqcolon \tau$ because  $\bDelta\gg_{\bdelta} 1 $.

Since $\upstar \AA^n_i$ satisfy (B${}_{v}$) for $\RR,\RR'$, the first inequality in~\eqref{eq:lem:W+:ground inter} follows from~\eqref{eq:lem:2:R and R'}. Below we will remove $O(1)$ buffers from $\RR,\RR'$ so that the new rectangle $\RR^\new, \RR'^\new$ satisfies~\eqref{eq:lem:2:R and R':3} with $|S|\le 3$.

Assume that $|I|\le |J|$. Let $k$ be the level of $I$: the unique number satisfying $\frac 4 5 \length_{k} > |I|\ge \frac 4 5\length_{k+1}$, where $\frac 4 5 $ is from Assumption~\ref{ass:wZ:CombSpace}. Consider dams $\beta^n_i\subset I^n\cup J^n$. We distinguish the following three cases.

Assume $n>k$. Since 
\[\dist(I^n,J^n)\ge 3 \min\{|I^n|,|J^n|\}=3|I^n|> 2\length_{k+1}>|\lfloor \partial ^h \XX^n_i\rfloor |,\]
$\lfloor  \partial ^h \XX^n_i\rfloor$ is disjoint from either $I^n$ or $ J^n$; i.e.~$\upstar \XX^n_i$ satisfy (B${}_{h}$) for $\RR,\RR'$.

 Assume $n=k$. Then $\beta^n_i\subset J^m$ because $|\beta^n_i|>|I|$ by Assumption~\ref{ass:wZ:CombSpace}. There are at most two level $n=k$ dams $\beta^n_i$ such that $\upstar \XX^n_i$ intersects $I^n$. Such exceptional dams are protected $\upstar \AA^n_i$ and we add these dams into $S$. The remaining dams are protected by $\upstar \XX^n_i$.
 
 Assume $n<k$. Again $\beta^n_i\subset J^n$. For every $\beta^n_i$, we recognize disjoint genuine subrectangles $\XX^{n,\inn}_i,\XX^{n,\out}_i\subset \XX^n_i$ such that $\dist(\partial^h \XX^{n,\out}_i,\sp \partial^h \XX^{n,\inn}_i)>|I|$ and
 \[  \Width(\XX^{n,\inn}_i), \Width(\XX^{n,\out}_i)\ge \bDelta/3,\sp\sp\sp \lfloor \partial^h\XX^{n,\inn}_i \rfloor\subset \lfloor\partial^h \XX^{n,\out}_i \rfloor.\]
If $I^n$ is disjoint from all $\partial ^h\XX^{n,\inn}_i $ for $n<k$, then $\upstar\XX^{n,\inn}_i $ satisfy (B${}_{h}$) and the lemma follows from~\eqref{eq:lem:2:R and R':3}. Suppose that $n$ is minimal so that for a dam $\beta^n_i$ the interval $I^n$ intersects $\partial ^h\XX^{n,\inn}_i$; note that the dam $\beta^n_i$ is unique. Then \[\partial^{h,0}\XX^{n,\out}_i<I^n< \XX^{n,\out}_i\] and for every $t\ge n, \sp \beta^{t}_j\not=\beta^n_i$, the rectangle $\XX^{n,\out}_i$ separates $I^n$ from $ \beta^{t}_j$. By removing a $O_\delta(1)$-buffer from $\RR,\RR'$ we can assume that the new rectangles $\RR^\new, \RR'^\new$ do not cross $\XX^{n,\out}_i$; the new rectangles are disjoint from all $\beta^t_j\not=\beta^n_i$ with $t\ge k$. We add $\beta^n_i$ to $S$ and apply~\eqref{eq:lem:2:R and R':3}.
\end{proof}

\subsubsection{Restrictions of rectangles}\label{sss:RestrOfFamil} Consider a rectangles $\RR\subset \wC\setminus Z$ and assume that there is a connected graph $G\subset \RR$ containing all vertices of $\RR$ such that $G\subset \wC\setminus \intr \XX(\wZ^m)$, see~\eqref{eq:dfn XX(wZ)}. Then the \emph{restriction} $\RR^m$ of $\RR$ to the complement of $\wZ^m$ is the connected component of $\RR\setminus \intr \wZ^m$ containing $G$. We view $\RR^m$ as a rectangle with the same vertex set and the same orientation of sides as $\RR$. Observe that every connected component of $\RR\setminus \RR^m$ is separated from $G$ by a restriction of some $\Fam(\upstar\XX^{n_i}_i)$. Therefore, Lemmas~\ref{lem:CompInter:prf} and~\ref{lem:R and R'} imply that
\begin{equation}
\label{eq:lem:RR vs RR^m}
 1-\varepsilon < \frac{\Width(\RR)}{\Width(\RR^m)}<1+\varepsilon, 
 \end{equation}
 for a small $\varepsilon>0$, where $\bDelta\gg_\varepsilon 1$.

\begin{rem}[Fixing $\bDelta$]
\label{rem:fix bDelta}
From now on we assume that $\bDelta\gg 1$ is fixed so that $\varepsilon$ in Lemmas~\ref{lem:W^+:well grnd int} and~\ref{lem:W+:ground inter}  and in~\eqref{eq:lem:RR vs RR^m} is small.
\end{rem}

\subsection{The geometry on shallow scales} On scale $\ge \length_m$, the geometry of $\wZ^m$  is controlled by its core $Z^m$:

\begin{thm}[Log-Rule for $\wZ^m$]
\label{thm:wZ:shallow scale}
Consider two regular intervals \[I,J\subset \partial \wZ^m\sp\sp\sp\text{ such that }\sp\sp |I|,\ |J|,\ \dist(I,J) \ge 1/\qq_{m+1} .\] If $\dist (I,J)\le \min\{|I|,|J|\}$, then
\begin{equation}
\label{eq:1:prop:wZ:shallow scale}
\Width^-(I,J) \asymp_\bdelta \log \frac{\min\{|I|,|J|\}}{\dist (I,J)} +1;
\end{equation}
otherwise 
\begin{equation}
\label{eq:2:prop:wZ:shallow scale}
\Width^-(I,J) \asymp_\bdelta \left( \log \frac{\dist (I,J)}{\min\{|I|,|J|\}} +1\right)^{-1}.
\end{equation} 

Moreover, there is a constant $T_\bdelta>1$ such that if $|I|,\ |J|,\ \dist(I,J) \ge T_\bdelta/\qq_{m+1}$, then ~\eqref{eq:1:prop:wZ:shallow scale} and~\eqref{eq:2:prop:wZ:shallow scale} are independent of $\bdelta$: if $\dist (I,J)\le \min\{|I|,|J|\}$, then
\begin{equation}
\label{eq:1:prop:wZ:shallow scale:impr}
\Width^-(I,J) \asymp \log \frac{\min\{|I|,|J|\}}{\dist (I,J)} +1;
\end{equation}
otherwise 
\begin{equation}
\label{eq:2:prop:wZ:shallow scale:impr}
\Width^-(I,J) \asymp \left( \log \frac{\dist (I,J)}{\min\{|I|,|J|\}} +1\right)^{-1}.
\end{equation} 
 \end{thm}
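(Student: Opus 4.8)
The statement is the exact analogue of Lemma~\ref{lmm:W^- I J} (and of Proposition~\ref{prop:part U: c quasi line}, Theorem~\ref{thm:beau:part U: c quasi line}) with $\ovDisk$ replaced by the pseudo-Siegel disk $\wZ^m$, and the proof will run along the same lines: reduce to the ``comparable'' case via the Splitting Argument (Remark~\ref{rem:SplitArg}), and in the comparable case play the inner geometry of $\wZ^m$ against a shift of itself to rule out wide rectangles. The one extra input we have is that the core $Z^m$ of $\wZ^m$ is a $\bdelta/2$-near-rotation domain (Definition~\ref{dfn:PseudoSiegDisk}), so the coarse and beau bounds of \S\ref{s:NearRotatSystem} apply verbatim to $Z^m$, and Lemma~\ref{lem:W^+:well grnd int}/\S\ref{sss:RestrOfFamil} tell us that passing between inner families of $\wZ^m$ and of its core $Z^m$ distorts widths by at most a factor close to $1$ (for $\bDelta\gg 1$ fixed as in Remark~\ref{rem:fix bDelta}).

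First I would establish the analogue of Claim~\ref{cl:1:prop:part U: c quasi line}: if $I,J\subset\partial\wZ^m$ are regular intervals with $\min\{|I|,|J|\}\asymp\dist(I,J)$ and $|I|,|J|,\dist(I,J)\ge 1/\qq_{m+1}$, then $\Width^-_{\wZ^m}(I,J)\asymp_\bdelta 1$. Approximate $I^m$ and $J^m$ from above by concatenations of standard intervals of $\partial Z^m$ (with error inside two unit intervals $L_a\cup L_b$), so that every curve in $\Fam^-_{\wZ^m}(I,J)$, restricted to the complement of a collar system, yields a curve in the corresponding $\Fam^-_{Z^m}$-family; by the Restriction-of-rectangles estimate~\eqref{eq:lem:RR vs RR^m} and the near-rotation coarse bounds (Proposition~\ref{prop:part U: c quasi line}, resp.\ its beau upgrade Theorem~\ref{thm:beau:part U: c quasi line}), the total width is $\preceq_\bdelta 1$ from above. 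The lower bound comes from the geodesic rectangle of $\wZ^m$ between $I$ and $J$, whose width is $\succeq_\bdelta 1$ by the same comparison. Applying this to the two complementary intervals $X,Y$ between $I$ and $J$ and using $\Width^-(I,J)=1/\Width^-(X,Y)$ gives $\Width^-(I,J)\asymp_\bdelta 1$, proving the claim. The dependence on $\bdelta$ (versus a universal constant for scale $\ge T_\bdelta/\qq_{m+1}$) is inherited directly from which of the two near-rotation statements we feed in, exactly as in Remark~\ref{rem:prop:part U: c quasi line}.

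Second, from this comparable-case claim the general estimates \eqref{eq:1:prop:wZ:shallow scale}--\eqref{eq:2:prop:wZ:shallow scale} (resp.\ their $\bdelta$-free versions) follow by the Splitting Argument: if $|I|\le|J|$ and $\dist(I,J)\le|I|$, split $I$ and the adjacent part of $\partial\wZ^m$ into $n\asymp\log(|I|/\dist(I,J))+1$ pieces $I_k$ (and matching pieces $J_k\subset J$) with $\dist(I_k,J_k)\asymp|I_k|\asymp\dist(I_k,J)$, so that the comparable-case claim and the Parallel Law give $\Width^-(I,J)\preceq\sum_k\Width^-(I_k,J)\preceq_\bdelta n$ and $\Width^-(I,J)\ge\sum_k\Width(\RR(I_k,J_k))\succeq_\bdelta n$; the case $\dist(I,J)\ge\min\{|I|,|J|\}$ then follows by passing to the complementary pair and reciprocating. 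Care must be taken that all intervals $I_k,J_k$ produced in the splitting are regular (one may choose their endpoints among regular points at the cost of an error within two unit intervals, which is absorbed once $|I|\ge T_\bdelta/\qq_{m+1}$ is large compared to a unit interval of $\partial Z^m$); this bookkeeping is the only genuinely new wrinkle over Lemma~\ref{lmm:W^- I J}.

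\textbf{Main obstacle.} The delicate point is precisely the passage between the inner geometry of $\wZ^m$ and that of its core $Z^m$: a curve in $\wZ^m$ connecting $I$ to $J$ may wander into peninsulas of $\wZ^m$ that are not part of $Z^m$, and conversely short standard intervals of $\partial Z^m$ need not be regular rel $\wZ^m$. Controlling this requires that such excursions are ``protected'' — i.e.\ that the width lost by restricting a rectangle to $\wC\setminus\intr\, Z^m$ is $O(1)$ — which is exactly what Lemma~\ref{lem:R and R'}, Lemma~\ref{lem:CompInter:prf}, and the restriction estimate~\eqref{eq:lem:RR vs RR^m} provide, once $\bDelta$ is fixed large; so the burden is to set up the comparison carefully rather than to prove anything structurally new. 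Everything downstream (the Splitting Argument and the Parallel/Serial manipulations) is then routine, as in the proofs of Lemma~\ref{lmm:W^- I J} and Proposition~\ref{prop:part U: c quasi line}.
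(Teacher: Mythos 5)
Your overall architecture (comparable case by comparison with the near-rotation core $Z^m$, then the Splitting Argument and duality) matches the paper's, but there are two genuine gaps. The first concerns the tools you invoke to control excursions of inner curves into peninsulas. Lemma~\ref{lem:R and R'}, Lemma~\ref{lem:CompInter:prf} and the restriction estimate~\eqref{eq:lem:RR vs RR^m} are \emph{outer}-geometry statements: they compare rectangles in $\wC\setminus Z$ with their restrictions to $\wC\setminus \wZ^m$, and the near-$1$ multiplicative error there comes from the protections $\XX^n_i$ of width $\ge \bDelta$. For a curve of $\Fam^-_{\wZ^m}(I,J)$ that wanders into a peninsula there is no such near-$1$ comparison between the inner families of $\wZ^m$ and of $Z^m$; what the construction actually gives is only that such a curve must cross the \emph{inner} collar $A^\inn(\alpha)$ of the corresponding channel, whose modulus is merely $\ge \bdelta$, so the offending curves form a buffer (Lemma~\ref{lem:buffer:R O}) of width $O_\bdelta(1)$. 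The paper's proof formalizes this via the projection of $I,J$ onto concatenations of channels and dams of $\partial Z^m$, removal of two $O_\bdelta(1)$-buffers, and only then restriction to first shortest subcurves in the corresponding $\Fam^-_{Z^m}$-family (Claims~\ref{cl7:small case} and~\ref{cl7:base case}); this additive $O_\bdelta(1)$ loss is exactly the source of the $\bdelta$-dependence in \eqref{eq:1:prop:wZ:shallow scale}--\eqref{eq:2:prop:wZ:shallow scale}, and the $\bDelta$-protected outer machinery you cite does not substitute for it.

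The second gap is more serious and concerns the ``beau'' half of the theorem, which is its genuinely new content. You assert that the $\bdelta$-independence at scale $\ge T_\bdelta/\qq_{m+1}$ is inherited directly from feeding in Theorem~\ref{thm:beau:part U: c quasi line} instead of Proposition~\ref{prop:part U: c quasi line}, as in Remark~\ref{rem:prop:part U: c quasi line}. That is not enough: the buffers you must discard to keep curves out of irrelevant peninsulas still cost $O_\bdelta(1)$ no matter which near-rotation bound you use, so all constants would remain $\bdelta$-dependent. The paper's Claim~\ref{cl7:LS case} resolves this by a bootstrap: enlarge the projected intervals by $\sqrt{T_\bdelta}$ unit intervals and use the already-established $\bdelta$-dependent estimate \eqref{eq:1:prop:wZ:shallow scale} to produce, around each channel outside the enlarged intervals, a separating family of width $>1$ (its width is $\asymp_\bdelta \log\sqrt{T_\bdelta}\gg 1$ once $T_\bdelta\gg_\bdelta 1$), so that only two $1$-buffers need to be removed and the resulting constants are universal. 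Some self-improvement step of this kind is indispensable; without it the second half of the theorem does not follow from your argument.
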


We refer to~\eqref{eq:1:prop:wZ:shallow scale:impr} and~\eqref{eq:2:prop:wZ:shallow scale:impr} as \emph{beau coarse-bounds}, compare with Theorem~\ref{thm:beau:part U: c quasi line}.  
 
\begin{proof} For a channel $\alpha_i\subset \partial Z^m$, let us denote by $\Width^-_{Z^m, 3}(\alpha_i)$ the width of curves starting at $\alpha_i$ and ending at $\partial Z^m\setminus (\beta_{i-1}\# \alpha_i\# \beta_{i})$. Similarly, $\Width^-_{Z^m, 3}(\beta_i)$ is the width of curves starting at $\beta_i$ and ending at $\partial Z^m\setminus (\alpha_{i}\# \beta_i\# \alpha_{i+1})$.

\begin{claim7}
\label{cl7:small case} For all channels and dams of $\partial Z^m$, we have:
\[ \Width^-_{Z^m, 3}(\alpha_i)\asymp_\bdelta 1,\sp\sp\sp\sp\Width^-_{Z^m, 3}(\beta_i)\asymp_\bdelta 1 .\] 
\end{claim7}
\begin{proof}
We have $\Width^-_{Z^m, 3}(\alpha_i)\preceq_\bdelta 1$ because $A^\inn(\alpha_i)$ separates $\alpha_i$ from $\partial Z^m\setminus (\beta_{i-1}\# \alpha_i\# \beta_{i})$. Similarly, $\Width^-_{Z^m, 3}(\beta_i)\preceq_\bdelta 1$. Since 
\[\left(\Width^-_{Z^m, 3}(\alpha_i) \right)^{-1} = \Width^-_{Z^m}(\beta_{i-1},\beta_i)\le \Width^-_{Z^m,3}(\beta_{i-1})\preceq_\bdelta 1,\]
where $\Width^-_{Z^m}(\beta_{i-1},\beta_i)$ is the width of curves connecting $\beta_{i-1}$ and $\beta_i$, we obtain $\Width^-_{Z^m, 3}(\alpha_i)\asymp_\bdelta 1$. Similarly, $\Width^-_{Z^m, 3}(\beta_i)\asymp_\bdelta 1$.
\end{proof}

\begin{claim7}
\label{cl7:base case}
If $\min\{|I|,|J|\}\asymp \dist(I,J)$, then $\Width^-_{\wZ^m}(I,J)\asymp_\bdelta 1$.
\end{claim7}
\begin{proof}  Let us define the \emph{projection} $I^\circ$ of $I$ onto $\partial Z^m$ to be the minimal concatenation 
\[\alpha_i\#\beta_{i+1}\#\beta_{i+1}\#\dots \#\beta_k\#\alpha_{k+1}\]
(starting and ending with a channel) such that every component of $I\setminus I^\circ$ is in the peninsula whose channel is in $I^\circ$. Similarly, $J^\circ $ is defined. Since $\dist(I,J)\ge 1/\qq_{m+1}$, the intervals $I^\circ$ and $J^\circ$ are separated by at least one dam (follows from Assumption~\ref{ass:wZ:CombSpace}) and we still have $\min\{|I^\circ|, |J^\circ|\} \asymp \dist(I^\circ ,J^\circ)$ with respect to the distance of $\partial Z^m$.  

The case $\min\{|I|,\ |J|\},\ \dist(I,J) \asymp 1/\qq_{m+1}$ follows from Claim~\ref{cl7:small case} by spiting intervals into finitely many channels and dams. Let us assume that $|I|,\ |J|,\ \dist(I,J) \ge 50/\qq_{m+1}$.    

By Lemma~\ref{lem:buffer:R O} that the set of curves in $\Fam\coloneqq \Fam_{\wZ^m}^-(I,J)$ entering a peninsula with a channel in $\partial Z^m\setminus (I^\circ \cup J^\circ)$ is a buffer of $\Fam_{\wZ^m}^-(I,J)$. By removing two $O_\bdelta (1)$-buffers from $\Fam_{\wZ^m}^-(I,J)$, we obtain that curves in the new family $\Fam^\new$ do not enter any peninsula whose channel is in $\partial Z^m\setminus (I^\circ \cup J^\circ)$ -- such channels are separated from $I^\circ\cup J^\circ$ by a definite  annulus. Let $\Fam^\New$ be the restriction of $\Fam^\new$ to the family $\Fam^-_{Z^m}(I^\circ, J^\circ)$ (see~\S\ref{sss:short subcurves}); i.e., $\Fam^\New$ consists of the first shortest subcurves $\gamma'$ of $\gamma\in \Fam^\new$ such that $\gamma'$ connects $I^\circ$ and $J^\circ$. We have:
\[\Width^-_{\wZ^m}(I,J)-O_\bdelta(1)\le \Width(\Fam^\new) \le \Width(\Fam^\New)\le \Width^-_{Z^m}(I^\circ,J^\circ ) \asymp 1   \]
by Proposition~\ref{prop:part U: c quasi line}; i.e.~$\Width^-_{\wZ^m}(I,J)\preceq_\bdelta 1$. If $A,B$ are the components of $\partial \wZ^m\setminus (I,J)$, then the above argument shows $\Width^-_{\wZ^m}(A,B)\preceq_\bdelta 1$. Therefore, $\Width^-_{\wZ^m}(I,J)\asymp_\bdelta 1$.
\end{proof}

The case $\dist (I,J)\le \min\{|I|,|J|\}$ follows from Claim~\ref{cl7:base case} by applying the Splitting Argument (Remark~\ref{rem:SplitArg}). By the same reason, the second part of the theorem follows from the following claim:

\begin{claim7}
\label{cl7:LS case}
There is a constant $T_\bdelta>50$ such that
if \[|I|,\ |J|,\ \dist(I,J) \ge T_\bdelta/\qq_{m+1} \sp\sp\text{ and }\sp\sp\min\{|I|,|J|\}\asymp_\bdelta \dist(I,J),\] then $\Width^-(I,J)\asymp 1$.
\end{claim7}
\begin{proof}
Assume $T_\bdelta\gg 50$ is sufficiently big and let us consider $I^\circ$ and $J^\circ$ as in the proof of Claim~\ref{cl7:base case}. Let us enlarge $I^\circ,J^\circ$ by adding $\sqrt{T_\bdelta}$ unit intervals on both sides of $I^\circ, J^\circ$; the new intervals $\widetilde I^\circ, \widetilde J^\circ$ still satisfy $\min\{|\widetilde I^\circ|, |\widetilde J^\circ| \}\asymp \dist(\widetilde I^\circ,\widetilde J^\circ)$.

We claim that by removing two $1$-buffers from $\Fam^-_{\wZ^m}(I,J)$ we obtain the family $\Fam^\new$ so that curves in $\Fam^\new$ do not enter any peninsula with a channel in ${\partial Z^m\setminus (\widetilde I^\circ\cup \widetilde J^\circ)}$. Indeed, consider a channel $\alpha_k\in \partial Z^m\setminus (\widetilde I^\circ \cup \widetilde J^\circ)$, and let $A,B\subset \partial \wZ^m$ be intervals of length $\sqrt{T_\bdelta}$ attached to $\alpha_k$. Since we already established~\eqref{eq:1:prop:wZ:shallow scale}, $\Width^-_{\wZ^k}(A,B)\asymp_\bdelta \log \sqrt{T_\bdelta}\gg 1$; removing a $O_\bdelta(1)$-buffer from $\Fam^-_{\wZ^k}(A,B)$, we obtain a family of curves $\RR_k$ such that $\Width(\RR_k)>1$ and $\RR_k$ separates $\alpha_k$ from $I^\circ\cup J^\circ$. The set of  vertical curves in $\Fam_{\wZ^m}^-(I,J)$ that intersect $\alpha$ form a buffer of $\RR$ by Lemma~\ref{lem:buffer:R O}; this buffer has width less than $1$ by $\Width(\RR_k)>1$.

 As in the proof of Claim~\ref{cl7:base case}, we now define $\Fam^\New$ to be the restriction of $\Fam^\new$ to the family $\Fam^-_{Z^m}(\widetilde I^\circ,\widetilde J^\circ)$ (see~\S\ref{sss:short subcurves}); i.e., $\Fam^\New$ consists of the first shortest subcurves $\gamma'$ of $\gamma\in \Fam^\new$ such that $\gamma'$ connects $\widetilde I^\circ$ and $\widetilde J^\circ$. We have:
\[\Width^-_{\wZ^m}(I,J)-O(1)\le \Width(\Fam^\new) \le \Width(\Fam^\New)\le \Width^-_{Z^m}(I^\circ,J^\circ ) \asymp 1 \]
 by Theorem~\ref{thm:beau:part U: c quasi line}; i.e.~$\Width^-_{\wZ^m}(I,J)\preceq 1$.  And similarly, $\left(\Width^-_{\wZ^m}(I,J)\right)^{-1}= \Width^-_{\wZ^m}(A,B)\preceq 1$, where $A,B$ are the components of $\partial \wZ^m\setminus (I,J)$.
\end{proof}
\end{proof}

\subsubsection{Hyperbolic geodesics in $\wZ^m$} Let us extend continuously the distance function $\dist_{\partial \wZ^m}(\ ,\ )$ specified in~\eqref{eq:dist on wZ^m} to all points of $\partial \wZ^m$. Given two sets $X,Y$ intersecting $\partial \wZ^m$, we define the \emph{$\partial \wZ^m$-distance between $X,Y$} to be $\dist_{\partial \wZ^m}\left(X\cap \partial \wZ^m,\ Y\cap \partial \wZ^m \right)$. 

\begin{lem}
\label{lem:HypGeod in wZ^m}
There is an $M=M(\bdelta)\ge 1$ such that the following properties hold for every hyperbolic geodesic $\gamma\subset \wZ^m$ connecting $x,y\in \partial \wZ^m$.
\begin{enumerate}[label=\text{(\Roman*)},font=\normalfont,leftmargin=*]
\item \label{Case1:lem:HypGeod in wZ^m} If the $\partial \wZ^m$-distance from $\{x,y\}$ to $\widetilde A^\inn(\beta^n_i)$ is at least $M\length_{m}$, then $\gamma$ is disjoint $\widetilde A^\inn(\beta^n_i)$.

\item\label{Case2:lem:HypGeod in wZ^m} Similarly, if the $\partial \wZ^m$-distance from $\{x,y\}$ to $\widetilde A^\inn(\alpha^n_i)$ is at least $M\length_{m}$, then $\gamma$ is disjoint $\widetilde A^\inn(\alpha^n_i)$.
\end{enumerate}

\noindent Let $\gamma_k=(f^k)_*(\gamma)\subset \UU\big(\wZ^m\big)$ be the image of $\gamma$ under~\eqref{eq:dfn UU(wZ)}, and assume that the endpoints of $\gamma_k$ are in $U_{x,k}, U_{y,k}$ -- the components of 
\begin{equation}
\label{eq:comp U ni}
\{\widetilde A(\alpha^n_i)\setminus A(\alpha^n_i), \ \widetilde A(\beta^n_i)\setminus A(\beta^n_i)\mid n,i\},\sp\sp\sp\text{ see~\S\ref{sss:UU and ZZ}} .
\end{equation}

\begin{enumerate}[label=\text{(\Roman*)},start=3,font=\normalfont,leftmargin=*]
\item \label{Case3:lem:HypGeod in wZ^m}  If for $a,b\in \partial \wZ^m$ the  $\partial \wZ^m$-distance from $U_{x,k}\cup U_{y,k}$ to $\{a,b\}$ is at least $M\length_{m}$, then $\gamma_k$ is disjoint from the hyperbolic geodesic of $\wZ^m$ between $a,b$.
\end{enumerate}
\end{lem}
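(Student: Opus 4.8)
The three statements are of the same nature, so I will set up one uniform mechanism and then apply it three times. The underlying principle is the \emph{Submergence Rule}~\eqref{eq:subm rule} combined with the beau coarse-bounds of Theorem~\ref{thm:wZ:shallow scale}: a hyperbolic geodesic $\gamma\subset\wZ^m$ whose endpoints are ``far'' from a filled-in inner collar $\widetilde A^\inn(\beta^n_i)$ (resp. $\widetilde A^\inn(\alpha^n_i)$) cannot enter it, because doing so would force $\gamma$ to cross, inside $\wZ^m$, a wide family of curves that separates the collar from the endpoints of $\gamma$.

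\emph{Step 1 (the separating family).} Fix $\beta^n_i$ with $n\ge m$, and consider $\widetilde A^\inn(\beta^n_i)$. By Assumption~\ref{ass:wZ:int patt}, $\widetilde A^\inn(\beta^n_i)\cap\partial\wZ^m$ is the grounded interval $S^\inn(\beta^n_i)$ of~\eqref{eq:dfn S^inn beta}, and by Assumption~\ref{ass:wZ:CombSpace} the combinatorial length of $S^\inn(\beta^n_i)$ is at least $200\length_{n+1}$, hence $\ge 200\length_{m+1}$ if $n=m$; in general it is $\asymp\length_n$. Now suppose the endpoints $x,y$ lie at $\partial\wZ^m$-distance $\ge M\length_m$ from $S^\inn(\beta^n_i)$. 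Pick two intervals $A,B\subset\partial\wZ^m$ of combinatorial length $\asymp\sqrt{M}\,\length_m$ attached to the two endpoints of $S^\inn(\beta^n_i)$ on the side facing $\{x,y\}$; shrinking if necessary, we may also ensure $\dist(A,B)\le\min\{|A|,|B|\}$ and $\min\{|A|,|B|\}\le\dist(A,B)$ is violated in the right direction so that~\eqref{eq:1:prop:wZ:shallow scale}, or its beau version~\eqref{eq:1:prop:wZ:shallow scale:impr} when $M\gg_\bdelta1$, gives $\Width^-_{\wZ^m}(A,B)\asymp\log\sqrt M\gg 1$. Removing an $O_\bdelta(1)$-buffer from the canonical rectangle of $\Fam^-_{\wZ^m}(A,B)$ produces a rectangle $\RR_{\beta}$ with $\Width(\RR_\beta)>1$ whose vertical curves separate $\widetilde A^\inn(\beta^n_i)$ from $\{x,y\}$ inside $\wZ^m$ --- here we use that $\widetilde A^\inn(\beta^n_i)$ is attached to $\partial\wZ^m$ along $S^\inn(\beta^n_i)$ and that $A,B$ bracket $S^\inn(\beta^n_i)$. (One must verify $\RR_\beta$ can be taken disjoint from $\widetilde A^\inn(\beta^n_i)$; this is where the extra $A^\out$-collars give room.)

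\emph{Step 2 (the geodesic cannot cross).} If $\gamma$ entered $\widetilde A^\inn(\beta^n_i)$, then $\gamma$ would have to cross every vertical curve of $\RR_\beta$: $\gamma$ starts and ends outside $\RR_\beta$ on the $\{x,y\}$-side and must reach the other side to penetrate the collar. But $\gamma$ is a hyperbolic geodesic of the open disk $\intr\wZ^m$, and a geodesic crossing a rectangle $\RR_\beta\subset\wZ^m$ of width $>1$ must intersect at most a $1$-wide subfamily of $\Fam(\RR_\beta)$ --- this is a standard extremal-length/Non-Crossing estimate (the geodesic, being an embedded arc realizing a homotopy class, can be homotoped off all but a sub-unit-width slice). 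Since $\Width(\RR_\beta)>1$, we get a contradiction. This proves~\ref{Case1:lem:HypGeod in wZ^m}; statement~\ref{Case2:lem:HypGeod in wZ^m} is identical with $\alpha^n_i$, $S^\inn(\alpha^n_i)$ from~\eqref{eq:dfn S^inn alpha} in place of $\beta^n_i$, noting $|S^\inn(\alpha^n_i)|\asymp\length_n$ as well.

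\emph{Step 3 (the pushed-forward statement \ref{Case3:lem:HypGeod in wZ^m}).} Here I transport the picture by~\eqref{eq:dfn UU(wZ)}. Write $k$ with $|k|\le\qq_{m+1}$ and $\gamma_k=(f^k)_*(\gamma)\subset\UU(\wZ^m)$. The map $f^k$ carries $\wZ^m$ conformally onto $(f^k)_*(\wZ^m)\subset\UU(\wZ^m)$, and by Assumption~\ref{ass:wZ:collars} each collar $A(\alpha^n_i),A(\beta^n_i)$ controls the difference between $\wZ^m$ and its $f^k$-image; consequently the components $U'$ of~\eqref{eq:comp U ni} are exactly the ``pockets'' where $(f^k)_*(\wZ^m)$ can stick out past $\wZ^m$, and each such $U'$ is enclosed by the corresponding $A^\inn$-collar of $\wZ^m$. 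So I run Step 1--2 not for the collar directly but for the separating family $\RR'$ pulled back from the annulus $A^\inn$ surrounding $U'$: the hypothesis that the $\partial\wZ^m$-distance from $U'$ to $U_{x,k}\cup U_{y,k}$ is $\ge M\length_m$ gives, via Theorem~\ref{thm:wZ:shallow scale} applied on $\partial\wZ^m$ and then transported by the conformal map $f^k$, a width-$>1$ family in $\UU(\wZ^m)\setminus(f^k)_*(\wZ^m)$ --- more precisely inside $(f^k)_*(\RR_\beta)$ --- separating $U'$ from the endpoints of $\gamma_k$. A geodesic of $\wZ^m$ pushed forward by a conformal map is a geodesic of $(f^k)_*(\wZ^m)$, so the same crossing argument applies: $\gamma_k$ cannot cross a width-$>1$ rectangle, hence stays out of $U'$.

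\emph{Main obstacle.} The routine analytic content (the beau bound $\Width^-\asymp\log\sqrt M$, the $O_\bdelta(1)$-buffer removal) is all in hand from Theorem~\ref{thm:wZ:shallow scale}. The delicate point is the \emph{bookkeeping of which intervals to choose} so that the separating rectangle $\RR_\beta$ genuinely separates the \emph{whole} collar $\widetilde A^\inn(\beta^n_i)$ (not just its trace on $\partial\wZ^m$) from the geodesic, and does so while being itself disjoint from that collar --- this requires using the nested structure $S^\inn\subset S\subset S^\well$ together with the $A^\out$-collars and the conformal-separation Assumption~\ref{ass:wZ:bDelta} to guarantee a definite annulus between $A^\inn(\beta^n_i)$ and the family $\RR_\beta$. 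The other subtlety, for \ref{Case3:lem:HypGeod in wZ^m}, is checking that $f^k$ conjugates the relevant separating families correctly: since $f^k$ is only defined on $\wZ^m$ (mapping into $\UU(\wZ^m)$), one must verify the curves of $\RR_\beta$ lie in $\wZ^m$ so that $(f^k)_*(\RR_\beta)$ makes sense and still separates, which again comes down to the collar assumptions controlling the discrepancy between $\wZ^m$ and $(f^k)_*(\wZ^m)$. I expect the constant $M(\bdelta)$ to emerge as $M\asymp\exp(O_\bdelta(1))$ from the requirement $\log\sqrt M\gg_\bdelta 1$.
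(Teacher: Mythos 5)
Your overall strategy for \ref{Case1:lem:HypGeod in wZ^m}--\ref{Case2:lem:HypGeod in wZ^m} (produce a wide family between two intervals $A,B\subset\partial\wZ^m$ bracketing $S^\inn(\beta^n_i)$ via Theorem~\ref{thm:wZ:shallow scale}, remove buffers, separate) is the same as the paper's, but Step 2 contains a genuine gap: the statement you lean on --- ``a geodesic crossing a rectangle of width $>1$ must intersect at most a $1$-wide subfamily of its vertical curves'' --- is not a valid extremal-length fact. A single arc carries no width, and nothing in the Non-Crossing Principle of \S\ref{sss:non cross princ} (which compares \emph{two} rectangles, each of width $>1$) prevents one curve from crossing every vertical curve of an arbitrarily wide rectangle; indeed any crosscut of $\wZ^m$ joining the two complementary arcs of $A\cup B$ does exactly that. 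So your ``contradiction'' does not exist, and geodesicity of $\gamma$ is never actually used. The correct mechanism, which is what the paper does, splits the two roles you have conflated: (a) take a \emph{geodesic} rectangle $\RR(A,B)$ (\S\ref{sss:GeodRect}); since $x$ and $y$ lie in the same complementary arc of $A\cup B$, their pair is unlinked with the endpoints of the vertical geodesic side facing them, so the hyperbolic geodesic $\gamma=[x,y]_{\wZ^m}$ is disjoint from that side and hence confined to the component of $\wZ^m\setminus\RR(A,B)$ attached to the arc containing $x,y$ --- this is pure hyperbolic geometry, with no width input; (b) the width is used only to confine the \emph{collar}: any vertical curve of $\RR(A,B)$ meeting $\widetilde A^\inn(\beta^n_i)$ must cross the annulus $A^\inn(\beta^n_i)$ of modulus $\ge\bdelta$, so by Lemma~\ref{lem:buffer:R O} such curves form buffers of total width $\le 1/\bdelta$; taking $\Width(\RR(A,B))\gg 1/\bdelta$ (note: the threshold is $1/\bdelta$, not $1$, which is why $M=M(\bdelta)$) and removing these buffers leaves a rectangle disjoint from $\widetilde A^\inn(\beta^n_i)$, so the connected collar stays on the far side, away from $\gamma$.

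A secondary remark: for \ref{Case3:lem:HypGeod in wZ^m} the paper simply applies $f^{-k}$, using Assumption~\ref{ass:wZ:collars} to transfer the component $U'$ to a shifted collar of $\wZ^m$ and reduce to \ref{Case1:lem:HypGeod in wZ^m}--\ref{Case2:lem:HypGeod in wZ^m}; your forward-transport by $(f^k)_*$ is workable in principle (conformal images of geodesics and geodesic rectangles behave as you say), but as written it inherits the invalid crossing argument, and the pullback route avoids the bookkeeping of restating the distance hypothesis in the image domain.
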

\begin{proof}
Choose a sufficiently big $M=M(\bdelta)$ and assume that $\widetilde A^\inn (\beta^n_i)$ has distance at least $M\length_{m+1}$ to $\{x,y\}$. By Theorem~\ref{thm:wZ:shallow scale}, there is a sufficiently wide geodesic rectangle $\RR\subset\wZ^m$ with $\partial^h \RR\subset \partial \wZ^m$ such that $\partial^h \RR$ separates $\widetilde A (\beta^n_i)\cap \partial \wZ^m$ from $\{x,y\}$ in $\partial \wZ^m$. By removing $1/\bdelta$ buffers from $\RR$, we obtain a rectangle $\RR^\new$ that separates $\widetilde A^\inn (\beta^n_i)$ from $\gamma$. This proves~\ref{Case1:lem:HypGeod in wZ^m} and~\ref{Case2:lem:HypGeod in wZ^m} is similar.

Properties~\ref{Case1:lem:HypGeod in wZ^m},~\ref{Case2:lem:HypGeod in wZ^m} and Assumption~\ref{ass:wZ:collars}, imply that $\gamma_k$ is disjoint from every component $U'$ of~\eqref{eq:comp U ni} that has $\partial \wZ^m$-distance at least $M\length_{m}$ to $U_{n,k}\cup U_{n,k}$. This implies \ref{Case3:lem:HypGeod in wZ^m}.
\end{proof}

\subsection{The inner geometry of peninsulas}\label{ss:peninsulas:geometry} For a level $n\ge m$ peninsula $\Penin$ of $\wZ^n$ with $\partial ^c\Penin\subset \partial \wZ^m$ and intervals $I,J\subset \partial ^c\Penin$, we denote by $\Fam_\Penin^-(I,J)$ the family of curves in $\Penin$ connecting $I$ and $J$. We write $\Width^-_\Penin(I,J)\coloneqq \Width(\Fam^-_\Penin(I,J))$.

By a \emph{grounded pair} of intervals $I,J\subset\partial \wZ^m$, we mean a pair of disjoint grounded intervals~\S\ref{sss:ground inter}.

\begin{lem}
\label{lmm:GP:rest}
Let $I,J\subset \partial \wZ^m$ be a grounded pair of intervals, and assume that $I$ is within a level $k\ge m$ peninsula $\Penin$ of $\wZ^m$. Set $J^\new\coloneqq J\cap \partial \Penin$. Then $\partial ^c\Penin\subset \partial \wZ^m$ and
\begin{equation}
\label{eq:lmm:GP:rest}
\Width^-_\Penin(I,J^\new)=\Width^-_{\wZ^{k+1}}(I,J^\new)-O_\bdelta(1)= \Width_{\wZ^m}^-(I,J)-O_\bdelta(1).
\end{equation}
If $J^\new \not=\emptyset$, then $I,J^\new$ is a grounded pair of $\wZ^{k+1}$.
\end{lem}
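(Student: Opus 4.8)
The statement asserts three facts about a grounded pair $I,J$ with $I$ inside a level-$k$ peninsula $\Penin$ of $\wZ^m$: first, that $\partial^c\Penin\subset\partial\wZ^m$; second, the two equalities in~\eqref{eq:lmm:GP:rest} up to $O_\bdelta(1)$; and third, that $I,J^\new$ is a grounded pair of $\wZ^{k+1}$. I would organize the argument in exactly this order, since the later claims depend on the structural fact established first.

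For the structural claim, recall the trichotomy in~\S\ref{sss:Fjord Penin} for a peninsula $\Penin=\Penin(\alpha^k_j)$ of $\wZ^k$ (and $\wZ^m$ with $m\le k$): either $\partial^c\Penin$ is in the interior of $\wZ^m$, or $\partial^c\Penin\subset\partial\wZ^m$, or one component of $\partial^c\Penin\setminus\{z\}$ lies in $\intr\wZ^m$ and the other in $\partial\wZ^m$, where $z\in\CP_k$. Since $I\subset\partial^c\Penin$ is a regular (indeed grounded) interval of $\partial\wZ^m$ with nonempty interior, we are not in the first case, and we must rule out the mixed case: in the mixed case $z$ is an endpoint of a dam $\beta^s_t\subset\partial\wZ^m$ with $s<k$, so $I$ would be split by $z$ unless $I$ lies entirely in the component of $\partial^c\Penin\setminus\{z\}$ inside $\partial\wZ^m$; but then $I$ has an endpoint at (or near) $z\in\CP_k$, and either $z$ is an endpoint of $I$ — which is impossible for a grounded interval since $z$ lies in an $S^\inn$-buffer by Assumption~\ref{ass:wZ:int patt} / Assumption~\ref{ass:wZ:Linking} (endpoints of dams are landing points of collar arcs, hence inside $S^\inn(\beta^s_t)$) — or $I$ is contained in the inner half and I can re-examine which peninsula it truly belongs to. So $I\subset\partial^c\Penin$ with $\partial^c\Penin\subset\partial\wZ^m$. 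This also shows $J^\new=J\cap\partial\Penin$ is an interval with the relevant endpoint either an endpoint of $J$ (grounded) or the endpoint of $\partial^c\Penin$, which is an endpoint of the channel $\alpha^k_j$, i.e. a point of $\CP_{k+1}\setminus\CP_k$; in either case $J^\new$ is grounded rel $\wZ^{k+1}$, and $I$ is grounded rel $\wZ^{k+1}$ because $S^\inn(\wZ^{k+1})\subset S^\inn(\wZ^m)$ on $\partial^c\Penin$ (the only extra grounding constraints of $\wZ^{k+1}$ over $\wZ^m$ come from deeper dams, which intersect $\partial\wZ^m$ inside $\Penin$ precisely as $S^\inn$-buffers of $\wZ^m$). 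This gives the third claim.

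For the middle equalities, the right-hand one, $\Width^-_{\wZ^{k+1}}(I,J^\new)=\Width^-_{\wZ^m}(I,J)-O_\bdelta(1)$, follows by the standard buffer-removal argument as in the proof of Claim~\ref{cl7:base case} in Theorem~\ref{thm:wZ:shallow scale}: curves in $\Fam^-_{\wZ^m}(I,J)$ that enter any peninsula whose channel lies outside $\lfloor I,J^\new\rfloor$ (in particular any peninsula not equal to $\Penin$ separating $I$ from $J\setminus J^\new$) form a buffer of definite width by Lemma~\ref{lem:buffer:R O}, using that such channels are separated from $I$ by a definite annulus (the collars $A^\inn$ together with Theorem~\ref{thm:wZ:shallow scale} giving wide separating rectangles, exactly as in Lemma~\ref{lem:HypGeod in wZ^m}); after removing two $O_\bdelta(1)$ buffers, the remaining curves connect $I$ to $J^\new$ inside $\wZ^{k+1}$ (equivalently, inside the region below the channel of $\Penin$), and a restriction argument (\S\ref{sss:short subcurves}) gives the inequality in one direction while overflow gives the other. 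The left-hand equality, $\Width^-_\Penin(I,J^\new)=\Width^-_{\wZ^{k+1}}(I,J^\new)-O_\bdelta(1)$, is the same mechanism applied one level up: relative to $\wZ^{k+1}$, the peninsula $\Penin$ is the component of $\wZ^{k+1}\setminus$(the dam side) attached to the channel $\alpha^k_j$, so curves in $\Fam^-_{\wZ^{k+1}}(I,J^\new)$ that leave $\Penin$ must cross the channel $\alpha^k_j$, whose collar $A^\inn(\alpha^k_j)$ has modulus $\ge\bdelta$ and hence defines an $O_\bdelta(1)$ buffer; removing it confines the curves to $\Penin$, and again overflow gives the reverse bound.

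The main obstacle I anticipate is the first (structural) step: carefully excluding the mixed case of the trichotomy and correctly identifying \emph{which} peninsula $I$ lies in, because the hypothesis only says $I$ is "within a level $k$ peninsula" and one must reconcile this with the grounding hypothesis on the endpoints of $I$; this is where Assumptions~\ref{ass:wZ:int patt} and~\ref{ass:wZ:Linking} on the exact combinatorial location of collar landing points (inside $\CP_{m+1}\setminus\CP_m$ and inside the relevant $S$-buffers) do the real work, and one has to be precise about $S^\inn$ versus $S$ versus $S^\well$. Once the geometry is pinned down, the width estimates are routine applications of the buffer-removal / restriction / overflow package already used repeatedly (e.g. Claim~\ref{cl7:base case}), with all losses of size $O_\bdelta(1)$ coming from collars of modulus $\ge\bdelta$.
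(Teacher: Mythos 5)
Your overall strategy is the paper's: establish $\partial^c\Penin\subset\partial\wZ^m$ from groundedness, note that the new endpoint of $J^\new$ is an endpoint of the channel and hence lies in $\CP_{k+1}\setminus\CP_k$, and get the width equalities by observing that curves leaving $\Penin$ must cross the collar $A^\inn(\alpha^k)$, which costs only $O_\bdelta(1)$. However, your execution of the first (structural) step — which you yourself flag as the crux — has a genuine gap. In the mixed case of the trichotomy of \S\ref{sss:Fjord Penin}, you claim that $I$ "has an endpoint at (or near) $z$", but nothing forces this: a priori $I$ could sit strictly inside the component of $\partial^c\Penin\setminus\{z\}$ that lies on $\partial\wZ^m$, far from $z$, and your fallback ("re-examine which peninsula it truly belongs to") is not an argument. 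The point that actually kills the interior/mixed cases, and which the paper uses, is quantitative: if $\partial^c\Penin\not\subset\partial\wZ^m$, then the tip $z\in\CP_k$ of the peninsula is an endpoint of a dam $\beta^{k'}_s\subset\partial\wZ^m$ with $k'<k$, and the $S^\inn$-buffer of that dam around $z$ extends combinatorially by at least $200\length_{k'+1}\ge 200\length_k$ on each side of $z$ (Assumption~\ref{ass:wZ:CombSpace}), whereas the entire coast of a level-$k$ peninsula has combinatorial length $O(\length_k)$ around $z$. Hence the whole visible part of $\partial^c\Penin$ would lie inside an $S^\inn$-buffer of level $<k$, which is incompatible with $I\subset\partial^c\Penin$ having grounded endpoints. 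Without this (or an equivalent) argument the conclusion $\partial^c\Penin\subset\partial\wZ^m$ — and with it the later steps, since a partially submerged coast would let curves exit $\Penin$ without crossing $A^\inn(\alpha^k)$ — is not established.

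Two smaller points. For the equality $\Width^-_{\wZ^{k+1}}(I,J^\new)=\Width^-_{\wZ^m}(I,J)-O_\bdelta(1)$ you invoke other peninsulas, separating rectangles from Theorem~\ref{thm:wZ:shallow scale}, and Lemma~\ref{lem:HypGeod in wZ^m}; this is both heavier than needed and misdirected, since a curve of $\Fam^-_{\wZ^m}(I,J)$ avoiding other peninsulas need not stay in $\wZ^{k+1}$ nor land on $J^\new$. The correct mechanism is exactly the one you use for the left-hand equality: once $\partial^c\Penin\subset\partial\wZ^m$, any curve from $I\subset\partial^c\Penin\setminus S^\inn(\alpha^k)$ that leaves $\Penin$ (whether inside $\wZ^m$ or inside $\wZ^{k+1}$) must cross $A^\inn(\alpha^k)$, so such curves carry width $O_\bdelta(1)$, and the curves staying in $\Penin$ connect $I$ to $J^\new$; comparing everything to $\Width^-_\Penin(I,J^\new)$ gives both equalities at once. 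Finally, you never treat the case $J^\new=\emptyset$, in which all three quantities in~\eqref{eq:lmm:GP:rest} are $O_\bdelta(1)$ because every curve of $\Fam^-_{\wZ^m}(I,J)$ must then cross $A^\inn(\alpha^k)$; the paper handles this explicitly and your argument implicitly assumes $J^\new\neq\emptyset$.
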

\begin{proof}

Since the endpoints of $I$ are grounded, $\partial^c \Penin$ is not in any $S^\inn$ buffer of level $<k$; this implies that  $\partial^c \Penin\subset \partial \wZ^m$. Let $\alpha_k$ be the channel of $\Penin$. If $J^\new=\emptyset$, then every curve in $\Fam^-_{\wZ^m}(I,J)$ crosses $A^\inn(\alpha_k)$ before intersecting $\alpha_k$; i.e.~all parts of~\eqref{eq:lmm:GP:rest} are equal to $O_{\bdelta}(1)$ and the lemma follows.

If $J^\new\not=\emptyset$, then  $J^\new$ is a grounded interval of $\wZ^{k+1}$ because the endpoints of $\alpha$ are not in any $S^\inn$-buffers of level $\ge k+1$. Since $I\subset \partial ^c\Penin\setminus S^\inn (\alpha)$, the width of curves in $\Fam^-_{\wZ^m}(I,J)$ that are not in $\Penin$ is $O_{\bdelta}(1)$ because every such curve crosses $A^\inn(\alpha_k)$. This implies the lemma. 
\end{proof}

The following lemma combined with Theorem~\ref{thm:wZ:shallow scale} allows us to inductively calculate the width between grounded intervals.

\begin{lem}
\label{lmm:gound inter:penins}
Let $I,J\subset \partial \wZ^m$ be a grounded pair with $|\lfloor I,J\rfloor \le 1/2$.
 Assume that both $I,J$ intersect a level $m$ peninsula $\Penin=\Penin(\alpha^m_i)$. Set \[I^\new\coloneqq I\cap \partial^c \Penin , \sp\sp I'\coloneqq I\setminus(\beta^m_{i-1}\cup I^\new) \sp\sp J^\new \coloneqq J\cap \partial^c\Penin,\sp\sp J'\coloneqq J\setminus (J^\new\cup \beta^m_i).\] Then: 
\[\begin{array}{cc}
\Width^-_{\wZ^m}(I,J)&=\Width^-_{\wZ^m}(I',J') + \Width_\Penin^-(I^\new, J^\new)+O_\bdelta(1)\\
&\sp=\Width^-_{\wZ^m}(I',J') + \Width_{\wZ^{m+1}}^-(I^\new, J^\new)+O_\bdelta(1)
\end{array}.\]
\end{lem}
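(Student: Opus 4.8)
\textbf{Proof plan for Lemma~\ref{lmm:gound inter:penins}.}
The strategy is to decompose the rectangle realizing $\Width^-_{\wZ^m}(I,J)$ into two pieces according to whether vertical curves stay inside the peninsula $\Penin$ or not, and to show the ``mixed'' curves carry only $O_\bdelta(1)$ width. First I would set up notation: since $I,J$ is a grounded pair both intersecting $\Penin=\Penin(\alpha_i^m)$, Lemma~\ref{lmm:GP:rest} (applied once to $I$ and once to $J$) already gives $\partial^c\Penin\subset\partial\wZ^m$, that $I^\new=I\cap\partial^c\Penin$ and $J^\new=J\cap\partial^c\Penin$ are grounded intervals of $\wZ^{m+1}$ (when nonempty), and the identity $\Width^-_\Penin(I^\new,J^\new)=\Width^-_{\wZ^{m+1}}(I^\new,J^\new)-O_\bdelta(1)$. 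So the second displayed equality in the lemma follows immediately from the first, and I only need to prove
\[
\Width^-_{\wZ^m}(I,J)=\Width^-_{\wZ^m}(I',J')+\Width^-_\Penin(I^\new,J^\new)+O_\bdelta(1).
\]

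Next I would take the canonical rectangle $\RR$ of $\Fam^-_{\wZ^m}(I,J)$ (see \S\ref{sss:can rect}) and classify its vertical curves $\gamma\colon[0,1]\to\wZ^m$ by the location of their two endpoints among the three sub-intervals $I^\new$, $I'$ of $I$ (separated by the dam $\beta^m_{i-1}$, which lies in $\partial\wZ^m$ between them) and similarly $J^\new$, $J'$ of $J$. A curve from $I'$ or $J'$ that wants to reach $\Penin$ must cross the collar annulus $A^\inn(\alpha^m_i)$ (or $A^\inn(\beta^m_{i-1})$, resp.\ $A^\inn(\beta^m_{i})$) of modulus $\ge\bdelta$, so by Lemma~\ref{lem:buffer:R O} the family of such curves forms a buffer of width $O_\bdelta(1)$; the same applies to curves from $I^\new$ or $J^\new$ that leave $\Penin$ — they too must traverse $A^\inn(\alpha^m_i)$. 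Deleting these $O_\bdelta(1)$-buffers splits $\RR$, by the Parallel Law~\eqref{eq:ParLaw} (and concatenation~\eqref{eq:width: concat}), into a sublamination inside $\Penin$ connecting $I^\new$ to $J^\new$ — whose width is $\le\Width^-_\Penin(I^\new,J^\new)$ — and a sublamination in $\wZ^m\setminus\Penin$ connecting $I'$ to $J'$ — whose width is $\le\Width^-_{\wZ^m}(I',J')$ (here the hypothesis $|\lfloor I,J\rfloor|\le 1/2$ guarantees that $I'$, $J'$ face each other on the correct side, so that curves avoiding $\Penin$ genuinely realize $\Fam^-_{\wZ^m}(I',J')$ rather than wrapping the long way). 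This yields the upper bound $\Width^-_{\wZ^m}(I,J)\le\Width^-_{\wZ^m}(I',J')+\Width^-_\Penin(I^\new,J^\new)+O_\bdelta(1)$.

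For the matching lower bound, I would start with the canonical rectangles $\RR_\Penin$ of $\Fam^-_\Penin(I^\new,J^\new)$ and $\RR'$ of $\Fam^-_{\wZ^m}(I',J')$; since $\Penin$ and $\wZ^m\setminus\Penin$ meet only along the channel $\alpha^m_i$, these two laminations are disjoint, and the Parallel Law gives a single lamination in $\Fam^-_{\wZ^m}(I,J)$ of width $\ge\Width^-_\Penin(I^\new,J^\new)+\Width^-_{\wZ^m}(I',J')-O_\bdelta(1)$ after removing boundary buffers forced by the fact that $I$, $J$ themselves straddle the dams $\beta^m_{i-1},\beta^m_i$ (this loses only $O_\bdelta(1)$ by Assumption~\ref{ass:wZ:collars}, exactly as in the proof of Claim~\ref{cl7:base case}). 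Combining the two bounds gives the claimed equality. \textbf{The main obstacle} I anticipate is the careful bookkeeping of which collar a ``mixed'' curve is forced to cross — a curve leaving $I'$ could in principle re-enter $\wZ^m$ near $J$ without ever touching $\Penin$, so the buffer argument must be organized around the \emph{channel} $\alpha^m_i$ as the unique gate into $\Penin$ (using $\partial^c\Penin\subset\partial\wZ^m$ from Lemma~\ref{lmm:GP:rest} and the intersection pattern of Assumption~\ref{ass:wZ:int patt}), rather than around individual dams; getting the three-way split of $I$ and $J$ consistent with the clockwise order and with $|\lfloor I,J\rfloor|\le 1/2$ is where the argument is most delicate.
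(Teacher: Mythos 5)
There is a genuine gap in the upper bound, at precisely the point where the paper has to work: your claim that every curve from $I^\new$ or $J^\new$ which leaves $\Penin$ (equivalently, every curve of $\Fam^-_{\wZ^m}(I,J)$ meeting the channel $\alpha^m_i$) ``must traverse $A^\inn(\alpha^m_i)$'' is false. The channel $\alpha^m_i$ lies \emph{inside} the filled-in collar $\widetilde A^\inn(\alpha^m_i)$, and $\widetilde A^\inn(\alpha^m_i)\cap\partial\wZ^m$ consists of the buffer pieces $S^\inn_{y_{i-1}},S^\inn_{x_i}$ sitting on the coast right next to the channel endpoints, hence inside $I^\new$- and $J^\new$-territory. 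A curve starting at a point of $I\cap S^\inn_{y_{i-1}}$, dipping under $\alpha^m_i$ out of $\Penin$, and returning to $J\cap S^\inn_{x_i}$ can stay entirely inside the disk $\widetilde A^\inn(\alpha^m_i)$ and never cross the annulus $A^\inn(\alpha^m_i)$; the modulus bound $\mod A^\inn(\alpha^m_i)\ge\bdelta$ and Lemma~\ref{lem:buffer:R O} give no control over this family, and it belongs neither to $\Fam^-_\Penin(I^\new,J^\new)$ nor to $\Fam^-_{\wZ^m}(I',J')$. This is exactly the family $\AA$ the paper isolates: since $I,J$ are \emph{grounded}, meeting these buffers forces $\beta^m_{i-1}\cup\big(S^\inn(y_{i-1})\cap\partial\wZ^m\big)\subset I$ and $\big(S^\inn(x_i)\cap\partial\wZ^m\big)\cup\beta^m_i\subset J$, so every such curve contains a subcurve joining $S^\inn_{y_{i-1}}$ to $S^\inn_{x_i}$, whose width is $\le 1/\bdelta$ by the conformal-separation Assumption~\ref{ass:wZ:bDelta}. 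Your proposal never invokes Assumption~\ref{ass:wZ:bDelta}, and the only way you use groundedness is through Lemma~\ref{lmm:GP:rest}; as written, your argument would prove the statement for merely regular $I,J$, which is a sign the essential step is missing.

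A secondary point: you cannot quote Lemma~\ref{lmm:GP:rest} verbatim for the pair $(I^\new,J^\new)$ to get the second displayed equality, because the endpoints $y_{i-1},x_i$ of these intervals lie in the level-$m$ buffers $S^\inn_{y_{i-1}},S^\inn_{x_i}\subset S^\inn(\wZ^m)$, so $(I^\new,J^\new)$ is \emph{not} a grounded pair of $\partial\wZ^m$. The paper instead observes that $y_{i-1},x_i$ avoid all $S^\inn$-buffers of level $\ge m+1$, so $I^\new,J^\new$ are grounded rel $\wZ^{m+1}$, and then the comparison $\Width^-_\Penin(I^\new,J^\new)=\Width^-_{\wZ^{m+1}}(I^\new,J^\new)+O_\bdelta(1)$ again reduces to the same $O_\bdelta(1)$ bound on curves meeting $\alpha^m_i$ (collar crossing for those leaving $\widetilde A^\inn(\alpha^m_i)$, Assumption~\ref{ass:wZ:bDelta} for those inside it). Your lower-bound sketch and overall decomposition are otherwise in the right spirit, but the proof stands or falls on the treatment of the family inside $\widetilde A^\inn(\alpha^m_i)$, which is absent.
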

\begin{proof}
 The intervals $I^\new$ and $J^\new$ are grounded intervals of $\wZ^{m+1}$ because the endpoints $y_{i-1},x_i$ of $\alpha^m_i$ are not in any $S^\inn$-buffer of level $\ge m+1$. We need to show that the width of the set $\Fam'$ of curves in $\Fam^-_{\wZ^m}(I,J)$ intersecting $\alpha^m_i$ is $O_\bdelta(1)$. Let $\AA$ be the set of curves in $\Fam'$ that are in $\widetilde A^\inn(\alpha^m_i)$. Note that $\Width(\Fam' \setminus \AA)= O_\bdelta(1)$ because every curve in $\Fam'\setminus \AA$ crosses $A^\inn(\alpha^m_i)$.

Assume $\AA\not=\emptyset$. This is only possible if
\[\beta^m_{i-1} \cup (S^\inn(y_{i-1})\cap \partial \wZ^m) \subset I\sp\sp\text{ and }\sp\sp (S^\inn(x_{i})\cap \partial \wZ^m) \cup \beta^m_{i}  \subset J\]
because $I,J$ are grounded. Therefore, every curve in $\AA$ has a subcurve connecting $S^\inn _{y_{i-1}}$ and $S^\inn _{x_{i}}$. By Assumption~\ref{ass:wZ:bDelta}, we obtain $\Width(\AA)= O_\bdelta(1)$.
\end{proof}

\subsection{Localization and Squeezing lemmas} 
\label{ss:wZ:local lmm}
We are now ready to establish Localization and Squeezing Properties for $\partial \wZ^m$, compare with \S\ref{sss:LocProp} and~\S\ref{sss:SqueezProp}. See~\S\ref{sss:LocProp} for the notion of an innermost subpair.

\begin{loclmm}
\label{lem:trading  width to space}
For every $\lambda>1$ the following holds. If $I, J\subset \partial\wZ^m$ is a grounded pair  with $|\lfloor I, J\rfloor |\le 1 -\frac 1 \lambda \min\{|I|,|J|\}$, then there is an innermost subpair \[ I^\new,J^\new\sp\sp\text{ with }\sp\sp I^\new\subset I,\sp J^\new\subset J \] satisfying \[|I^\new|,|J^\new|\le \frac 1 \lambda \min\{|I|, |J|\}\] such that, up to $O_{\bdelta}(\log  \lambda)$, the width of $\Fam ^-(I,J)$ is contained in $\Fam^-(I^\new,J^\new)$:
\begin{equation}
\Width^- (I\setminus I^\new,J)+ \Width^-(I,J\setminus J^\new) =O_{\bdelta}(\log  \lambda).
\end{equation}

Moreover, we can assume that $\max\{|I^\new|, |J^\new|\} < 2 \min\{ |I^\new|, |J^\new|\}$. The subpair $ I^\new,J^\new$ is grounded rel $\wZ^n$, where $n\ge m$ is the deepest level such that $I^\new,J^\new\subset \partial \wZ^n$.
\end{loclmm}

\begin{squeelmm}
\label{lem:squeezing}
There is a constant $C_\bdelta$ such that the following holds. If $I,J\subset \partial \wZ^m$ is a grounded pair of intervals with $|\lfloor I, J\rfloor |\le 1/2$ such that \[\Width^-(I,J)\ge C_\bdelta \log \lambda, \sp\sp\lambda>2,\]
then $\dist (I,J) \le\frac{1}{\lambda} \min\{|I|,|J|\}$.
\end{squeelmm}

\begin{proof}[Proof of Lemmas \ref{lem:trading  width to space} and~\ref{lem:squeezing}] By splitting $I,J$ into shorter grounded intervals, we can assume that $|\lfloor I, J\rfloor |\le 1/2$. 

Let $L\subset \lfloor I,J \rfloor$ be the shortest complementary interval between $I$ and $J$. The case $\dist(I,J) =|L|\ge \length_m$ follows from Theorem~\ref{thm:wZ:shallow scale}: we can find points $a,b\in \CP_{m}$ so that the right interval $I^\new$ of $I\setminus \{a\}$ and the left interval $J^\new$ of $J\setminus \{b\}$ satisfy the conclusion of Lemma \ref{lem:trading  width to space}. (The intervals $I^\new, J^\new$ are grounded because the set $\CP_m$ is away from $S^\inn$-buffers.)

Let $m_j\ge m$ be the smallest regularization level such that $|I|,|J|\ge \length_{m_j}$. If $m< m_j$, then by Lemma~\ref{lmm:GP:rest}, up to $O_\bdelta(1)$ the width of $\Fam^-_{\wZ^m}(I,J)$ is in $\Fam^-_{\wZ^{m_j}}(I^\new,J^\new)$; set $I_1=I^\new,J_1=J^\new\subset \partial \wZ^m\cap \partial\wZ^{m_j}$. If $m=m_j$, set $I_1\coloneqq I, \sp J_1\coloneqq J$. Note that $L$ is still the shortest complementary interval between $I_1, J_1$.

The case $\dist(I_1,J_1) =|L|\ge \length_{m_j}$ follows again from Theorem~\ref{thm:wZ:shallow scale}. Otherwise,  as in Lemma~\ref{lmm:gound inter:penins}, we recognize subintervals $I'_1,  I^\new_1\subset I_1$ and $J^\new_1, J'_1\subset J_1$. Up to $O_\bdelta(1)$, the family $\Fam^-_{\wZ^{m_j}} (I_1,J_1)$ is $\Fam^-_{ \wZ^{m_j} }(I'_1,J'_1)\sqcup \Fam^-_{ \wZ^{m_j+1} }(I^\new_1,J^\new_1)$. Applying Theorem~\ref{thm:wZ:shallow scale} to $\Fam^-_{ \wZ^{m_j} }(I'_1,J'_1)$, we either conclude the lemma if $\min\{|I'_1|,|J'_1|\}\ge \lambda  |L|$ or we reduce the problem to the pair $I_2\coloneqq I_1^\new, J_2\coloneqq J^\new_1$ with a new $\lambda_2< \lambda/2$ because $|I_2|,|J_2|\le 2\length_{m_k}/5$ by Assumption~\ref{ass:wZ:CombSpace}. Proceeding by induction, we conclude the lemma with at most $\log_2 \lambda$ steps. 
\end{proof}

\subsection{Grounded subintervals}
\label{ss:Bett Grnd Inter}
As before, we extend continuously the distance function $\dist_{\wZ^m}$ specified in~\eqref{eq:dist on wZ^m} to all points of $\partial \wZ^m$. Then $\Fam^-_\lambda(I)$ and $\Width^-_\lambda(I)$ are well defined for all intervals $I\subset \partial \wZ^m$. 

Given an interval $J\subset \partial Z$, we define:
\begin{itemize}
\item $J^\GRND\subset J$ be the biggest grounded interval in $J$; and
\item $J^\grnd\supset J$ be the smallest grounded interval containing $J$.
\end{itemize} 
We allow $J^\grnd =\emptyset$ or $J^\GRND = \partial \wZ^m$.

\begin{lem}[$X$ vs $X^\grnd$]
\label{lem:I vs I^grnd:with separ} Consider a family $\Fam(I,J)$, where $I,J\subset \partial \wZ^m$. Let $A,B\subset\partial \wZ^m$ be two complementary intervals to $I,J$. 
\begin{enumerate}[label=\text{(\roman*)},font=\normalfont,leftmargin=*]
\item \label{Case1:lem:I vs I^grnd:with separ}If $\dist(I,J)>\length_{m}$, then for every interval $X\subset \partial \wZ^m$, there are at most $O_\bdelta(1)$ curves in $\Fam(I,J)$ intersecting ${X\setminus X^\grnd}$. 
\item \label{Case2:lem:I vs I^grnd:with separ}If $|I|, |J|, |A|\ge \length_m$, then for every interval $X\subset A$, there are at most $O_\bdelta(1)$ curves in $\Fam(I,J)$ intersecting ${X\setminus X^\grnd}$. 
\end{enumerate}
\end{lem}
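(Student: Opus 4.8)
The statement claims that a curve family $\Fam(I,J)$ between two intervals of $\partial\wZ^m$ loses only boundedly many curves when we shrink an interval $X$ to its largest grounded subinterval $X^\GRND$ (equivalently, $X\setminus X^\grnd$ is hit by only $O_\bdelta(1)$ curves). The key point is that $X\setminus X^\grnd$ is contained in a single $S^\inn$-buffer: indeed, if an endpoint of $X$ lies in $S^\inn(\wZ^m)$, then by Lemma~\ref{lem:S are disj} the connected component of $S^\inn(\wZ^m)$ containing that endpoint is a single $S^\inn(\beta^{n_i}_i)$, and $X\setminus X^\grnd$ is the portion of that component lying outside the grounded hull of $X$. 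So it suffices to bound, for a single dam $\beta^n_i$ with $S^\inn(\beta^n_i)$ meeting an endpoint region of $X$, the width of the subfamily of $\Fam(I,J)$ that intersects $S^\inn(\beta^n_i)\setminus (X^\grnd)$; there are at most two such dams (one per endpoint of $X$), so a factor of $2$ is harmless.

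For a fixed such dam $\beta^n_i$, I would invoke the collar $A^\inn(\beta^n_i)$ with $\mod A^\inn(\beta^n_i)\ge\bdelta$ from Assumption~\ref{ass:wZ:collars}, together with the $S$-buffer structure: the filled-in collar $\widetilde A^\inn(\beta^n_i)$ meets $\partial\wZ^m$ precisely in $S^\inn(\beta^n_i)$ (see~\eqref{eq:dfn S^inn beta}), and the part of $A^\inn(\beta^n_i)$ separating $\beta^n_i$ (hence $S^\inn(\beta^n_i)$) from the rest of $\partial\wZ^m$ is a definite annulus. A curve of $\Fam(I,J)$ that intersects $S^\inn(\beta^n_i)$ but whose endpoints are \emph{not} both inside $S(\beta^n_i)$ must cross this annulus, so the subfamily of such curves has width $O_\bdelta(1)$ by the Grötzsch/serial-rule estimate for moduli of annuli. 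It remains to control curves of $\Fam(I,J)$ with \emph{both} endpoints inside $S(\beta^n_i)$ — but this can only happen if $I$ or $J$ (not merely $X$) has an endpoint in $S(\beta^n_i)$; the hypotheses of the two cases rule this out up to finitely many exceptions, as follows.

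In case~\ref{Case1:lem:I vs I^grnd:with separ} we have $\dist(I,J)>\length_m$. Since every $S$-buffer $S(\beta^n_i)$ has length $O_\bdelta(\length_{n+1})\le O_\bdelta(\length_m)$ — more precisely $S(\beta^n_i)\subset T^{m+1}_i$ has length controlled by $\length_n$ via Assumption~\ref{ass:wZ:CombSpace}, but in any case $|S(\beta^n_i)|$ is comparable to $|\beta^n_i|$ and hence $\le |T_i|\le\length_m+\length_{m+1}$ only when $n=m$, and $\le\length_n$ when $n>m$ — I would split into $n>m$ and $n=m$. If $n>m$ then $|S(\beta^n_i)|\le 2\length_{m+1}<\dist(I,J)$, so $S(\beta^n_i)$ cannot contain an endpoint of both $I$ and an endpoint of $J$, hence no curve of $\Fam(I,J)$ has both endpoints in $S(\beta^n_i)$, and we are done by the previous paragraph. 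If $n=m$ there are at most two dams $\beta^m_i$ of level exactly $m$ whose $S$-buffer meets the endpoint region of $X$ (and each such $S(\beta^m_i)$, being long, can absorb at most one endpoint each of $I$ and $J$), so again after discarding $O(1)$ exceptional curves the bound follows. In case~\ref{Case2:lem:I vs I^grnd:with separ} we additionally assume $X\subset A$ and $|I|,|J|,|A|\ge\length_m$; then a curve of $\Fam(I,J)$ intersecting $X\setminus X^\grnd\subset S(\beta^n_i)$ has its endpoints in $I$ and $J$, which are disjoint from $A\supset X$, so it cannot have both endpoints in the single buffer $S(\beta^n_i)$ unless $|S(\beta^n_i)|$ exceeds the gap between $I$ and $J$ on the $A$-side, again impossible for $n>m$ and giving $O(1)$ exceptions for $n=m$. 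In both cases the conclusion is assembled from the annulus estimate plus the finite exceptional count.

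\textbf{Main obstacle.} The delicate point is the bookkeeping for level-$m$ dams $\beta^m_i$, whose $S$-buffers are not short relative to $\length_m$ and can therefore genuinely contain endpoints of $I$ or $J$; one must argue that each endpoint of $I\cup J$ lies in at most one $S$-buffer and that only $O(1)$ dams are relevant to a given $X$, using the disjointness in Lemma~\ref{lem:S are disj} and the combinatorial-space estimates of Assumption~\ref{ass:wZ:CombSpace}, so that discarding these finitely many curves does not affect the $O_\bdelta(1)$ bound. The remaining ingredient, that crossing a modulus-$\ge\bdelta$ annulus costs width $O_\bdelta(1)$, is routine from the definitions in~\S\ref{sss:EnclAnn} and the Grötzsch inequality~\eqref{eq:Grot}.
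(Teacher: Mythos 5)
Your overall skeleton is the paper's: locate each component of $X\setminus X^\grnd$ inside $S^\inn(\beta)$ for a dam $\beta$ (there are at most two relevant dams), and then bound the width of the curves of $\Fam(I,J)$ reaching that buffer by forcing them to cross a collar annulus of modulus $\ge\bdelta$. However, the annulus you invoke is the wrong one, and this is not cosmetic. By \eqref{eq:dfn S^inn beta}, $S^\inn(\beta)=\widetilde A^\inn(\beta)\cap\partial\wZ^m$, so $S^\inn(\beta)$ runs all the way out to $\partial^\out A^\inn(\beta)$, where it abuts the rest of $\partial\wZ^m$; consequently no ``part of $A^\inn(\beta)$'' separates $S^\inn(\beta)$ from $\partial\wZ^m\setminus S(\beta)$, and a curve of $\Fam(I,J)$ that touches a point of $S^\inn(\beta)$ lying inside the annulus $A^\inn(\beta)$ need not cross $A^\inn(\beta)$ at all. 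The annulus that does the job is the \emph{outer} collar $A^\out(\beta)$ (also of modulus $\ge\bdelta$ by Assumption~\ref{ass:wZ:collars}): since $\partial^\inn A^\out(\beta)=\partial^\out A^\inn(\beta)$, the filled disk $\widetilde A^\inn(\beta)\supset S^\inn(\beta)$ is enclosed by $A^\out(\beta)$, so any curve from $S^\inn(\beta)$ to a set disjoint from $\widetilde A(\beta)$ crosses $A^\out(\beta)$ in full. This is exactly the annulus the paper uses.

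Once the correct annulus is in place, the only thing to prove is that $A^\out(\beta)$ is disjoint from $I$ or from $J$, and here your argument has a genuine gap. Your length bookkeeping works (after fixing the arithmetic: for a dam of level $n\ge m+1$ its buffers lie in an interval of $\Dbb_n$ of length $\le\length_{m+1}+\length_{m+2}\le\length_m$, whereas your chain ``$\le 2\length_{m+1}<\dist(I,J)$'' is not valid, e.g.\ for golden-mean data $2\length_{m+1}>\length_m$), but it breaks down precisely for level-$m$ dams, whose buffer structure sits in an interval of length up to $\length_m+\length_{m+1}$ and can therefore meet both $I$ and $J$ a priori. You dispose of this case by ``discarding $O(1)$ exceptional curves,'' which has no meaning for extremal width: if both $I$ and $J$ penetrated the filled collar of a single dam, the subfamily of curves of $\Fam(I,J)$ joining those portions would be an uncountable family that could carry arbitrarily large width, and nothing in your argument bounds it. The paper's proof avoids the residual case altogether: it shows from the hypotheses ($\dist(I,J)>\length_m$ forces both complementary intervals to have length $\ge\length_m$ in case (i); $|I|,|J|,|A|\ge\length_m$ in case (ii), together with the intersection pattern of Assumption~\ref{ass:wZ:int patt}) that $A^\out(\beta)$ is disjoint from $I$ or from $J$, so every curve hitting $X\setminus X^\grnd$ crosses it and the total width is at most $O_\bdelta(1)$. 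To repair your proof you must replace $A^\inn$ by $A^\out$ and supply this disjointness argument for level-$m$ dams instead of the exceptional-curve count.
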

\begin{proof}
Consider Case~\ref{Case1:lem:I vs I^grnd:with separ}. Every component (out of at most $2$) of ${X\setminus X^\grnd}$ is within $ S^\inn(\beta)$ for a dam $\beta\subset L$. Since $|A|, |B|\ge \length_{m}$, the color $A^\out (\beta)$ is disjoint (and hence separate $S^\inn(\beta)$) from  either $I$ or $J$. The lemma now follows from $\mod A^\out (\beta)\ge \bdelta$.

Case~\ref{Case2:lem:I vs I^grnd:with separ} follows from a similar argument. Every component of ${X\setminus X^\grnd}$ is within $ S^\inn(\beta)$. Since $|I|, |J|, |A|\ge \length_m$, the annulus $A^\out (\beta)$ separates $S^\inn(\beta)$ from either $I$ or $J$.
\end{proof}

\subsection{Pseudo-bubbles}\label{ss:Ps-bubles}

 A \emph{bubble} $Z_\ell$ is the closure of a connected component of $f^{-k}(Z)\setminus Z$. The \emph{generation} of $Z_\ell$ is the minimal $k$ such that $f^k(Z_\ell)=\overline Z$; i.e. $f^k\colon Z_\ell\to \overline Z$ is the first landing. Given a pseudo-Siegel disk $\widehat Z^m$, the \emph{pseudo-bubble} $\widehat Z_\ell$ is the closure of the connected component of $f^{-k}\big( \intr \widehat Z^m\big)$ containing $\intr Z_\ell$. In other words, $\widehat Z_\ell$ is obtained from $Z_\ell$ by adding the lifts of all reclaimed fjords (components of $\widehat Z^m\setminus \overline Z$) along $f^k\colon Z_\ell\to \overline Z$.

Channels, dams, collars $A^\inn, A^\out$, extra protections $\XX^n_i$ are defined for $\wZ_\ell$ as pullbacks of the corresponding objects along $f^k\colon \wZ_\ell\to  \wZ^m$. For instance, $\XX(\wZ_\ell)$ is the pullback of $\XX(\wZ^m)$, see~\eqref{eq:dfn XX(wZ)}, under $f^k$. The length of an interval $I\subset \partial \wZ_\ell$ is the length of its image $f^k(I)\subset \partial \wZ^m$. All results of this section are valid for pseudo-bubbles. In particular, the results concerning the inner geometry of $\wZ^m$ (such as Theorem~\ref{thm:wZ:shallow scale}, Lemmas~\ref{lem:trading  width to space} and~\ref{lem:squeezing}) are obtained by identifying $\wZ_\ell$ with $\wZ^m$ via $f^k$. The results concerning the outer geometry of $\wZ_\ell$ (see~\S\ref{ss:WellGrounded}) are obtained by repeating the arguments.

\section{Snakes}\label{s:snakes} Consider the family $\Fam^\circ _L(I,J)=\Fam^\circ_{L,\wZ^m}(I,J)$ as in~\S\ref{sss:SnakeLmm:simpl toplog}. Our principal result of the section is the following generalization of Lemma~\ref{simplmm:SnLmm:Z}:

\begin{snakelmm}
\label{simplmm:SnLmm:wZ}
Let $I,J\subset \partial \wZ^m$ be a pair of grounded intervals and let $L$ be a complementary interval between $I,J$. Normalize $I<L<J$ and set
\[K\coloneqq \Width^\circ_L(I,J) - \Width^+(I,J).\]
Assume that $|N|\ge \length_{m}$, where $N=\partial \wZ^m\setminus (I\cup L\cup J)$ is the second complementary interval between $I$ and $J$.

 If $K\gg_\bdelta \log \lambda$ with $\lambda>2$, then there are grounded intervals 
 \begin{equation}
\label{eq:SnakeLmm:intervals}
 J_1,I_1\subset L,\sp\sp |J_1|< \frac{\dist(I, J_1)}{\lambda},\sp |I_1|<\frac{\dist( I_1,J)}{\lambda},\sp\sp I< J_1<I_1<J
 \end{equation} such that 
\begin{equation}
\label{eq:SnakeLmm}  \Width^\circ_{L_a} (I,J_1) \oplus\Width^\circ_{L_b} (I_1,J)\ge K - O_\bdelta(\log \lambda),   
\end{equation}
where $L_a,L_b\subset L$ are the intervals between $I,J_1$ and $I_1,J$ respectively:
\[\begin{tikzpicture}
\draw (-6,0) -- (6,0);
\draw[line width =0.8mm,red ] (-5.5,0)--(-3,0);
\node[below,red ] at (-4.2,0){$I$};
\draw[line width =0.8mm,blue ] (5.5,0)--(3,0);
\node[below,blue ] at (4.2,0){$J$};

\draw[line width =0.8mm,red ] (-1,0)--(-0.3,0);
\node[below,red ] at (-0.7,0){$J_1$};

\draw[line width =0.8mm,blue ] (1,0)--(0.3,0);
\node[below,blue ] at (0.7,0){$I_1$};

\draw [line width =1.2mm,red ] (-4,0)
 .. controls (-3.5, 0.5) and (-2.5,0.5) ..
 (-2,0)
 .. controls (-1.8, -0.5) and (-1.6,-0.5) ..
 (-1.4,0)
  .. controls (-1.2, 0.5) and (-0.8,0.5) ..
 (-0.6,0);
 \node[above,red] at (-1.7, 0.3) {$\Fam^\circ_{L_a} (I,J_1)$};

\draw [line width =1.2mm,blue ] (4,0)
 .. controls (3.5, 0.5) and (2.5,0.5) ..
 (2,0)
 .. controls (1.8, -0.5) and (1.6,-0.5) ..
 (1.4,0)
  .. controls (1.2, 0.5) and (0.8,0.5) ..
 (0.6,0);
 \node[above,blue] at (1.8, 0.3) {$\Fam^\circ_{L_b} (I_1,J)$};

\end{tikzpicture}
\]
\end{snakelmm}

\noindent Note that~\eqref{eq:SnakeLmm} implies  
\begin{equation}
\label{eq:SnakeLmm:2} \max\{\Width^\circ_{L_a} (I,J_1), \Width^\circ_{L_b} (I_1,J)\}\ge 2K -O_\bdelta(\log \lambda).
\end{equation}

\begin{cor}
\label{cor:SnakeLmm}
Under the assumption of Lemma~\ref{simplmm:SnLmm:wZ}, there is an interval $I^\new\subset L^\bullet\subset \partial Z$ grounded rel $\wZ^m$ such that $\Width^+_\lambda(I^\new)\succeq K$, where $L^\bullet$ is the projection of $L$ onto $\partial Z$.
\end{cor}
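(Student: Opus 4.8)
The plan is to derive Corollary~\ref{cor:SnakeLmm} from the Snake Lemma~\ref{simplmm:SnLmm:wZ} by the same iteration scheme used in the proof of the Corollary to Lemma~\ref{simplmm:SnLmm:Z} in \S\ref{sss:SnakeLmm:simpl toplog}, with the necessary bookkeeping to keep all intervals grounded rel $\wZ^m$. First I would record the two standing consequences of the hypotheses: by~\eqref{eq:SnakeLmm:2} one of $\Width^\circ_{L_a}(I,J_1)$, $\Width^\circ_{L_b}(I_1,J)$ is $\ge 2K-O_\bdelta(\log\lambda)\ge \tfrac74 K$ (using $K\gg_\bdelta\log\lambda$), and by construction $J_1$ and $I_1$ lie in $L$ with the separation estimates $|J_1|<\dist(I,J_1)/\lambda$, $|I_1|<\dist(I_1,J)/\lambda$; in particular $\Fam^\circ_{L_a}(I,J_1)\subset \Fam^+_\lambda(J_1)$ and $\Fam^\circ_{L_b}(I_1,J)\subset \Fam^+_\lambda(I_1)$ once we pass to the outer part of the family. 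Say $\Width^\circ_{L_a}(I,J_1)\ge\tfrac74K$ (the other case is symmetric). Then either $\Width^+_\lambda(J_1)\ge \tfrac15 K$, in which case we are done with $I^\new\coloneqq J_1^\bullet$ projected to $\partial Z$ — noting that $J_1$ is grounded by the output of Lemma~\ref{simplmm:SnLmm:wZ}, hence $I^\new$ is grounded rel $\wZ^m$ — or $\Width^\circ(I,J_1)-\Width^+(I,J_1)\ge \tfrac32K$, and then we reapply the Snake Lemma to the grounded pair $I,J_1$ (the complementary interval $L_a$ has the required second-complement length bound since $|J_1|$ is small relative to $\dist(I,J_1)$, so the ambient interval $N$ for this new application still has length $\ge\length_m$ up to shrinking $\lambda$, which we must check).

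The induction then produces a nested sequence of grounded intervals $I^\new_n, J^\new_n$ and complementary intervals $L^\new_n$ with $\Fam^\circ_{L^\new_n}(I^\new_n,J^\new_n)$ of width $\succeq (3/2)^n K$ and with $|L^\new_n|\gtrsim\min\{|I^\new_n|,|J^\new_n|\}$, the lengths strictly decreasing. Either the induction terminates at some finite stage, giving an interval with $\Width^+_\lambda(\cdot)\ge c\,(3/2)^n K\succeq K$ (so actually $\succeq K$ already at $n=0$ suffices for the statement — we only need one successful trade), or it runs forever. The latter is impossible: an infinite sequence of shrinking intervals carrying arbitrarily wide $\Fam^\circ$-families would contradict that $\overline Z$ is a (non-uniform) qc disk. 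Here the key point to verify is that the relevant widths for $\overline Z$ and for $\wZ^m$ are comparable on grounded intervals — this is exactly what Lemma~\ref{lem:W+:ground inter} and Lemma~\ref{lem:W^+:well grnd int} give us, so the contradiction with $K_\theta$-quasiconformality of $\overline Z$ can be imported from the $\partial Z$ picture. Since $L$ is grounded, $L^\bullet\subset\partial Z$ is well-defined and the terminal interval $I^\new$ sits inside $L^\bullet$.

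The two places requiring genuine care, rather than routine repetition, are: (i) maintaining groundedness through the iteration — each application of Lemma~\ref{simplmm:SnLmm:wZ} outputs grounded $I_1,J_1$, but when we reapply to the pair $(I,J_1)$ at a deeper stage we must confirm the hypothesis ``$|N|\ge\length_m$'' for the \emph{new} second complementary interval; this is where we may need to first split or enlarge using the $\grnd/\GRND$ operations of \S\ref{ss:Bett Grnd Inter} and Lemma~\ref{lem:I vs I^grnd:with separ} to absorb the $S^\inn$-buffers with only $O_\bdelta(1)$ loss in width; and (ii) the passage from $\Width^\circ_{L_a}(I,J_1)$ to $\Width^+_\lambda(J_1)$, which uses $\Fam^+_\lambda(J_1)\supset\Fam^+_{L_a}(I,J_1)$ together with the dichotomy ``either the outer part is already a definite fraction of the $\Fam^\circ$ width, or the submerged excess $\Width^\circ-\Width^+$ is large,'' exactly as in the $\partial Z$ case but now with $O_\bdelta$-errors. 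I expect the main obstacle to be item (i): ensuring the geometric side conditions of the Snake Lemma survive each iteration, which forces one to interleave the trade-off steps with groundedness-restoration steps, tracking that the cumulative $O_\bdelta(\log\lambda)$ and $O_\bdelta(1)$ losses stay dominated by the geometrically growing $(3/2)^n K$.
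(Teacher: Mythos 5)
Your proposal is correct and follows essentially the same route as the paper: the dichotomy "either the outer part of $\Fam^\circ_{L_a}(I,J_1)$ already has width $\succeq K$, or the submerged excess is large and the Snake Lemma is reapplied," the exponential growth $(3/2)^nK$, termination because the pseudo-Siegel disk is a (non-uniform) qc disk, and the final projection to $\partial Z$ via Lemma~\ref{lem:W+:ground inter}. Your worry (i) is automatically resolved, since each new pair $I_n,J_n$ lies in $L$, so the second complementary interval of the new pair always contains the original $N\cup J$ and the hypothesis $|N|\ge\length_m$ persists without extra work.
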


\subsubsection{Outline and motivation of the section} \label{sss:outline of snakes}  The proof of Lemma~\ref{simplmm:SnLmm:wZ} repeats the argument of Lemma~\ref{simplmm:SnLmm:Z} (the Snake Lemma for $Z$) with an additional input from Lemma~\ref{lem:I vs I^grnd}. More precisely, the Series Decomposition~\S\ref{sss:SerDecomp for F circ} yields families $\Fam^\circ_{L_a} (I,J_1)$ and $\Fam^\circ_{L_b} (I_1,J)$ shown on the figure in Lemma~\ref{simplmm:SnLmm:wZ}. By Localization Lemma~\ref{lem:trading  width to space}, we can assume that $|J_1|, |I_1|$ are small compared with $\dist(I,J_1),\dist(I_1,J)$ respectively. And by Lemma~\ref{lem:I vs I^grnd}, we can assume that $J_1,I_1$ are grounded rel $\wZ^m$; i.e., Snake Lemma~\ref{simplmm:SnLmm:wZ} can be iterated.

This allows to trade families entering $\intr \wZ^m$ into outer families (Corollary~\ref{cor:SnakeLmm}). Indeed, assuming that $\Width^\circ_{L_a} (I,J_1)\ge 2K -O_\bdelta(\log \lambda)$, either $\Width^+_{L_a} (I,J_1)\ge K/3$ or repeating the Snake Lemma we find $I_2,J_2$ with $\Width^\circ_{L_2} (I_2,J_2)\ge \frac 3 2  K $ enlarging the family. Since $\wZ^m$ is a non-uniformly qc disk, the process eventually stops.

For applications, we will need several variations of the Snake Lemma. In~\S\ref{ss:SL:barriers} we state the Snake Lemma ``with toll barriers'': if $\Fam^\circ_L(I,J)$ contains a lamination $\RR$ submerging into $L$ at least $n$ times, then $2K$ in~\eqref{eq:SnakeLmm:2} can be replaced by $nK$. In~\S\ref{lem:sneaking} we state the Sneaking Lemma when $\Fam^\circ(I,J)$ ``sneaks'' through a wide outer rectangle $\RR$, see Figure~\ref{Fg:snake thorugh WR} for illustration. Both versions will be used in Snake-Lair Lemma~\ref{lem:Hive Lemma} to amplify the width of degeneration.


\subsection{Proof of Snake Lemma~\ref{simplmm:SnLmm:wZ}} We will prove the Snake Lemma in \S\ref{sss:Prf SnaekLmm} after introducing an auxiliary subfamily $\Fam^\str_L(I,J)\subset \Fam^\circ_L(I,J)$. The family $\Fam^\str_L(I,J)$ consists of curves omitting channels (and some space around them) that have an endpoint in $L^c$. We will show that $\Width^\circ_L(I,J)- \Width^\str_L(I,J)=O_\bdelta(1)$ and that at most $O_\bdelta(1)$-curves in $\Fam^\str_L(I,J)$ intersects $X\setminus X^\grnd$ for every interval $X\subset L$.

\subsubsection{$\Fam^\str$-family}
\label{ss:Fam^*} Let us fix an interval $L\subset\partial \wZ^m$. For a channel $\alpha=\alpha^n_i\subset \wZ^m$ with $|\alpha \cap L^c|\ge 1$ (i.e., at least one of the endpoints of $\alpha$ is in $L^c$), define
\begin{itemize}
\item $\alpha^{\str L^c}\coloneqq \alpha$ if $|\alpha\cap L^c|=2$;
\item $\alpha^{\str L^c}$ to be the connected component of $A^\inn(\alpha)\setminus L^c$ intersecting $\intr \wZ^m$ if $|\alpha\cap L^c|=1$ (i.e., this connected component is attached to $(L^c)^-$). 
\end{itemize}
We define \[\big(L^c\big)^{\str}\coloneqq \text{ filling in of } \left[ L^c\cup \bigcup _{|\alpha\cap L^c|\ge 1} \alpha^{\str L^c}\right].\]
We remark that if $\length_{m_{i+1}}\le |L|< \length_{m_i}$, where the $m_i$ are level of regularizations, then $\wZ^m\setminus \big(L^c\big)^{\str}$ is within the level $m_i$ peninsula containing $L$ on its boundary.

Finally, we define $\Fam^\str_L(I,J)$ to be the set of curves in $\Fam^\circ_L(I,J)$ that are in $\wC\setminus \intr (L^c)^\str$.

\begin{lem}[Trading $\Fam^\circ $ into $ \Fam^\str$]
\label{lem:A|_L} Under the assumption of Lemma~\ref{simplmm:SnLmm:wZ}, the family $\Fam^\str_L(I,J)\setminus \Fam^+(I,J)$ contains a rectangle $\RR$ such that
\[ \Width(\RR)= \Width^\circ_L(I,J)-\Width^+(I,J)-O_\bdelta(1).\]

\end{lem}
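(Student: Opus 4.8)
The statement is a refinement of Lemma~\ref{lem:Fam^circ:R} (``Snakes in $\Fam^\circ\setminus\Fam^+$'') from the $Z$-setting to the pseudo-Siegel setting, upgraded so that the submerging snake avoids a neighborhood of all dams having an endpoint in $L^c$. The plan is first to produce a wide snake $\RR_0\subset\Fam^\circ_L(I,J)\setminus\Fam^+(I,J)$ with $\Width(\RR_0)=K-O_\bdelta(1)$ exactly as in Lemma~\ref{lem:Fam^circ:R}: take the canonical rectangle of $\Fam^\circ_L(I,J)$, let $\gamma$ be the unique vertical curve intersecting $L$ that lies in $\wC\setminus\intr\wZ^m$, and split along $\gamma$ into $\RR^\out\subset\Fam^+(I,J)$ and $\RR^\inn$; by Lemma~\ref{lem:splitting rectangle:2} we get $\Width(\RR^\out)=\Width^+(I,J)-O(1)$, hence $\Width(\RR^\inn)=K-O(1)$, and we set $\RR_0\coloneqq\RR^\inn$, a rectangle whose every vertical curve intersects $L$.

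The core of the argument is then to remove from $\RR_0$ the (bounded-width) subfamily of curves that enter the forbidden regions $\alpha^{\str L^c}$ for dams $\alpha$ with $|\alpha\cap L^c|\ge 1$. The key point is that each such region is ``protected'': if $|\alpha\cap L^c|=2$ then $\alpha$ itself together with a component of $A^\inn(\alpha)$ separates $\alpha^{\str L^c}$ from $I\cup J$, because $A^\out(\alpha)$ (or, in the well-grounded case, the wide rectangle $\upstar\XX^{n}_i$ from Assumption~\ref{ass:wZ:EtraProt}) has modulus $\ge\bdelta$ and is disjoint from $I\cup J$ — here we use that $I,J$ are grounded, that $|N|\ge\length_m$ (so that at least one complementary side stays away from the relevant $S^\inn$-buffer), and Lemma~\ref{lem:I vs I^grnd:with separ}. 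If $|\alpha\cap L^c|=1$, then $\alpha^{\str L^c}$ is a component of $A^\inn(\alpha)\setminus L^c$ attached to $(L^c)^-$, and again the complementary component of $A^\inn(\alpha)$ together with the part of $\partial\wZ^m$ near the other endpoint of $\alpha$ separates it from $I\cup J$; the modulus bound $\mod A^\inn(\alpha)\ge\bdelta$ forces the width of curves in $\RR_0$ that reach $\alpha^{\str L^c}$ to be $O_\bdelta(1)$. Since the regions $\alpha^{\str L^c}$ over all relevant dams are pairwise disjoint, and since a curve in $\RR_0$ that enters infinitely-nested such regions would have to cross infinitely many disjoint moduli $\ge\bdelta$ — impossible — the set of curves of $\RR_0$ meeting $\bigl(L^c\bigr)^\str\setminus L^c$ is a union of at most $O_\bdelta(1)$ buffers, by the standard buffer argument (Lemma~\ref{lem:buffer:R O}) applied levelwise and summed using the Parallel Law. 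Removing these buffers and the resulting $O_\bdelta(1)$-width leftover, we extract from $\RR_0$ a genuine subrectangle $\RR$ with $\Width(\RR)=\Width(\RR_0)-O_\bdelta(1)=\Width^\circ_L(I,J)-\Width^+(I,J)-O_\bdelta(1)$, all of whose vertical curves avoid $\intr(L^c)^\str$, i.e.\ $\RR\subset\Fam^\str_L(I,J)$. Since every vertical curve of $\RR_0$ intersected $L\subset\partial\wZ^m$, the same holds for $\RR$, so $\RR\subset\Fam^\str_L(I,J)\setminus\Fam^+(I,J)$.

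\textbf{Main obstacle.} The delicate point is bounding, uniformly in the number of regularizations, the total width of curves in $\RR_0$ that sneak into the union of the $\alpha^{\str L^c}$: the dams come in all levels $n\ge m$, and one must rule out a curve zig-zagging through a deeply nested cascade of such regions. This is exactly where the disjointness of the collars across levels (Assumptions~\ref{ass:wZ:int patt},~\ref{ass:wZ:Linking}), the uniform modulus lower bound $\bdelta$, and Lemma~\ref{lem:I vs I^grnd:with separ} (which says that for $\dist(I,J)>\length_m$ only $O_\bdelta(1)$ curves of $\Fam(I,J)$ meet $X\setminus X^\grnd$ for any $X$) combine to give a bound independent of the depth; the well-groundedness propagation along peninsulas (Lemma~\ref{lmm:GP:rest}) is used to reduce to the case where the relevant dam's protection $A^\out$ or $\upstar\XX^n_i$ is disjoint from both $I$ and $J$. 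Once the buffer count is seen to be $O_\bdelta(1)$, the rest is the routine rectangle-surgery already used throughout~\S\ref{s:snakes}.
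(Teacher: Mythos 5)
Your skeleton is the same as the paper's (extract the snake from Lemma~\ref{lem:Fam^circ:R}, then argue that the curves meeting the forbidden regions $\alpha^{\str L^c}$ carry only $O_\bdelta(1)$ width), but the two steps where the actual work lies are not carried out correctly. The first gap is the case $|\alpha\cap L^c|=1$. There $\alpha^{\str L^c}$ is by definition a piece of the inner collar $A^\inn(\alpha)$ itself (the component of $A^\inn(\alpha)\setminus L^c$ attached to $(L^c)^-$), so your claim that ``the complementary component of $A^\inn(\alpha)$ together with the part of $\partial\wZ^m$ near the other endpoint'' separates it from $I\cup J$ does not give a crossed annulus of definite modulus: a snake curve can enter $\intr\wZ^m$ through the part of $L$ lying inside the filled collar $\widetilde A^\inn(\alpha)$ (near the endpoint of $\alpha$ that is not in $L^c$) and reach $\alpha^{\str L^c}$ without ever crossing $A^\inn(\alpha)$. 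What is actually needed — and what the paper proves as a separate claim — is that the \emph{outer} collar $A^\out(\alpha)$ is disjoint from either $I$ or $J$, so that the buffer of curves hitting $\alpha^{\str L^c}$ must cross $A^\out(\alpha)$ and hence has width $\le 1/\bdelta$. That disjointness is exactly where the groundedness of $I,J,L$, the peninsula structure of \S\ref{sss:Fjord Penin}, and the hypothesis $|N|\ge\length_m$ enter, via a case split on whether the other endpoint of the channel lies in $\intr\wZ^n$ or in $L$; your appeal to Lemma~\ref{lem:I vs I^grnd:with separ} does not substitute for it, since its hypotheses ($\dist(I,J)>\length_m$, or $|I|,|J|,|A|\ge\length_m$) are not available here — $L$ may be arbitrarily short.

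The second gap is the total count. You bound the width of the curves reaching each individual $\alpha^{\str L^c}$ by $O_\bdelta(1)$ and then ``sum using the Parallel Law'', asserting there are only $O_\bdelta(1)$ buffers; but there can be arbitrarily many channels with an endpoint in $L^c$, and a naive sum gives a bound proportional to their number, not $O_\bdelta(1)$. Your nesting remark (a curve crossing infinitely many disjoint $\bdelta$-moduli) addresses depth, not multiplicity along $L$. The point that makes the bound uniform is structural: every forbidden region $\alpha^{\str L^c}$ is attached to $(L^c)^-$, so by Lemma~\ref{lem:buffer:R O} the curves of the snake meeting it form an \emph{innermost} buffer (attached to the $(L^c)^-$ side), each of width $\le 1/\bdelta$; innermost buffers on the same side are nested, so their union is again an innermost buffer of width $\le 1/\bdelta$, and removing a single $1/\bdelta$-wide innermost buffer from the snake disposes of all forbidden regions at once. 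Without this observation (or some replacement for it) your estimate $\Width(\RR)=\Width^\circ_L(I,J)-\Width^+(I,J)-O_\bdelta(1)$ does not follow.
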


\begin{proof} Write $K\coloneqq\Width^\circ_L(I,J)-\Width^+(I,J)$. By Lemma~\ref{lem:Fam^circ:R}, $\Fam^\circ_L(I,J)$ contains a rectangle $\RR$ submerging into $\wZ^m$ with $\Width(\RR)=K-O(1)$. Let $\RR^\new$ be the rectangle obtained by removing the $1/\bdelta$ innermost buffer (attached to $(L^c)^-$) from $\RR$. We claim that $\RR\subset \wC \setminus \big(L^c\big)^{\str}$.  

Assume first that $|\alpha\cap L^c|=2$. Since $\alpha=\alpha^{\str L^c}$ is attached to $(L^c)^-$, vertical curves in $\RR$ intersecting $\alpha$ form a buffer of $\RR$, see Lemma~\ref{lem:buffer:R O}. Since curves in this buffer cross $A^\inn$ after entering $\intr \wZ^m$ through $L$, the width of the buffer is $\le 1/\delta$.  

Assume now that $|\alpha\cap L^c|=1$. We {\bf claim} that $A^\out(\alpha)$ is disjoint from either $I$ or $J$. The claim will imply that the width of the buffer formed by curves intersecting $\alpha^{\str L^c}$ (these curves form a buffer of $\RR$ by Lemma~\ref{lem:buffer:R O}) is $\le 1/\bdelta$ because $\alpha^{\str L^c}$ is separated from either $I$ or $J$ by  $A^\out(\alpha)$.

\begin{proof}[Proof of the claim] Write $\alpha=\alpha^n_i=[y_{n-1},x_n]$ as in Figure~\ref{Fg:wZ_Z_m} and assume that $y_{n-1}\in L^c$; the  opposite case is analogous.

 Suppose first that $x_n\in \intr \wZ^n$. Then the unique point $z$ in $\partial^c\Penin (\alpha^n_i)\cap \CP_n$ is an endpoint of a channel $\beta^k_s\subset \partial \wZ^m$ for $k<n$, see~\S\ref{sss:Fjord Penin}. Therefore, $A^\out(\alpha^n_i)\cap \partial \wZ^m\subset S^\inn_z$. Since $I, J$ are grounded, $ S^\inn_z$ (and hence $A^\out(\alpha^n_i)$) is disjoint from either $I$ or $J$.
 
 Suppose now that $x_n\in L$. Since $L$ is grounded, it also contains $\beta^n_{i}$ -- the dam after $\alpha^n_i$. We obtain that $\beta^n_{i}$ separates $S_{x_n}$ from $J$ while $N$ separates $S_{y_{n-1}}$ from $J$ because $|N|\ge \length_m$. Therefore, $A^\out(\alpha^n_i)$ separates $\widetilde A^\inn(\alpha^n_i)$ from $J$.
\end{proof}
\end{proof}

\begin{lem}[$X$ vs $X^\grnd$]
\label{lem:I vs I^grnd} Consider a family $\Fam^\str_L(I,J)$ from Lemma~\ref{lem:A|_L}. Then for every interval $X\subset L$, there are at most $O_\bdelta(1)$ curves in $\Fam^\str_L(I,J)$ intersecting ${X\setminus X^\grnd}$. 
\end{lem}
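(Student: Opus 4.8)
The plan is to bound the number of curves in $\Fam^\str_L(I,J)$ that can reach $X\setminus X^\grnd$ by exhibiting, for each such curve, a definite annulus it must cross. Recall from~\S\ref{sss:S^well} that $X\setminus X^\grnd$ has at most two components (one at each endpoint of $X$), and each such component is contained in some $S^\inn(\beta^n_i)$ for a dam $\beta^n_i$ with an endpoint inside $X$. So it suffices to fix one such component, contained in $S^\inn(\beta^n_i)\subset \partial\wZ^m$, and show that at most $O_\bdelta(1)$ curves of $\Fam^\str_L(I,J)$ intersect it.

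First I would observe that since $X\subset L$ and $\beta^n_i$ has an endpoint in $X\subset L$, the dam $\beta^n_i\subset L$ entirely (a dam is a single unit-arc of some $\partial\wZ^n$, whose two endpoints are consecutive points of $\CP_{n+1}\setminus\CP_n$; if one endpoint lies in the open interval $L$ then so does the other, because $L$ is grounded and hence cannot split $S^\inn_{x_n}$ or the dam — this is the same mechanism used in the proof of Lemma~\ref{lem:A|_L}). Then $\widetilde A^\inn(\beta^n_i)\cap\partial\wZ^m = S^\inn_{x_n}\cup S^\inn_{y_n}$, both inside $L$. Now a curve $\gamma\in\Fam^\str_L(I,J)$ lands at $I^+$ and $J^+$, both outside $L$, so its endpoints are separated from $S^\inn(\beta^n_i)$ along $\partial\wZ^m$; in particular $I$ and $J$ are both disjoint from the annulus $A^\out(\beta^n_i)$, which has $\mod A^\out(\beta^n_i)\ge\bdelta$ by Assumption~\ref{ass:wZ:collars}. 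The key point is that a curve of $\Fam^\str_L(I,J)$ reaching into $S^\inn(\beta^n_i)$ must intersect $A^\out(\beta^n_i)$: indeed $\gamma$ is a curve in $\wC\setminus\intr(L^c)^\str$ with endpoints landing at $I^+\subset L^c$ and $J^+\subset L^c$, so $\gamma$ starts and ends outside the disk $\widetilde A(\beta^n_i)$, while to meet $S^\inn(\beta^n_i)=\widetilde A^\inn(\beta^n_i)\cap\partial\wZ^m$ it must enter $\widetilde A^\inn(\beta^n_i)$, hence cross the collar $A^\out(\beta^n_i)$ separating $\widetilde A^\inn(\beta^n_i)$ from $\{I,J\}$ (using that $\beta^n_i$ is disjoint from $I$ and $J$, so the whole disk $\widetilde A(\beta^n_i)$ is disjoint from $I\cup J$).

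Consequently, the curves of $\Fam^\str_L(I,J)$ that meet this component of $X\setminus X^\grnd$ all share the subcurve crossing $A^\out(\beta^n_i)$; in the canonical rectangle of $\Fam^\str_L(I,J)$ these curves form a buffer (Lemma~\ref{lem:buffer:R O}), and the width of that buffer is at most $1/\mod A^\out(\beta^n_i)\le 1/\bdelta$ by the Gr\"otzsch inequality~\eqref{eq:Grot}. Summing over the at most two components of $X\setminus X^\grnd$ gives the bound $O_\bdelta(1)$. I would write this out as:
\begin{proof}
Each of the at most two components of $X\setminus X^\grnd$ lies in $S^\inn(\beta^n_i)$ for some dam $\beta^n_i$ with an endpoint in $X\subset L$. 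Since $L$ is grounded, $\beta^n_i$ and its two $S^\inn$-buffers $S^\inn_{x_n},S^\inn_{y_n}$ all lie in $L$, so $\widetilde A(\beta^n_i)$ is disjoint from $I\cup J$. By Assumption~\ref{ass:wZ:collars}, $\mod A^\out(\beta^n_i)\ge\bdelta$, and $A^\out(\beta^n_i)$ separates $\widetilde A^\inn(\beta^n_i)$ from $I\cup J$. Any $\gamma\in\Fam^\str_L(I,J)$ meeting $S^\inn(\beta^n_i)=\widetilde A^\inn(\beta^n_i)\cap\partial\wZ^m$ must therefore cross $A^\out(\beta^n_i)$. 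Hence these curves form a buffer of the canonical rectangle of $\Fam^\str_L(I,J)$, and by Lemma~\ref{lem:buffer:R O} and the Gr\"otzsch inequality~\eqref{eq:Grot} its width is at most $1/\bdelta$. Summing over the (at most two) components of $X\setminus X^\grnd$ yields the claim.
\end{proof}

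The main obstacle I anticipate is the topological bookkeeping in the claim that $\beta^n_i$ lies entirely in $L$, and more precisely that a curve of $\Fam^\str_L(I,J)$ entering $\widetilde A^\inn(\beta^n_i)$ genuinely has to pass through $A^\out(\beta^n_i)$ rather than sneaking in along $\partial\wZ^m$ from the $L$-side — one must use that $\gamma\subset\wC\setminus\intr(L^c)^\str$ together with the fact that $\gamma$'s endpoints land at $I^+,J^+$ which are on the boundary component of $\wC\setminus\intr(L^c)^\str$ opposite to $\widetilde A(\beta^n_i)$, exactly as in the proof of the claim inside Lemma~\ref{lem:A|_L}. Once that separation is in place, the width bound is immediate from the modulus lower bound, so this is really the only point requiring care.
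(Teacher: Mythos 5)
Your reduction to at most two components of $X\setminus X^\grnd$, each lying in $S^\inn(\beta^n_i)$ for a dam with $S^\inn(\beta^n_i)\subset L$, agrees with the paper, and your bound is fine for curves that reach the dam proper. The gap is the separation claim: you assert that the filled collar $\widetilde A(\beta^n_i)$, hence $A^\out(\beta^n_i)$, is disjoint from $I\cup J$, and deduce that every curve of $\Fam^\str_L(I,J)$ meeting $S^\inn(\beta^n_i)$ must cross the annulus $A^\out(\beta^n_i)$. But groundedness of $I,J$ (equivalently of $L$) only keeps their endpoints out of the $S^\inn$-buffers, i.e.\ out of $\widetilde A^\inn(\beta^n_i)\cap\partial\wZ^m$; it does \emph{not} keep them out of the larger $S$-buffers $S_{x_i},S_{y_i}$, i.e.\ out of $\widetilde A(\beta^n_i)\setminus \widetilde A^\inn(\beta^n_i)$ (avoiding those would be the stronger ``well-grounded''-type condition). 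So the common endpoint of $I$ and $L$ may lie in $S_{x_i}\setminus S^\inn_{x_i}$; a curve can then land at $I^+$ at a point already inside the filled outer collar and reach the part of $S^\inn_{x_i}$ that lies inside the annulus $A^\inn(\beta^n_i)$ (rather than inside the enclosed disk) without containing a crossing of either $A^\out(\beta^n_i)$ or $A^\inn(\beta^n_i)$. For such ``shoulder'' curves neither modulus bound applies, and these are precisely the curves the lemma has to control; your anticipated-obstacle paragraph dismisses exactly this configuration with the unjustified ``$\widetilde A(\beta^n_i)$ is disjoint from $I\cup J$''.

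This is why the paper's proof is two-tiered: curves hitting the dam itself are cut off by $A^\inn(\beta^n_i)$, whose trace $S^\inn(\beta^n_i)$ \emph{is} disjoint from $I\cup J$ by groundedness, while the shoulder pieces are absorbed via $S^\inn(\beta^n_i)\subset S^\inn(\alpha^n_i)\cup S^\inn(\alpha^n_{i+1})$ and handled by a separate claim about channels (Claim~\ref{cl6:1}), whose protecting annuli $A^\out(\alpha)$ meet $\partial\wZ^m$ only inside the dam--coast--dam arc, together with the excision of channel pieces attached to $L^c$ that is built into $\Fam^\str_L(I,J)$ in Lemma~\ref{lem:A|_L} — note your argument never actually uses that the family is $\Fam^\str$ rather than $\Fam^\circ$, which is a sign the hypotheses are not being exploited. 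To repair your route you would need either to strengthen the hypothesis so that $I,J$ avoid the full $S$-buffers, or to add the channel-collar step, or to control the leftover curves by Assumption~\ref{ass:wZ:bDelta} (the bound $\Width(S_{x_i},S_{y_i})\le 1/\bdelta$), which is how the paper treats the analogous leakage in Lemma~\ref{lmm:gound inter:penins}.
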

\noindent Combined with Lemma~\ref{lem:A|_L}, there are at most $O_\bdelta(1)$ curves in $\Fam^\circ (I,J)$ intersecting ${X\setminus X^\grnd}$. 
\begin{proof}
Let us start the proof with the following two properties.
\begin{claim6} 
\label{cl6:1}
Let $\alpha$ be a channel such that either both endpoints of $\alpha$ are in $L$ or one of the endpoints of $\alpha$ is in $L$ and the second endpoint is in $\intr \wZ^m$. Then at most $O_\bdelta(1)$ curves in $\Fam^\str_L(I,J)$ intersect $\widetilde A^\inn(\alpha)$. 
\end{claim6}
\begin{proof}[Proof of the claim]
Assume first that both endpoints of $\alpha$ are in $L$. Then we have $\partial^c\Penin(\alpha)\subset L$. Since $L$ is grounded, it also contains two dams attached to $\alpha$. Therefore, $\widetilde A^\out(\alpha)$ is disjoint from $I\cup J$. The claim now follows from $\mod  A^\out(\alpha)\ge \bdelta$.

Assume that one of the endpoints of $\alpha$ is in $\intr Z$. Then the unique point $z$ in $\partial^c\Penin (\alpha^n_i)\cap \CP_n$ is an endpoint of a dam $\beta^k_s\subset \partial \wZ^m$ for $k<n$, see~\S\ref{sss:Fjord Penin}. We have $A^\out(\alpha^n_i)\cap \partial \wZ^m\subset S^\inn_z$ and $A^\out(\alpha^n_i)$ separates $\widetilde A^\inn(\alpha)$ from $I,J$.
\end{proof}

\begin{claim6} 
\label{cl6:2}
Let $\beta\subset L$ be a dam. Then at most $O_\bdelta(1)$ curves in $\Fam^\str_L(I,J)$ intersect $ S^\inn(\beta)$. 
\end{claim6}
\begin{proof}[Proof of the claim]
Since $L$ is grounded, $A^\inn(\beta)$ separates $\beta$ from $I\cup J$; thus at most $O_\bdelta(1)$ curves in $\Fam^\str_L(I,J)$ intersect $\beta$. Write $\beta=\beta^n_i$; then $S^\inn(\beta)\subset S^{\inn}(\alpha^n_{i})\cup S^{\inn}(\alpha^n_{i+1})$. By Lemma~\ref{lem:A|_L} and Claim~\ref{cl6:2}, at most $O_\bdelta(1)$ curves in $\Fam^\str_L(I,J)$ intersect $S^{\inn}(\alpha^n_{i})\cup  S^{\inn}(\alpha^n_{i+1})$.
\end{proof}

The lemma now follows from Claim~\ref{cl6:2} because every component (out of at most $2$) of ${X\setminus X^\grnd}$ is within $ S^\inn(\beta)$ for a dam $\beta\subset L$.
 \end{proof}

\subsubsection{Proof of Snake Lemma~\ref{simplmm:SnLmm:wZ}}
\label{sss:Prf SnaekLmm} Let $\RR\subset\Fam_L^\str(I,J)\setminus \Fam^\circ= (I,J)$ with $\Width(\RR)=K-O_\bdelta(1)$ be a rectangle (a snake) from Lemma~\ref{lem:A|_L} realizing $K$.  Applying Series Decomposition~\S\ref{sss:SerDecomp for F circ} to $\RR$, we obtain that $\Fam(\RR)$ consequently overflows the laminations 
 \begin{equation}
 \label{eq:SL:prf:SerDecomp}
 \Fam_a\subset \Fam^\circ (I,J_a),\sp \sp \Gamma\subset \Fam^-(J_a,I_b),\sp \sp \Fam_b\subset \Fam^\circ (I_b, J),
 \end{equation}
where $J_a, I_b\subset L$. Let $J_a^\grnd, I_b^\grnd$ be the biggest grounded intervals in $J_a, I_b$, see~\S\ref{ss:Bett Grnd Inter}. By Lemma~\ref{lem:I vs I^grnd},  the width of vertical curves in $\RR$ intersecting $\big(J_a\setminus J^\grnd_a\big)\cup \big(I_b\setminus I^\grnd_b\big)$ is $O_\bdelta(1)$; removing these curves from $\RR$ and their restrictions from the laminations in~\eqref{eq:SL:prf:SerDecomp},  we obtain that the new rectangle $\RR^\new, \Width(\RR^\new)\ge K-O_\bdelta(1)$ such that  $\Fam(\RR^\new)$ consequently overflows
 \begin{equation}
 \label{eq:SL:prf:SerDecomp:2}
 \Fam^\new_a\subset \Fam^\circ (I,J^\grnd_a),\sp \sp \Gamma^\new\subset \Fam^-(J^\grnd_a,I^\grnd_b),\sp \sp \Fam^\new_b\subset \Fam^\circ (I^\grnd_b, J).
 \end{equation}  
 
 By Localization and Squeezing Lemmas~\ref{lem:trading  width to space},~\ref{lem:squeezing}, $J^\grnd_a, I^\grnd_b$ contains an innermost pair $J_1,I_1$ such that \[ |\lfloor J_1, I_1\rfloor |\le \frac 1 {5\lambda} \{|I^\grnd_a|,\ |J^\grnd_b|\}\]
and up to $O_\bdelta(\log \lambda)$-width the family $\Fam^-(J_a,I_b)$ is in  $\Fam^-(J_1,I_1)$:
\begin{equation}
\label{eq:SL:prf:Small Interv}
\Width^-(J_a\setminus J_1,\sp I_b) +\Width^-(J_a,\sp I_b\setminus I_1)=O_\bdelta(\log \lambda).
\end{equation}
 
Let $\RR^\New$ be the lamination obtained from $\RR^\new$ by removing all $\gamma\in \Fam(\RR)$ with $\gamma_a^d\not \in \Fam^-(J_1,I_1)$ or  $\gamma_b^d\not \in \Fam^-(J_1,I_1)$.  Then $\Width(\RR^\New)=K-O_\bdelta(\log \lambda)$. 

Applying the Series Decomposition~\S\ref{sss:SerDecomp for F circ}  
to $\RR^\New$, we obtain that $\RR^\New$ consequently overflows 
 \begin{equation}
 \label{eq:SL:prf:SerDecomp:3}
\Fam^\New_a\subset \Fam^\circ (I,J^\New_a),\sp \sp \Gamma^\New\subset \Fam^-(J^\New_a,I^\New_b),\sp \sp \Fam^\New_b\subset \Fam^\circ (I^\New_b, J),
\end{equation}
where $J^\New_a\subset J_1$, $I^\New_b\subset I_1$ and $\lfloor J_a^\New, I^\New_b\rfloor\subset \lfloor J_1, I_1\rfloor$. Set $J_2\coloneqq \big(J^\New_a\big)^\grnd$ and $I_2\coloneqq \big(I^\New_b\big)^\grnd$. By Lemma~\ref{lem:I vs I^grnd}, the width of curves in $\RR^\New$ intersecting $J^\New_a\setminus J_2$ or $I^\New_b\setminus I_2$ is at most $O_\bdelta(1)$. Removing these curves from $\RR^\New$ and their restrictions from the laminations in~\eqref{eq:SL:prf:SerDecomp:3}, we obtain that the new $\RR^\NEW$ consequently overflows
\begin{equation}
 \label{eq:SL:prf:SerDecomp:4}
\Fam^\NEW_a\subset \Fam^\circ (I,J_2),\sp \sp \Gamma^\NEW\subset \Fam^-(J_2,I_2),\sp \sp \Fam^\NEW_b\subset \Fam^\circ (I_2, J),
\end{equation}
where $\lfloor J_2,I_2\rfloor \subset \lfloor J_1, I_1\rfloor.$ Therefore,~\eqref{eq:SL:prf:Small Interv} and~\eqref{eq:SL:prf:SerDecomp:4} imply the lemma.

\begin{rem}
\label{rem:SL:steps}
Let us summarize the steps in the proof of Lemma~\ref{simplmm:SnLmm:wZ}: 
\begin{enumerate}[label=\text{(\alph*)},font=\normalfont,leftmargin=*]
\item\label{rem:SL:steps:a} first we apply Series Decomposition~\S\ref{sss:SerDecomp for F circ} to $\Fam(\RR)$, see~\eqref{eq:SL:prf:SerDecomp};
\item\label{rem:SL:steps:b} then we apply Lemma~\ref{lem:I vs I^grnd} to obtain grounded intervals $J_a^\grnd, I_b^\grnd$, see~\eqref{eq:SL:prf:SerDecomp:2};
\item\label{rem:SL:steps:c} then we apply  Localization Lemma~\ref{lem:trading  width to space} to $J_a^\grnd, I_b^\grnd$;
\item\label{rem:SL:steps:d} then we reapply the Series Decomposition~\S\ref{sss:SerDecomp for F circ}, see~\eqref{eq:SL:prf:SerDecomp:3}; 
\item \label{rem:SL:steps:e}finally,  we reapply Lemma~\ref{lem:I vs I^grnd} to obtain required~\eqref{eq:SL:prf:SerDecomp:4}. 
\end{enumerate}
In Steps~\ref{rem:SL:steps:b},~\ref{rem:SL:steps:c},~\ref{rem:SL:steps:e} we remove at most $O_\bdelta(\log \lambda)$ curves from $\RR$.
\end{rem}

\subsubsection{Submerging laminations} We can refine the Snake Lemma as follows:

\begin{lem}
\label{lem:SL:subm rect}
Under the assumptions of Lemma~\ref{simplmm:SnLmm:wZ}, consider a lamination $\RR\subset \Fam^\circ_L (I,J)\setminus \Fam^+(I,J)$ with $\Width(\RR)\gg_\bdelta \log \lambda$. Then there are intervals $J_1,I_1$ satisfying \eqref{eq:SnakeLmm:intervals} and there are laminations $\RR_a\subset \Fam_{L_a}(I,J_1),$ $\RR_b\subset \Fam^\circ_{L_b} (I_1,J)$ such that \[\Width(\RR_a)\oplus \Width(\RR_b)\ge \Width(\RR)-O_\bdelta(\log \lambda),\]
and such that $\RR_a,\RR_b$ are restrictions of sublaminations of $\RR$.
\end{lem}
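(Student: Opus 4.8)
The plan is to mirror the proof of Snake Lemma~\ref{simplmm:SnLmm:wZ} (see the five-step summary in Remark~\ref{rem:SL:steps}), but starting from the given submerging lamination $\RR$ instead of the canonical snake produced by Lemma~\ref{lem:A|_L}. First I would invoke Lemma~\ref{lem:Fam^circ:R} (or rather the part of Lemma~\ref{lem:A|_L} that does not use the canonical rectangle) to replace $\RR$ by a sublamination contained in $\Fam^\str_L(I,J)\setminus \Fam^+(I,J)$, losing only $O_\bdelta(1)$ width: indeed the argument of Lemma~\ref{lem:A|_L} shows that the vertical curves of $\RR$ meeting any $\alpha^{\str L^c}$ form a buffer of width $\le 1/\bdelta$, and removing all such buffers (over the $O(1)$ relevant dams, after the usual splitting of $L$) keeps us in $\Fam^\str$. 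Call the result $\RR^{(0)}$; we still have $\Width(\RR^{(0)})\ge \Width(\RR)-O_\bdelta(1)$.

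Next I would run the Series Decomposition of \S\ref{sss:SerDecomp for F circ} on $\RR^{(0)}$, exactly as in step~\ref{rem:SL:steps:a}: since $\RR^{(0)}\subset\Fam^\circ_L(I,J)\setminus\Fam^+(I,J)$, it consequently overflows pairwise disjoint laminations $\widetilde\Fam_a\subset\Fam^\circ(I,J_a)$, $\Gamma\subset\Fam^-(J_a,I_b)$, $\widetilde\Fam_b\subset\Fam^\circ(I_b,J)$ with $J_a,I_b\subset L$. Then apply Lemma~\ref{lem:I vs I^grnd} (step~\ref{rem:SL:steps:b}) to pass to $J_a^\grnd, I_b^\grnd$, discarding $O_\bdelta(1)$ further curves; apply Localization and Squeezing Lemmas~\ref{lem:trading  width to space},~\ref{lem:squeezing} (step~\ref{rem:SL:steps:c}) to find an innermost grounded pair $J_1\subset J_a^\grnd$, $I_1\subset I_b^\grnd$ with $|\lfloor J_1,I_1\rfloor|\le\frac{1}{5\lambda}\min\{|I_a^\grnd|,|J_b^\grnd|\}$, losing $O_\bdelta(\log\lambda)$ width by removing all curves $\gamma$ with $\gamma_a^d\notin\Fam^-(J_1,I_1)$ or $\gamma_b^d\notin\Fam^-(J_1,I_1)$; and finally reapply the Series Decomposition (step~\ref{rem:SL:steps:d}) followed once more by Lemma~\ref{lem:I vs I^grnd} (step~\ref{rem:SL:steps:e}), obtaining a lamination $\RR^{(5)}$, a restriction of a sublamination of $\RR$, with $\Width(\RR^{(5)})\ge\Width(\RR)-O_\bdelta(\log\lambda)$, which consequently overflows
\[
\Fam^{(5)}_a\subset\Fam^\circ(I,J_1),\sp\sp \Gamma^{(5)}\subset\Fam^-(J_1,I_1),\sp\sp \Fam^{(5)}_b\subset\Fam^\circ(I_1,J).
\]
Now set $\RR_a$ to be the restriction of $\widetilde\Fam_a$-part of $\RR^{(5)}$ to $\Fam(I,J_1)$ and $\RR_b$ the restriction of the $\widetilde\Fam_b$-part to $\Fam(I_1,J)$, exactly as in~\eqref{eq:F_a:dfn},~\eqref{eq:F_b:dfn}; since $\RR^{(5)}$ consequently overflows $\Fam^{(5)}_a,\Gamma^{(5)},\Fam^{(5)}_b$ in that order, the Series (Grötzsch) inequality gives $\Width(\RR_a)\oplus\Width(\RR_b)\ge\Width(\RR^{(5)})\ge\Width(\RR)-O_\bdelta(\log\lambda)$, and by construction $\RR_a,\RR_b$ are restrictions of sublaminations of $\RR$, with $\RR_a\subset\Fam_{L_a}(I,J_1)$ and $\RR_b\subset\Fam^\circ_{L_b}(I_1,J)$ where $L_a,L_b$ are the complementary intervals between $I,J_1$ and $I_1,J$.

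The one genuinely new point compared to Lemma~\ref{simplmm:SnLmm:wZ} — and hence the main thing to be careful about — is that here the ``input'' lamination $\RR$ need not be a rectangle realizing $\Width^\circ_L(I,J)-\Width^+(I,J)$, so I cannot quote Lemma~\ref{lem:A|_L} verbatim; I must re-examine its proof to confirm that the buffer estimates (the $\le 1/\bdelta$ bounds coming from $A^\inn$, $A^\out$, and Assumption~\ref{ass:wZ:bDelta}) only use that $\RR\subset\Fam^\circ_L(I,J)$ and that $I,J$ are grounded and $|N|\ge\length_m$, not the width-maximality of $\RR$. This is straightforward but needs to be stated. Everything else is a faithful repetition of the proof in \S\ref{sss:Prf SnaekLmm}, tracking that at each of the three deletion steps (\ref{rem:SL:steps:b},~\ref{rem:SL:steps:c},~\ref{rem:SL:steps:e}) we remove at most $O_\bdelta(\log\lambda)$ worth of curves from a \emph{sublamination} of $\RR$ rather than from $\RR$ itself, so the final object is honestly a restriction of a sublamination of $\RR$.
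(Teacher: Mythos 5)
Your proposal is correct and follows essentially the same route as the paper: the paper's proof likewise removes $O_\bdelta(1)$ buffers via (the argument of) Lemma~\ref{lem:A|_L} to place $\RR$ inside $\Fam^\str_L(I,J)$, then runs steps~\ref{rem:SL:steps:a}--\ref{rem:SL:steps:e} of \S\ref{sss:Prf SnaekLmm} verbatim and takes $\RR_a=\Fam^\NEW_a$, $\RR_b=\Fam^\NEW_b$ from~\eqref{eq:SL:prf:SerDecomp:4}. Your extra remark that one must use the proof of Lemma~\ref{lem:A|_L} (its collar/buffer estimates, which need only that $\RR\subset\Fam^\circ_L(I,J)$ with $I,J$ grounded and $|N|\ge\length_m$) rather than its statement is exactly the implicit point in the paper's citation, so no gap.
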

\begin{proof}
By Lemma~\ref{lem:A|_L}, we can remove $O_\bdelta(1)$ buffers from $\RR$ so that the new lamination $\RR^\new$ is in $\Fam^\str_L(I,J)$. We apply the argument of~\S\ref{sss:Prf SnaekLmm} to $\RR^\new$ until~\eqref{eq:SL:prf:SerDecomp:4}, and then we set $\RR_a\coloneqq \Fam^\NEW_a$ and $\RR_b\coloneqq \Fam^\NEW_b$.
\end{proof}

\begin{rem} \label{rem:SL:ShallowScale} 
The condition $|N|\ge \length_{m}$ in Snake Lemmas~\ref{simplmm:SnLmm:wZ}~\ref{lem:SL:subm rect} can be omitted if the lamination $\RR$ in Lemma~\ref{lem:SL:subm rect} has the following property:
\begin{enumerate}[label=\text{(\Alph*)},start=24,font=\normalfont,leftmargin=*]
\item\label{property:X} for every interval $X\subset \partial \wZ_\ell$, there are at most $O_\bdelta(1)$ vertical curves in $\RR$ intersecting $X\setminus X^\grnd$.
\end{enumerate}
In the proof, Property~\ref{property:X} substitutes Lemma~\ref{lem:I vs I^grnd}.
\end{rem}

\subsection{Trading $\Width$ into $\Width^+$} In this subsection we will prove Corollary~\ref{cor:SnakeLmm} as well as several of its variations.
\begin{proof}[Proof of Corollary~\ref{cor:SnakeLmm}]
Snake Lemma~\ref{simplmm:SnLmm:wZ} implies \eqref{eq:SnakeLmm:2}. Assume that $\Width^\circ_{L_a}(I,J_1)\ge 2K - O_\bdelta(\log \lambda)$; the second case is analogous.

If $\Width^+(I,J_1)\ge \frac 13 K$, then  we set $I^\new=J_1^\bullet$ to be the projections of $J_1$ onto $\partial Z$. By Lemma~\ref{lem:W+:ground inter} we have 
\[  \Width^+_{\lambda,  Z}(I^\new) \ge \Width_{ Z}^+(I^\bullet, J^\bullet_1) \ge \Width^+_{ \wZ^m}(I,J_1)-O_\bdelta(1)\succeq K.\]

If $\Width^+_{L_a}(I,J_1)< \frac 5 3 K$, then applying Snake Lemma~\ref{simplmm:SnLmm:wZ}  again, we find $I_2,J_2\subset L$ with \[\Width^\circ_{L_2} (I_2,J_2)\ge \frac 3 2  K \sp\sp \text{ and }\sp\sp  \min\{|I_2|,|J_2|\} < \frac 1 \lambda \dist(I_2,J_2).\] 

The case $\Width^+(I_2,J_2)\ge \frac 13 K$ is treated as above. If  $\Width^+(I_2,J_2)< \frac 13 K$, then applying Snake Lemma~\ref{simplmm:SnLmm:wZ} again, we find $I_3,J_3\subset L$ with \[\Width^\circ_{L_3} (I_3,J_3)\ge \left(\frac 32\right)^2 K \sp\sp \text{ and }\sp\sp  \min\{|I_3|,|J_3|\} < \frac 1 \lambda \dist(I_3,J_3),\]
i.e., $\Width^\circ_{L_n}(I_n,J_n)\ge \big(3/2\big)^n K$ growth exponentially fast. Since $\wZ^m$ is a non-uniformly qc disk, the process eventually stops: we obtain $\Width^+(I_n, J_n)\ge \frac 1 3K $ for some $n$ and grounded intervals $I_n,J_n$ with $\min\{|I_n|,|J_n|\}<\frac 1 \lambda\dist(I_n,J_n)$. Lemma~\ref{lem:W+:ground inter} allows to replace $I_n,J_n$ with their projections onto $\partial Z$.
\end{proof}

\subsubsection{Scale $\ge \length_m$} Trading $\Width$ into $\Width^+$ is more straightforward if intervals have length $\ge \length_m$ thanks to Lemma~\ref{lem:I vs I^grnd:with separ}.

\begin{lem}
\label{lem:trad width to width+}
Let $I,  L\subset \partial \wZ$ be disjoint intervals such that \[|I|\ge \length_m, \sp \sp \dist( I, L)\ge \length_{m+1}, \sp\sp |L|\ge \frac 12, \sp\sp |L^c|\le \lambda \length_{m+1},\]
\[K\coloneqq \Width(I, L) -\Width^+(I,L) \gg_{\bdelta,\lambda}1,\sp\sp \sp\text{ where  }\sp\sp \lambda \ge 3. \]  
Then there is an interval $I^\new\subset L^c\setminus I\subset \partial Z$ grounded rel $\wZ^m$ such that
\[|I^\new|<\frac 1 \lambda |I|\sp\sp \text{ and }\sp\sp \Width_\lambda^+(I^\new)\succeq K.\]
\end{lem}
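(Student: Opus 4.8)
The statement is a variant of Corollary~\ref{cor:SnakeLmm} adapted to the case where $I$ is big (length $\ge \length_m$), $L$ is almost all of $\partial\wZ^m$, and $L^c$ (which plays the role of the ``gap'' $L$ in the Snake Lemma setup) is short. The strategy is to reduce to Snake Lemma~\ref{simplmm:SnLmm:wZ} applied to the pair $(I,J)$, where $J\subset L^c\setminus I$ is an appropriately chosen grounded interval with $\dist(I,J)\asymp \dist(I,L)$, and then run the iteration of Corollary~\ref{cor:SnakeLmm}. First I would choose $J$: since $|L^c|\le \lambda\length_{m+1}$ is short compared to $\dist(I,L)$ and $|I|$, I pick $J$ to be a grounded rel $\wZ^m$ subinterval of $L^c\setminus I$ realizing, up to $O_\bdelta(1)$ error, the width $\Width(I,L^c\setminus I)$ — the correction from passing from $L$ to $J$ is controlled because the complementary intervals have comparable size and Lemma~\ref{lem:I vs I^grnd:with separ}\ref{Case2:lem:I vs I^grnd:with separ} guarantees that at most $O_\bdelta(1)$ curves hit $X\setminus X^\grnd$ for $X$ in the big complementary interval. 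With this choice, the complementary interval $\widetilde L$ between $I$ and $J$ inside $L^c$ satisfies $|\widetilde L|\asymp \dist(I,L)\ge |I|/\lambda$, and the second complementary interval $N$ contains $L$, hence $|N|\ge 1/2\gg\length_m$, so the hypothesis ``$|N|\ge \length_m$'' of Snake Lemma~\ref{simplmm:SnLmm:wZ} holds.

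Next I would verify that $\Width^\circ_{\widetilde L}(I,J)-\Width^+(I,J)\gg_{\bdelta,\lambda} 1$. Here $\Width^\circ_{\widetilde L}(I,J)\ge \Width(I,J)-O_\bdelta(1)$ up to restricting curves to those staying in $\wC\setminus \widetilde L^c$ (the curves in $\Fam(I,J)$ realizing most of the width can be taken inside the open disk $\wC\setminus \widetilde L^c$ after removing $O_\bdelta(1)$ buffers, using that $\widetilde L^c\supset L\cup \text{(most of $\partial\wZ^m$)}$ is protected by wide families as in Theorem~\ref{thm:wZ:shallow scale}); and $\Width^+(I,J)\le \Width^+(I,L)+O_\bdelta(1)$ since $J$ is close to $L$. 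Combined with $\Width(I,J)\ge \Width(I,L)-O_\bdelta(1)$ this gives $\Width^\circ_{\widetilde L}(I,J)-\Width^+(I,J)\ge K-O_\bdelta(1)\gg_{\bdelta,\lambda}1$. Then Snake Lemma~\ref{simplmm:SnLmm:wZ} (with $5\lambda$ in place of $\lambda$, say, to absorb later losses) produces grounded intervals $J_1,I_1\subset\widetilde L$ with the smallness conditions $|J_1|<\dist(I,J_1)/(5\lambda)$, $|I_1|<\dist(I_1,J)/(5\lambda)$ and $\max\{\Width^\circ_{L_a}(I,J_1),\Width^\circ_{L_b}(I_1,J)\}\ge 2K-O_\bdelta(\log\lambda)$.

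Then I would run the dichotomy exactly as in the proof of Corollary~\ref{cor:SnakeLmm}: assuming WLOG $\Width^\circ_{L_a}(I,J_1)\ge 2K-O_\bdelta(\log\lambda)$, either $\Width^+(I,J_1)\ge K/3$, in which case by Lemma~\ref{lem:W+:ground inter} the projection $I^\new\coloneqq J_1^\bullet\subset L^c\setminus I\subset\partial Z$ is grounded rel $\wZ^m$, satisfies $|I^\new|=|J_1|<|I|/(5\lambda)<|I|/\lambda$ (using $\dist(I,J_1)\le|\widetilde L|\preceq |I|$ — here is where I need $\dist(I,L)\preceq|I|$, which follows from $\dist(I,L)\le|L^c|\le\lambda\length_{m+1}$ and... wait, this may actually require $|I|\succeq\length_{m+1}$, which holds since $|I|\ge\length_m\ge\length_{m+1}$), and $\Width^+_\lambda(I^\new)\ge\Width^+(I^\new,(L^c)^c)-O_\bdelta(1)\succeq K$; or $\Width^+(I,J_1)<5K/3$, in which case reapply Snake Lemma~\ref{simplmm:SnLmm:wZ} to get $I_2,J_2$ with $\Width^\circ(I_2,J_2)\ge\tfrac32 K$, and iterate. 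Since $\wZ^m$ is a (non-uniform) qc disk, an infinite sequence of shrinking grounded intervals with widths growing like $(3/2)^n$ is impossible, so the process terminates with the desired $I^\new$. The main obstacle, and the step requiring the most care, is the bookkeeping that $\dist(I,J_1)$ (and hence $|J_1|$) stays $\preceq |I|$ so that the final length bound $|I^\new|<\tfrac1\lambda|I|$ is genuinely achieved — this uses both the shortness hypothesis $|L^c|\le\lambda\length_{m+1}$ together with $\dist(I,L)\ge\max\{\length_{m+1},|I|/\lambda\}$ to pin $\dist(I,L)$ between $|I|/\lambda$ and something $\preceq_\lambda\max\{|I|,\length_{m+1}\}\preceq_\lambda|I|$, and one must feed the correct rescaling factor (taking $5\lambda$ rather than $\lambda$ into the Snake Lemma) through every iteration so the accumulated $O_\bdelta(\log\lambda)$ and constant losses do not destroy the conclusion.
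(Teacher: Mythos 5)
There is a genuine gap at the very first step: the reduction to a single pre-chosen pair $(I,J)$. You pick a grounded interval $J\subset L^c\setminus I$ with $\dist(I,J)\asymp\dist(I,L)$ and then assert $\Width(I,J)\ge\Width(I,L)-O_\bdelta(1)$ and that, after removing $O_\bdelta(1)$ buffers, this width sits in $\Fam^\circ_{\widetilde L}(I,J)$. Neither claim is justified, and neither is true in general: $\Fam(I,L)$ does not overflow $\Fam(I,J)$, since a curve from $I$ to $L$ need not come anywhere near a prescribed $J$ — it can land on $L$ directly from outside, cross the other gap, or (the crucial scenario) dive under a tiny piece of boundary immediately adjacent to $I$ or to $L$, at distance much smaller than $\dist(I,L)$ from its endpoint. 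The whole content of the lemma is to locate where the excess $K$ is focused, and your argument assumes the answer (``near $J$'') rather than proving it. (Your motivating remark that $|L^c|\le\lambda\length_{m+1}$ is ``short compared to $\dist(I,L)$ and $|I|$'' is also false: $L^c$ contains $I$ and both gaps, so it is longer than both.) The paper's proof supplies exactly the missing mechanism: it takes the vertical family $\RR$ of $\Fam(I,L)$, notes that Property~\ref{property:X} holds by Lemma~\ref{lem:I vs I^grnd:with separ}, and applies Lemma~\ref{lmm:W into W(I+,J+)} to trade $\RR$, with loss only $O_\bdelta(\log\lambda)$ (controlled by the Localization Lemma and Lemma~\ref{lem:I vs I^grnd:with separ}), into a lamination landing on the \emph{outer} sides of slight enlargements $\widehat I\supset I$, $\widehat L\supset L$, whose added pieces $I_a,I_b,L_a,L_b$ have length $\tfrac{1}{5\lambda^2}|I|$. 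Then comes the key dichotomy: since only $\Width^+(I,L)$ of this lamination can consist of genuinely outer $I$-to-$L$ curves, at least $K/3$ of it either lands on one of the tiny intervals $I_a,I_b,L_a,L_b$ — in which case Lemma~\ref{lem:W+:ground inter} immediately yields $I^\new$, with no Snake Lemma at all — or it meets the complementary gaps $A\cup B$, and only then does one restrict to a $\Fam^\circ$ family and invoke Snake Lemma~\ref{simplmm:SnLmm:wZ} (with Property~\ref{property:X} replacing Lemma~\ref{lem:I vs I^grnd}, see Remark~\ref{rem:SL:ShallowScale}) together with Corollary~\ref{cor:SnakeLmm}. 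Your outline omits both the trade to outer sides and this dichotomy; in particular the ``focusing adjacent to $I$ or $L$'' case cannot be recovered from your setup, because your $J$ is placed at distance $\asymp\dist(I,L)$ from $I$.

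A secondary problem is the final length bound. Feeding $5\lambda$ into the Snake Lemma gives $|J_1|<\dist(I,J_1)/(5\lambda)$, and the best a priori bound your hypotheses give on $\dist(I,J_1)$ is $|L^c|\le\lambda\length_{m+1}\le\lambda|I|$, which only yields $|J_1|<|I|/5$, not $|I|/\lambda$; one needs a $\lambda^2$-type margin, which is precisely why the paper's enlargement pieces are taken of length $|I|/(5\lambda^2)$. This is fixable, but as written your bookkeeping does not deliver $|I^\new|<\tfrac1\lambda|I|$.
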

\begin{proof}

Let $\RR$ be the vertical family of $\Fam(I,L)$, see~\S\ref{sss:can rect}. By Lemma~\ref{lem:I vs I^grnd:with separ}, Property~\ref{property:X} (see Remark~\ref{rem:SL:ShallowScale}) holds for $\RR$.

Let $\widetilde I, \widetilde L$ be slight enlargements of $I^\grnd,L^\grnd$ such that every interval in $\widetilde I\setminus I^\grnd, \widetilde L\setminus L^\grnd$ has length $\frac{1}{2\lambda }|I|$. Applying Lemma~\ref{lmm:W into W(I+,J+)}, we construct intervals $\widehat I, \widehat L$ with $I\subset \widehat I \subset \widetilde I$ and $L\subset \widehat L \subset \widetilde L$ such that there is a restriction $\FamG\subset \Fam(\widehat I^+, \widehat L^{+})$ of a sublamination of $\RR$ satisfying
\begin{itemize}
\item $\Width(\RR|\FamG)= \Width(I,L)- O(C)= K - O_\bdelta(\log \lambda)$;
\item $\FamG$ is disjoint from the central arc in $\Fam^-(I,L)$.
\end{itemize}
Here $C=O_\bdelta(\log \lambda)$ by Lemmas~\ref{lem:trading  width to space} and~\ref{lem:I vs I^grnd:with separ}.

Let $A,B$ be the complementary intervals to $\widehat I, \widehat L$, and let  $I_a,I_b$ be two intervals (possibly empty) in $\widehat I\setminus I$. Since $\Width(I,L)-\Width^+(I,L)=K$, there are two possibilities:
\begin{enumerate}[label=\text{(\Roman*)},font=\normalfont,leftmargin=*]
\item \label{cond:prf:lem:W to W^+:1} either $\FamG$ contains a lamination $\FamH$ in $\Fam^+\big(I_a, \widehat L\big)\cup \Fam^+\big(I_b, \widehat L\big)$ with $\Width(\RR|\FamH)\ge K/3$, see~\S\ref{sss:Restr of Lam}; 
\item \label{cond:prf:lem:W to W^+:2} or $\FamG$ contains a lamination $\FamH$ intersecting $A\cup B$ with $\Width(\RR|\FamH)\ge K/3$, where $A,B$ are two intervals of $\partial \wZ^m\setminus \big( \widehat I\cup \widehat L \big)$.
\end{enumerate}

Case~\ref{cond:prf:lem:W to W^+:1} follows from Lemma~\ref{lem:W+:ground inter} and Property~\ref{property:X} by defining $I^\new$ to be the projection of either $I^\grnd_a$ or $I^\grnd_b$ onto $\partial Z$.

Consider Case~\ref{cond:prf:lem:W to W^+:2}. Let $\FamH_A$ be the lamination of curves in $\FamH$ intersecting $A$ before intersecting $B$. Similar, $\FamH_B\subset \FamH$ consists of curves intersecting $B$ before $A$. We have $\FamH=\FamH_A\sqcup \FamH_B$. Below we assume that $\Width(\FamH_A)\ge K/6$; the case $\Width(\FamH_B)\ge K/6$ is analogous.

Since $\FamH_A$ is disjoint from the central arc in $\Fam^-(I,J)$, we can restrict $\FamH_A$ to the lamination $\FamP$ in $\Fam^\circ_A \big(\widehat I , \widehat L \cup B \big)$; i.e.,~$\FamP$ consists of the first shortest subcurve $\gamma'$ of $\gamma\in \FamH_A$ such that $\gamma'$ connects $\widehat I^+$ to $\big(\widehat J\cup B\big)^+$.

Because of Property~\ref{property:X}, Snake Lemma~\ref{simplmm:SnLmm:wZ} is applicable to $\PP$, see Remark~\ref{rem:SL:ShallowScale}. Therefore, there are grounded intervals $J_1\subset A,L_a$ such that $\Width^\circ_{L_a} (\widehat I^\grnd,J_1)\succeq K$ for and $|J_1|<\lambda \dist(J_1,\widehat I^\grnd)$. Corollary~\ref{cor:SnakeLmm} applied to $\Fam^\circ_{L_a} (I,J_1)\succeq K$ finishes the proof.
\end{proof}

\subsection{Rectangles crossing pseudo-bubbles} \label{sss:RR through D} Consider a rectangle $\RR$ and a closed topological disk $D$ such that $\partial^h\RR\subset \wC\setminus  D$. Assume that all vertical curves in $\RR$ intersect $D$. We denote by $x,y\subset \partial D$ the first intersections of $\partial ^{v, \ell}\RR,\partial ^{v, \rho}\RR$ with $D$; and let $I=[x,y]\subset \partial D$ be an interval with endpoints $x,y$. We say that $\RR$ \emph{crosses $D$ through $I$} if for every $\gamma\in \Fam(\RR)$
\begin{itemize}
\item the first intersection of $\gamma$ with $D$ is in $I$; and
\item the last intersection of $\gamma$ with $D$ is in $I^c=\partial D\setminus I$.
\end{itemize}

\begin{lem}
\label{lem:rect rhrough PB}
Assume that a rectangle $\RR, \ \Width(\RR)=K$ crosses a pseudo-bubble $\wZ_\ell$ (see~\S\ref{ss:Ps-bubles}) through $I\subset \partial \wZ_\ell$. Assume also that either $\partial^{h,0}\RR$ or $\partial^{h,1} \RR$ is disjoint from $\XX(\wZ_\ell)$, see~\eqref{eq:dfn XX(wZ)}. Then one of the following holds for every $\lambda >2$.
\begin{enumerate}[label=\text{(\Roman*)},font=\normalfont,leftmargin=*]
\item\label{c1:lem:rect rhrough PB} There is a grounded interval $B\subset \left[\big(1+ \lambda^{-2}\big)I\right]\setminus I\subset \partial \wZ_\ell$ and there is a sublamination $\widetilde \FamQ\subset \Fam(\RR)$ with $\Width(\widetilde \FamQ)\succeq K-O_\bdelta(\log \lambda)$ such that the restriction (see~\S\ref{sss:short subcurves}) $\FamQ$ of $\widetilde \FamQ$ to the family from $\partial D$ to $\partial^{h,1}\RR$ starts in $B$.
\item\label{c2:lem:rect rhrough PB} There is a grounded interval $B\subset \left[\big(1+ \lambda^{-2}\big)I\right]\setminus I\subset \partial \wZ_\ell$ and there is a lamination $\FamQ\subset \Fam^+\left(B, \big[(\lambda B)^c \big]^\grnd\right)$ such that $\Width(\FamQ)\succeq K-O_\bdelta(\log \lambda)$ and such that $\FamQ$ is a restriction (see~\S\ref{sss:short subcurves}) of a sublamination of $\RR$.
\end{enumerate}
\end{lem}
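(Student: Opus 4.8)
The statement is a ``crossing dichotomy'': a rectangle $\RR$ of width $K$ that enters and exits a pseudo-bubble $\wZ_\ell$ through a single interval $I$ either genuinely passes through $\wZ_\ell$ (Case~\ref{c1:lem:rect rhrough PB}), or its width must be carried outside $\wZ_\ell$ in an outer family based near $I$ (Case~\ref{c2:lem:rect rhrough PB}). The plan is to run the same machinery used in Snake Lemma~\ref{simplmm:SnLmm:wZ}, applied after identifying $\wZ_\ell$ with $\wZ^m$ via the landing map $f^k\colon \wZ_\ell\to \wZ^m$ (see~\S\ref{ss:Ps-bubles}), so that all inner-geometry estimates (Theorem~\ref{thm:wZ:shallow scale}, Localization and Squeezing Lemmas~\ref{lem:trading  width to space},~\ref{lem:squeezing}, and the $X$ vs $X^\grnd$ estimate of Lemma~\ref{lem:I vs I^grnd}) transfer verbatim. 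First I would use the hypothesis that $\RR$ crosses $\wZ_\ell$ through $I=[x,y]$ to split each vertical curve $\gamma\in\Fam(\RR)$ at its first entry into $\wZ_\ell$ (in $I$) and its last exit (in $I^c$); the ``outer'' initial segment of $\gamma$, before it first touches $\wZ_\ell$, lives in $\wC\setminus\intr\wZ_\ell$ and reaches $\partial^{h,1}\RR$ or $\partial^{h,0}\RR$. Using the hypothesis that one of the horizontal sides, say $\partial^{h,1}\RR$, is disjoint from $\XX(\wZ_\ell)$, this gives a graph $G\subset\RR$ containing all vertices and lying in $\wC\setminus\intr\XX(\wZ_\ell)$, so the restriction of $\RR$ to the complement of $\wZ_\ell$ is defined (\S\ref{sss:RestrOfFamil}); by~\eqref{eq:lem:RR vs RR^m} this restriction has width $(1\pm\varepsilon)K$.

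Next I would localize the entry points. Applying Localization Lemma~\ref{lem:trading  width to space} to the pair $(\partial^{h,0}\RR$-side of the picture, and the interval $I$ on $\partial\wZ_\ell)$ together with the $X$-vs-$X^\grnd$ control of Lemma~\ref{lem:I vs I^grnd} (or rather its ambient version, Lemma~\ref{lem:I vs I^grnd:with separ}, combined with Remark~\ref{rem:SL:ShallowScale}), one removes $O_\bdelta(\log\lambda)$ worth of curves and arranges that the curves of the resulting sublamination $\widetilde\FamQ$ first meet $\partial\wZ_\ell$ in a \emph{grounded} interval $B$ contained in the $\lambda^{-2}$-collar of $I$; here the quantitative statement $|B|\le\lambda^{-2}|I|$ comes from the Squeezing Lemma~\ref{lem:squeezing} exactly as in the derivation of~\eqref{eq:SnakeLmm:intervals}. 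Now there is a dichotomy. If a definite proportion of the width of $\widetilde\FamQ$ reaches $\partial^{h,1}\RR$ \emph{without} re-entering $\wZ_\ell$ after leaving $B$, then restricting these curves to the family from $\partial\wZ_\ell$ to $\partial^{h,1}\RR$ (as in~\S\ref{sss:short subcurves}) gives exactly the family $\FamQ$ of Case~\ref{c1:lem:rect rhrough PB}, with $\Width(\FamQ)\succeq K-O_\bdelta(\log\lambda)$. Otherwise a definite proportion of the width consists of curves that leave $B$ and then return to $\wZ_\ell$ (landing in $I^c$, by the crossing hypothesis), i.e.\ a snake in $\Fam^\circ_{L}(B,\,(\lambda B)^c)$ that submerges into $\wZ_\ell$; then Corollary~\ref{cor:SnakeLmm} (equivalently, iterating Snake Lemma~\ref{simplmm:SnLmm:wZ} as in its proof, using the shallow-scale variant of Remark~\ref{rem:SL:ShallowScale} since Property~\ref{property:X} holds for our lamination by Lemma~\ref{lem:I vs I^grnd:with separ}) trades this submerging width into an outer family $\FamQ\subset\Fam^+\big(B,[(\lambda B)^c]^\grnd\big)$ with $\Width(\FamQ)\succeq K-O_\bdelta(\log\lambda)$, which is Case~\ref{c2:lem:rect rhrough PB}. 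Pushing everything down to $\partial Z$ via $f^k$ and Lemma~\ref{lem:W+:ground inter} keeps the widths comparable.

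\textbf{Main obstacle.} The delicate point is organizing the bookkeeping of \emph{re-entries}: a curve of $\RR$ may oscillate in and out of $\wZ_\ell$ many times between its first entry in $B$ and its last exit in $I^c$, so one has to be careful that the ``first passage after $B$'' decomposition (modeled on the Series Decomposition of~\S\ref{sss:SerDecomp for F circ}) cleanly separates the Case~\ref{c1:lem:rect rhrough PB} curves from the Case~\ref{c2:lem:rect rhrough PB} curves, and that the grounding replacements $B\leadsto B^\grnd$ at each stage only cost $O_\bdelta(1)$ width (Lemma~\ref{lem:I vs I^grnd:with separ}) rather than accumulating. A second, more technical nuisance is that $\RR$ is a rectangle in the ambient sphere while $\wZ_\ell$ is only a pseudo-bubble, so to apply the inner-geometry lemmas of Section~\ref{s:wZ^m} one must first pass to the restriction $\RR^{\wZ_\ell}$ and check that the hypothesis ``$\partial^{h,1}\RR$ disjoint from $\XX(\wZ_\ell)$'' survives this passage — this is precisely what makes the graph $G$ available and hence what makes~\eqref{eq:lem:RR vs RR^m} applicable. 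I expect the rest to be a routine repetition of the proof of Snake Lemma~\ref{simplmm:SnLmm:wZ} together with the Splitting Argument (Remark~\ref{rem:SplitArg}).
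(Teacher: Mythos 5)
Your overall route is the paper's route (localize near $I$ by Squeezing/Localization, take the last-passage decomposition near $I$, then the dichotomy ``tail escapes to $\partial^{h,1}\RR$'' versus ``tail re-submerges, apply snake machinery''), but there is a genuine gap in how you secure the grounding control, and this is exactly where the hypothesis about $\XX(\wZ_\ell)$ is supposed to enter. You spend that hypothesis on the claim that it ``gives a graph $G\subset\RR$ containing all vertices and lying in $\wC\setminus \intr \XX(\wZ_\ell)$,'' so that the restriction of \S\ref{sss:RestrOfFamil} and the estimate~\eqref{eq:lem:RR vs RR^m} apply. This is unjustified: only \emph{one} horizontal side is assumed disjoint from $\XX(\wZ_\ell)$, the other is merely off $\wZ_\ell$ and may sit inside $\XX(\wZ_\ell)$, so no such graph need exist; and since every vertical curve of $\RR$ crosses $\wZ_\ell$, the component of $\RR\setminus\intr\wZ_\ell$ joining base to roof is typically empty, so a ``restriction of width $(1\pm\varepsilon)K$'' is not available anyway. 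Meanwhile the control you actually need --- that for every interval $X\subset\partial\wZ_\ell$ at most $O_\bdelta(1)$ vertical curves of $\RR$ meet $X\setminus X^\grnd$ (Property~\ref{property:X} of Remark~\ref{rem:SL:ShallowScale}) --- you attribute to Lemma~\ref{lem:I vs I^grnd:with separ}, whose hypotheses (a family $\Fam(I,J)$ between intervals of $\partial\wZ^m$ with separation or lengths at least $\length_m$) are simply not met here: $\partial^h\RR$ does not lie on $\partial\wZ_\ell$ and there is no scale restriction on $I$. In the paper, Property~\ref{property:X} is deduced directly from the hypothesis you misused: any vertical curve of $\RR$ entering $X\setminus X^\grnd\subset S^\inn(\beta^n_i)$ must cross the protecting rectangle $\XX^n_i$ of width $\ge\bDelta$, and one endpoint of every vertical curve lies outside the filled-in $\XX(\wZ_\ell)$, so these curves carry only $O_\bdelta(1)$ width. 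Without Property~\ref{property:X}, every one of your ``replace by the grounded interval at cost $O_\bdelta(1)$'' steps, and every appeal to the shallow-scale Snake Lemma via Remark~\ref{rem:SL:ShallowScale} (the condition $|N|\ge\length_m$ of Lemma~\ref{simplmm:SnLmm:wZ} is not available here), is unsupported.

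Two smaller points. First, your interval $B$ is described as where the curves ``first meet $\partial\wZ_\ell$,'' which contradicts the crossing hypothesis (first meetings are in $I$ itself); what is actually needed, and what the paper extracts, is the starting interval of the tail after the \emph{last} passage from $I$ into the collar $\widetilde I\setminus I$, localized by the Squeezing Lemma. Second, in your Case~\ref{c2:lem:rect rhrough PB} you invoke Corollary~\ref{cor:SnakeLmm} off the shelf; but that corollary relocates the base to some new interval $I^\new$ inside the complementary interval, whereas the lemma demands an outer family based at a grounded $B$ inside the specific collar $\left[\big(1+\lambda^{-2}\big)I\right]\setminus I$. The paper therefore inserts an intermediate dichotomy (first re-entry lands in $\big[(\lambda J)^c\big]^\grnd$ or not) and a localized series decomposition near $J$ before iterating Lemma~\ref{lem:SL:subm rect}; some version of this extra bookkeeping is needed in your argument as well.
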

\begin{proof}
Since either $\partial^{h,0}\RR$ or $\partial^{h,1} \RR$ is disjoint from $\XX(\wZ_\ell)$, we have
\begin{enumerate}[label=\text{(\Alph*)},start=24,font=\normalfont,leftmargin=*]
\item \label{prop:X vs X^grnd:prf}for every interval $X\subset \partial \wZ_\ell$, there are at most $O_\bdelta(1)$ vertical curves in $\RR$ intersecting $X\setminus X^\grnd$.
\end{enumerate}
(This is Property~\ref{property:X} of Remark~\ref{rem:SL:ShallowScale}.) Write \[\widetilde I \coloneqq \left[\big(1+ \lambda^{-5}\big)I\right]^\grnd\sp\sp \text{and }\sp\sp N\coloneqq \left[\partial \wZ_\ell\setminus \big(1+ \lambda^{-5}\big)I\right]^\grnd.\] Let $\Fam^\new$ be the lamination obtained from $\Fam(\RR)$ by removing all $\gamma\in \Fam(\RR)$ that have a subarc in $\wZ_\ell$ connecting $I$ and $\widetilde I^c$. By Property~\ref{prop:X vs X^grnd:prf}, the width of removed curves is bounded by $\Fam^-(I^\grnd, N)$, and by Squeezing Lemma~\ref{lem:squeezing}, $\Fam^-(I^\grnd, N)=O_\bdelta(\log \lambda).$ Therefore, $\Width(\Fam^\new)\ge K-O_\bdelta(\log \lambda)$.

Following notations of~\S\ref{sss:SerDecomp for F circ}, for every $\gamma\in \Fam^\new$, let  
\begin{itemize}
\item $\gamma_b^d\subset \wZ_\ell$ be the last subarc  of $\gamma$ connecting $I$ to $\widetilde I\setminus I$; and
\item $\gamma_b$ be the subarc of $\gamma$ after $\gamma^d_b$.
\end{itemize}

We set \[\Gamma_b\coloneqq\{\gamma_b^d\mid \gamma\in \Fam^\new\}\sp\sp\text{ and }\sp\sp \widetilde \Fam_b\coloneqq \{\gamma_b\mid \gamma\in \Fam^\new\},\]
where $\Width\big(\RR|\widetilde \Fam_b\big)\succeq K-O_\bdelta(\log \lambda)$ following conventions of \S\ref{sss:Restr of Lam}. Let $\widetilde I^\new\subset \widetilde I$ be the subinterval bounded by the leftmost and rightmost endpoints of curves in $\Gamma_b$. And let $\Fam_b$ be the restriction (see~\S\ref{sss:short subcurves}) of $\widetilde \Fam_b$ to the family of curves from $\widetilde I^\new$ to $\partial^{h,1} \RR$. Since $\Gamma_b$ is a lamination, every curve in $\Fam_b$ starts in $\left[\widetilde I^\new\setminus I \right]^+$.

If a sufficient part of $\Fam_b$ is outside of $\wZ_\ell$, then we obtain Case~\ref{c1:lem:rect rhrough PB} of the lemma (we apply Property~\ref{prop:X vs X^grnd:prf}  to construct  a grounded interval $B\subset \widetilde I^\new\setminus I$). 

Assume converse. Then using Property~\ref{prop:X vs X^grnd:prf} we find a grounded interval $J\subset \widetilde I^\new\setminus I$  and a sublamination $\FamH$ of $\Fam_b$ with $\Width(\RR|\FamH)\succeq K-O_\bdelta(\log \lambda)$ such that curves in $\FamH$ start in $J^+$ and every $\gamma\in \FamH$ intersects $\partial \wZ_\ell\setminus J$. 

Let $\FamH_1$ be the sublamination of $\FamH$ consisting of $\gamma\in \FamH$ such that the first intersection of $\gamma$ with $\partial \wZ_\ell\setminus J$ is in $\left[(\lambda J)^c\right]^\grnd$. And we set $\FamH_2\coloneqq \FamH\setminus \FamH_1$.

If $\Width(\RR|\FamH_1)\ge \Width(\RR|\FamH_2)$, then the Case~\ref{c2:lem:rect rhrough PB} of the lemma is obtained by restricting $\FamH_1$ to the family $\Fam^+\left(J, \big[(\lambda J)^c \big]^\grnd\right)$.

Assume that $\Width(\RR|\FamH_1)\le \Width(\RR|\FamH_2)$. Let $J_a,J_b$ be the connected components of $(\lambda J)^\grnd\setminus J^\grnd$ with $J_a<J<J_b$ in $\lambda J$. Let 
\begin{itemize}
\item $\FamH_a$ be the set of curves in $\FamH_2$ intersecting $J_a$ before intersecting $(J_a)^c$; and 
\item $\FamH_b$ be the set of curves in $\FamH_2$ intersecting $J_b$ before intersecting $(J_b)^c$.
\end{itemize} By Property~\ref{prop:X vs X^grnd:prf}, at most $O_\bdelta(1)$ curves in $\RR$ intersect $(\lambda J)\setminus (\lambda J)^\grnd$ before intersecting $(\lambda J)^\grnd\cup (\lambda J)^c$. Therefore, either $\Width(\RR|\FamH_a)\succeq K -O_\bdelta(\log \lambda)$ or  $\Width(\RR|\FamH_b)\succeq K -O_\bdelta(\log \lambda)$. Below we will assume that $\Width(\RR|\FamH_b)\succeq K -O_\bdelta(\log \lambda)$; the case of $\FamH_a$ is analogous.

 Let $\rho\in \FamH_b$ be the curve with the rightmost starting point in $J$, let $z$ be the first intersection of $\rho$ with $J_b$, and let $J'\subset J_b$ be the subinterval of $J_b$ between $J$ and $z$. Following notations of~\S\ref{sss:SerDecomp for F circ}, for every $\gamma\in \FamH_b$, let  
\begin{itemize}
\item $\gamma_a^d\subset \wZ_\ell$ be the first subarc  of $\gamma$ connecting $J'$ to $(J\cup J')^c$; and
\item $\gamma_a$ be the subarc of $\gamma$ before $\gamma^d_a$.
\end{itemize}
We set \[\Gamma_a\coloneqq\{\gamma_a^d\mid \gamma\in \FamH_b\}\sp\sp\text{ and }\sp\sp \widetilde \PP\coloneqq \{\gamma_a\mid \gamma\in \FamH_b\},\]
where $\Width\big(\RR|\widetilde \PP\big)\succeq K-O_\bdelta(\log \lambda).$ Let $N\subset J'$ be the subinterval bounded by the leftmost starting point of curves in $\Gamma_a$ and $z$. We now apply Steps~\ref{rem:SL:steps:b},~\ref{rem:SL:steps:c},~\ref{rem:SL:steps:d},~\ref{rem:SL:steps:e} (see Remark~\ref{rem:SL:steps}) of~\S\ref{sss:Prf SnaekLmm}, where Property~\ref{prop:X vs X^grnd:prf} substitutes Lemma~\ref{lem:I vs I^grnd}, to localize $N$ and to contract a lamination \[\FamQ\subset \Fam^\circ(J,N),\sp\sp |N|<\frac{1}{2\lambda} \dist(J,N),\sp\sp \Width(\RR| \FamQ)\succeq K-O_\bdelta(\log \lambda).\] 
Applying iteratively Lemma~\ref{lem:SL:subm rect} and repeating the argument of Corollary~\ref{cor:SnakeLmm}, we can replace $\FamQ$ with a required $\FamQ^\new$ in the complement of $\intr \wZ_\ell$.
\end{proof}
\subsection{Snakes with barriers}
\label{ss:SL:barriers}
 Consider a grounded pair $I,J\subset \wZ^m$ with $|J|> 1/2$. Let $A,B$ be two complementary intervals between $I$ and $J$.  We assume that the intervals are clockwise oriented as $A<I<B<J$.

Let $\ell_1,\ell_2,\dots, \ell_n$ be pairwise disjoint simple arcs in  $\C\setminus \wZ^m$ such that every $\ell_i$ connects $a_i\in A$ and $b_i\in B$ with the orientation $a_i<a_{i-1}<b_{i-1}<b_i$ for every $i>1$. We say that $\ell_1,\dots, \ell_n$ are \emph{barriers} for a lamination $\RR\subset \Fam(I,J)$ if no curves in $\RR$ intersect $\bigcup _{i=1}^n\ell_i$.

We say that a curve $\gamma\in \RR$ 
\begin{itemize}
\item \emph{skips under $[a_i, a_{i-1}]$} if $\gamma\cap \wZ^m$ contains a subcurve connecting two different components of $A\setminus [a_i, a_{i-1}]$;
\item \emph{skips under $[b_{i-1}, b_{i}]$} if $\gamma\cap \wZ^m$ contains a subcurve connecting two different components of $B\setminus [b_{i-1}, b_{i}]$;
\item  \emph{skips under $[a_i, a_{i-1}]\cup [b_{i-1}, b_{i}]$} if $\gamma$ skips under $[a_i, a_{i-1}]$ or under $[b_{i-1}, b_{i}]$.
\end{itemize}

\begin{defn}[Toll barriers] 
\label{defn:barriers}
Let $\RR \subset \Fam(I,J)$  be a lamination with barriers $\ell_1,\dots, \ell_n$ as above,  where $I,J\subset \partial \wZ^m$ is a grounded pair with $|J| >1/2$. Then $\ell_1,\dots, \ell_n$ are \emph{toll barriers for $\RR$} if for all $i$, no curves in $\RR$ skip under $[a_i, a_{i-1}]\cup [b_{i-1}, b_{i}]$. 
\end{defn}

\begin{lem}
\label{lem:barr vs toll barriers}
For every $\lambda,\chi>1$ the following holds.  Assume that $\ell_1,\dots, \ell_n$ are barriers for a lamination $\RR \subset \Fam(I,J)$, where $I,J\subset \partial \wZ$ is a grounded pair with $|J| >1/2$. Assume moreover, that 
\[ 1/\chi <\frac{|A|}{|I|}, \frac{|B|}{|I|} < \chi\sp\sp\text{ and }\sp\sp  |A|,|B|\ge 2\length_{m},\]
where $A,B$ are the complementary intervals between $I$ and $J$ as above. 

Then either 
\begin{itemize}
\item $\dist_{\partial \wZ^m}(a_i,a_{i-1}) <|I|/\lambda$ or $\dist_{\partial \wZ^m}(b_i,b_{i-1}) <|I|/\lambda$ for some $i$ with respect to any extension of $\dist_{\partial \wZ^m}(\sp,\sp ) $ from regular to all points;
\item  or after removing $O_{\chi, n,\bdelta}(\log \lambda)$-curves from $\RR$, we obtain a lamination $\RR^\new$ for which  $\ell_1,\dots, \ell_n$ are toll barriers.  
\end{itemize}
\end{lem}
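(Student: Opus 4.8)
\textbf{Proof plan for Lemma~\ref{lem:barr vs toll barriers}.}
The plan is to reduce the ``toll barrier'' condition to a uniform control on how far curves of $\RR$ can skip under the short intervals $[a_i,a_{i-1}]$ and $[b_{i-1},b_i]$, and to handle the two cases of the dichotomy by either declaring the barrier already short enough or by producing, from a skipping curve, a wide inner family that Squeezing Lemma~\ref{lem:squeezing} forbids. First I would fix notation: set $A_i\coloneqq[a_i,a_{i-1}]\subset A$ and $B_i\coloneqq[b_{i-1},b_i]\subset B$ for $i=1,\dots,n$ (with $a_0,b_0$ the endpoints of $I$), so the barriers $\ell_i$ cut off nested ``pockets'' of $\wC\setminus\wZ^m$ bounded by $\ell_i$, $A_i$, $B_i$ and pieces of $\partial\wZ^m$. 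Suppose we are not in the first alternative, i.e.\ $\dist_{\partial\wZ^m}(a_i,a_{i-1})\ge |I|/\lambda$ and $\dist_{\partial\wZ^m}(b_i,b_{i-1})\ge|I|/\lambda$ for all $i$. The goal is then to bound, for each $i$, the width of the sublamination of $\RR$ consisting of curves that skip under $A_i$ (resp.\ $B_i$), and sum these $2n$ bounds.

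The key step is the following claim: for a fixed $i$, the width of curves in $\RR$ that skip under $A_i$ is $O_{\chi,\bdelta}(\log\lambda)$. Indeed, a curve $\gamma\in\RR$ skipping under $A_i$ contains a subarc $\gamma'\subset\wZ^m$ connecting the two components of $A\setminus A_i$. Since $\gamma$ does not cross the barrier $\ell_i$ and does not cross $\ell_{i-1}$ either, this subarc $\gamma'$ must pass ``around'' the pocket cut off by $\ell_i$ — more precisely, $\gamma'$ together with $A_i$ bounds a region inside $\wZ^m$ meeting $\partial\wZ^m$ only along $A_i$, so $\gamma'$ lies in the family $\Fam^-_{\wZ^m}(A'_i,A''_i)$ where $A'_i,A''_i$ are the two components of $A\setminus\intr A_i$. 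Now I would apply Localization Lemma~\ref{lem:trading  width to space} and, crucially, Lemma~\ref{lem:I vs I^grnd:with separ} (Case~\ref{Case2:lem:I vs I^grnd:with separ}, valid because $|I|,|J|,|A|\ge 2\length_m$ and $A_i\subset A$) to pass to grounded endpoints at cost $O_\bdelta(1)$; then the hypotheses $\dist_{\partial\wZ^m}(a_i,a_{i-1})\ge|I|/\lambda$, together with $|A|,|B|\asymp_\chi|I|$, give that the grounded interval $A_i^\grnd$ has length comparable to $\dist(A'_i,A''_i)$ up to a factor $\lambda$, so Theorem~\ref{thm:wZ:shallow scale} (or directly the Squeezing Lemma) bounds $\Width^-(A'_i,A''_i)\preceq_\bdelta\log\lambda$. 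Hence at most $O_\bdelta(\log\lambda)$ curves of $\RR$ skip under $A_i$, and symmetrically under $B_i$. Summing over $i=1,\dots,n$, we remove a lamination of total width $O_{\chi,n,\bdelta}(\log\lambda)$ from $\RR$; the remaining lamination $\RR^\new$ has no curves skipping under any $A_i\cup B_i$, i.e.\ $\ell_1,\dots,\ell_n$ are toll barriers for $\RR^\new$, which is the second alternative.

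The main obstacle I anticipate is making the topological reduction ``a skipping curve produces a subarc in a controlled inner family $\Fam^-_{\wZ^m}$'' fully rigorous in the presence of the other barriers and of the nested pocket structure: one must check that the relevant subarc $\gamma'$ genuinely connects the two components of $A\setminus A_i$ \emph{and} that its endpoints (or a grounded adjustment thereof) are separated by a definite distance in $\partial\wZ^m$, so that Squeezing applies with the right parameter. A secondary subtlety is that the distance function $\dist_{\partial\wZ^m}$ is only defined a priori on regular points and must be extended continuously (as in~\S\ref{ss:Bett Grnd Inter}); since the points $a_i,b_i$ need not be regular, the statement is phrased for an arbitrary such extension, and one must check the argument only uses the extension through comparisons of grounded intervals, where Lemma~\ref{lem:I vs I^grnd:with separ} absorbs the discrepancy into the $O_\bdelta(1)$ error. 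Once these points are settled, the width bookkeeping is routine via the Parallel Law.
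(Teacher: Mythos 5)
Your proposal is correct and follows essentially the same route as the paper: the paper's proof is exactly the observation that if the curves skipping under $[a_{i+1},a_i]$ or $[b_i,b_{i+1}]$ had width $\gg_{\bdelta,\chi}\log\lambda$, then Squeezing Lemma~\ref{lem:squeezing} applied to the grounded pair $A\setminus[a_{i+1},a_i]^\GRND$ (resp.\ $B\setminus[b_i,b_{i+1}]^\GRND$) would force the first alternative, so otherwise one removes the $O_{\chi,n,\bdelta}(\log\lambda)$-wide skipping sublaminations to obtain toll barriers. You state this in contrapositive form and add the grounding bookkeeping (via Lemma~\ref{lem:I vs I^grnd:with separ}) that the paper compresses into the notation $[\,\cdot\,]^\GRND$, but the key lemma and the width accounting are the same.
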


Let us say that a curve $\gamma\in \RR$ \emph{skip under $[a_{i+1},a_i]\cup [b_i,b_{i+1}]$} if $\gamma$ intersects the interval $\{z\mid b_{i+1} <z< a_{i+1}\}\supset J$ before intersecting $[a_{i+1},a_i]\cup [b_i,b_{i+1}]$.

\begin{proof}
If the width of curves skipping under $[a_{i+1},a_i]$ or under $[b_i,b_{i+1}]$ is at least $C\gg _{\bdelta,\chi}\log \lambda$, then Squeezing Lemma~\ref{lem:squeezing} applied to $A\setminus [a_{i+1},a_i]^\GRND$ or to $B\setminus [b_i,b_{i+1}]^\GRND$ implies that either $\dist_{\partial \wZ^m}(a_i,a_{i-1}) <|I|/\lambda$ or $\dist_{\partial \wZ^m}(b_i,b_{i-1}) <|I|/\lambda$ holds.
\end{proof}

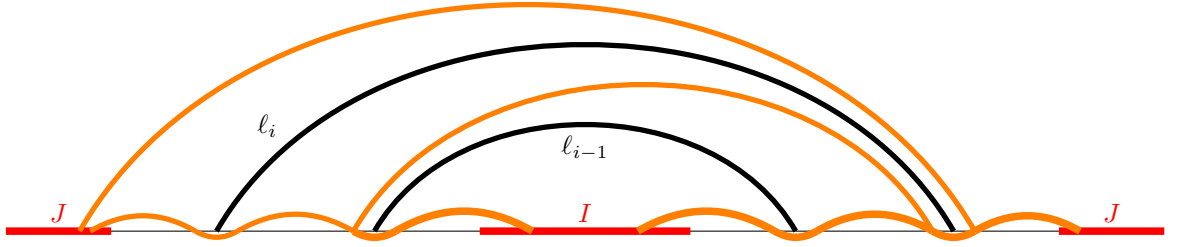
\begin{figure}[t!]
\[\begin{tikzpicture}[scale=1.4]

\node[red,above] at(5,0){$J$}; 
\node[red,above] at(-5,0){$J$};

\draw (-5.2,0) -- (5.2,0);

\draw[red,line width=1mm] (5.5,0)--(4.5,0)
(-5.5,0)--(-4.5,0);

\draw[red,line width=1mm] (1,0)--(-1,0);

\node[above,red] at (0,0)  {$I$}; 


\draw[line width=0.7mm] (2,0)edge[bend right=60](-2,0);
\draw[line width=0.7mm] (3.5,0)edge[bend right=60](-3.5,0);

\node[below] at (0,1) {$\ell_{i-1}$};
\node[right] at (-3.2,1) {$\ell_i$};

\draw[orange,line width=1mm]  (-0.5,0) edge[bend right] (-1.8,0)
 (-1.8,0)edge[bend left] (-2.2,0);
 
\draw[orange,line width=1mm]  (0.5,0) edge[bend left] (1.8,0)
 (1.8,0)edge[bend right] (2.2,0)
 (2.2,0) edge[bend left] (3.3,0)
(3.3,0) edge[bend right] (3.7,0)
(3.7,0) edge[bend left] (4.7,0) ; 

\draw[orange,line width=0.7mm]  (-2.2,0) edge[bend right] (-3.3,0)
(-3.3,0) edge[bend left] (-3.7,0)
 (-2.2,0) edge[bend left=60] (3.3,0)
 (3.7,0) edge[bend right=60] (-4.8,0) 
(-3.7,0) edge[bend right] (-4.7,0)  ;

\end{tikzpicture}\]
\caption{An example of a lamination (orange) with two toll barriers (black).}
\label{Fig:long snake}
\end{figure}

\begin{snakelmm}[with toll barriers]
\label{lem:SnakeWithCheckpts}
 Suppose that a lamination $\RR\subset\Fam(I,J)$ has $n\ge 3$ toll barriers, where $I,J\subset \wZ^m$ is a grounded pair with $|J|>1/2$. Assume also that $|A|,|B|\ge 2\length_m$, where $A,B$ are the complementary intervals between $I,J$.

 Then for every $\lambda>1$ there is an interval $T\subset \partial Z$ grounded rel $\wZ^m$ such that 
 \[\Width^+_\lambda(T)\succeq n \Width(\RR) - O_{\bdelta,n}(\log \lambda). \]
\end{snakelmm}
We will give a proof of Lemma~\ref{lem:SnakeWithCheckpts} after introducing the Series Decomposition for $\RR$. 

\subsubsection{Series Decomposition for laminations with toll barriers} \label{sss:SetDecomp:TallBarr}The construction  below is an adaptation of Series Decomposition~\S\ref{sss:SerDecomp for F circ}.  We assume that $I=[a_0,b_0]$ and $J=[b_{n+1},a_{n+1}]$ so that $a_{n+1}<a_n<\dots <a_0<b_0<\dots <b_{n+1}$. Consider a curve $\gamma\colon[0,1]\to \C$ in $ \RR$. For $i\in \{2,3\dots, n\}$, we will define below:
\begin{itemize}
\item $\gamma_i^{df}$ the first passage of $\gamma$ under $\{a_i, b_i\}$;
\item $\gamma_{i-1}^{dl}$ the last passage of $\gamma$ under $\{a_{i-1}, b_{i-1}\}$ before $\gamma_i^{df}$;
\item $\gamma_i$ the subcurve of $\gamma$ between $\gamma_{i-1}^{dl}$ and $\gamma_i^{df}$.
\end{itemize}
Then we will specify the laminations $\Fam_i$ and $\Gamma_{i}$.

\noindent {\bf Definition of} $\gamma_i^{df},\gamma_{i-1}^{dl}, \gamma_i$. Set:
\begin{itemize}
\item $\gamma(\tilde t_i)\in \partial \wZ^m$ to be the first intersection of $\gamma$ with 
$[a_{i+1}, a_{i}]\cup [b_{i},b_{i+1}]$;
\item $\gamma(t_i)\in \partial \wZ^m$ to be the last before $\tilde t_i$ intersection of $\gamma$ with $[a_i, a_{i-1}]\cup [b_{i-1},b_i]$;
\item $\gamma^{df}_i$ to be the subcurve $\gamma\mid [t_i,\tilde t_i]$;
\item $\gamma( \tau_{i-1})\in \partial \wZ^m$ to be the last intersection of $\gamma$ with 
$[a_{i-1}, a_{i-2}]\cup [b_{i-2},b_{i-1}]$ before $t_i$;
\item $\gamma(\tilde \tau_{i-1})\in \partial \wZ^m$ to be the first after $\tau_{i-1}$ intersection of $\gamma$ with $[a_i, a_{i-1}]\cup [b_{i-1},b_i]$;
\item $\gamma^{dl}_{i-1}$ to be the subcurve $\gamma\mid [\tau_{i-1},\tilde \tau_{i-1}]$;
\item $\gamma_i$ is the subcurve of $\gamma$ between $\gamma_{i-1}^{dl}$ and $\gamma_i^{df}$.
\end{itemize}

By construction, $\gamma_i$ 
\begin{itemize}
\item is disjoint from $\ell_{-1}\cup \ell_i$;
\item can only submerge into $\intr \wZ^m$ thorough $[a_i, a_{i-1}]\cup [b_{i-1},b_i]$; and
\item can not travel between $[a_i, a_{i-1}]$ and $ [b_{i-1},b_i]$ within $\wZ^m$.
\end{itemize}

\noindent {\bf Laminations} $\Gamma_i=\Gamma_{a_i}\cup \Gamma_{b_i}$ and $ \Fam_i=\Fam_{a_i}\cup \Fam_{b_i}$. Set 
\[ \widetilde \Fam_i \coloneqq\{\gamma_i\mid \gamma \in \RR\} \sp\sp \text{ and }\sp\sp \Gamma_i \coloneqq \left\{\gamma^{df}_i, \gamma^{dl}_i\mid \gamma\RR\right\} .\]

Every curve $\beta\in \Gamma_i$ is either under $a_i$ or under $b_i$ depending on whether $\beta$ connects $[a_i, a_{i-1}]$ to $[a_{i+1},a_{i}]$ or $[b_{i-1}, b_i]$ to $[b_i,b_{i+1}]$. Let $\Gamma_{a_i}, \Gamma_{b_i}$ be the sublaminations of $\Gamma_i$ consisting of curves that are below $a_i$ and $b_i$ respectively. One of the $\Gamma_{a_i}, \Gamma_{b_i}$ can be empty.

Similarly, we decompose $\widetilde \Fam_i=\widetilde \Fam_{a_i}\cup \widetilde \Fam_{a_i}$ as follows: 
\begin{itemize}
\item $\widetilde \Fam_{a_i}$ consists of curves $\gamma_i\in \widetilde \Fam_i$ such that $\gamma^{df}_i\in \Gamma_{a_i}$; and 
\item $\widetilde \Fam_{b_i}$ consists of curves $\gamma_i\in \widetilde \Fam_i$ such that $\gamma^{df}_i\in \Gamma_{b_i}$.
\end{itemize}

For every $a_i$ and $b_i$ set $\beta_{a_i}$ and $\beta_{b_i}$ to be the lowest curves in $\Gamma_{a_i}$ and $\Gamma_{b_i}$ and specify the following intervals: 
\begin{itemize}
\item $J_{a_i}$ to be  between the right endpoint of $\beta_{a_i}$ and $a_i$,
\item $I_{a_i}$ to be  between the left endpoint of $\beta_{a_i}$ and $a_i$,
\item $J_{b_i}$ to be  between the left endpoint of $\beta_{b_i}$ and $b_i$,
\item $I_{b_i}$ to be  between the right endpoint of $\beta_{b_i}$ and $b_i$.
\end{itemize} 
 If $\Gamma_{a_i}$ (resp $\Gamma_{b_i}$) is empty, then $J_{a_i}, I_{a_i}$ (resp $J_{b_i}, I_{b_i}$) are trivial. 
 
 By construction every curve in $\widetilde \Fam_i=\widetilde \Fam_{a_i}\cup \widetilde \Fam_{b_i}$ connects $I_{a_{i-1}}\cup I_{b_{i-1}}$ to $J_{a_i}\cup J_{b_i}$ and is disjoint from barriers $\ell_{i-1}, \ell_i$ and arcs $\beta_{a_{i-1}},\beta_{b_{i-1}},\beta_{a_i},\beta_{b_{i}}$.
 
We define 
\begin{equation}
\label{eq:SerDecomp:barriers:Fam}
\Fam_{a_i} \subset \Fam^\circ(J_{a_i},\ \lfloor I_{a_{i-1}}, J_{b_i}\rfloor )\sp\sp\text{ and }\sp\sp \Fam_{b_i} \subset \Fam^\circ(\lfloor J_{a_i},I_{b_{i-1}} \rfloor , J_{b_i})
\end{equation} 
to be the restrictions of $\widetilde \Fam_{a_i}, \widetilde \Fam_{b_i}$ to the associated families; i.e.:
\begin{itemize}
\item $\Fam_{a_i} $ consists of the first shortest subcurves in $\widetilde \Fam_{a_i}$ connecting $\lfloor I_{a_{i-1}}, J_{b_i}\rfloor$ and $J_{a_i}$;  
\item $\Fam_{b_i} $ consists of the first shortest subcurves in $\widetilde \Fam_{b_i}$ connecting $\lfloor J_{a_i},I_{b_{i-1}} \rfloor$ and $J_{b_i}$.
\end{itemize}

{\bf Series Decomposition.} We obtain that $\RR$ consistently overflows 
\begin{equation}
\label{eq:dfn:SD:barr}
 \Gamma_{1}=\Gamma_{a_1}\cup \Gamma_{b_1},\sp \Fam_2=\Fam_{a_2}\cup \Fam_{b_2}, \  \dots,\  \Fam_{n}=\Fam_{a_n}\cup \Fam_{b_n}, \sp\Gamma_{n}=\Gamma_{a_n}\cup \Gamma_{b_n}
\end{equation}

\subsubsection{Proof of Lemma~\ref{lem:SnakeWithCheckpts}} Follows by repeating the steps in the proof of Lemma~\ref{simplmm:SnLmm:wZ}, see~\S\ref{sss:Prf SnaekLmm} and Remark~\ref{rem:SL:steps}. 

Consider Series Decomposition~\ref{eq:dfn:SD:barr}. We recall that every $\Fam_i$ satisfies~\eqref{eq:SerDecomp:barriers:Fam}.

By Lemma~\ref{lem:I vs I^grnd:with separ}, at most $O_\bdelta(n)$ curves in $\RR$ intersect \[\bigcup_{i=2}^n \left(\left(I_{a_i}\setminus I^\grnd_{a_i}\right)\cup \left(I_{b_i}\setminus I^\grnd_{b_i}\right)\cup\left(J_{a_i}\setminus J^\grnd_{a_i}\right)\cup \left(J_{b_i}\setminus J^\grnd_{b_i}\right) \right);\]
removing all such curves from $\RR$, we can assume that $I_{a_i}, I_{b_i}, J_{a_i}, J_{b_i}$ are grounded (by replacing them with $I^\grnd_{a_i}, I^\grnd_{b_i}, J^\grnd_{a_i}, J^\grnd_{b_i}$).

By Localization and Squeezing Lemmas~\ref{lem:trading  width to space},~\ref{lem:squeezing} applied to \[\Fam^-(I_{a_i}, J_{a_i})\supset \Gamma_{a_i}\sp\sp\text{ and }\sp\sp \Fam^-(I_{b_i}, J_{b_i})\supset \Gamma_{b_i},\] 
$I_{a_i}, J_{a_i}$ and $I_{b_i}, J_{b_i}$ have innermost subpairs $I^\new_{a_i}, J^\new_{a_i}$ and $I^\new_{b_i}, J^\new_{b_i}$ such that, up to $O_\bdelta(\log \lambda)$, the width of $\Fam^-(I_{a_i}, J_{a_i}), \Fam^-(I_{b_i}, J_{b_i})$ is contained in $\Fam^-(I^\new_{a_i}, J^\new_{a_i}),$ $\Fam^-(I^\new_{b_i}, J^\new_{b_i})$ and such that $\lfloor I^\new_{a_i}, J^\new_{a_i}\rfloor, \lfloor J^\new_{b_i}, I^\new_{b_i}\rfloor$ are at least $5\lambda$ times smaller than $I_{a_i}, J_{a_i}, I_{b_i}, J_{b_i}$ respectively. Removing all curves in $\RR$ intersecting
\[ \big(I_{a_i}\setminus I^\new_{a_i}\big)\cup  \big(J_{a_i}\setminus J^\new_{a_i}\big)\cup  \big(I_{b_i}\setminus I^\new_{b_i}\big)\cup \big(J_{b_i}\setminus J^\new_{b_i}\big),\]
and then reapplying Series Decomposition~\ref{eq:dfn:SD:barr}, we obtain that the new $J_{a_i}$ and $J_{b_i}$ have small length compared to their distances to $I_{a_i}$ and $I_{b_i}$ respectively. 

Since $\RR$ consequently overflows the $\Fam_i$, there is an $i$ such that \[\Width(\Fam_i)= \Width(\Fam_{a_i})+\Width(\Fam_{b_i})\succeq n\Width(\RR) - O_{\bdelta, n} (\log \lambda).\]
The lemma now follows by applying Lemma~\ref{lem:trad width to width+} to either $\Fam_{a_i}$ or $\Fam_{b_i}$ -- they satisfy~\eqref{eq:SerDecomp:barriers:Fam} and the $\lambda$-separation.
\qed

\begin{figure}[t!]
\[\begin{tikzpicture}[scale=1.4]

\draw (-7,0) -- (2.5,0);

\node[blue] at (-2.3,1) {$\RR$};

\node[red,below] at (-1,-0.15) {$\Fam(I,J)$};
\node[blue,below] at (1,-0.0) {$B$};

\draw[blue, fill=blue, fill opacity=0.04] (-1.5,0) 
.. controls (-2, 0.5) and (-2.5,0.5) .. 
 (-3,0)
 .. controls (-3, 0) and (-6,0) .. 
 (-6.25,0)
.. controls (-3.25, 2) and (-1.25,2) .. 
(1.75,0)
.. controls (1.75,0) and  (-1.5,0)  .. 
 (-1.5,0) ;

 \node[blue, below] at (-5,0) {$A$};

\begin{scope}[shift={(0,-0.05)} ,scale=0.4]

\draw[red, fill=red, fill opacity=0.3]  (-4.5,0.125) 
.. controls (-4, 0.5) and (-3.5,0.5) .. 
(-3,0)
.. controls (-2.8, -0.2) and (-2.7,-0.2) .. 
(-2.5,0)
.. controls (2-4, 0.5) and (2-3.5,0.5) .. 
(2-3,0)
.. controls (2-2.8, -0.2) and (2-2.7,-0.2) .. 
(2-2.5,0)
.. controls (4-4, 0.5) and (4-3.5,0.5) .. 
(4-3,0)
.. controls (4-2.8, -0.2) and (4-2.7,-0.2) .. 
(4-2.5,0)
.. controls (6-4, 0.5) and (6-3.5,0.5) .. 
(6-3,0)
.. controls (6-2.8, -0.2) and (6-2.7,-0.2) .. 
(6-2.5,0)
.. controls (8-4, 0.5) and (8-3.5,0.5) .. 
(8-3,0.125)
.. controls (7.8-3,0.125) and (6.2-0.75,0.125 ) .. 
(6-0.75,0.125 )
.. controls   (6-1.25,0.7) and (6-2.25, 0.7) .. 
(4-0.75,0)
.. controls   (4-1.25,0.7) and (4-2.25, 0.7) .. 
(2-0.75,0)
.. controls (2-1.25,0.7) and (2-2.25, 0.7) .. 
(-0.75,0)
.. controls (-1.25,0.7) and (-2.25, 0.7).. 
(-2.75,0)
.. controls (-3.25,0.7) and  (-4.25, 0.7) .. 
(-4.75,0.125) 
.. controls (-4.75,0.125)  and  (-4.5,0.125) .. 
(-4.5,0.125); 

\end{scope}

\begin{scope}[shift={(0,-3.5)}]

\draw (-7,0) -- (2.5,0);

\node[blue] at (-2.3,1) {$\RR$};

\node[red,below] at (-0.1,-0.15) {$\Fam(I,J)$};
\node[blue,below] at (1,-0.0) {$B$};

\draw[blue, fill=blue, fill opacity=0.04] (-1.5,0) 
.. controls (-2, 0.5) and (-2.5,0.5) .. 
 (-3,0)
 .. controls (-3, 0) and (-6,0) .. 
 (-6.25,0)
.. controls (-3.25, 2) and (-1.25,2) .. 
(1.75,0)
.. controls (1.75,0) and  (-1.5,0)  .. 
 (-1.5,0) ;

\begin{scope}[shift={(0,-0.05)} ,scale=0.4]

\draw[red, fill=red, fill opacity=0.3]  (-4.5,0.125) 
.. controls (-4, 0.5) and (2-3.5,0.5) .. 
(2-3,0)
.. controls (2-2.8, -0.2) and (2-2.7,-0.2) .. 
(2-2.5,0)
.. controls (4-4, 0.5) and (8-3.5,0.5) .. 
(8-3,0.125)
.. controls (7.8-3,0.125) and (6.2-0.75,0.125 ) .. 
(6-0.75,0.125 )
.. controls   (6-1.25,0.7)  and (2-2.25, 0.7) .. 
(-0.75,0)
.. controls (-1.25,0.7) and  (-4.25, 0.7) .. 
(-4.75,0.125) 
.. controls (-4.75,0.125)  and  (-4.5,0.125) .. 
(-4.5,0.125); 

\coordinate (l1) at  (-0.6,0.125);

\coordinate (l2) at  (-0.9,0.125);

\end{scope}

\draw[blue, fill=blue, opacity=0.3] (l2) 
.. controls (-1.5,0.7)  and  (-2.5,0.7) .. 
(-3.5,0)
.. controls (-3.5,0)  and  (-3.5,0) .. 
(-5,0)
.. controls (-3.5,1.7)  and  (-1.5,1.7) .. 
(l1)
.. controls (l1)  and  (l2) .. 
(l2);
 \node[blue, below] at (-5,0) {$A$};

\end{scope}

\end{tikzpicture}\]
\caption{Two patterns for $\Fam(I,J)$ to sneak through $\RR$.}
\label{Fg:snake thorugh WR}
\end{figure}

\subsection{Sneaking Lemma}
\begin{sneaklmm}
\label{lem:sneaking}
Let $\wZ^m$ be a $\bdelta$ pseudo-Siegel disk. For all $t, \chi, \lambda>2$ there is a $\kappa (t)>2$ such that the following holds.

Suppose \[I,J\subset \partial \wZ^m,\sp\sp |I|\ge \length_m, \sp  \sp |A|,|B| \ge  2\length_m,\sp\sp  |J|> 1/2\] is a grounded pair and denote by $A,B$ two complementary intervals between $I$ and $J$; i.e., $A\cup B =\overline{\partial \wZ^m\setminus (I\cup J)}$. Suppose also that
\[  1/\chi \le \frac{|A|}{|I|}, \frac{|B|}{|I|}\le \chi.\]
If
\begin{itemize}
\item  $\Width(I,J)\eqqcolon K\gg_{\chi,\lambda, \bdelta} 1$; and
\item $\Width^+(A,B) \ge \kappa(t) K$,
\end{itemize}
then there is a $[tK,\lambda]^+$-wide interval $T\subset \partial Z$ with $|T| < |I|$ such that $T$ is grounded rel $\wZ^m$.
\end{sneaklmm}
\begin{proof}
Let us denote by $\RR$ the canonical rectangle of $\Fam^+(A,B)$. The idea of the proof is illustrated on Figure~\ref{Fg:snake thorugh WR}: either the family $\Fam_(I,J)$ submerges many times in $A$ or $B$, or a substantial part of $\Fam^+(A,B)$ is focused; i.e. it starts or terminates in a sufficiently small interval.

Let us select in $\RR$ a disjoint union of rectangles \[\RR_1\sqcup \RR_2\sqcup \dots\sqcup \RR_m,\sp\sp\text{ with }\sp \Width(\RR_i) = (t+1) K,\sp\sp \text{ and }\sp m \approx  \kappa(t)/(t+1).\] We assume that $A_i$ and $B_i$ is the base and the roof of $\RR_i$ respectively, and that they have the following orientation:
\[ A_{m}<A_{m-1}<\dots <A_1<B_1<\dots <B_m;\]
in particular, $\RR_{i+1}$ is above $\RR_i$.

By Lemma~\ref{lem:vet boundar}, we can forget $O_m(1)$- curves in $\Fam(I,J)$ and we can choose a vertical curve $\beta_i$ in the inner $O_m(1)$-buffer of every $\RR_i$ such that the remaining part $\LL$ of $\Fam(I,J)$ is disjoint from every $\beta_i$. We assume that $\beta_i$ connects $a_i\in A_i$ with $b_i\in B_i$. We denote by $\RR^\new_i$ the rectangle obtained from $\RR_i$ by removing an inner $O(1)$-buffer so that the horizontal sides of $\RR^\new_i$ are within $[a_{i+1},a_i]\sqcup [b_i,b_{i+1}]$.

We obtain that $\beta_i$ are barriers for $\LL$ as in~\S\ref{ss:SL:barriers} and that $\RR^\new_i$ is between $\beta_{i+1}$ and $\beta_i$. By Lemma~\ref{lem:barr vs toll barriers}, we have two possibilities (depending whether a sufficiently wide part of $\LL$ skips under $[a_{i+1},a_i]\cup [b_i,b_{i+1}]$):

{\bf Case \RN{1}:} $[a_{i+1},a_i]$ or $[b_i,b_{i+1}]$ is smaller than $|I|/\lambda$. Then either $\Fam_{\lambda} [a_{i+1},a_i]$ or $\Fam_{\lambda} [b_{i+1},b_i]$ contains $\RR^\new_i$. The statement follows from Lemmas~\ref{lem:W+:ground inter} and~\ref{lem:I vs I^grnd:with separ} by setting $T$ to be the projection of either $[a_{i+1},a_i]^\grnd$ or $[b_i,b_{i+1}]^\grnd$ on $\partial Z$. 

{\bf Case \RN{2}:} we can remove $O_{\lambda,\bdelta,\chi}(1)$-part from $\LL$ so that $\beta_{1},\dots, \beta_{m}$ are toll barriers for the remaining $\LL^\new$.  The statement now follows from Snake Lemma~\ref{lem:SnakeWithCheckpts} (with toll barriers).
\end{proof}

\subsection{Families that block each other}

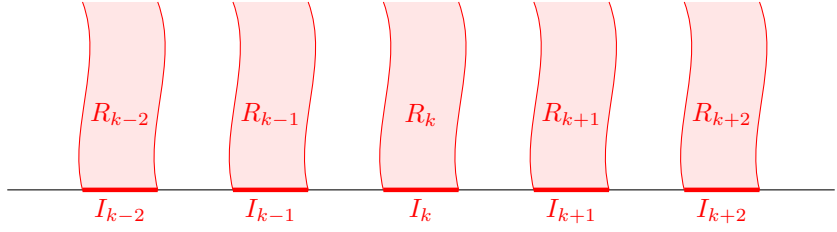
\begin{figure}[t!]

\[
\begin{tikzpicture}

\draw (-5,0)--(6,0);

\draw[red, line width=0.6mm] (0,0) --(1,0);

\draw[red, fill=red, fill opacity=0.1] 
(1,2.5) 
 .. controls  (1.3,1.7) and (1-0.2, 0.8)  .. 
(1,0)
.. controls  (0.8,0) and (0.2, 0)  ..
 (0,0) 
 .. controls (-0.2, 0.8) and (0.3,1.7) .. 
(0,2.5);

\node[red] at (0.5,1) {$R_k$};
\node[below,red] at (0.5,0){$I_k$};

\begin{scope}[shift ={(-2,0)}]

\draw[red, line width=0.6mm] (0,0) --(1,0);

\draw[red, fill=red, fill opacity=0.1] 
(1,2.5) 
 .. controls  (1.3,1.7) and (1-0.2, 0.8)  .. 
(1,0)
.. controls  (0.8,0) and (0.2, 0)  ..
 (0,0) 
 .. controls (-0.2, 0.8) and (0.3,1.7) .. 
(0,2.5);

\node[red] at (0.5,1) {$R_{k-1}$};
\node[below,red] at (0.5,0){$I_{k-1}$};

\end{scope}

\begin{scope}[shift ={(-4,0)}]

\draw[red, line width=0.6mm] (0,0) --(1,0);

\draw[red, fill=red, fill opacity=0.1] 
(1,2.5) 
 .. controls  (1.3,1.7) and (1-0.2, 0.8)  .. 
(1,0)
.. controls  (0.8,0) and (0.2, 0)  ..
 (0,0) 
 .. controls (-0.2, 0.8) and (0.3,1.7) .. 
(0,2.5);

\node[red] at (0.5,1) {$R_{k-2}$};
\node[below,red] at (0.5,0){$I_{k-2}$};

\end{scope}

\begin{scope}[shift ={(2,0)}]

\draw[red, line width=0.6mm] (0,0) --(1,0);

\draw[red, fill=red, fill opacity=0.1] 
(1,2.5) 
 .. controls  (1.3,1.7) and (1-0.2, 0.8)  .. 
(1,0)
.. controls  (0.8,0) and (0.2, 0)  ..
 (0,0) 
 .. controls (-0.2, 0.8) and (0.3,1.7) .. 
(0,2.5);

\node[red] at (0.5,1) {$R_{k+1}$};
\node[below,red] at (0.5,0){$I_{k+1}$};

\end{scope}

\begin{scope}[shift ={(4,0)}]

\draw[red, line width=0.6mm] (0,0) --(1,0);

\draw[red, fill=red, fill opacity=0.1] 
(1,2.5) 
 .. controls  (1.3,1.7) and (1-0.2, 0.8)  .. 
(1,0)
.. controls  (0.8,0) and (0.2, 0)  ..
 (0,0) 
 .. controls (-0.2, 0.8) and (0.3,1.7) .. 
(0,2.5);

\node[red] at (0.5,1) {$R_{k+2}$};
\node[below,red] at (0.5,0){$I_{k+2}$};

\end{scope}

\end{tikzpicture}
\]

\caption{Illustration to Lemma~\ref{lem:no inf quasi additivity}: rectangles $R_k$ do not exist because, otherwise, they would block each other.}
\label{fig:F_n don't go far}
\end{figure}

We will need the following simple fact.


\begin{lem}
\label{lem:no inf quasi additivity}
Let $\wZ^m$ be a pseudo-Siegel disk. There is no sequence of disjoint regular intervals
\[ I_n=I_0,I_1,\dots , I_{n-1},\sp\sp\sp |I_k| \ge \length_m,\sp\sp \Width^+( I_k, L^c_k) \ge 3 \]
enumerated either counterclockwise or clockwise such that $I_{k-1}\cup I_k\cup I_{k+1}\subset  L_k$.
\end{lem}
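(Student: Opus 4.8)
Suppose for contradiction that such a sequence $I_0,I_1,\dots,I_{n-1}\subset\partial\wZ^m$ exists; enumerate it, say, clockwise (the counterclockwise case is symmetric), so that the intervals occur in cyclic order $I_0<I_1<\dots<I_{n-1}<I_0$ on $\partial\wZ^m$. For each $k$, let $\RR_k\subset\Fam^+(I_k,L^c_k)$ be a rectangle with $\Width(\RR_k)\geq 3$; since $I_{k-1}\cup I_k\cup I_{k+1}\subset L_k$, the rectangle $\RR_k$ consists of outer curves (in $\wC\setminus Z$) joining $I_k$ to the complement of $L_k$, hence joining $I_k$ to $\partial\wZ^m\setminus(I_{k-1}\cup I_k\cup I_{k+1})$ -- in particular every curve of $\RR_k$ separates $I_{k-1}$ from $I_{k+1}$ in $\wC\setminus Z$, or more precisely each such curve together with a subarc of $\partial\wZ^m$ encloses exactly one of $I_{k-1},I_{k+1}$. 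The point is a non-crossing count: a curve of $\RR_{k}$ and a curve of $\RR_{k+1}$ must cross, because the endpoints of the former (one in $I_k$, one outside $L_k$, hence outside $I_k\cup I_{k+1}$) are linked along $\partial\wZ^m$ with the endpoints of the latter (one in $I_{k+1}$, one outside $L_{k+1}\supset I_k\cup I_{k+1}$).

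The contradiction is then purely combinatorial, of the same flavour as the Non-Crossing Principle used throughout the paper. First I would reduce to a cyclic chain of three consecutive indices: by quasi-additivity of width under a chain of mutually crossing families one shows that if $\RR_{k-1}$ crosses $\RR_k$ crosses $\RR_{k+1}$ with all three widths $\geq 3$, then already $\Width^+(I_{k-1},L^c_{k-1})$ is forced to be small, or else the triple of rectangles cross each other in a way that violates planarity. Concretely: orient all vertical curves of $\RR_k$ from $I_k$ outward; each curve of $\RR_k$ is crossed by every curve of $\RR_{k-1}$ and of $\RR_{k+1}$, so $\RR_{k-1}$ and $\RR_{k+1}$ lie on opposite ``sides'' of $\RR_k$ (one on the $I_{k-1}$-side, one on the $I_{k+1}$-side). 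But $\RR_{k+1}$ and $\RR_{k-1}$ are themselves forced to cross for the same reason applied to the pair $(k-1,k+1)$ through $I_k\subset L_{k-1}\cap L_{k+1}$: a curve of $\RR_{k+1}$ from $I_{k+1}$ to outside $L_{k+1}$, and a curve of $\RR_{k-1}$ from $I_{k-1}$ to outside $L_{k-1}$, have linked endpoints whenever $I_k$ lies between $I_{k-1}$ and $I_{k+1}$ -- which it does. A planar family of curves cannot have three members pairwise crossing and simultaneously two of them separated by the third; cleanly, one extracts three arcs $\gamma_{k-1}\in\RR_{k-1},\gamma_k\in\RR_k,\gamma_{k+1}\in\RR_{k+1}$ in the planar surface $\wC\setminus Z$, each pair meeting in an odd number of transversal points, with $\gamma_{k-1}$ and $\gamma_{k+1}$ on opposite sides of $\gamma_k$; pushing $\gamma_{k-1}$ and $\gamma_{k+1}$ along $\gamma_k$ shows $\gamma_{k-1}$ and $\gamma_{k+1}$ must meet $\gamma_k$ and yet be disjoint from each other once we restrict to the component between consecutive crossings with $\gamma_k$ -- contradiction with their forced crossing. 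I would make this precise by a mod-$2$ intersection-number argument in $H_1$ of the planar surface, exactly as in the ``cross-intersection'' estimates elsewhere in the paper (\S on Non-Crossing Principle, cf.~Lemma~\ref{lem:near transl:est:eps}).

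So the heart of the argument is the observation that three mutually linked outer families, two of which are separated by the third, cannot coexist with all widths $\geq 3$: if $\RR_{k-1}$ is pinched against one horizontal side of $\RR_k$ and $\RR_{k+1}$ against the other, then $\Width(\RR_k)\leq \Width(\RR_{k-1}^{\,c})\oplus\Width(\RR_{k+1}^{\,c})$ type bookkeeping (via the Parallel/Gr\"otzsch inequality \eqref{eq:Grot}) combined with the forced crossing of $\RR_{k-1}$ and $\RR_{k+1}$ themselves forces one of the three widths below the threshold $3$; iterating around the finite cycle $k=0,1,\dots,n-1$ (the cyclic order makes every index play the role of the middle one) produces the contradiction. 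I expect the main obstacle to be the careful bookkeeping of which side of $\RR_k$ each neighbour lies on, i.e.~setting up the linking/crossing data consistently around the whole cycle and checking it is genuinely inconsistent rather than merely ``tight'' -- this is where a clean statement of the planar non-crossing count, rather than an ad hoc picture-chase, is needed. Once that lemma about three linked families is isolated, the proof of Lemma~\ref{lem:no inf quasi additivity} is immediate, and indeed no pseudo-Siegel disk structure is used beyond the fact that $\wC\setminus Z$ is a planar topological annulus, so the statement holds verbatim with $\wZ^m$ replaced by any hull.
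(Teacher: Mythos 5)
There is a genuine gap, and it sits at the very first step. You assert that a curve of $\RR_k$ (from $a\in I_k$ to $b\in L_k^c$) and a curve of $\RR_{k+1}$ (from $c\in I_{k+1}$ to $d\in L_{k+1}^c$) must cross because their endpoint pairs are linked on $\partial \wZ^m$. They need not be linked: the constraints only say $b\notin I_{k-1}\cup I_k\cup I_{k+1}$ and $d\notin I_k\cup I_{k+1}\cup I_{k+2}$, so nothing prevents, e.g., $b\in I_{k+4}$ and $d\in I_{k+3}$, giving the cyclic order $a,c,d,b$, which is unlinked; the two curves can then be nested rather than crossing. This is in fact the expected picture (see Figure~\ref{fig:F_n don't go far}): the wide families do \emph{not} cross one another — by the Non-Crossing Principle \S\ref{sss:non cross princ} they cannot, since all widths exceed $1$ — they \emph{block} each other, i.e.\ each is forced to dive under its neighbours. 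Note also that if your linking claim were true, you would not need any triple argument at all: universal crossing between two rectangles of width $\ge 3$ already contradicts non-crossing directly. And the triple argument you sketch as the "heart" is itself not a valid planarity obstruction — three arcs in the plane can perfectly well pairwise cross — so the second half of the plan does not stand independently either.

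The paper's proof goes the opposite way. One first removes $1$-buffers from the canonical rectangles so that the resulting rectangles $R_k$ of width $\ge 1$ are pairwise \emph{disjoint}. Then one picks a curve $\gamma\in\bigsqcup_k R_k$, say $\gamma\in R_k$, minimizing the boundary distance $\dist_{\partial \wZ^m}(\gamma(0),\gamma(1))$ between its endpoints. Since $\gamma(0)\in I_k$ and $\gamma(1)\notin L_k\supset I_{k-1}\cup I_k\cup I_{k+1}$, the shorter boundary arc between its endpoints contains all of $I_{k+1}$ (or all of $I_{k-1}$); every curve of $R_{k+1}$ (resp.\ $R_{k-1}$) starts there and, being disjoint from $\gamma$, must stay in the region enclosed by $\gamma$ and that arc, hence has endpoints at strictly smaller boundary distance — contradicting minimality. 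If you want to repair your write-up, replace the forced-crossing/linking step by this disjointness-plus-extremal-curve (nesting) argument; your closing observation that only the planarity of $\wC\setminus\wZ^m$ and a boundary parametrization are used is correct, but it does not rescue the crossing claim.
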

\begin{proof}
Suppose converse. Let $\Fam_k$ be the canonical rectangle of $\Fam^+( I_k, L^c_k)$ in $\wC\setminus \intr(\wZ^m)$. By removing $1$-buffers on each side of $\Fam_k$, we obtain closed rectangles $R_k\subset \Fam^+( I_k, L^c_k)$ such that the $R_k$ are disjoint and have width at least $1$. This is impossible because $R_k$ block each other, see Figure~\ref{fig:F_n don't go far}. Indeed, let us choose \[\big[\gamma\colon [0,1]\to \wC\setminus \intr(\wZ^m)\big]\in \bigsqcup_{i=1}^{n} R_k\] so that $\gamma(1)\boxminus \gamma(0)$ is the minimal possible. Assuming $\gamma\in R_k$ and using $I_{k-1}\sqcup I_{k+1}\subset L_k$, we obtain that some $\ell \in R_{k-1}\sqcup R_k$ would have smaller difference $\ell(1)\boxminus \ell(0)$.
\end{proof}

\section{Welding of $\wZ^{m+1}$ and parabolic fjords}
\label{s:Welding}

Let $\wZ^{m+1}$ be a $\bdelta$-pseudo-Siegel disk. For an interval $J^{m+1}=[x,y]\subset \partial \wZ^{m+1}$, let $[x,y]_{\wZ^{m+1}}$ and $[x,y]_{\wC\setminus \wZ^{m+1}}$ be the hyperbolic geodesics of $\intr \wZ^{m+1}$ and of $\wC\setminus \wZ^{m+1}$ connecting $x$ and $y$. Define $O_{J^{m+1}}\supset J^{m+1}$ to be the closed topological disk bounded by $[x,y]_{\wZ^{m+1}}\cup [x,y]_{\wC\setminus \wZ^{m+1}}\eqqcolon\partial O_{J^{m+1}}$, see Figure~\ref{fig:O_J}.

Consider $T\in \Dbb_m,\sp m\ge -1$ and let $T'$ be as in~\S\ref{sss:diff tilings}; i.e.~$T'\coloneqq T\cap f^{\qq_{m+1} }(T)$ for $m\ge 0$ with an appropriate adjustment for $m=-1.$ Assume that there is a sufficiently wide non-winding parabolic rectangle based on $T'$. By Theorem~\ref{thm:par fjords}, all such wide rectangles are essentially based  on $T_\parab\subset T'$. Theorem~\ref{thm:par fjords} also describes the outer geometry of $\overline Z$ above $T_\parab$ on scale $\ge \length_{m+1}$.

\begin{weldlmm}
\label{lmm:welding}
Consider a concatenation of intervals $J=N\# I\# M\subset T_\parab$ with $|J|<\frac 1 2 |T_\parab|$, where $T_\parab$ is from Theorem~\ref{thm:par fjords}, such that the endpoints of $N, I,M$ are within $\CP_{m+1}$. Assume that
\begin{equation}
\label{lmm:welding:cond}
|N|\asymp |I| \asymp |M|.
\end{equation}
Then there is a constant $\bvarepsilon>0$ depending only on ``$\asymp$'' in~\eqref{lmm:welding:cond} such that the following holds for all $\lambda>2$. If $\wZ^{m+1}$ is a $\bdelta$-pseudo-Siegel disk and if ${\nu\coloneqq |I|/\length_{m+1}\gg_{\bdelta,\lambda} 1}$ (where ``~$\gg_{\bdelta,\lambda}$'' also depends on ``$\asymp$'' in~\eqref{lmm:welding:cond}), then either 
\begin{equation}
\label{eq:1:lmm:welding}
\mod \left(O_{J^{m+1}}\setminus I^{m+1} \right)\ge \bvarepsilon
\end{equation}
holds, where $I^{m+1},J^{m+1}$ are the projections of $I,J$ onto $\partial \wZ^{m+1}$, or there is an interval $S\subset \partial Z$ with $|S|<\length_{m+1}$ such that $S$ is grounded rel $\wZ^{m+1}$ and
\begin{equation}
\label{eq:2:lmm:welding}
\log \Width^+_{\lambda}(S) \succeq  \log \nu.
\end{equation}
\end{weldlmm}

\begin{rem}
\label{rem:error does not accum}
We emphasize that ``$\bvarepsilon$ and $\succeq$'' in~\eqref{eq:1:lmm:welding} and~\eqref{eq:2:lmm:welding} are independent of $\bdelta$. Only the scale on which the Welding Lemma works  (i.e.,~how big is $\nu$) depends on $\bdelta$. This independence of $\bdelta$ follows from beau coarse-bounds for $\wZ^{m+1}$ (Theorem~\ref{thm:wZ:shallow scale}) that are based on beau coarse-bounds for near rotation domains (Theorem~\ref{thm:beau:part U: c quasi line}). The independence of $\bdelta$ implies that the error does not increase during the regularization $\dots \wZ^{m+1}\leadsto \wZ^{m} \leadsto\wZ^{m-1}\leadsto\dots$-- see Corollary~\ref{cor:regul}.

In the application, the comparison ``$\asymp$'' for~\eqref{lmm:welding:cond} will be selected in~\eqref{eq:mod O:bounds}; see also Remark~\ref{rem:selections of:lmm:welding:cond}. For the outline of the proof, see~\S\ref{sss:outline of Welding}. 
\end{rem}

Recall from~\S\ref{sss:regul within rect} that a regularization $\wZ^{m}=Z^{m+1}\cup \wZ^{m+1}$ is within $\orb_{-\qq_{m+1}+1} \RR$ if all relevant objects are within the backward orbit of a rectangle $\RR$. The following result is our primary tool of constructing pseudo-Siegel disks.

\begin{cor}
\label{cor:regul}
There is a sufficiently small $\bdelta>0$ with the following properties. Consider $T\in \Dbb_m$ and let $T'$ be as above. Let $\RR$ be a non-winding parabolic rectangle based on $T'$ with $\Width(\RR)\gg_{\bdelta} 1$. Let $\wZ^{m+1}$ be a geodesic $\bdelta$-pseudo-Siegel disk, see~\S\ref{sss:regul within rect}. Then:
\begin{enumerate}
\item either there is a geodesic $\bdelta$-pseudo-Siegel disk $\wZ^m=Z^m\cup \wZ^{m+1}$ with its level-$m$ regularization within $\orb_{-\qq_{m+1}+1} \RR$;
\label{case:1:thm:regul}
\item or there is an interval \[I\subset T',\sp\sp   |I|> \length_{m+1} \sp\sp \text{such that}\sp\sp 
\log \Width^{+}_{\lambda,\div, m}(I)\succeq \Width(\RR);
\]\label{case:2:thm:regul}
\item or there is a grounded rel $\wZ^{m+1}$ interval \[I\subset \partial Z\sp\sp \text{ with }\sp |I|\le \length_{m+1} \sp \text{ such that }\sp\log \Width^+_\lambda(I)\succeq  \Width(\RR).\]
\label{case:3:thm:regul}
\end{enumerate}  
\end{cor}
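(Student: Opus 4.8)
\textbf{Plan of proof of Corollary~\ref{cor:regul}.}
The idea is to feed the hypothesis ``$\Width(\RR)\gg_\bdelta 1$'' through the machinery of Section~\ref{s:par fjords} and the Welding Lemma~\ref{lmm:welding}, keeping careful track of which of the three alternative conclusions is being produced at each branch point. First I would apply Lemma~\ref{lem:central cond} (central subrectangles) to $\RR$. If its second alternative holds we immediately land in case~\eqref{case:2:thm:regul}, so assume from now on that $\RR$ contains a parabolic non-winding central balanced geodesic subrectangle $\RR_1$ with $\Width(\RR_1)\ge \Width(\RR)/2\gg_\bdelta 1$; by Lemma~\ref{lem:contr:part^h,1} and Theorem~\ref{thm:par fjords}\,(\RN{1}) this also pins down the interval $T_\parab=[x,y]\subset T'$ together with the outer protections $\XX,\YY$, and gives~\eqref{thm:par fjords:prf:1}, i.e.\ $\dist_{T'}(v',z)\asymp\dist_{T'}(\tilde x,z)$ on $T_\parab$. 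Centrality of $\RR_1$ guarantees that both its horizontal sides sit in the ``middle'' part of $T_\parab$, which is what we need so that the dams $\beta^m_i$ and their extra protections $\XX^m_i$ we are about to build are genuinely parabolic and well inside the fjord (Assumptions~\ref{ass:wZ:EtraProt} and~\ref{ass:wZ:CombSpace}).

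Next I would carry out the actual welding. Spread $\RR_1$ around using Lemma~\ref{lem:Rect:pullback} and~\S\ref{sss:regul within rect} to obtain the family $\orb_{-\qq_{m+1}+1}\RR_1$ of pullback rectangles sitting over all the diffeo-intervals $T'_i$; inside each one Theorem~\ref{thm:par fjords} describes the outer geometry of $\overline Z$ on scale $\ge\length_{m+1}$. For each $i$ I would select, well inside $\lfloor\XX^m_i\rfloor\subset T^{m+1}_i$, a concatenation $J=N\#I\#M\subset T_{\parab,i}$ with endpoints in $\CP_{m+1}\setminus\CP_m$ and $|N|\asymp|I|\asymp|M|$, with $\nu=|I|/\length_{m+1}$ large (possible since $\Width(\RR_1)\gg_\bdelta 1$ forces, via~\eqref{eq:1:thm:par fjords}, the ratio $|\partial^h\RR_1|/\dist(v,x)$ to be exponentially large, so $T_\parab$ is long compared to $\length_{m+1}$). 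Apply Welding Lemma~\ref{lmm:welding}: if its second alternative~\eqref{eq:2:lmm:welding} occurs for some $i$ we are in case~\eqref{case:3:thm:regul}, because $\log\Width^+_\lambda(S)\succeq\log\nu\succeq\Width(\RR_1)\succeq\Width(\RR)$ once $\nu$ was chosen comparably large to $\Width(\RR)$. Otherwise~\eqref{eq:1:lmm:welding} holds uniformly in $i$: the disks $O_{J^{m+1}_i}$ have definite modulus of the surrounding annulus, independent of $\bdelta$ (this is the point of Remark~\ref{rem:error does not accum}). These $O_{J^{m+1}_i}$, together with the hyperbolic geodesics bounding them, furnish the channels $\alpha^m_i$, the dams $\beta^m_i$, their collars $A^\inn,A^\out$ (of modulus $\ge\bdelta$ once $\bdelta$ is chosen small enough relative to the universal $\bvarepsilon$), and — from the protecting rectangles $\XX,\YY$ of Theorem~\ref{thm:par fjords}, pulled back — the extra protections $\XX^m_i$ of width $\ge\bDelta$. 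One then checks, item by item, Assumptions~\ref{ass:wZ:2} through~\ref{ass:wZ:Linking}: the intersection pattern and combinatorial-space conditions follow from Theorem~\ref{thm:par fjords}\,(\RN{2}),(\RN{3}) and Assumption~\ref{ass:wZ:CombSpace}'s $\tfrac45$ requirement by taking $J$ deep enough in $T_\parab$; conformal separation~\ref{ass:wZ:bDelta} follows from $\Width(S_{x_i},S_{y_i})\le 1/\bdelta$ which is automatic from $\mod A^\inn\ge\bdelta$; minimal position is arranged by taking all arcs to be hyperbolic geodesics, which also makes $\wZ^m$ geodesic and (choosing the truncation conservatively) gives $\stab(\wZ^m)\ge 10$. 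Since everything was built inside $\orb_{-\qq_{m+1}+1}\RR_1\subset\orb_{-\qq_{m+1}+1}\RR$, the regularization is within $\orb_{-\qq_{m+1}+1}\RR$, and we are in case~\eqref{case:1:thm:regul}.

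The one delicate point is the choice of the two absolute constants $\bdelta$ and the size of $\nu$ (hence the ``$\gg_\bdelta$'' threshold), and the order of quantifiers: $\bdelta$ must be small enough that the universal modulus $\bvarepsilon$ of Welding Lemma~\ref{lmm:welding} dominates it (so that~\eqref{eq:1:lmm:welding} actually yields collars of modulus $\ge\bdelta$), and small enough that $\bDelta\gg_\bdelta 1$ in the sense of Assumption~\ref{ass:wZ:EtraProt} can be met by the pulled-back protections $\XX,\YY$; yet the scale $\nu$ on which welding operates is allowed to depend on $\bdelta$. The key conceptual input making this consistent — and the reason the induction $\dots\leadsto\wZ^{m+1}\leadsto\wZ^m\leadsto\dots$ does not accumulate error — is exactly that $\bvarepsilon$ and the ``$\succeq$'' in~\eqref{eq:2:lmm:welding} are $\bdelta$-independent, which in turn rests on the beau coarse-bounds (Theorems~\ref{thm:wZ:shallow scale} and~\ref{thm:beau:part U: c quasi line}) rather than on the cruder $\asymp_\bdelta$ estimates. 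I expect verifying the full list of compatibility Assumptions~\ref{ass:wZ:2}--\ref{ass:wZ:Linking} from the output of Theorem~\ref{thm:par fjords} to be the main bookkeeping obstacle, while the genuinely substantive step — that each $O_{J^{m+1}_i}$ has a definite collar or else a deeper degeneration surfaces — is supplied wholesale by the Welding Lemma.
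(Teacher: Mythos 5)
Your skeleton does follow the paper's route: Lemma~\ref{lem:central cond} to dispose of Case~\eqref{case:2:thm:regul}, spreading a central subrectangle around, the Welding Lemma~\ref{lmm:welding} to produce either Case~\eqref{case:3:thm:regul} or definite moduli, and an assembly step landing in Case~\eqref{case:1:thm:regul}. The genuine gap is in that last step: you never verify Assumption~\ref{ass:wZ:collars}, i.e.\ that the collars $A(\alpha^m_i)$, $A(\beta^m_i)$ enclose the images $f^k(\alpha^m_i)$, $f^k(\beta^m_i)$ and their lifts for all $|k|\le \qq_{m+1}$. This dynamical quasi-invariance is what makes the core $Z^m$ a $\bdelta/2$-near-rotation domain, and it is not a consequence of having static annuli of modulus $\ge\bdelta$ around the arcs. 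In the paper this is where the real work goes: the outermost $K/2$-buffer of $\RR$ is kept in reserve and spread around to give rectangles $\BB_i$ of width $\asymp K$ separating each $\RR^\new_i$ from $\filled_m\setminus \overline Z$; these force $f^k_*(\beta^m_i)$ into an $\varepsilon(K)$-hyperbolic neighborhood of the geodesic joining its endpoints, hence into the corridor $\FamG^{m+1}_j$ of width $\asymp \log S$ (the protections $\XX^n_r$ of $\wZ^{m+1}$ prevent diving into reclaimed fjords), while confinement of the channels uses Lemma~\ref{lem:HypGeod in wZ^m},~\ref{Case3:lem:HypGeod in wZ^m} inside $\wZ^{m+1}$. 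Your construction takes $\RR_1$ directly from Lemma~\ref{lem:central cond} and retains only the protections $\XX,\YY$ of Theorem~\ref{thm:par fjords}, whose width is a fixed constant ($\ge 400$, $\ge 10$), not $\asymp\Width(\RR)$; with those alone the $\qq_{m+1}$-step orbit control of each dam and channel does not follow, and without it Definition~\ref{dfn:PseudoSiegDisk} cannot be checked. For the same reason, pulling back $\XX,\YY$ does not by itself give extra protections of width $\ge\bDelta$ for the fixed large $\bDelta$ of Remark~\ref{rem:fix bDelta}; the paper instead takes subrectangles of the spread-around $\RR^\new_i$, whose width is $\gg_\bdelta 1$.

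Two smaller points in the assembly are also underestimated. A single application of the Welding Lemma per dam produces one ring, whereas Assumptions~\ref{ass:wZ:collars} and~\ref{ass:wZ:int patt} require nested inner and outer collars with a prescribed intersection pattern; the paper gets these from the five-level nest $X_{i,0}\subset\cdots\subset X_{i,4}$ (and likewise for $Y_{i,t}$), applying the Welding Lemma to each consecutive ring and then assembling annuli of definite modulus from two welding disks and two rectangles of width $\asymp_S 1$ via Lemma~\ref{lem:A:encl form}. And your claim that conformal separation (Assumption~\ref{ass:wZ:bDelta}) is ``automatic from $\mod A^\inn\ge\bdelta$'' is not right as stated: the bound $\Width(S_{x_i},S_{y_i})\le 1/\bdelta$ concerns the full family between the $S$-buffers and is derived in the paper from Theorems~\ref{thm:par fjords} and~\ref{thm:wZ:shallow scale}. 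These last items are repairable bookkeeping, but the missing orbit-control argument for the channels and dams is a substantive hole in the proof of Case~\eqref{case:1:thm:regul}.
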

 
 We refer to Cases~\eqref{case:2:thm:regul} and~\eqref{case:3:thm:regul} as \emph{exponential boosts}.

\begin{rem}
\label{rem:thm:regul} Calibration Lemma~\ref{lmm:CalibrLmm} will reduce Case~\eqref{case:2:thm:regul} to Case~\eqref{case:3:thm:regul}.
\end{rem}

\begin{rem}
\label{rem:fixing deltas} Starting with (next) Section~\ref{part:CovCalibr}, we fix a sufficiently small $\bdelta>0$ so that Corollary~\ref{cor:regul} is applicable. 
\end{rem}

\begin{figure}[t!]

\[
\begin{tikzpicture}

\draw[line width=0.4mm] (-5,0)--(7,0);

\node[above] at (6,0) {$\partial \wZ^{m+1}$};


\filldraw[blue] (-4,0) circle (0.04 cm);
\node[above left,blue] at (-4,0) {$x$};

\filldraw[blue] (4,0) circle (0.04 cm);
\node[above right,blue] at (4,0) {$y$};

\filldraw[red] (-1,0) circle (0.04 cm);
\node[above ,red] at (-1,0) {$a$};
\filldraw[red] (1,0) circle (0.04 cm);
\node[above ,red] at (1,0) {$b$};

\draw[orange,line width=1.2mm]
(-0.6,0)
.. controls (-0.8,0.6) and (-1.2, 0.6) .. 
(-1.4,0)
.. controls (-1.6,-0.6) and (-2, -0.6) .. 
(-2.2,0)
.. controls (-2.4,0.6) and (-2.8, 0.6) .. 
(-3,0)
.. controls (-3.2,-0.6) and (-3.6, -0.6) .. 
(-3.8,0)
.. controls (-3.9,0.1) and (-3.95, 0) .. 
(-4,0);

\node[orange,above] at (-2.4,0.5)  {$\RR$};

\draw[blue, line width=0.4mm] (-4,0) edge[bend left=60](4,0);
\draw[blue, line width=0.4mm] (-4,0) edge[bend right=60](4,0);
\node[above,blue] at (0,2) {$[x,y]_{\wC\setminus \wZ^{m+1}}$};
\node[below,blue] at (0,-2) {$[x,y]_{\wZ^{m+1}}$};

\node[above,blue] at (1,1) {$O_{J^{m+1}}$};

\draw[red, line width=0.6mm] (-1,0) --(1,0);
\node[below,red] at (0,0) {$I^{m+1}$};

\end{tikzpicture}
\]

\caption{The open disk $O_{J^{m+1}}$ is bounded by hyperbolic geodesics in the interior and exterior of $\wZ^{m+1}$. If ${\mod (O_{J^{m+1}}\setminus I^{m+1})}$ is small, then there is a wide lamination $\RR$ submerging many times into $\wZ^{m+1}$.}
\label{fig:O_J}
\end{figure}
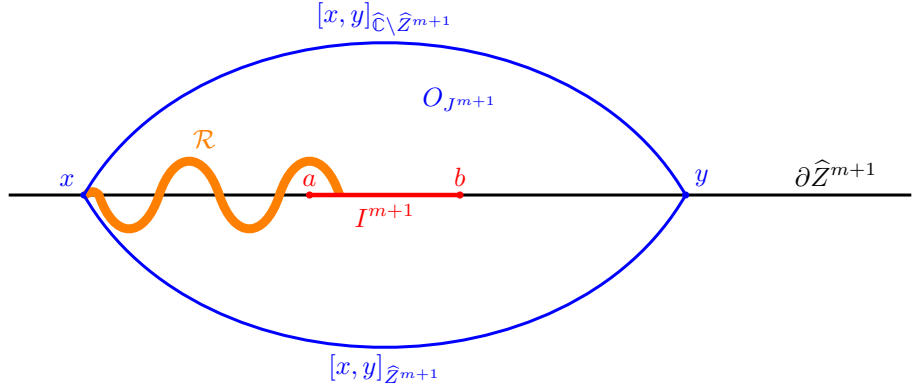

\subsubsection{Outline and Motivation} \label{sss:outline of Welding} Note that we already have a control of
\begin{itemize}
\item the outer geometry of $\overline Z$ on scale $\ge \length_{m+1}$ above $T_\parab$ -- Theorem~\ref{thm:par fjords};
\item the outer geometry of $\wZ^{m+1}$ on scale $\ge \length_{m+1}$ above $T_\parab^{m+1}$ -- because the outer geometries of  $\overline Z$ and $\wZ^{m+1}$ are close (Lemma~\ref{lem:W^+:well grnd int});

\item the inner geometry of $\wZ^{m+1}$ with the estimates depending on $\bdelta$ -- see~\eqref{eq:1:prop:wZ:shallow scale} and~\eqref{eq:2:prop:wZ:shallow scale} in Theorem \ref{thm:wZ:shallow scale};  
\item the inner geometry of $\wZ^{m+1}$ with the estimates independent of $\bdelta$ on scale $\gg_{\bdelta} \length_{m+1}$, see~\eqref{eq:1:prop:wZ:shallow scale:impr} and~\eqref{eq:2:prop:wZ:shallow scale:impr} in  Theorem \ref{thm:wZ:shallow scale}.
\end{itemize}
Therefore, the families $\Fam^+_{\wZ^{m+1}}(N^{m+1},M^{m+1})$ and $\Fam^-_{\wZ^{m+1}}(N^{m+1},M^{m+1})$ have width $\asymp 1$. Since these families (after a slight adjustment) separate $I^{m+1}$ from $\partial O_{J^{m+1}}$ in $O_{J^{m+1}}\setminus \partial \wZ^m$, most of the curves in the vertical family 
${\FamG\coloneqq \Fam( O_{J^{m+1}}\setminus I^{m+1})}$ intersect ${M^{m+1}\cup N^{m+1}}$ (assuming $\Width(\FamG)\gg 1$). Moreover, we should expect that a typical curve in $\FamG$ submerges $\asymp \nu$ times into $N\cup M$ because we have a control of the inner and outer geometries on scale $\ge \length_{m+1}$. Combined with Lemma~\ref{lem:trad width to width+}, this would have implied the existence of an interval $S$ with $\Width^+_{\lambda}(S) \succeq   \nu$. Our proof gives a somewhat weaker estimate $\log \Width^+_{\lambda}(S) \succeq  \log \nu$ (i.e.~\eqref{eq:2:lmm:welding}) that is sufficient for our purposes.

The main step in Corollary~\ref{cor:regul} is construction of annuli $A(\alpha^m_i), A(\beta^m_i)$ around channel and dams. The annuli are of the form illustrated on Figure~\ref{fig:lem:A:encl form}: $A=O_\ell \cup \YY \cup O_\rho \setminus \XX$. Rectangles $\YY^\pm, \XX$ are constructed using Theorems~\ref{thm:par fjords} and~\ref{thm:wZ:shallow scale}. To assemble all such rectangles in a chain around $\wZ^{m+1}$ we need the property that $\RR$ contains central subrectangles. If this is not the case, then Lemma~\ref{lem:central cond} implies Case~\eqref{case:2:thm:regul} of the corollary. If $\RR$ contains central subrectangles, then applying the Welding Lemma to construct $O_\ell, O_\rho$, we obtain either Case~\eqref{case:1:thm:regul}  or Case~\eqref{case:3:thm:regul} of the corollary.

\subsection{Proof of the Welding Lemma} Since the endpoints of $M,N$ are within $\CP_{m+1}$, these intervals are well-grounded rel $\wZ^{m+1}$. By Lemma~\ref{lem:W^+:well grnd int}, we have $\Width^+_{\wZ^{m+1}}(N^{m+1},M^{m+1})\asymp 1$ . Since $|N|, |I|, |M|\gg_\bdelta \length_{m+1}$, we have $\Width^-_{\wZ^{m+1}}(N^{m+1},M^{m+1})\asymp 1$ by Theorem~\ref{thm:wZ:shallow scale}, Equations \eqref{eq:1:prop:wZ:shallow scale:impr} and \eqref{eq:2:prop:wZ:shallow scale:impr}.

Let $\RR, \sp\Width(\RR)=K\coloneqq 1/\varepsilon$ be the vertical lamination of the annulus ${O_{J^{m+1}}\setminus I^{m+1}}$. Let us assume that $K\gg 1$. Let us write \[F\approx _\ell G\sp\sp\text{ if both }\sp\sp F\asymp G \sp\text{ and } \sp F=G+O(\ell) \sp\text{ hold.}\]

To simplify notations, we will omit below the upper index ``$m+1$'' for intervals. All intervals will be in $\partial \wZ^{m+1}$. Let us write
\[N=[x,a],\sp I=[a,b],\sp M=[b,y] \sp \in \partial \wZ^{m+1},\sp\sp N\le I\le M \text{ in }T.\]

Let $N_1$ and $M_1$ be middle $1/3$ well-grounded subintervals of $N$ and $M$:  
\[ \dist (x,N_1) \approx_{\length_{m+1}} |N_1|\approx_{\length_{m+1}}\dist ( N_1 ,a)\asymp \nu\length_{m+1}/3,\]
\[ \dist (b,M_1) \approx_{\length_{m+1}} |M_1|\approx_{\length_{m+1}}\dist (M_1,y)\asymp \nu\length_{m+1}/3.\]


\begin{figure}[t!]

\[
\begin{tikzpicture}[scale=1.3]

\draw[line width=0.4mm] (-4.5,0)--(4.5,0);

\filldraw[blue] (-4,0) circle (0.04 cm);
\node[above left,blue] at (-4,0) {$x$};

\filldraw[blue] (4,0) circle (0.04 cm);
\node[above right,blue] at (4,0) {$y$};

\filldraw[red] (-1,0) circle (0.04 cm);
\node[above ,red] at (-1,0) {$a$};
\filldraw[red] (1,0) circle (0.04 cm);
\node[above ,red] at (1,0) {$b$};

\filldraw[red] (-3,0) circle (0.04 cm);
\filldraw[red] (-2,0) circle (0.04 cm);

\draw[red, line width=0.6mm] (-3,0) --(-2,0);
\node[below,red] at (-2.5,0) {$N_1$};
\draw[red, dashed, line width=0.3mm] (-3,0) edge[bend left=60](3,0);
\draw[red, dashed, line width=0.3mm] (-3,0) edge[bend right=60](3,0);
\draw[red, dashed, line width=0.3mm] (-2,0) edge[bend left=60](2,0);
\draw[red, dashed, line width=0.3mm] (-2,0) edge[bend right=60](2,0);

\node[above,red] at (0,1) {$\Fam_+$};

\node[below,red] at (0,-1) {$\Fam_-$};

\filldraw[red] (3,0) circle (0.04 cm);
\filldraw[red] (2,0) circle (0.04 cm);
\draw[red, line width=0.6mm] (3,0) --(2,0);
\node[below,red] at (2.5,0) {$M_1$};

\draw[red, line width=0.6mm] (-1,0) --(1,0);
\node[below,red] at (0,0) {$I$};

\draw[blue, line width=0.4mm] (-4,0) edge[bend left=60](4,0);
\draw[blue, line width=0.4mm] (-4,0) edge[bend right=60](4,0);

\node[above,blue] at (1,2) {$\partial O_{J}$};

\end{tikzpicture}
\]

\caption{Since the families $\Fam_+$ and $\Fam_-$ separate $I$ from $\partial O_{J}$ (compare with Figure~\ref{fig:O_J}), most of the curves in the family $\RR$ intersect $N_1\cup M_1$.}
\label{fig:def:I_1}
\end{figure}
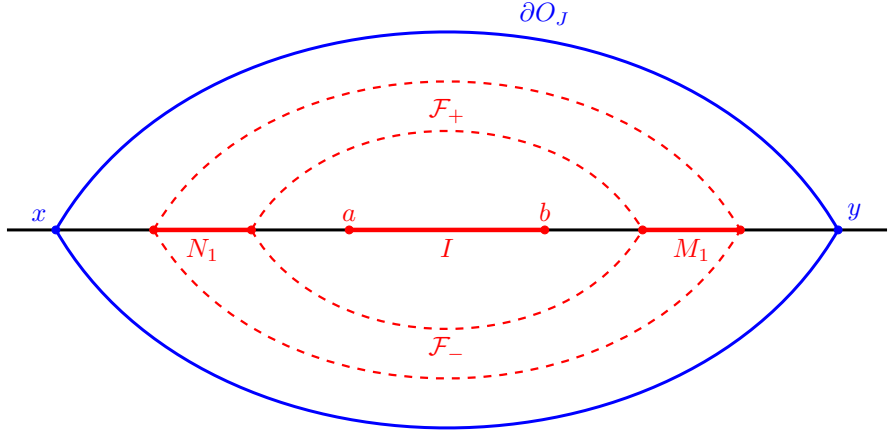

\begin{claim5}
\label{cl5:I_1}
At least $0.99 K$ curves in $\RR$ intersect $N_1\cup M_1$ before intersecting \newline${\lfloor N_1, M_1\rfloor^c\cup \partial^\out O_J}$, see Figure~\ref{fig:def:I_1}.
 \end{claim5}
\begin{proof}
Consider the outer and inner geodesic rectangles (see~\S\ref{sss:GeodRect}) \[\Fam_+\subset \wC\setminus \wZ^m,\sp \sp \Fam_-\subset \wZ^m,\sp\sp \partial ^{h,0}\Fam_+=\partial ^{h,0}\Fam_- = N_1,\sp  \partial ^{h,1}\Fam_+=\partial ^{h,1}\Fam_- = M_1\] between $N_1$ and $M_1$.  Since $|M_1|\asymp |N_1|\asymp \dist(M_1,N_1)\gg_{\bdelta} \nu\length_{m+1}$, Theorems~\ref{thm:par fjords} and~\ref{thm:wZ:shallow scale} imply that
$\Width(\Fam_-)\asymp \Width(\Fam_+)\asymp 1.$
Since $\Fam_-\cup \Fam_+$ separate $I$ from $\partial O_{[x,y]}$, most of the curves in $\RR$ must intersect $N_1\cup M_1$ before intersecting $\lfloor N_1\cup M_1\rfloor^c \cup \partial^\out O_J$.
\end{proof}

 Let $X,Y\subset\partial \wZ^{m+1}$ be a pair of intervals and let $A$ be one of the complementary intervals to $X,Y$. We denote by $\Fam_A(X,Y)$ the subfamily of $\Fam(X,Y)$ consisting of curves that are disjoint from $\partial \wZ^{m+1}\setminus \big(X\cup A\cup Y\big)$.

Let $L_a$ be the shortest complementary interval between $N_1$ and $I$, and let $L_b$ be the shortest complementary interval between $I$ and $M_1$. Claim~\ref{cl5:I_1} implies that either \[\Width _{L_a}(N_1, I)\ge 0.49 K\sp\sp\text{ or }\sp\sp \Width _{L_b}(I,M_1)\ge 0.49 K.\]
Setting $I_0\coloneqq I$ and $I_1, L_1$ to be either $N_1, L_a$ or $M_1, L_b$, we obtain $\Fam _{L_1}(I_1, I_0) \ge 0.49 K$. Note that $|L_1|\approx_{\length_{m+1}} |I_1|$. We can now proceed by induction:

\begin{claim5}
\label{cl5:I_n}
There is a sequence of grounded intervals \[I_1,I_2,\dots, I_{k},\sp\sp |I_t|\asymp \nu \length_{m+1}/3^t, \sp\sp k\asymp \log \nu, \sp\sp |I_k|\asymp \length _{m+1} \]
such that for every $I_t$ there is a $j(t)\in \{0,1,\dots, t-1\}$ with \[ \Width _{L_t}(I_t, I_{j(t)})\ge  0.49 \cdot 1.9^{t} K,\sp\sp\sp\sp
|L_t| \approx_{\length_{m+1}} |I_t|\asymp \nu \length_{m+1}/3^t,\]
where $L_t$ is the shortest complementary interval between $I_t$ and $I_{j(t)}$. Moreover,
\begin{equation}
\label{eq:cl5:I_n}
\Width _{L_t}(I_t, I_{j(t)}) - \Width^+ _{L_t}(I_t, I_{j(t)})  \ge  0.48 \cdot 1.9^{t} K \sp\sp\text{ for }t<k
\end{equation} 

and $\dist(I_{k-1}, I_k)\ge 0.6 | I_k|\ge 4\length_{m+1}$.

 \end{claim5}
\begin{proof} 
Assume that $I_t$ is constructed for $t\ge 1$. Set \[I_{t+1}\subset L_t \sp\sp\text{ with }\sp\sp |I_{t+1}|\approx_{\length_{m+1}} \dist(I_{t+1}, I_t)\approx_{\length_{m+1}} \dist(I_{t+1}, I_{j(t)})\]
to be a middle $1/3$-subinterval of $L_t$, see Figure~\ref{fig:def:I_n}. Let us show that either
\begin{equation}
\label{eq:prf:cl5:I_n}
 \Width_{L_a}(I_{t+1}, I_t)\ge 0.49 \cdot 1.9^{t+1} K\sp\sp\text{ or }\sp\sp \Width_{L_b}(I_{t+1}, I_{j(t)})\ge 0.49 \cdot 1.9^{t+1} K,
\end{equation} 
 where $L_a$ and $L_b$ are the shortest complementary intervals between $I_{t+1}, I_t$ and $I_{t+1}, I_{j(t)}$. This would finish the construction of $I_{t+1}$ with $j(t+1)\in \{t, j(t)\}$.

Consider a grounded interval $X\subset \partial \wZ^{m+1}$ attached to $I_t$ so that $|I_t|\approx _{\length_{m+1}} |X|$ and $I_t$ is between $X$ and $I_{t+1}$ in $T$. As in Claim~\ref{cl5:I_1}, consider the outer and inner geodesic rectangles \[\Fam_+\subset \wC\setminus \wZ^m,\sp \sp \Fam_-\subset \wZ^m,\sp\sp \partial ^{h,0}\Fam_+=\partial ^{h,0}\Fam_- = X,\sp  \partial ^{h,1}\Fam_+=\partial ^{h,1}\Fam_- = I_{t+1}\] between $X$ and $I_{t+1}$. By Theorems~\ref{thm:par fjords} and~\ref{thm:wZ:shallow scale}:
\begin{itemize}
\item if $|I_{t+1}|\gg_{\bdelta} 1$, then $\Width(\Fam_-)\asymp \Width(\Fam_+)\asymp 1$;
\item otherwise $\Width(\Fam_-)\asymp_{\bdelta} 1\asymp_\bdelta \Width(\Fam_+)$.
\end{itemize}

In the first case,  after removing $O(1)$ curves, the family $\Fam _{L_t}(I_t, I_{j(t)})$ overflows consequently $\Fam_{L_a}(I_{t+1}, I_t)$ and then $\Fam_{L_b}(I_{t+1}, I_{j(t)})$. The Equation~\eqref{eq:prf:cl5:I_n} follows.

In the second case, we have $t \gg_\bdelta 1$ because $\nu \gg _\bdelta 1$. Therefore, we can still remove $O_{\bdelta} (1)\ll  \Width _{L_t}(I_t, I_{j(t)})$ curves from  $\Fam _{L_t}(I_t, I_{j(t)})$; the remaining family overflows consequently $\Fam_{L_a}(I_{t+1}, I_t)$ and then $\Fam_{L_b}(I_{t+1}, I_{j(t)})$. The Equation~\eqref{eq:prf:cl5:I_n} follows. 

Note that we also established~\eqref{eq:cl5:I_n} for $t$. The induction can be proceed until $|I_t|> 20 \length_{m+1}$.

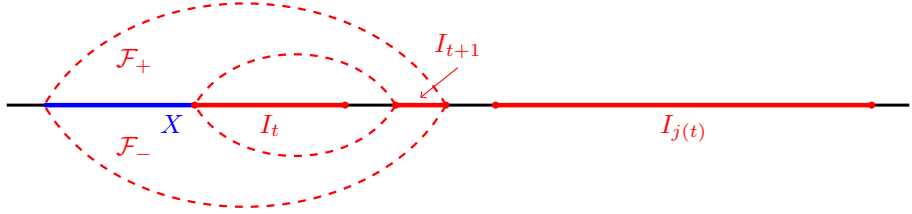
\begin{figure}[t!]

\[
\begin{tikzpicture}

\draw[line width=0.4mm] (-6.5,0)--(5.5,0);

\draw[blue, line width=0.6mm] (-6,0) --(-4,0);
\node[blue,below] at(-4.3,0) {$X$};

\filldraw[red] (0,0) circle (0.04 cm);
\filldraw[red] (5,0) circle (0.04 cm);
\draw[red, line width=0.6mm] (0,0) --(5,0);

\node[below,red] at (2.5,0) {$I_{j(t)}$};

\filldraw[red] (-4,0) circle (0.04 cm);
\filldraw[red] (-2,0) circle (0.04 cm);
\draw[red, line width=0.6mm] (-4,0) --(-2,0);
\node[below,red] at (-3,0) {$I_{t}$};

\filldraw[red] (-0.66,0) circle (0.04 cm);
\filldraw[red] (-1.33,0) circle (0.04 cm);
\draw[red, line width=0.6mm] (-0.66,0) --(-1.33,0);
\node[above,red] at (-0.5,0.5) {$I_{t+1}$};

\draw[red] (-0.5,0.5) edge[->] (-1,0.1);

\draw[red, dashed, line width=0.3mm] (-1.33,0) edge[bend left=60](-4,0);
\draw[red, dashed, line width=0.3mm] (-0.66,0) edge[bend left=60](-6,0);
\draw[red, dashed, line width=0.3mm] (-1.33,0) edge[bend right=60](-4,0);
\draw[red, dashed, line width=0.3mm] (-0.66,0) edge[bend right=60](-6,0);

\node[below,red] at (-4.8,-0.3) {$\Fam_-$};
\node[above,red] at (-4.8,0.3) {$\Fam_+$};

\end{tikzpicture}
\]

\caption{The interval $I_{t+1}$ is a middle $1/3$ interval between $I^t$ and $I^t_{j(t)}$. Since $\Fam_-$ and $\Fam_+$ separate $I^t$ and $I_{j(t)}$, most of the curves in $\Fam (I_t, I_{j(t)})$ must intersect $I_{t+1}$.}
\label{fig:def:I_n}
\end{figure}
\end{proof}
The Welding Lemma now follows from Lemma~\ref{lem:trad width to width+} applied to~\eqref{eq:cl5:I_n} with $t=k-1$.\qed

\subsection{Proof of Corollary~\ref{cor:regul}} \label{ss:prf of cor:regul}Write $K\coloneqq \Width(\RR)$ and let $\RR^\new$ be the rectangle obtained from $\RR$ by removing the outermost $K/2$ buffer. If $\RR^\new$ is not central, then Lemma~\ref{lem:central cond} implies Case~\eqref{case:2:thm:regul} of the corollary.

Assume that $\RR^\new$ is central and write $T=[a_0,a_1]$ with $a_0< \partial ^{h,0} \RR^\new<\partial ^{h,1}\RR^\new< a_1$. By Theorem~\ref{thm:par fjords},
\begin{equation}
\label{eq:Weld:Prf:1}
\log \frac{\dist (a_0, \partial ^{h,0}\RR^\new)}{\length_{m+1}} ,\sp  \log \frac{\dist (\partial ^{h,1}\RR^\new, a_1)}{\length_{m+1}} \succeq K  
\end{equation}
because the removed outermost buffer from $\RR$ has width $K/2$. As in~\S\ref{sss:regul within rect}, let $\RR^\new_{-j}$ for $j<\qq_{m+1}$ be the pullback of $\RR^\new$ along $f^j\colon \overline Z\selfmap$. Then every $\RR^\new_{-j}$ is based on a certain $T_{i}=[a_i,a_{i+1}]\in \Dbb_m$, $i=t(j)$, where $T_i$ are enumerated from left-to-right. We write $\RR^\new_{i(j)}=\RR^\new_{-j}$

Fix a big $S\gg 1$ independent of (and much smaller than) $K$. We can select intervals $X_i\subset \partial ^{h,0} \RR^\new_i$ and $Y_i\subset \partial^{h,1}\RR^\new_i$ such that 
\begin{enumerate}[label=\text{(\Alph*)},font=\normalfont,leftmargin=*]
\item \label{cond:1:WeldLmm:prf}the endpoints of $ X_i,Y_i$ are in $\CP_{m+1}$;
\item all $X_i$ are obtained by spreading around $X_0$ and, similar, all $Y_i$ are obtained by spreading around $Y_0$;
\item \label{cond:2:WeldLmm:prf}$0.99<\frac {|X_i|}{ |Y_i|}=\frac {|X_0|}{ |Y_0|}<1.01$;
\item \label{cond:3:WeldLmm:prf}$S \dist(X_i , a_i)<|X_i|<(S+1) \dist(X_i , a_i)$;
\item \label{cond:4:WeldLmm:prf}$S \dist(Y_i , a_{i+1})<|Y_i|<(S+1) \dist(Y_i , a_{i+1})$;
\item \label{cond:5:WeldLmm:prf} the geodesic rectangle $\FamG_i\subset \wC\setminus Z$ between $X_i, Y_i$ splits $\RR^\new_i$ into two rectangles with width $\asymp K$. 
 \end{enumerate}
 
 \noindent Here, to achieve~\ref{cond:4:WeldLmm:prf} and~\ref{cond:5:WeldLmm:prf} we use the property that $\RR^\new$ is central. Property~\ref{cond:6:WeldLmm:prf} follows from Theorem~\ref{thm:par fjords} and~\ref{cond:3:WeldLmm:prf},~\ref{cond:4:WeldLmm:prf}~\ref{cond:5:WeldLmm:prf}. By construction (by~\eqref{eq:Weld:Prf:1} and Theorem~\ref{thm:par fjords}), we also have
\begin{enumerate}[label=\text{(\Alph*)},start = 6,font=\normalfont,leftmargin=*]
\item \label{cond:6:WeldLmm:prf} $\log|X_i|, \log |Y_i| \succeq K$.  
 \end{enumerate}

 By~\ref{cond:2:WeldLmm:prf},~\ref{cond:3:WeldLmm:prf},~\ref{cond:4:WeldLmm:prf} and Theorem~\ref{thm:par fjords}, we have $\Width(\FamG_i)\asymp \log S\gg 1$.

 Let $X_i^{m+1},Y^{m+1}_i$ be the projections of $X_i,Y_i$ onto $\partial \wZ^{m+1}$. Similar, let $\FamG_i^{m+1}$ be the restriction of $\FamG_i$ onto $\wC\setminus \intr \wZ^{m+1}$. By \eqref{eq:lem:RR vs RR^m}, $\Width(\FamG^{m+1}_i)\asymp \log S \gg 1$.

Let $\FamH_i\subset \wZ^{m+1}$ be the geodesic rectangles between $Y^{m+1}_{i-1}$ and $X^{m+1}_{i}$. By~\ref{cond:2:WeldLmm:prf},~\ref{cond:3:WeldLmm:prf},~\ref{cond:4:WeldLmm:prf}, \ref{cond:6:WeldLmm:prf}, and Theorem~\ref{thm:wZ:shallow scale}, we have $\Width(\FamH_i)\asymp \log S\gg1$. Below we will construct relevant objects for $\wZ^m$ to satisfy~\S\ref{sss:regul within rect}.  

\subsubsection{Channels $\alpha_i$ and dams $\beta_i$, see Figure~\ref{Fg:wZ_Z_m}} Let us select points $x_i\in X_i \cap \CP_{m+1}, \ y_i\in Y_{i}\cap \CP_{m+1}$ such that the hyperbolic geodesics $\alpha_i=[y_{i-1},x_{i}]_{\wZ^{m+1}}$ and $\beta_i=[x_i, y_i]_{\wC\setminus Z}$ split every $\FamH_i$ and $\FamG_i^{m+1}$ into two subrectangles of width $\asymp \log S$ respectively. Moreover, we can choose genuine subrectangles $\FamG^{m+1}_{i,a}, \FamG^{m+1}_{i,b}, \FamG^{m+1}_{i,c}$ in $\FamG^{m+1}_i$ and genuine subrectangles $\FamH_{i,a}, \FamH_{i,b}, \FamH_{i,c}$ in $\FamH_i$  such that
\begin{itemize}
\item all subrectangles have width $\asymp S$;
\item $\FamG^{m+1}_{i,b}$ is between $\FamG^{m+1}_{i,a}$ and $\FamG^{m+1}_{i,c}$ and contains $\beta_i$ in the middle, i.e.~$\beta_i$ splits $\FamG^{m+1}_{i,b}$ into two subrectangles of width $\asymp S$;
\item $\dist(\partial^h \FamG^{m+1}_{i,b}, \partial^h  \FamG^{m+1}_{i,a}), \dist(\partial^h \FamG^{m+1}_{i,b}, \partial^h \FamG^{m+1}_{i,c})\gg \length_{m+1}$;
\item $\FamH_{i,b}$ is between $\FamH_{i,a}$ and $\FamH_{i,c}$ and contains $\alpha_i$ in the middle, i.e.~$\alpha_i$ splits $\FamH_{i,b}$ into two subrectangles of width $\asymp S$;
\item $\dist(\partial^h \FamH_{i,b}, \partial^h \FamH_{i,a}), \dist(\partial^h \FamH_{i,b}, \partial^h \FamH_{i,c})\gg \length_{m+1}$.
\end{itemize}  

Following~\eqref{eq:dfn UU(wZ)}, we denote by $f^k_*(\alpha_i), f^k_*(\beta_i)$ either the $f^k$-images of $\alpha_i, \beta_i$ if $k>0$ or the lifts under $f^{-k}$ starting and ending at $\partial Z$ if $k<0$, where $|k|\le \qq_{m+1}$. Similarly, $f^k_* (\FamG_{i}), f^k_* (\FamH_{i})$ are defined.

For every $|k|\le \qq_{m+1}$, the image $f^{k}_*(\beta_i)$  is in a certain $\FamG^{m+1}_j$ because at most $O(1)$ curves in $f^{k}_* \big(\FamG_{i,b}\big)$ can cross $\FamG^{m+1}_{j,a},\FamG^{m+1}_{j,c}$. Similarly, $f^{k}_*(\alpha_i)\cap \wZ^m$ is in a certain $\FamH_j$ because $f^{k}_*\big(\FamH_{i,b}\big)$ is disjoint from $\partial^h\FamH_{j,a}\cup \partial^h \FamH_{j,c} $ by Lemma~\ref{lem:HypGeod in wZ^m},~\ref{Case3:lem:HypGeod in wZ^m}, and hence at most $O(1)$ curves in $f^{k}_* \big(\FamH_{i,b}\big)$ can cross $\FamH_{j,a},\FamH_{j,c}$. By Lemma~\ref{lem:HypGeod in wZ^m},~\ref{Case3:lem:HypGeod in wZ^m}, $f^k_*(\alpha_i)$ can intersect only components of $f^k_*\left(\wZ^{m+1}\right)$ (see~\eqref{eq:dfn UU(wZ)}) that are close to the endpoints of $f^k_*(\alpha_i)$. Therefore, the orbits  $f^k_*(\alpha_i), f^k_*(\beta_i)$ with $|k|\le \qq_{m+1}$ is within $\bigcup_{i}  \big(\FamG_i^{m+1}\cup \FamH^{m+1}_i \big)$. This verifies Assumption~\ref{ass:wZ:2}.

\subsubsection{Collars $A(\alpha_i), A(\beta_i)$} \label{sss:WL:collars}By~\ref{cond:5:WeldLmm:prf}, we can choose well-grounded enlargements of intervals \[X_i=X_{i,0}\subset X_{i,1}\subset X_{i,2}\subset X_{i,3}\subset X_{i,4}\subset \partial^{h,0}\RR^\new_i,\]\[ Y_i=Y_{i,0}\subset Y_{i,1}\subset Y_{i,2}\subset Y_{i,3}\subset Y_{i,4}\subset \partial^{h,1}\RR^\new_i\]
such that for every $t\in \{1,2,3,4\}$ and every $i$
\begin{itemize}
\item $X_{i,t}\setminus X_{i,t-1}$ consists of a pair of intervals $X_{i,t}^{\pm}$ with $X^+_{i,t}<X_{i,t-1}<X^-_{i,t}$ such that $|X^+_{i,t}|\asymp_S |X^-_{i,t}|\asymp_S \dist(X^+_{i,t}, a_i)\asymp_S \dist(X^-_{i,t}, a_i)\asymp_S |X_i| $;
\item $Y_{i,t}\setminus Y_{i,t-1}$ consists of a pair of intervals $Y_{i,t}^\pm$ with $Y^-_{i,t}<Y_{i,t-1}<Y^+_{i,t}$ such that $|Y^-_{i,t}|\asymp_S |Y^+_{i,t}|\asymp_S \dist(Y^-_{i,t}, a_{i+1})\asymp_S \dist(Y^+_{i,t}, a_{i+1})\asymp_S |Y_i|$.
\end{itemize}
Taking the projection of the new intervals onto $\partial \wZ^m$ and using Theorems~\ref{thm:par fjords} and~\ref{thm:wZ:shallow scale}, we obtain that
\begin{itemize}
\item the geodesic rectangles $\FamG^+_{i,t}$ and $\FamG^-_{i,t}$ of $\wC\setminus \wZ^{m+1}$ between $(X^+_{i,t})^{m+1}$, $(Y^+_{i,t})^{m+1}$ and between $(X^-_{i,t})^{m+1}$, $(Y^-_{i,t})^{m+1}$ have width $\asymp_S 1$;
\item the geodesic rectangles $\FamH^+_{i,t}$ and $\FamH^-_{i,t}$ of $\wZ^{m+1}$ between $(Y^+_{i-1,t})^{m+1}$, $(X^+_{i,t})^{m+1}$ and between $(Y^-_{i-1,t})^{m+1}$, $(X^-_{i,t})^{m+1}$ have width $\asymp_S 1$.
\end{itemize}

Applying Welding Lemma~\ref{lmm:welding} with
\[J=N\#I\#M =X^{m+1}_{i,t}, \sp\text{ and }\sp  I=X_{i,t-1}^{m+1}; \sp\sp  \sp J=Y^{m+1}_{i,t}\sp\sp\text{ and }\sp  I=Y_{i,t-1}^{m+1}, \]
, we obtain either Case~\eqref{case:3:thm:regul} of the corollary, or:
\begin{equation}
\label{eq:mod O:bounds}
\mod(O_{X^{m+1}_{i,t}}\setminus X_{i,t-1}^{m+1}),\sp \mod(O_{Y^{m+1}_{i,t}}\setminus Y_{i,t-1}^{m+1}) \ge \varepsilon= \varepsilon(S).
\end{equation}

We now construct collars \[A^\inn(\alpha_i),\sp A^\out(\alpha_i),\sp  A^\inn(\beta_i), \sp A^\out(\beta_i)\]
 as annuli bounded by hyperbolic geodesics of $\wC\setminus \overline Z$ and of $\wZ^{m+1}$  such that their outer boundaries pass through the endpoints of \[Y^{m+1}_{i-1,2}\cup X^{m+1}_{i,2},\sp\sp Y^{m+1}_{i-1,4}\cup X^{m+1}_{i,4},\sp X^{m+1}_{i,2}\cup Y^{m+1}_{i,2},\sp X^{m+1}_{i,4}\cup Y^{m+1}_{i,4}\] while their inner boundaries pass through the endpoints of \[Y^{m+1}_{i-1,0}\cup X^{m+1}_{i,0},\sp\sp Y^{m+1}_{i-1,2}\cup X^{m+1}_{i,2},\sp X^{m+1}_{i,0}\cup Y^{m+1}_{i,0},\sp X^{m+1}_{i,2}\cup Y^{m+1}_{i,2}\] 
 respectively. The moduli of the collars are bounded by Lemma~\ref{lem:A:encl form} (see also Figure~\ref{fig:lem:A:encl form}) by $\bdelta=\bdelta(S)$ because we have bounds on the width of $\FamG^\pm_{i,t}, \FamH^\pm _{i,t}$ and the moduli bounds~\eqref{eq:mod O:bounds}.

This verifies Assumptions~\ref{ass:wZ:collars} and~\ref{ass:wZ:int patt}. Assumption~\ref{ass:wZ:CombSpace} follows from~\eqref{eq:Weld:Prf:1}. Assumption~\ref{ass:wZ:bDelta} follows from Theorems~\ref{thm:par fjords} and~\ref{thm:wZ:shallow scale}. 

Extra protections $\XX^m_i$ for Assumption~\ref{ass:wZ:EtraProt} can be selected as subrectangles of $\RR_i^\new$. Assumption~\ref{ass:wZ:Linking} and conditions in~\S\ref{sss:regul within rect}  hold by construction.

\begin{rem}
\label{rem:selections of:lmm:welding:cond} Let us comment that we select the comparison ``$\asymp$''  in~\eqref{eq:mod O:bounds} for~\eqref{lmm:welding:cond} of the Welding Lemma~\ref{lmm:welding} so that channels $\alpha_i$ and dams $\beta_i$ are in the middles of sufficiently wide (and hence almost invariant under $f^{\qq_{m+1}}$) rectangles $\FamG_i, \FamH_i$ between $X_i^{m+1},Y^{m+1}_i\subset \partial \wZ^{m+1}$. As a result, the collars $A(\alpha_i), A(\beta_i)$ have $\mod(\ ) \succeq \varepsilon$, where $\varepsilon$ depends on the width of $\FamG_i, \FamH_i$.
\end{rem}

\qed


\part{Covering and Calibration lemmas}
\label{part:CovCalibr}

\section{Covering and Lair Lemmas}\label{s:Covring+Lair Lmms}
In the section we will prove the following theorem that can be characterized by the principle ``if the life is bad now, then it will be worse tomorrow"\footnote{Compare with Kahn's principle: ``If the life is bad now, then it was even worse yesterday.''}:

\begin{ampthm}
\label{thm:SpreadingAround}
There are increasing functions \[\blambda_\bbt,\sp  \bK_\bbt \sp\sp \text{ for }\tt>1\sp\sp\text{ with }\sp \blambda_\bbt,  \ \bK_\bbt\ \underset{t\to \infty}\longrightarrow \ +\infty \] such that the following holds. Suppose that there is a combinatorial interval
\[ I\subset \partial Z\sp\sp \text{ such that }\sp  \Width^+_{\blambda_\bbt}(I) \eqqcolon K \ge \bK_\bbt \sp \text{ and }\sp |I|\le |\theta_0|/(2\blambda_\bbt). 
\]
Consider a geodesic pseudo-Siegel disk $\wZ^m$, where $m$ is the level of $I$.  Then there is a grounded rel $\wZ^m$ interval \[J\subset \partial Z \sp\sp \text{ such that } \sp \Width^+_{\blambda_\bbt} (J) \ge \bbt K \sp \text{ and }\sp |J|\le |I|.
\]
\end{ampthm}
\subsubsection{Motivation and outline} \label{sss:outline of CovL LairL} Recall from~\S\ref{sss:renorm til} that a forward orbit of a combinatorial interval up to the first return almost tiles $\partial Z$. If a combinatorial interval $I\subset \partial Z$ witnesses a big degeneration, say that $I$ is $[K,\lambda]^+$-wide with $K\gg_{\lambda}1$, then, using the Covering Lemma, we spread this degeneration around $\partial Z$ and obtain an almost tiling $I_k$ of $\partial Z$ so that, \emph{roughly} $I_k$ is $[C K, \lambda]$-wide for an absolute $C>0$. (Covering Lemma~\ref{lem:CovLmm} has two possibilities; we are omitting the ``local'' Case~\eqref{case:eq:ColAssum} in this outline.)  The constant $C$ is independent of $\lambda$; the $\lambda$ influences only the degeneration threshold $\bK_{\bbt}\gg_\lambda 1$. In short, Snake-Lair Lemma~\ref{lem:Hive Lemma} states that if $\lambda\gg _{C, \bbt} 1$, then $\lambda$ ``beats'' $C$ and produces a $[\bbt K, \lambda]^+$-wide interval $J$ on a deeper scale. More precisely, since wide families $\Fam_\lambda(I_k)$ combinatorially block each other, they must submerge under each other resulting in long snakes. Then Snake Lemmas~\ref{lem:SnakeWithCheckpts} and \ref{lem:sneaking} are applicable.

A key technical issue is that the new wide interval $J$ may be far from being combinatorial. Namely, the resulting wide family $\Fam^+_\lambda(J)$ can be within a wide non-winding parabolic rectangle -- such rectangles exist and are described by Theorem~\ref{thm:par fjords}. To deal with this issue, we apply the Covering and Snake-Lair Lemmas to the pseudo-Siegel disk $\wZ^m$ instead of $\overline Z$. Pseudo-Siegel disks are almost invariant up to $\sim \qq_{m+1}$ iterates~\S\ref{sss:Pullbacks of wZ^m} -- this is sufficient to spread the degeneration around using the Covering Lemma. Lemma~\ref{lem:W+:ground inter} allows us to trade $\Width^+$ wide families between $\overline Z$ and $\wZ^m$.

In~Section \ref{s:MainThms}, we will inductively construct  (from the deep to shallow scales) $\wZ^m$ so that it absorbs ``most'' of the non-winding parabolic rectangles. Then the Calibration Lemma will replace $J$ with a combinatorial interval on a deeper scale.

\subsection{Applying the Covering Lemma}\label{ss:ApplyingCoveringLemma}
As in~\S\ref{sss:reg interv}, we will denote by $I^m$ the projection of a regular interval $I\subset \partial Z$ onto $\partial \wZ^m$.
\begin{lem}
\label{lem:SpredAroundWidth}
For every $\bkappa>1$ and $\lambda>10$, there is $\bK_{\lambda,\bkappa}>1$ and $C_\kappa$ (independent of $\blambda$) such that the following holds. Suppose that there is a combinatorial interval \[I\subset\partial Z\sp\sp \text{ such that }
\sp\Width^+_{\lambda+2}(I)=K \ge \bK_{\lambda,\bkappa},\sp\sp |I|\le \theta / (2\lambda+4),\sp\sp m =\Level(I)\]
and such that one of the endpoints of $I$ is in $\CP_m$. Let $\wZ^m$ be a geodesic pseudo-Siegel disk (see~\S\ref{sss:regul within rect}), and \[ I_s\subset \partial Z,\sp\sp I_s=f^{i_s}(I),\sp\sp s\in \{0,1,\dots, \qq_{m+1}-1\}\] be the intervals obtained by spreading around $I=I_0$ (as in \S\ref{sss:spread around}). Then every interval $I_s$ is well-grounded rel $\wZ^{m+1}$ and its projection $I^m_s\subset \partial \wZ^m$ is  
\begin{enumerate}
\item either $[\bkappa K,10]$-wide; 
\item or $[ C_{\bkappa} K,\lambda]$-wide.
\end{enumerate} 
\end{lem}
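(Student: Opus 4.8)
The plan is to feed the hypothesis $\Width^+_{\lambda+2}(I)=K\ge \bK_{\lambda,\bkappa}$ into the Covering Lemma (cited in the excerpt as Lemma~\ref{lem:CovLmm}), applied not to $\overline Z$ but to the pseudo-Siegel disk $\wZ^m$, and then to run the spreading-around argument \emph{uniformly over all} $s\in\{0,1,\dots,\qq_{m+1}-1\}$. The key point is that every ingredient is $s$-independent once $I$ is fixed: the intervals $I_s=f^{i_s}(I)$ are all in the forward orbit of the single combinatorial interval $I$, they have the same combinatorial length $|I_s|=|I|=\length_m$, and one endpoint of $I$ lies in $\CP_m$, so by Remark~\ref{rem:easy cond:well ground} (using that $\wZ^m$ is geodesic, hence $\stab(\wZ^m)\ge 10$) every $I_s$ is well-grounded rel $\wZ^{m+1}$ and in fact rel $\wZ^m$; moreover $f^{i_s}$ maps $\wZ^m$ to $(f^{i_s})_*(\wZ^m)\subset \UU(\wZ^m)$ injectively for $i_s<\qq_{m+1}$ by~\eqref{eq:dfn UU(wZ)}. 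Thus the outer family $\Fam^+_{\lambda+2,\wZ^m}(I^m_s)$ is, up to the universally-controlled distortion of $(f^{i_s})_*$ on $\UU(\wZ^m)$ and the comparison between $\Width^+_{\overline Z}$ and $\Width^+_{\wZ^m}$ for well-grounded intervals (Lemma~\ref{lem:W^+:well grnd int}), the $(f^{i_s})$-transport of $\Fam^+_{\lambda+2,\overline Z}(I)$, so each $I^m_s$ is $[K-O(1),\lambda+2]^+$-wide — and this holds for \emph{every} $s$ by the same transport argument, not merely for a single distinguished $s$.

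The second step is to apply the Covering Lemma to each $I^m_s$ inside $\wZ^m$. The Covering Lemma produces a dichotomy: either (``global'' case) the outer width of $I^m_s$ can be spread so that $I^m_s$ becomes $[\bkappa K,10]$-wide — giving conclusion (1) — or (``local'' case, the case numbered~\eqref{case:eq:ColAssum} in the source) a uniform fraction of the degeneration is carried by curves that stay in a fixed-size neighbourhood, which forces $I^m_s$ to be $[C_\bkappa K,\lambda]$-wide with $C_\bkappa$ depending only on $\bkappa$ and \emph{not} on $\lambda$ — giving conclusion (2). I would extract the threshold $\bK_{\lambda,\bkappa}$ and the constant $C_\bkappa$ directly from the Covering Lemma's quantitative statement, and note that the trichotomy collapses to the stated dichotomy because in both surviving branches the conclusion is a lower bound on a full (not merely outer) width $\Width_{10}$ or $\Width_\lambda$ of $I^m_s$, which is implied by the corresponding outer bound via $\Fam_\mu^+(\cdot)\subset \Fam_\mu(\cdot)$. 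Since the hypotheses on $I$ are symmetric under replacing $I$ by $I_s$ (same length, same combinatorial level $m$, endpoint-in-$\CP_m$ property is preserved because $\CP_m$ is $(f\mid\partial Z)$-invariant), the Covering Lemma applies verbatim to each $I^m_s$ and the dichotomy holds for each $s$ with the same constants.

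The main obstacle — and the step I would spend the most care on — is verifying that the transport $f^{i_s}\colon \wZ^m\to (f^{i_s})_*\wZ^m$ does not degrade the outer width of $I^m_s$ by more than an $O(1)$ amount that is absorbed into the threshold $\bK_{\lambda,\bkappa}$, \emph{uniformly in $s$}. This requires: (a) that $(f^{i_s})_*\wZ^m$ sits inside $\UU(\wZ^m)$ with all channels/dams/collars controlled by the collar system of Assumption~\ref{ass:wZ:collars}, so the complement geometry is comparable; (b) that the buffer collars $A(\alpha^n_j),A(\beta^n_j)$ of modulus $\ge\bdelta$ protect the portions of $\partial\wZ^m$ near the endpoints of $I^m_s$ so that the comparison of outer harmonic measures is valid — this is exactly the well-groundedness input via Lemma~\ref{lem:W^+:well grnd int} and Remark~\ref{rem:easy cond:well ground}; and (c) that the distortion bound is genuinely independent of $s$, which follows because $i_s<\qq_{m+1}$ and the collar system was built to control all such iterates simultaneously. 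Once (a)--(c) are in place, the Covering Lemma is applied $\qq_{m+1}$ times (once per $s$) with identical constants, and the ``for every $I_s$'' conclusion is immediate; the bookkeeping is purely a matter of chasing the $O(1)$ losses into the definition of $\bK_{\lambda,\bkappa}$.
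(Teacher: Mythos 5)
Your first step contains a fatal error that makes the rest of the argument collapse. You claim that $\Fam^+_{\lambda+2,\wZ^m}(I^m_s)$ is "the $(f^{i_s})$-transport of $\Fam^+_{\lambda+2,\overline Z}(I)$" up to $O(1)$, so that each $I^m_s$ is already $[K-O(1),\lambda+2]^+$-wide before any covering argument. This is false: the injectivity statement~\eqref{eq:dfn UU(wZ)} concerns $f^{i_s}$ restricted to the pseudo-Siegel disk itself, whereas the outer families live in the \emph{complement}, where $f^{i_s}$ is a branched covering of degree up to $4^{\lambda+2}$ (Lemma~\ref{lem:f:UtoV}). The naive push-forward~\eqref{eq:Width:degree d} only gives $\Width(f^{i_s}[\FamG])\ge 4^{-(\lambda+2)}\Width(\FamG)$, a loss depending on $\lambda$ — exactly what would destroy the claim that $C_{\bkappa}$ is independent of $\lambda$. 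The Covering Lemma is not a second step applied to a single disk after the transport; it \emph{is} the transport. In the paper it is applied to the branched covering $f^{i_s}\colon (U^m_{-s},\Lambda',\Lambda)\to(V^m,B',B)$, with the degeneration hypothesis read off at the domain side ($\Width(U^m_{-s}\setminus\Lambda)\ge K-O(1)$, coming from $\Width^+(I)$ via the conformal pullback $\wZ^m_{-s}$ and Lemma~\ref{lem:W+:ground inter}), and the conclusion delivered at the range side around $I^m_s$; the reason $C_{\bkappa}$ is $\lambda$-independent is that case (2) of the Covering Lemma loses only $d^2$ where $d=\deg(\Lambda'\to B')\le 4^{12}$ is the degree of the \emph{small} nest, built from $T_s\approx 12 I_s$ rather than $L_s\approx\lambda I_s$. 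Your proposal never sets up this two-nest structure, so it cannot produce a $\lambda$-independent constant.

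Two further problems. First, you have the dichotomy backwards: the Collar-Assumption case~\eqref{case:eq:ColAssum} gives $\Width(B'\setminus B)>\kappa\,\Width(U\setminus\Lambda)$ with $B'$ based on $T_s\subset 12I_s$, i.e.\ the amplified $[\bkappa K,10]$-wide conclusion (1), while the other case gives the degraded-but-wide-scale $[C_{\bkappa}K,\lambda]$ conclusion (2); you assign~\eqref{case:eq:ColAssum} to conclusion (2). This matters downstream, since the whole Snake-Lair mechanism relies on "amplify by $\bkappa$ at scale $10$, or retain a $C_{\bkappa}$-fraction at scale $\lambda$." Second, you omit the slit-removal step: $V^m$ is cut by an arc $\gamma^m_s$ to infinity, and converting $\Width(V\setminus B)$ into a genuine $\Width_\lambda(I^m_s)$ requires the harmonic-measure case analysis of~\S\ref{sss:removing gamma} (including the possible passage to the degree-two preimage $\wZ^m_\bullet$), which accounts for part of the final constants. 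The parts of your plan that are correct — well-groundedness of all $I_s$ via Remark~\ref{rem:easy cond:well ground}, and the uniformity over $s$ — are the easy parts; the core of the lemma is missing.
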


\begin{proof}

Since one of the endpoints of $I$ is in $\CP_m$, all intervals $I_s,\sp s <\qq_{m+1}$ are well grounded, see the Remark~\ref{rem:easy cond:well ground}. 

We will start the proof by introducing appropriate branched covering restrictions of the $f^{i_s}$ with uniformly bounded degrees. Then we will apply the Covering Lemma. The condition  ``$ |I|\le |\theta_0|/(2\lambda+4)$'' will be used in removing slits.

\subsubsection{Projections onto $\wZ^m$} Let us first approximate $10 I$ and $\lambda I$ with well-grounded intervals. Choose intervals $L$ and $T$ whose endpoints are in $\CP_m$
such that
\[ 10 I \subset T \subset 12 I \sp\sp \text{ and }\sp\sp  \lambda I \subset L\subset (\lambda +2)I .\]
Applying $f^{i_s}$ to $T$ and $L$ we obtain the intervals $T_s$ and $L_s$ respectively satisfying 
\[ 10 I_s \subset T_s \subset 12 I_s \sp\sp \text{ and }\sp\sp  \lambda I_s \subset L_s\subset (\lambda +2)I_s .\] 
Then $T_s, L_s$ are well-grounded rel $\wZ^m$, see~\eqref{eq:rem:easy cond:well ground}.

\subsubsection{Covering structure around $f^{i_s}\mid I$} 
\label{sss:CovStrZ}
 Observe first that $I\subset \partial Z$ contains at most one critical point of $f^{i_s}$ because the map $f^{i_s}\colon I\to I_s$ realizes the first landing of points in $I$ onto $I_s$, see Lemma~\ref{lem:FirstLand}.

 Since $ |I|\le |\theta_0|/(2\lambda+4)<1/2$, the interval $(L_s)^c =\partial Z\setminus L_s$ has length greater than $1/2$. Consider a simple arc $\gamma_s\subset \C\setminus \overline Z$ connecting $(L_s)^c$ to $\infty$; we will specify $\gamma_s$ in~\S\ref{sss:removing gamma}. Then 
\begin{equation}
\label{eq:BrStr:V}
V\coloneqq \C\setminus (\gamma_s\cup (L_s)^c)
\end{equation}
is an open topological disk. Define $U_{-s}$ to be the pullback of $V$ along $f^{i_s}\mid I$. We obtain a branched covering 
\begin{equation}
\label{eq:UtoV}
f^{i_s}\colon U_{-s} \to V.
\end{equation}
\begin{lem}
\label{lem:f:UtoV}
The degree of~\eqref{eq:UtoV} is at most $4^{\lambda+2}$. 
\end{lem}
\begin{proof}
Let us present~\eqref{eq:UtoV} as the composition of branched coverings 
\[U_{-s}=X_0\overset{f}{\longrightarrow} X_1\overset{f}{\longrightarrow} X_2 \overset{f}{\longrightarrow} \dots \overset{f}{\longrightarrow} X_n=V.\] 
Observe that $X_j \cap \partial Z$ is the interior of the interval $f^{j}(L)$. The map $f\colon X_j\to X_{j+1}$ has degree $2$ if and only if $f^{j}(L)$ contains $c_0$ in its interior. Since $f^{i_s}\colon I\to I_s$ is the first landing, there are at most $2(\lambda+2)$ moments $t\in \{0,1,\dots, i_s\}$ such that $(\lambda+2)f^t(I)\supset  f^t(L)\ni c_0$. The lemma follows.
\end{proof}
\end{proof}

\subsubsection{Covering structure around $f^{i_s}\mid I^m$} Consider the projection $L^m_s\subset \partial \wZ^m$ of $L_s$. Similar to~\S\ref{sss:CovStrZ}, we choose a simple arc $\gamma^m_s\subset \wC\setminus \wZ^m$ connecting
 \[(\wL^m_s)^c =\partial \wZ^m\setminus \wL^m_s\sp\sp\text{ and }\sp \infty.\]  
By Lemma~\ref{lem:Pullback of wZ^m}, $\wZ^m$ has the conformal pullback  $f^{i_s}\colon  \wZ^m_{-s} \to \wZ^m$ such that $\wZ^m_{-s}$ is also a pseudo-Siegel disk. We denote by $I^{m,-s}$ the projection of $I$ onto $\wZ^m_{-s}$. Then $ I^m_s=f^{i_s}(I^{m,-s})$ is the projection of $I$ onto $\wZ^m$.

 Similar to \eqref{eq:UtoV}, we define the branched covering
\begin{equation}
\label{eq:UtoV:m}
f^{i_s}\colon U^m_{-s} \to V^m\coloneqq  \C\setminus \left(\gamma^m_s\cup \big(\wL^m_s\big)^c\right),
\end{equation}
where $U^m_{-s}$ is the pullback of $V^m$ along $f^{i_s}\colon I^{m,-s}\to I^m_s$.
\begin{lem}
\label{lem:f:UtoV:m}
The degree of~\eqref{eq:UtoV} is at most $4^{\lambda+2}$. \qed
\end{lem}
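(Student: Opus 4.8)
The statement to prove is \lemref{lem:f:UtoV:m}, the analogue of \lemref{lem:f:UtoV} for the covering~\eqref{eq:UtoV:m} built over the pseudo-Siegel disk $\wZ^m$ rather than over $\overline Z$.

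\textbf{The plan.} The proof should parallel that of \lemref{lem:f:UtoV} verbatim, with $\overline Z$ replaced by the pseudo-Siegel disk $\wZ^m$ and the dynamics replaced by the extension $f^k\colon \wZ^m_{-s}\to \wZ^m$ furnished by \lemref{lem:Pullback of wZ^m} (valid since $i_s<\qq_{m+1}\le \stab(\wZ^m)\qq_{m+1}$, using that $\wZ^m$ is geodesic so $\stab(\wZ^m)\ge 10$). First I would factor~\eqref{eq:UtoV:m} as a composition of degree-$1$-or-$2$ maps: write $U^m_{-s}=X_0\to X_1\to\dots\to X_n=V^m$, where each $X_j$ is the pullback of $V^m$ along $f^{i_s-j}\colon f^j(I^{m,-s})\to I^m_s$, so that $X_j\cap \partial\wZ^m_{-s+j}$ (equivalently, after identifying via $f^j$, a subarc of $\partial\wZ^m$) is the interior of the projection onto the appropriate pseudo-Siegel disk of $f^j(L_s')$, where $L_s'=f^{-i_s}(L^m_s)$ is the level-$m$-combinatorial neighbourhood of $I$ of comparable size to $(\lambda+2)I$. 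Each map $f\colon X_j\to X_{j+1}$ then has degree $2$ exactly when the relevant arc contains the free critical point $c_0$ (or rather its appropriate preimage/image on the pseudo-bubble) in its interior.

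\textbf{Key steps.} (i) Using \lemref{lem:FirstLand}, the map $f^{i_s}\colon I\to I_s$ realizes the first landing, hence $I$ contains at most one critical point of $f^{i_s}$; the same combinatorial fact bounds the number of times the orbit of $L_s'$ under the first $i_s$ iterates can sweep across $c_0$. As in \lemref{lem:f:UtoV}, since $L_s'$ is contained in $(\lambda+2)I$ (up to the $\CP_m$-rounding from the choice of $L$) and $f^{i_s}\colon I\to I_s$ is the first return time, there are at most $2(\lambda+2)$ moments $t\in\{0,1,\dots,i_s\}$ with $(\lambda+2)f^t(I)\supset f^t(L_s')\ni c_0$. (ii) Each such moment contributes a factor $2$ to the degree; all other stages are conformal isomorphisms; so the total degree is at most $2^{2(\lambda+2)}=4^{\lambda+2}$. (iii) The only genuinely new point over \lemref{lem:f:UtoV} is that one must check the pullback construction stays inside the domain where $f^k$ extends univalently to the pseudo-Siegel disk: this is exactly what \lemref{lem:Pullback of wZ^m} guarantees for $|k|\le\stab(\wZ^m)\qq_{m+1}$, and $i_s\le\qq_{m+1}-1$ is well within that range; moreover the added fjords and peninsulas of $\wZ^m$ carry no critical points of the relevant iterates (by Assumptions~\ref{ass:wZ:2} and~\ref{ass:wZ:Linking}, the landing points involved lie in $\CP_{m+1}\setminus\CP_m$ and the extension is injective on the filled-in collars), so they do not increase the branching count.

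\textbf{Main obstacle.} The routine part is the degree count; the only subtlety — and thus the ``hard part'' to state cleanly — is bookkeeping that the covering~\eqref{eq:UtoV:m} is genuinely the pullback of an honest topological disk and that removing the slit $\gamma^m_s$ and the complementary arc $(\wL^m_s)^c$ does not interfere with the lifts (this is where the hypothesis $|I|\le|\theta_0|/(2\lambda+4)$ enters, ensuring $(\wL^m_s)^c$ has length $>1/2$ so that the slit can be chosen landing in its interior and the pullback of $V^m$ is connected and simply connected). Since all of this is identical to the argument already written for \lemref{lem:f:UtoV}, with \lemref{lem:Pullback of wZ^m} supplying the extension of the dynamics, the proof is a direct transcription and I would simply write ``the proof is verbatim the same as that of \lemref{lem:f:UtoV}, using the conformal pullback $f^{i_s}\colon \wZ^m_{-s}\to\wZ^m$ of \lemref{lem:Pullback of wZ^m} in place of the dynamics on $\overline Z$; the arc $(\wL^m_s)^c$ has length $>1/2$ and carries the slit $\gamma^m_s$, and at most $2(\lambda+2)$ of the intermediate maps have degree $2$.''
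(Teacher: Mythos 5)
Your proposal is correct and follows essentially the same route as the paper: the paper's proof simply observes that all critical values of $f^{i_s}$ lie in $\partial Z\cap \partial \wZ^m$ (your point that the reclaimed fjords and peninsulas carry no extra branching) and then repeats the degree count of Lemma~\ref{lem:f:UtoV}, exactly as you do.
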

\begin{proof}
All critical values of $f^{i_s}$ are in $\partial Z\cap \partial \wZ^m$ and we can repeat the argument of Lemma~\ref{lem:f:UtoV}.
\end{proof}

\subsubsection{Covering Lemma}
The Covering Lemma was proven in~\cite{KL}; for our convenience we will state it in terms of the width $\Width(A)$ instead of $\mod(A)=1/\Width(A)$ for an annulus $A$. We will also state the Collar Assumption~\eqref{case:eq:ColAssum}  as one of the alternatives.

\begin{lem}[Covering Lemma]
\label{lem:CovLmm}
 Fix some $\kappa >1$. Let $U \supset  \Lambda' \supset \Lambda$ and
$V \supset B'\supset B$ be two nests of Jordan disks. Let \[f : (U,\Lambda'
, \Lambda) \to (V, B'
, B)\]  be a branched covering between the respective disks, and let
$D = \deg(U \to  V )$, $d = \deg(\Lambda' \to B'
)$. Then there is a $K_1>0 $ (depending on $\kappa$ and $D$) such that the following holds. If 
\[ \Width( U\setminus \Lambda) >K_1 ,\] then either
\begin{enumerate}
\item $\Width(B' \setminus B) > \kappa \Width(U \setminus \Lambda),$ \label{case:eq:ColAssum} or
\item $\Width( V \setminus B) >\left(2\kappa d^2 \right)^{-1}\Width (U\setminus \Lambda).$
\end{enumerate}
\end{lem}

Consider~\eqref{eq:UtoV:m}  and recall that $I^{m,-s}$ to be the projection of $I$ onto $\wZ^m_{-s}$. We denote by $T^m_s\subset \partial \wZ^m$ the projection of $T_s$. Set  
\begin{itemize}
\item $B\coloneqq I^m_s$;
\item $\Lambda$ to be the connected component of $f^{-i_s}(I^m_s)$ containing $I^{m,-s}$;
\item $B'\coloneqq  \C\setminus \left(\gamma^m_s\cup \big(T^m_s\big)^c\right)$;
\item $\Lambda'$ to be the connected component of $f^{-i_s}(B')$ containing $
I^{m,-s}$. 
\end{itemize}

By Lemma~\ref{lem:f:UtoV:m} applied to the case $\lambda=12$, the degree of $f\colon \Lambda' \to B'$ is at most $d\coloneqq 4^{12}$. Clearly, 
\[\Width(U^m_{-s}\setminus \Lambda) \ge \Width^+_{\wZ^m_{-s}}(I^{m,-s}) \ge \Width^+_{Z}(I) -O(1)= K-O(1).\]

 Applying the Covering Lemma to 
 \[f^{i_s}\colon (U^m_{-s},\Lambda' , \Lambda)\to (V^m,B',B)\] with $\kappa=3\bkappa $, we obtain that either
\begin{itemize}
\item $\Width(B'\setminus B) \ge 3\bkappa  K$; or
\item $\Width(V\setminus  B) \ge C_{\bkappa} K$ otherwise.
\end{itemize}

\subsubsection{Removing $\gamma_s$}
\label{sss:removing gamma} It remains to remove $\gamma_s$ from 
\[V^m=  \C\setminus \left(\gamma^m_s\cup \big(L^m_s\big)^c\right) \sp\sp \text{ and }\sp\sp B'=  \C\setminus \left(\gamma^m_s\cup \big(T^m_s\big)^c\right)\]
without decreasing much $\Width(B'\setminus B)$ and $\Width(V\setminus  B)$.

Consider the outer harmonic measure of $\partial \wZ^m$ -- it is the harmonic measure of $\wC\setminus \partial \wZ^m$ relative $\infty$. If the outer harmonic measure of $L^m_s$ is less than $2/3$, then we can choose $\gamma^m_s$ so that the width of curves in $V^m$ connecting $B$ and $\gamma_s$ is $O(1)$. We obtain
\[ \Width_{10} (I^m_s)\ge \Width(B'\setminus  B)- O(1),\sp\sp \Width_{\lambda} (I^m_s)\ge \Width(V\setminus  B)- O(1)\]
and the lemma follows.

Consider the remaining case when the outer harmonic measure of $L^m_s$ is bigger than $2/3$. Let $\wZ^m_\bullet$ be the pullback of $\wZ^m$ under $f$; i.e.~$\wZ^m_\bullet$ is the pseudo-Siegel disk so that $f\colon \wZ^m_\bullet\to \wZ^m$ is conformal. Let $I'^m, L'^m,\subset \partial \wZ^m_\bullet$ be the preimages of $I^m_s,L^m_s$ under $f\colon \wZ^m_\bullet\to \wZ^m$. By Lemma~\ref{lem:W^+:well grnd int}, the outer harmonic measures of $L'^m\subset \partial \wZ^m_\bullet$ and $L^m_s\subset \partial \wZ^m$ are very close to the outer harmonic measures of $L',L_s\subset \partial Z$, where $L'$ is the projection of $L'^m$ onto $\overline Z$. Since $L',L_s$ are disjoint, the outer harmonic measure of $L'\subset \partial \wZ^m_\bullet$ is less than $2/3$. Repeating the above argument for $I'^m$, we obtain that 
either 
\begin{itemize}
\item $I'^m$ is $[2\bkappa K,10]$-wide;  or
\item $I'^m$ is $[ C_{\bkappa} K,\lambda]$-wide
\end{itemize}   
relative $\wZ^m_\bullet$. Applying $f$ which has the global degree $2$, we obtain (see~\eqref{eq:Width:degree d}) that either
\begin{itemize}
\item $I^m_s$ is $[\kappa K,10]$-wide;  or
\item $I^m_s$ is $[ C_{\bbt} K/2,\blambda_\bbt]$-wide.
\end{itemize}

\subsection{Lair of snakes}
For our convenience, we enumerate intervals clockwise in the following lemma.
\begin{lairlmm}
\label{lem:Hive Lemma}
For every $\bbt>2$ there are $\bkappa,\blambda,\bK\gg_{\bdelta}1$ such that the following holds. Suppose that $\wZ^m$ is a pseudo-Siegel disk with $\length_m< \blambda/4$. Let \[I_{n+1}=I_0, I_1,\dots , I_n\subset \partial \wZ^m, \sp\sp\sp |I_k| = \length_m,\sp  \dist(I_k, I_{k+1})\le \length_m\]
be a sequence of well-grounded intervals enumerated clockwise such that every $I_s$ is one of the following two {\bf types}
\begin{enumerate}
\item either $I_s$ is $[\bkappa K,10]$-wide, \label{case:I_k:1}
\item or $I_s$ is $[ C_{\bkappa} K,\blambda]$-wide,\label{case:I_k:2}
\end{enumerate} 
where $K\ge \bK$ and $C_\kappa$ is a constant (from Lemma~\ref{lem:SpredAroundWidth}) independent of $\lambda$. Then there is a $[\bbt K,3 \blambda]^+$-wide interval $J\subset \partial Z$ grounded rel $\wZ^m$ with $|J|<|I|$.
\end{lairlmm}

\begin{proof}
The first three claims below show that families of Type~\eqref{case:I_k:2} appear with certain frequency. The last three claims amplify their width (by the snake lemmas).
We assume that $\bK \gg\blambda \gg \bkappa\gg \bbt$. The first claim follows immediately from Lemma~\ref{lem:trad width to width+}. 

\begin{claim}
Lemma~\ref{lem:Hive Lemma} holds if there is a Type~\eqref{case:I_k:1} interval $I_j$ such that \[\Width_{10}(I_j)-\Width^+_{10}(I_j)\ge \bkappa  K/2.\] \qed
\end{claim}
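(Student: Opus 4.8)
The plan is to use the hypothesis directly: if some Type~\eqref{case:I_k:1} interval $I_j$ satisfies $\Width_{10}(I_j) - \Width^+_{10}(I_j) \ge \bkappa K/2$, then a large part of the width of $\Fam_{10}(I_j)$ submerges into $\wZ^m$, and we can apply the trading machinery of \S\ref{ss:wZ:local lmm}--\S\ref{s:snakes} to convert this inner degeneration into outer degeneration on a deeper scale.

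More precisely, I would argue as follows. Set $K' \coloneqq \Width_{10}(I_j) - \Width^+_{10}(I_j) \ge \bkappa K / 2$, which is $\gg_\bdelta \log \blambda$ since $\bK \gg \blambda \gg \bkappa$. Since one of the endpoints of $I_j$ can be assumed to be in $\CP_m$ (the $I_s$ are well-grounded, and we may shrink $I_j$ slightly to a grounded subinterval losing only $O(1)$ width by Lemma~\ref{lem:I vs I^grnd:with separ}), the interval $I_j$ and the complement $[10 I_j]^c$ form a grounded pair with a complementary interval $L$ of length $\ge \length_m$ (here we use $\length_m < \blambda/4$ so that $10 I_j$ is not almost all of $\partial\wZ^m$, hence both complementary intervals of $I_j$ and $[10I_j]^c$ are long). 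Now $\Width_{10}(I_j) = \Width^\circ_L(I_j, [10I_j]^c) + O(1)$ up to choosing $L$ to be the correct complementary interval carrying the submerging part, so the hypotheses of Snake Lemma~\ref{simplmm:SnLmm:wZ} are met with $\lambda = \blambda$: we have $\Width^\circ_L(I_j, [10I_j]^c) - \Width^+(I_j, [10I_j]^c) \succeq K' \gg_\bdelta \log\blambda$.

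Then Corollary~\ref{cor:SnakeLmm} applies and produces an interval $J \subset L^\bullet \subset \partial Z$, grounded rel $\wZ^m$, with $\Width^+_{\blambda}(J) \succeq K' \succeq \bkappa K$. Since $\bkappa \gg \bbt$, the implied absolute constant is beaten and $\Width^+_{\blambda}(J) \ge \bbt K$; a fortiori $\Width^+_{3\blambda}(J) \ge \bbt K$, so $J$ is $[\bbt K, 3\blambda]^+$-wide. The length bound $|J| < |I|$ holds because $J \subset L^\bullet$ and $L$ is a complementary interval between $I_j$ and $[10I_j]^c$, so $|L| < |[10I_j]|= 10|I_j| = 10\length_m$; shrinking the thresholds appropriately in Corollary~\ref{cor:SnakeLmm} via Localization Lemma~\ref{lem:trading  width to space}, the interval $J$ produced has length $< \frac{1}{\blambda}\min\{|I_j|, |[10I_j]^c|\} \le \length_m/\blambda < \length_m = |I|$. (If one worries about the precise normalization of $|J|$ vs.\ $|I|$, note $I$ here refers to a generic member of the family, all of length $\length_m$.)

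The main obstacle is bookkeeping the complementary interval: $\Fam_{10}(I_j)$ connects $I_j$ to $[10I_j]^c$ through $\wC\setminus(I_j \cup [10I_j]^c)$, and this family has \emph{two} complementary intervals on $\partial\wZ^m$; I need to check that the submerging part (the difference $\Width_{10} - \Width^+_{10}$) is carried, up to $O_\bdelta(1)$, by curves that submerge through one designated side $L$, so that $\Fam^\circ_L$ captures it — this is exactly the content of Lemma~\ref{lem:Fam^circ:R} (snakes in $\Fam^\circ \setminus \Fam^+$) together with Lemma~\ref{lem:A|_L}, and one must verify the hypothesis $|N| \ge \length_m$ of the Snake Lemma for the \emph{other} complementary interval $N$, which follows from $\length_m < \blambda/4$ and $|I_j| = \length_m$ (so $[10I_j]^c$ has length $\ge 1 - 12\length_m > 1/2 > \length_m$, and symmetrically the short complementary interval between $I_j$ and $[10I_j]^c$ on the ``far'' side has length comparable to $\length_m$). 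Once this is in place the rest is a direct invocation of the already-established snake/trading toolkit.
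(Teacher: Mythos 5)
Your overall strategy points in the right direction, but you have overlooked that the paper disposes of this claim in one line: it is a direct application of Lemma~\ref{lem:trad width to width+} (trading $\Width$ into $\Width^+$), applied with $I=I_j$ and $L=[10 I_j]^c$ (after the grounded adjustments you describe). The hypotheses of that lemma are essentially the facts you check by hand — $|I_j|=\length_m$, $\dist(I_j,L)\asymp\length_m$, $|L|\ge 1/2$, and $\Width(I_j,L)-\Width^+(I_j,L)\ge \bkappa K/2\gg_{\bdelta,\blambda}1$ — and its conclusion immediately gives a grounded interval $I^\new\subset 10I_j$ with $|I^\new|<|I_j|/\lambda<|I|$ and $\Width^+_{\lambda}(I^\new)\succeq \bkappa K\ge \bbt K$ once $\bkappa\gg\bbt$ absorbs the implicit constant. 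You never cite this lemma, and instead try to rebuild it from the Snake Lemma.

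That is where the genuine gap sits, and it is exactly the step you flag as ``the main obstacle.'' Your assertion that $\Width_{10}(I_j)=\Width^\circ_L(I_j,[10I_j]^c)+O(1)$ for a suitably chosen side $L$ is not delivered by the lemmas you cite: Lemma~\ref{lem:Fam^circ:R} and Lemma~\ref{lem:A|_L} both \emph{start} from families already inside $\Fam^\circ_L$ (the first extracts a snake from $\Fam^\circ\setminus\Fam^+$, the second trades $\Fam^\circ$ into $\Fam^\str$); neither converts the full family $\Fam_{10}(I_j)$ — whose curves may land on the inner sides of $I_j$ or of $[10I_j]^c$ and may cross the \emph{other} complementary arc $N$ — into a one-sided $\Fam^\circ_L$ family. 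In the paper this conversion is done by Lemma~\ref{lmm:W into W(I+,J+)} together with the case analysis inside the proof of Lemma~\ref{lem:trad width to width+}: curves hitting one of the short complementary arcs $A$, $B$ first are restricted with the \emph{target enlarged} to $\widehat L\cup B$, and the Snake Lemma is then invoked via Property~\ref{property:X} and Remark~\ref{rem:SL:ShallowScale}, not via the $|N|\ge\length_m$ hypothesis you appeal to. Two smaller slips: the Lair Lemma only assumes the $I_s$ are well-grounded, so you cannot assume an endpoint of $I_j$ lies in $\CP_m$ (groundedness is all you need anyway); and your final ``a fortiori'' runs backwards, since $\Fam^+_{3\blambda}(J)\subset\Fam^+_{\blambda}(J)$ gives $\Width^+_{3\blambda}(J)\le\Width^+_{\blambda}(J)$ — the harmless fix is to run the machinery (or Lemma~\ref{lem:trad width to width+}) with parameter $3\blambda$ from the start.
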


We assume from now on that for every Type~\eqref{case:I_k:1} interval $I_j$ we have
\begin{equation}
\label{eq:Type1inerv:ass1}
\Width^+_{10}(I_j)\ge \bkappa K/2.
\end{equation}

Let us enlarge every $I_i$ into a well-grounded interval $\wI_i\subset \partial \wZ^m$ by adding to $I_i$ the interval between $I_i$ and $I_{i+1}$ if $I_i$ and $I_{i+1}$ are disjoint. Since the distances between the $I_i$ and $I_{i+1}$ are $\le \length_{m+1}$ (see~\S\ref{sss:renorm til}), we have $|\wI_i|\le 2 \length_m$.

\begin{claim}
\label{cl:seq:I_i}
There is a sub-sequence \begin{equation}
\label{eq:seq:I_i  I_i+lambda}
\wI_{i}, \wI_{i+1},\dots, \wI_{i+\blambda/20}\subset\partial \wZ^m, \sp\sp\sp |\wI|\le 2 \length_m
\end{equation}
such that  every interval $\wI_j$ in~\eqref{eq:seq:I_i  I_i+lambda} is not $[3,\blambda/4]^+$ wide.
\end{claim}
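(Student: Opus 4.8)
The plan is to prove Claim~\ref{cl:seq:I_i} by a pigeonhole/counting argument, showing that intervals which are $[3,\blambda/4]^+$-wide cannot be too frequent along the cyclic sequence $\wI_0,\wI_1,\dots,\wI_n$. The key obstruction to overcome is that a $[3,\blambda/4]^+$-wide interval $\wI_j$ carries a wide outer family $\Fam^+_{\blambda/4}(\wI_j)$, and these families combinatorially block each other in $\wC\setminus\intr\wZ^m$; this is exactly the situation of Lemma~\ref{lem:no inf quasi additivity}. So first I would set up the counting: suppose, for contradiction, that among any block of $\blambda/20$ consecutive intervals $\wI_i,\wI_{i+1},\dots,\wI_{i+\blambda/20}$ at least one is $[3,\blambda/4]^+$-wide. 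Since the whole cyclic sequence has $n+1$ terms, this would force a subset $\{\wI_{j_1},\wI_{j_2},\dots\}$ of $[3,\blambda/4]^+$-wide intervals which is $(\blambda/20)$-syndetic around $\partial\wZ^m$.

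Next I would exploit the length and spacing hypotheses. Each $\wI_k$ has $|\wI_k|\le 2\length_m$ and the consecutive intervals are within distance $\le\length_m$, so a run of $\blambda/20$ consecutive intervals is contained in an arc of combinatorial length $\preceq \blambda\length_m$, which, by the hypothesis $\length_m<\blambda/4$ (rescaled appropriately) and by choosing $\blambda$ large relative to the absolute constants, is still a small fraction of $\partial\wZ^m$. Hence if $\wI_{j_1},\wI_{j_2},\wI_{j_3}$ are three $[3,\blambda/4]^+$-wide intervals appearing within three consecutive blocks, they are pairwise disjoint and, crucially, each is contained in $\blambda/4$ times the neighbouring ones are \emph{not} — more precisely, writing $L^c_{j_t}\coloneqq[(\blambda/4)\wI_{j_t}]^c$, the spacing $\dist(\wI_{j_{t}},\wI_{j_{t\pm1}})\preceq \blambda\length_m$ together with $|\wI_{j_t}|=\Theta(\length_m)$ and $\blambda$ large guarantees $\wI_{j_{t-1}}\cup\wI_{j_{t+1}}\subset (\blambda/4)\wI_{j_t}$, i.e.\ $\wI_{j_{t-1}}\cup\wI_{j_t}\cup\wI_{j_{t+1}}\subset L_{j_t}$ in the notation of Lemma~\ref{lem:no inf quasi additivity}. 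Then the cyclic chain $\wI_{j_1},\wI_{j_2},\dots$ of $[3,\cdot]^+$-wide intervals with this nesting property is precisely the forbidden configuration of Lemma~\ref{lem:no inf quasi additivity}, giving the contradiction. Therefore some block of $\blambda/20$ consecutive intervals contains no $[3,\blambda/4]^+$-wide interval, which is the assertion.

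The main obstacle I anticipate is verifying the geometric inclusion $\wI_{j_{t-1}}\cup\wI_{j_t}\cup\wI_{j_{t+1}}\subset (\blambda/4)\wI_{j_t}$ with the correct constants: this requires that the $[3,\blambda/4]^+$-wide intervals within nearby blocks really are close enough (in the combinatorial metric on $\partial\wZ^m$, extended continuously as in~\S\ref{ss:Bett Grnd Inter}) compared with $\blambda|\wI_{j_t}|$, and this is where the hypothesis $\length_m<\blambda/4$ and the choice $\bK\gg\blambda\gg\bkappa\gg\bbt$ must be used to absorb the spacing $\le\length_m$ and the length $\le 2\length_m$ into the $(\blambda/4)$-dilation with room to spare. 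A secondary bookkeeping point is that Lemma~\ref{lem:no inf quasi additivity} is stated for intervals of $\partial\wZ^m$ and for $\Width^+(I_k,L^c_k)\ge 3$, so I must record that $[3,\blambda/4]^+$-wide means exactly $\Width^+_{\blambda/4}(\wI_j)=\Width^+(\wI_j,[(\blambda/4)\wI_j]^c)\ge 3$, matching the hypothesis of that lemma. Once these two points are in place, the claim follows, and subsequent claims in the proof of Lemma~\ref{lem:Hive Lemma} will use the block~\eqref{eq:seq:I_i  I_i+lambda} of ``not-too-wide'' intervals to run the snake/sneaking arguments.
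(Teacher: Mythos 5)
Your argument is correct and is essentially the paper's own proof: negate the claim to extract a cyclic, $(\blambda/20)$-syndetic subsequence of $[3,\blambda/4]^+$-wide intervals $\wI_{\ell(k)}$ and rule it out by the blocking Lemma~\ref{lem:no inf quasi additivity}. The only difference is that you explicitly verify the inclusion $\wI_{j_{t-1}}\cup\wI_{j_t}\cup\wI_{j_{t+1}}\subset(\blambda/4)\wI_{j_t}$ from the bounds $|\wI_k|\le 2\length_m$, $\dist(I_k,I_{k+1})\le\length_m$ and $\blambda\gg1$, a step the paper leaves implicit in the phrase ``such families would block each other.''
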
 
 \begin{proof}
Suppose converse. Then $(\wI_k)$ has a sub-sequence $(L_k)$ with $L_k=\wI_{\ell(k)}$ such that $\ell(k)<\ell(k+1)\le \ell(k)+\blambda/20$ and such that every $L_k$ is $[3,\blambda/4]^+$-wide. This is impossible by Lemma~\ref{lem:no inf quasi additivity}  (such families would block each other).
\end{proof}

\begin{claim}
There is $\bbk$ depending on $\bbt$ and $\bkappa$ but not on $\blambda$ such that Lemma~\ref{lem:Hive Lemma} holds if~\eqref{eq:seq:I_i  I_i+lambda} has $\bbk$ consecutive Type~\eqref{case:I_k:1} intervals in~\eqref{eq:seq:I_i  I_i+lambda}. 
\end{claim}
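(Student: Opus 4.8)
The plan is to show that a long run of consecutive Type~\eqref{case:I_k:1} intervals inside the sub-sequence from Claim~\ref{cl:seq:I_i} forces a snake that we can amplify. First I would set up the geometry: since each $\wI_j$ in~\eqref{eq:seq:I_i I_i+lambda} is \emph{not} $[3,\blambda/4]^+$-wide but (for Type~\eqref{case:I_k:1} intervals) satisfies $\Width^+_{10}(\wI_j)\ge \Width^+_{10}(I_j)\ge \bkappa K/2$ by~\eqref{eq:Type1inerv:ass1}, the discrepancy
\[
\Width^\circ_{L_j}(\wI_j, \wI_j^c) - \Width^+(\wI_j, \cdot)
\]
must be large whenever the outer family is forced to submerge. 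The key observation is that the wide outer families $\Fam^+_{10}(\wI_j)$ of consecutive Type~\eqref{case:I_k:1} intervals combinatorially block one another (they all have endpoints within a short arc of $\partial\wZ^m$ while aiming at the long complementary interval), so by the argument of Lemma~\ref{lem:no inf quasi additivity} they cannot all stay external; a definite fraction must dive under their neighbours. This produces, for a window of $\bbk$ consecutive Type~\eqref{case:I_k:1} intervals, a lamination $\RR\subset \Fam(\wI_{i}, \wI_{i}^c)$ equipped with roughly $\bbk$ disjoint barriers $\ell_1,\dots,\ell_{\bbk}$ in the sense of~\S\ref{ss:SL:barriers}, each $\ell_r$ coming from a vertical curve in one of the intermediate wide rectangles $\Fam^+_{10}(\wI_{i+r})$.

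Next I would convert these barriers into \emph{toll} barriers using Lemma~\ref{lem:barr vs toll barriers}: the hypotheses $|A|,|B|\asymp |I|$ and $|A|,|B|\ge 2\length_m$ are arranged by choosing the grounded pair to consist of $\wI_i$ against the long complementary interval $L$ (which has length $>1/2$ since $\length_m<\blambda/4$ is small and the $\wI_j$ are short). Lemma~\ref{lem:barr vs toll barriers} then gives two alternatives: either some $\dist_{\partial\wZ^m}(a_r, a_{r-1})$ or $\dist_{\partial\wZ^m}(b_r,b_{r-1})$ is smaller than $|I|/\lambda$ for a suitable $\lambda$ — in which case the relevant wide rectangle is contained in $\Fam_\lambda$ of a short interval and Lemmas~\ref{lem:W+:ground inter}, \ref{lem:I vs I^grnd:with separ} finish by projecting to $\partial Z$ — or, after discarding $O_{\bkappa,\bbk,\bdelta}(\log\lambda)$ curves, the barriers become toll barriers for the surviving lamination $\RR^\new$, which still has width $\gg_\bdelta 1$. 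In the latter case Snake Lemma~\ref{lem:SnakeWithCheckpts} (with toll barriers) applies with $n\approx \bbk$ barriers, yielding a grounded-rel-$\wZ^m$ interval $T\subset\partial Z$ with
\[
\Width^+_{3\blambda}(T)\succeq \bbk\,\Width(\RR^\new) - O_{\bdelta,\bbk}(\log\blambda).
\]
Choosing $\bbk$ large enough in terms of $\bbt$ and $\bkappa$ (and absorbing the constant losses, using $\bK\gg\blambda\gg\bkappa\gg\bbt$ and that $\Width(\RR^\new)\succeq \bkappa K$), this is $\ge \bbt K$, and $|T|<|I|$ follows from the localization built into the Snake Lemmas; setting $J\coloneqq T$ proves Lemma~\ref{lem:Hive Lemma} in this case.

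The main obstacle I anticipate is bookkeeping the width losses: each barrier contributes an additive $O_\bdelta(\log\lambda)$ error through Lemmas~\ref{lem:barr vs toll barriers} and~\ref{lem:SnakeWithCheckpts}, and the number of barriers $\bbk$ must be chosen \emph{before} $\blambda$, so one has to verify the chain of inequalities $\bbt K \le \bbk \cdot (\text{const})\cdot \bkappa K - O_{\bdelta,\bbk}(\log\blambda)$ is consistent with the hierarchy $\bK\gg\blambda\gg\bkappa\gg\bbt$ and with $\bbk$ depending only on $\bbt,\bkappa$. The second subtlety is checking that the grounded-pair and length hypotheses of the toll-barrier machinery (namely $|A|,|B|\ge 2\length_m$, $|J|>1/2$, and the $\chi$-comparability $1/\chi\le |A|/|I|,|B|/|I|\le\chi$) genuinely hold for the configuration of short well-grounded intervals $\wI_i$ produced by Claim~\ref{cl:seq:I_i}; this requires a careful choice of which $\wI_i$ in the window to distinguish and how to enlarge it, but it is routine given $\length_m<\blambda/4$ and the spacing bounds $|\wI_i|\le 2\length_m$, $\dist(I_k,I_{k+1})\le\length_m$. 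The remaining claims of Lemma~\ref{lem:Hive Lemma} (handling the alternative where there is no long run of Type~\eqref{case:I_k:1} intervals, so that Type~\eqref{case:I_k:2} intervals are frequent) would be treated symmetrically via the Sneaking Lemma~\ref{lem:sneaking} applied to the wide outer rectangle $\Fam^+_\blambda(I_s)$ of a Type~\eqref{case:I_k:2} interval, whose $\blambda$-width $C_\bkappa K$ beats the fixed threshold once $\blambda\gg_{C_\bkappa,\bbt}1$.
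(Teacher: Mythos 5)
Your route (manufacturing $\approx\bbk$ toll barriers from the run of Type~\eqref{case:I_k:1} intervals and then invoking Snake Lemma~\ref{lem:SnakeWithCheckpts}) differs from the paper's, and it has a genuine gap at its first step. You assert that because consecutive wide families $\Fam^+_{10}(\wI_j)$ ``block one another,'' by the argument of Lemma~\ref{lem:no inf quasi additivity} ``they cannot all stay external; a definite fraction must dive under their neighbours,'' and that this yields a lamination threading $\bbk$ barriers. This is not correct: the families $\Fam^+_{10}(I_j)$ are outer families, supported in $\wC\setminus\intr\wZ^m$ by definition, so diving is not even an option for them; and Lemma~\ref{lem:no inf quasi additivity} only forbids a \emph{cyclic} chain of wide outer rectangles spanning over their neighbours all the way around $\partial\wZ^m$ --- a finite run of stacked wide outer rectangles over the window is perfectly consistent (curves of $\Fam^+_{10}(I_j)$ may land just a few intervals away, inside the run, and the wide rectangles simply nest/stack by the non-crossing principle). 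Consequently the barrier-threaded lamination, which is the engine of your argument, is never produced, and the width input $\Width(\RR^\new)\succeq\bkappa K$ that you feed into the toll-barrier Snake Lemma is unsupported: after the standing reduction~\eqref{eq:Type1inerv:ass1} the submerging part of each Type~\eqref{case:I_k:1} family may well be $O(1)$, so even if barriers existed the multiplicative gain $n\cdot\Width(\RR^\new)$ could not be closed against $\bbt K$.

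The paper exploits the stacking in the opposite way: since the wide outer families of the consecutive Type~\eqref{case:I_k:1} intervals have small overlaps, their widths \emph{add}, producing a single outer rectangle $\RR_2\subset\bigcup_j\Fam^+_{10}(I_j)$ of width $\succeq\bbk\bkappa K$ which (using Claim~\ref{cl:seq:I_i} to discard the $O(\bbk)$ curves reaching $L$) lands on one side $J_a$ of the run and must pass above some Type~\eqref{case:I_k:2} interval $I_x\subset J_a$. Then Sneaking Lemma~\ref{lem:sneaking} is applied to the pair ($\RR_2$ as the wide outer family, $\Fam_\blambda(I_x)$ of width $\ge C_\bkappa K$ as the family that sneaks through it), and $\bbk$ is fixed so that $\bbk\bkappa\ge\kappa(t)\,C_\bkappa$ with $t$ chosen from $\bbt$; since $\kappa(t)$ and $C_\bkappa$ do not depend on $\blambda$, neither does $\bbk$. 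If you want to salvage your write-up, this is the mechanism to adopt for the present claim; the toll-barrier machinery is the right tool elsewhere in the proof of Lemma~\ref{lem:Hive Lemma} (Claims~\ref{cl:5} and~\ref{cl:6}), where the arc-diagram structure genuinely forces a wide family to submerge under many disjoint arcs, but that forcing is absent in the consecutive Type~\eqref{case:I_k:1} configuration.
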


\begin{proof}
Suppose that~\eqref{eq:seq:I_i  I_i+lambda} has a consecutive sequence of Type~\eqref{case:I_k:1} intervals $I_a,I_{a+1},\dots, I_b$ with $b-a= \bbk -1$. Consider the intervals 
\[ I_{a,b}\coloneqq \lfloor I_a, I_b\rfloor\sp\sp\sp \text{ and }\sp\sp\sp L\coloneqq \bigcap_{j=a}^b \left(\frac \blambda 4 I_j \right)^c\]
and observe that $\Width^+(I, L)= O(b-a)$ by Claim~\ref{cl:seq:I_i}. Since the $\Fam^+_{10}(I_j), \Fam^+_{10}(I_{j+1})$ have small overlaps (they block each other), there is a rectangle \[  \RR \subset \Fam\coloneqq \Fam^+_{10}(I_a)\cup \Fam^+_{10}(I_{a+1})\cup \dots \cup \Fam^+_{10}(I_b) \sp\sp\text{ with }\sp \Width(\RR)\succeq (b-a)\kappa K.\]
Let $J_a,J_b$ be two intervals forming $L\setminus I_{a,b}$. We assume that $J_a<I_{a,b}<J_b< L$. Since at most $O(b-a)$ curves in $\RR$ land at $L$, we can select a subrectangle $\RR_2$ in $\RR$ with $\Width(\RR)\succeq (b-a)\kappa K=\bbk \kappa K$ such that, without loss of generality, $\RR_2$ is lands at $J_a$.

By removing $O(1)$-buffer, we can assume that $\RR_2$ skips over $I_{a-1}\subset (10I_a)$. Since Type~\eqref{case:I_k:1} intervals block each other, $\RR_2$ goes above a  Type~\eqref{case:I_k:2} interval $I_x\subset J_a$. The claim now follows by applying Sneaking Lemma~\ref{lem:sneaking} to $\RR_2$ and $\Fam(I_x)$.
\end{proof}

We may now assume that among 
$\bbk$ consecutive intervals in Sequence~\eqref{eq:seq:I_i  I_i+lambda} there is at least one Type~\eqref{case:I_k:2} interval. Let us enumerate Type~\eqref{case:I_k:2} intervals in Sequence~\eqref{eq:seq:I_i  I_i+lambda} as
\[I_{i_0},I_{i_2},\dots ,I_{i_{s}},\sp\sp i_j<i_{j+1}<i_j + \bbk,\]
where $s \ge \blambda/(22\bbk).$

Let us enlarge $I_{i_t}$ to well grounded intervals $\widetilde I_{i_t}\supset I_{i_t}$ such that
\begin{itemize}
\item $\widetilde I_{i_t}$ ends where $ I_{i_{t+1}}$ starts; and
\item $\widetilde I_{i_0}$ and $\widetilde I_{i_s}$ have length between $\blambda/4+1$ and $\blambda/4+3$.
\end{itemize}

It follows from Claim~\ref{cl:seq:I_i} that most of the curves in $\Fam^+_\blambda (\widetilde I_{i_t})$ do not bypass $\widetilde I_{i_0}\cup \widetilde I_{i_s}$:
\begin{claim}
Write $L\coloneqq \lfloor \widetilde I_{i_0},  \widetilde I_{i_s}\rfloor $. Then for every $i\in \{1,\dots, s-1\}$, we have
 \[  \Width^+(\widetilde I_{i_t}, L) =O(\bbk) \sp\sp\sp\text{ for } 1\le t\le s-1.\]
\qed
\end{claim}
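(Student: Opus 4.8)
The plan is to obtain this claim as a routine packaging of Claim~\ref{cl:seq:I_i} via the Parallel Law, so it requires no new analytic input. Throughout, I read $\Width^+(\widetilde I_{i_t}, L)$ as the width of the family $\Fam$ of curves in $\C\setminus Z$ that issue from $\widetilde I_{i_t}$ and land in $\partial\wZ^m\setminus L$; since the two endpoints of $L=\lfloor\widetilde I_{i_0},\widetilde I_{i_s}\rfloor$ lie in $\widetilde I_{i_0}$ and $\widetilde I_{i_s}$ respectively, these are exactly the curves of $\Fam^+_\blambda(\widetilde I_{i_t})$ that ``bypass'' $\widetilde I_{i_0}\cup\widetilde I_{i_s}$, which is the assertion used in the sentence preceding the claim.

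First I would record two elementary facts about positions on $\partial\wZ^m$, lengths being counted in units of $\length_m$. (i) Since $\widetilde I_{i_t}$ runs from the left endpoint of $I_{i_t}$ to the left endpoint of $I_{i_{t+1}}$ and $i_{t+1}-i_t<\bbk$, it is the concatenation $\wI_{i_t}\# \wI_{i_t+1}\# \cdots \# \wI_{i_{t+1}-1}$ of fewer than $\bbk$ of the enlarged unit intervals of Sequence~\eqref{eq:seq:I_i  I_i+lambda}, each of combinatorial length in $[1,2]$. (ii) For $1\le t\le s-1$ the interval $\widetilde I_{i_t}$ lies clockwise after $\widetilde I_{i_0}$ and clockwise before $\widetilde I_{i_s}$; hence every point of $\widetilde I_{i_t}$, and a fortiori every point of each $\wI_j$ contained in it, is at combinatorial distance at least $\min\{|\widetilde I_{i_0}|,|\widetilde I_{i_s}|\}\ge \blambda/4+1$ from $\partial\wZ^m\setminus L$.

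Given (i) and (ii), any $\gamma\in\Fam$ starts in some $\wI_j$ with $i_t\le j<i_{t+1}$ and ends at a point at combinatorial distance $\ge\blambda/4+1$ from $\wI_j$; since $|\wI_j|\le2$ we have $\blambda/4+1>(\blambda/4-1)|\wI_j|/2$, so that endpoint lies in $[\frac{\blambda}{4}\wI_j]^c$ and therefore $\gamma\in\Fam^+_{\blambda/4}(\wI_j)$. Consequently $\Fam\subseteq\bigcup_{j=i_t}^{i_{t+1}-1}\Fam^+_{\blambda/4}(\wI_j)$, and the Parallel Law gives $\Width(\Fam)\le\sum_{j=i_t}^{i_{t+1}-1}\Width^+_{\blambda/4}(\wI_j)$. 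By Claim~\ref{cl:seq:I_i} each summand is $<3$, since none of these $\wI_j$ is $[3,\blambda/4]^+$-wide, and there are fewer than $\bbk$ summands; hence $\Width^+(\widetilde I_{i_t},L)<3\bbk=O(\bbk)$. As $\bbk$ is fixed in terms of $\bbt$ and $\bkappa$ only, this is the asserted bound. I do not expect any real obstacle here: the only points needing care are to keep the dilation factor equal to $\blambda/4$ so that Claim~\ref{cl:seq:I_i} applies verbatim, and to note that the number of intervals $\wI_j$ involved — hence the implied constant — is independent of $\blambda$ and of $K$.
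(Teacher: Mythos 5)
Your proof is correct and is essentially the argument the paper intends: the paper states this claim with no written proof, as an immediate consequence of Claim~\ref{cl:seq:I_i}, and your decomposition of $\widetilde I_{i_t}$ into its fewer than $\bbk$ constituent intervals $\wI_j$, the observation that any curve reaching $\partial \wZ^m\setminus L$ must cross past a buffer of length $\ge (\blambda/4+1)\length_m$ and hence lies in $\Fam^+_{\blambda/4}(\wI_j)$, and the Parallel Law constitute exactly that deduction. Your reading of the loosely stated quantity $\Width^+(\widetilde I_{i_t},L)$ as the width of outer curves from $\widetilde I_{i_t}$ to $\partial\wZ^m\setminus L$ (rather than to $L\supset \widetilde I_{i_t}$ itself) is the intended one, consistent with the preceding sentence about curves not bypassing $\widetilde I_{i_0}\cup\widetilde I_{i_s}$.
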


 
Choose a big $T\gg 1$ (but still much smaller than $\bK$; the $T$ depends on $\rr,\bbg$ from Claims~\ref{cl:5},~\ref{cl:6}). We consider the following fundamental arc diagram $\mathfrak G$:
\begin{itemize}
\item the vertices of $\mathfrak G$ are the intervals $\widetilde I_{i_j}$ for $j\in \{1,2,\dots,s-1\}$.
\item there is an edge between $\widetilde I_{i_a}$ and $\widetilde I_{i_b}$ if and only if \[|a-b|\ge 2\sp\sp \text{ and }\sp\sp \Width^+(\widetilde I_{i_a},\widetilde I_{i_b})\ge T.\]
 
\end{itemize}

\begin{figure}[t!]
\[\begin{tikzpicture}[scale=1.4]

\draw (-6.3,0) -- (3.2,0);

\filldraw (-6,0) circle (0.04 cm);

\filldraw (-3,0) circle (0.04 cm);
\filldraw (-2.5,0) circle (0.04 cm);
\filldraw (-2,0) circle (0.04 cm);
\filldraw (-1.5,0) circle (0.04 cm);
\filldraw (-1,0) circle (0.04 cm);
\filldraw (-0.5,0) circle (0.04 cm);
\filldraw (0,0) circle (0.04 cm);
\filldraw (0.5,0) circle (0.04 cm);

\filldraw (1,0) circle (0.04 cm);
\filldraw (1.5,0) circle (0.04 cm);
\filldraw (2,0) circle (0.04 cm);
\filldraw (2.5,0) circle (0.04 cm);
\filldraw (3,0) circle (0.04 cm);

\node [below] at (-6,0){$\widetilde I_a$}; 
\node [below] at (-2.9,0){$\widetilde I_{b_1}$}; 

\node [below] at (-1.3,0){$\widetilde I_{b_{f-\bbg}}$};
\node [below] at (0.1,0){$\widetilde I_{b_f}$};
\node [below] at (1.6,0){$\widetilde I_{b_{f+\bbg}}$};
\node [below] at (3.1,0){$\widetilde I_{b_{\btau}}$};

\draw [red]  (-6,0) edge[bend left]  (-3,0); 
\draw [red]  (-6,0) edge[bend left]  (-2.5,0);
\draw [red]  (-6,0) edge[bend left]  (-2,0);
\draw [red]  (-6,0) edge[bend left]  (-1.5,0);
\draw [red]  (-6,0) edge[bend left]  (-1,0);
\draw [red]  (-6,0) edge[bend left]  (-0.5,0); 
\draw [red]  (-6,0) edge[bend left]  (-0,0); 
\draw [red]  (-6,0) edge[bend left]  (0.5,0); 
\draw [red]  (-6,0) edge[bend left]  (1,0); 
\draw [red]  (-6,0) edge[bend left]  (1.5,0); 
\draw [red]  (-6,0) edge[bend left]  (2,0); 
\draw [red]  (-6,0) edge[bend left]  (2.5,0); 
\draw [red]  (-6,0) edge[bend left]  (3,0); 
\end{tikzpicture}\]
\caption{Illustration to the argument in Claim~\ref{cl:5}.}
\label{Fg:snake thorugh I_a}
\end{figure}
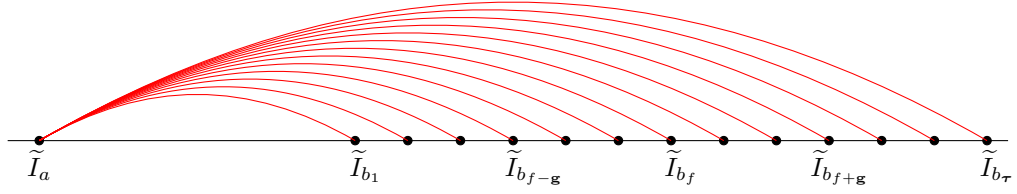

\begin{claim}
\label{cl:5}
There is a constant $\rr=\rr(\bbt)$ depending on $\bbt$ and $\bkappa$ but not on $\blambda$ such that Lemma~\ref{lem:Hive Lemma}  holds if $\mathfrak G$ has a vertex with degree $\rr$.
\end{claim}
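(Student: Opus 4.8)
The plan is to convert the high degree of the vertex into a long chain of \emph{toll barriers} in the sense of Definition~\ref{defn:barriers} and then feed that chain into Snake Lemma~\ref{lem:SnakeWithCheckpts}. Suppose $\widetilde I_a$ has degree $\ge\rr$ in $\mathfrak G$. At least $\rho\coloneqq\lceil\rr/2\rceil$ of its neighbours lie on one side of $\widetilde I_a$; relabel them clockwise $\widetilde I_{b_1}<\widetilde I_{b_2}<\dots<\widetilde I_{b_\rho}$, so that $\Width^+\big(\widetilde I_a,\widetilde I_{b_j}\big)\ge T$ for every $j$ and, since each edge forces $|a-b_j|\ge 2$, between $\widetilde I_a$ and $\widetilde I_{b_1}$ there lies a further Type~\eqref{case:I_k:2} interval $\widetilde I_c$, over which every $\Fam^+\big(\widetilde I_a,\widetilde I_{b_j}\big)$ passes from left to right. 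From each of these $\rho$ outer families I would extract a genuine rectangle $\RR_j\subset\wC\setminus Z$ of width $\ge T-O(1)\ge 2$; since they share the base $\widetilde I_a$ and have linearly ordered roofs $\widetilde I_{b_j}$, after deleting $O(\rho)$ curves the $\RR_j$ may be taken pairwise disjoint and nested, and a vertical curve $\ell_j$ chosen in the inner buffer of each yields $\rho$ pairwise disjoint simple arcs. Passing to a $\bbg$-separated sub-collection $\ell_{b_\bbg},\ell_{b_{2\bbg}},\dots$ of size $n\approx\rho/\bbg$ (Figure~\ref{Fg:snake thorugh I_a}) I obtain arcs whose endpoints, read off inside $\widetilde I_a$ and inside the $\widetilde I_{b_j}$, are nested exactly as required of barriers for the grounded pair $\big(\widetilde I_c,J\big)$, where $J$ is the far complementary interval; one has $|J|>1/2$ since, by the size hypothesis on $\length_m$ in Lemma~\ref{lem:Hive Lemma}, the $\blambda/20$-term sub-sequence of Claim~\ref{cl:seq:I_i} containing all the relevant Type~\eqref{case:I_k:2} intervals occupies an arc of $\partial\wZ^m$ of length $<1/2$, and the side intervals $A\ni\widetilde I_a$, $B\supset\bigcup_j\widetilde I_{b_j}$ then satisfy $|A|,|B|\ge 2\length_m$ and are comparable to $|\widetilde I_c|=\length_m$.

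With this in place I would run a dichotomy on the full family $\RR\coloneqq\Fam\big(\widetilde I_c,J\big)$, which has width $\ge C_\bkappa K$ because $\widetilde I_c$ is Type~\eqref{case:I_k:2} and $J\supset(\blambda\widetilde I_c)^c$. A curve of $\RR$ avoiding all the barriers must submerge into $\wZ^m$ once near each barrier endpoint in order to sneak past it. If a definite proportion of $\RR$ does avoid them — say width $\ge\tfrac12 C_\bkappa K-O_\bdelta(1)$ after the clean-up of Lemmas~\ref{lem:Fam^circ:R} and~\ref{lem:A|_L} — then we obtain a lamination $\RR'$ submerging $\sim n$ times; Lemma~\ref{lem:barr vs toll barriers} then gives either a complementary gap $[a_i,a_{i-1}]$ or $[b_{i-1},b_i]$ already shorter than $|\widetilde I_c|/(3\blambda)$, in which case the argument restarts on a strictly deeper scale with the same bookkeeping, or (after discarding a further $O_{\bdelta,\bbg,\rho}(\log\blambda)$ curves) genuine toll barriers for $\RR'$. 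In the latter case Snake Lemma~\ref{lem:SnakeWithCheckpts} produces a grounded rel $\wZ^m$ interval $J_*\subset\partial Z$ with $|J_*|<|I|$ and
\[\Width^+_{3\blambda}(J_*)\ \succeq\ n\,\Width(\RR')-O_{\bdelta,\bbg,\rho}(\log\blambda)\ \succeq\ \frac{\rho}{\bbg}\cdot\frac{C_\bkappa}{2}\,K-O(\log\blambda),\]
so that, choosing $\rr$ (hence $\rho$) large depending only on $\bbt,\bkappa,\bbg$ — namely $\rho C_\bkappa/(2\bbg)\ge 2\bbt$ — and recalling $\bK\gg\blambda$, the right-hand side exceeds $\bbt K$, which is the conclusion of Lemma~\ref{lem:Hive Lemma}.

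It remains to treat the opposite sub-case, in which the barrier-crossing part of $\RR$ carries width $\ge\tfrac12 C_\bkappa K$, i.e.\ a wide sub-family of $\Fam\big(\widetilde I_c,J\big)$ threads the entire fan $\RR_1,\dots,\RR_\rho$ while staying essentially outside $\wZ^m$. Such a family is linked with every $\RR_j$, so by the Non-Crossing Principle~\S\ref{sss:non cross princ} all but $O(\rho)$ of its curves cross the whole fan; turning the picture around, $\Fam^+\big(\widetilde I_c,J\big)$ is then a wide outer family that, passing over the Type~\eqref{case:I_k:2} interval $\widetilde I_{b_1}$, forces the full family of $\widetilde I_{b_1}$ either to be largely outer on a deeper scale or to submerge, and in the submerging situation the nested arcs $\ell_j$ (or their pushed-around copies) again serve as toll barriers, bringing us back to Snake Lemma~\ref{lem:SnakeWithCheckpts} and, where a width-gap is available, to Sneaking Lemma~\ref{lem:sneaking}. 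I expect this sub-case to be the main obstacle: one must rule out that a wide \emph{outer} family through the fan can ``escape'' at the ambient scale $\blambda$ with only width $\sim C_\bkappa K$, and instead pin it down — using the interaction of the Non-Crossing Principle with the fan and the comparability $|A|,|B|\asymp|\widetilde I_c|$ — until it can be amplified all the way to width $\bbt K$ on a strictly deeper scale; by contrast, verifying the toll-barrier hypotheses (disjointness, precise endpoint nesting, absence of skips) via Lemma~\ref{lem:barr vs toll barriers} is routine, provided one is careful to sacrifice only $O_{\bdelta,\bbg,\rho}(\log\blambda)$ curves.
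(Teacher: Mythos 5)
The decisive gap is your second sub-case, and it is not a removable technicality: it is exactly where the amplification has to come from, and your setup cannot produce it. In the paper's argument the dichotomy is run on the full families $\Fam_{\blambda}(I_{b_f})$ of the \emph{middle} fan members $f\in\{\bbg,\dots,\rr-\bbg\}$: either for some such $f$ a $\tfrac12 C_{\bkappa}K$-part of $\Fam_{\blambda}(I_{b_f})$ reaches $\widetilde I_{b_{f-\bbg}}\cup\widetilde I_{b_{f+\bbg}}$ before $\widetilde I_a$, in which case the toll barriers are the concatenations $\ell_{f-2j}\#(\text{subarc of }\widetilde I_a)\#\ell_{f+2j}$ hemming in $\widetilde I_{b_f}$ from \emph{both} sides and Lemma~\ref{lem:SnakeWithCheckpts} applies; or this fails for \emph{every} middle $f$, and then the Parallel Law over $\sim\rr/\bbg$ indices gives $\Width^+\big(\widetilde I_a,\lfloor\widetilde I_{b_2},\widetilde I_{b_\rr}\rfloor\big)\succeq(\rr/\bbg)\,C_{\bkappa}K\ge\kappa(\bbt)K$, which is precisely the hypothesis the Sneaking Lemma~\ref{lem:sneaking} needs (applied against $\Fam_{\blambda}(I_{b_1})$). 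In your version the ``crossing'' horn leaves you with an outer family through the fan of width only $\sim C_{\bkappa}K$; that is far below the Sneaking Lemma threshold $\kappa(\bbt)K$, and the Non-Crossing Principle by itself amplifies nothing, so there is no route from your second horn to a $[\bbt K,\cdot]^+$-wide interval. The missing idea is that the failure of the first horn must be harvested \emph{simultaneously at many $f$'s}, so that the widths add up before the Sneaking Lemma is invoked; this is also why $\rr$ (and not just $\bbg$) must be large in terms of $\bbt$ and $\bkappa$.

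Your first horn also has a genuine flaw. Because every arc $\ell_j$ emanates from the single interval $\widetilde I_a$, the $A$-side endpoints of your barriers are crammed into an interval of length $\le 2\length_m$, so consecutive gaps $[a_i,a_{i-1}]$ can be tiny; Lemma~\ref{lem:barr vs toll barriers} then ends in its small-gap alternative, and---unlike in the proof of the Sneaking Lemma, where the inter-barrier rectangles carry width $(t+1)K$ so that a small gap instantly yields a $[tK,\lambda]^+$-wide short interval---your inter-barrier rectangles have width only $T-O(1)=O(1)$, so nothing wide is supported over the small gap and your ``restart on a strictly deeper scale'' has nothing to restart with. This case cannot be dismissed: $\widetilde I_a$ is itself a wide (Type~(1) or~(2)) interval, and curves of $\Fam(\widetilde I_c,J)$ realizing its degeneration may legitimately dive under tiny subintervals of $\widetilde I_a$ with width comparable to $K$. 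The paper avoids the issue by amplifying a middle $\widetilde I_{b_f}$ and routing each barrier through two distinct intervals $\widetilde I_{b_{f-2j}},\widetilde I_{b_{f+2j}}$, which forces consecutive barrier endpoints to be $\ge\length_m$ apart, so the toll property costs only $O_{\bdelta}(\log\blambda)$ curves and no dichotomy is needed at that step.
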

\begin{proof}
For our convenience, we replace $\rr$ with $2\rr$ and we also introduce a big constant $\bbg$ with $\rr\gg \bbg \gg 1$.

  Assume that $\widetilde I_a$ is a vertex of $\mathfrak G$ with degree at least $2\rr$. Without loss of generality, we assume that $\rr$ neighbors of $I_a$ in $\mathfrak G$ are on the right of $I_a$; we enumerate these neighbors as $\widetilde  I_{b_1},\widetilde  I_{b_2},\dots , \widetilde  I_{b_\rr}$, see Figure~\ref{Fg:snake thorugh I_a}. We will show below that either there is an $\widetilde I_{b_f}$ such that $\Fam_\blambda (I_{b_k})$ submerges many times in the $I_{b_k}$, or the family $\Fam^+\left(\widetilde I_a, \lfloor \widetilde I_{b_2}, \widetilde I_{b_\rr}\rfloor\right)$ has width $\gg_{\bbt} K$. In the former case, we will apply the Snake Lemma with toll barriers. In the latter case, we will use the Sneaking Lemma.

    Consider $\widetilde  I_{b_f}$, where $f\in \{\bbg, \bbg+1,\dots, \rr-\bbg\}$, see Figure~\ref{Fg:snake thorugh I_a}. Observe that the width of curves in $\Fam_\blambda(I_{b_f})$ omitting $\widetilde I_{b_{f-\bbg}} \cup \widetilde I_{b_{f+\bbg}}\cup \widetilde I_a$ is at most $O_{\blambda}(1)$ because \[\Fam^+(\widetilde I_a, \widetilde I_{b_{f-\bbg}}),\sp\sp \Fam^+(\widetilde I_a, \widetilde I_{b_{f+\bbg}}),\sp\sp \Fam^-(\widetilde I_{b_{f-\bbg}}, \widetilde I_{b_{f+\bbg}}) \] have $\succeq_\blambda 1$ width. We orient curves in $\Fam_\blambda(\widetilde I_{b_f})$ from $\widetilde I_{b_f}$ towards $(\blambda \widetilde I_{b_f})^c$.
  
{\bf Case (A)}. Suppose there is an $f\in \{\bbg, \bbg+1,\dots, \rr-\bbg\}$ such that a $\frac 1 2 C_{\bkappa} K$ part of $\Fam_\blambda(I_{b_f})$, call it $\Fam$, intersects $\widetilde I_{b_{f-\bbg}} \cup \widetilde I_{B_{f+\bbg}}$ before intersecting $\widetilde I_a$. By the Small Overlapping Principle~\S\ref{sss:sm overl princ}, there are pairwise disjoint simple closed arcs \[\ell_j\in \Fam^+(\widetilde I_a, \widetilde I_{f+j})\sp\sp\text{ for }\sp  1\le |j-f|\le \bbg\] 
such that at most $O_\blambda(1)$ curves in $\Fam$ intersect $\bigcup_{1\le |j-f|\le \bbg} \ell_j$. Removing $O_\blambda(1)$ curves in $\Fam$ we obtain a lamination $\Fam^\new\subset \Fam$ whose curves are disjoint from any $\ell_j$. 

Suppose $\ell_j$ lands at $x_j\in \widetilde I_{b_j}$. Since $\dist(x_j,x_{j+2})\ge \length_m$, we can remove $O_\blambda(1)$-curves from $\Fam^\new$ so that every curve in the new family $\Fam^\New$ intersects 
\[\lfloor x_{b_{f-2j -2}},x_{b_{f-2j }}\rfloor \cup \lfloor x_{b_{f+2j}},x_{b_{f+2j+2 }}\rfloor \]  before intersecting $\lfloor x_{b_{f-2j -2}},x_{b_{f+2j+2 }}\rfloor^c$. For $|j|\le \bbg/2$, define the arc $\beta'_j\subset  \ell_{f-2j}\cup \widetilde I_a \cup \ell_{f+2j}$ to be the concatenation of $\ell_{f-2j}$, followed by the subarc of $ \widetilde I_a $, and followed by $ \ell_{f+2j}$. Note that $\beta'_j$ is a simple arc connecting $\widetilde I_{f-2j}$ and $\widetilde I_{f+2j}$. Moreover, $\beta'_j$ is disjoint from $\beta'_k$ away from $\widetilde I_a$. Let us slightly move the arcs $\beta'_j$ away from $\widetilde I_a$ so that the new arcs $\beta_j$ are pairwise disjoint and so that at most $O(1)$ curves in $\Fam^\New$ intersect any of $\beta_j$. We denote by $\Fam^\NEW$ the family obtained from $\Fam^\New$ by removing curves intersecting at least one $\beta_j$. Case (A) now follows from Snake Lemma~\ref{lem:SnakeWithCheckpts} applied to $\Fam^\NEW$, $\Width(\Fam^\NEW)\ge \frac 1 2 K -O_\blambda(1)$ with toll barriers $\beta_j$, $j\le \bbg/2$.

 {\bf Case (B)}. If {Case (A)} never occurs, then $C_\kappa K /2$-wide part of $\Fam^+(\widetilde I_a, \widetilde I_{b_f})$ is disjoint from $\Fam^+(\widetilde I_a, \widetilde I_{b_{f\pm \bbg}})$. Applying the Parallel Law~\ref{sss:ParLaw}, we obtain that \[\Fam\coloneqq \Fam^{+}\left(\widetilde I_{a}, \lfloor  \widetilde I_{b_2} ,\widetilde I_{b_\bbk} \rfloor\right)\sp\sp\text{ has width }\sp \succeq\frac{\rr}{\bbg} C_\bkappa K\gg_\bbt K .\]
 We now apply Sneaking Lemma~\ref{lem:sneaking} to $\Fam$ and $\Fam_\lambda (I_{b_1})$.
\end{proof}

\begin{claim}
\label{cl:6}
Suppose that the degree of every vertex in $\mathfrak G$ is bounded by $\rr$ from Claim~\ref{cl:5}.  For every $\bbg\gg 1$, if $\blambda\gg_\bbg 1$, then the following holds. There is an interval $\widetilde I_{s}$ such that $\Fam_\blambda(\widetilde I_s)$ contains a lamination $\Fam, \sp \Width(\Fam)\ge C_\bkappa K-O_\blambda(1)$ that has toll barriers $\ell_1, \ell_2, \dots, \ell_\bbg$.
\end{claim}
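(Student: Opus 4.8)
\textbf{Plan of proof of Claim~\ref{cl:6}.}
The idea is the same dichotomy that drives the other claims, but now run ``along'' the sequence $\widetilde I_{i_1}, \dots, \widetilde I_{i_{s-1}}$ rather than at a single vertex. First I would fix the hierarchy of constants: $\bbg\gg 1$ is given, and I will choose $\blambda\gg_{\bbg, \rr, T} 1$ afterwards (recall $s\ge \blambda/(22\bbk)$, so a large $\blambda$ forces the sequence to be long). Since the fundamental arc diagram $\mathfrak G$ has all vertex degrees $\le \rr$, it has at most $\rr s /2$ edges; hence there is a subinterval of the index range $\{1, \dots, s-1\}$ of length $\ge \bbg$ (in fact of length comparable to $s/\rr \gg_\bbg 1$ once $\blambda$ is large) consisting of $\widetilde I_{i_j}$ that are \emph{not} joined by an edge of $\mathfrak G$ to any of each other or to their $\bbg$-neighbors within that block — this is where $\blambda\gg_\bbg 1$ is used. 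Rename these consecutive intervals $\widetilde J_0, \widetilde J_1, \dots, \widetilde J_N$ with $N\ge 2\bbg$, say; by construction $\Width^+(\widetilde J_a, \widetilde J_b)<T$ for all $|a-b|\ge 2$ inside the block, and each $\widetilde J_k$ is of Type~\eqref{case:I_k:2}, hence $[C_\bkappa K, \blambda]$-wide.

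Next I would localize on one such interval, say $\widetilde J_{\bbg}$ (an interior one of the block, so it has $\bbg$ neighbors of the block on each side). Orient the curves of $\Fam_\blambda(\widetilde J_\bbg)$ outward. As in Claim~\ref{cl:5}, Case~(A), the width of curves of $\Fam_\blambda(\widetilde J_\bbg)$ that \emph{omit} $\widetilde J_0\cup \widetilde J_{2\bbg}$ is $O_\blambda(1)$, because $\Fam^-(\widetilde J_0, \widetilde J_{2\bbg})$, together with $\Fam^+(\widetilde J_0, \cdot)$ and $\Fam^+(\cdot, \widetilde J_{2\bbg})$ type families, cut $\widetilde J_\bbg$ off from the far part of $\partial \wZ^m$; here I use the already-established coarse estimates (Theorem~\ref{thm:wZ:shallow scale}) that the relevant inner and outer families have width $\succeq_\blambda 1$. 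Now among the curves of $\Fam_\blambda(\widetilde J_\bbg)$ that do reach $\widetilde J_0\cup \widetilde J_{2\bbg}$, a $\ge C_\bkappa K - O_\blambda(1)$-wide part passes through the $\widetilde J_k$ with $0<k<2\bbg$, $k\ne \bbg$, and these $\widetilde J_k$ block each other (their $\Fam^+_\blambda$'s have $O(1)$ overlap). Using the Small Overlapping Principle~\S\ref{sss:sm overl princ} I pick pairwise disjoint simple arcs $\ell_j\in \Fam^+(\widetilde J_\bbg, \widetilde J_{\bbg\pm j})$, $1\le j\le \bbg$, missed by all but $O_\blambda(1)$ of the curves, and — exactly as in Claim~\ref{cl:5}, Case~(A) — concatenate the symmetric pair $\ell_{\bbg-j}$, a subarc of $\widetilde J_\bbg$, and $\ell_{\bbg+j}$ into a simple arc, then perturb these to get pairwise disjoint arcs $\beta_1,\dots,\beta_\bbg$ which, after throwing away $O_\bbg(1)$ further curves, become \emph{toll} barriers for the remaining lamination $\Fam\subset \Fam_\blambda(\widetilde J_\bbg)$ of width $\ge C_\bkappa K - O_\blambda(1)$; the no-skipping condition of Definition~\ref{defn:barriers} is arranged by the same ``$\dist(x_j, x_{j+2})\ge \length_m$ plus Squeezing Lemma'' trick (Lemma~\ref{lem:barr vs toll barriers}), using that $\blambda\gg \bbg$. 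Relabelling $\widetilde J_\bbg$ as $\widetilde I_s$ gives the claim.

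The \textbf{main obstacle} is purely bookkeeping of constants: one has to be sure that the number of barriers one extracts, $\bbg$, stays a genuine free parameter while the degeneration width $C_\bkappa K$ one carries along does not get eaten by the $O_\blambda(1)$ and $O_\bbg(1)$ losses. This forces the ordering $\bK\gg \blambda\gg \bbg$ (and $\blambda \gg_\bbg 1$ so that the edge-counting in $\mathfrak G$ produces a long barrier-free block), which is consistent with the hypotheses of Lemma~\ref{lem:Hive Lemma} and with how $\rr, \bbg$ were introduced before Claim~\ref{cl:5}. A secondary point to check is that the arc $\widetilde I_s$ produced is \emph{well-grounded} — but this is automatic since it is one of the $\widetilde I_{i_t}$, which were chosen well-grounded — and that the barriers $\ell_j$ genuinely live in $\C\setminus \intr\wZ^m$, which holds because they are sublaminations of outer families $\Fam^+$. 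Once Claim~\ref{cl:6} is in hand, Snake Lemma~\ref{lem:SnakeWithCheckpts} applied to $\Fam$ with the $\bbg$ toll barriers yields a $[\bbg\, C_\bkappa K, 3\blambda]^+$-wide grounded interval $J$ with $|J|<|I|$, and choosing $\bbg\gg \bbt/C_\bkappa$ finishes the proof of Snake-Lair Lemma~\ref{lem:Hive Lemma}.
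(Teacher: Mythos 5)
There is a genuine gap, and it occurs at both of the two steps that carry your argument. First, the combinatorial step is false: bounded vertex degree in $\mathfrak G$ does \emph{not} produce a long run of consecutive vertices spanning no edge. Edges of $\mathfrak G$ only require $|a-b|\ge 2$, so the diagram could be a perfect matching joining $\widetilde I_{i_j}$ to $\widetilde I_{i_{j+2}}$ for all $j$; every vertex then has degree $\le 1\le\rr$, yet every window of three consecutive vertices contains an edge, no matter how large $s$ (i.e.\ $\blambda$) is. Your edge count $\le \rr s/2$ compared with $\asymp s/\bbg$ blocks goes the wrong way ($\rr s/2\gg s/\bbg$), so the pigeonhole you invoke does not exist.

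Second, and more fundamentally, even granting an edge-free block the toll barriers cannot be manufactured inside it. You propose to take $\ell_j\in\Fam^+(\widetilde J_\bbg,\widetilde J_{\bbg\pm j})$ and invoke the Small Overlapping Principle, but for $|j|\ge 2$ these families have width $<T$ \emph{by your own choice of the block}, and Lemma~\ref{lem:inters of wide lamin} only lets you find a curve met by a small part of the lamination when the family \emph{supplying} that curve is wide; a narrow (possibly almost empty) family gives you no arc that $\Fam_\blambda(\widetilde J_\bbg)$ is guaranteed to avoid. In Claim~\ref{cl:5}, Case (A), the barrier arcs were drawn from the wide edge-families ($\ge T$) emanating from a high-degree vertex; once all degrees are $\le\rr$ that resource is gone, which is precisely why the paper's proof of Claim~\ref{cl:6} switches to the \emph{planar} structure of the arc diagram. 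There one considers the dual graph $\mathfrak G^\vee$ and shows (using $\blambda\gg_\bbg 1$) that either there is a long simple path from the outer face, which produces $\bbg$ edges of $\mathfrak G$ that are \emph{nested} around some $\widetilde I_s$ in the sense of~\eqref{eq:a_i b_i} — each a family $\Fam^+(\widetilde I_{a_i},\widetilde I_{b_i})$ of width $\ge T$ from which a barrier arc avoided by all but $O_\blambda(1)$ of $\Fam_\blambda(\widetilde I_s)$ can be extracted — or else some face of $\mathfrak G$ contains $\ge 3\bbg$ vertices, and the cyclic boundary order of that face forces most curves of $\Fam_\blambda(\widetilde I_s)$ to meet $\widetilde I_{a_i}\cup\widetilde I_{b_i}$ before $\widetilde I_{a_{i+1}}\cup\widetilde I_{b_{i+1}}$, so nested arcs crossed by only $O_\blambda(1)$ curves can again be selected; in both cases the Squeezing Lemma upgrades barriers to \emph{toll} barriers as you indicate. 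This nesting-from-planarity dichotomy is the actual content of Claim~\ref{cl:6} and is missing from your proposal; your final step (feeding the toll barriers into Snake Lemma~\ref{lem:SnakeWithCheckpts} to conclude the Snake-Lair Lemma) is fine once the barriers exist.
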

\begin{proof}
Consider the dual graph $\mathfrak G^\vee$:
\begin{itemize}
\item vertices of $\mathfrak G^\vee$ are faces of $\mathfrak G$,
\item edges of $\mathfrak G^\vee$ are orthogonal to edges of $\mathfrak G$.
\end{itemize}
We denote by $X$ the outermost vertex of  $\mathfrak G^\vee$ corresponding to the unbounded face. Since $\blambda\gg_\bbg 1$, it is easy to check that either:
\begin{enumerate}
\item there is a simple path in $\mathfrak G^\vee$ of length $3\rr \bbg$ starting at $X$, or
\item there is a face $Y$ of  $\mathfrak G$ (a vertex of $\mathfrak G^\vee$) containing at least $3\bbg$ vertices of $\mathfrak G$.  
\end{enumerate}

In the first case, we can choose edges $\tilde \ell_i, i\le \bbg$ of $\mathfrak G$ such that each $\tilde \ell_i,$ connects $\widetilde I_{a_i}$ and $\widetilde I_{b_i}$ with 
\begin{equation}
\label{eq:a_i b_i}
a_{i+1}+1<a_i ,\sp\sp  a_i+1 <s < b_i-1,\sp\sp b_i<b_{i+1}-1.
\end{equation}
 Since $\Fam^+(\widetilde I_{a_i}, \widetilde I_{b_i})\ge T$, Small Overlapping Principle~\S\ref{sss:sm overl princ} implies that $\Fam_\blambda(I_s)$ contains a lamination $\Fam$ with $\Width(\Fam)\ge C_\bkappa K-O_\blambda(1)$ that is disjoint from pairwise disjoint curves $\ell_i\in  \Fam^+(\widetilde I_a, \widetilde I_b)$. Since at most $O_\blambda(1)$ curves in $\Fam$ can pass under $\widetilde I_{a_{i}+1}$ or $\widetilde I_{b_{i}+1}$ (Squeezing Lemma~\ref{lem:squeezing}), we can remove $O_\blambda(1)$ curves from $\Fam$ such that $\ell_i$ are toll barriers for the new lamination $\Fam^\new$ with $\Width(\Fam^\new)\ge C_\bkappa K-O_\blambda(1)$.

Consider the second case. We can choose intervals $\widetilde I_s, \widetilde I_{a_i}, \widetilde I_{b_i} \sp i\le \bbg$ on the boundary of the face $Y$ such that~\eqref{eq:a_i b_i} holds. Since $Y$ is a face of the arc diagram $\mathfrak G$, most curves (up to  $O_\blambda(1)$-width) in $\Fam_\blambda(\widetilde I_s)$ intersect $I_{a_i}\cup I_{b_i}$ before intersecting $I_{a_{i+1}}\cup I_{b_{i+1}}$. Therefore, we can select pairwise disjoint arcs $\ell_i\in \Fam^+(\widetilde I_{a_i},\widetilde I_{b_i})$ such that at most $O_\blambda(1)$ curves in $\Fam_\blambda(\widetilde I_s)$ intersect $\bigcup_i \ell_i$. Since at most $O_\blambda(1)$ curves in $\Fam_\blambda(\widetilde I_s)$ can pass under $\widetilde I_{a_{i}+1}$ or $\widetilde I_{b_{i}+1}$, the family $\Fam_\blambda(\widetilde I_s)$ contains a lamination $\Fam$ with $\Width(\Fam)\ge C_\bkappa K-O_\blambda(1)$ such that $\ell_i$ are toll barriers for $\Fam$.
\end{proof}

The lemma now follows from Snake Lemma~\ref{lem:SnakeWithCheckpts} applied to $\Fam$ and toll barriers $\ell_1, \ell_2, \dots, \ell_\bbg$.
 \end{proof}

\begin{proof}[Proof of Theorem~\ref{thm:SpreadingAround}]  Consider a $[\bK,\blambda_\bbt]^+$ combinatorial interval $I\subset \partial Z$ of level $m$ as in the statement of Theorem~\ref{thm:SpreadingAround}. There are two level $m$ combinatorial intervals $I_a,I_b$ such that $I\subset I_a\cup I_b$ and at least one of the endpoints of $I_a$ and of $I_b$ is in $\CP_m$. Then either $I_a$ or $I_b$ is $[\bK/2,\blambda_\bbt/2]^+$-wide. The theorem now follows from Lemmas \ref{lem:SpredAroundWidth} and \ref{lem:Hive Lemma}.
\end{proof}

\subsection{Bounded type regime} Recall that we are considering eventually golden-mean rotation numbers $\theta=[0;a_1,a_2,\dots]$ with $a_n=1$ for $n\ge \bbn_\theta$. 

\begin{cor}
\label{cor:bds:deep case}
There are absolute constant $\bK,\bbn>2$ such that for every $\theta$ we have
\[\Width^+_3(I)\le \bK \sp\sp\sp\text{ for every interval }I\subset \partial Z \text{ with }|I|\le \length_{\max\{\bbn_\theta, \bbn\}} .\]
\end{cor}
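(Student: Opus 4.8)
The goal is an absolute-constant bound $\Width^+_3(I)\le\bK$ for every interval $I\subset\partial Z$ whose combinatorial length is below the threshold $\length_{\max\{\bbn_\theta,\bbn\}}$, where $\bbn_\theta$ is the index past which the continued fraction of $\theta$ is all $1$'s. The strategy is a proof by contradiction combined with the Amplification Theorem~\ref{thm:SpreadingAround}: if the constant fails on some scale, we produce a sequence of intervals on deeper and deeper scales whose external widths grow geometrically, which is impossible because $\overline Z$ is a (non-uniform) quasidisk, so any \emph{fixed} finite interval family has finite width. The role of the hypothesis $|I|\le\length_{\max\{\bbn_\theta,\bbn\}}$ is twofold: first, it guarantees we are in the near-degenerate regime below any shallow scale where the $K_\theta$-quasidisk bound of Douady--Ghys might be large; second, on these deep scales the rotation number looks eventually golden-mean, so the combinatorial structure (renormalization tilings, diffeo-tilings, $\length_{n+2}<\length_n/2$) is uniformly well-behaved and $\length_m<\blambda/4$ for the $\blambda$ provided by the Amplification Theorem once $m$ is large.

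\textbf{Main steps.}
First I would fix $\bbt=2$ in Theorem~\ref{thm:SpreadingAround}, obtaining the increasing functions $\blambda\coloneqq\blambda_2$ and $\bK_2$; set $\bK\coloneqq\bK_2$ and choose $\bbn$ so large that for $m\ge\bbn$ (and any $\theta$) one has $\length_m<\blambda/4$ and $|I|\le\length_m$ forces $|I|\le|\theta_0|/(2\blambda)$ — this uses only~\eqref{eq:length decrease} and boundedness of $a_i$ below the golden-mean tail, so $\bbn$ is absolute. Second, suppose for contradiction there is an interval $I$ with $|I|\le\length_{\max\{\bbn_\theta,\bbn\}}$ and $\Width^+_3(I)>\bK$. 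By the Localization/Splitting argument (Remark~\ref{rem:SplitArg}) applied to $\Width^+_3$, up to an absolute loss I may assume $I$ is essentially a combinatorial interval $I^n_f(x)$ of some level $n=\Level(I)$; more precisely, $I$ lies in the union of at most two level-$n$ combinatorial intervals $I_a,I_b$ with an endpoint of each in $\CP_n$, and one of them, say $I_a$, satisfies $\Width^+_{\blambda}(I_a)\ge\bK$ (absorbing the passage from radius $3$ to radius $\blambda$ into the threshold, as in the proof of Theorem~\ref{thm:SpreadingAround}). Third, I construct the iteration: given a combinatorial interval $I_k$ of level $n_k\ge\max\{\bbn_\theta,\bbn\}$ with $\Width^+_{\blambda}(I_k)\ge 2^k\bK$ and $|I_k|\le\length_{n_k}$, I take a geodesic pseudo-Siegel disk $\wZ^{n_k}$ (which exists by Corollary~\ref{cor:regul} together with the inductive construction of pseudo-Siegel disks; alternatively $\wZ^{n_k}=\overline Z$ suffices to start, and the regularization is updated as needed) and apply the Amplification Theorem to get a grounded interval $J_{k+1}\subset\partial Z$ with $\Width^+_{\blambda}(J_{k+1})\ge 2^{k+1}\bK$ and $|J_{k+1}|\le|I_k|$. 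Then I replace $J_{k+1}$ by a genuine combinatorial subinterval $I_{k+1}$ on a strictly deeper scale $n_{k+1}>n_k$, with only an absolute width loss, again by the Splitting argument applied to $\Width^+_{\blambda}$; since $|J_{k+1}|\le|I_k|\le\length_{n_k}$ and combinatorial intervals on scale $n_k$ decompose into ones on scale $n_k+1$ (plus one on scale $n_k+2$, Lemma~\ref{lem:Jbb_n}), the new level strictly increases. Fourth, iterating yields a nested-in-scale sequence $I_k$ with $\Width^+_{\blambda}(I_k)\to\infty$, hence (since $\Width^+\le\Width$) a sequence of disjoint-from-each-other-complements families inside $\wC\setminus Z$ of unbounded width on shrinking intervals — but $\overline Z$ is a quasidisk (with some finite dilatation $K_\theta$), so $\Width_{\blambda}(I)$ is uniformly bounded over \emph{all} intervals $I\subset\partial Z$ by the remark following~\S\ref{ss:FullOutFam}; this is the contradiction. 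Finally, unwinding the absolute constants (the loss in each Splitting step is bounded independently of $\theta$, and the geometric growth $2^k$ dominates) gives the desired absolute $\bK$.

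\textbf{Main obstacle.}
The delicate point is the combinatorial bookkeeping in the third step: the Amplification Theorem outputs an interval $J_{k+1}$ that is \emph{grounded} but need not be combinatorial — its wide family may be trapped inside a non-winding parabolic rectangle described by Theorem~\ref{thm:par fjords}, so I cannot naively "round $J_{k+1}$ to the nearest combinatorial interval." The resolution (this is exactly where Remark~\ref{rem:thm:regul} and the Calibration Lemma~\ref{lmm:CalibrLmm} of the later part are used) is to note that either $\Width^+_{\blambda}(J_{k+1})$ already contains a combinatorial subinterval of comparable width on a deeper scale, or the degeneration lives in a parabolic fjord and Calibration relocates it to a genuine combinatorial interval on an even deeper scale. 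I would isolate this "combinatorialization" as a lemma: for any grounded interval $J$ with $\Width^+_{\blambda}(J)\ge C$ and $|J|\le\length_{n}$, there is a combinatorial interval $I'\subset\partial Z$ of level $>n$ with $\Width^+_{\blambda}(I')\ge C-O(1)$ and $|I'|\le|J|$. Granting this lemma, the induction runs cleanly; without it, the geometric growth could in principle "leak" into ever-shallower rather than ever-deeper parabolic structures, which is precisely the subtlety the paper flags as running opposite to the quadratic-like renormalization direction.
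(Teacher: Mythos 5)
Your overall skeleton is the same as the paper's: choose $\blambda_\bbt,\bK_\bbt$ from Amplification Theorem~\ref{thm:SpreadingAround}, pick $\bbn$ via~\eqref{eq:length decrease} so that $|I|\le\length_\bbn$ forces $|I|\le|\theta_0|/(2\blambda_\bbt)$, extract a combinatorial subinterval of comparable width, iterate the amplification, and contradict the fact that $\overline Z$ is a (non-uniform, $\theta$-dependent) quasidisk. Two points, however, would break or badly overload the proof as you wrote it. First, fixing $\bbt=2$ does not close the bookkeeping: the losses in the splitting/re-combinatorialization steps are multiplicative absolute constants (the ``$\succeq$'' and ``$\asymp$'' in the extraction of $I'$), not additive $O(1)$, so the effective growth per step is $c\,\bbt$ for some absolute $c$ which may be $<1$ when $\bbt=2$; your inductive claim $\Width^+_{\blambda}(I_{k+1})\ge 2^{k+1}\bK$ ``after an absolute loss'' does not follow. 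The paper takes $\bbt\gg 1$ precisely so that the amplification factor dominates these constants; with that change your induction runs.

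Second, your ``main obstacle'' paragraph misidentifies the difficulty and imports machinery that is both unnecessary and dangerous here. At levels $\ge\max\{\bbn_\theta,\bbn\}$ the continued fraction is all $1$'s, so consecutive scales are comparable ($\length_{n+1}\asymp\length_n$); consequently any interval of length $\le\length_{\bbn_\theta}$ — in particular the grounded output $J$ of the amplification — is covered by boundedly many combinatorial intervals of a comparable level, and the Parallel Law alone produces a combinatorial subinterval carrying a definite proportion of $\Width^+_{\blambda}(J)$. There are no deep parabolic fjords at golden-mean levels for the width to hide in, so no Calibration Lemma and no pseudo-Siegel regularization are needed: the paper applies Theorem~\ref{thm:SpreadingAround} with $\wZ^m=\overline Z$ at every step (grounded rel $\overline Z$ is vacuous). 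Invoking Corollary~\ref{cor:regul} and Calibration Lemma~\ref{lmm:CalibrLmm} here is not just overkill — it risks circularity, since this corollary is exactly the base case of the deep-to-shallow induction (Lemmas~\ref{lem:ind:mainthm:v1} and~\ref{lem:ind:mainthm:shallow case}) in which that machinery is legitimately deployed; the construction of the pseudo-Siegel disks $\wZ^{n_k}$ with the absorption property you would need is itself predicated on the bounds this corollary provides at deep levels. (A small additional slip: $|J_{k+1}|\le|I_k|$ does not force the level to strictly increase, but strict increase is not needed — unbounded growth of $\Width^+_{\blambda}(I_k)$ against the fixed quasidisk constant $K_\theta$ is already the contradiction.)
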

\begin{proof}
It is sufficient to prove $\Width^+_{\blambda} (I)\le \bK$ for some $\blambda\ge 3$ and $\bK$; the case $\blambda=3$ follows by spiting $I$ and increasing $\bK$. For a sufficiently big $\bbt\gg 1$, let $\blambda=\blambda_\bbt$ and $\bK=\bK_\bbt$ be the constants from Theorem~\ref{thm:SpreadingAround}. Set $\bbn$ to be the integer part of $ 2 \log_2 (2\blambda)+2$; then $\length_\bbn \le \frac{|\theta_0|}{2\blambda _\bbt}$ by~\eqref{eq:length decrease}.

Assume converse: $\Width^+_\blambda(I)=K\ge \bK$ for some $I$ with $|I|\le \length_{\max\{\bbn_\theta, \bbn\}}$. Then $I$ contains a combinatorial subinterval $I'$ with $|I'|\asymp |I|$ such that $\Width^+_\blambda(I')\succeq K$. Applying Theorem~\ref{thm:SpreadingAround} with $\bZ^m= \overline Z$, we find an interval $I_2$ with $\Width(I_2) \succeq \bbt K\gg K$ and $|I_2|\le |I|$. Continuing the process,  we obtain a sequence of interval $I_k$ with $|I_k|\le |I_{k-1}|$ such that $\Width^+_{\blambda}(I_k)\to +\infty$. This is impossible.
\end{proof}

\section{The Calibration Lemma} \label{s:CalibrLmm}Recall from \S\ref{sss:ext div famil}  that the diving family $\Fam^+_{\lambda, \div, m}(I)\subset \Fam^+_{\lambda}(I)$ consists of curves intersecting (or diving into) $\filled_m\setminus \overline Z$.

\begin{caliblem}
\label{lmm:CalibrLmm}
There is an absolute constant $\bchi> 1$ such that the following holds for every $\lambda\ge 10$. Let $\wZ^{m+1}$ be a geodesic pseudo-Siegel disk and consider an interval $T\subset\partial Z$ in the diffeo-tiling $\Dbb_m$. If there are intervals \[I\subset T,\sp\sp L\subset \partial Z \sp\sp \text{ such that }\sp\sp \length_{m+1} \le |I|\le \length_{m},\sp\sp \dist(I,L^c)\ge \lambda \length
_{m+1} \]
\[\text{and }\sp\sp\sp \Width^+_{\div, m}(I, L^c) = \bchi K\gg_\lambda 1,\] then 
\begin{enumerate}[label=\text{(\Roman*)},font=\normalfont,leftmargin=*]
\item \label{Cal:Lmm:Concl:1} either there is a $[K,\lambda]^+$-wide level-$(m+1)$ combinatorial interval,
\item \label{Cal:Lmm:Concl:2} or there is a $[\bchi^{1.5} K,\lambda]^+$-wide interval $I'\subset \partial Z,\sp |I'|<\length_{m+1}$ grounded rel $\wZ^{m+1}$.
\end{enumerate}
\end{caliblem}

In applications, we will often take $L=\lambda I$.

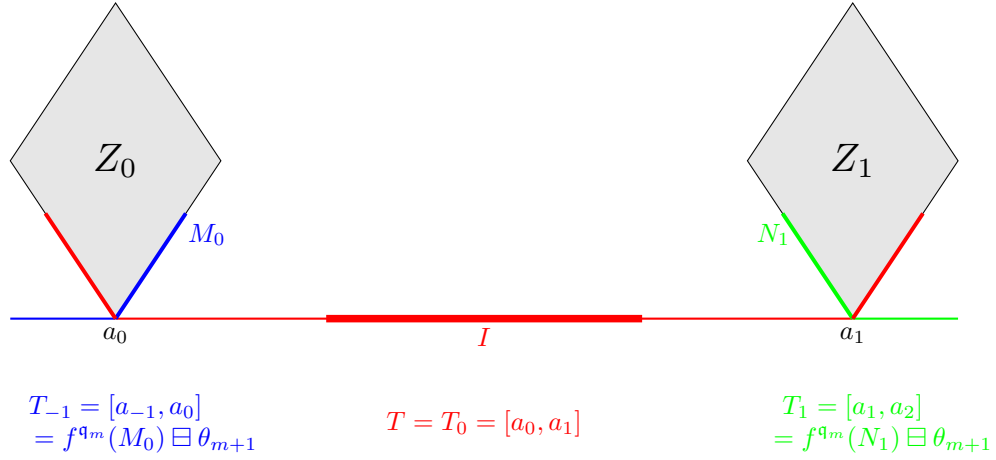
\begin{figure}[t!]
\[\begin{tikzpicture}[scale=1.4]

\draw[red,line width =0.8] (-7,0) --(0,0); 
\draw[blue,line width =0.8] (-8,0) --(-7,0);
\draw[green,line width =0.8] (0,0) --(1,0);

\node[below] at (-7,0){$a_0$};
\node[below] at (-0,0){$a_1$};
\node[scale=1.5] at (-7,1.5){$Z_0$};
\node[scale=1.5] at (-0,1.5){$Z_1$};

\node[green] at (0,-1){$
\begin{matrix}
T_1=[a_1,a_2]\\
\sp \sp\sp =f^{\qq_m}(N_1)\boxminus \theta_{m+1}
\end{matrix}
$};

\node[blue] at (-7,-1){$
\begin{matrix}
T_{-1}=[a_{-1},a_0]\\
\sp \sp\sp =f^{\qq_m}(M_0) \boxminus \theta_{m+1}
\end{matrix}
$};

\draw[fill=black, fill opacity=0.1] (0,0) -- (1,1.5 )--(0,3)--(-1,1.5)--(0,0);

\draw[shift={(-7,0)},fill=black, fill opacity=0.1] (0,0) -- (1,1.5 )--(0,3)--(-1,1.5)--(0,0);

\draw[green,line width =1.5] (0,0) -- (-2/3,1 );
\draw[red,line width =1.5] (0,0) -- (2/3,1 );


\node[right,green] at (-1, 0.8) {$N_1$};

\draw[blue,line width =1.5] (-7,0) -- (-7+2/3,1);
\draw[red,line width =1.5] (-7,0) -- (-7-2/3,1);

\node[right,blue] at (-6.4, 0.8) {$M_0$};





\draw[red,line width=1mm] 
(-5,0)--(-2,0);

\node[red,below] at(-3.5,0){$I$};
\node[red] at(-3.5,-1){$T=T_0=[a_0,a_1]$};

\end{tikzpicture}\]
\caption{Various intervals on $\partial Z$ and $\partial Z_i$.}
\label{Fg:CalibrLmm1}
\end{figure}

\subsubsection{Outline and Motivation}\label{sss:outline of Calibr} The Calibration Lemma allows us to trade $\Width^+_{\lambda, \div}$-wide intervals into $\Width^+_{\lambda}$-wide combinatorial intervals. In Section~\ref{s:MainThms}, we will construct a pseudo-Siegel disk $\wZ^{m+1}$ to absorb most of the external families. Therefore, the Calibration Lemma can be inductively applied if Case (\RN{2}) happens.

 The main idea of the proof is illustrated on Figure~\ref{Fig:cl4:1+1=1}. If Case (\RN{1}) does not happen, then we should expect a wide rectangle $\PP$ that essentially overflows its two conformal pullbacks $\PP_-, \PP_+$; this leads to the ``$1\le 1\oplus 1=0.5$" contradiction. To construct such a wide rectangle $\PP$, we will first spread the family $\Fam_{\lambda,\div, m}^+(I)$ around $\Dbb_m$ (using univalent push-forwards~\S\ref{sss:univ push forw}). Next we will find a wide rectangle $\PP$ between two neighboring intervals of $\Dbb_m$. Finally either the argument illustrated on  Figure~\ref{Fig:cl4:1+1=1} is applicable, or the roof of $\PP$ is shorter than $\length_{m+1}$ -- this leads to Case (\RN{2}).


\subsection{Proof of Calibration Lemma~\ref{lmm:CalibrLmm}} 
We split the proof into several subsections. We assume that $\bchi\gg1$ is sufficiently big.

\subsubsection{Bubbles $Z_i$} We enumerate intervals in $\Dbb_m$ as $T_i, \ T_0=T$ from left-to-right. Denote by $a_i$ the common endpoint of $T_{i-1}$ and $T_{i}$. We recall from~\S\ref{sss:diff tilings} that $a_i$ is a critical point of generation $t_i\le \qq_{m+1}$. Thus there is a bubble $Z_i$ attached to $a_i$, see Figure~\ref{Fg:CalibrLmm1}, with the first landing $f^{t_i}\colon Z_i \to Z$.

Let us pullback the diffeo-tiling $\Dbb_m$ to a partition of $\partial Z_i$ under \[f^{-\qq_m}\circ f^{\qq_m}\colon \partial Z\to \partial Z_i\sp\sp\sp\text{ (equivalently, under $f^{-t_i}\circ f^{t_i}\colon \partial Z\to \partial Z_i$)}.\]  We specify the following elements of the new partition:
\begin{itemize}
\item $N_i$ is the preimage of $T_{i}\boxplus \theta_{m+1}=f^{\qq_m}(T_{i+1})$ under $f^{\qq_m}\mid \partial Z_i$;
\item $M_i$ is the preimage of $T_{i-1} \boxplus \theta_{m+1}=f^{\qq_m}(T_{i})$ under $f^{\qq_m}\mid \partial Z_i$.
\end{itemize}
The point $a_i$ is the common endpoint of $M_i,N_i$.

By the following properties $N_i,M_i$ are intervals of $\filled_m=f^{-\qq_{m+1}}(\overline Z).$

\begin{claim4}
The interiors of $M_i,N_{i+1}$ contain no branched points of $f^{-\qq_{m+1}}(\partial Z)$. 
\end{claim4}
\begin{proof}
By definition, the interiors of the $T_i$ contain no critical points of $f^{\qq_{m+1}}$. Therefore, 
the interiors of the $f^{\qq_{m+1}}(T_i)$ contain no critical values of $f^{\qq_{m+1}}$.  Applying $f^{-\qq_{m+1}}$, we obtain a required claim for the $M_i, N_i$.
\end{proof}

\subsubsection{Rectangle $\RR_i$} \label{sss:RR_i}By Lemma~\ref{lmm:ext famil},~\ref{Cl:C:lmm:ext famil}, most of the width of $\Fam^+_{\div, m}(I, L^c)$  is within two rectangles; thus we can select a rectangle $\RR\subset \Fam^+_{\div, m}(I, L^c)$ satisfying 
\[\Width(\FamG) =\bchi K/2 -O(1) .\]
We orient the vertical curves in $\RR$ from $I$ to $L^c$. By shrinking $I$, we can assume that $\partial ^{h,0} \RR =I$. For an interval $J\subset I$, we denote by $\RR_J\subset \RR$ the genuine subrectangle  consisting of vertical curves of $\RR$ starting at $J$.

 {\bf Assuming that ~\ref{Cal:Lmm:Concl:1} does not hold}, we obtain 
\begin{enumerate}[label=\text{(\Alph*)},font=\normalfont,leftmargin=*]
\item\label{Cond:A:Cal_Lmm} For every combinatorial subinterval $J\subset I$ with $|J|=\length_{m+1}$, we have 
\[\Width(\RR_J) \le K.\]
\end{enumerate}
In particular, $|I|> \bchi \length_{m+1}/3\gg 1$. Let us present $I$ as a concatenation \[I=I_a\cup I_0\cup I_b  \sp\sp\text{ with } \sp\sp I_a<I_0 <I_b \] such that $\Width(\RR_{I_a})=\Width(\RR_{I_b})=6K$. In particular, $|I_a|,|I_b|\ge 6\length_{m+1}$ by~\ref{Cond:A:Cal_Lmm}.

We set
\[ \RR_0\coloneqq \RR_{I_0}, \sp\sp\text{ where }\sp\sp \Width(\FamG_0) =\bchi K/2-O(K).\]

Since $\RR_0$ is obtained from $\RR$ by removing sufficiently wide buffers, we can spread around $\RR_0$ using the univalent push-forward ~\eqref{eq:sss:univ push forw}. We denote by $\RR_i$ the resulting image of $\RR_0$ in $T_i$. As for $\RR$, we denote by $\RR_{i, J}$ the subrectangle of $\RR_i$ consisting of curves starting at $J\subset \partial ^{h,0} \RR_i$.

\begin{claim4}[\ref{Cond:A:Cal_Lmm} holds for all $\RR_i$]
\label{cl4:A for RR_i}
For every level $m+1$ combinatorial interval $J\subset \partial^{h,0} \RR_i $, we have $\Width(\RR_{i, J})\le K+O(1)$.  
\end{claim4}
\begin{proof}
Assume that $\Width(\RR_{i, J})\ge K +C$ for $C\gg 1$. Pushing $\RR_{i, J}$ forward under \eqref{eq:sss:univ push forw} towards $T_0$ we obtain the violation of~\ref{Cond:A:Cal_Lmm} for $\RR$.
\end{proof}

\subsubsection{Almost invariance of $\RR_i$} For every $\RR_{i,J}$, let us denote by $\RR^*_{i,J}$ the lift of $\RR_{i,J}$ under $f^{\qq_{m
+1}}$ starting at $J \boxminus \theta_{m+1}$.

\begin{claim4}
\label{cl4:RR is invar} Every $J\subset \partial^{h,0} \RR_i$ contains a subinterval $J^\new\subset J$ such that
\[\Width(\RR_{i,J^\new}) = \Width(\RR_{i,J})-O(K)\]
and such that $\RR_{i,J^\new}$ vertically overflows $\RR^*_{i,J}$.

Moreover, at most $O(K)$ curves of $\RR_{i}$ land at $T_i\boxplus \theta_{m+1}$.
\end{claim4}
\begin{proof}
Let $\RR_{i,J^\new}$ be the rectangle obtained from $\RR_{i,J}$ by removing two $3K$-buffers. Then the length of each of the intervals in $J\setminus J^\new$ is at least $2\length_{m+1}$ by~\ref{Cond:A:Cal_Lmm}, and at most $O(1)$ curves in $\RR_{i,J^\new}$ can cross the buffers of $\RR^*_{i,J}$ starting at $\big(J\boxminus \theta_{m+1}\big)\setminus J^\new$.

Similarly, up to $O(K)$, curves of $\RR^*_{i,J}$ are in $\RR_{i,J^\new}$. Since $\RR_{i,J^\new}$ is in $\Fam^+_{\div, m}$, we obtain that, up to  $O(K)$, curves of $\RR^*_{i,J}$ intersect $\partial Z_i\cup \partial Z_{i+1}$. Taking $J=\partial^{h,0}\RR_i$ and applying $f^{\qq_{m+1}}$, we obtain the second claim.
\end{proof}

\begin{claim4}[A rectangle between $T_{s}$ and $T_{s\pm1}$]
\label{cl4:RR_j}
There is an $\RR_s$ such that, up to removing $O(K)$ buffers, the roof of $\RR_s$ is in $\big(T_{s-1}\cup T_{s+1}\big)\boxplus \theta_{m+1}$. 

Moreover, up to removing $O(K)$ buffers from $\RR_s$, we can assume that \[\dist(\partial^{h,1} \RR_s, \{a_{s-1},a_{s+2}\})\ge 5\length_{m+1}.\]
\end{claim4}
\begin{proof}
Let us show that, up to removing $O(K)$ buffers, there is a rectangle $\RR_s$ such that $\partial^{h,1} \RR_s\subset \lfloor \partial ^{h,0} \RR_{s-1},\partial ^{h,0}\RR_{s+1} \rfloor\setminus (T_s\boxplus \theta_{m+1})$. This will imply the claim because $\partial^{h,0} \RR_j$ has distance $>5\length_{m+1}$ to $\{a_{j}, a_{j+1}\}$, see~\S\ref{sss:RR_i}.

 Assuming converse and using Claim~\ref{cl4:RR is invar} (the second part), we can choose in every $\RR_i$ a wide rectangle $\SS_i$ whose roof is outside of $\lfloor \partial ^{h,0} \RR_{i-1},\partial ^{h,0}\RR_{i+1} \rfloor$. This is a contraction by Lemma~\ref{lem:no inf quasi additivity}.
\end{proof}

\subsubsection{Proof of Calibration Lemma} We now fix a rectangle $\RR_s$ from Claim~\ref{cl4:RR_j}. We assume that most of the curves (up to $O(K)$) in $\RR_s$ land at $T_{s+1}$; the case of $T_{s-1}$ is similar. By removing $O( K)$ buffers from $\RR_s$, we obtain the new $\RR^\new_s$ with
\begin{equation} 
\label{eq:CL:prf:RR_s}
 \partial^{h,0}\RR_s^\new\subset T_s,\sp\sp  \partial^{h,1}\RR_s^\new\subset T_{s+1}\sp\sp \dist(\partial ^{h,1}\RR^\new_s , a_{s+2})\ge 5 \length_{m+1}.
\end{equation}
Let $\PP,\PP_1$ be the restrictions (see~\S\ref{sss:RestrOfFamil}) of $\RR^\new_s$ onto $\wZ^{m+1}$ and $f^{\qq_{m+1}}(\wZ^{m+1})$  respectively. Subrectangles $\PP_J$ of $\PP$ are defined in the same way as subrectangles $\RR_{i,J}$ for $\RR_i$.

\begin{claim4}[See Figure~\ref{Fig:cl4:1+1=1}]
\label{cl4:1+1=1}There is an interval $J\subset \partial ^{h,0} \PP$ well-grounded rel $\wZ^{m+1}$ with $\Width(\PP_{J})\ge \bchi^{0.9}K$ such that $|\partial^{h,1} \PP_{J} |\le \length_{m+1}/5$.
\end{claim4}
\begin{proof}Let $\PP_-$ and $\PP_+$ be the lifts of $\PP_1$ under $f^{\qq_{m+1}}$ such that $\PP_-$ starts in $T^{m+1}_s\boxplus \theta_{m+1}$ while $\PP_+$ lands $T^{m+1}_{s+1}\boxplus \theta_{m+1}$, where $T^{m+1}_s, T^{m+1}_{s+1}$ are the projections of $T_s, T_{s+1}$ onto $\partial \wZ^{m+1}$. Assume a required interval $J$ does not exist. Combining this assumption with Claim~\ref{cl4:A for RR_i}, we can remove $O(\bchi^{0.9}K)$ buffers from $\PP$ such that $\PP^\new$ consequently overflows $\PP_-$ and $\PP_+$.  This contradicts the Gr\"otzsch inequality:
\[\bchi K -O(\bchi^{0.9}K ) =\Width(\PP) -O(\bchi^{0.9}K )\le \Width(\PP_-) \oplus \Width(\PP_+)= 0.5 \bchi K.\] 

By removing $O(K)$-buffers from $\PP_J$, we can assume that $J$ is well-grounded.
\end{proof}

\begin{figure}[t!]
\[\begin{tikzpicture}[scale=1.4]

\draw[fill=black, fill opacity=0.1] (0,0) -- (1,1.5 )--(0,3)--(-1,1.5)--(0,0);

\node[above,red] at (-1.2,1.8) {$\PP $};

\node[red,below] at(4,0){$\partial^{h,1} \PP$}; 
\node[red,below] at(-4,0){$\partial^{h,0} \PP $};

\draw (-5.2,0) -- (6,0);

\draw[red,line width=1mm] (5,0)--(3,0)
(-5,0)--(-3,0);

\draw [red] (5,0)
 .. controls (3.5, 3) and (-3.5,3) ..
 (-5,0);

\draw [red] (3,0)
 .. controls (1.8, 1.1) and (-1.8,1.1) ..
 (-3,0);
 
 \draw[blue,dashed] (-4.6,0) 
 .. controls (-3.5, 1.4) and (-1.5,1.9) ..
 (-1+0.3,1.5+0.45);

   \draw[blue,dashed] (-2.6,0) 
 .. controls (-1.5, 0.5) and (-1,0.6) ..
 (-0.4, 0.6);

 \draw[blue,dashed] (5.4,0) 
 .. controls (4.5, 2) and (1.5,2.45) ..
 (1-0.7,1.5+1.05);

  \draw[blue,dashed] (3.4,0) 
 .. controls (2.5, 0.8) and (1,1) ..
 (0.7, 1.05); 
 
  \draw[blue, line width=0.6 mm] (-1+0.3,1.5+0.45) --(-1,1.5)--(-0.4, 0.6)
   (1-0.7,1.5+1.05)--(1,1.5)-- (0.7, 1.05);

\node[blue] at (-3,0.6){$\PP_-$};
\node[blue] at (3.2,0.9){$\PP_+$};
\end{tikzpicture}\]
\caption{Illustration to Claim~\ref{cl4:1+1=1}. If $\PP$ essentially overflows its univalent pullbacks $\PP_\pm$, then ${\Width(\PP_-)\oplus \Width(\PP_+)\ge \Width(\PP)-O(\bchi^{0.9}K)}$.}
\label{Fig:cl4:1+1=1}
\end{figure}

Let $\wZ_{s+1}$ be the pseudo-bubble (see~\S\ref{ss:Ps-bubles}) around $Z_{s+1}$ such that $f^{\qq_{m+1}}$ maps $\wZ_{s+1}$ onto $f^{\qq_{m+1}}\big(\wZ^{m+1}\big)$. We denote by $N^{m+1}_{s+1}$ the projection of $N_{s+1}$ onto $\partial \wZ_{s+1}$.

Consider an interval $J$ from Claim~\ref{cl4:1+1=1}. Write $X\coloneqq \partial^{h,1}\PP^1_{J}$, where $|X|\le \length_{m+1}/5$, and let $X^*\subset N^{m+1}_{s+1}$ be the lift of $X$ under $f^{\qq_{m+1}}$. We denote by $\PP^*_J,\  \partial^{h,1}\PP^*_J=X^*$ the full lift of $\PP_{1,J}$ under $f^{\qq_{m+1}}$.

By Claim~\ref{cl4:RR is invar}, we can remove $O(K)$ buffers from $\PP_J$ so that the new rectangle $\PP^\new_J$ overflows $\PP^*_J$. Let us denote by $V\subset X^*$ the subinterval of $X^*$ between the first intersections of $\partial^{v, \ell}\PP_J^\new, \partial^{v, \rho}\PP_J^\new$ with $X^*$. Set 
\begin{itemize}
\item $\Fam_1$ to be the restriction (see \S\ref{sss:short subcurves}) of $\Fam(\PP^\new_J)$ to $\Fam(\partial ^{h,0} \PP^\new_J, V)$; 
\item $\Fam_2$ to be the restriction of $\Fam(\PP^\new_J)$ to $\Fam(V,\partial ^{h,1} \PP^\new_J)$.
\end{itemize}
In other words:
\[ \Fam_1=\{\gamma_1\mid \gamma \in \Fam(\PP^\new_J)\}\sp\sp\text{ and }\sp\sp \Fam_2=\{\gamma_2\mid \gamma \in \Fam(\PP^\new_J)\},\]
where $\gamma_1$ and $\gamma_2$ are the shortest subarcs of $\gamma$ connecting $\partial ^{h,0} \PP^\new_J, V$ and $V,\partial ^{h,1} \PP^\new_J$ respectively. We remark that $\gamma_1$ lands at $V^+$ while $\gamma_2$ starts in $V^-$.


\begin{claim4}
We have $\Width(\Fam_2)\ge t^{1.7}K$.
\end{claim4}
\begin{proof}
By the Gr\"otzsch inequality $\Width(\PP^*_J) \oplus  \Width(\Fam_2)\ge \Width(\Fam_1) \oplus  \Width(\Fam_2)  \ge  \Width(\PP^\new_J)$, we have: \[\Width(\PP^*_J) \oplus  \Width(\Fam_2) = \bchi^{0.9} K \oplus \Width(\Fam_2)\ge  \Width(\PP^\new_J) =\bchi^{0.9}K -O(K),\]

\[ \frac{1}{ \bchi^{0.9} K}+ \frac{1}{ \Width(\Fam_2)} \le  \frac{1}{\bchi^{0.9}K -O(K)},\]
\[\Width(\Fam_2) \ge \frac{\bchi^{1.8}K^2}{ O(K)} \ge \bchi^{1.7}K\]
because $\bchi\gg 1$.
\end{proof}

Consider the rectangle $\FamG$ bounded by the leftmost and rightmost curves of $\Fam_2$:
\[ \FamG, \sp \partial^{h,0}\FamG\subset V^-, \sp \partial^{h,1}\FamG\subset \partial^{h,1} \PP^\new_J,\sp\sp  \Width(\FamG)\ge \Width(\Fam_2)\ge \bchi^{1.7}K.\]

 Applying Lemma~\ref{lem:rect rhrough PB} to $\FamG$, we obtain an interval $B\subset [(1+\lambda^{-2})V]\setminus V\subset \partial \wZ_{s+1}$ together with a lamination \[\FamQ\subset  \wC\setminus \intr \wZ_{s+1},\sp\sp\Width(\FamQ)\succeq \bchi^{1.7} K \] from $B$ to either $\big [ (\lambda B)^c\big]^\grnd\subset \partial \wZ_{s+1}$ (Case~\ref{c2:lem:rect rhrough PB}) or to $\partial ^{h,1}\PP^\new_J$ (Case~\ref{c1:lem:rect rhrough PB}). In both cases, $\FamQ$ is disjoint from $\intr \wZ^m$ as a restriction of a sublamination of $\PP^\new_J$. Write $N_{s+1}^{m+1}=[a_{s+1},b_{s+1}]$, where $a_{s+1}\in \partial Z$, see Figure~\ref{Fg:CalibrLmm1}. Observe that 
\begin{equation}
 \label{eq:CL:dist:B}
\dist_{\partial \wZ_{s+1}}(B, b_{s+1})\ge 4\length_{m+1}, \sp\sp \dist_{\partial \wZ_{s+1}\cup \partial \wZ^{m+1}}(B, \partial^{h,1}\PP^\new_J)\ge \length_{m+1}/5. 
\end{equation}
 Indeed, the first inequality follows from~\eqref{eq:CL:prf:RR_s} because $f^{\qq_{m+1}}(b_{s+1})=a_{s+2}$. The second inequality follows from the observation that if $X^*, |X^*|\le \length_{m+1}/5$ is close to $a_{s+1}$, then $X$ will be close to $a_{s+1}\boxplus \theta_{m+1}$.
 
 For every $\gamma\in \FamQ$, let $\gamma'$ be the first subarc of $\gamma$ connecting $B$ to $\partial \filled_m$. By \S\ref{sss:univ push forw} and~\eqref{eq:CL:dist:B}, $f^{\qq_{m+1}}$ injectively maps most of the $\{\gamma'\mid \gamma\in \FamQ\}$ into a sublamination of $\Fam\left(B_2, \big[(\lambda B_2)^c\big]^\grnd\right)$, where $B_2=f^{\qq_{m+1}}(B)\subset \partial f^{\qq_{m+1}}(\wZ^{m+1})$ is a grounded interval. By Lemma~\ref{lem:W+:ground inter}, the projection $B_2^\bullet$ of $B_2$ onto $\partial Z$ satisfies 
 \[ \Width_{\lambda}(B^\bullet_2)\succeq \bchi^{1.7} K \ge \bchi^{1.5} K;\]
 which is Case~\ref{Cal:Lmm:Concl:2} of the Calibration Lemma.

\part{Conclusions}
\label{part:conclus}

\section{Proof of the main result}
\label{s:MainThms}
Recall that we are considering eventually golden-mean rotation numbers:
\begin{equation}
\label{eq:cond on theta}
\theta=[0;a_1,a_2,\dots]\sp\sp \text{ with }\sp a_n=1\sp\sp\text{ for all }\sp n\ge \bbn_\theta.
\end{equation}

In this section we will establish the following results:

\begin{thm}
\label{mainthm:v1}
There is an absolute constant $\bK\gg 1$ such that $\Width^+_3(I)\le \bK$ for every combinatorial interval $I\subset \partial Z$ and every $\theta$ satisfying~\eqref{eq:cond on theta}. 
\end{thm}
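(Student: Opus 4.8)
The plan is to argue by contradiction, turning the amplification results of Part~\ref{part:CovCalibr} --- the Amplification Theorem~\ref{thm:SpreadingAround}, the Calibration Lemma~\ref{lmm:CalibrLmm}, and the regularization Corollary~\ref{cor:regul} --- into a self-improving loop whose gain at each pass is a fixed absolute factor. Suppose the assertion fails. Fix the absolute constant $\bbt$ large (depending only on the absolute loss constants of Lemmas~\ref{lem:SpredAroundWidth},~\ref{lem:Hive Lemma} and the Calibration Lemma), and with it $\blambda=\blambda_{\bbt}$, $\bK=\bK_{\bbt}$ from Theorem~\ref{thm:SpreadingAround}. By the Splitting Argument it suffices to derive a contradiction from the existence of a combinatorial interval $I_0\subset\partial Z$, of level $m_0$, with $\Width^+_{\blambda}(I_0)=:K_0\ge\bK$, for some eventually golden-mean $\theta$. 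After harmless reductions --- the levels with $|I|>|\theta_0|/(2\blambda)$ form an absolute-size set by~\eqref{eq:length decrease}, and for these either $(3I)^c=\emptyset$ (so $\Width^+_3(I)=0$) or $I$ is subsumed in the descent below --- we may assume the hypotheses of Theorem~\ref{thm:SpreadingAround} hold at $(\wZ^{m_0},I_0)$ with $\wZ^{m_0}=\overline Z$, the trivial pseudo-Siegel disk (geodesic for the present purposes; cf.\ the proof of Corollary~\ref{cor:bds:deep case}, where Theorem~\ref{thm:SpreadingAround} is already applied with $\overline Z$).

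I would then construct, by induction on $j$, an increasing sequence of levels $m_0<m_1<m_2<\cdots$, geodesic $\bdelta$-pseudo-Siegel disks $\wZ^{m_j}$, and combinatorial intervals $I_j$ of level $m_j$ grounded rel $\wZ^{m_j}$, with $\Width^+_{\blambda}(I_j)=:K_j$ and $K_{j+1}\ge 2K_j$. The step from $j$ to $j+1$ has two stages. First, Theorem~\ref{thm:SpreadingAround} applied to $(\wZ^{m_j},I_j)$ yields a grounded-rel-$\wZ^{m_j}$ interval $J$ with $\Width^+_{\blambda}(J)\ge\bbt K_j$ and $|J|\le|I_j|$. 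Second, and substantively, one must promote $J$ to a combinatorial interval on a strictly deeper renormalization scale, together with a geodesic pseudo-Siegel disk at that scale: one locates the diffeo-tile $T\in\Dbb_n$, $n>m_j$, carrying a definite share of the wide family of $J$; when that family is external one invokes Theorem~\ref{thm:par fjords} to encase it in a non-winding parabolic rectangle $\RR$ based on $T'$ and runs Corollary~\ref{cor:regul}, and when it dives into $\filled_n$ one invokes the Calibration Lemma~\ref{lmm:CalibrLmm} directly. Each time Corollary~\ref{cor:regul} returns outcome~(1) the pseudo-Siegel disk is extended; each time it returns outcome~(2) or~(3) the exponential boost is cashed in --- outcome~(2) feeding a diving-wide family into the Calibration Lemma (Remark~\ref{rem:thm:regul}) --- to extract either a combinatorial interval one scale deeper whose $\Width^+_{\blambda}$ is the exponential of the current width, or, recursively, a still wider grounded interval. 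Because the boosts are super-exponential while every loss incurred (the covering degrees of Lemma~\ref{lem:SpredAroundWidth}, the Lair constants of Lemma~\ref{lem:Hive Lemma}, the calibration constant $\bchi$) is a fixed absolute multiple, the net outcome is a combinatorial $I_{j+1}$ of level $m_{j+1}>m_j$, grounded rel a geodesic $\wZ^{m_{j+1}}$, with $K_{j+1}\succeq\bbt K_j$ and hence $K_{j+1}\ge 2K_j$.

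The feature that makes the loop close is that no multiplicative constant in it depends on $\theta$, on $j$, on $m_j$, or --- crucially --- on the small parameter $\bdelta$: this last independence is precisely the non-accumulation-of-errors phenomenon of Remark~\ref{rem:error does not accum}, which rests on the beau coarse-bounds of Theorems~\ref{thm:beau:part U: c quasi line} and~\ref{thm:wZ:shallow scale}. Granting it, $K_j\ge 2^{\,j}K_0\to\infty$ while $m_j\to\infty$; once $m_j\ge\max\{\bbn_\theta,\bbn\}$ we have $|I_j|\le\length_{\max\{\bbn_\theta,\bbn\}}$, so Corollary~\ref{cor:bds:deep case} forces $\Width^+_3(I_j)\le\bK<K_j\le\Width^+_3(I_j)$ (using $\Width^+_3\ge\Width^+_{\blambda}$), a contradiction. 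Equivalently, $\overline Z$ being a $K_\theta$-quasidisk bounds $\sup_I\Width^+_{\blambda}(I)<\infty$, which again rules out $K_j\to\infty$. This proves Theorem~\ref{mainthm:v1}; the Uniform Bounds Theorem for arbitrary bounded type is then obtained from this eventually golden-mean case.

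The main obstacle is the promotion stage. Turning the merely grounded interval handed back by the Amplification Theorem into a combinatorial interval genuinely one (or more) renormalization level deeper, paired with a geodesic pseudo-Siegel disk at that level, while charging every loss to an absolute constant, forces one to interleave the construction of the pseudo-Siegel disk via Corollary~\ref{cor:regul} --- whose ``bad'' outcomes must be certified self-defeating, since they boost the width super-exponentially --- with repeated appeals to the Calibration Lemma, and to check that the $\bdelta$-dependent scale on which the Welding Lemma operates is always reached, because after an absolute number of steps the intervals in play have length $\gg_{\bdelta}\length_m$. Everything else --- the initial reductions, the bookkeeping of the geometric growth, and the final contradiction --- is routine once this core loop is in place.
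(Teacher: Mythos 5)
Your high-level scheme (iterate amplification to get widths $2^jK\to\infty$ and contradict the deep-scale/quasidisk bound) matches the paper's informal outline, but the paper's actual proof is organized in the \emph{opposite} direction, and that reversal is not cosmetic: Lemma~\ref{lem:ind:mainthm:v1} inducts from the deep golden-mean tail (base case Corollary~\ref{cor:bds:deep case} at level $\bbn_\theta$) toward shallow scales, simultaneously maintaining three statements at every level -- (a) $\wZ^m$ is built so that $\Width^+_{\blambda,\ext,m}$ of grounded intervals is $O(\sqrt{\bK})$, (b) grounded intervals of size between $\length_{m+1}$ and $\length_m$ have $\Width^+_{\blambda}\le 2\bchi\bK$, (c) level-$m$ combinatorial intervals have $\Width^+_{\blambda}\le\bK$ -- and the induction hypothesis at deeper levels is exactly what certifies that Case~(1) of Corollary~\ref{cor:regul} occurs (the exponential boost of Cases~(2)--(3) would contradict (b)/(c) already established one level down) and what turns the outputs of the Amplification and Calibration Lemmas into contradictions. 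Your ``promotion stage'' has a circularity precisely here: to replace the grounded interval $J$ returned by Theorem~\ref{thm:SpreadingAround} by a combinatorial interval on a deeper level you need $\wZ^{m_j}$ to have already absorbed the wide non-winding parabolic rectangles at \emph{all} levels $\ge m_j$ (otherwise the width of $J$ may sit entirely inside a fjord, where neither the Calibration Lemma nor any combinatorial interval captures it); whether regularization succeeds at those deeper levels cannot be certified without width bounds at still deeper levels, which in your top-down descent have not been established; and when regularization \emph{does} succeed it produces no width gain at all -- it only improves the pseudo-Siegel disk, after which the amplification must be redone at the same level -- so the claimed inequality $K_{j+1}\ge 2K_j$ at every pass is not justified. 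This is the missing core of the argument, not a routine interleaving.

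A second, smaller gap: your opening reduction ``by the Splitting Argument it suffices to bound $\Width^+_{\blambda}$ of combinatorial intervals'' is not valid as stated, since $\Width^+_3(I)\ge\Width^+_{\blambda}(I)$ goes the wrong way. In the paper the passage from the $\Width^+_{\blambda}$-bound on combinatorial intervals to the $\Width^+_3$-bound (\S\ref{sss:PrfOf mainthm:v1}) splits $I$ into $2\blambda$ deeper combinatorial intervals \emph{plus a central grounded piece}, and that central piece is controlled by the Calibration Lemma together with the grounded-interval estimates of Lemmas~\ref{lem:ind:mainthm:v1}--\ref{lem:ind:mainthm:shallow case}; so this step also relies on the full inductive package (including the finitely many shallow levels with degraded constants $(2\bchi)^{\bbs_\theta-m}\bK$), not on splitting alone.
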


\begin{thm}
\label{mainthm:v2}
There are absolute constants $\bN\gg 1$ and $\bK\gg 1$ such that for every $\theta$ satisfying~\eqref{eq:cond on theta}, there is a sequence of geodesic pseudo-Siegel disks~\S\ref{sss:regul within rect}
\[  \wZ^{\bbn_\theta} =\overline Z,\sp  \wZ^{\bbn_\theta-1} , \sp \wZ^{\bbn_\theta-2},\sp \dots, \sp \wZ^{-1}=\wZ \]
satisfying the following properties. If $I\subset \partial Z$ is a regular interval rel $\wZ^m$ with  $\length_{m+1}\le|I|\le\length_{m}$, then 
\begin{equation}
\label{eq:mainthm:v2}
 \text{either }\sp\sp |I|\ge \length_{m}/2\sp\sp\sp\text{ or }\sp\sp\sp |I|\le \bN \length_{m+1}
\end{equation}
and $\Width^{+}_3(I)\le \bK$.  
\end{thm}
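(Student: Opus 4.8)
The plan is to build the disks $\wZ^m$ by downward induction on $m$, carrying along the width bound as part of the inductive claim. The preliminary bookkeeping is to fix the constants in the right order: take $\bbn$ and an absolute bound $\bK_0$ from Corollary~\ref{cor:bds:deep case}; pick $\bbt\gg1$ and let $\blambda=\blambda_\bbt$, $\bK_1=\bK_\bbt$ be as in the Amplification Theorem~\ref{thm:SpreadingAround}; fix the small $\bdelta>0$ of Remark~\ref{rem:fixing deltas} so that Corollary~\ref{cor:regul} applies; then choose $\bK$ much bigger than $\max\{\bK_0,\bK_1\}$ and than the thresholds in Corollary~\ref{cor:regul} and the Calibration Lemma, and $\bN$ big enough to absorb both the ratio $\length_m/\length_{m+1}$ (which is $\le\phi<2$ in the eventually golden-mean regime) and the combinatorial-space constants of Assumption~\ref{ass:wZ:CombSpace}.

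For the base case I would set $\wZ^m:=\overline Z$ for every $m\ge\max\{\bbn_\theta,\bbn\}$: every interval is then regular, the alternative ``$|I|\ge\length_m/2$ or $|I|\le\bN\length_{m+1}$'' is automatic because $\length_m\le\bN\length_{m+1}$, and $\Width^+_3(I)\le\bK_0\le\bK$ by Corollary~\ref{cor:bds:deep case}. For the inductive step, assume a geodesic $\bdelta$-pseudo-Siegel disk $\wZ^{m+1}$ has been produced so that the conclusion holds at all levels $\ge m+1$. I would look at a diffeo-interval $T\in\Dbb_m$ and its subinterval $T'=T\cap f^{\qq_{m+1}}(T)$, and split according to whether a non-winding parabolic rectangle based on $T'$ of width exceeding the threshold $\gg_\bdelta1$ of Corollary~\ref{cor:regul} exists. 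If none exists, Theorem~\ref{thm:par fjords} together with Lemma~\ref{lmm:ext famil} bounds the outer geometry of $\overline Z$ above every $T'$, so $\wZ^m:=\wZ^{m+1}$ works. Otherwise spread one such wide rectangle $\RR$ around $\Dbb_m$ and feed it to Corollary~\ref{cor:regul}: in Case~(1) it returns exactly the desired geodesic $\bdelta$-pseudo-Siegel disk $\wZ^m=Z^m\cup\wZ^{m+1}$. Then Assumption~\ref{ass:wZ:CombSpace} forces the dichotomy: a regular interval of length $\le\length_m$ that contains a dam $\beta_i$ projects onto $\partial Z$ to a set containing $[x_i,y_i]$, hence has length $\ge\tfrac45\length_m>\tfrac12\length_m$, while one containing no dam lies in a single peninsula coast, so it is a regular interval of $\wZ^{m+1}$ controlled by the inductive hypothesis; the width bound for the new regular intervals follows from Lemma~\ref{lem:W+:ground inter} combined with that same hypothesis.

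The crux is to rule out Cases~(2) and~(3) of Corollary~\ref{cor:regul}, the exponential boosts. Assume $\Width(\RR)\ge\bK$. Case~(3) immediately yields an interval $I\subset\partial Z$ grounded rel $\wZ^{m+1}$ with $|I|\le\length_{m+1}$ and $\log\Width^+_\lambda(I)\succeq\Width(\RR)\ge\bK$; Case~(2) yields an interval on which $\Width^+_{\lambda,\div,m}\succeq\Width(\RR)$, and the Calibration Lemma~\ref{lmm:CalibrLmm} converts it either into a $[K,\lambda]^+$-wide level-$(m+1)$ combinatorial interval, or (alternative~(\RN{2}) there) again into a grounded interval on scale $<\length_{m+1}$ with width $\succeq\bchi^{1.5}K$. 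In each sub-case one lands at a strictly deeper combinatorial scale with astronomically large outer width; splitting produces a combinatorial subinterval whose $\Width^+_3$ is enormous, and the Amplification Theorem~\ref{thm:SpreadingAround} (applied to the already-built $\wZ^{m+1}$, or to $\overline Z$) boosts this by a factor $\bbt$ while keeping the combinatorial length non-increasing. Iterating gives intervals with non-increasing length and $\Width^+_3\to+\infty$, impossible because $\overline Z$ is a (non-uniform) quasidisk; together with the absolute base bound of Corollary~\ref{cor:bds:deep case} this forces Case~(1) and the absoluteness of $\bK$. Theorem~\ref{mainthm:v1} then drops out by covering an arbitrary combinatorial interval by two combinatorial intervals with an endpoint in $\CP_m$, which are regular rel $\wZ^m$.

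The other technical point I would have to be careful about is the uniformity of $\bdelta$: the regularization $\wZ^{m+1}\leadsto\wZ^m$ must return a $\bdelta$-pseudo-Siegel disk with the \emph{same} $\bdelta$, for otherwise $\bdelta$ would deteriorate at every step and the induction would collapse after finitely many levels. This is exactly what Remark~\ref{rem:error does not accum} provides, resting on the $\bdelta$-independent beau bounds of Theorems~\ref{thm:beau:part U: c quasi line} and~\ref{thm:wZ:shallow scale} feeding into the Welding Lemma inside Corollary~\ref{cor:regul}. Thus the main obstacle is the descent bookkeeping of the previous paragraph --- coordinating the Amplification Theorem, the Calibration Lemma, and Corollary~\ref{cor:regul} into a single self-improving dichotomy and checking that every boost genuinely migrates to a strictly deeper combinatorial level --- combined with maintaining this uniform $\bdelta$ through all the regularizations.
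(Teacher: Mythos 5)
Your induction essentially reproduces the paper's proof of Theorem~\ref{mainthm:v1}, but it misses the ingredient that is specific to Theorem~\ref{mainthm:v2}: a reason why an \emph{absolute} $\bN$ exists. Your parenthetical that $\length_m/\length_{m+1}$ is bounded by the golden ratio is only true at levels $m\ge \bbn_\theta$; at levels $m<\bbn_\theta$ this ratio is governed by the corresponding partial quotient and can be arbitrarily large, and that is exactly the regime the theorem addresses. In your branch ``no wide parabolic rectangle exists, set $\wZ^m:=\wZ^{m+1}$'' nothing prevents $\length_m/\length_{m+1}\gg \bN$, in which case there exist regular rel $\wZ^m$ intervals of every intermediate length (e.g.\ with endpoints in $\CP_{m+1}$), and~\eqref{eq:mainthm:v2} simply fails. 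In the branch where the regularization does occur, Assumption~\ref{ass:wZ:CombSpace} only gives that the complementary pieces $[a_i,x_i]$, $[y_i,a_{i+1}]$ of a tile have length at most about $\tfrac15\length_m$, which is far from $\bN\length_{m+1}$; and the inductive hypothesis at level $m+1$ says nothing about regular intervals whose length lies between $\length_{m+1}$ and $\tfrac15\length_m$ inside a coast, so the dichotomy does not follow from your argument either.

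The paper's proof supplies exactly this missing piece through Lemma~\ref{lem:est for bN}: for the dyadically spaced pairs $X^k,Y^k$ near the endpoints of $T'$ one has the \emph{lower} bound $\Width^+_{\ext,m}(X^k,Y^k)\succeq_{\bK}1$ (proved by estimating the dual family via Theorem~\ref{thm:par fjords}, the Calibration Lemma, and the already established bound for combinatorial intervals). By the Parallel Law, if $\length_m/\length_{m+1}\ge \bN/2$ then the union of $\bM\asymp\log_2\bN$ such pairs carries width $\gg\bK$, hence contains an external parabolic rectangle of width $\sqrt{\bK}$ based within distance $O\big(\bN^{1/10^3}\big)\length_{m+1}$ of the tile endpoints; so the regularization is \emph{forced} to happen at level $m$, and it happens within the orbit of such a rectangle, anchoring the dams within $O(\bN)\length_{m+1}$ of the points $a_i$. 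It is this anchoring, not Assumption~\ref{ass:wZ:CombSpace} alone, that yields~\eqref{eq:mainthm:v2} with a universal $\bN$; and the bound $\Width^+_3(I)\le\bK$ for \emph{regular} (rather than grounded) intervals is then obtained because, thanks to the dichotomy, $I$ differs from a grounded subinterval by at most $2\bN$ level-$(m+1)$ combinatorial intervals, each controlled by Theorem~\ref{mainthm:v1}, and one increases $\bK$ accordingly. None of this appears in your proposal, so as written it has a genuine gap.
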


\subsubsection{Outline and motivation}\label{sss:outline prd of main res} Let us note that Theorem~\ref{mainthm:v1} implies Theorem~\ref{thm:unform bounds}. Indeed, for every $\theta=[0;a_1,a_2,a_3, \dots]$, $|a_i|\le M_\theta$ of bounded type, define the approximating sequence
\[\theta_{n}\coloneqq [0;a_1,a_2,\dots, a_n,1,1,1,\dots]\to \theta.\]
Since $Z_{\theta_n}\to Z_{\theta}$ as qc disks (with dilatation depending on $M_\theta$), the estimate $\Width^+_3(I)\le \bK$ in Theorem~\ref{mainthm:v1} also holds for $Z_\theta$, where $\bK>1$ is independent of $M_\theta$.

We will prove Theorem~\ref{mainthm:v1} by induction from deep to shallow levels as follows. We will show that there are $\blambda \gg 1$ and $\bK\gg_\blambda 1$ such that the following properties hold for every level $m$:
\begin{enumerate}[label=\text{(\alph*)},font=\normalfont,leftmargin=*]
\item Existence of a geodesic pseudo-Siegel disk $\wZ^m$ so that
\[\Width^+_{\blambda,\ext, m} (I) =O(\sqrt \bK)\]
for every interval $I$ grounded rel $\wZ^m$ with $\length_{m+1}\le |I|\le \length_m$. (In other words, $\wZ^m$ consumes all but $O(\sqrt{\bK})$-external $\lambda$-separated rel $m$ families.)
\label{eq:case1:main prf}
\item $\Width_\blambda^+(I)\le (2\bchi) \bK$ for every grounded rel $\wZ^m$ interval $m$ with $|I|\le \length_m$, where $\bchi$ is the constant from Calibration Lemma~\ref{lmm:CalibrLmm}.\label{eq:case2:main prf}
\item $\Width_\blambda^+(I)\le  \bK$ for every combinatorial interval $I$ of level $\ge m$.\label{eq:case3:main prf}
\end{enumerate}

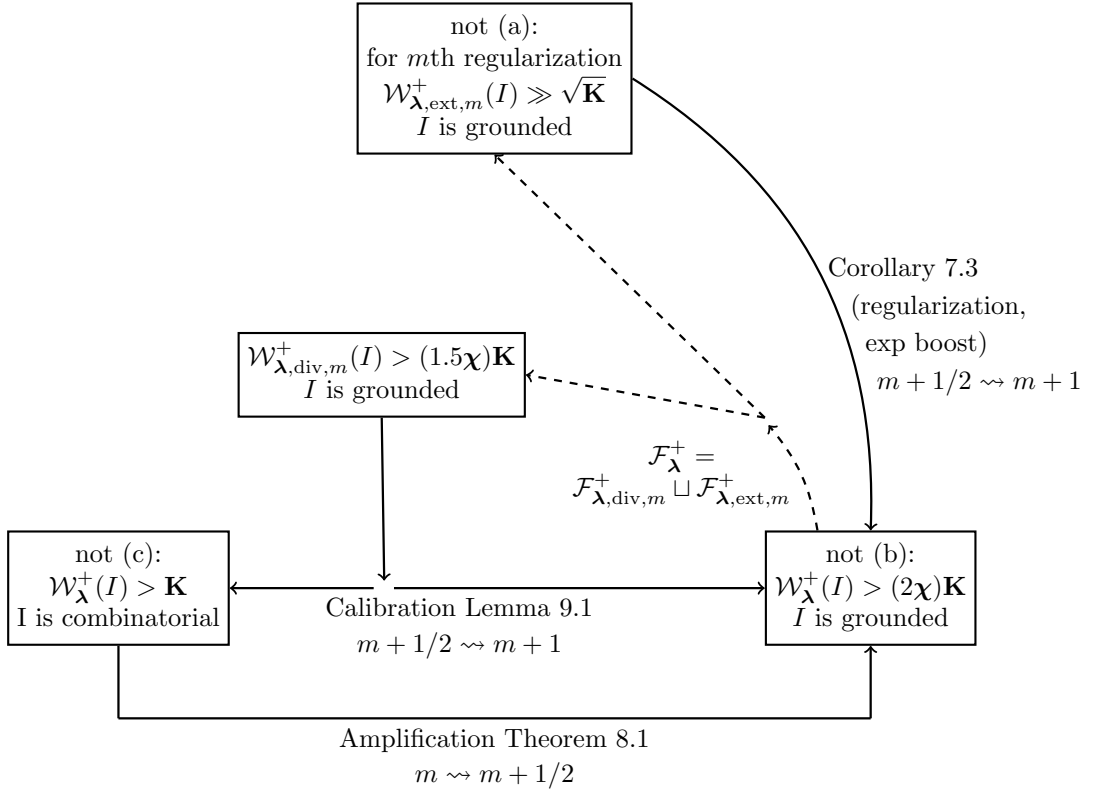
\begin{figure}[t!]

\[
\begin{tikzpicture}[line width=0.3mm]

\draw
         node[below] at (0,0)[block] (comb) {$\begin{matrix}
                 \text{not~\ref{eq:case3:main prf}:}
               \\ \Width^+_\blambda (I)> \bK
         \\
        $I$\text{ is combinatorial}
        \end{matrix}$};

               \coordinate (grnd) at (10,0);

     \draw
         node[below](Gr) at (grnd)[block]  {$\begin{matrix}
         \text{not~\ref{eq:case2:main prf}:}
         \\
          \text{$\Width_\blambda^+(I)> (2\bchi)\bK$} 
         \\
          \text{$I$ is grounded} 
        \end{matrix}$};

       \coordinate (regul) at (5,5);
        \draw
         node[above]  at (regul) [block] (Reg) {$\begin{matrix}
         \text{not~\ref{eq:case1:main prf}:}\\
        \text{for the $m$th regularization} 
         \\ \Width^+_{\blambda, \ext,m} (I)\gg \sqrt\bK
         \\
          \text{$I$ is grounded}             
          \end{matrix}$};

       \coordinate (diving) at (5-1.5,1.5);
        \draw
         node[above]  at (diving) [block] (Div) {$\begin{matrix}
           \text{$\Width_{\blambda,\div,m}^+(I)> (1.5\bchi)\bK$} 
         \\
          \text{$I$ is grounded} 
          \end{matrix}$};

       \draw (Reg.east) edge[bend left,->] node[right]{Corollary~\ref{cor:regul}} node[right,shift={(0.3,-0.5)}]{(regularization),} 
node[right,shift={(0.5,-1)}]{exp boost for} 
   node[right,shift={(0.65,-1.5)}]{$m+1/2 \leadsto m+1$}  (grnd);

    \draw  (comb.south) -- (0,-2.5);
     
    \draw (0,-2.5) edge node[below]{\text{Amplification Theorem~\ref{thm:SpreadingAround}}}
    node[below,shift={(0,-0.45)}]{$m \leadsto m+1/2$} (10,-2.5); 
          
     \draw (10,-2.5) edge[->] (Gr.south);
     
 \draw ([shift={(0.7,0)}]Gr.north west) edge[dashed, ->, bend right=15] node[left]{$\begin{matrix} \Fam_\blambda^+=\\ \Fam^+_{\blambda,\div,m}\sqcup \Fam^+_{\blambda,\ext,m}\end{matrix}$} ([shift={(0.05,1.4)}]Gr.north west);     
 
 \draw ([shift={(0,1.5)}]Gr.north west) edge[dashed, ->] (Reg.south);
 \draw ([shift={(0,1.5)}]Gr.north west) edge[dashed, ->] (Div.east);

\draw (Div.south) edge[->]([shift={(-3.56-1.5,0.1)}]Gr.west);

 \draw  ([shift={(-4.93,0.)}]Gr.west) edge[->]  (Gr.west);
\draw  ([shift={(-5.2,0.)}]Gr.west) edge[->]  (comb.east);

\node[below] at ([shift={(-3.56-0.5,0.)}]Gr.west) {Calibration Lemma~\ref{lmm:CalibrLmm}};
\node[below] at ([shift={(-3.56-0.5,-0.45)}]Gr.west) {$m+1/2 \leadsto m+1$};


\end{tikzpicture}
\]

\caption{Statements~\ref{eq:case1:main prf},~\ref{eq:case2:main prf}, and~\ref{eq:case3:main prf} are proved by contradiction: if one of the statements is violated on levels $m$ or $m+1/2$, then there will be even bigger violation on deeper scales. Here ``$m$'' indicates level $m$ combinatorial intervals, ``$m+1/2$'' indicates intervals with $\length_m\ge |I|\ge \length_{m+1}$, and ``$m+1$'' indicates intervals with $|I|\le \length_{m+1}$. The dashed arrows illustrate the decomposition $\Fam_\blambda^+(I)=\Fam^+_{\blambda,\div,m}(I)\sqcup \Fam^+_{\blambda,\ext,m}(I)$.}
\label{fig:main pattern}
\end{figure}


The proof of the induction step is illustrated on Figure~\ref{fig:main pattern}: 

\begin{itemize}
\item If a pseudo-Siegel disk $\wZ$ can not be constructed to satisfy Statement~\ref{eq:case1:main prf}, i.e.~to absorb all but $O(\sqrt{\bK})$-external rel $m$ families, then by the exponential boost in Corollary~\ref{cor:regul} there will be a degeneration of order $a^{\sqrt{\bK}} \gg (2\bchi)\bK$ on levels $\ge m+1$, where $a>1$ is fixed.

\item If Statement~\ref{eq:case2:main prf} is violated, then it follows from $\Fam_\blambda^+(I)={\Fam^+_{\blambda,\div,m}(I)\sqcup \Fam^+_{\blambda,\ext,m}(I)}$ that either $\wZ^m$ was not properly constructed, i.e.~the violation of Statement~\ref{eq:case1:main prf} with $\length _{m}\ge |I|\ge\length_{m+1}$, or there is a diving degeneration of order $> (1.5 \bchi)\bK$.

\item If there is a diving degeneration of scale $> (1.5 \bchi)\bK$ with $\length _{m}\ge |I|\ge\length_{m+1}$, then by Calibration Lemma~\ref{lmm:CalibrLmm}, either Statement~\ref{eq:case3:main prf} or Statement~\ref{eq:case2:main prf}  is violated on levels $\ge m+1$.

\item If Statement~\ref{eq:case3:main prf} is violated, then by Amplification Theorem~\ref{thm:SpreadingAround}, Statement~\ref{eq:case2:main prf} is violated with $|I|< \length_m$. 
\end{itemize} 
 
 After the induction, there might still be finitely many renormalization levels where Amplification Theorem~\ref{thm:SpreadingAround} is not applicable because of the condition $|I|\le |\theta_0|/(2\blambda_\bbt)$. The number of such levels is bounded in terms of $\blambda$; the estimates for these levels are established by increasing $\bK$.

Let us stress that regularizations on different levels do not interact much. Corollary~\ref{cor:regul}, Theorem~\ref{thm:SpreadingAround}, and Calibration Lemma~\ref{lmm:CalibrLmm}  are stated in terms of the outer geometry of the Siegel disk $Z$ with only indirect references to $\wZ^m$. Lemma~\ref{lem:W+:ground inter} implies that the outer geometries of $\wZ^m$ and $\overline Z$ are close rel grounded intervals independently of the number of regularizations. Our estimates for the inner geometry of $\wZ^m$ are also independent of the number of regularizations -- see the discussion in~\S\ref{sss:outline:s:NearRotatSystem} and in~\S\ref{sss:outline:s:Trad W to W+}.

To prove Theorem~\ref{mainthm:v2}, we will show that external families can not be unexpectedly narrow (Lemma~\ref{lem:est for bN}); otherwise, the dual family will be very wide -- contradicting the estimates established for Theorem~\ref{mainthm:v1}. Combined with the Parallel Law, this will imply the existence of the combinatorial threshold $\bN$. We will refer to $\bN$ in Theorem~\ref{mainthm:v2} as the \emph{high-type} condition. This a near-degenerate analogy of the high-type condition in the Inou-Shishikura theory~\cite{IS}.

\subsection{Proof of Theorem~\ref{mainthm:v1}}
We choose constants $\blambda, \bK$ and a parameter $\bbt$ from Theorem~\ref{thm:SpreadingAround} such that
\[ \bchi\ll  \bbt,\sp\sp\sp  \blambda\coloneqq \blambda_\bbt, \sp\sp \sp \bK_\bbt\ll \bK,\]
where $\bchi>1$ is the constant in Calibration Lemma~\ref{lmm:CalibrLmm} and $\blambda_\bbt,\bK_\bbt$ are from Theorem~\ref{thm:SpreadingAround}. Theorem~\ref{mainthm:v1} on levels $\ge \bbn_\theta$ follows from Corollary~\ref{cor:bds:deep case}. Let $\bbs=\bbs_\theta$ be the smallest so that $\length_\bbs\le |\theta_0|/(2\blambda+4)$. 

\begin{lem}[Induction]
\label{lem:ind:mainthm:v1}
There is a sequence of geodesic pseudo-Siegel disks 
\[  \wZ^{\bbn_\theta} =\overline Z,\sp  \wZ^{\bbn_\theta-1} , \sp \wZ^{\bbn_\theta-2},\sp \dots, \sp \wZ^{\bbs_\theta} \] 
with the following properties:
\begin{enumerate}[label=\text{(\Alph*)},font=\normalfont,leftmargin=*]
\item \label{C1:lem:ind:mainthm:v1} If $Z$ does not have an external level-$m$ parabolic rectangle of width $\sqrt{\bK}$, then $\wZ^m=\wZ^{m+1}$; otherwise $\wZ^m$ is the regularization of $\wZ^{m+1}$ constructed by Corollary~\ref{cor:regul}. Moreover, $\Width^+_{\blambda,\ext, m}(J)\le 2\sqrt {\bK}+O(1)$ for every interval $J\subset \partial Z$ grounded rel $\wZ^{m}$ with  $\length_{m+1}\le |J| \le \length_m$.

\item \label{C2:lem:ind:mainthm:v1}  For every interval $J\subset \partial Z$ grounded rel $\wZ^{m}$ with  $\length_{m+1}\le |J| \le \length_m$, we have $\Width^+_{\blambda}(J)\le 2\bchi \bK$.

\item \label{C3:lem:ind:mainthm:v1}  For every level-$m$ combinatorial interval $I\subset \partial Z$, we have $\Width^+_{\blambda}(I)\le \bK$. 
\end{enumerate}
\end{lem}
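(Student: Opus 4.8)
The plan is to prove Lemma~\ref{lem:ind:mainthm:v1} by downward induction on $m$, from $m=\bbn_\theta$ down to $m=\bbs_\theta$, following the scheme of Figure~\ref{fig:main pattern}. Before starting I would fix the absolute constants in the order they depend on one another: first $\bchi>1$ from Calibration Lemma~\ref{lmm:CalibrLmm}, then a large $\bbt\gg\bchi$, then $\blambda\coloneqq\blambda_\bbt$ and $\bK_\bbt$ from Amplification Theorem~\ref{thm:SpreadingAround}, then the small $\bdelta>0$ so that Corollary~\ref{cor:regul} applies (Remark~\ref{rem:fixing deltas}), and finally $\bK$ so large that an exponential-in-$\sqrt{\bK}$ degeneration dominates every polynomial-in-$\bK$ threshold below and $\bK\gg\bK_\bbt$, $\bK\gg_\blambda 1$. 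For the base case $m=\bbn_\theta$ I set $\wZ^{\bbn_\theta}=\overline Z$; then (A) holds in its first branch, and (B),(C) follow from Corollary~\ref{cor:bds:deep case} after the routine passage between $\Width^+_3$ and $\Width^+_\blambda$ via the Splitting Argument (Remark~\ref{rem:SplitArg}). In the inductive step I assume $\wZ^{m+1},\dots,\wZ^{\bbn_\theta}$ are constructed with (A)--(C), produce $\wZ^m$, and verify (A), then (B), then (C) at level $m$ (no circularity, since (A) at level $m$ uses nothing at level $m$, (B) at level $m$ uses (A) at level $m$ and (C) at level $m+1$, and (C) at level $m$ uses (B) at levels $\ge m$).

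\emph{Construction of $\wZ^m$ and (A).} If $Z$ has no external level-$m$ parabolic rectangle of width $\sqrt{\bK}$, put $\wZ^m\coloneqq\wZ^{m+1}$; otherwise let $\RR$ be the widest external level-$m$ parabolic rectangle, so $\Width(\RR)\ge\sqrt{\bK}\gg_\bdelta 1$, pass to a non-winding version (Lemma~\ref{lem:injecti of diffeo rect}), and feed it to Corollary~\ref{cor:regul}. The corollary yields case (1), a geodesic $\bdelta$-pseudo-Siegel disk $\wZ^m=Z^m\cup\wZ^{m+1}$ with its level-$m$ regularization within $\orb_{-\qq_{m+1}+1}\RR$, or an exponential boost (case (2): an interval of length $>\length_{m+1}$ with $\log\Width^+_{\blambda,\div,m}\succeq\sqrt{\bK}$; case (3): a grounded-rel-$\wZ^{m+1}$ interval of length $\le\length_{m+1}$ with $\log\Width^+_\blambda\succeq\sqrt{\bK}$). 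I rule out (2) and (3): in (3) the interval contains a comparable combinatorial subinterval of level $>m$ with $\Width^+_\blambda$ at least $e^{c\sqrt{\bK}}/O(1)\gg\bK$, contradicting (C) at that deeper level (or Corollary~\ref{cor:bds:deep case} if the level exceeds $\bbn_\theta$); case (2) reduces to the same situation through Calibration Lemma~\ref{lmm:CalibrLmm}, whose outputs are an $[e^{c\sqrt{\bK}},\cdot\,]^+$-wide level-$(m+1)$ combinatorial interval or an $[e^{c\sqrt{\bK}},\cdot\,]^+$-wide grounded interval shorter than $\length_{m+1}$, each handled as above. Hence Corollary~\ref{cor:regul} gives case (1) and $\wZ^m$ is the asserted regularization, again geodesic $\bdelta$-pseudo-Siegel. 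For the \emph{moreover} part of (A) I use Lemma~\ref{lmm:ext famil} and Theorem~\ref{thm:par fjords}: a grounded interval $J$ with $\length_{m+1}\le|J|\le\length_m$ has, by Assumption~\ref{ass:wZ:CombSpace}, $|J|\asymp\length_m$ with endpoints near $\CP_m$, so an external-rel-$m$ family of width $\gg\sqrt{\bK}$ based on $J$ would yield (Lemma~\ref{lmm:ext famil}) a wide external parabolic rectangle based on the enclosing $T\in\Dbb_m$ whose horizontal sides lie away from the core $T_\parab$ of the fjord; Theorem~\ref{thm:par fjords} bounds the external width of such a rectangle, except for one located at $T_\parab$ itself, which in the regularization case is precisely what the dams of $\wZ^m$ (placed ``within $\orb_{-\qq_{m+1}+1}\RR$'', $\RR$ being widest) neutralize, while in the non-regularization case the widest external parabolic rectangle has width $<\sqrt{\bK}$ by hypothesis. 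This gives $\Width^+_{\blambda,\ext,m}(J)\le 2\sqrt{\bK}+O(1)$.

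\emph{(B) and (C).} If (B) fails at level $m$, some grounded $J$ with $\length_{m+1}\le|J|\le\length_m$ has $\Width^+_\blambda(J)>2\bchi\bK$; splitting off the external part via Lemma~\ref{lmm:ext famil} and using (A) at level $m$, $\Width^+_{\blambda,\div,m}(J)\ge 2\bchi\bK-2\sqrt{\bK}-O(1)>1.9\bchi\bK$. Feeding this into Calibration Lemma~\ref{lmm:CalibrLmm} (with $L$ a suitable enlargement of $J$ so that its separation hypothesis holds with parameter $\blambda$, passing if necessary to a sub-configuration when $\length_m/\length_{m+1}$ is small) gives either an $[\gg\bK,\blambda]^+$-wide level-$(m+1)$ combinatorial interval, contradicting (C) at level $m+1$, or a $[\bchi^{1.5}\!\cdot 1.9\bK,\blambda]^+$-wide grounded-rel-$\wZ^{m+1}$ interval $I'$ with $|I'|<\length_{m+1}$; since $\bchi^{1.5}\!\cdot 1.9\bK\gg 2\bchi\bK$ and $I'$ is grounded rel $\wZ^{m'}$ for the level $m'\ge m+1$ with $\length_{m'+1}\le|I'|\le\length_{m'}$, this contradicts (B) at level $m'$ (or Corollary~\ref{cor:bds:deep case}). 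Finally, if (C) fails, a level-$m$ combinatorial interval $I$ has $\Width^+_\blambda(I)>\bK\ge\bK_\bbt$, and since $|I|=\length_m\le|\theta_0|/(2\blambda_\bbt)$ (here $m\ge\bbs_\theta$ is used), Amplification Theorem~\ref{thm:SpreadingAround} applied to $\wZ^m$ yields a grounded-rel-$\wZ^m$ interval $J$ with $\Width^+_\blambda(J)\ge\bbt\bK\gg 2\bchi\bK$ and $|J|\le\length_m$; letting $m''\ge m$ be the level of $J$, this contradicts (B) at level $m''$ — namely (B) at level $m$ just proved, or (B) at a deeper inductive level, or Corollary~\ref{cor:bds:deep case}. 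This closes the induction.

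The hardest part will be the absorbing property in (A): controlling, after a regularization, the external-rel-$m$ degeneration of all grounded intervals by $2\sqrt{\bK}$. This requires both that Corollary~\ref{cor:regul} always lands in case (1) when a wide parabolic rectangle exists (so the boosts in (2),(3) must be shown genuinely impossible under the induction) and that the dams it places truly kill the external degeneration seen by grounded intervals, which leans on Theorem~\ref{thm:par fjords}'s description of fjord geometry together with the combinatorial-space assumptions. A second, more pedestrian difficulty is the bookkeeping of width parameters: the various statements are phrased with $\Width^+_3$, $\Width^+_\blambda$, $\Width^+_{\blambda,\div,m}$, $\Width^+_{\blambda,\ext,m}$ at different scales, and passing between them (Splitting Argument, Parallel Law, Lemma~\ref{lmm:ext famil}) introduces absolute but $\blambda$-dependent constants that must all be absorbed into the final choice of $\bK$ — which is legitimate only because the degeneration produced by the boosts grows like $e^{c\sqrt{\bK}}$ while every threshold it must beat grows polynomially in $\bK$. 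With Lemma~\ref{lem:ind:mainthm:v1} in hand, Theorem~\ref{mainthm:v1} follows by observing that $\bbs_\theta$ is bounded by an absolute constant (since $\length_n$ decays geometrically by~\eqref{eq:length decrease}), so the finitely many shallow levels $m<\bbs_\theta$, where Amplification Theorem~\ref{thm:SpreadingAround} need not apply, contribute only a bounded product of bounded factors and are handled by enlarging $\bK$.
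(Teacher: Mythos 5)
Your overall architecture is the same as the paper's: downward induction from $\bbn_\theta$, with (A) obtained from Corollary~\ref{cor:regul} after ruling out its exponential boosts, (B) by the Calibration Lemma against (C) at level $m+1$ and (B) at deeper levels, and (C) by the Amplification Theorem against (B). However, there is one genuine gap, precisely in how you rule out the boost cases while constructing $\wZ^m$ for statement~\ref{C1:lem:ind:mainthm:v1}. When Case~(3) of Corollary~\ref{cor:regul} (or the second output of Calibration Lemma~\ref{lmm:CalibrLmm}, to which you reduce Case~(2)) produces a grounded rel $\wZ^{m+1}$ interval $I$ with $|I|\le \length_{m+1}$ and $\Width^+_{\blambda}(I)\ge a^{\sqrt{\bK}}$, you claim that $I$ ``contains a comparable combinatorial subinterval of level $>m$'' and contradict (C) at that deeper level. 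This comparability is false in general: the rotation number is only eventually golden-mean, so for levels below $\bbn_\theta$ the partial quotients are unbounded; if $\length_{n+1}\le |I|\le \length_{n}$, the combinatorial subintervals of $I$ have length $\length_{n+1}$, and the ratio $\length_{n}/\length_{n+1}$ is not bounded in terms of $\bK$. Splitting $\Fam^+_{\blambda}(I)$ over the $\asymp |I|/\length_{n+1}$ level-$(n+1)$ combinatorial pieces therefore loses an uncontrolled factor, and you cannot conclude that any combinatorial interval is $\gg \bK$-wide. Converting grounded degeneration into combinatorial degeneration is exactly the difficulty that statement~\ref{C2:lem:ind:mainthm:v1} and the Calibration Lemma exist to circumvent, so this step cannot be waved through.

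The repair is immediate and is what the paper does — and it is available inside your own induction: the interval $I$ from Case~(3) is grounded rel $\wZ^{m+1}$, hence grounded rel $\wZ^{n}$ for its level $n\ge m+1$ (deeper pseudo-Siegel disks have fewer $S^\inn$-buffers), and $a^{\sqrt{\bK}}\gg 2\bchi\bK$ once $\bK$ is large; so $I$ directly contradicts statement~\ref{C2:lem:ind:mainthm:v1} at level $n$ (or Corollary~\ref{cor:bds:deep case} for $n\ge \bbn_\theta$), with no passage to combinatorial intervals. A secondary, more minor point: in the ``moreover'' part of (A), your claim that a grounded $J$ with $\length_{m+1}\le |J|\le \length_{m}$ satisfies $|J|\asymp \length_{m}$ ``by Assumption~\ref{ass:wZ:CombSpace}'' is not justified — groundedness only keeps the endpoints out of the $S^\inn$-buffers and does not pin $|J|$ to the scale $\length_m$. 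The paper argues instead that, by the choice of the regularization within the orbit of an (essentially) widest rectangle, every level-$m$ external parabolic rectangle whose horizontal sides form a grounded pair rel $\wZ^m$ has width at most $\sqrt{\bK}+O(1)$, and then bounds $\Width^+_{\blambda,\ext,m}(J)$ by splitting $J$ into at most two such intervals via Lemma~\ref{lmm:ext famil}; you should phrase the absorption argument in that form rather than through the unjustified $|J|\asymp\length_m$ reduction.
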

\begin{proof}
We proceed by induction from deep to shallow scales. The base case $m=\bbn_\theta$ follows from Corollary~\ref{cor:bds:deep case}. Let us assume that the lemma is true for levels $>m$. In particular, $\wZ^{m+1}$ is constructed. Let us verify the lemma for $m$.

Suppose that $Z$ has a level-$m$ external parabolic rectangle $\RR$ with $\Width(\RR)\ge \sqrt{\bK}$. Assume that $\RR$ is based on $T\in\Dbb_m$ and let $T'$ be as in~\S\ref{sss:diff tilings}. We replace $\RR$ with an outermost external rel $\wC\setminus \filled_m$ geodesic rectangle $\RR_\new$ based on $T'$ with $\Width(\RR_\new)\ge \sqrt K-O(1)$. In particular, $\RR_\new$ is non-winding. Let us apply Corollary~\ref{cor:regul} to $\RR_\new$. We {\bf claim} that Case~\eqref{case:1:thm:regul} of  Corollary~\ref{cor:regul} occurs.
\begin{proof}[Proof of the Claim]
 Assume Case~\eqref{case:3:thm:regul} of Corollary~\ref{cor:regul} occurs. We obtain an interval $I\subset \partial Z$ grounded rel $\wZ^{m+1}$ with $|I|\le \length_{m+1}$ such that $\log \Width^+_\blambda (I)\succeq \sqrt \bK$. Since $\bK\gg \bchi>1$, we have \[\Width^+_\blambda (I) \ge a^{\sqrt{\bK}}\gg 2 \bchi \bK,\sp\sp\sp \text{ where }\sp  a>1 \sp\text{represents ``$\succeq$''}\]contradicting the induction assumption that Statement (B) holds on levels $\ge m+1$.  
 
 Calibration Lemma~\ref{lmm:CalibrLmm} reduces Case~\eqref{case:2:thm:regul} of Corollary~\ref{cor:bds:deep case} to Case~\eqref{case:3:thm:regul}.
\end{proof}

By construction, $\sqrt\bK+O(1)$ bounds the width of level-$m$ external parabolic rectangles $\RR$ such that $\partial^{h} \RR $ is a pair of grounded rel $\wZ^m$ intervals -- wider rectangles are absorbed by $\wZ^m$. By splitting $J$ into at most $2$ intervals, we obtain the estimate $\Width^+_{\blambda,\ext, m}(J)\le 2\sqrt {\bK}+O(1)$. This proves Statement~\ref{C1:lem:ind:mainthm:v1}.

Let us verify Statement \ref{C2:lem:ind:mainthm:v1}. Assuming otherwise and using $\Width^+_{\blambda,\ext, m}(J)\le 2\sqrt {\bK}+O(1)$ (Statement \ref{C1:lem:ind:mainthm:v1}), we obtain $\Width^+_{\blambda,\div,m}(J)> 1.5 \bchi \bK$. Applying Calibration Lemma~\ref{lmm:CalibrLmm}, there would exist
\begin{itemize}
\item either a combinatorial $[1.5\bK,\blambda]^+$-wide level-$(m+1)$ combinatorial interval -- contradicting Statement \ref{C3:lem:ind:mainthm:v1} on level $m+1$,
\item or a $[1.5\bchi^{1.5} \bK,\blambda]^+$-wide interval $I'\subset \partial Z$ grounded rel $\wZ^{m+1}$ with $|I'|<\length_{m+1}$ -- contradicting Statement \ref{C2:lem:ind:mainthm:v1} on levels $\ge m+1$.
\end{itemize}

It remain to verify Statement \ref{C3:lem:ind:mainthm:v1}. Let us assume converse: $\Width^+_{\blambda}(I)> \bK$ for a combinatorial level $m$ interval $I\subset\partial Z$. Applying Theorem~\ref{thm:SpreadingAround}, we obtain a $[\bbt \bK,\blambda]^+$ wide interval $J$ grounded rel $\wZ^{m}$ with length $\le \length_{m}$. This contradicts Statement \ref{C2:lem:ind:mainthm:v1}.
\end{proof}

Since $\length_{n+2} < \length_n/2$, see~\eqref{eq:length decrease}, we have:\[\length_{2n}< \length_0 /2^n=|\theta_0|/2^n \le |\theta_0|/(2\blambda +4) \sp\sp\text{ if }\sp n >  \log_2 (2\lambda +4).\]
We obtain that $\bbs_\theta \le \bbs\coloneqq 2 \log_2 (2\lambda +4)$. Set $\bK_i\coloneqq (2\bchi)^i \bK$.

\begin{lem}[A few shallow levels]
\label{lem:ind:mainthm:shallow case}
The sequence of geodesic pseudo-Siegel disks in Lemma~\ref{lem:ind:mainthm:v1} can be continued with a sequence of geodesic pseudo-Siegel disks 
\[  \wZ^{\bbs_\theta-1},\sp \wZ^{\bbs_\theta-2}, \sp\wZ^{\bbn_\theta-3},\sp \dots, \sp \wZ^{-1} \coloneqq \wZ_{f_\theta},\sp\sp\sp \bbs_\theta\le \bbs\] 
with the following properties for $m<\bbs_\theta$:
\begin{enumerate}[label=\text{(\Alph*)},font=\normalfont,leftmargin=*]
\item \label{C1:lem:ind:mainthm:shallow case}
 If $Z$ does not have an external level-$m$ parabolic rectangle of width $\sqrt{(2\bchi)^{\bbs_{\theta}-m-1} \bK}$, then $\wZ^m=\wZ^{m+1}$; otherwise $\wZ^m$ is the regularization of $\wZ^{m+1}$ constructed by Corollary~\ref{cor:regul}.  Moreover, $\Width^+_{\blambda,\ext, m}(J)\le 2\sqrt { (2\bchi)^{\bbs_{\theta}-m-1}\bK}+O(1)$ for every interval $J\subset \partial Z$ grounded rel $\wZ^{m}$ with  $\length_{m+1}\le |J| \le \length_m$.

\item \label{C2:lem:ind:mainthm:shallow case} For every interval $J\subset \partial Z$ grounded rel $\wZ^{m}$ with  $\length_{m+1}\le |J| \le \length_m$, we have $\Width^+_{\blambda}(J)\le (2\bchi)^{\bbs_{\theta}-m} \bK$.
\end{enumerate}
\end{lem}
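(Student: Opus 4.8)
The plan is to run the same downward induction as in Lemma~\ref{lem:ind:mainthm:v1}, now started from the data at level $\bbs_\theta$ supplied by that lemma and descended through $m=\bbs_\theta-1,\bbs_\theta-2,\dots,-1$, with the degeneration threshold allowed to degrade by a factor $2\bchi$ at each step; writing $\bK_i=(2\bchi)^i\bK$, the bound on level $\bbs_\theta-i$ is $\bK_i$. The base case $m=\bbs_\theta$ is exactly Lemma~\ref{lem:ind:mainthm:v1} (Statements~\ref{C1:lem:ind:mainthm:v1}, \ref{C2:lem:ind:mainthm:v1}, and the combinatorial bound~\ref{C3:lem:ind:mainthm:v1}, with constant $\bK=\bK_0$). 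Assume the two assertions of Lemma~\ref{lem:ind:mainthm:shallow case} on all levels $>m$ (levels $\ge\bbs_\theta$ being covered by Lemma~\ref{lem:ind:mainthm:v1}); the task is to establish them on level $m$. The crucial point is that the whole descent has length at most $\bbs=2\log_2(2\blambda+4)$, an absolute constant, so the final constant $(2\bchi)^{\bbs_\theta}\bK$ stays absolute, and $\bK$ is fixed so large that $a^{\sqrt{\bK}}\gg(2\bchi)^{\bbs+1}\bK$, where $a>1$ is the absolute base attached to ``$\succeq$'' in the exponential boosts of Corollary~\ref{cor:regul}.

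\textit{Construction of $\wZ^m$.} If $\overline Z$ carries no external level-$m$ non-winding parabolic rectangle of width $\ge\sqrt{\bK_{\bbs_\theta-m-1}}$, set $\wZ^m\coloneqq\wZ^{m+1}$, which is geodesic and gives Statement~\ref{C1:lem:ind:mainthm:shallow case} with no loss. Otherwise pick such a rectangle, pass to an outermost geodesic representative $\RR_\new$ based on the relevant $T'\in\Dbb_m$ with $\Width(\RR_\new)\ge\sqrt{\bK_{\bbs_\theta-m-1}}-O(1)$, and apply Corollary~\ref{cor:regul} to $\RR_\new$. I claim Case~\eqref{case:1:thm:regul} occurs. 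Indeed, in Case~\eqref{case:3:thm:regul} one obtains a grounded rel $\wZ^{m+1}$ interval $I$ with $|I|\le\length_{m+1}$ and $\log\Width^+_\blambda(I)\succeq\sqrt{\bK_{\bbs_\theta-m-1}}$, hence $\Width^+_\blambda(I)\ge a^{\sqrt{\bK_{\bbs_\theta-m-1}}}$; since this dwarfs $(2\bchi)^{\bbs}\bK$ it contradicts Statement~\ref{C2:lem:ind:mainthm:shallow case} on the level containing $I$ (the induction hypothesis, or Lemma~\ref{lem:ind:mainthm:v1} if that level is $\ge\bbs_\theta$). Case~\eqref{case:2:thm:regul} is reduced to Case~\eqref{case:3:thm:regul} by Calibration Lemma~\ref{lmm:CalibrLmm} exactly as in Lemma~\ref{lem:ind:mainthm:v1}, the combinatorial alternative being excluded as in the next paragraph. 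So Corollary~\ref{cor:regul} produces a geodesic regularization $\wZ^m=Z^m\cup\wZ^{m+1}$; by construction it absorbs every external level-$m$ parabolic rectangle of width $\ge\sqrt{\bK_{\bbs_\theta-m-1}}$ with grounded horizontal sides, so splitting an arbitrary grounded $J$ with $\length_{m+1}\le|J|\le\length_m$ into at most two such intervals yields $\Width^+_{\blambda,\ext,m}(J)\le 2\sqrt{\bK_{\bbs_\theta-m-1}}+O(1)$, which is Statement~\ref{C1:lem:ind:mainthm:shallow case}.

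\textit{Statement~\ref{C2:lem:ind:mainthm:shallow case}.} Suppose $\Width^+_\blambda(J)>\bK_{\bbs_\theta-m}$ for a grounded $J$ with $\length_{m+1}\le|J|\le\length_m$. By Statement~\ref{C1:lem:ind:mainthm:shallow case} and the decomposition $\Fam^+_\blambda=\Fam^+_{\blambda,\div,m}\sqcup\Fam^+_{\blambda,\ext,m}$ of Lemma~\ref{lmm:ext famil}, and since $\bK_{\bbs_\theta-m}=2\bchi\bK_{\bbs_\theta-m-1}$ with $\bchi$ a large absolute constant, $\Width^+_{\blambda,\div,m}(J)>\bK_{\bbs_\theta-m}-2\sqrt{\bK_{\bbs_\theta-m-1}}-O(1)>1.9\bchi\bK_{\bbs_\theta-m-1}$. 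Calibration Lemma~\ref{lmm:CalibrLmm}, applied as in Lemma~\ref{lem:ind:mainthm:v1}, then yields either a $[1.9\bK_{\bbs_\theta-m-1},\blambda]^+$-wide level-$(m+1)$ combinatorial interval or a $[1.9\bchi^{1.5}\bK_{\bbs_\theta-m-1},\blambda]^+$-wide grounded rel $\wZ^{m+1}$ interval $I'$ with $|I'|<\length_{m+1}$. The second possibility contradicts Statement~\ref{C2:lem:ind:mainthm:shallow case} (or Lemma~\ref{lem:ind:mainthm:v1}, resp.~Corollary~\ref{cor:bds:deep case}) on the level $\ge m+1$ containing $I'$, whose bound is at most $\bK_{\bbs_\theta-m}=2\bchi\bK_{\bbs_\theta-m-1}<1.9\bchi^{1.5}\bK_{\bbs_\theta-m-1}$. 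For the first possibility, note that a level-$(m+1)$ combinatorial interval is contained in a single interval of $\partial Z$ of comparable length whose endpoints lie in $\CP_{m+1}$, hence well-grounded rel $\wZ^{m+1}$ by Remark~\ref{rem:easy cond:well ground}; by a Log-Rule comparison (Lemma~\ref{lmm:W^- I J}) moving the target back to $(\blambda\,\cdot)^c$ and Statement~\ref{C2:lem:ind:mainthm:shallow case} on level $m+1$ (or Lemma~\ref{lem:ind:mainthm:v1} when $m+1=\bbs_\theta$), its $\Width^+_\blambda$ is $\le\bK_{\bbs_\theta-m-1}+O(1)<1.9\bK_{\bbs_\theta-m-1}$, again a contradiction. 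This proves Statement~\ref{C2:lem:ind:mainthm:shallow case} on level $m$ and closes the induction.

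\textit{Main obstacle.} The only genuinely new feature is that the estimates must be allowed to degrade geometrically: unlike in Lemma~\ref{lem:ind:mainthm:v1}, the Amplification Theorem~\ref{thm:SpreadingAround} is not available for $m<\bbs_\theta$ (its hypothesis $|I|\le|\theta_0|/(2\blambda)$ fails precisely when $\length_m>|\theta_0|/(2\blambda+4)$), so there is no tight combinatorial statement to ``reset'' the constant, and each appeal to a combinatorial bound at level $m+1$ must instead be routed through Statement~\ref{C2:lem:ind:mainthm:shallow case} via the grounded-comparison above. The difficulty is purely bookkeeping of constants, and it is harmless because the number of shallow levels is absolutely bounded by $\bbs$ while the exponential boosts of Corollary~\ref{cor:regul} supply widths $a^{\sqrt{\bK}}$ that dominate $(2\bchi)^{\bbs+1}\bK$ for $\bK$ fixed large. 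All remaining steps are verbatim repetitions of the corresponding steps in the proof of Lemma~\ref{lem:ind:mainthm:v1}, with $\bK$ replaced by $\bK_{\bbs_\theta-m}$ on level $m$.
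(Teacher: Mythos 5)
Your proof is correct and takes essentially the same route as the paper: the paper's own proof of this lemma is just the remark that one repeats the induction of Lemma~\ref{lem:ind:mainthm:v1} with its combinatorial Statement (C) replaced by the weaker Statement (B), so that the bound degrades by a factor $2\bchi$ per level over the at most $\bbs$ shallow levels — exactly your bookkeeping with $\bK_i=(2\bchi)^i\bK$ and the observation that $a^{\sqrt{\bK}}$ dominates $(2\bchi)^{\bbs+1}\bK$. The extra details you supply (grounding the level-$(m+1)$ combinatorial interval via endpoints in $\CP_{m+1}$ and Remark~\ref{rem:easy cond:well ground}, and the $\lambda$-adjustment when comparing against Statement (B)) are consistent with, and more explicit than, what the paper records.
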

\begin{proof}
Statements \ref{C1:lem:ind:mainthm:shallow case} and \ref{C2:lem:ind:mainthm:shallow case} are proven in the same way as the corresponding statements in Lemma~\ref{lem:ind:mainthm:v1} where Statement \ref{C3:lem:ind:mainthm:v1} of Lemma~\ref{lem:ind:mainthm:v1} replaced with a weaker Statement \ref{C2:lem:ind:mainthm:shallow case} of Lemma~\ref{lem:ind:mainthm:shallow case}.
\end{proof}
 
\subsubsection{Proof of Theorem~\ref{mainthm:v1}} \label{sss:PrfOf mainthm:v1} We have shown in Lemmas~\ref{lem:ind:mainthm:v1} and~\ref{lem:ind:mainthm:shallow case} that there are absolute $\blambda \gg 1,\sp \bK\gg_\blambda 1$ such that $\Width^+_\blambda(I) \le \bK$ for every combinatorial interval $I$. We need to show that $\Width^+_3(I)\le  \bK_2$ for some $\bK_2$.

Assume $I$ is a level $m$ combinatorial interval. For simplicity, let us round up $\blambda$ to the smallest integer number. Choose the minimal $n>m$ such that $\length_{m}/\length_n>2\blambda+1$. We can decompose $I$ as a concatenation \[I=I_{-\blambda}\cup I_{-\blambda+1}\cup \dots I_{-1}\cup I_0\cup I_1\cup \dots \cup I_{\blambda}\]
so that for $k\not=0$ the interval $I_k$ is level-$n$ combinatorial while $I_0$ is a grounded rel $\wZ^{n}$ interval. By construction,
Then \[\Fam^+_{3} (I)\subset \bigcup_{j}\Fam^+(I_j, (3I)^c)\sp\sp\text{ and }\sp\sp \Fam^+(I_k, (3I)^c)\subset \Fam^+_\blambda(I_k)\sp\text{ for }k\not=0.\]
For $k\not=0$, we have $\Width^+_\blambda(I_k)\le \bK$. If $\Width^+(I_0, (3I)^c)\gg _\bK 1$, then applying Calibration Lemma~\ref{lmm:CalibrLmm} to $\Fam^+(I_0, (3I)^c)$, we obtain an interval $J$ grounded rel $\wZ^n$ with $|J|\le \length_n$ such that $\Width^+_\blambda(J)\gg_\bK 1$ -- contradicting the estimates in  Lemmas~\ref{lem:ind:mainthm:v1} and~\ref{lem:ind:mainthm:shallow case}. Therefore, $\Width_3(I)$ is bounded in terms of $\blambda$ and $\bK.$
\qed
 
\subsection{Proof of Theorem~\ref{mainthm:v2}}\label{ss:prf:mainthm:v2} Consider a renormalization level $m\ge -1$ with $\length_{m}/\length_{m+1}\gg 1$, and let $T=[v,w]$ be an interval in the diffeo-tiling $\Dbb_m$. As in~\S\ref{s:par fjords}, we assume that $v<w$ in $T$ and that $T'=[v',w]$ is $T\cap f^{\qq_{m+1}}(T)$ (with necessary adjustments for $m=-1$). 

For $k< \log_2 [\length_{m}/(20\length_{m+1})]$ we define $v_k,w_k\in T'$ to be the points at distance \newline ${10 (2^{k}-1)\length_{m+1}}$ from $v'$ and $w$ respectively with $v_k<w_k$ in $T'$. We set 
\[T^k\coloneqq [v_k,w_k]\subset T',\sp\sp X^{k+1}\coloneqq [v_k,v_{k+1}],\sp\sp Y^{k+1}\coloneqq  [w_{k+1},w_k]\]
i.e.~$T^{0}=T'$ and 
\[T^{k}=X^{k+1}\cup T^{k+1}\cup Y^{k+1},\sp\sp\sp|X^{k+1}|=|Y^{k+1}|=2^{k}10 \length_{m+1}.\]

\begin{lem}
\label{lem:est for bN}
For a constant $\bK$ in Theorem~\ref{mainthm:v1} and every above well-defined pair $X^k, Y^k$ with $k\ge 1$, we have
\[\Width^+_{\ext,m} (X^k,Y^k)\succeq_{\bK} 1.\]
\end{lem}
\begin{proof}
Assume converse; then we have the following estimate of the dual family:
\[\Width^+_{\filled_{m}} (T^k, \partial \filled_{m}\setminus T^{k-1})=K\gg_\bK 1.\]
Up to $O(1)$, the family $\Fam^+_{\filled_{m}} (T^k, \partial \filled_{m}\setminus T^{k-1})$  is within two rectangles $\RR_x,\RR_y$ in $\C\setminus \intr \filled_m$. Applying Lemma~\ref{lem:univ push}, we can push-forward $\Fam^+_{\filled_{m}} (T^k, \partial \filled_{m}\setminus T^{k-1})$ almost univalently under $f^{\qq_{m+1}}\colon \wC\setminus \filled_{m+1}\to \wC\setminus \overline Z$; we obtain that
\[\Width^+_{\overline Z}(T_k\boxplus \theta_{m+1}, \sp (T_{k-1}\boxplus \theta_{m+1})^c)\ge K-O(1).\]
Below we recognize three types of the curves in \[\Fam\coloneqq \Fam^+_{Z}(T_k\boxplus \theta_{m+1}, \sp (T_{k-1}\boxplus \theta_{m+1})^c)\] and prove that the width of each type curve family can be bounded in terms of $\bK$.

\emph{Curves diving into $\filled_{m}\setminus \overline Z$.} If the width of such curves is sufficiently big, then applying Calibration Lemma~\ref{lmm:CalibrLmm} to such curves, we obtain a sufficiently wide interval on deeper scale contradicting the estimates in Lemmas~\ref{lem:ind:mainthm:v1} and~\ref{lem:ind:mainthm:shallow case}.

\emph{Curves landing at $[v,v']$.} Since $[v,v']$ is combinatorial, the width of such curves is bounded by $\bK$.

\emph{External curves landing at $T'\cap (T_{k-1}\boxplus \theta_{m+1})^c$.} Note that $T'\cap (T_{k-1}\boxplus \theta_{m+1})^c$ consists of two intervals of length $\asymp 2^k\length_{m+1}$. Since the distance between \[T'\cap (T_{k-1}\boxplus \theta_{m+1})^c\sp\sp\text{ and }\sp\sp T_k\boxplus \theta_{m+1}\] is $\asymp 2^k\length_{m+1}$, the width of curves of this last type is $O(1)$ by Theorem~\ref{thm:par fjords}.
\end{proof}

Let us now choose a sufficiently big $\bN\gg_{\bK} 1$. Write $\bM\coloneqq \log_2 \bN/10^3. $. If $\length_{m}/\length_{m+1}\ge \bN/2$, then 
\[\Width^+_{\ext,m}(X^1\cup X^2\cup\dots \cup X^{\bM}, Y^1\cup Y^2\cup\dots \cup Y^{\bM}) \gg_\bK \bM \gg \bK\] by Lemma~\ref{lem:est for bN}.
Therefore, $\Fam^+_{\ext,m}(X^1\cup X^2\cup\dots \cup X^{\bM}, Y^1\cup Y^2\cup\dots \cup Y^{\bM})$ contains a parabolic external level $m$ rectangle of width $\sqrt{\bK}$ and the regularization happens within the orbit of such rectangle. This implies~\eqref{eq:mainthm:v2}.

The combinatorial threshold~\eqref{eq:mainthm:v2} implies that for every interval $I\subset\partial Z$ regular rel $\wZ^m$ with $\length_{m}\ge  |I|\ge \length_{m+1}$, there is a grounded rel $\wZ^m$ interval $I_\grnd \subset I$ such that $I\setminus I_\grnd$ is within $2\bN$ level $m+1$ combinatorial intervals. Therefore, the condition ``grounded rel $\wZ^m$" in  Lemmas~\ref{lem:ind:mainthm:v1} and~\ref{lem:ind:mainthm:shallow case} can be replaced with ``regular rel $\wZ^m$'' by possibly increasing $\bK$.

\section{Mother Hedgehogs and uniform quasi-conformality of $\wZ$}\label{s:MotherHedgehogs}
Recall that we are considering eventually golden-mean rotations numbers $\theta$,~\eqref{eq:cond on theta}.

\begin{thm}
\label{mainthm:v3}
There is an absolute constant $\bK\gg 1$ such that the pseudo-Siegel disk $\wZ_f=\wZ^{-1}$ in Theorem~\ref{mainthm:v2} is $\bK$-qc.
\end{thm}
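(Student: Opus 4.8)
The plan is to deduce uniform quasiconformality of $\wZ_f=\wZ^{-1}$ from the quantitative control already established in Theorem~\ref{mainthm:v2}, using the standard criterion that a Jordan domain is a $\bK$-quasidisk if and only if its boundary satisfies the Ahlfors three-point (bounded turning) condition with a uniform constant, or equivalently if there is a uniform control of the modulus of the family of curves joining any interval $I\subset\partial\wZ_f$ to its complement $(\lambda I)^c$ for a fixed $\lambda$. Concretely, I would fix $\lambda=3$ and show that there is an absolute $\bK$ with
\begin{equation}
\label{eq:qc-criterion}
\Width_{\wZ_f}\big(I,(3I)^c\big)\le \bK\qquad\text{for every interval }I\subset\partial\wZ_f .
\end{equation}
Since $\Width=\Width^-+\Width^+$ up to $O(1)$ additivity along the common boundary (Parallel Law together with Lemma~\ref{lmm:ext famil}-type splitting), \eqref{eq:qc-criterion} follows once we bound $\Width^-_{\wZ_f}(I,(3I)^c)$ and $\Width^+_{\wZ_f}(I,(3I)^c)$ separately. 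The inner bound is supplied directly by the beau coarse-bounds for pseudo-Siegel disks (Theorem~\ref{thm:wZ:shallow scale}, Equations~\eqref{eq:1:prop:wZ:shallow scale:impr}--\eqref{eq:2:prop:wZ:shallow scale:impr}) applied at the level $m$ for which $\length_{m+1}\le|I|\le\length_m$: on that scale the inner width of a $3$-rescaled interval is $\asymp 1$, uniformly. So the whole content is the \emph{outer} estimate.

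For the outer estimate I would first reduce to regular (then grounded) intervals rel $\wZ^m$: any interval $I\subset\partial\wZ_f$ can be enlarged to a grounded interval $I^{\grnd}$ so that $I^{\grnd}\setminus I$ consists of at most two pieces inside $S^{\inn}$-buffers, each of which contributes only $O_\bdelta(1)$ to the width by Lemma~\ref{lem:I vs I^grnd} (the collar moduli $\ge\bdelta$ are absorbed into a fixed constant since $\bdelta$ is now fixed, Remark~\ref{rem:fixing deltas}). Then, by Lemma~\ref{lem:W+:well grnd int} and Lemma~\ref{lem:W+:ground inter}, the outer width of a grounded pair rel $\wZ^m$ is within a bounded factor (and bounded additive error) of the corresponding outer width for $\overline Z$ rel the projections onto $\partial Z$. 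Now invoke Theorem~\ref{mainthm:v2}: for the projection $I^\bullet\subset\partial Z$ with $\length_{m+1}\le|I^\bullet|\le\length_m$ the high-type alternative \eqref{eq:mainthm:v2} holds, so $I^\bullet$ is either comparable to a level-$m$ combinatorial interval (for which $\Width^+_3\le\bK$ is asserted), or has length $\le\bN\length_{m+1}$, in which case it is contained in $O(\bN)$ consecutive level-$(m+1)$ combinatorial intervals and the Parallel Law gives $\Width^+_3(I^\bullet)\preceq\bN\bK$; in both cases we get an absolute bound. Tracing back through the projections $\wZ^m\leadsto\overline Z$ yields $\Width^+_{\wZ_f}(I,(3I)^c)\le\bK'$ with $\bK'$ absolute, which combined with the inner bound gives \eqref{eq:qc-criterion}.

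The main obstacle I anticipate is the \emph{accumulation of errors across the infinite chain of regularizations} $\overline Z\leadsto\cdots\leadsto\wZ^{m+1}\leadsto\wZ^m\leadsto\cdots\leadsto\wZ^{-1}$: a priori each projection step $\wZ^{m+1}\leadsto\wZ^m$ could multiply the width estimate by $1+\varepsilon$ or add $O_\bdelta(1)$, and there are infinitely many such steps between a deep level and level $-1$, so a naive telescoping diverges. This is precisely why one must use the \emph{$\bdelta$-independent} beau bounds (Theorem~\ref{thm:wZ:shallow scale}, \eqref{eq:1:prop:wZ:shallow scale:impr}--\eqref{eq:2:prop:wZ:shallow scale:impr}; Remark~\ref{rem:error does not accum}) rather than the crude $\bdelta$-dependent ones, and the $\varepsilon$-independence of Lemma~\ref{lem:W^+:well grnd int} (with $\bDelta$ fixed large, Remark~\ref{rem:fix bDelta}). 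The key point to make rigorous is that for a \emph{single} interval $I$ one only uses the comparison at the one level $m$ with $\length_{m+1}\le|I|\le\length_m$, plus one application of Lemma~\ref{lem:W+:ground inter} comparing $\wZ^m$ directly to $\overline Z$ — not an iterated step-by-step descent — because Lemma~\ref{lem:W^+:well grnd int}/\ref{lem:W+:ground inter} compares $\wZ^m$ to $\overline Z$ in one shot with constants independent of how many intermediate regularizations occurred. Once that is clearly organized, the bound \eqref{eq:qc-criterion} is uniform, and the Ahlfors quasidisk characterization produces the absolute $\bK$ with $f|\wZ_f$ injective (the injectivity being part of the construction of pseudo-Siegel disks via Lemma~\ref{lem:Pullback of wZ^m} and Definition~\ref{dfn:PseudoSiegDisk}), completing Theorem~\ref{mainthm:v3} and hence, via Hausdorff limits as in the quick outline, Theorems~\ref{thm:quasidisk} and \ref{thm:hedgehog}.
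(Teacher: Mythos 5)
There is a genuine gap: your argument only controls intervals of $\partial\wZ_f$ whose endpoints can be moved onto $\partial Z$, i.e.\ scales at or above the scale of a dam. But $\partial\wZ_f$ also contains the dams $\beta^m_i$ themselves — hyperbolic geodesics of $\wC\setminus\overline Z$, not subsets of $\partial Z$ — and an interval $I$ lying inside a dam has no useful projection onto $\partial Z$: enlarging it to a grounded interval jumps all the way up to the scale of the whole dam (plus $S^{\inn}$-buffers), so the resulting width bound says nothing about the geometry of $\partial\wZ_f$ at scales finer than the dam. Theorem~\ref{mainthm:v2} and Lemmas~\ref{lem:W^+:well grnd int}, \ref{lem:W+:ground inter} are silent there, and you cannot fall back on ``geodesics in the complement of a quasidisk are uniform quasiarcs,'' since $\overline Z$ is only a \emph{non-uniform} quasidisk (dilatation $K_\theta$). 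Moreover, your rescaling $3I$ is defined via the combinatorial metric induced from $\partial Z$, which does not resolve scales inside a dam at all, so the criterion \eqref{eq:qc-criterion} cannot even be formulated there; and the claimed equivalence ``bounded $\Width(I,(3I)^c)$ for a fixed $\lambda$ $\Rightarrow$ quasidisk'' is not the form of the converse the paper actually proves — the precise statement used is the welding criterion of Lemma~\ref{lmm:qc tiling condit}, which requires a \emph{nest of tilings with bounded combinatorics} and separate inner and outer bounds (this is where the high-type constant $\bN$ of Theorem~\ref{mainthm:v2} enters essentially, giving $2\bN$-bounded combinatorics).

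The paper's proof supplies exactly the missing ingredient: it builds the nest $\TT(\wZ)$ of \eqref{eq:TT of wZ} by combining the projected diffeo-tilings $\widehat\Dbb'_n$ with a nest of tilings $\TT(\beta^m_i)$ on every dam (Lemma~\ref{lem:TT for beta^m_i}). The dam tilings are produced by uniformizing $\wC\setminus Z$ so that $\beta^m_i$ becomes $[-1,1]$ and transporting the ``foundations'' $I^n_U,I^n_V,J^n_U,J^n_V\in\Dbb_n$ adjacent to the dam's endpoints; their harmonic comparability (Lemma~\ref{lem:harm compar of I,J}), which rests on Lemma~\ref{lem:Dbb_n:est}~\ref{E3:lem:Dbb_n:est} and on the lower bound $\Width^+(X^1,Y^1)\succeq 1$ of Lemma~\ref{lem:est for bN}, is what gives uniform control of $\partial\wZ_f$ at scales inside a dam: inner bounds via monotonicity from $U(\beta^m_i)\subset\wZ$ (Lemma~\ref{lem:TT for beta^m_i}~\ref{B:lem:TT for beta^m_i}), outer bounds via the dual rectangles $\RR^+_{\dual,V}$ together with \eqref{eq:Width:R_dual} and the restriction estimate \eqref{eq:lem:RR vs RR^m}. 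Your treatment of regular/grounded intervals (inner bound from Theorem~\ref{thm:wZ:shallow scale}, outer bound via one-shot comparison with $\overline Z$ and Theorem~\ref{mainthm:v2}, splitting into at most $O(\bN)$ combinatorial pieces) is sound and parallels the paper's handling of the $\widehat\Dbb'_n$ part of the nest, and your remark about avoiding telescoping errors is correct; but without the dam tilings and without the bounded-combinatorics welding lemma, the proposal does not prove that $\wZ_f$ is uniformly quasiconformal.
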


Recall that a hull $Q\subset \C$ is a compact connected full set.
The {\em  Mother Hedgehog}~\cite{Ch} for a neutral polynomial $f_\theta$
is an invariant hull containing both the fixed point $0$ and the critical
point $c_0(f)$.

\begin{thm}
\label{mainthm:v4}
 Any neutral quadratic polynomial $f=f_\theta$, $\theta\not\in \Q$,
  has a Mother Hedgehog $H_f\ni c_0(f)$ such that $f\colon  H_f \to H_f$ is a homeomorphism.
\end{thm}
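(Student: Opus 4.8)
The plan is to deduce Theorem~\ref{mainthm:v4} (the Mother Hedgehog Theorem) from Theorem~\ref{mainthm:v3} (uniform quasiconformality of $\wZ_{f_\theta}$ for eventually golden-mean $\theta$) by a Hausdorff-limit argument, exactly as sketched in the introduction. First I would reduce to the eventually golden-mean case: given an arbitrary irrational $\theta$, write $\theta=[0;a_1,a_2,\dots]$ and approximate it by the eventually golden-mean numbers $\theta^{(n)}\coloneqq [0;a_1,\dots,a_n,1,1,1,\dots]$, so that $\theta^{(n)}\to\theta$ and $\ee(\theta^{(n)})\to\ee(\theta)$. Since the maps $f_{\theta^{(n)}}$ converge to $f_\theta$ locally uniformly on $\C$, the filled Julia sets and all the relevant dynamical data vary upper-semicontinuously. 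For bounded-type $\theta$ itself (the easy sub-case), $Z_\theta$ is already a $K_\theta$-quasidisk, but we want a uniform hull; for \emph{arbitrary} irrational $\theta$ (including Cremer) there may be no Siegel disk at all, which is why the approximation is essential.

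The core of the argument is as follows. For each $n$ with $\theta^{(n)}$ eventually golden-mean, Theorem~\ref{mainthm:v3} provides a pseudo-Siegel disk $\wZ_{f_{\theta^{(n)}}}$ which is a $\bK$-quasidisk with $\bK$ absolute, which contains $\overline{Z_{\theta^{(n)}}}$ (hence the fixed point $0$ and the critical point $c_0(f_{\theta^{(n)}})$), and on which $f_{\theta^{(n)}}$ is injective (Theorem~\ref{thm:quasidisk}). I would pass to a subsequence so that the closed sets $\overline{\wZ_{f_{\theta^{(n)}}}}$ converge in the Hausdorff topology to a compact set $H$. Because each $\wZ_{f_{\theta^{(n)}}}$ is a $\bK$-quasidisk and $\bK$ is uniform, the limit $H$ is a full compact set: Hausdorff limits of uniform quasidisks (with suitably normalized location and scale — here pinned down by $0\in\wZ$ and by the fixed uniform bound relating $\diam\wZ$ to $|c_0-0|$, or one renormalizes by an affine map) are either points or $\bK$-quasidisks, and degeneration to a point is excluded since $0$ and $c_0$ stay a definite distance apart and both lie in $\overline{\wZ_{f_{\theta^{(n)}}}}$. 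In particular $H$ is a hull containing $0$ and $c_0(f_\theta)$.

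Next I would check invariance and injectivity of $f_\theta$ on $H$. Since $f_{\theta^{(n)}}(\overline{\wZ_{f_{\theta^{(n)}}}})$ is contained in (a controlled neighborhood of) $\overline{\wZ_{f_{\theta^{(n)}}}}$ — more precisely the pseudo-Siegel disk is quasi-invariant up to the allowed errors, and $\overline{\wZ_{f_{\theta^{(n)}}}}\supset\overline{Z_{\theta^{(n)}}}$ is \emph{forward invariant} in the limit because the overshoot shrinks — and $f_{\theta^{(n)}}\to f_\theta$ uniformly on a neighborhood of $H$, the limit satisfies $f_\theta(H)\subseteq H$. To get $f_\theta(H)=H$ and injectivity, I would use that $f_{\theta^{(n)}}|\,\wZ_{f_{\theta^{(n)}}}$ is injective with a \emph{uniform} modulus of quasisymmetry on the boundary (again $\bK$-quasidisk), so that Hurwitz's theorem together with the uniform geometry forces $f_\theta|H$ to be injective, hence a homeomorphism of $H$ onto its image; combined with $f_\theta(H)\subseteq H$, connectedness, and the fact that $\partial H$ maps into $H$ by a nonconstant map preserving the boundary circle structure, one concludes $f_\theta(H)=H$. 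Finally I would replace $H$ by its filled-in version (taking the union with all bounded complementary components) to ensure it is a genuine hull; fullness is automatic from the quasidisk property but this step is harmless and makes $H_f$ canonical. Setting $H_f\coloneqq H$ completes the proof.

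The main obstacle I anticipate is the invariance/injectivity step in the limit: quasi-invariance of $\wZ_{f_{\theta^{(n)}}}$ is only \emph{approximate} (there are errors controlled by the truncation depth and by the regularization, as in Theorem~\ref{thm:beau:part U: c quasi line} and Remark~\ref{rem:error does not accum}), so one must argue carefully that these errors vanish in the Hausdorff limit and do not produce a map that is merely $2$-to-$1$ or that pushes $H$ strictly outside itself. The clean way around this is to note that $\overline{Z_{\theta^{(n)}}}\subseteq\wZ_{f_{\theta^{(n)}}}$ is \emph{exactly} invariant and the two sets have comparable geometry by the beau bounds, so $H$ contains the Hausdorff limit of $\overline{Z_{\theta^{(n)}}}$, which is exactly forward-invariant; then a separate compactness/Hurwitz argument upgrades forward-invariance to full invariance and injectivity using only the uniform $\bK$-quasidisk bound. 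A secondary technical point is the normalization needed to keep the Hausdorff limit from collapsing or escaping to $\infty$, which is handled by the a priori bound $\dist(0,c_0(f_{\theta^{(n)}}))=\tfrac12$ and the uniform control $\diam\wZ_{f_{\theta^{(n)}}}\asymp 1$ coming from the $\bK$-quasidisk property together with $\overline{Z_{\theta^{(n)}}}\subseteq\wZ_{f_{\theta^{(n)}}}$.
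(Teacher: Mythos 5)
Your argument as written has a genuine gap at its central step: you take the Mother Hedgehog to be the Hausdorff limit $H$ of the \emph{pseudo-Siegel disks} $\overline{\wZ_{f_{\theta^{(n)}}}}$ and justify $f_\theta(H)\subseteq H$ by asserting that the quasi-invariance ``overshoot shrinks'' in the limit. Nothing supports that claim. The pseudo-Siegel disks are not forward invariant: their dams and channels are not dynamically defined, and the discrepancy between $f^k(\wZ^m)$ and $\wZ^m$ is only controlled by collars of a \emph{fixed} modulus $\bdelta$ (Assumption~\ref{ass:wZ:collars}); at each fixed shallow level this error has definite size relative to that level's scale and there is no reason for it to tend to $0$ as $\theta^{(n)}\to\theta$. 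Consequently neither $f_\theta(H)\subseteq H$ nor $f_\theta(H)=H$ follows, and the ``compactness/Hurwitz upgrade'' you invoke cannot repair this, since Hurwitz-type arguments for limits of uniformly quasiconformal embeddings yield injectivity, never invariance. As written, ``Setting $H_f\coloneqq H$ completes the proof'' is exactly the point where the proof fails.

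The correct route is the one you gesture at in your ``obstacle'' paragraph, and it is the paper's proof: define $H_f$ as a Hausdorff limit (along a subsequence) of the Siegel disk closures $\overline{Z_{\theta_n}}$ themselves. These are \emph{exactly} invariant, they contain $\alpha=0$ and $c_0(f_{\theta_n})$, and $f_{\theta_n}\to f_\theta$ locally uniformly, so $f_\theta(H_f)=H_f$ follows directly from $f_{\theta_n}(\overline{Z_{\theta_n}})=\overline{Z_{\theta_n}}$ (both inclusions, by the standard argument for Hausdorff limits of invariant compacta, with no Hurwitz input). The uniform $\bK$-quasidisks $\wZ_{\theta_n}\supset\overline{Z_{\theta_n}}$ from Theorem~\ref{mainthm:v3} enter only as an \emph{envelope}: pass to a qc limit $\wZ_\theta\supset H_f$ of the uniformly $\bK$-quasiconformal parametrizations, on which injectivity of $f_{\theta_n}|\,\wZ_{\theta_n}$ persists in the limit, and restrict to conclude that $f_\theta| H_f$ is a homeomorphism. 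Your normalization worries are moot ($|c_0-0|=1/2$ and everything lies in the uniformly bounded filled Julia sets), and the fullness issue is no worse for the limit of the $Z$'s than in the paper (one may fill in, and $0,c_0$ already lie in that limit), so with the hedgehog redefined this way your remaining steps go through.
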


\subsubsection{Outline of the section}
\label{sss:outline MotherHedg} Since $\wZ=\wZ^{-1}$ is obtained from a qc disk $\overline Z$ by adding finitely many fjords bounded by hyperbolic geodesics in $\wC\setminus \overline Z$, the resulting pseudo-Siegel disk $\wZ$ is a qc disk. To show that $\wZ$ is uniformly $\bK$-qc, we will introduce a nest of tilings on $\partial \wZ$ as follows:
\begin{equation}
\label{eq:nest of til on wZ}
\TT(\wZ)\coloneqq \operatorname{Projection}_{\wZ}(\Dbb)\cup \bigcup_{\beta^{m}_i\subset\partial \wZ^m} \TT(\beta^m_i),
\end{equation}
where:
\begin{itemize}
\item $\operatorname{Projection}_{\wZ}(\Dbb)$ is the projection onto $\wZ$ the nest of diffeo-tilings $\Dbb=[\Dbb_n]_{n\ge -1}$~\eqref{eq:diffeo nest}, where intervals completely submerged into $\wZ$ are removed;
\item $\TT(\beta^m_i)=[\TT_{n}(\beta^m_i)]_{n\ge m+1}$ is an appropriate nest of tilings on dams, see~\S\ref{ss:nest of til:beta}.  
\end{itemize}

The combinatorial threshold $\bN$ will imply that $\TT(\wZ)$ has $2\bN$-bounded combinatorics: each level-$n$ interval consists of at most $2\bN$ intervals of level $n+1$. Using Theorem~\ref{mainthm:v1}, we will show that  $\TT(\wZ)$ has uniformly bounded outer geometry: neighboring intervals in $\TT^n(\wZ)$ have comparable outer harmonic measures. And using Theorem~\ref{thm:wZ:shallow scale}, we will show that $\TT(\wZ)$ has uniformly bounded inner geometry. This will conclude that $\wZ$ is uniformly $\bK$-qc as a result of quasisymmetric welding, see  Lemma~\ref{lmm:qc tiling condit}. 

Let us comment about the construction of the $\TT(\beta^m_i)$. Every dam $\beta^m_i$ connects two points in $\CP_{m+1}$, call them $x$ and $y$. For every $n\ge m+1$, we can consider four level-$n$ intervals of $\Dbb_n$ adjacent to $x,y$; we call these intervals the \emph{$n$th foundation} of $\beta^m_i$. Our estimates imply that intervals in the $n$th and $(n+1)$th foundations have comparable outer harmonic measures. This fact allows us to introduce a nest of tilings $\TT(\beta^m_i)=[\TT_{n}(\beta^m_i)]_{n\ge m+1}$ comparable with the foundations of $\beta^m_i$ on all levels $\ge m+1$. We view $\beta^m_i$ as a sole interval in $\TT_{m+1}(\beta^m_i)$. Since every dam $\beta^m_i$ is protected by a wide rectangle $\XX^m_i$ (Assumption~\ref{ass:wZ:EtraProt}), different dams are geometrically faraway and their nest of tilings do not interact much in~\eqref{eq:nest of til on wZ}.

Theorem~\ref{mainthm:v4} follows from Theorem~\ref{mainthm:v3} by taking Hausdorff limits of bounded-type Siegel disks:

\begin{proof}[Proof of Theorem~\ref{mainthm:v4} using Theorem~\ref{mainthm:v3}]
For every $\theta\in \R\setminus \Z$, consider a sequence of eventually golden-mean rotation numbers $\theta_n$ converging to $\theta$. Let $Z_{\theta_n}\subset \wZ_{\theta_n}$ be the Siegel disk and a $\bK$-qc pseudo-Siegel disk of $f_{\theta_n}$. By passing to a subsequence, we can assume that $Z_{\theta_n}$ has a Hausdorff limit $H_{f_\theta}=H_{\theta}$ and $\wZ_{\theta_n}$ has a qc limit $\wZ_{\theta}$. We obtain that $H_\theta$ is $f_\theta$ invariant, $c_0, \alpha\in H_\theta\subset \wZ_{\theta}$, and $f_\theta\colon  \wZ_{\theta}\to f_\theta \left(  \wZ_{\theta}\right)$ is a homeomorphism. Therefore, $f_\theta\mid H_\theta$ is a homeomorphism.  
\end{proof}

\subsubsection{Uniform Pseudo-Siegel bounds}\label{sss:uniform wZ bounds} Let us briefly summarize one of the main outcomes of the paper. Pseudo-Siegel disks $\wZ^m_\theta$ are almost invariant up to $\qq_{m+1}$ iterations such that 
\begin{enumerate}
\item[\setword{(I)}{Item:wZ:I}] their internal geometry is that of rotational dynamics, see Theorem~\ref{thm:wZ:shallow scale} and~\S\ref{ss:peninsulas:geometry};
\item[\setword{(O)}{Item:wZ:O}] their exterior geometry is that of unicritical circle maps, see~\S\ref{sss:rem:crit circle maps and fjords} and Remark~\ref{rem:TT:wZ^m}.
\end{enumerate}
Thus, $\wZ^m_\theta$ can be viewed as a \emph{mating} of rotational and unicritical dynamics; i.e., pseudo-Siegel disks extend principles of the Douady-Ghys surgery to all rotation numbers. We remark that such an extension is only possible in the almost-invariant framework: the inner geometry of $\wZ^m_\theta$ can disappear in the limit $H_\theta=\bigcap _{m\ge -1}\wZ^m_\theta$; for instance, if $\theta$ is a Cremer number, then $H_\theta$ has empty interior.

By \emph{pseudo-Siegel bounds}, we refer to the \emph{existence} of disks $\wZ^m_\theta$ with appropriate uniform bounds on tilings, see Definition~\ref{defn:ps Siegel bounds} and Remark~\ref{rem:TT:wZ^m}. Theorem~\ref{mainthm:v2} constructs such $\wZ^m_\theta$ for eventually golden-mean numbers. Since $\wZ^{-1}_\theta$ is uniformly qc, it exists by continuity for all $\theta$ (Theorem~\ref{mainthm:v3}). The continuity (and existence) of $\wZ^m$ for all $\theta$ is justified in~\cite{DL:sector bounds}; see~\S\ref{ss:pseudo-Siegel bounds} for summary.

\subsection{Nests of tilings}
\label{ss:NestOfTiling}
We will use notations similar to \cite[\S15.1]{L-book}. Consider a closed qc disk $D\subset \C$. Let $\TT=(\TT_n)_{n\ge m}$ be a system of finite partitions of $\partial D$ into finitely many closed intervals such that $\TT_{n+1}$ is a refinement of $\TT_n$. We say that $\TT$ is a \emph{nest of tilings} if 
\begin{itemize}
\item the maximal diameter of intervals in $\TT_n$ tends to $0$ as $n\to \infty$, and
\item every interval in $\TT_n$ for $n\ge m$ decomposes into at least two intervals of $\TT_{n+2}$.
\end{itemize}
 Similarly, a nest of tilings is defined for a closed qc arc. (In the second condition, we require $\TT_{n+2}$ instead of $\TT_{n+1}$ because of Lemma~\ref{lem:Dbb_n:Dbb_n+2}.)

We say that a nest of tilings $\TT$ has \emph{$M$-bounded combinatorics} if every interval of $\TT_n$ consists of at most $M>1$ intervals of $\TT_{n+1}$.

For an interval $I\in \TT_n$, let $I_\ell, I_\rho\in \TT_n$ be two its neighboring intervals. We denote by $[3I]^c$ the closure of $\partial D\setminus (I_\ell\cup I\cup I_\rho)$.  We set \[[3I]^c\coloneqq \overline{\Gamma\setminus (I\cup I_\ell \cup I_\rho)},\] and we define:
\begin{itemize}
\item $\Fam^-_{3,\TT}(I)$ to be the family of curves in $D$ connecting $I$ and $[3I]^c$;
\item  $\Fam^+_{3,\TT}(I)$ to be the family of curves in $\wC\setminus D$ connecting $I$ and $[3I]^c$;
\item $\Width^\pm_{3,\TT}(I)=\Width\big(\Fam^\pm_{3,\TT}(I)\big)$.
\end{itemize}

We say that a nest of tilings $\TT$ has \emph{essentially $C$-bounded outer geometry} if for every $I\in \TT$ we have $\Width^+_3(I)\le C$. If, moreover, $\TT$ has $M$-bounded combinatorics, then we say that $\TT$ has \emph{$(C,M)$-bounded outer geometry}. Similarly, bounded and essentially bounded inner geometries are defined.

\begin{lem}
\label{lmm:qc tiling condit}
For every pair $C,M$, there is a $K_{C,M}>1$ such that the following holds. Let $D$ be a closed qc disk and $\TT$ be a nest of tilings of $\partial D$. If $\TT$ has $(C,M)$-bounded inner and outer geometries, then $D$ is a $K_{C,M}$ qc disk.
\end{lem}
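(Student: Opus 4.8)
The statement I want to prove is Lemma~\ref{lmm:qc tiling condit}: a closed qc disk $D$ whose boundary carries a nest of tilings $\TT$ with $(C,M)$-bounded inner and outer geometries must itself be a $K_{C,M}$-quasidisk, with $K_{C,M}$ depending only on $C$ and $M$.

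\begin{proof}[Proof plan]
The plan is to reconstruct $\partial D$ as a quasisymmetric image of a reference circle carrying a standard dyadic-type nest, and then invoke the quasisymmetric welding criterion for quasidisks. First I would build a combinatorial model: since $\TT$ has $M$-bounded combinatorics, there is a ``model arc'' $\Gamma_0\subset \Circle$ together with a nest of tilings $\TT_0=(\TT_{0,n})$ with exactly the same combinatorial branching data as $\TT$ (each level-$n$ interval subdividing into the same number of level-$(n+1)$ children, in the same cyclic order), and with each interval of $\TT_{0,n}$ having Euclidean length comparable to $M^{-n}$; this is possible precisely because the combinatorics is $M$-bounded. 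There is then a canonical homeomorphism $\phi\colon \Circle\to \partial D$ matching $\TT_0$ to $\TT$ level by level. The goal is to show $\phi$ is quasisymmetric with constant depending only on $C,M$.

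The core step is the estimate that $\phi$ is quasisymmetric. Here I would use the two width hypotheses as follows. For any interval $I\in\TT_n$, the $(C,M)$-bounded outer geometry says $\Width^+_{3}(I)\le C$; combined with the $M$-bounded combinatorics and the fact that each interval decomposes into at least two children two levels down, a standard telescoping argument (compare the Log-Rule, Lemma~\ref{lmm:W^- I J}, run in the exterior) shows that adjacent intervals of $\TT_n$ have comparable outer harmonic measures $\mu^+_D$, with ratio bounded in terms of $C,M$; iterating, one gets that the outer harmonic measure of a level-$n$ interval is comparable (with multiplicative error $(1+\epsilon_{C,M})^n$ absorbed into geometric decay, using that $\Width^+_3\le C<\infty$ forces a definite shrinking ratio) to $M^{-n}$ up to a controlled constant. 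The same argument applied to $\Width^-$ inside $D$ gives the analogous control for the inner harmonic measure $\mu^-_D$. Since harmonic measure of a Jordan domain is the boundary extension of the Riemann map, these two comparisons say exactly that both the interior and exterior Riemann maps of $D$, pulled back via $\phi$, distort the model measure on $\Gamma_0$ by a bounded amount on every tile; by the standard characterization of quasisymmetry via tilings (neighboring tiles of comparable size map to neighboring arcs of comparable harmonic measure, uniformly in the level), both $\psi_{\mathrm{int}}\circ\phi$ and $\psi_{\mathrm{ext}}\circ\phi$ are quasisymmetric circle maps with constant depending only on $C,M$.

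Finally I would apply the welding characterization of quasidisks: a Jordan curve is a $K$-quasicircle if and only if the conformal welding homeomorphism $\psi_{\mathrm{ext}}^{-1}\circ\psi_{\mathrm{int}}\colon \Circle\to\Circle$ is $K'$-quasisymmetric, with $K,K'$ depending only on each other. Here that welding map equals $(\psi_{\mathrm{ext}}\circ\phi)^{-1}\circ(\psi_{\mathrm{int}}\circ\phi)$, a composition of two quasisymmetric maps each with constant controlled by $C,M$, hence quasisymmetric with constant $K_{C,M}$; therefore $\partial D$ is a $K_{C,M}$-quasicircle and $D$ a $K_{C,M}$-quasidisk. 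The main obstacle is the middle step — converting the modulus/width bounds $\Width^\pm_3\le C$ together with $M$-bounded combinatorics into a genuine \emph{uniform} quasisymmetry estimate on $\phi$ rather than just a level-by-level comparison whose constant could in principle grow with depth. The resolution is that $\Width^+_3(I)\le C$ is not merely a bound at one tile but, via the Grötzsch/Parallel laws and the series-decomposition technique already used throughout the paper (e.g.\ the Splitting Argument, Remark~\ref{rem:SplitArg}), propagates multiplicatively in a summable way, so the accumulated distortion from level $0$ to level $n$ stays bounded; making this telescoping precise, and checking it symmetrically for the inner geometry, is the heart of the argument, but it is routine given the machinery of \S\ref{ss:GeomZ}.
\end{proof}
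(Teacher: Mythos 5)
Your overall strategy (derive comparability of adjacent tiles in harmonic measure from the width bounds, then conclude via conformal welding) is the same as the paper's, but the middle of your argument contains a genuine gap. You claim that, by iterating the adjacent-tile comparison, the outer (and inner) harmonic measure of every level-$n$ tile is comparable to $M^{-n}$, and hence that the map $\phi$ from a Euclidean model circle with tiles of length $\asymp M^{-n}$ to the harmonic parametrizations is quasisymmetric. Neither assertion follows from the hypotheses. The bounds $\Width^{\pm}_{3,\TT}(I)\le C$ only give \emph{local} comparability: an interval has harmonic measure comparable to that of its two neighbors at the same level, and (using that every tile splits into at least two comparable pieces two levels down, with at most $M$ children per level) each child's measure is a definite fraction of its parent's, with both the upper and the lower ratio depending on $C,M$ but \emph{not} pinned to $1/M$. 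Since branching numbers may vary from tile to tile and from level to level, the decay rate of harmonic measure can differ in different parts of $\partial D$ (say $\sim 2^{-n/2}$ along one arc and $\sim M^{-n}$ along another), so level-$n$ measures are not globally comparable to $M^{-n}$, and your factor maps $\psi_{\mathrm{int}}\circ\phi$, $\psi_{\mathrm{ext}}\circ\phi$ need not be quasisymmetric. (The model itself is also inconsistent: if a level-$n$ tile of length $\asymp M^{-n}$ has only two children of length $\asymp M^{-(n+1)}$, the implied constants must absorb a factor $M/2$ at every level.)

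The detour through the model is also unnecessary, because $\phi$ cancels in your final composition: the welding map is just $h_+\circ h_-^{-1}$ where $h_\mp$ are the interior/exterior uniformizations. The paper's proof works directly with these: after normalizing base points so every tile of $\TT_3$ has inner and outer harmonic measure $<1/3$, it pushes the nest forward under $h_-$ and $h_+$ to get two nests $\TT^-_n,\TT^+_n$ on $\Circle$ with identical combinatorics; the width bounds together with Lemma~\ref{lmm:W^- I J} show that in \emph{each} of these nests adjacent same-level intervals have comparable length, and since the combinatorics is $M$-bounded and diameters shrink, the circle homeomorphism $h_+\circ h_-^{-1}$ matching the two nests is quasisymmetric with constant depending only on $C,M$; quasiconformality of $D$ then follows from the welding criterion exactly as in your last step. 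So the fix is to drop the absolute $M^{-n}$ normalization and compare the two uniformized tilings to \emph{each other}, using only the local comparability that the hypotheses actually provide.
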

\begin{proof}
Assume that $\TT=[\TT^n]_{n\ge -1}$. Then there are at least $4$ intervals in $\TT_3$. Let us choose base points $u\in \intr D$ and $v\in \wC\setminus D$ such that the inner and outer harmonic measure of every $I\in \TT_3$ with respect to $u$ and $v$ is less than $1/3$.
 
Consider conformal maps $h_-\colon (\intr D,u)\to (\Disk, 0)$ and $h_+\colon (\wC\setminus D,v)\to (\Disk, 0)$ and define \[\TT^-_n\coloneqq h_{-,*}(\TT_n)\sp\sp \text{ and }\sp\sp \TT^+_n \coloneqq  h_{+, *}(\TT_n)\]
to be the induced partitions on $\Circle=\partial \Disk$. The assumptions on the harmonic measures and the width imply that the diameter of every $I\in \TT_n^-\cup \TT_n^+$ is comparable to the diameters of two neighboring intervals in the same tiling -- see the estimates in Lemma~\ref{lmm:W^- I J}. Therefore, $h_+\circ h_-^{-1}$ is quasisymmetric with the dilatation bounded in terms $C$ and $M$. The curve $\partial D$ is a $K_{C,M}$-qc circle as the result of a qc welding.
\end{proof}

\subsection{Estimates for $\Dbb_n$} \emph{Let us for the rest of this section view $\bK$ in Theorems~\ref{mainthm:v1} and~\ref{mainthm:v2} as $\bK=O(1)$.} In particular, the main estimate in Theorem~\ref{mainthm:v1}  takes form $\Width^+_3(I)\le \bK=O(1)$ for every combinatorial interval $I\subset \partial Z$. We will need the following estimates:

\begin{lem}
\label{lem:Dbb_n:est}
 For every diffeo-tiling \S\ref{sss:diff tilings} $\Dbb_m$ consisting of at least $4$ intervals and every interval $I_j\in \Dbb_m$ the following holds. Write $L_j\coloneqq  I_{j-1} \cup I_j\cup I_{j+1}$, and let $ I^m_j,  L^{m,c}_j $ be the projections of $I_j, L_j^c$ onto $\partial \wZ^m$. Then
 \begin{enumerate}[label=\text{(\Roman*)},font=\normalfont,leftmargin=*]
 \item \label{E1:lem:Dbb_n:est} $\Width^+_{3,\Dbb_m}(I_j)\coloneqq \Width_Z^+\big(I_j, L_j^c\big)\asymp 1$;
 \item \label{E2:lem:Dbb_n:est} $\Width_{3,\widehat \Dbb^m_m}(I_j)=\Width\left(\Fam_{3,\widehat \Dbb^m_m}(I_j)\right)\coloneqq \Width_{\wZ^{m}}\big( I^m_j,  L^{m,c} \big)\asymp 1$
 \end{enumerate}
(where $\widehat \Dbb^m_m$ denotes the projection of $\Dbb_m$ onto $\wZ^m$);
 \begin{enumerate}[label=\text{(\Roman*)},start=3,font=\normalfont,leftmargin=*]
\item \label{E3:lem:Dbb_n:est}  for an interval $V\in \Dbb_{m+n}$ such that $V\subset I_j$ and $V$ is attached to one of the endpoints of $I_j$, we have $\Width_Z^+\big(V, L^c \big)\asymp_n 1$.
\end{enumerate}
\end{lem}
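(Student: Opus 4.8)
\textbf{Proof plan for Lemma~\ref{lem:Dbb_n:est}.}
The three estimates are all of the ``comparable to $1$'' type, and the natural strategy is to estimate each width from above by exhibiting enough curves (or by splitting into finitely many canonical rectangles) and from below by bounding the dual family. For \ref{E1:lem:Dbb_n:est}, the upper bound $\Width^+_Z(I_j, L_j^c)\preceq 1$ is immediate from Theorem~\ref{mainthm:v1}, since every interval of $\Dbb_m$ has length $\ell_m$ or $\ell_m+\ell_{m+1}$ (Lemma~\ref{lem:Jbb_n}), hence $I_j$ is a concatenation of $O(1)$ combinatorial intervals and $L_j^c\subset [3I_j']^c$ for a slightly larger combinatorial $I_j'\supset I_j$; applying $\Width^+_3\le\bK$ to each piece and the Parallel Law gives the bound. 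The lower bound follows by applying the same argument to the dual pair: the two complementary intervals $A,B$ between $I_j$ and $L_j^c$ are themselves each a union of $O(1)$ combinatorial intervals of level $m$, so $\Width^+_Z(A,B)\preceq 1$ by the same reasoning, and since $\Width^+_Z(A,B)=1/\Width^+_Z(I_j,L_j^c)$ (the Gr\"otzsch duality for the sphere minus two intervals), we get $\Width^+_{3,\Dbb_m}(I_j)\asymp 1$.

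For \ref{E2:lem:Dbb_n:est}, the point is that $I_j, L_j$ are regular rel $\wZ^m$ (their endpoints lie in $\CP_{m+1}\subset\partial Z\cap\partial\wZ^m$), indeed well-grounded, since the endpoints of diffeo-intervals are away from the $S^\well$-buffers by construction. So Lemma~\ref{lem:W^+:well grnd int} lets me trade the width rel $\wZ^m$ for the width rel $\overline Z$ up to the fixed factor $1\pm\varepsilon$: $\Width^+_{\wZ^m}(I_j^m, L_j^{m,c})\asymp \Width^+_Z(I_j, L_j^c)\asymp 1$ by part \ref{E1:lem:Dbb_n:est}. (A small bookkeeping point: $\Width_{3,\widehat\Dbb^m_m}$ as written refers to the \emph{full} family in $\wC$, not just the outer one, but since the endpoints are well-grounded and $I_j$ is short relative to $L_j^c$, the inner contribution is controlled by Theorem~\ref{thm:wZ:shallow scale}, which gives $\Width^-_{\wZ^m}(I_j^m, L_j^{m,c})\asymp 1$ as well — here I would split off the inner and outer pieces and add them, using $\Width^+ + \Width^- \asymp \Width$ up to $O(1)$ in this regime.)

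For \ref{E3:lem:Dbb_n:est}, $V\in\Dbb_{m+n}$ is again a concatenation of $O(1)$ combinatorial intervals of level $m+n$ (Lemma~\ref{lem:Jbb_n}), so $\Width^+_Z(V, [3V]^c)\preceq 1$ by Theorem~\ref{mainthm:v1} and the Parallel Law; this gives the upper bound $\Width^+_Z(V, L^c)\le\Width^+_Z(V,[3V]^c)\preceq 1$ for free. The lower bound is where the dependence on $n$ enters: $V$ is attached to an endpoint $a$ of $I_j$, and $a$ is a point of $\CP_{m+1}$, so the complementary interval $B$ between $V$ and $L^c=L_j^c$ that runs through the ``far'' part of $I_j\cup I_{j\pm1}$ has length $\asymp_n |V|$ (it consists of $O(2^n)$ level-$(m+n)$ intervals, since $|I_j|/|V|\asymp_n 1$ by \eqref{eq:length decrease}). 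Then $\Width^-_Z(V,B)\asymp_n 1$ by the Log-Rule (Lemma~\ref{lmm:W^- I J}), and since $V$ and $L^c$ are combinatorially at distance comparable to $\min\{|V|,|B|\}$ I can estimate $\Width^+_Z(V,L^c)$ from below by the harmonic sum $\Width^+_Z(V, [3V]^c)\oplus(\text{a constant depending on }n)$ — more cleanly, by passing to the dual: the two complementary intervals between $V$ and $L^c$ each have length $O_n(|V|)$, so $\Width^+_Z$ of that dual pair is $\preceq_n 1$, hence $\Width^+_Z(V,L^c)\succeq_n 1$.

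\textbf{Main obstacle.} The routine part is the Parallel-Law / Gr\"otzsch-duality bookkeeping; the one genuinely delicate point is \ref{E2:lem:Dbb_n:est} when $|I_j|$ is \emph{not} much bigger than $\ell_{m+1}$ — i.e.\ exactly the regime where $\wZ^m$ may have just been regularized on level $m$ and the diffeo-interval $I_j$ sits across a fjord or a peninsula of $Z^m$. In that case the projection $I_j^m$ onto $\partial\wZ^m$ can be strictly shorter than $I_j^\bullet$, and one must check that the ``reclaimed'' part of $I_j$ (which by Assumption~\ref{ass:wZ:CombSpace} is most of $T_i^{m+1}$) does not destroy the comparability; the tool for this is precisely the inductive control of peninsula geometry (Lemma~\ref{lmm:gound inter:penins}) together with the well-grounded trading lemma, so the argument goes through, but this is the step I would write out most carefully rather than assert.
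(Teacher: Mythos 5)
Your treatment of \ref{E1:lem:Dbb_n:est} is essentially the paper's argument (the uniform bounds of Lemmas~\ref{lem:ind:mainthm:v1},~\ref{lem:ind:mainthm:shallow case} plus the splitting of \S\ref{sss:PrfOf mainthm:v1} give one direction, and duality against the neighboring pair $I_{j-1},I_{j+1}$ — this is where the hypothesis of at least four tiles enters — gives the other), so that part is fine. The first genuine gap is in \ref{E2:lem:Dbb_n:est}. The quantity there is the \emph{full} width, and your parenthetical ``$\Width^+ + \Width^- \asymp \Width$ up to $O(1)$ in this regime'' is precisely the nontrivial claim, not bookkeeping: in general the full width can exceed the sum of the outer and inner widths by an arbitrary amount, because curves may submerge into $\wZ^m$ and re-emerge many times — exactly the snake phenomenon the paper builds machinery to control — and nothing in Theorem~\ref{thm:wZ:shallow scale} or Lemma~\ref{lem:W^+:well grnd int} rules this out. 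The paper closes this by quoting Lemma~\ref{lem:trad width to width+}: if $\Width_{\wZ^m}(I^m_j,L^{m,c}_j)$ exceeded the outer width substantially, that lemma would produce a grounded interval with enormous $\Width^+_\lambda$, contradicting Lemmas~\ref{lem:ind:mainthm:v1},~\ref{lem:ind:mainthm:shallow case}. Your plan needs this trading step (or an equivalent snake argument); the obstacle you flagged (reclaimed fjords, Assumption~\ref{ass:wZ:CombSpace}) is not where the difficulty sits.

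The more serious gap is the lower bound in \ref{E3:lem:Dbb_n:est}. Your premise ``$|I_j|/|V|\asymp_n 1$ by \eqref{eq:length decrease}'' is false: \eqref{eq:length decrease} gives $\length_{m+n}<\length_m\,2^{-\lfloor n/2\rfloor}$, i.e.\ only a \emph{lower} bound on the ratio $\length_m/\length_{m+n}$, and this ratio is unbounded in terms of $n$ whenever the partial quotients below the golden-mean tail are large. So the complementary intervals of the pair $(V,L^c)$ need not have length $O_n(|V|)$; and, more fundamentally, even if they did, bounding the \emph{outer} width of the dual pair across the tiny gap $V$ by a combinatorial-length count is not legitimate — the Log-Rule (Lemma~\ref{lmm:W^- I J}) governs the inner geometry only, while the outer geometry near the critical endpoint of $I_j$ is exactly where deep fjords could make such a width blow up. That is the real content of \ref{E3:lem:Dbb_n:est}. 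The paper proves it by fixing a base point $w$ in the exterior and showing inductively that the nested intervals $V_n\in\Dbb_{m+n}$ attached to the endpoint have comparable exterior harmonic measures; the crucial case $\length_{m+n-1}\gg\length_{m+n}$ is handled by Lemma~\ref{lem:est for bN} (the uniform bound $\Width^+_{\ext,m}(X^1,Y^1)\succeq 1$ for the blocks flanking the endpoints, which encodes the high-type threshold) together with \ref{E1:lem:Dbb_n:est} applied at the deeper level. Without an input of that strength your $\succeq_n$ estimate does not follow; your upper bound $\Width^+_Z(V,L^c)\preceq 1$ is fine.
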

\begin{proof}
It follows from Lemmas~\ref{lem:ind:mainthm:v1} and~\ref{lem:ind:mainthm:shallow case} by splitting $I_j$ as in \S\ref{sss:PrfOf mainthm:v1} that $\Width^+_{3,\Dbb_m}(I_j)\succeq  1$. Since this holds for all the $I_j$, we obtain Statement~\ref{E1:lem:Dbb_n:est}.  

Lemma~\ref{lem:trad width to width+} reduces Statement~\ref{E2:lem:Dbb_n:est} to Lemmas~\ref{lem:ind:mainthm:v1},~\ref{lem:ind:mainthm:shallow case}. 

Choose a point $w\in \wC\setminus \overline Z$ such that the intervals $I_{j-1},I_j, I_{j+1}, L_j^c$ have comparable harmonic measures in $\wC\setminus \overline Z$ with respect to $w$. Let $V_{n}\in \Dbb_{m+n}, V_0=I_j$ be a sequence of nested intervals so that $V_{n}$ is attached to one of the endpoints of $I_j$. We {\bf claim} that $V_n$ and $V_{n-1}$ have comparable harmonic measures in $(\wC\setminus \overline Z, w)$; this will imply Statement~\ref{E3:lem:Dbb_n:est}.
\begin{proof}[Proof of the claim]
If $\length_{m+n}\asymp \length_{m+n-1}$, then the claim follows from Statement~\ref{E1:lem:Dbb_n:est}. 

Assume that $\length_{m+n-1}\gg \length_
{m+n}$. Let $V'_n\subset V_{n-1}\setminus V_n$ be the interval in $\Dbb_{m+n}$ attached to another endpoint of $V_{n-1}$. It follows from $\Width^+(X^1,Y^1)\succeq 1$ (in Lemma~\ref{lem:est for bN}) and Statement~\ref{E1:lem:Dbb_n:est} that $\Width^+(V_n,V'_n)\asymp 1$. This implies the claim.
\end{proof}
\end{proof}

For an interval $I_j\subset \Dbb_m$, let $\RR^+_{\dual}(I_j)$ be the geodesic rectangle (see~\ref{sss:GeodRect}) in $\wC\setminus Z$ between $I_{j-1}$ and $I_{j+1}$; i.e., $\partial^{h,0}\RR^+_\dual (I_j)=I_{j-1},$ $\partial^{h,1}\RR^+_\dual (I_j)=I_{j+1},$ and the vertical sides of $\RR^+_\dual (I_j)=I_{j-1}$ are the hyperbolic geodesics of $\wC\setminus \overline Z$. It follows from Lemma~\ref{lem:Dbb_n:est}, \ref{E1:lem:Dbb_n:est} that 
\begin{equation}
\label{eq:Width:R_dual}
\Width\left(\RR^+_{\dual}(I_j)\right)\asymp 1.
\end{equation}

\subsection{Nest of tiling of dams} 
\label{ss:nest of til:beta}
Consider a dam $\beta^m
_i\subset \partial \wZ$, and assume that it connects $x$ and $y$. We recall from Assumption~\ref{ass:wZ:Linking} that $x,y\in \CP_{m+1}$. Let us denote by $V=V(\beta^m_i)\ni \infty$ the unbounded component of $\wC\setminus (\overline Z\cup \beta^m_i)$ and by $U=U(\beta^m_i)\not\ni \infty$ the bounded component of $\wC\setminus (\overline Z\cup \beta^m_i)$. For every $n\ge m+1$, we specify, see Figure~\ref{Fig:intervals: I_x}:
\begin{itemize}
\item $I^n_V$ to be the interval in $\Dbb_n$ (see~\S\ref{sss:diff tilings}) adjacent to $x$ such that $I^n_V\subset \partial V$,
\item $I^n_U$ to be the interval in $\Dbb_n$ adjacent to $x$ such that $I^n_U\subset \partial U$,
\item $J^n_V$ to be the interval in $\Dbb_n$ adjacent to $y$ such that $J^n_V\subset \partial V$,
\item $J^n_U$ to be the interval in $\Dbb_n$ adjacent to $y$ such that $J^n_U\subset \partial U$.
\end{itemize}
We will refer to $I^n_U, I^n_V,J^n_U, J^n_V\in \Dbb_n$ as the \emph{$n$th foundation} of $\beta^m_i$.

\begin{figure}[t!]
\[\begin{tikzpicture}[line width=0.2mm] 

\draw (-6.5,0)--(-5,0)
(-3,0)--(3,0)
(6.5,0)--(5,0);
\draw[blue,line width=0.35mm] (-5,0)--(-3,0)
(3,0)--(5,0);

\filldraw[red] (-4,0) circle (0.04 cm);
\node[below,red] at(-4,0) {$x$};
\filldraw[red] (4,0) circle (0.04 cm);
\node[below,red] at(4,0) {$y$};
\draw[red] (-4,0) edge[bend left=50] node[above]{$\beta^m_i$}  (4,0);

\filldraw[blue] (-3,0) circle (0.04 cm);
\filldraw[blue] (-5,0) circle (0.04 cm);

\node[below,blue] at (-3.5,0) {$I^n_{U}$};
\node[below,blue] at (-4.5,0) {$I^n_{V}$};

\node at (0,1) {$U$};

\node at (-4,1.5) {$V$};

\filldraw[blue] (3,0) circle (0.04 cm);
\filldraw[blue] (5,0) circle (0.04 cm);

\node[below,blue] at (3.5,0) {$J^n_{V}$};
\node[below,blue] at (4.5,0) {$J^n_{U}$};

\end{tikzpicture}\]
\caption{Intervals $I^n_U, I^n_V, J^n_U,J^n_V$ form the foundation of $\beta_i^m$.}
\label{Fig:intervals: I_x}
\end{figure}
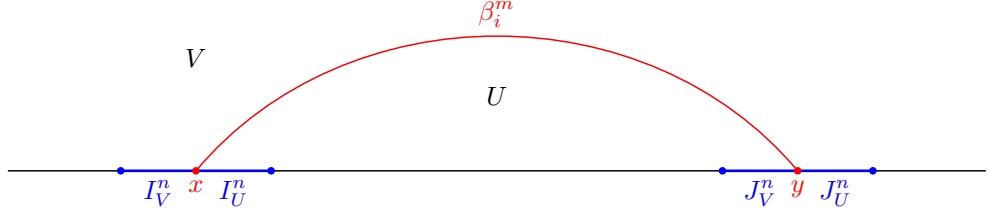

We say that intervals $A_1,A_2,\dots, A_s\subset \partial S^1$ with pairwise disjoint interiors are \emph{harmonically comparable with respect to $\Disk$} if $\Width^-_\Disk(I,J)\asymp 1$ for every pair of non-adjacent intervals
\[I, J \in \{A_j, j\le s\}\cup \{\text{connected components of }S^1\setminus \bigcup_{j=1}^s A_j\}.\]
In other words, all the $A_j$ as well as all their complementary intervals have comparable inner harmonic measure with respect to a certain point in $\Disk$. Similarly, the harmonic comparison is defined for intervals of $\partial Z$ rel $\wC\setminus \overline Z$. The following lemma is a consequence of Lemma~\ref{lem:Dbb_n:est}, Estimates~\ref{E1:lem:Dbb_n:est} and \ref{E3:lem:Dbb_n:est}.

\begin{lem}
\label{lem:harm compar of I,J}
For every $\beta^m_i$ and every $n> m+1$, we have:
\begin{itemize}
\item $I^{m+1}_U, I^{m+1}_V, J^{m+1}_U, J^{m+1}_V$ are harmonically comparable with respect to $\wC\setminus \overline Z$,
\item $I^{n}_U, I^{n}_V, (I^{n-1}_U\cup I^{n-1}_V)^c$ are harmonically comparable with respect to $\wC\setminus \overline Z$,
\item $J^{n}_U, J^{n}_V, (J^{n-1}_U\cup J^{n-1}_V)^c$ are harmonically comparable with respect to $\wC\setminus \overline Z$.
\end{itemize}
\end{lem}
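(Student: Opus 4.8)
The plan is to translate the statement into comparisons of outer harmonic measures and then assemble it from Estimates~\ref{E1:lem:Dbb_n:est} and~\ref{E3:lem:Dbb_n:est} of Lemma~\ref{lem:Dbb_n:est} by the Splitting Argument (Remark~\ref{rem:SplitArg}). Fix the dam $\beta=\beta^m_i$ with endpoints $x,y\in\CP_{m+1}\setminus\CP_m$; both lie in the interior of the level-$m$ diffeo-interval $T_i\in\Dbb_m$, and by Assumption~\ref{ass:wZ:CombSpace} the arc $[x,y]$ occupies a definite proportion of $T_i$ while $x$ and $y$ stay at combinatorial distance $\gtrsim\length_{m+1}$ from the endpoints of $T_i$. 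By conformal invariance of widths and the Log-Rule (Lemma~\ref{lmm:W^- I J}), a finite family of intervals of $\partial Z$ together with the complementary arcs of their union is harmonically comparable with respect to $\wC\setminus\overline Z$ precisely when $\Width^+_Z(P,Q)\asymp 1$ for every non-adjacent pair $P,Q$ in that family; so in each of the three cases there are only finitely many such comparisons to verify, with an absolute constant.

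Two base estimates feed the chaining. First, Estimate~\ref{E1:lem:Dbb_n:est} at level $k$ gives $\Width^+_Z(P,[3P]^c)\asymp 1$ for every $P\in\Dbb_k$, so by the Log-Rule each $P\in\Dbb_k$ is harmonically comparable to each of its two $\Dbb_k$-neighbours and to the gap separating it from the far part of $\partial Z$. Second, Estimate~\ref{E3:lem:Dbb_n:est} with parameter $r$ gives that a level-$(k+r)$ diffeo-interval anchored at an endpoint of a level-$k$ interval $P$ is harmonically comparable to the complement of the $3$-neighbourhood of $P$; for the $(m+1)$st foundation one uses $r=1$ at base level $m$, and for the passage from the $n$th to the $(n-2)$nd foundation one uses $r=2$ at base level $n-2$, so the constants stay absolute. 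Combining these makes any two of the intervals under consideration that are either mutual neighbours at a common level, or nested with index gap $\le 2$, harmonically comparable.

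Concretely, for the first bullet $I^{m+1}_U\asymp I^{m+1}_V$ (two $\Dbb_{m+1}$-neighbours at $x$) and $J^{m+1}_U\asymp J^{m+1}_V$ (at $y$) come straight from Estimate~\ref{E1:lem:Dbb_n:est} at level $m+1$; to bridge the $x$-side to the $y$-side I would run Estimate~\ref{E1:lem:Dbb_n:est} at level $m$ for $T_i$ and Estimate~\ref{E3:lem:Dbb_n:est} with $r=1$ anchored at both endpoints of $T_i$, feeding in Assumption~\ref{ass:wZ:CombSpace} so that the whole arc $[x,y]$ and its two extreme level-$(m+1)$ subintervals become comparable; the at most four complementary arcs of $\partial Z\setminus(I^{m+1}_U\cup I^{m+1}_V\cup J^{m+1}_U\cup J^{m+1}_V)$ are handled the same way. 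For the second and third bullets, since $I^n_U\subsetneq I^{n-2}_U$ and $I^n_V\subsetneq I^{n-2}_V$ (nesting of $\Dbb$, with strictness from Lemma~\ref{lem:Dbb_n:Dbb_n+2}), $\partial Z$ splits cyclically into $I^n_U$, $C_1\coloneqq I^{n-2}_U\setminus I^n_U$, $(I^{n-2}_U\cup I^{n-2}_V)^c$, $C_2\coloneqq I^{n-2}_V\setminus I^n_V$, $I^n_V$; the pair $(I^n_U,I^n_V)$ is handled by Estimate~\ref{E1:lem:Dbb_n:est} at level $n$, and each remaining non-adjacent pair by combining Estimate~\ref{E1:lem:Dbb_n:est} at level $n-2$ applied to $I^{n-2}_U,I^{n-2}_V$ (which makes $I^{n-2}_U$, $I^{n-2}_V$ and $(I^{n-2}_U\cup I^{n-2}_V)^c$ comparable from a base point adapted to them) with Estimate~\ref{E3:lem:Dbb_n:est} with $r=2$ inside $I^{n-2}_U$ and $I^{n-2}_V$ (which makes $I^n_U\asymp C_1$ and $I^n_V\asymp C_2$).

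The step I expect to be the real obstacle is the cross-comparison of the two foundations belonging to the same dam, since $x$ and $y$ are combinatorially far apart inside $T_i$ and this is not a local statement: it has to be extracted by pushing Estimate~\ref{E1:lem:Dbb_n:est} at level $m$ (resp.\ $n-2$) through Assumption~\ref{ass:wZ:CombSpace}, so that from a base point adapted to $T_i$ the arc $[x,y]$ already looks balanced and its extreme level-$(m+1)$ (resp.\ level-$n$) subintervals inherit comparability with it. A secondary nuisance is the degenerate configurations — some $C_j$ collapsing to a single interval, or two foundation intervals becoming adjacent — in which a formally ``non-adjacent'' pair drops out of the definition; these I would dispose of using the strictness in Lemma~\ref{lem:Dbb_n:Dbb_n+2} together with the level-$n$ (resp.\ level-$(m+1)$) instance of Estimate~\ref{E1:lem:Dbb_n:est}.
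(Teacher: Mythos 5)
Your treatment of the second and third bullets, and of the same-endpoint comparisons in the first bullet, is sound and is exactly the route the paper intends (the paper indeed dispatches the lemma as a consequence of Estimates~\ref{E1:lem:Dbb_n:est} and~\ref{E3:lem:Dbb_n:est}): since $x,y\in\CP_{m+1}\subset\CP_{n-2}$, the point $x$ is an endpoint of $I^{n-2}_U$ and $I^{n-2}_V$, so Estimate~\ref{E3:lem:Dbb_n:est} with depth $2$ gives $|I^{n}_U|\asymp|I^{n-2}_U|$ and $|I^{n}_V|\asymp|I^{n-2}_V|$ in outer harmonic measure, and Estimate~\ref{E1:lem:Dbb_n:est} at levels $n$ and $n-2$ handles the neighbour comparisons; together with the Log-Rule this yields all non-adjacent pairs in the second and third bullets (for $n\ge m+3$; for $n=m+2$ the interval $I^{m}_U$ is not even defined, which is a wrinkle of the statement rather than of your argument).

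The gap is the step you yourself flag as the obstacle, and the bridge you propose for it would fail. Estimate~\ref{E3:lem:Dbb_n:est} anchored at the endpoints $a_i,a_{i+1}$ of $T_i$ controls the level-$(m+1)$ intervals attached to $a_i,a_{i+1}$, not the foundation intervals attached to $x$ and $y$: the dam endpoints lie in $\CP_{m+1}\setminus\CP_m$, deep inside $T_i$, separated from $a_i$ by an unbounded number of $\Dbb_{m+1}$-intervals (at a regularization level $\length_m/\length_{m+1}$ is huge, and in the construction $\log\bigl(|X_i|/\length_{m+1}\bigr)\succeq K$), so neither a bounded chain of Estimate~\ref{E1:lem:Dbb_n:est} nor Estimate~\ref{E3:lem:Dbb_n:est} reaches $I^{m+1}_U,J^{m+1}_U$. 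Moreover Assumption~\ref{ass:wZ:CombSpace} is purely a statement about combinatorial length in the linearizing coordinate and carries no information about outer harmonic measure; the divergence of these two quantities inside fjords is precisely the phenomenon at stake, and the dam sits where that divergence is maximal. Concretely, the first bullet requires, e.g., $\Width^+\bigl(I^{m+1}_U,J^{m+1}_U\bigr)\asymp 1$, i.e.\ that the whole arc of $\partial Z$ between the two inner foundation intervals (combinatorially almost all of $[x,y]$) has outer harmonic measure $\preceq$ that of a single level-$(m+1)$ interval; this cannot follow from Estimates~\ref{E1:lem:Dbb_n:est}, \ref{E3:lem:Dbb_n:est} and Assumption~\ref{ass:wZ:CombSpace} alone, since all three can hold for a configuration with no deep fjord (a nearly round $\partial Z$ with $x,y$ harmonically far apart), where that width is $\ll 1$. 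What the cross-fjord comparisons actually use is the placement data of the dam: the protection $\XX^m_i$ of Assumption~\ref{ass:wZ:EtraProt} and the conformal separation of Assumption~\ref{ass:wZ:bDelta} for the upper bounds, and, for the lower bounds, the collars of Assumption~\ref{ass:wZ:collars} together with the way $x_i,y_i$ are chosen in the proof of Corollary~\ref{cor:regul} (they split the bounded-width rectangle $\FamG^{m+1}_{i,b}$ between $X_i$ and $Y_i$) and the fjord estimates of Theorem~\ref{thm:par fjords} (or Lemma~\ref{lem:est for bN}), which force the part of $\partial Z$ beyond the dam to carry only a bounded multiple of the harmonic measure of the foundation intervals. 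Your write-up needs this conformal input; combinatorial balance of $[x,y]$ inside $T_i$ is not a substitute for it.
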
\qed

\begin{lem}[Existence of $\TT(\beta^m_i)$]
\label{lem:TT for beta^m_i}
There is an absolute $\bC>0$ such that for every $\beta^m_i$ there is a nest of tilings
 \begin{equation}\label{eq:lem:TT for beta^m_i}
\TT(\beta^m_i)=(\TT_{n})_{n\ge m+1}
\end{equation} with $10$-bounded combinatorics such that the following properties hold. \\
\noindent For $V=V(\beta^m_i)$ as above and every interval $I\in \TT_{n}\cup\{I^n_V,J^n_V\},$ $n\ge m+1$, let 
\begin{itemize}
\item $I^V_-, I_+^V$ be two neighboring intervals of $I$ in \[\TT_{n}(\beta^m_i)\ \cup \ \{\text{intervals of $\Dbb_n$ that are in $\partial V$} \};\]
\item $\RR^+_{\dual,V}(I_j)$ be the geodesic rectangle (similar to~\eqref{eq:Width:R_dual}) in $V$ between $I^V_-, I_+^V$;
\end{itemize}
\noindent For $U=U(\beta^m_i)$ and every interval $I\in \TT_{n}\cup\{I^n_U,J^n_U\},$ $n\ge m+1$, let 
\begin{itemize}
\item $I^U_-, I_+^U$ be two neighboring intervals of $I$ in \[\TT_{n}(\beta^m_i)\ \cup \ \{\text{intervals of $\Dbb_n$ that are in $\partial U$} \};\]
\item  $\Fam_{3,U}^-(I)$ be the family of curves in $U$ connecting $I$ to $\partial U\setminus (I^U_-\cup I\cup  I_+^U)$.
\end{itemize}

 \noindent Then
 \begin{enumerate}[label=\text{(\Alph*)},font=\normalfont,leftmargin=*]
\item \label{A:lem:TT for beta^m_i}$\Width(\RR^+_{\dual,V}(I_j))\asymp 1$;
\item\label{B:lem:TT for beta^m_i} $\Width_{3,U}^-(I)=\Width(\Fam_{3,U}^-(I))\asymp 1$.
\end{enumerate}
\end{lem}
\begin{proof}
Consider  \[S^1=\{z :\sp |z|=1\}\subset \C\sp\sp \text{ and } \sp\sp X=S^1\cup [-1,1]\subset \C. \]
And let  us consider a conformal map $h\colon \wC\setminus \overline Z\to \Disk$ mapping $x$ and $y$ to $-1$ and $1$ respectively. Since $\beta^m_i$ is a hyperbolic geodesic, we have  \[h(\beta^m_i)=[-1,1]\sp\sp\text{ and }\sp\sp h(\partial Z\cup \beta^m_i)=X.\]

Let $h_*(\Dbb_n)$ be the pushworward of the diffeo-tiling $\Dbb_n, n>m$ onto $S^1$ by $h$. By Lemma~\ref{lem:Dbb_n:est}, any two neighboring intervals in $h_*(\Dbb_n)$ have comparable diameters (uniformly over $n$). And by Lemma~\ref{lem:harm compar of I,J}, the following diameters are comparable:
\begin{itemize}
\item of $ h(I^{m+1}_U),h( I^{m+1}_V), h(J^{m+1}_U),h( J^{m+1}_V)$;
\item of $ h(I^{n}_U),h( I^{n}_V), h(I^{n+1}_U),h( I^{n+1}_V)$ for $n\ge m+1$;
\item of $ h(J^{n}_U),h( J^{n}_V), h(J^{n+1}_U),h( J^{n+1}_V)$ for $n\ge m+1$.
\end{itemize}

For $n\ge m+1$, let $\beta_I^n$ be the hyperbolic geodesic of $\Disk$ connecting the endpoints of $h\left(I^n_{U}\cup I^n_V\right)$, and let $x_I^n$ be the intersection of $\beta_I^n$ with $[-1,1]\subset X$. We define $I^n\coloneqq [-1,x^n_I]\subset [-1,1]$.

Similarly,  let $\ell_J^n$ be the hyperbolic geodesic of $\Disk$ connecting the endpoints of $h\left(J^n_{U}\cup J^n_V\right)$, and let $x_J^n$ be the intersection of $\ell_J^n$ with $[-1,1]\subset X$. We define $J^n\coloneqq [x^n_J,1]\subset [-1,1]$. By construction, the following diameters are comparable:
\begin{itemize}
\item of $ I^{m+1}, J^{m+1} ,h(I^{m+1}_U),h( I^{m+1}_V), h(J^{m+1}_U),h( J^{m+1}_V)$;
\item of $I^n,I^{n+1}, h(I^{n}_U),h( I^{n}_V), h(I^{n+1}_U),h( I^{n+1}_V)$ for $n\ge m+1$;
\item of $ J^n,J^{n+1},h(J^{n}_U),h( J^{n}_V), h(J^{n+1}_U),h( J^{n+1}_V)$ for $n\ge m+1$.
\end{itemize}

We can now easily extend $(I^n,J^n)_{n\ge m+1}$ to a tiling of $[-1,1]$ and then pull it back under $h$ to a required tiling of $\beta^m_i$.
\end{proof}

\subsection{Nest of tilings on $\partial \wZ$} \label{ss:NestofTilings}For $m\ge 0$, consider the diffeo-tiling $\Dbb_m$ of $\partial Z$. Since level $n\le m$ dams land at $\CP_{n+1}$ (Assumption~\ref{ass:wZ:Linking}), every interval $T\in \Dbb_m$ is either regular rel $\wZ$ or is inside a reclaimed fjord of generation $n< m$. We denote by $\Dbb'_m$ the set of regular rel $\wZ$ intervals in $\Dbb_m$. And we define $\widehat \Dbb'_m\equiv \Dbb'^{-1}_m$ to be the set of projections of intervals in $\Dbb'_m$ onto $\wZ\equiv \wZ^{-1} $. We define the \emph{diffeo-tiling} $\TT(\wZ)$ on $\partial \wZ$ to be union of the $\Dbb'_n$ and appropriated tilings on dams:
\begin{equation}
\label{eq:TT of wZ}
\TT(\wZ)\equiv \TT(\wZ^{-1})\coloneqq \big[\widehat \Dbb'_n\big]_{n\ge -1} \cup \bigcup_{\beta^m_i} \TT(\beta^m_i),\hspace{2cm} \text{ $\TT(\beta^m_i)$ is as in~\eqref{eq:lem:TT for beta^m_i}}
\end{equation} 

The following proposition combined with Lemma~\ref{lmm:qc tiling condit} implies Theorem~\ref{mainthm:v3}.
\begin{prop}\label{prop:wZ:tiling}There is an absolute $\bC\gg 1$ such that for every eventually golden-mean rotation number the nest of tilings $\TT(\wZ)$ in~\eqref{eq:TT of wZ} has $(2\bN, \bC)$-bounded inner and outer geometries.
\end{prop}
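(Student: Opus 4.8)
The plan is to verify, separately, the three defining properties of a nest of tilings with $(2\bN,\bC)$-bounded inner and outer geometries for the partition $\TT(\wZ)$ in \eqref{eq:TT of wZ}, and then to invoke Lemma~\ref{lmm:qc tiling condit}. First I would check that $\TT(\wZ)$ is indeed a nest of tilings: the intervals of $\widehat\Dbb'_n$ shrink since $\length_n\to 0$ (with~\eqref{eq:length decrease}), every interval of $\widehat\Dbb'_n$ decomposes into at least $2$ intervals of $\widehat\Dbb'_{n+2}$ by Lemma~\ref{lem:Dbb_n:Dbb_n+2}, and the grafted subtilings $\TT(\beta^n_i)$ refine with $10$-bounded combinatorics by Lemma~\ref{lem:TT for beta^m_i}. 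The two families of intervals are nested compatibly: a dam $\beta^n_i$ appears as a single interval of $\TT_{n+1}$ and its subtiling $\TT(\beta^n_i)$ is consistent with the foundations $I^k_U,I^k_V,J^k_U,J^k_V\in\Dbb_k$ which are themselves refined by $\widehat\Dbb'_k$. So $\TT(\wZ)$ is a well-defined nest of tilings.

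Next comes $2\bN$-bounded combinatorics. For intervals coming from $\widehat\Dbb'_n$ this is exactly the content of Theorem~\ref{mainthm:v2}: the combinatorial threshold~\eqref{eq:mainthm:v2} says every level-$n$ regular interval either has length $\ge\length_n/2$ or $\le\bN\length_{n+1}$, which forces each interval of $\Dbb'_n$ to contain at most $2\bN$ intervals of $\Dbb'_{n+1}$ (modulo the standard bookkeeping for the single level-$(n+2)$ gap in a renormalization tiling, cf.\ Lemma~\ref{lem:Jbb_n}). The grafted subtilings $\TT(\beta^n_i)$ have $10$-bounded combinatorics by Lemma~\ref{lem:TT for beta^m_i}, and $10\le 2\bN$. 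The only subtlety is what happens at the interface level $n+1$ where a dam $\beta^n_i\in\TT_{n+1}$ gets subdivided starting at level $n+2$; here I would note that the four foundation intervals are comparable (Lemma~\ref{lem:harm compar of I,J}) and that the total number of children stays bounded by $\max\{2\bN,10\}=2\bN$.

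For the outer geometry I would use Theorem~\ref{mainthm:v1} together with Lemma~\ref{lem:W+:well grnd int} / Lemma~\ref{lem:W+:ground inter}: for a regular interval $I\in\widehat\Dbb'_n$ with neighbors in the same tiling, $\Width^+_{3,\TT}(I)$ is comparable to $\Width^+_Z(I^\bullet,[3I^\bullet]^c)$ for the projection onto $\partial Z$, and since $I^\bullet$ is (up to splitting into $O(1)$ combinatorial pieces, as in \S\ref{sss:PrfOf mainthm:v1}) a combinatorial interval, Theorem~\ref{mainthm:v1} bounds it by $\bK=O(1)$; the reverse bound (the dual family is not too wide either) comes from the analogous estimate applied to the complementary intervals together with Lemma~\ref{lem:Dbb_n:est}\ref{E1:lem:Dbb_n:est},\ref{E3:lem:Dbb_n:est}. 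For intervals of the dam subtilings, property~\ref{A:lem:TT for beta^m_i} of Lemma~\ref{lem:TT for beta^m_i} gives $\Width(\RR^+_{\dual,V}(I))\asymp 1$, and since distinct dams are separated by the wide protections $\XX^n_i$ (Assumption~\ref{ass:wZ:EtraProt}, Lemma~\ref{lem:CompInter:prf}), curves realizing the outer width of one dam's interval cannot reach another dam without paying a definite modulus, so the global outer width of each interval of $\TT(\wZ)$ stays $\asymp 1$. For the inner geometry I would argue symmetrically using Theorem~\ref{thm:wZ:shallow scale} and the beau coarse-bounds: the estimate~\eqref{eq:1:prop:wZ:shallow scale:impr} gives $\Width^-_{\wZ}(I,J)\asymp 1$ for neighboring regular intervals of comparable length (this is Lemma~\ref{lem:Dbb_n:est}\ref{E2:lem:Dbb_n:est}), and property~\ref{B:lem:TT for beta^m_i} of Lemma~\ref{lem:TT for beta^m_i} handles the dam subtilings, with non-interaction of distinct dams in the interior following from Lemma~\ref{lmm:gound inter:penins} and the inner collars $A^\inn$.

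I expect the main obstacle to be the interface bookkeeping: making the estimates at the level where a dam $\beta^n_i$ transitions from being a single tile of $\TT_{n+1}$ to being subdivided by $\TT(\beta^n_i)$ match up cleanly with the surrounding $\widehat\Dbb'$-tiling, so that ``neighboring intervals have comparable harmonic measures'' holds uniformly across the splice. This requires carefully choosing the first few tiles of $\TT(\beta^n_i)$ to be comparable to the foundation intervals $I^{n+1}_U,I^{n+1}_V,J^{n+1}_U,J^{n+1}_V$ (which is exactly what Lemma~\ref{lem:TT for beta^m_i} is designed to provide) and checking that the constant $\bC$ absorbing all these comparisons is genuinely absolute — independent of the number of regularizations — which in turn rests on the fact, emphasized in Remark~\ref{rem:error does not accum}, that the relevant estimates for $\wZ^m$ are $\bdelta$-independent on the scales in play.
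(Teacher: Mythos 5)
Your proposal is correct and follows essentially the same route as the paper: $2\bN$-bounded combinatorics by construction (Theorem~\ref{mainthm:v2} plus the $10$-bounded dam tilings of Lemma~\ref{lem:TT for beta^m_i}), outer bounds from the dual rectangles of Lemma~\ref{lem:Dbb_n:est} and Lemma~\ref{lem:TT for beta^m_i}~\ref{A:lem:TT for beta^m_i}, and inner bounds from Theorem~\ref{thm:wZ:shallow scale}, Lemma~\ref{lem:Dbb_n:est}~\ref{E2:lem:Dbb_n:est} and Lemma~\ref{lem:TT for beta^m_i}~\ref{B:lem:TT for beta^m_i}, with monotonicity under the embeddings $\wZ^n, U(\beta^m_i)\subset\wZ$. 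The only real difference is at the $\widehat\Dbb'$-tiles: where you bound $\Width^+_{3,\TT}$ by trading to $\partial Z$ and quoting Theorem~\ref{mainthm:v1}, the paper uniformly takes the separating dual rectangle $\RR^+_\dual(X^\bullet)$ (resp.\ $\RR^+_{\dual,V}(X^\bullet)$) between the two neighbors, restricts it to $\wC\setminus\wZ^n$ via~\eqref{eq:lem:RR vs RR^m}, and observes that every curve of $\Fam^+_{3,\TT(\wZ)}(X)$ must cross it --- which is precisely the device that also settles the interface tiles $I^n_V, J^n_V$ where your Theorem~\ref{mainthm:v1}-plus-trading shortcut is least airtight.
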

\begin{proof}
By construction, $\TT(\wZ)$ has $2\bN$ bounded combinatorics. Consider an interval $X\in \TT^n(\wZ)$. We need to show that $\Width^\pm _{3,\TT(\wZ)}(X)=O(1)$, where $\Width^\pm _{3,\TT(\wZ)}$ are defined in~\S\ref{ss:NestOfTiling}. We write 
\begin{itemize}
\item $X^\bullet\coloneqq X$ if $X$ is within a dam $\beta^m_i$, $m<n$;
\item $X^\bullet$ to be the projection of $X$ onto $\partial Z$ if $X$ is an interval in $\partial \wZ^n$.
\end{itemize}

Set either 
\begin{itemize}
\item $\RR\coloneqq \RR^+_{\dual,V}(X^\bullet)$ as in Lemma~\ref{lem:TT for beta^m_i} if  $X\in \TT_{n}\cup\{I^n_V,J^n_V\}$ for a dam $\beta^m_i$;
\item or, otherwise, $\RR\coloneqq \RR^+_\dual(X^\bullet) $ as in~\eqref{eq:Width:R_dual}.
\end{itemize}
 Let $\RR^n$ be the restriction (as in \S\ref{sss:RestrOfFamil}) of $\RR$ onto $\wC\setminus \wZ^n$. By~\eqref{eq:lem:RR vs RR^m}, we have $\Width(\RR^n)\asymp 1$. Since the curves in $\Fam^+ _{3,\TT(\wZ)}(X)$ cross $\RR^n$, we obtain \[\Width^+ _{3,\TT(\wZ)}(X)=O(1).\]
 
To show $\Width^- _{3,\TT(\wZ)}(X)=O(1)$, we will use the monotonicity of the width under the embeddings $\wZ^n, U(\beta^m_i)\subset \wZ=\wZ^{-1}$. Consider several cases.

Assume first that $X\subset \partial \wZ^n$ and $X$ is not a neighbor of any dam $\beta^m_i$, $m<n$. Then $\Width^- _{3,\TT(\wZ)}(X)\le \Width^-_{3,\widehat \Dbb_n^n(\wZ^n)}(X)=O(1)$ by Theorem~\ref{thm:wZ:shallow scale}, where $\widehat \Dbb_n^n$ is the prjection of $\Dbb_n$ onto $\wZ^n$.

In the remaining case, we have $X^\bullet\in \TT_{n}\cup\{I^n_V,J^n_V\},$ $n\ge m+1$ as in Lemma~\ref{lem:TT for beta^m_i} for a dam $\beta^m_i$. If $X$ is in the interior of $\beta^m_i$, then $\Width^-_{3,\TT(\wZ)}(X)=O(1)$ follows from Lemma~\ref{lem:TT for beta^m_i},~\ref{B:lem:TT for beta^m_i} because $\Fam^-_{3,\TT(\wZ)}(X)$ overflows $\Fam_{3,U}^-(X)$, where $U=U(\beta^m_i)\subset \wZ$. 

Assume finally that $X$ touches one of the endpoints of $\beta^m_i$. If $X\subset \beta^m_i$, then $\Fam^-_{3,\TT(\wZ)}(X)$ overflows 
\[ \Fam_{3,U}^-(X)\cup \Fam_{3,\widehat \Dbb^n_n}\left(\big[I^n_U(\beta^m_i)\big]^n \right)\cup \Fam_{3,\widehat \Dbb^n_n}\left(\big[J^n_U(\beta^m_i)\big]^n \right)\]
(see Lemma~\ref{lem:Dbb_n:est}, \ref{E2:lem:Dbb_n:est}; here $[\ ]^n$ denotes the projection onto $\wZ^n$);
otherwise 
$\Fam^-_{3,\TT(\wZ)}(X)$ overflows 
\[\Fam_{3,\widehat \Dbb^n_n}\left(X \right)\cup  \Fam_{3,U}^-\left(\big[I^n_U(\beta^m_i)\big]^n \right)\cup   \Fam_{3,U}^-\left(\big[J^n_U(\beta^m_i)\big]^n \right).\]
Lemma~\ref{lem:Dbb_n:est}, \ref{E2:lem:Dbb_n:est} and Lemma~\ref{lem:TT for beta^m_i}, \ref{B:lem:TT for beta^m_i} complete the proof.
\end{proof}

\begin{rem} \label{rem:TT:wZ^m}Similar to~\eqref{eq:TT of wZ}, we define the nest of tilings $\TT(\wZ^m)$ on $\partial \wZ^m$ to be
\begin{itemize}
\item $\big[\TT(\wZ^m)\big]_{n\ge m}\coloneqq \big[\widehat \Dbb'^m_n\big]_{n\ge m} \cup \bigcup_{\beta^m_i} \TT(\beta^m_i),$ and
\item  $\big[\TT(\wZ^m)\big]_{n< m}\coloneqq \big[\widehat \Dbb^m_n\big]_{n< m} $.
\end{itemize}
where  $\widehat \Dbb'^m_n, \widehat \Dbb^m_n$ are the projections onto $\partial \wZ^m$. Then  $\TT(\wZ^m)$ has \emph{essentially bounded geometry}:
\begin{itemize}
\item on the renormalization levels $\ge m$,  the nest of tilings $\TT(\wZ)$ has $(2\bN, \bC)$-bounded inner and outer geometries (it follows from Proposition~\ref{prop:wZ:tiling});
\item on the renormalization levels $<m$, the inner and outer geometries of $\wZ^m$ and $\overline Z$ are close and are described by the Log-Rules, see Theorem~\ref{thm:wZ:shallow scale}, \S\ref{ss:peninsulas:geometry}, and~\S\ref{sss:rem:crit circle maps and fjords}.
\end{itemize}
\end{rem}

\appendix

\section{Degeneration of Riemann surfaces}
Consider a compact Riemann surface $S\Subset \C$ with boundary. We assume that $\partial S$ consists of finitely many components. In this subsection, we will recall basic tools to detect degenerations of $S$, see~\cite{A,L-book,KL} for details. The discussion can be adjusted for open Riemann surfaces in $\C$ by considering their Caratheodory boundaries.

\subsection{Rectangles and Laminations}
Given two disjoint intervals $I,J\subset \partial S$ on the boundary of a Riemann surface, we denote by 
\begin{itemize}
\item $\Fam_S(I,J)$ the family of curves in $S$ connecting $I$ and $J$:
\begin{equation}
\label{eq:Fam I J}
 \Fam_S(I,J)\coloneqq \{\gamma\colon[0,1]\to S\mid \gamma(0) \in I, \sp \gamma(1)\in J\};
\end{equation}
\item $\Width_S(I,J)=\Width(\Fam_S(I,J))$ the extremal width between $I,J$ -- the modulus of the family $\Fam_S(I,J)$.
\end{itemize}
We will often write $\Fam^-(I,J)=\Fam_S(I,J)$ and $\Width^-(I,J)=\Width_S(I,J)$ when the surface $S$ is fixed. 

\subsubsection{Rectangles}
\label{sss:Rectan} A \emph{Euclidean rectangle} is a rectangle $E_x\coloneqq[0,x]\times [0,1] \subset \C$, where:
\begin{itemize}
\item $(0,0), (x,0), (x,1), (0,1)$ are four vertices of $E_x$,
\item $\partial^h E_x=[0,x]\times\{0,1\}$ is the horizontal boundary of $E_x$,
\item $\partial^{h,0} E_x\coloneqq [0,x]\times\{0\}$ is the \emph{base} of $E_x$,
\item $\partial^{h,1} E_x\coloneqq [0,x]\times\{1\}$ is the \emph{roof} of $E_x$,
\item $\partial^v E_x=\{0,x\}\times [0,1]$ is the \emph{vertical} boundary of $E_x$,
\item $\partial^{v,\ell} E_x\coloneqq \{0\}\times [0,1], \sp  \partial^{v,\rho} E_x\coloneqq \{x\}\times [0,1]$ is the \emph{left} and \emph{right vertical} boundaries of $E_x$; 
\item $\Fam(E_x)\coloneqq \{\{t\}\times[0,1]\mid t\in [0,x]\}$ is the \emph{vertical foliation} of $E_x$,
\item $\Fam^\full(E_x)\coloneqq \{\gamma\colon [0,1]\to E_x \mid \gamma(0)\in \partial ^{h,0}E_x,\ \gamma(1)\in \partial ^{h,1}E_x \}$ is the \emph{full family of curves} in $E_x$; 
\item $\Width(E_x)=\Width(\Fam(E_x))=\Width(\Fam^\full(E_x))=x$ is the \emph{width} of $E_x$,
\item $\mod (E_x)=1/\Width(E_x)=1/x$ the extremal length of $E_x$.
\end{itemize}

By a \emph{(topological) rectangle in $\C$} we mean a closed Jordan disk $\RR$ together with a conformal map $h\colon \RR\to E_x$ into the standard rectangle $E_x$.  The vertical foliation $\Fam(\RR)$, the full family $\Fam^\full(\RR)$, the base $\partial^{h,0}\RR$, the roof $\partial^{h,1}\RR$, the vertices of $\RR$, and other objects are defined by pulling back the corresponding objects of $E_x$. Equivalently, a rectangle $\RR\subset\C$ is a closed Jordan disk together with four marked vertices on $\partial \RR$ and a chosen base between two vertices.

A \emph{genuine subrectangle} of $E_x$ is any rectangle of the form $E'=[x_1,x_2]\times [0,1]$, where $0\le x_1<x_2\le x$; it is identified with the standard Euclidean rectangle $[0,x_2-x_1]\times [0,1]$ via $z\mapsto z- x_1$. A genuine subrectangle of a topological rectangle is defined accordingly. 

A \emph{subrectangle} of a rectangle $\RR$ is a topological rectangle $\RR_2\subset \RR$ such that $\partial^{h,0}\RR_2\subset \RR$ and $\partial^{h,1}\RR_2\subset \RR$. By monotonicity: $\Width(\RR_2)\le \Width(\RR)$.

Assume that $\Width(E_x)>2$. The \emph{left and right $1$-buffers} of $E_x$ are defined \[ B^\ell_1\coloneqq [0,1]\times [0,1]\sp\text{ and }\sp B^\rho_1\coloneqq [x-1,x]\times [0,1]\] respectively. We say that the rectangle 
\[E^\new_x\coloneqq [1,x-1]\times [0,1]=E_x\setminus \big( B^\ell_1 \cup B^\rho_1\big)\] is obtained from $E_x$ by \emph{removing $1$-buffers}. If $\Width(E_x)\le 2$, then we set $E_x^\new\coloneqq\emptyset$. Similarly, buffers of any width are defined.

\subsubsection{Annuli}
\label{sss:annuli} 
A \emph{closed annulus $A$ of modulus $1/x$} is a Riemann surface obtained from $E_x$ by gluing its vertical boundaries:
\[A\coloneqq E_x/_{\partial^{v,\ell} E_x\ni (0,y)\sim (x,y)\in \partial^{v,\rho} E_x\mid  \sp \forall y}\sp ,\sp \Width(A)= x,\sp \mod (A)\coloneqq 1/x.\]
Its interior $\intr (A)$ is an \emph{open annulus with modulus $x$}. The induced image of the vertical foliation $\Fam(E_x)$ is the \emph{vertical foliation $\Fam(A)$ of $A$}. The width of $\Fam (A)$ is equal to the width of all the curves in $A$ connecting its boundaries $\partial^{h,0} A, \partial^{h,1} A$ --  the induced images of the horizontal boundaries $\partial^{h,0} E_x, \partial^{h,1} E_x$. 


\subsubsection{Monotonicity and Gr\"otzsch inequality} \label{sss:Monot+Gr}We say a family of curves $\SS$ \emph{overflows} a family $\FamG$ if every curve $\gamma\in\SS$ contains a subcurve $\gamma'\in \FamG$. We also say that 
\begin{itemize}
\item a family of curves $\Fam$ \emph{overflows} a rectangle $\RR$ if $\Fam$ overflows $\Fam^\full(\RR)$;
\item a rectangle $\RR_1$ overflows another rectangle $\RR_2$ if $\Fam(\RR_1)$ overflows $\Fam^\full(\RR_2)$.
\end{itemize}

If $\Fam$ overflows a family or a rectangle $\FamG$, then $\FamG$ is wider than $\Fam$:
\begin{equation}
\label{eq:Width Monot}
\Width(\Fam) \le \Width(\FamG).
\end{equation}
If $\Fam$ overflows both $\FamG_1,\FamG_2$, and $\FamG_1,\FamG_2$ are disjointly supported, then the \emph{Gr\"otzsch} inequality states:
\begin{equation}
\label{eq:Grot}
\Width(\Fam) \le \Width(\FamG_1)\oplus \Width(\FamG_2),
\end{equation}
where $x\oplus y =(x^{-1}+y^{-1})^{-1}$ is the harmonic sum. 


\subsubsection{Parallel Law}\label{sss:ParLaw}
 For any families of curves $\FamG_1,\FamG_2$, we have:
\begin{equation}
\label{eq:ParLaw}
\Width(\FamG_1\cup \FamG_2)\le \Width(\FamG_1)+\Width(\FamG_2).
\end{equation}
If $\FamG_1,\FamG_2$ are disjointly supported, then
\begin{equation}
\label{eq:StrictParLaw}
\Width(\FamG_1\cup \FamG_2) = \Width(\FamG_1)+\Width(\FamG_2).
\end{equation}

\subsubsection{Restriction of families}
\label{sss:short subcurves} Consider a family of curves $\FamG$ connecting $X$ and $Y$. And suppose \[\widetilde X\supset X,\sp\sp  \widetilde Y\supset Y,\sp\sp \widetilde X\cap \widetilde Y=\emptyset\] are enlargements. Then every curve \[[\gamma\colon [0,1]\to \wC]\in \FamG\] 
has a \emph{unique first shortest} subcurve $\gamma'\subset \gamma$ connecting $\widetilde X$ and $\widetilde Y$: there is a minimal $t_1\ge 0$ for which there is a $t_2>t_1$ such that
\[ \gamma\big((t_1,t_2)\big)\subset \C\setminus (\widetilde X\cup \widetilde Y),\sp\sp\text{ and }\sp\sp\gamma(t_1)\in \widetilde X,\sp \gamma(t_2)\in \widetilde Y;\]
we set $\gamma'\coloneqq \gamma\mid[t_1,t_2]$. Define $\FamG^\new$ to be the family consisting of $\gamma'$ for all $\gamma\in \FamG$. Since $\FamG$ overflows $\FamG^\new$, we have (see~\S\ref{sss:Monot+Gr}):
\[\Width(\FamG^\new)\ge \Width(\FamG).\]
Note that if $\FamG$ is a lamination, then so is $\FamG^\new$.

Consider now the following generalization. For a lamination $\FamG$ and disjoint sets $X_1,X_2,\dots, X_m$ suppose that the following holds. Every curve $\gamma\in\FamG$ intersects all the $X_i$ and it intersects $X_i$ before intersecting any $X_{i+j}$ for $j>0$. Then every $\gamma\in \FamG$ contains disjoint subcurves $\gamma_1,\gamma_2,\dots, \gamma_{m-1}$ where $\gamma_i$ is the first shortest subcurve between $X_i$ and $X_{i_1}$. Setting $\FamG_i$ to be the set of $\gamma_i$ over all $\gamma\in \FamG$, we obtain that 
$\FamG$ \emph{overflows consequently} $\FamG_i$ and, by~\S\ref{sss:Monot+Gr}:  
\[\Width(\FamG)\le \Width(\FamG_1)\oplus \dots \oplus \Width(\FamG_{m-1}).\]
Note that $\FamG_i$ are disjoint laminations.

\subsubsection{Canonical rectangles}
\label{sss:can rect} 
Consider a closed Jordan disk $D\subset \C$ together with disjoint intervals $I,J\subset \partial D$. We denote by $\Fam^-(I,J),\Fam^+(I,J), \Fam(I,J)$ the families of curves in $D, \ \wC\setminus \intr D,\  \wC\setminus (I\cup J)$ connecting $I,J$. The widths of these families are denoted by $\Width^-(I,J),\Width^+(I,J), \Width(I,J)$.

We can view $D$ as a rectangle $\RR$ with $\partial^v\RR =I\cup J$. We call $\RR$ the \emph{canonical rectangle of $\Fam^-(I,J)$}; we have $\Width(\RR)=\Width^-(I,J)$. Similarly, viewing $\wC\setminus \intr D$ as a rectangle $\RR_2$ with $\partial^h \RR_2=I\cup J$, we obtain the \emph{canonical rectangle} $\RR_2$ of $\Fam^+(I,J)$; we have $\Width^+_D(I,J)=\Width(\RR_2)$.

Observe that $A\coloneqq \wC\setminus (I\cup J)$ is an open annulus; its Caratheodory boundary consists of $I^-\cup I^+$ and $J^-\cup J^+$, where $I^-, \ J^-$ are the sides of $I,J$ from $\intr D$ while $I^+, \ J^+$ are the sides of $I,J$ from $\wC\setminus  D$. The \emph{vertical family} $\FamH$ of $\Fam(I,J)$ consists of vertical curves of $A$ together with their landing points. We have $\Width(\FamH) =\Width(I,J)$.

\subsubsection{Innermost and outermost curves} \label{sss:inner outer order} It will be convenient for us to use the following inner-outer order on vertical curves in rectangles.
Consider a rectangle
\[ \RR\subset \wC,\sp\sp\sp\text{ with }\sp \partial ^h \RR\subset \partial D,\]
where $D$ is a closed Jordan disk, such that $\RR$ is disjoint from a complementary interval $N\subset \partial D$ between $\partial ^{h,0}\RR, \partial ^{h,1}\RR$. Let $N^-$ and $N^+$ be two sides of $N$ from the inside and outside of $D$. Consider a set of vertical curves $\{\ell_i\}_i\subset \Fam(\RR)$. The \emph{innermost} curve of $\{\ell_i\}_i$ is the curve $\ell_\inn$ 
separating $N^-$ from all remaining $\ell_i$ in $\wC\setminus (\partial^h \RR \cup N)$. The \emph{outermost} curve of $\{\ell_i\}_i$ is the curve $\ell_\out$ separating $N^+$ from all remaining $\ell_i$ in $\wC\setminus (\partial^h \RR \cup N)$.

\subsubsection{Laminations}\label{sss:App:Lam}  By a \emph{lamination} $\LL$ we mean a family of pairwise disjoint simple rectifiable arcs such that $\supp \LL$ is measurable. A \emph{sublamination} of $\LL$ is any collection $\FamH$ of arcs from $\FamG$ such that $\supp \FamH$ is measurable.

Laminations naturally appear as restrictions of rectangles -- see~\S\ref{sss:short subcurves}.  Note that a restriction of a rectangle is usually not a rectangle as discussed in~\cite[\S2.3]{KL}.  For convenience, a lamination $\FamG$ can often be replaced by a rectangle $\RR$ bounded by the left- and rightmost curves of $\FamG$; then $\Width(\RR)\ge \Width(\FamG)$.

All laminations in our paper will appear from rectangles using basic operations like restrictions and finite unions.

\subsubsection{Restrictions of sublaminations} \label{sss:Restr of Lam}Consider a lamination or a rectangle $\RR$, and let $\widetilde \SS$ be a sublamination of $\RR$ (or of $\Fam(\RR)$). Assume that $\widetilde \SS$ overflows a lamination $\SS$. Then we write
\begin{equation}
\label{eq:sss:Restr of Lam}
\Width(\RR | \SS) \coloneqq \Width(\widetilde \SS)\sp\sp\sp\sp  \left(\text{note that $\Width(\RR | \SS)\le \Width (\RR)$}\right). 
\end{equation}

\subsubsection{Splitting Rectangles}

\begin{lem}
\label{lem:splitting rectangle}
Consider a Jordan disk $D$ and let $I,J\subset \partial D$ be a pair of disjoint intervals. Consider an arc $\ell$ in the canonical rectangle of $\Fam^-_D(I,J)$, \S\ref{sss:can rect}. Suppose $\ell$ splits $I$ and $J$ into $I_1,I_2$ and $J_1,J_2$ enumerated so that the pairs $I_1,J_1$  and $I_2,J_2$ are on the same side of $\ell$. We denote by $D_1$ and $D_2$ connected components of $D\setminus \ell$ containing $I_1,J_1$ and $I_2,J_2$ on its boundaries respectively.
Then
\[ \Width^-_D(I_1,J_1)+\Width^-_D(I_2,J_2)-2\le  \Width^-_{D_1}(I_1,J_1)+ \Width^-_{D_2}(I_2,J_2)=\]
\[=\Width^-_D(I,J) \le \Width^-_D(I_1,J_1)+\Width^-_D(I_2,J_2).\]
\end{lem}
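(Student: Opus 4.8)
The statement is a standard ``splitting a rectangle by an internal arc'' estimate, and I would prove it by combining the Parallel Law (Section~\ref{sss:ParLaw}) with the Gr\"otzsch/series inequality (Section~\ref{sss:Monot+Gr}) applied in both directions. View $D$ as the canonical rectangle of $\Fam^-_D(I,J)$ with horizontal sides $I,J$; the arc $\ell$ is one of its vertical curves (up to reparametrization), so it splits $D$ into the two genuine subrectangles $D_1,D_2$ whose horizontal sides are $I_1,J_1$ and $I_2,J_2$ respectively. The middle equality $\Width^-_{D_1}(I_1,J_1)+\Width^-_{D_2}(I_2,J_2)=\Width^-_D(I,J)$ is then exactly the additivity of width under a vertical decomposition of a rectangle, i.e.\ the strict Parallel Law~\eqref{eq:StrictParLaw} for the two disjointly supported subfamilies $\Fam^-_{D_1}(I_1,J_1)$ and $\Fam^-_{D_2}(I_2,J_2)$ into which $\Fam^-_D(I,J)$ decomposes.

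For the upper bound $\Width^-_D(I,J)\le \Width^-_D(I_1,J_1)+\Width^-_D(I_2,J_2)$: since $\Fam^-_{D_i}(I_i,J_i)\subset \Fam^-_D(I_i,J_i)$, monotonicity~\eqref{eq:Width Monot} gives $\Width^-_{D_i}(I_i,J_i)\le \Width^-_D(I_i,J_i)$, and adding the two instances to the middle equality yields the claim. For the lower bound, I would go the other way: a curve in $\Fam^-_D(I_1,J_1)$ that leaves $D_1$ must cross $\ell$, hence crosses $D_2$ as well, so after deleting from $\Fam^-_D(I_1,J_1)$ the (at most width $1$) sub-family of curves intersecting $\ell$ — more precisely, restricting to the first shortest subcurve in $D_1$ as in~\S\ref{sss:short subcurves} costs at most the width of the buffer of curves touching $\ell$, which is $\le 1$ since $\ell$ together with a side realizes width $1$ — one obtains $\Width^-_D(I_1,J_1)\le \Width^-_{D_1}(I_1,J_1)+1$, and symmetrically $\Width^-_D(I_2,J_2)\le \Width^-_{D_2}(I_2,J_2)+1$; summing and using the middle equality gives the stated ``$-2$''.

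The main obstacle — really the only subtle point — is justifying the ``$+1$'' error, i.e.\ that passing from $\Fam^-_D(I_1,J_1)$ to $\Fam^-_{D_1}(I_1,J_1)$ costs at most unit width. The clean way is to note that the curves in $\Fam^-_D(I_1,J_1)$ that exit $D_1$ all cross the fixed arc $\ell$, hence form a sub-family overflowed by the family of curves connecting $\ell$ to itself within $D$ going around through $D_2$; bounding this requires observing that $\ell$ is a vertical curve of the canonical rectangle $D$, so such ``excursion'' curves overflow a rectangle of width at most $1$ (they must traverse, within $D_2$, a curve family whose dual is at least as wide as a side of the canonical rectangle). I would phrase this via Lemma~\ref{lem:vet boundar}-type buffer arguments if available, or directly: remove from $\Fam^-_D(I_1,J_1)$ the buffer of curves meeting $\ell$ and apply~\eqref{eq:Grot}. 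Everything else is bookkeeping with the Parallel Law and monotonicity.
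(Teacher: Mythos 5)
Your middle equality and right-hand inequality are fine and essentially the paper's: cut the canonical rectangle $\RR$ of $\Fam^-_D(I,J)$ along the vertical leaf $\ell$ into the genuine subrectangles with horizontal sides $I_1,J_1$ and $I_2,J_2$, so the widths add, and then the right-hand inequality follows from monotonicity $\Width^-_{D_i}(I_i,J_i)\le \Width^-_D(I_i,J_i)$. One small caveat: the equality is not literally ``the strict Parallel Law for the two disjointly supported subfamilies into which $\Fam^-_D(I,J)$ decomposes'' --- the full family does \emph{not} decompose (curves crossing $\ell$ lie in neither piece), and the Parallel Law plus monotonicity only give ``$\le$''; the reverse inequality, hence the equality, uses that $\ell$ is a leaf, i.e.\ a straight vertical segment in the uniformizing coordinates. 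That is a phrasing issue, not a gap.

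The genuine gap is in the left-hand inequality. Your argument hinges on the claim that the subfamily of $\Fam^-_D(I_1,J_1)$ consisting of curves meeting $\ell$ has width at most $1$, and this is asserted rather than proved: ``$\ell$ together with a side realizes width $1$'' does not exhibit any width-$1$ family that these curves overflow, and the serial decomposition you gesture at collapses because $I_1$ and $\ell$ share an endpoint, so $\Width^-_D(I_1,\ell)$ and $\Width^-_D(\ell,J_1)$ are both infinite; nor do these curves uniformly cross a rectangle of definite width (they may touch $\ell$ arbitrarily close to the common corner), so no buffer/crossing bound with the constant $1$ comes for free. The paper does not attempt a per-side estimate at all; it bounds the two losses \emph{jointly}: take the canonical rectangles $\widetilde\RR_1,\widetilde\RR_2$ of $\Fam^-_D(I_1,J_1)$ and $\Fam^-_D(I_2,J_2)$, remove from each the $1$-buffer adjacent to the vertical side facing the other pair, and use the Non-Crossing Principle (\S\ref{sss:non cross princ}) to choose the two bounding leaves disjoint; the trimmed rectangles $\widetilde\RR_1^{\new},\widetilde\RR_2^{\new}$ are then disjoint and both sit inside $\RR$, so $\Width(\widetilde\RR_1^{\new})+\Width(\widetilde\RR_2^{\new})\le \Width(\RR)$, which together with the middle equality is exactly the stated loss of $2$. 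The one-sided inequality $\Width^-_D(I_1,J_1)\le \Width^-_{D_1}(I_1,J_1)+1$ may well be true, but it needs a real argument (and a naive one-sided version of the buffer trick only yields a worse constant); as written, your ``at most width $1$'' step is the missing ingredient, and the disjointness-inside-$\RR$ argument is what actually produces the $-2$.
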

\begin{proof}
The last inequality is immediate. Let
\begin{itemize}
\item $\RR$ be the canonical rectangle of $\Fam^-_D(I,J)$;
\item $\RR_1,\RR_2$ be the canonical rectangles of $\Fam^-_{D_1}(I_1,J_1), \Fam^-_{D_2}(I_2,J_2)$;
\item $\widetilde \RR_1,\widetilde \RR_2$ be the canonical rectangles of $\Fam^-_D(I_1,J_1), \Fam^-_D(I_2,J_2)$.
\end{itemize}
Since $\ell$ splits $\RR$ into $\RR_1,\RR_2$, we have \[\Width^-_{D_1}(I_1,J_1)+ \Width^-_{D_2}(I_2,J_2)=\Width(\RR_1)+\Width(\RR_2)=\Width(\RR)=\Width^-_D(I,J).\]

By removing $1$-buffers from $\widetilde\RR_1,\widetilde \RR_2$, we obtain new disjoint rectangles $\widetilde \RR_1^\new,\widetilde \RR^\new_2$; since $\widetilde \RR_1^\new\sqcup \widetilde \RR^\new_2\subset \RR$, we have  \[ \Width(\widetilde \RR_1^\new)+\Width( \widetilde \RR^\new_2)\le  \Width(\RR).\]   
\end{proof}

\begin{lem}
\label{lem:splitting rectangle:2} Under the assumptions of Lemma~\ref{lem:splitting rectangle}, let $\FamG_1$ be the family of curves in $D$ connecting $I$ to $J$ such that every curve in $\FamG_1$ intersects $D_1$. Then 
\[\Width^-_{D_1}(I_1,J_1) \le \Width(\FamG_1)\le \Width^-_{D_1}(I_1,J_1)+2\]
\end{lem}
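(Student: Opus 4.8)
\textbf{Proof plan for Lemma~\ref{lem:splitting rectangle:2}.} The statement compares two families of curves in $D$ connecting $I$ to $J$: the full family $\Fam^-_D(I,J)$ and its subfamily $\FamG_1$ of curves that meet $D_1$ (the component of $D\setminus\ell$ carrying $I_1,J_1$ on its boundary). The key structural fact is that $\FamG_1$ decomposes according to how a curve $\gamma\in\FamG_1$ interacts with the separating arc $\ell$: either $\gamma$ stays entirely in $D_1$, or it must cross $\ell$. The plan is to produce the lower bound by overflowing, and the upper bound by a buffer-removal argument analogous to the proof of Lemma~\ref{lem:splitting rectangle}.

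\emph{Lower bound.} First I would observe $\Fam^-_{D_1}(I_1,J_1)\subset\FamG_1$ trivially, since every curve in $D_1$ joining $I_1$ to $J_1$ joins $I$ to $J$ and meets $D_1$; hence by monotonicity~\eqref{eq:Width Monot}, $\Width^-_{D_1}(I_1,J_1)\le\Width(\FamG_1)$. (Strictly, one should note $I_1\subset I$ and $J_1\subset J$, so such curves indeed lie in $\Fam^-_D(I,J)$.)

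\emph{Upper bound.} For a curve $\gamma\in\FamG_1$ that is not contained in $D_1$, it intersects $\ell\subset\partial D_1$, so $\gamma$ contains a subcurve in $D_1$ running from $I$ (or from $\ell$) to $\ell$, or it runs between $I$ and $J$ passing through $D_1$; in all cases $\gamma$ contains a subarc in $\overline{D_1}$ connecting $\partial D_1=I_1\cup \ell\cup J_1$ to itself in the relevant way. Rather than enumerate cases, I would run the buffer argument: let $\widetilde\RR_1$ be the canonical rectangle of $\Fam^-_{D_1}(I_1,J_1)$ — more precisely, I split $\FamG_1$ into the sublamination of curves lying in $D_1$ (which overflows $\Fam^-_{D_1}(I_1,J_1)$) and the sublamination of curves crossing $\ell$; the latter overflows $\Fam^-_{D_1}(I_1,\ell)\cup\Fam^-_{D_1}(\ell,J_1)$, i.e.\ curves in $D_1$ touching the arc $\ell$. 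By the Parallel Law~\eqref{eq:ParLaw} it suffices to bound these two pieces; a curve in $D_1$ from $I_1$ to $\ell$ together with its return gives a contribution bounded by the width of $\Fam^-_{D_1}(I_1,\ell)$, and since $\ell$ is a single arc on $\partial D_1$, the families $\Fam^-_{D_1}(I_1,\ell)$ and $\Fam^-_{D_1}(\ell,J_1)$ each have width $\le 1$ after removing a $1$-buffer — here I use that removing the innermost $1$-buffer from the canonical rectangle of $\Fam^-_{D_1}(I_1,J_1)$ detaches precisely those curves that can be forced to hit $\ell$, exactly as in the last paragraph of the proof of Lemma~\ref{lem:splitting rectangle}. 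Combining: $\Width(\FamG_1)\le\Width^-_{D_1}(I_1,J_1)+2$.

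\emph{Main obstacle.} The delicate point is the claim that the curves of $\FamG_1$ which touch $\ell$ carry width at most $2$ over and above $\Width^-_{D_1}(I_1,J_1)$. One must make sure that a curve $\gamma$ touching $\ell$ is correctly ``localized'': its piece inside $D_1$ connects $I_1\cup\ell$ to $\ell\cup J_1$, and these pieces, taken over all such $\gamma$, form (after restriction, \S\ref{sss:short subcurves}) a lamination overflowing $\Fam^-_{D_1}(I_1,\ell)\cup\Fam^-_{D_1}(\ell,J_1)$; the $\le 1$ bound for each of the latter comes from the fact that $\ell$ shares an endpoint with $I_1$ and with $J_1$ on $\partial D_1$, so the corresponding canonical rectangles, after removing a $1$-buffer, become empty (their width is at most $1$ since the extremal width of a family joining two boundary arcs sharing an endpoint of a Jordan disk, restricted away from that endpoint, is controlled by $1$). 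This is a routine but slightly fiddly boundary-geometry argument; once it is in place, the two displayed inequalities follow immediately from~\eqref{eq:Width Monot} and~\eqref{eq:ParLaw}.
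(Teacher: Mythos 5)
Your lower bound is fine and coincides with the paper's (inclusion of $\Fam^-_{D_1}(I_1,J_1)$ into $\FamG_1$ plus monotonicity). The upper bound, however, has a genuine gap. The curves of $\FamG_1$ that cross $\ell$ are routed, in your plan, through the families $\Fam^-_{D_1}(I_1,\ell)$ and $\Fam^-_{D_1}(\ell,J_1)$; but $\ell$ shares an endpoint with $I_1$ and with $J_1$ on $\partial D_1$, so both of these families have \emph{infinite} width (arbitrarily short curves accumulate at the common endpoint), and the assertion that they have width $\le 1$ ``after removing a $1$-buffer'' is not a meaningful bound -- these are not rectangles based on disjoint arcs, and no fixed buffer removal makes their width finite. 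Even setting this aside, the overflow step is shaky: a curve entering $D_1$ through $\ell$ may also exit through $\ell$, so its excursion in $D_1$ need not connect $I_1\cup J_1$ to $\ell$ at all. Most tellingly, your argument never uses the hypothesis that $\ell$ is an arc of the canonical rectangle of $\Fam^-_D(I,J)$, i.e.\ a leaf of its extremal (vertical) foliation; and without that hypothesis the inequality is simply false. For instance, take $D=[0,10]\times[0,1]$ with $I,J$ the horizontal sides and let $\ell$ be a crosscut from $(5,0)$ to $(5,1)$ that first hugs the base leftwards to $x=1$, goes up, and hugs the roof back to $x=5$: then $\Width^-_{D_1}(I_1,J_1)\approx 1$, while every vertical segment over $[1,5]$ meets $D_1$, so $\Width(\FamG_1)\ge 4$. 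So any correct proof must exploit the leaf property, which yours does not.

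The paper's proof does exactly that, and it is shorter than your case analysis: view $D$ as the canonical rectangle $\RR$ of $\Fam^-_D(I,J)$; since $\ell$ is a leaf, the canonical rectangle $\RR_1$ of $\Fam^-_{D_1}(I_1,J_1)$ is a \emph{genuine} subrectangle of $\RR$. Let $\RR_1^+\supset\RR_1$ be the genuine subrectangle with $\Width(\RR_1^+)=\Width(\RR_1)+1$, so that $\BB:=\RR_1^+\setminus\RR_1$ is a $1$-buffer sitting on the $D_2$-side of $\ell$. Every curve of $\FamG_1$ either stays inside $\RR_1^+$, hence belongs to the full family of $\RR_1^+$ and contributes width at most $\Width(\RR_1)+1$, or it reaches beyond $\RR_1^+$ and therefore contains a subcurve crossing $\BB$, and such curves have total width at most $1$. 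The Parallel Law then gives $\Width(\FamG_1)\le\Width^-_{D_1}(I_1,J_1)+2$. If you want to repair your write-up, replace the ``curves touching $\ell$'' dichotomy by this ``curves escaping past a unit buffer beyond $\ell$'' dichotomy in the uniformizing coordinates of the \emph{full} family.
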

\begin{proof}
As in the proof of Lemma~\ref{lem:splitting rectangle}, let $\RR$ be the canonical rectangle of $\Fam^-_D(I,J)$ and $\RR_1$ be the canonical rectangle of $\Fam^-_{D_1}(I_1,J_1)$. Since $\RR_1$ is a genuine subrectangle of $\RR$, we can consider the genuine subrectangle $\RR_1^{+}$ of $\RR$ specified by \[\RR_1\subset \RR_1^+ \sp\sp\text{ and }\sp\sp\Width(\RR^+_1)= \Width(\RR_1)+1;\]
i.e.~$\RR_1$ is $\RR_1^+$ minus its one $1$-buffer $\BB$. The width of curves in $\FamG_1$ crossing $\BB$ is less than $1$; the remaining curves of $\FamG_1$ are in $\RR_1^+$.
\end{proof}

\subsubsection{Enclosed annuli}
\label{sss:EnclAnn} Let $A, B\subset \C$ be two closed annuli surrounding open disks $U$ and $V$ respectively. Assume that 
\begin{itemize}
\item $U \cup V$ is an open topological disk;
\item $A\cup U\cup B\cup V$ is a closed topological disk. 
\end{itemize}
Then the \emph{enclosed annulus} is
\[A\square B\coloneqq \big(A\cup U\cup B\cup V \big)\setminus  \big(U \cup V\big).\]
\begin{lem} 
\label{lem:square}
If $\mod (A), \mod (B)\ge 2\varepsilon$, then $\mod (A\square B) \ge \varepsilon$.
\end{lem}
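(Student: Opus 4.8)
The statement is a standard and quantitatively sharp version of the Gr\"otzsch-type estimate for a ``figure-eight'' configuration of annuli, so the plan is to reduce it to the Parallel Law (\S\ref{sss:ParLaw}) together with monotonicity of extremal width (\S\ref{sss:Monot+Gr}). The key observation is that any curve $\gamma$ in the enclosed annulus $A\square B$ that connects its two boundary components must either (i) go around the whole disk $U\cup V$ essentially ``through'' $A$ and $B$ in series, or, more usefully for the dual formulation, that the family $\Fam(A\square B)$ of curves joining $\partial^{h,0}(A\square B)$ to $\partial^{h,1}(A\square B)$ overflows a family of curves that separately cross $A$ and cross $B$. First I would fix the core disk $W=U\cup V$, which by hypothesis is an open topological disk, and note that $A\square B = (A\cup U\cup B\cup V)\setminus W$ is a genuine closed annulus surrounding $W$ (this uses that $A\cup U\cup B\cup V$ is a closed topological disk, so its interior minus $W$ is an annulus whose outer boundary is $\partial(A\cup U\cup B\cup V)$ and whose inner boundary is $\partial W$).

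\textbf{Key steps.} The argument proceeds in the dual (width) formulation, which is what the lemma is stated in. Let $\Fam\coloneqq\Fam(A\square B)$ be the family of curves in $A\square B$ joining its two boundary circles; we must show $\Width(\Fam)\le 1/\varepsilon$, equivalently $\mod(A\square B)=1/\Width(\Fam)\ge\varepsilon$. Each such curve $\gamma$ starts on $\partial(A\cup U\cup B\cup V)$ and ends on $\partial W=\partial(U\cup V)$. Since $\partial(U\cup V)$ decomposes (up to the at most two points where $\partial U$ and $\partial V$ meet) into an arc of $\partial U\setminus V$ and an arc of $\partial V\setminus U$, the endpoint of $\gamma$ on $\partial W$ lies either on $\partial U$ or on $\partial V$; correspondingly its starting point on the outer boundary lies on $\partial A\setminus(A\square B)^\circ$-side or on the $\partial B$-side. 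Thus $\Fam$ splits as $\Fam = \Fam_A\cup\Fam_B$, where every curve in $\Fam_A$ contains a subcurve crossing the annulus $A$ (i.e. joining the two boundary components of $A$) and every curve in $\Fam_B$ contains a subcurve crossing $B$. Hence $\Fam_A$ overflows $\Fam(A)$ and $\Fam_B$ overflows $\Fam(B)$, so by monotonicity \eqref{eq:Width Monot},
\[ \Width(\Fam_A)\le\Width(\Fam(A))=\frac{1}{\mod(A)}\le\frac{1}{2\varepsilon},\qquad \Width(\Fam_B)\le\frac{1}{\mod(B)}\le\frac{1}{2\varepsilon}. \]
By the Parallel Law \eqref{eq:ParLaw},
\[ \Width(\Fam)=\Width(\Fam_A\cup\Fam_B)\le\Width(\Fam_A)+\Width(\Fam_B)\le\frac{1}{2\varepsilon}+\frac{1}{2\varepsilon}=\frac{1}{\varepsilon}, \]
and therefore $\mod(A\square B)=1/\Width(\Fam)\ge\varepsilon$, as claimed.

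\textbf{Main obstacle.} The only genuinely delicate point is the clean decomposition $\Fam = \Fam_A\cup\Fam_B$ with the overflow property; everything else is bookkeeping. I would justify it by tracking the first moment a curve $\gamma\in\Fam$ enters $A$ versus $B$ after leaving the outer boundary, and the last moment it is in $A$ versus $B$ before hitting $\partial W$. One has to check that a curve whose relevant subcurve lies in $A$ really does cross $A$ (join both boundary components of $A$), which follows because $\gamma$ joins the outer boundary of $A\square B$ — contained in $\partial A$ — to $\partial W\supset\partial U$, and $\partial U$ is the inner boundary of $A$; the at-most-two exceptional meeting points of $\partial U$ and $\partial V$ form a null set and can be ignored in the width computation since $\supp\Fam$ is measurable. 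A small amount of care is needed to handle degenerate cases where $A$ and $B$ touch, but the measurability conventions of \S\ref{sss:App:Lam} absorb these. Thus the lemma reduces entirely to monotonicity plus the Parallel Law, and no new estimate is required.
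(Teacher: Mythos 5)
Your proof is correct and follows essentially the same route as the paper's: classify the vertical curves of $A\square B$ by which inner boundary piece ($\partial U$ or $\partial V$) they meet, note that each such curve contains a subcurve crossing $A$ or $B$ respectively, and conclude by monotonicity and the Parallel Law that $\Width(A\square B)\le \Width(A)+\Width(B)\le 1/\varepsilon$. The only slip is the parenthetical claim that the outer boundary of $A\square B$ is contained in $\partial A$ (it lies in $\partial^{\out}A\cup\partial^{\out}B$), but this is harmless since the crossing of $A$ follows from the first-exit argument through $\partial(A\cup U)=\partial^{\out}A$, exactly as in the paper.
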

\begin{proof}
Let $\gamma\colon [0,1] \to A\square B$ be a vertical curve of the annulus $A\square B$. Assume first that $\gamma(0)\in \partial^\inn A$. Then for some $t\in (0,1]$ we have
$\gamma(t)\in \partial^\out A$; i.e. $\gamma$ crosses $A$. Similarly, if $\gamma(0)\in\partial ^\inn B$, then $\gamma$ crosses $B$. By the Parallel Law~\S\ref{sss:ParLaw}, $\Width(A\square B)\le \Width(A)+\Width(B)\le 1/\varepsilon$.
\end{proof}

\begin{figure}

\[\begin{tikzpicture}[line width=0.05 cm]

\draw[fill, fill opacity=0.1] (0,0) -- (0,2)--(6,2)-- (6,0)--(0,0);

\draw[fill =blue, fill opacity =0.1] (0,2) -- (0,3)-- (6,3)--(6,2)
 (0,0) -- (0,-1)-- (6,-1)--(6,0); 
 \node[] at (3,1) {$\XX$};
\node[blue] at (3,2.5) {$\YY^+$};
\node[blue] at (3,-0.5) {$\YY^-$};

 \node[above] at (0,3) {$a$};
 \node[below] at (0,-1) {$b$};
 \node[above] at (6,3) {$d$};
 \node[below] at (6,-1) {$c$};

\node[red] at (-0.5,1) {$O_\ell$};
\node[red] at (6.5,1) {$O_\rho$};

\draw[red] (6,4.5) edge[bend right] (6,-2.5)
(6,4.5) edge[bend left] (6,-2.5)
(0,4.5) edge[bend right] (0,-2.5)
(0,4.5) edge[bend left] (0,-2.5);


\end{tikzpicture}\]
\caption{Assume a rectangle $\YY$ is a union $\YY^+\cup \XX\cup \YY^-$ such that $\Width(\YY^+)\asymp \Width(\YY^-)\asymp \mod (O_\ell \setminus [a,b])\asymp \mod (O_\rho \setminus [c,d])\asymp 1$. Then $\mod (O_\ell \cup \YY \cup O_\rho \setminus \XX) \succeq 1$, see Lemma~\ref{lem:A:encl form}.
}
\label{fig:lem:A:encl form}
\end{figure}

Let us consider the following construction which will be used in \S\ref{s:Welding}. Suppose, see Figure~\ref{fig:lem:A:encl form}:
\begin{itemize}
\item a rectangle $\YY$ is a union of its genuine subrectangles $\YY^+, \XX,\YY^-$ with disjoint interiors, where $\XX$ is between $\YY^+$ and $\YY^-$;
\item closed disks $O_\ell,O_\rho$ contain $\partial^{h,0} \YY=[a,b]$ and $\partial^{h,1}\YY=[c,d]$ respectively;
\item $O_\ell \cup \YY\cup O_\rho$ is a closed topological disk.
\end{itemize}

\begin{lem}
\label{lem:A:encl form}
For $O_\ell,\sp \YY=\YY^+\cup \XX\cup \YY^-,\sp O_\rho$ as above, if  
\[\Width(\YY^+)\asymp \Width(\YY^-)\asymp \mod (O_\ell \setminus [a,b])\asymp \mod (O_\rho \setminus [c,d])\asymp 1, \]
then $\mod (O_\ell \cup \YY \cup O_\rho \setminus \XX) \succeq 1$.
\end{lem}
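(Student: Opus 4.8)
The statement is a purely conformal/extremal-length estimate about a closed topological disk assembled from a rectangle $\YY=\YY^+\cup\XX\cup\YY^-$ with two closed disks $O_\ell,O_\rho$ attached along $[a,b]=\partial^{h,0}\YY$ and $[c,d]=\partial^{h,1}\YY$. The goal is to show the annulus $A\coloneqq O_\ell\cup\YY\cup O_\rho\setminus\XX$ has $\mod(A)\succeq 1$, equivalently $\Width(A)\preceq 1$. I would argue directly with the Parallel Law and the overflowing (monotonicity) principle from \S\ref{sss:Monot+Gr}, \S\ref{sss:ParLaw}, exactly in the spirit of the proof of Lemma~\ref{lem:square}. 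The inner boundary of $A$ is $\partial\XX$ and the outer boundary is $\partial(O_\ell\cup\YY\cup O_\rho)$; a generic vertical curve $\gamma$ of $A$ starts on $\partial\XX$ and must reach the outer boundary, so it must exit $\XX$ through one of its four sides.

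\textbf{Key steps.} First I would split the vertical foliation $\Fam(A)$ of $A$ according to which side of $\XX$ the curve $\gamma$ leaves through: either it crosses $\YY^+$ (exiting $\XX$ through $\partial^{h,1}\XX$ and then traversing $\YY^+$), or it crosses $\YY^-$ (through $\partial^{h,0}\XX$), or it exits $\XX$ through one of the two vertical sides $\partial^{v,\ell}\XX,\partial^{v,\rho}\XX$ and hence lands in $O_\ell$ or $O_\rho$. In the first two cases the curve overflows $\Fam^{\full}(\YY^+)$ resp.\ $\Fam^{\full}(\YY^-)$, whose widths are $\asymp 1$ by hypothesis, so by monotonicity~\eqref{eq:Width Monot} the corresponding subfamilies of $\Fam(A)$ have width $\preceq 1$. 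In the third case, a curve entering $O_\ell$ through $\partial^{v,\ell}\XX\subset\partial^{v,\ell}\YY$ then travels inside $O_\ell$ and must reach $\partial O_\ell$; such a curve overflows the family of curves in $O_\ell\setminus[a,b]$ connecting $[a,b]$ to $\partial O_\ell\setminus[a,b]$, which has width $1/\mod(O_\ell\setminus[a,b])\asymp 1$ (here one should be slightly careful: a curve of $A$ that enters $O_\ell$ might not cross the full annulus $O_\ell\setminus[a,b]$, but by the same buffer trick as in Lemma~\ref{lem:splitting rectangle:2} one removes an $O(1)$-buffer so that the remaining curves genuinely overflow it). Similarly for $O_\rho$ with $[c,d]$. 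Then the Parallel Law~\eqref{eq:ParLaw} gives
\[
\Width(A)\le \Width(\YY^+)+\Width(\YY^-)+\Width\big(O_\ell\setminus[a,b]\big)+\Width\big(O_\rho\setminus[c,d]\big)+O(1)\preceq 1,
\]
i.e.\ $\mod(A)=1/\Width(A)\succeq 1$.

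\textbf{Main obstacle.} The only genuine subtlety is topological bookkeeping: verifying that every vertical curve of $A$ really does have a subcurve lying in one of the four model pieces $\YY^+,\YY^-,O_\ell\setminus[a,b],O_\rho\setminus[c,d]$, and in particular handling curves that meander between $\XX$ and several of these pieces before reaching the outer boundary. This is resolved by the restriction-to-first-shortest-subcurve mechanism of \S\ref{sss:short subcurves}: given $\gamma\in\Fam(A)$, take the first shortest subcurve from $\partial\XX$ to the boundary of the relevant piece; since these restricted families overflow the model ones, monotonicity applies. A second minor point is that $\YY^+$ or $\YY^-$ might a priori be too thin to remove buffers, but the hypothesis $\Width(\YY^\pm)\asymp 1$ together with the possibility of absorbing such bounded errors into the $O(1)$ term makes this harmless. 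Once these overflow relations are pinned down, the estimate is immediate from the Parallel Law.
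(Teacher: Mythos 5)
Your proposal is essentially the paper's proof: classify the vertical curves of the annulus $O_\ell\cup\YY\cup O_\rho\setminus\XX$ according to which of the four model pieces $\YY^+$, $\YY^-$, $O_\ell\setminus[a,b]$, $O_\rho\setminus[c,d]$ they must cross, and conclude by the Parallel Law, exactly in the spirit of Lemma~\ref{lem:square}. One small correction of bookkeeping: in the paper's conventions the horizontal sides of the genuine subrectangle $\XX$ lie on $[a,b]\cup[c,d]$ (hence inside $O_\ell\cup O_\rho$) while its vertical sides abut $\YY^{\pm}$ — the opposite of your labeling — so the curves traversing $\YY^{\pm}$ overflow the family dual to $\Fam^{\full}(\YY^{\pm})$, of width $1/\Width(\YY^{\pm})$, and the paper sorts curves simply by whether they meet $[a,b]$, meet $[c,d]$, or neither (which makes the crossing of $O_\ell\setminus[a,b]$, resp.\ $O_\rho\setminus[c,d]$, immediate and renders your buffer remark unnecessary); since all hypotheses are two-sided ($\asymp 1$), none of this affects the estimate.
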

\begin{proof}
Consider a vertical curve $\gamma$ of the annulus $O_\ell \cup \YY \cup O_\rho \setminus \XX$. 
\begin{itemize}
\item If $\gamma$ intersects $[a,b]$, then $\gamma$ crosses the annulus $O_\ell\setminus [a,b]$. 
\item If $\gamma$ intersects $[c,d]$, then $\gamma$ crosses the annulus $O_\rho\setminus [c,d]$. 
\item If $\gamma$ is disjoint from $[a,b]\cup [c,d]$, then $\gamma$ crosses either $\YY^+$ or $\YY^-$.
\end{itemize}
By the Parallel Law~\S\ref{sss:ParLaw}, $\mod (O_\ell \cup \YY \cup O_\rho \setminus \XX) \succeq 1$.
\end{proof}

\subsubsection{Geodesic Rectangles}
\label{sss:GeodRect}
Let $D$ be a closed Jordan disk, and consider two closed disjoint intervals $I,J\subset \partial D$. The \emph{geodesic rectangle} $\RR(I,J)$ in $\ovl D$ is a rectangle such that \[\partial^{h,0} \RR(I,J)=I,\sp\sp \partial^{h,1}\RR(I,J)=J,\]
and the vertical sides of $\RR(I,J)$ are the hyperbolic geodesics of $D$.

\begin{lem}
\label{lem:hyp rectangles}
Let $\RR$ with $\Width(\RR)>1$ be a rectangle in $\ovl D$ with $\partial ^{h,0}\RR=I$ and  $\partial ^{h,1}\RR=J$. Let $\RR^\new$ be the rectangle obtained from $\RR$ by removing two $1/2$-buffers on each side. Write $\partial^{h,0}\RR^\new=I^\new\subset I$ and $\partial ^{h,1}\RR^\new=J^\new\subset J$. Then
\[\RR\supset \RR(I^\new,J^\new)\sp\sp\text{ and }\sp\sp \RR^\new\subset \RR(I,J),\]
where $\RR(I^\new,J^\new), \RR(I,J)$ are geodesic rectangles as above.
\end{lem}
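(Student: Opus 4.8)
The statement is a purely two-dimensional, conformally-invariant fact about a Jordan disk $D$, a rectangle $\RR\subset \overline D$ with horizontal sides $I,J\subset\partial D$, and the geodesic rectangles it contains and is contained in. I would first reduce to a normal form: by the conformal uniformization of $\overline D$ onto the closed unit disk $\overline\Disk$ (hyperbolic geodesics of $D$ go to Euclidean-circular hyperbolic geodesics of $\Disk$), it suffices to prove the lemma for $D=\overline\Disk$, with $I,J$ a pair of disjoint boundary arcs. Next, uniformize $\RR$ itself conformally onto a Euclidean rectangle $E_x=[0,x]\times[0,1]$, $x=\Width(\RR)>1$, with $\partial^{h,0}E_x$ corresponding to $I$ and $\partial^{h,1}E_x$ to $J$; under this map the two $1/2$-buffers become $[0,1/2]\times[0,1]$ and $[x-1/2,x]\times[0,1]$, and $\RR^\new$ becomes $E'=[1/2,x-1/2]\times[0,1]$.

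\textbf{Containment $\RR^\new\subset\RR(I,J)$.} Here the geodesic rectangle $\RR(I,J)$ is the region of $\overline\Disk$ between the hyperbolic geodesic $\gamma_1$ joining the endpoints of $I$ and the hyperbolic geodesic $\gamma_2$ joining the endpoints of $J$, both realized inside $\Disk$ (they are disjoint since $I,J$ are disjoint arcs). I would argue that each vertical side of $\RR^\new$ lies in the ``far'' side of both geodesics, so $\RR^\new$ lies in the region they cobound. The mechanism is the standard buffer estimate: the hyperbolic geodesic of $\Disk$ joining the endpoints of $I$ is precisely the central arc (in the sense of \S\ref{sss:inner outer order}) of the inner family $\Fam^-_{\Disk}(I,I^c)$ degenerately, i.e. it is the curve separating $I^-$ from the rest; more usefully, the left vertical side $\partial^{v,\ell}\RR$ together with the relevant boundary arc bounds a sub-annulus of modulus $\ge$ something, and removing a $1/2$-buffer pushes $\partial^{v,\ell}\RR^\new$ past the ``$1$-wide'' collar around the complementary boundary interval that the geodesic $\gamma_1$ lives in. Concretely: consider the family of curves in $\overline\Disk$ from $I$ to $J$; $\RR$ overflows (is a subfamily of) the canonical rectangle $\RR(I,J)$'s family only up to the geodesic-rectangle bound $\Width(\RR)\le\Width(\RR(I,J))=\Width^-_\Disk(I,J)$, but we need the geometric (not just metric) inclusion. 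For that I would use: any curve of $\Fam^\full(\RR)$ is homotopic rel endpoints, inside $\overline\Disk\setminus(I\cup J)$, to a curve meeting the geodesic quadrilateral; a vertical curve of $\RR$ lying outside $\RR(I,J)$ would have to cross $\gamma_1$ or $\gamma_2$ twice, and the sub-rectangle of $\RR$ of curves doing so has width $\le 1$ (this is exactly the linking/non-crossing principle, cf. the buffer arguments throughout \S\ref{s:par fjords}), so after deleting the two $1/2$-buffers — width $1$ total — no vertical curve of $\RR^\new$ crosses $\gamma_1\cup\gamma_2$; hence $\RR^\new\subset\RR(I,J)$.

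\textbf{Containment $\RR(I^\new,J^\new)\subset\RR$.} This is the symmetric statement with the roles of $\RR$ and geodesic rectangle swapped, applied to the narrower arcs $I^\new=\partial^{h,0}\RR^\new$, $J^\new=\partial^{h,1}\RR^\new$. The geodesics $\gamma_1^\new,\gamma_2^\new$ joining the endpoints of $I^\new,J^\new$ are ``inside'' $\RR$: each endpoint of $I^\new$ lies in the interior of $I$, at hyperbolic distance controlled by the $1/2$-buffer from the corresponding endpoint of $I$, and likewise for $J^\new$; the geodesic joining two points at depth $\ge$ (something) inside $I$ and $J$ stays in the hyperbolic-convex hull of $I^\new\cup J^\new$, which is trapped between $\partial^{v,\ell}\RR$ and $\partial^{v,\rho}\RR$ because those vertical sides, together with the complementary arcs, bound annuli of modulus $\ge 1/2$ separating them from $I^\new,J^\new$. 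Again the crossing count does the job: a point of $\gamma_1^\new$ outside $\RR$ forces $\gamma_1^\new$ to cross $\partial^{v,\ell}\RR\cup\partial^{v,\rho}\RR$, but a hyperbolic geodesic is disjoint from any curve it is unlinked with, and $I^\new,J^\new$ are unlinked (in $\partial\Disk\setminus\partial^v\RR$) with the endpoints of the vertical sides precisely because of the $1/2$-buffer removal. The main obstacle, and the only place care is genuinely needed, is making the ``$1/2$-buffer $\Rightarrow$ geodesic cannot escape'' step quantitatively clean: I would phrase it as an explicit estimate that a rectangle of width $>\tfrac12$ between an arc and a sub-annulus forces the far hyperbolic geodesic to the far side, using the elementary fact that in $\Disk$ the hyperbolic geodesic between the endpoints of an arc $A$ is separated from any arc $B$ with $\Width^-_\Disk(A^c\text{-side},\ B)\ge$ const — which is immediate from Lemma~\ref{lmm:W^- I J}. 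With that lemma in hand the two inclusions follow by the non-crossing principle with no further computation.
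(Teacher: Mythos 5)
Your overall scheme (normalize conformally, show the bounding geodesics cannot penetrate past the buffers, then read off both inclusions by separation) is the same as the paper's, but the quantitative core is missing, and it is exactly the core that makes the lemma true with $1/2$-buffers. The paper's proof rests on one sharp claim: after uniformizing to the Euclidean rectangle $E_x=[0,x]\times[0,1]$, the hyperbolic geodesic of $E_x$ joining the two left vertices $(0,0),(0,1)$ lies inside the left $1/2$-buffer. This is proved by an explicit comparison with the right half-plane $\{\Re z>0\}\supset E_x$: there the geodesic through $0$ and $i$ is the semicircle $|z-i/2|=1/2\subset\{\Re z\le 1/2\}$, and the standard monotonicity for a subdomain sharing the boundary segment $[0,i]$ keeps the geodesic of $E_x$ on the near side of that semicircle. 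Nothing in your write-up produces this constant. For the inclusion $\RR^\new\subset\RR(I,J)$ you assert that the vertical curves of $\RR$ crossing the geodesics form a family of width $\le 1$ ``by the non-crossing principle''; but the non-crossing principle needs a transversal rectangle of width $>1$, and there is no a priori wide family supported in the region cut off by a geodesic of $D$ (that region can be conformally arbitrarily thin), so the bound is not forced by anything you cite. Moreover, even granting a total width $\le 1$, the bookkeeping fails: the crossing trajectories form buffers adjacent to the vertical sides of $\RR$, and they could constitute a single buffer of width close to $1$ on one side, which deleting a $1/2$-buffer from each side does not remove. What the statement requires, and what the semicircle computation delivers, is a bound of $1/2$ per side.

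The argument for $\RR(I^\new,J^\new)\subset\RR$ has the same defect together with incorrect auxiliary claims: ``a hyperbolic geodesic is disjoint from any curve it is unlinked with'' is false when the other curve is not a geodesic (and $\partial^{v}\RR$ is an arbitrary arc); removing a $1/2$-buffer produces a separating rectangle of width $1/2$ (extremal length $2$), not ``annuli of modulus $\ge 1/2$''; and your final appeal to Lemma~\ref{lmm:W^- I J} only gives estimates up to unspecified comparability constants, which can never certify the exact threshold $1/2$ built into the statement (at best it would yield the lemma with some unspecified $C$-buffers). To repair the proof you need precisely the sharp input the paper uses — the half-plane/semicircle comparison, or an equivalent exact harmonic-measure computation showing that on the vertical trajectory at modulus-distance $1/2$ from a unit horizontal side the harmonic measure of the complementary boundary arc drops below $1/2$ — and then both inclusions follow as in the paper.
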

\noindent In particular, $\RR$ can be replaced with a geodesic subrectangle $\RR(I^\new,J^\new)$ so that $\Width(\RR)-\Width[\RR(I^\new,J^\new)]\le 1$.
\begin{proof}
We can assume that $\RR'=D=E_x$, where $E_x$ is a Euclidean rectangle, see~\S\ref{sss:Rectan}. Then the lemma follows by appropriately applying the following {\bf claim}: the hyperbolic geodesic $\gamma \subset E_x$ connecting $(0,0)$ and $(0,1)$ is within $E_{1/2}$ -- the left $1/2$-buffer of $E_x$. 

To prove the claim about $\gamma$, consider the right half-plan $\C_{>0}\coloneqq \{z \mid \Re z>0\}$. Then the hyperbolic geodesic $\widetilde \gamma \subset \C_{>0}$ connecting $(0,0)$ and $(0,1)$ is the semicircle orthogonal to the imaginary line; i.e. $\widetilde \gamma\subset E_{1/2}$. Since $E_x\subset \C_{>0}$, we also obtain that $  \gamma\subset E_{1/2}$. 
\end{proof}

It follows from Lemma~\ref{lem:hyp rectangles} that if $J\subset\partial D$ is a concatenation of subintervals $J_1\#J_2$ and $I\subset \partial D$, then
\begin{equation}
\label{eq:width: concat} \Width^-(I,J)=\Width^-(I,J_1)+\Width^-(I,J_2)-O(1).
\end{equation}

\subsection{Small overlapping of wide families} Many arguments in the near-degenerate regime are based on the principle that wide families have a relatively small overlap. 
\subsubsection{Non-Crossing Principle}
\label{sss:non cross princ}  Consider a closed Jordan disk $D$ and let \[\RR_1 , \RR_2\subset  D,\sp\sp \partial^h \RR_1 , \partial^h \RR_2\subset \partial D,\sp\sp \partial^h \RR_1 \cap \partial^{h} \RR_2 =\emptyset\] be two rectangles. If $\Width(\RR_1),\Width(\RR_2)> 1$, then $\RR_1, \RR_2$ \emph{do not cross-intersect}: there are vertical curves $\gamma_1\in \Fam(\RR_1)$ and $\gamma_2\in \Fam(\RR_2)$ with $\gamma_1\cap \gamma_2=\emptyset$. Indeed, assuming otherwise, we obtain
\[1/\Width (\RR_1) = \mod (\RR_1) \ge \Width(\RR_2)\] by monotonicity of the external length.

\subsubsection{Vertical boundaries}
\label{sss:sm overl princ} The following lemma is a slight generalization of \cite[Lemma 2.14]{KL}.

\begin{lem}
\label{lem:vet boundar} For every $\varepsilon>0$ the following holds. Consider rectangles \[\FamG, \sp \RR_1, \RR_2,\dots, \RR_n\subset \wC ,\sp\sp  \sp \Width(\FamG),\ \Width(\RR_i)>  8n+2\varepsilon\] such that the $\RR_i$ are pairwise disjoint. Then after removing buffers of width at most $4n+\varepsilon$, we can assume that the new rectangles $\FamG^\new, \RR^\new_1,\RR^\new_2,\dots,\RR^\new_n$ have disjoint vertical boundaries.
\end{lem}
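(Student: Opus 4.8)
\textbf{Proof plan for Lemma~\ref{lem:vet boundar}.} The statement is essentially the assertion that finitely many wide, pairwise disjoint rectangles (plus one extra rectangle $\FamG$) can be trimmed by small buffers so that their vertical boundaries become pairwise disjoint. The proof is a bookkeeping argument built on the Non-Crossing Principle~\S\ref{sss:non cross princ}: two rectangles with width $>1$ and disjoint horizontal boundaries cannot cross-intersect, so there exist vertical curves of each that are disjoint. The point is to make a single such choice that is compatible across all pairs simultaneously, and to bound the total width of the buffers that must be discarded.

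\medskip

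First I would set up the comparison graph whose vertices are the rectangles $\FamG,\RR_1,\dots,\RR_n$; there are at most $\binom{n+1}{2}\le 2n^2$ pairs, but in fact the relevant count is linear because each rectangle only needs to be separated from the $n$ others one at a time. I would process the rectangles in a fixed order. For $\RR_1$: for each $j\ne 1$, since $\Width(\RR_1),\Width(\RR_j)>1$, the Non-Crossing Principle produces a vertical curve $\gamma_1^{(j)}\in\Fam(\RR_1)$ disjoint from some vertical curve of $\RR_j$; the sub-buffer of $\RR_1$ of vertical curves that \emph{do} cross $\partial^v\RR_j$ (or more precisely, that are not separated from $\RR_j$'s chosen vertical curve) has width $\le 1$ on each of the two vertical sides by the same monotonicity estimate. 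Summing over the $\le n$ other rectangles and both sides, I remove from $\RR_1$ a left buffer and a right buffer of total width $\le 2n$; after this trimming the vertical boundary of $\RR_1^\new$ is disjoint from all the $\RR_j$. Repeating for each $\RR_i$ in turn, and then for $\FamG$ against all the already-trimmed $\RR_i^\new$, each rectangle loses buffers of width $\le 2n$ on each side, i.e.\ of width $\le 4n$ total; the $\varepsilon$ slack absorbs the fact that cross-intersection estimates are $\le 1+\delta$ for any $\delta>0$ rather than exactly $1$, and that we may wish the surviving vertical curves to lie in the \emph{interior} of the trimmed rectangles. The hypothesis $\Width>8n+2\varepsilon$ guarantees the trimmed rectangles are non-degenerate (indeed still of width $>2\varepsilon$). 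One must also check that trimming $\RR_i$ does not reintroduce a vertical-boundary intersection with an already-processed $\RR_k$: this is automatic, since shrinking a rectangle towards its interior can only move its vertical boundary away from a fixed disjoint set of curves, and the chosen vertical curves of $\RR_k^\new$ were taken in the interior.

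\medskip

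The main obstacle is purely combinatorial: making the choices of surviving vertical curves mutually consistent so that the ``bad'' sub-buffer of $\RR_i$ against $\RR_j$ is genuinely a buffer (a sub-rectangle attached to a vertical side) rather than a scattered set, so that the Parallel Law~\S\ref{sss:ParLaw} bound on total discarded width applies. This is handled exactly as in \cite[Lemma 2.14]{KL}: for each ordered pair one identifies the maximal buffer of $\Fam(\RR_i)$ all of whose curves meet $\partial^v\RR_j$ (this is a buffer by the analogue of Lemma~\ref{lem:buffer:R O}), its width is $\le 1$ by non-crossing, and the union over $j$ of these buffers on a single side of $\RR_i$ has width $\le n$ by the Parallel Law; doubling for the two sides and allowing the $\varepsilon$ cushion gives the claimed $\le 4n+\varepsilon$ bound. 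Once the buffers are removed in this order, the vertical boundary of each $\RR_i^\new$ consists of curves avoiding every other rectangle's vertical boundary, which is the conclusion. The argument is the verbatim generalization of the cited two-rectangle case in \cite{KL} to $n+1$ rectangles, the only new input being the linear-in-$n$ accumulation of buffer widths, which the strengthened width hypothesis accommodates.
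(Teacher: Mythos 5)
There is a genuine gap at the crux of your argument: the claim that ``the sub-buffer of $\RR_1$ of vertical curves that do cross $\partial^v\RR_j$ (or that meet a fixed chosen vertical curve of $\RR_j$) has width $\le 1$ by non-crossing.'' The Non-Crossing Principle~\S\ref{sss:non cross princ} only says that two rectangles of width $>1$ with disjoint horizontal boundaries cannot have \emph{every} vertical curve of one crossing \emph{every} vertical curve of the other; it gives no bound on the width of the set of vertical curves of $\RR_i$ meeting one fixed curve. Indeed a single curve can meet a family of arbitrarily large width (a horizontal chord of a Euclidean rectangle $[0,W]\times[0,1]$ meets every vertical leaf), so if the chosen vertical curve of $\RR_j$ (or of $\FamG$) runs ``horizontally'' through $\RR_i$, your ``bad buffer'' can be essentially all of $\RR_i$ and your linear accounting collapses. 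What is true — and what the paper actually uses — is the quantitative Lemma~\ref{lem:inters of wide lamin} (= \cite[Lemma 2.13]{KL}): when \emph{both} families are wide, one can \emph{choose} a curve in one of them meeting less than a $\tfrac1\kappa$-fraction of the width of the other lamination. The direction of choice matters: you cannot fix a vertical boundary of $\RR_j$ first and then bound the bad set in $\RR_i$; you must select the curve inside the wide buffer so that it is good relative to the other family.

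The paper's proof is correspondingly different and shorter. Since the $\RR_i$ are pairwise disjoint, nothing needs to be done among the $\RR_i$'s; the only interaction is between $\FamG$ and each $\RR_i$. One takes the two buffers $\FamG_\pm$ of width $4n+\varepsilon$ in $\FamG$ and the two buffers $\RR_{\pm,i}$ of width $4n+\varepsilon$ in each $\RR_i$, and applies Lemma~\ref{lem:inters of wide lamin} with $\kappa=4n$ to choose $\gamma_{\pm,i}\in\RR_{\pm,i}$ meeting less than $\tfrac1{4n}\Width(\FamG_-\sqcup\FamG_+)$ of the curves of $\FamG_-\sqcup\FamG_+$. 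Summing over the $2n$ chosen curves, the total width of curves of $\FamG_\pm$ meeting some $\gamma_{\pm,i}$ is less than $\tfrac{2n}{4n}\,\Width(\FamG_-\sqcup\FamG_+)=4n+\varepsilon$, so there remain curves $\beta_-\in\FamG_-$, $\beta_+\in\FamG_+$ disjoint from all $\gamma_{\pm,i}$; these $\beta_\pm,\gamma_{\pm,i}$ are declared the new vertical boundaries (and the $\gamma_{\pm,i}$ are automatically mutually disjoint because the $\RR_i$ are). So your overall plan (trim buffers, pay width linear in $n$) matches the statement, but the per-pair ``width $\le 1$'' step is not a consequence of non-crossing, and without replacing it by the averaging selection of Lemma~\ref{lem:inters of wide lamin} the proof does not go through.
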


\begin{proof} We need the following fact:

\begin{lem}[{\cite[Lemma 2.13]{KL}}]
\label{lem:inters of wide lamin} Consider two laminations $\Lambda,\FamG$ such that $\Lambda$ is a sublamination of the vertical foliation of a rectangle. If $\Width(\Lambda)>\kappa$ and $\Width(\FamG)\ge \kappa \ge1$, then there is a curve $\ell \in \FamG$ that intersects less than $\frac  1 \kappa \Width (\Lambda)$ of curves in $\Lambda$.
\end{lem}

 Let $\FamG_{\pm}, \RR_{\pm, n}$ be the buffers of width $4n+\varepsilon$ in $\FamG, \RR_n$. Applying Lemma~\ref{lem:inters of wide lamin}, we can select vertical curves $\gamma_{-,n}\in \RR_{-,n,} \gamma_{+,n}\in \RR_{+,n}$ so that each $\gamma_{\pm,n}$ intersects less than $\frac{1}{4n}\Width(\FamG_{-}\sqcup \FamG_{+})=\frac{1}{2n}\Width(\FamG_{\pm})$ curves in $\FamG_{-}\sqcup \FamG_{+}$.  Therefore, there are curves $\beta_-\in \FamG_-, \ \beta_+\in \FamG_+$ that are disjoint from all the $\gamma_{\pm,n}$. We set $\beta_{\pm}, \gamma_{\pm,n}$ to be the vertical boundaries of  $\FamG^\new, \RR^\new_1,\RR^\new_2,\dots,\RR^\new_n$.
\end{proof}

\subsubsection{Crossing an annulus}
\label{sss:cross ann}
Let $A\subset \C$ be an annulus and $\FamG$ be a family of curves such that every its curve starts in the unbounded component $U$ of $\C\setminus A$. Then at most $1/\mod A$ curves in $\FamG$ intersect the bounded component $O$ of $\C\setminus A$. Indeed, every curve $\gamma\in \FamG$ intersecting $O$ contains a subcurve $\gamma'$ connecting the inner and outer boundaries of $A$. The width of such $\gamma$ is at most $1/\mod A$.

\begin{lem} 
\label{lem:Rect in U} Let $D\subset \C$ be a closed Jordan disk and $A,\mu=\mod A$ be a closed topological annulus such that the bounded component $O$ of $\C\setminus A$ intersects $\partial D$. Then for every rectangle \[\RR\subset D\sp\sp \text{ such that }\sp \partial ^h \RR \subset D\setminus (A\cup O), \] after removing two $1/\mu$-buffers from $\RR$, the new rectangle $\RR^\new$ is disjoint from $O$.
\end{lem}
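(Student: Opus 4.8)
The statement is a routine instance of the ``crossing an annulus'' principle, see \S\ref{sss:cross ann}. Fix a rectangle $\RR\subset D$ with $\partial^h\RR\subset D\setminus(A\cup O)$, and write $\mu=\mod A$. The plan is to look at the subfamily of vertical curves of $\RR$ that meet $O$ and to show it is a buffer of width at most $1/\mu$ on each side.

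First I would record the key structural observation: since $\partial^{h,0}\RR$ and $\partial^{h,1}\RR$ are both contained in $D\setminus(A\cup O)$, any vertical curve $\gamma\in\Fam(\RR)$ that intersects $O$ must, before and after entering $O$, cross the annulus $A$ — more precisely, $\gamma$ contains a subarc joining the inner and outer boundary components of $A$ (it starts outside $A\cup O$, reaches $O$, hence passes through $A$). Consequently every such $\gamma$ overflows (see \S\ref{sss:Monot+Gr}) the family of curves connecting the two boundary components of $A$, whose width is $1/\mu$. Let $\Lambda_O\subset\Fam(\RR)$ be the set of vertical curves of $\RR$ that meet $O$. By monotonicity of the extremal width, $\Width(\Lambda_O)\le 1/\mu$.

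Next I would argue that $\Lambda_O$ splits as (at most) two buffers. Since $O$ intersects $\partial D$ in at most one arc and $\RR\subset D$ is a topological rectangle, $\Lambda_O$ is a union of at most two sub-laminations each of which is a ``side'' collection of consecutive vertical curves of $\RR$: indeed $O$ touches $\partial D$, hence the curves of $\RR$ meeting $O$ are exactly those whose basepoint on $\partial^h\RR$ lies in a controlled neighborhood of $O\cap\partial D$, and these form consecutive vertical curves clustered near the left and right vertical sides of $\RR$ (using that $O$ is disjoint from $\partial^h\RR$ and that vertical curves of a rectangle are ordered). Each such cluster is a genuine sub-rectangle, a buffer, of width at most $\Width(\Lambda_O)\le 1/\mu$. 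Removing a $1/\mu$-buffer from each of the two vertical sides of $\RR$ therefore yields $\RR^\new$ with $\Fam(\RR^\new)\cap\Lambda_O=\emptyset$; that is, no vertical curve of $\RR^\new$ meets $O$. Since $\RR^\new$ is swept out by its vertical curves, $\RR^\new\cap O=\emptyset$, as claimed.

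The only mild subtlety — and the step I would be most careful with — is the purely topological claim that the curves of $\RR$ meeting $O$ are confined to two buffers rather than being scattered across $\Fam(\RR)$; this is where one uses that $O$ meets $\partial D$ in a connected arc and that $\partial^h\RR$ avoids $\overline{A\cup O}$, so that the basepoints of the offending vertical curves form at most two sub-intervals of $\partial^h\RR$ adjacent to the vertical sides of $\RR$. Everything else is monotonicity of extremal width and the definition of a buffer.
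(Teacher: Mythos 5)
Your overall strategy coincides with the paper's: bound the width of the vertical curves of $\RR$ meeting $O$ by $1/\mu$ via the crossing-an-annulus principle (\S\ref{sss:cross ann}), then argue that these curves form at most two buffers, one at each vertical side, so that removing two $1/\mu$-buffers eliminates all of them. The first half is correct and is exactly what the paper does. The gap is in the second half, the step you yourself flag as delicate. Your justification --- that the curves of $\RR$ meeting $O$ are those whose basepoints on $\partial^h\RR$ lie in a ``controlled neighborhood'' of $O\cap\partial D$, and that these basepoints form two subintervals adjacent to the vertical sides --- is not valid. Nothing in the hypotheses controls where the basepoints of such curves lie: a vertical curve of a conformal rectangle can start anywhere on $\partial^{h,0}\RR$ and still wander into $O$; moreover $O\cap\partial D$ need not be a single arc, and $\partial^h\RR$ is not even assumed to lie on $\partial D$ in this lemma. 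So the claim that the offending curves are clustered at the two vertical sides is asserted rather than proved, and it is precisely the content that requires an argument.

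The correct mechanism (this is Lemma~\ref{lem:buffer:R O}, which the paper's proof of the present lemma invokes) is a separation argument using only the connectedness of $O$: if $\gamma_1,\gamma_2\in\Fam(\RR)$ are vertical curves disjoint from $O$, then every vertical curve between them is disjoint from $O$. Indeed, otherwise the connected set $O$, being disjoint from $\gamma_1\cup\gamma_2\cup\partial^h\RR$, would be contained in the subrectangle of $\RR$ bounded by $\gamma_1$, $\gamma_2$ and parts of $\partial^h\RR$, whose interior lies in $\intr \RR\subset\intr D$; this contradicts the hypothesis that $O$ meets $\partial D$ (at a point off $\partial^h\RR$, since $\partial^h\RR\subset D\setminus(A\cup O)$). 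Hence the curves avoiding $O$ form a single consecutive block in the left--right order, so the curves meeting $O$ form at most two buffers, each of width at most $1/\mu$ by your extremal-width estimate. With this substitution your argument closes and becomes the paper's proof.
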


We will need the following topological property:

\begin{lem}
\label{lem:buffer:R O}
Let $D\subset \C$ be a closed Jordan disk together with a rectangle
\[\RR\subset  D,\sp\sp \partial^h \RR\subset \partial D. \]
Let $O\subset \C\setminus \partial^h\RR $ be a connected set intersecting $\partial D\setminus \partial ^h \RR$. If $\RR$ intersects $O$, then the set of vertical curves in $\RR$ intersecting $O$ forms either one or two buffers of $\RR$. If, moreover, $O$ intersects exactly one component of $\partial D\setminus \partial ^h \RR$, then the set of vertical curves in $\RR$ intersecting $O$ forms a buffer of $\RR$.
\end{lem}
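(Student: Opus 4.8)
The statement is purely topological, so the proof will not touch extremal length at all; it is a statement about the combinatorics of how a connected set $O$ meets the vertical foliation of a rectangle. First I would uniformize and set up coordinates: conformally identify the rectangle $\RR$ with a Euclidean rectangle $E_x=[0,x]\times[0,1]$ so that $\partial^{h,0}\RR$ becomes $[0,x]\times\{0\}$ and $\partial^{h,1}\RR$ becomes $[0,x]\times\{1\}$; the two components $N_\ell,N_\rho$ of $\partial D\setminus\partial^h\RR$ correspond to the two vertical sides $\{0\}\times[0,1]$ and $\{x\}\times[0,1]$. Since $\RR\subset D$ and $\partial^h\RR\subset\partial D$, the set $O\cap\RR$ is a relatively closed subset of $\RR$ whose closure in $D$ meets $\partial D$ only along $N_\ell\cup N_\rho$ (because $O$ is disjoint from $\partial^h\RR$). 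The set of vertical curves meeting $O$ is then $\pi(O\cap\RR)\subset[0,x]$, where $\pi$ is projection to the horizontal coordinate; I must show this projection is a union of at most two intervals, each containing an endpoint of $[0,x]$ (i.e.\ forms one or two buffers), and exactly one such interval when $O$ meets only one of $N_\ell,N_\rho$.

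\textbf{Key steps.} The heart of the argument is a connectedness/monotonicity claim: if a vertical curve $\{t\}\times[0,1]$ meets $O$ and $0\le s<t$, and $O$ also meets $N_\ell=\{0\}\times[0,1]$, then $\{s\}\times[0,1]$ meets $O$ as well. To see this, take a point $p\in O$ on the curve $\{t\}\times[0,1]$ and a point $q\in O$ on $N_\ell$; since $O$ is connected, there is a connected subset (indeed a path, after thickening, or simply using connectedness of $O$ directly) joining $p$ to $q$ inside $O\subset D\setminus\partial^h\RR$. This connected set lies in the open (in $D$) region $D\setminus\partial^h\RR$ and joins a point with horizontal coordinate $t$ to a point of $N_\ell$ (horizontal coordinate $0$). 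Because $O$ is disjoint from the horizontal boundary $\partial^h\RR$, the vertical line $\{s\}\times[0,1]$ — whose closure in $D$ together with $\partial^h\RR$ separates the point of $N_\ell$ from $p$ within $D$ when $0<s<t$ — must be crossed by this connected subset of $O$; hence $\{s\}\times[0,1]$ meets $O$. Running the same argument from the right side gives: if $O$ meets $N_\rho$, then $\pi(O\cap\RR)$ contains an interval abutting $x$. Combining the two, $\pi(O\cap\RR)$ is a union of an initial segment $[0,a]$ (present iff $O$ meets $N_\ell$) and a terminal segment $[b,x]$ (present iff $O$ meets $N_\rho$); since $O$ meets $\partial D\setminus\partial^h\RR$, at least one of these is nonempty, and if $O$ meets exactly one of the two components then exactly one is nonempty, giving a single buffer. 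One minor point to address: a priori $\pi(O\cap\RR)$ could fail to be closed, but a buffer of $\RR$ in the sense of \S\ref{sss:Rectan} is a sub-rectangle, so one takes the closure; the monotonicity just shown guarantees the closure is still an interval abutting the correct side.

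\textbf{Main obstacle.} The only genuinely delicate point is making the separation argument rigorous with $O$ merely a connected set (not necessarily path-connected, not necessarily nice): I cannot literally lift a path, so I would argue via the standard fact that in a disk, a properly embedded arc (here the closure of a vertical line $\{s\}\times[0,1]$, which has its two endpoints on $\partial D$) separates $D$ into two components, and a connected set meeting both components must meet the separating arc. Verifying that the two relevant points of $O$ — the point on $\{t\}\times[0,1]$ and the point on $N_\ell$ — lie in opposite components of $D\setminus(\{s\}\times[0,1])$ for $0<s<t$ is where I must use that $O$ avoids $\partial^h\RR$, so that $O$ stays in the interior strip and the horizontal boundary does not provide an alternative route around the separating curve. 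This is elementary plane topology but needs to be stated carefully; everything else is bookkeeping. I expect the whole proof to be about half a page.

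\begin{proof}
We may conformally normalize $\RR$ to a Euclidean rectangle $E_x=[0,x]\times[0,1]$ with $\partial^{h,0}\RR=[0,x]\times\{0\}$, $\partial^{h,1}\RR=[0,x]\times\{1\}$, and with the two components of $\partial D\setminus\partial^h\RR$ being $N_\ell\coloneqq\{0\}\times[0,1]$ and $N_\rho\coloneqq\{x\}\times[0,1]$ (the remaining two sides of $E_x$ lie on $\partial D$). Let $\pi\colon E_x\to[0,x]$ be the horizontal projection, so that the vertical curve $\{t\}\times[0,1]$ of $\Fam(\RR)$ meets $O$ iff $t\in\pi(O\cap\RR)$.

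Suppose $O$ meets $N_\ell$. We claim that if $t\in\pi(O\cap\RR)$ and $0<s<t$, then $s\in\pi(O\cap\RR)$. Indeed, the vertical segment $\sigma_s\coloneqq\{s\}\times[0,1]$ is a properly embedded arc in $\ovl D$ with both endpoints on $\partial D$, hence it separates $D$ into two components $D_s^-$ (containing $N_\ell$) and $D_s^+$ (containing $N_\rho$). Pick $q\in O\cap N_\ell\subset D_s^-$ and $p\in O\cap\RR$ with $\pi(p)=t$; since $t>s$, we have $p\in D_s^+\cup\sigma_s$. If $p\in\sigma_s$ we are done; otherwise $p\in D_s^+$, and since $O$ is connected and joins $q\in D_s^-$ to $p\in D_s^+$, it must meet the separating arc $\sigma_s$. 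As $O$ is disjoint from $\partial^h\RR$, any point of $O\cap\sigma_s$ lies in $\{s\}\times(0,1)\subset\RR$, so $s\in\pi(O\cap\RR)$. This proves the claim; together with the fact that $\ovl{\pi(O\cap\RR)}$ is closed, it shows that $[0,a]\subset\ovl{\pi(O\cap\RR)}$ for $a\coloneqq\sup\{t:t\in\pi(O\cap\RR)\text{ and }[0,t)\subset\pi(O\cap\RR)\}$, and in fact the argument shows $\ovl{\pi(O\cap\RR)}\cap[0,a]=[0,a]$ with $[0,a]$ a maximal initial segment; the corresponding sub-rectangle $[0,a]\times[0,1]$ is a buffer of $\RR$ attached to $N_\ell$.

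Symmetrically, if $O$ meets $N_\rho$, then $\pi(O\cap\RR)$ contains a maximal terminal segment, producing a buffer $[b,x]\times[0,1]$ of $\RR$ attached to $N_\rho$. Now suppose some $\{u\}\times[0,1]$ with $u\notin[0,a]\cup[b,x]$ met $O$. Running the separation argument with $\sigma_u$ (which separates a point of $O$ on $N_\ell$ from $\ldots$) would force $O$ to meet either $N_\ell$ or $N_\rho$ and, by the monotonicity just established, $u$ to lie in $[0,a]$ or $[b,x]$, a contradiction — unless $O$ meets neither $N_\ell$ nor $N_\rho$; but then $O$ would be disjoint from $\partial D\setminus\partial^h\RR$ and from $\partial^h\RR$, hence from $\partial D$, contradicting that $O$ meets $\partial D\setminus\partial^h\RR$. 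Therefore the set of vertical curves of $\RR$ meeting $O$ is exactly $([0,a]\cup[b,x])\times[0,1]$, i.e.\ at most two buffers of $\RR$. Finally, if $O$ meets exactly one component of $\partial D\setminus\partial^h\RR$, say $N_\ell$, then by the separation argument no point of $O\cap\RR$ can have horizontal coordinate $>a$ arbitrarily close to $x$ without forcing $O$ to meet $N_\rho$; hence the terminal segment is empty and the set of vertical curves meeting $O$ is the single buffer $[0,a]\times[0,1]$.
\end{proof}
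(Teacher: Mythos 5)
There is a genuine gap, and it sits exactly at your key monotonicity claim. You treat $O$ as if it were a connected subset of $D$, but the hypothesis only says $O\subset \C\setminus \partial^h\RR$ is connected and meets $\partial D\setminus\partial^h\RR$; in the intended applications (e.g.\ Lemma~\ref{lem:Rect in U}, where $O$ is the bounded complementary component of an annulus meeting $\partial D$) the set $O$ is typically \emph{not} contained in $D$. A vertical curve $\sigma_s$ of $\RR$ is a crosscut of $D$: it separates $D$, but it does not separate the plane. So from $q\in O$ on $N_\ell$ and $p\in O$ on a vertical curve to the right of $\sigma_s$ you cannot conclude that $O$ meets $\sigma_s$: the connected set $O$ may leave $D$ through $N_\ell$, travel in $\C\setminus D$, and re-enter through $N_\rho$ to reach $p$. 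Concretely, take $O$ to be an arc that touches $N_\ell$ at one point, runs outside $D$, and dips into $\RR$ only near its right side; then $O$ meets $N_\ell$ and the rightmost vertical curves but no intermediate ones, so your claim ``$O$ meets $N_\ell$ and the curve at $t$ $\Rightarrow$ it meets every curve at $s<t$'' is false, and with it the structural statement you derive (initial segment present iff $O$ meets $N_\ell$, terminal segment iff it meets $N_\rho$) — which is in fact strictly stronger than the lemma and incompatible with it. A secondary issue: your normalization identifies the two components of $\partial D\setminus\partial^h\RR$ with the vertical sides of $E_x$, which tacitly assumes $D=\RR$; in general $\RR\subsetneq D$ and $N_\ell,N_\rho$ are arcs of $\partial D$ lying outside $\overline{\RR}$, so they are not in the image of the uniformizing map at all.

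The repair is to run the separation argument in the plane with a Jordan curve that $O$ entirely avoids, rather than with a crosscut of $D$. Show that the set of vertical curves \emph{disjoint} from $O$ is convex in the left--right order: if $\gamma_1,\gamma_2\in\Fam(\RR)$ avoid $O$ but some curve between them meets $O$, the intersection point lies in the interior of the subrectangle $Q\subset\RR$ bounded by $\gamma_1\cup\gamma_2$ and two arcs of $\partial^h\RR$; since $\intr Q\subset \intr D$, the point of $O$ on $\partial D\setminus\partial^h\RR$ lies outside $Q$, while the whole boundary $\partial Q\subset \gamma_1\cup\gamma_2\cup\partial^h\RR$ is avoided by $O$ — contradicting connectedness of $O$ in $\C$. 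This gives ``one or two buffers''. For the ``moreover'' part, if $O$ met curves on both sides of some $O$-free vertical curve $\gamma\in\Fam(\RR)\setminus\partial^v\RR$, then, since $\gamma$ is a crosscut of $D$ separating the two components of $\partial D\setminus\partial^h\RR$ and $O$ avoids $\gamma\cup\partial^h\RR$, the only way $O$ can connect its two pieces is through $\C\setminus D$, which forces it to cross \emph{both} $N_\ell$ and $N_\rho$; hence if $O$ meets exactly one component, there is a single buffer. This is the route the paper takes, and it is precisely designed around the fact that $O$ may exit and re-enter $D$.
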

\begin{proof}
If $\gamma_1,\gamma_2\in \Fam(\RR)$ are two curves disjoint from $O$, then all vertical curves of $\RR$ between $\gamma_1$ and $\gamma_2$ are also disjoint from $O$ -- otherwise $O$ would be enclosed by $\partial^h \RR\cup \gamma_1\cup \gamma_2$. Therefore, the set of vertical curves intersecting $O$ form one or two buffers. 

Assume there are two buffers. Then there will be a vertical curve $\gamma\in\Fam(\RR)\setminus \partial^{v} \RR$ that is disjoint from $O$. Since $O$ is disjoint from $ \gamma\cup \partial ^h \RR$ and since $O$ intersects both $\partial^{v,\ell} \RR, \partial^{v,\rho} \RR$, the set $O$ intersects both components of $\partial D\setminus \partial ^h \RR$.
\end{proof}
\begin{proof}[Proof of Lemma~\ref{lem:Rect in U}]
 At most $1/\mu$ vertical curves in $\RR$ can cross $A$ and all such curves form one or two buffers of $\RR$ by Lemma~\ref{lem:buffer:R O}. 
\end{proof}

\subsubsection{Push-forwards} Suppose $f\colon S_1\to S_2$ is a branched covering between Riemann surfaces of degree $d$. Let $\FamG$ be a family of curves in $S_1$. Then, see~\cite[Lemma 4.3]{KL}: 
\begin{equation}
\label{eq:Width:degree d}\Width(f[\FamG])\ge \frac 1 d \Width(\FamG).
\end{equation}

Covering Lemma~\cite{KL} (stated as Lemma~\ref{lem:CovLmm}) allows one to push-forward width of curves more efficiently.

\begin{lem}
\label{lem:univ push} Suppose $g\colon A\to B$ is a covering between either two closed annuli or between punctured disks. Let $\RR\subset A, \ \partial^h\RR\subset \partial A $ be a rectangle in $A$ such that $g$ maps $\partial^{h,0} \RR$ injectively onto $g\big(\partial^{h,0} \RR \big)$. Then after removing two $1$-buffers from $\RR$, the map $g$ is injective on the new rectangle $\RR^\new$.
\end{lem}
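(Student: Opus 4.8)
\textbf{Proof plan for Lemma~\ref{lem:univ push}.}
The statement is essentially a quantitative form of the observation that a wide rectangle mapped under a covering can wind around at most once unless a curve of the rectangle would have to self-cross, which is forbidden by the Non-Crossing Principle~\S\ref{sss:non cross princ}. First I would reduce to a normalized picture: identify $A$ with a straight Euclidean cylinder (or a straight punctured-disk model) so that $\partial^{h,0}\RR$ lies on one boundary circle of $A$ and so that $\RR$ itself is presented as a (topological) rectangle with $\partial^{h,0}\RR, \partial^{h,1}\RR\subset \partial A$. The covering $g\colon A\to B$ of degree $n$ is, in these coordinates, the quotient by a rotation of order $n$ (for annuli) or the map $z\mapsto z^n$ (for punctured disks). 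The hypothesis that $g$ maps $\partial^{h,0}\RR$ injectively onto its image means exactly that the base of $\RR$, as an arc on $\partial A$, subtends an angular length $<2\pi/n$ in the target coordinate, equivalently an angle $<2\pi$ upstairs after the identification made by $g$.

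Next I would argue that the failure of injectivity of $g$ on $\RR$ forces a pair of vertical curves of $\RR$ to be identified by $g$, and then track where this pair lives. Concretely: suppose $g(p)=g(q)$ with $p,q$ on distinct vertical leaves $\gamma_p,\gamma_q\in\Fam(\RR)$. Since $g$ is a covering and $\partial^{h,0}\RR$ is mapped injectively, $q$ is obtained from $p$ by applying a nontrivial deck transformation, i.e.\ by rotating by a multiple of $2\pi/n$. The leaf $\gamma_p$ and the rotated leaf are then two vertical leaves of $\RR$ (or of a deck-translate of $\RR$ inside $A$, but these translates are disjoint from $\RR$ away from the identification, since the base is short) whose endpoints on $\partial A$ are not linked with respect to the base arc, yet the leaves must intersect because $g(\gamma_p)$ and the image leaf coincide near their common point. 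Quantitatively, the set of vertical leaves of $\RR$ that participate in such an identification is, by Lemma~\ref{lem:buffer:R O}, a union of at most two buffers of $\RR$ — it is the set of leaves $\gamma$ of $\Fam(\RR)$ whose $g$-image is non-embedded relative to $g(\RR)$, and this is governed by proximity to the two vertical sides $\partial^{v,\ell}\RR,\partial^{v,\rho}\RR$. The Non-Crossing Principle bounds the total width of each such buffer by $1$: if a buffer of width $>1$ consisted of leaves that get identified in pairs by $g$, then inside that buffer we could find two leaves $\gamma_1,\gamma_2$ with unlinked endpoints (relative to the base) that nonetheless cross, and applying $g$ we would obtain two genuine subrectangles of width $>1$ in $B$ that cross-intersect, contradicting the bound $\mod(\RR_1)\ge\Width(\RR_2)$ from monotonicity of extremal length. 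Hence each offending buffer has width at most $1$.

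Therefore, removing the left $1$-buffer and the right $1$-buffer from $\RR$ (that is, the $\RR^\new$ of the statement) excises every vertical leaf on which $g$ could identify a pair of points, so $g|\RR^\new$ is injective. I would then double back to confirm the only place the annulus-versus-punctured-disk dichotomy matters: in the punctured-disk case the ``short base'' condition again says the base subtends angle $<2\pi$ in the source after pulling back the angular coordinate of $B$, so the same rotation argument applies with the cyclic deck group of $z\mapsto z^n$; and in the covering-of-annuli case the deck group is literally a finite rotation group, so the argument is identical. The main obstacle is the bookkeeping of ``which'' leaves are offending and verifying they form buffers rather than an interior band: this is exactly the content of Lemma~\ref{lem:buffer:R O} applied with $D$ a suitable Jordan-disk slit model of $A$ (cut $A$ along a vertical leaf disjoint from a fixed leaf of $\RR$) and with $O$ the set of points of $\RR$ that $g$ fails to separate; once that is set up, the width bound is a one-line application of the Non-Crossing Principle and extremal-length monotonicity.
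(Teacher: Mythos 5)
Your overall strategy is the same as the paper's: reduce everything to the deck group of $g$ (which is cyclic since coverings of annuli/punctured disks are normal), note that injectivity of $g$ on the base forces the deck translates of $\partial^{h,0}\RR$ to be pairwise disjoint, and then argue that the overlap of $\RR$ with its nontrivial deck translates is confined to two width-$1$ side buffers, so that trimming them makes $g$ injective. The paper does exactly this, phrased as: the orbit $\RR_0=\RR,\RR_1,\dots,\RR_{D-1}$ under deck transformations has pairwise disjoint bases, and after lifting to the universal cover one checks that the trimmed rectangles $\RR_i^\new$ are pairwise disjoint, which gives injectivity of $g$ on $\RR^\new$.

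However, as written your two key steps have genuine gaps. First, the reduction to ``at most two buffers'' via Lemma~\ref{lem:buffer:R O} is not justified: that lemma requires the obstacle $O$ to be connected and disjoint from \emph{both} horizontal sides of $\RR$, but the natural obstacles $\sigma^j(\RR)$ (or your ``set of points that $g$ fails to separate'') need not avoid the roof $\partial^{h,1}\RR$ --- the hypothesis controls only the base, and $g$ may well identify points of the roof --- nor is the union over all nontrivial $j$ connected; the slit-disk model you propose also requires a cut disjoint from all translates, which is not automatic. Your parenthetical ``these translates are disjoint from $\RR$ away from the identification, since the base is short'' is circular: the possible overlap of $\RR$ with its deck translates is precisely what the lemma must control. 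Second, the quantitative bound (each offending buffer has width at most $1$) is argued by pushing forward to $B$ and invoking the Non-Crossing Principle~\S\ref{sss:non cross princ} on the images; but the $g$-images are in general non-embedded and are not two rectangles inside a Jordan disk with disjoint horizontal boundaries, so the principle as stated does not apply downstairs. The crossing comparison has to be run upstairs: if $x\in\RR^\new\cap\sigma^j(\RR^\new)$, compare the two subrectangles of width $\ge 1$ cut off in $\RR$ and in $\sigma^j(\RR)$ by the vertical leaves through $x$ on the sides facing each other; to see that these genuinely cross (and to handle the fact that the roofs of $\RR$ and $\sigma^j(\RR)$ may intersect) one passes to the universal cover, where all translated bases are disjoint intervals on the boundary line and the linear order of these intervals forces the crossing --- this is the step the paper delegates to the universal cover, and it is missing from your argument. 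With those two points repaired (work with the translates $\sigma^j(\RR)$ upstairs, lift to the universal cover, and apply the extremal-length/non-crossing estimate there rather than in $B$), your proof becomes essentially the paper's.
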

\begin{proof}
Write $D\coloneqq \deg g$. Since $g$ is a normal covering, $g$ has a group of deck transformations; we denote by $\RR_0=\RR, \RR_1,\dots, \RR_{D-1}$ the orbit of $\RR$ under the group of deck transformations. Since $\partial^{h,0} \RR_i$ are disjoint, all $\RR_i^\new$ are disjoint. (The last claim can be easily checked by lifting the $\RR_i$ to the universal cover.) Therefore, $g\mid \RR^\new_0$ is injective.
\end{proof}

\begin{figure}

\begin{tikzpicture}[scale=0.8]

\draw[shift={(0,-1.5)},line width =0.08 cm]
(3,0) -- (20,0);

\draw[shift={(0,-1.5)},line width =0.15 cm , red]
 (4, 0) -- (6,0)
 (8, 0) -- (10,0);
 
 \draw[shift={(9,-1.5)},line width =0.15 cm , red]
 (4, 0) -- (6,0)
 (8, 0) -- (10,0);
 
  \draw[shift={(0,-1.5)},line width =0.07 cm , blue]
(4, 0) edge[bend left =50](15,0)
(6, 0) edge[bend left ](13,0);

\draw[shift={(4,-1.5)},line width =0.07 cm , dashed, blue]
(4, 0) edge[bend left =50](15,0)
(6, 0) edge[bend left ](13,0); 
 
 \node[below,red] at (5,-1.5){$\partial^{h,0} \RR$};

\node[below,red] at (14,-1.5){$\partial^{h,1} \RR$};

\node[below,blue] at (9,0.7){$\RR$};

\node[below,blue] at (10.5+4.5,0.7){$\RR_1$};

\draw[line width =0.04 cm ] (9.5,0.6) edge[->,bend left] (14.5,0.6);


\end{tikzpicture}

\caption{If a rectangle $\RR$ cross-intersects its conformal image, then $\Width(\RR)\le 1$.}
\label{fig:cross inters}
\end{figure}

\subsection{Shift Argument} \label{sss:shift argum}  If a rectangle $\RR$ has a conformal shift $\RR_1$ cross-intersecting $\RR$, then $\Width(\RR)\le 1$, see Figure~\ref{fig:cross inters}. Often, a weaker condition is sufficient: $\partial^{h,0} \RR$ is disjoint from $\partial ^h \RR_1$. Let us provide details. Consider a rectangle
\[
\RR\subset \C\setminus Z\sp\sp\text{ such that }\sp\sp \partial^h \RR\subset \partial Z \]
that has a conformal pullback or push-forward 
\[
 \RR_1\coloneqq f^t(\RR) \overset{1:1}{\longleftarrow} \RR, \sp\sp\sp \RR_1\subset \C\setminus Z,\sp \sp\sp \partial^h \RR_1\subset \partial Z 
\]
for $t\in \Z$. Assume next that there is an interval $T\subsetneq \partial Z$ containing $\lfloor \partial^h \RR\rfloor\cup \lfloor \partial \RR^h_1\rfloor$ such that \[\partial^{h,0}\RR<\partial^{h,1}\RR,\sp\sp  \partial^{h,0}\RR_1<\partial^{h,1}\RR_1\sp\sp\text{ in }\sp T\]
and $f^t$ maps $\partial^{h,0}\RR, \partial^{h,1}\RR, \lfloor \partial ^h \RR\rfloor$ onto  $\partial^{h,0}\RR_1, \partial^{h,1}\RR_1, \lfloor \partial ^h \RR_1\rfloor$. We say that $\RR, \RR_1$ are \emph{linked} if
\begin{equation}
\label{eq:LinkCond}
\begin{matrix}
\text{either} \ \sp \partial ^{h,0} \RR_1 <\partial^{h,0} \RR< \partial^{h,1} \RR_1 \sp\sp\text{ or }\sp\sp \partial ^{h,0} \RR <\partial^{h,0} \RR_1< \partial^{h,1} \RR \\
\text{ or }\sp\sp\sp
\partial ^{h,0} \RR_1 <\partial^{h,1} \RR< \partial^{h,1} \RR_1 \sp\sp\text{ or }\sp\sp \partial ^{h,0} \RR <\partial^{h,1} \RR_1< \partial^{h,1} \RR
\end{matrix}
\end{equation}
holds.


\begin{lem}
\label{lem:shift argum}
If $\RR$ is linked to its conformal pullback or push-forward $\RR_1$ as above, then $\Width(\RR)\le 2$.
\end{lem}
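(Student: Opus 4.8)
\textbf{Proof proposal for Lemma~\ref{lem:shift argum}.}
The plan is to deduce the bound from the Non-Crossing Principle~\S\ref{sss:non cross princ} by producing, from $\RR$ and its conformal image $\RR_1$, a pair of rectangles that \emph{must} cross-intersect whenever $\Width(\RR)$ is too large. The subtlety the argument must handle is that $\RR$ and $\RR_1$ live in $\C\setminus Z$ rather than inside a single Jordan disk, and the linking condition~\eqref{eq:LinkCond} only constrains one of the four horizontal boundary arcs at a time; so the first step is to pass to the topological disk in which the crossing takes place.

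First I would normalize: by~\eqref{eq:LinkCond}, after relabeling we may assume one of the four inequalities, say $\partial^{h,0}\RR_1 < \partial^{h,0}\RR < \partial^{h,1}\RR_1$ in $T$ (the other three cases are symmetric, obtained by reflecting the roles of $\RR$ and $\RR_1$ or of base and roof). Since $T\subsetneq \partial Z$, choose a simple arc $\gamma$ in $\C\setminus Z$ from $\partial Z\setminus T$ to $\infty$; then $\Delta\coloneqq\wC\setminus(\overline Z\cup\gamma)$ is an open topological disk containing $\intr\RR\cup\intr\RR_1$, with $\partial^h\RR,\partial^h\RR_1\subset T\subset\partial\Delta$. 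Now $\RR$ and $\RR_1$ are rectangles based on intervals of $\partial\Delta$ with $\partial^h\RR\cap\partial^h\RR_1=\emptyset$, and in $\partial\Delta$ the four endpoints of $\partial^h\RR$ and the four endpoints of $\partial^h\RR_1$ interlace: one endpoint of $\partial^h\RR$ lies in the interior of $\lfloor\partial^h\RR_1\rfloor$ while $\partial^{h,1}\RR$ lies outside it (this is exactly what~\eqref{eq:LinkCond} encodes). Consequently every vertical curve of $\RR$ joining $\partial^{h,0}\RR$ to $\partial^{h,1}\RR$ must separate the two endpoints of some vertical side of $\RR_1$ from each other within $\Delta$, and hence must intersect every vertical curve of $\RR_1$; symmetrically, every vertical curve of $\RR_1$ meets every vertical curve of $\RR$. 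Thus $\RR$ and $\RR_1$ cross-intersect in the sense of~\S\ref{sss:non cross princ}.

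Next I would invoke the estimate from the Non-Crossing Principle. If $\Width(\RR)>1$ and $\Width(\RR_1)>1$, then the two rectangles cannot cross-intersect; but we have just shown they do, so $\min\{\Width(\RR),\Width(\RR_1)\}\le 1$. Since $f^t$ maps $\RR$ conformally onto $\RR_1$ and carries $\partial^h\RR$ onto $\partial^h\RR_1$ respecting the base/roof labeling, the vertical foliation of $\RR$ maps to that of $\RR_1$, so $\Width(\RR_1)=\Width(\RR)$. Therefore $\Width(\RR)\le 1$, which is even stronger than the asserted bound $\Width(\RR)\le 2$; the slack of $2$ in the statement is harmless and in fact convenient because in the applications (Lemmas~\ref{lem:left balanc}, \ref{lem:contr:part^h,1}, \ref{lem:balance rectangle}, etc.) one only has the weaker hypothesis that $\partial^{h,0}\RR$ is disjoint from $\partial^h\RR_1$ and one of the linking inequalities holds, in which case one must first split off an $O(1)$-buffer to land in the genuinely linked situation — this is the step that costs the extra unit of width.

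The main obstacle I anticipate is bookkeeping rather than depth: making the ``interlacing forces intersection'' claim precise requires care about which of the four sub-cases of~\eqref{eq:LinkCond} one is in, and about orientations on $\partial\Delta$ versus $\partial Z$ (recall the clockwise convention of~\S\ref{ss:SiegDisk}). A clean way to organize it is to observe that in all four cases exactly one endpoint of $\lfloor\partial^h\RR\rfloor$ lies strictly inside $\lfloor\partial^h\RR_1\rfloor$ and the other lies strictly outside (and vice versa), which is the defining combinatorial feature of linked intervals; then a vertical curve of $\RR$ is an arc in $\Delta$ joining the two endpoints of $\lfloor\partial^h\RR\rfloor$ along the ``inside'' (it separates $\partial\Delta$ into two arcs, one of which contains exactly one endpoint of $\lfloor\partial^h\RR_1\rfloor$), hence it must cross any arc joining the two endpoints of $\lfloor\partial^h\RR_1\rfloor$. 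This reduces the whole lemma to the Jordan curve theorem in $\Delta$ plus the already-available Non-Crossing Principle.
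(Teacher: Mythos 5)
Your central step fails. The claim that the linking condition forces $\RR$ and $\RR_1$ to cross-intersect in full (``every vertical curve of $\RR$ \ldots must intersect every vertical curve of $\RR_1$'') is not what \eqref{eq:LinkCond} gives, and it is false in general under the lemma's hypotheses. In your normalized case, \eqref{eq:LinkCond} places $\partial^{h,0}\RR$ strictly inside $\lfloor\partial^h\RR_1\rfloor$ but says nothing about $\partial^{h,1}\RR$; your parenthetical assertion that ``$\partial^{h,1}\RR$ lies outside it \ldots (this is exactly what \eqref{eq:LinkCond} encodes)'' is exactly where the error enters. Since $f^t|\,\partial Z$ is a rotation in the linearizing coordinate, $\partial^{h,1}\RR_1$ is $\partial^{h,1}\RR$ shifted by the same combinatorial amount $s$ as the base, and whenever $s<|\partial^{h,1}\RR|$ the two roofs overlap; this is fully compatible with \eqref{eq:LinkCond} and is in fact the typical situation in the applications (e.g.\ the subrectangles $\RR_t$ in the proof of Lemma~\ref{lem:contr:part^h,1}, whose bases are much shorter than their roofs). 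For a vertical curve of $\RR$ landing in that overlap and a vertical curve of $\RR_1$ landing to its right, the two endpoint pairs are unlinked on $\partial\Delta$, so nothing forces a crossing; your separation claim fails for precisely these curves. (Two further inaccuracies feed this: a vertical curve of $\RR$ joins a point of the base to a point of the roof, not the two endpoints of $\lfloor\partial^h\RR\rfloor$, so the ``hull-endpoint'' reformulation in your last paragraph does not repair the argument; and even the interlacing of the hull endpoints is not encoded by \eqref{eq:LinkCond} alone -- without the length-preservation of $f^t$ on $\partial Z$ the hull of $\RR$ could sit nested inside the hull of $\RR_1$.)

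Consequently your conclusion $\Width(\RR)\le 1$ does not follow, and the constant $2$ is not, as you suggest, an artifact needed only ``in the applications'': the lemma's hypothesis is already the weak one, so the overlap configuration must be handled inside the proof. That is what the paper's argument does: it assumes $\Width(\RR)>2$ and removes a $1$-buffer from each side of $\RR$ before transporting by $f^t$, and only for the trimmed rectangles does it obtain the interlaced, crossing configuration contradicting Figure~\ref{fig:cross inters}; the two discarded buffers are exactly the price of the possible overlap, and they are where the bound $2$ comes from. A clean way to repair your write-up along the same lines: split $\RR$ into two genuine subrectangles of width $\Width(\RR)/2>1$ each; the half whose curves land farther from $\partial^{h,0}\RR_1$ and the $f^t$-image of the other half have pairwise disjoint horizontal boundaries and forced pairwise crossings (their landing points are strictly separated by the shift), so the Non-Crossing Principle applies to this pair and yields the contradiction. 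As written, your argument covers only the special case in which all four horizontal sides are pairwise disjoint and fully interlaced.
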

\begin{proof}
Assume that $ \partial ^{h,0} \RR_1 <\partial^{h,0} \RR< \partial^{h,1} \RR_1 $ holds; the other cases are analogous. Assume that $\Width(\RR)> 2$. Let $\RR^\new$ be the rectangle obtained from $\RR$ by removing two $1$-buffers on each side. Set $\RR^\new_1\coloneqq f^t(\RR^\new)\subset \RR_1$. Since $\partial ^{h,0}\RR^\new\subset \partial^{h,0}\RR$ is disjoint from $\partial^h\RR^\new_1\subset \partial^{h}\RR_1$, the new rectangles $\RR^\new,\RR^\new_1$ are disjoint. Since $f^t$ maps $\partial^{h,0}\RR^\new, \partial^{h,1}\RR^\new, \lfloor \partial ^h \RR^\new\rfloor$ onto  $\partial^{h,0}\RR^\new_1, \partial^{h,1}\RR^\new_1, \lfloor \partial ^h \RR^\new_1\rfloor$ and all the intervals are in $T$, we obtain 
\[ \partial ^{h,0} \RR_1 <\partial^{h,0} \RR< \partial^{h,1} \RR_1<\partial ^{h,1}\RR \sp\sp\text{ in }\sp T;\] i.e., $\RR^\new, \RR^\new_1$ intersect. This is a contradiction.
\end{proof}

\end{document}